\DeclareMathOperator{\interior}{int}
\DeclareMathOperator{\Hom}{Hom}
\DeclareMathOperator{\Id}{Id}
\DeclareMathOperator{\tr}{tr}
\DeclareMathOperator{\Stab}{Stab}
\DeclareMathOperator{\vol}{vol}
\DeclareMathOperator{\supp}{supp}
\DeclareMathOperator{\SL}{SL}
\DeclareMathOperator{\SO}{SO}
\DeclareMathOperator{\Lip}{Lip}
\DeclareMathOperator{\diam}{diam}
\DeclareMathOperator{\Leb}{Leb}
\DeclareMathOperator{\ad}{ad}
\DeclareMathOperator{\Ad}{Ad}
\DeclareMathOperator{\inj}{inj}
\DeclareMathOperator{\Span}{span}
\DeclareMathOperator{\Fix}{Fix}
\DeclareMathOperator{\codim}{codim }
\DeclareMathOperator{\Gr}{Gr}
\newcommand{\N}{\mathbb{N}}
\newcommand{\Z}{\mathbb{Z}}
\newcommand{\Q}{\mathbb{Q}}
\newcommand{\R}{\mathbb{R}}
\newcommand{\C}{\mathbb{C}}
\newcommand{\LieG}{\mathfrak{g}}
\newcommand{\LieA}{\mathfrak{a}}
\newcommand{\LieU}{\mathfrak{u}}
\newcommand{\LieK}{\mathfrak{k}}
\newcommand{\LieM}{\mathfrak{m}}
\newcommand{\LieP}{\mathfrak{p}}
\newcommand{\LieQ}{\mathfrak{q}}
\newcommand{\LieH}{\mathfrak{h}}
\newcommand{\height}{\mathrm{ht}}
\newcommand{\disc}{\mathrm{disc}}
\newcounter{consta}
\renewcommand{\theconsta}{{A_{\arabic{consta}}}}
\newcommand{\consta}{\refstepcounter{consta}{\color{red}\theconsta}}
\newcounter{constm}
\renewcommand{\theconstm}{{M_{\arabic{constm}}}}
\newcommand{\constm}{\refstepcounter{constm}{\color{red}\theconstm}}
\newcounter{constc}
\renewcommand{\theconstc}{{C_{\arabic{constc}}}}
\newcommand{\constc}{\refstepcounter{constc}{\color{red}\theconstc}}
\newcounter{constE}
\renewcommand{\theconstE}{{{E}_{\arabic{constE}}}}
\newcommand{\constE}{\refstepcounter{constE}{\color{red}\theconstE}}
\newcounter{constd}
\renewcommand{\theconstd}{{D_{\arabic{constd}}}}
\newcommand{\constd}{\refstepcounter{constd}{\color{red}\theconstd}}
\newcommand{\mytag}[2]{%
\text{#1}%
\@bsphack
\begingroup
\@onelevel@sanitize\@currentlabelname
\edef\@currentlabelname{%
\expandafter\strip@period\@currentlabelname\relax.\relax\@@@%
}%
\protected@write\@auxout{}{%
\string\newlabel{#2}{%
{#1}%
{\thepage}%
{\@currentlabelname}%
{\@currentHref}{}%
}%
}%
\endgroup
\@esphack
}
\DeclareFontFamily{U}{mathb}{\hyphenchar\font45}
\DeclareFontShape{U}{mathb}{m}{n}{
<5> <6> <7> <8> <9> <10> gen * mathb
<10.95> mathb10 <12> <14.4> <17.28> <20.74> <24.88> mathb12
}{}
\DeclareSymbolFont{mathb}{U}{mathb}{m}{n}
\DeclareMathSymbol{\bigast}{2}{mathb}{"06}
\DeclareMathAlphabet{\mathpzc}{OT1}{pzc}{m}{it}
\def\XXint#1#2#3{{\setbox0=\hbox{$#1{#2#3}{\int}$}
\vcenter{\hbox{$#2#3$}}\kern-.5\wd0}}
\theoremstyle{plain}
\newtheorem{theorem}{Theorem}[section]
\newtheorem{proposition}[theorem]{Proposition}
\newtheorem{lemma}[theorem]{Lemma}
\newtheorem*{theorem*}{Theorem}
\newtheorem*{proposition*}{Proposition}
\newtheorem*{lemma*}{Lemma}
\newtheorem{corollary}[theorem]{Corollary}
\newtheorem*{claim*}{Claim}
\theoremstyle{definition}
\newtheorem{definition}[theorem]{Definition}
\newtheorem{example}{Example}[section]
\theoremstyle{remark}
\newtheorem{remark}[theorem]{Remark}
\Crefname{enumi}{Property}{Properties}
\Crefname{alternativei}{Alternative}{Alternatives}
\Crefname{subsection}{Subsection}{Subsections}
\begin{document}
\selectlanguage{english}


\title[Quadratic forms of signature $(2, 2)$ or $(3, 1)$ I]{Quadratic forms of signature $(2, 2)$ or $(3, 1)$ I: effective equidistribution in quotients of $\mathrm{SL}_4(\mathbb{R})$}

\author{Zuo Lin}
\address{Department of Mathematics, University of California, San Diego, CA 92093}
\email{zul003@ucsd.edu}

\date{\today}

\begin{abstract}
We prove an effective equidistribution theorem for orbits of horospherical subgroups of $\mathrm{SO}(2, 2)$ and $\mathrm{SO}(3, 1)$ in quotients of $\mathrm{SL}_4(\mathbb{R})$ with a polynomial error term. In a forthcoming paper, we will use this theorem to prove an effective version of the Oppenheim conjecture for indefinite quadratic forms of signature $(2, 2)$ or $(3, 1)$ with a polynomial error rate.
\end{abstract}
\maketitle

\setcounter{tocdepth}{1}
\tableofcontents
\setcounter{part}{-1}

\part{Introduction}
\section{Introduction}
An important theme in homogeneous dynamics is the behavior of orbits of $\Ad$-unipotent subgroups from \emph{any} initial point. More precisely, let $G$ be a Lie group, $\Gamma < G$ be a lattice and $U \leq G$ be an $\Ad$-unipotent subgroup. Raghunathan conjectured that for \emph{any} initial point $x \in X = G/\Gamma$, the orbit closure $\overline{U.x}$ is a periodic orbit $L.x$ of some subgroup $U \leq L \leq G$. We say $L.x$ is periodic if $\Stab(x) \cap L$ is a lattice in $L$. In the literature the conjecture was first stated in the paper \cite{Dan81} and in a more general form in \cite{Mar90} where the subgroup $U$ is not necessarily $\Ad$-unipotent but generated by $\Ad$-unipotent elements. 

Raghunathan's conjecture was proved in full generality by Ratner in \cite{Rat90a,Rat90b,Rat91a,Rat91b}. In her landmark work, Ratner also classified all ergodic invariant probability measures under the action of $U$ and proved an equidistribution theorem for orbits of $U$. These remarkable theorems have been highly influential and have led to a lot of important applications. 

Prior to Ratner's proof, the conjecture was known in certain cases. We refer to the book by Morris \cite{Mor05} for a detailed historical background. We mention the following important special case related to Oppenheim conjecture on distribution of values of indefinite quadratic forms on integer points. In his seminal work \cite{Mar89}, Margulis proved Oppenheim conjecture by showing every $\SO(2, 1)$-orbit in $\SL_3(\R)/\SL_3(\Z)$ is either periodic or unbounded. Later, Dani and Margulis \cite{DM89,DM90} showed that any $\SO(2, 1)$-orbit is either periodic or dense. They also classified possible orbit closures of a one-parameter unipotent subgroup of $\SO(2, 1)$ in $\SL_3(\R)/\SL_3(\Z)$. 

Based on equidistribution results for unipotent subgroups, information on asymptotics of distribution of values of indefinite quadratic forms with signature $(p, q)$ on integer points when $p \geq 3$ or $(p, q) = (2, 2)$ is provided by Eskin, Margulis and Mozes in \cite{EMM98,EMM05}. Recently, Kim extended the ideas by Eskin, Margulis and Mozes to indefinite quadratic forms with signature $(2, 1)$ in \cite{Kim24}. 

Because of its intrinsic interest and in view of the applications, \emph{effective} results on distribution of the orbits of unipotent groups have been sought after for some time. We briefly review the progress on this problem related to applications on distribution of values of indefinite quadratic forms on integer points and refer to \cite{Moh23}(and also \cite[Section 1.4]{LMW22}) for a throughout survey on both historical background and recent progress. We refer to \cite{LM23,OS25} and references therein for recent progress related to hyperbolic geometry. 

An effective version of equidistribution theorem for a one-parameter unipotent subgroup in $\SL_3(\R)/\SL_3(\Z)$ with a poly-logarithmic rate was proved by Lindenstrauss and Margulis in \cite{LM14}. It lead to an effective proof of the Oppenheim conjecture with a poly-logarithmic rate. 

In the landmark works by Lindenstrauss and Mohammadi \cite{LM23} and later with Wang and Yang and by Yang \cite{LMW22,Yan24,LMWY25}, effective density and equidistribution theorems with polynomial rate for orbits of unipotent subgroups is established in quotients
of quasi-split, almost simple linear algebraic groups of absolute rank $2$. In \cite{LMWY25}, they established an effective Oppenheim conjecture with a polynomial rate when the dimension $d = 3$ building on their effective equidistribution theorem. 

Motivated by the above problems and results, we prove an effective equidistribution theorem with a polynomial rate (Theorem~\ref{thm:main equidistribution}) in quotients of $\SL_4(\R)$ and discuss effective results on Oppenheim conjecture for indefinite quadratic forms of signature $(2, 2)$ or $(3, 1)$ in this series of papers. This first paper is devoted to the proof of the effective equidistribution theorem (Theorem~\ref{thm:main equidistribution}). Before we state the main theorem, let us introduce the following notions. 

Let $G = \SL_4(\R)$ and $\LieG = \mathrm{Lie}(G)$.
Let $Q_1$ be the following quadratic form on $\R^4$,
\begin{align*}
    Q_1(x_1, x_2, x_3, x_4) = x_2 x_3 - x_1 x_4,
\end{align*}
and put $H_1 = \SO(Q_1)^\circ \subset \SL_4(\R)$. Note that $Q_1$ is of signature $(2, 2)$ and $H_1 \cong \SO(2, 2)^\circ$. Let $\LieH_1 = \mathrm{Lie}(H_1)$. 
Let $Q_2$ be the following quadratic form on $\R^4$,
\begin{align*}
    Q_2(x_1, x_2, x_3, x_4) = x_2^2 +  x_3^2 - 2x_1 x_4,
\end{align*}
and put $H_2 = \SO(Q_2)^\circ \subset \SL_4(\R)$. Note that $Q_2$ is of signature $(3, 1)$ and $H_2 \cong \SO(3, 1)^\circ$. Let $\LieH_2 = \mathrm{Lie}(H_2)$. If a definition/statement/proof can be formulated simultaneously to $H_1$ and $H_2$, we drop the subscripts and denote them by $Q$, $H$ and $\LieH$ for simplicity. 

Let $a_t$ be the one-parameter diagonal subgroup in both $H_1$ and $H_2$ defined by
\begin{align}\label{eqn:def of a}
    a_t = \begin{pmatrix}
        e^t & & & \\
         & 1 & & \\
         & & 1 & \\
         & & & e^{-t}
    \end{pmatrix}.
\end{align}
The corresponding horospherical subgroups $U_1 \leq H_1$ and $U_2 \leq H_2$ consists of the following elements respectively:
\begin{align}\label{eqn:def of U}
    u_{r, s}^{(1)} = \begin{pmatrix}
        1 & r & s & sr\\
         & 1 & & s\\
         & & 1 & r\\
         & & & 1
    \end{pmatrix}, \qquad u_{r, s}^{(2)} = \begin{pmatrix}
        1 & r & s & \frac{r^2 + s^2}{2}\\
         & 1 & & r\\
         & & 1 & s\\
         & & & 1
    \end{pmatrix}.
\end{align}
As before, if a definition/statement/proof can be formulated simultaneously to $U_1$ and $U_2$, we drop the subscripts for $U$ and superscripts for $u_{r, s}$ for simplicity. 

Let $\Gamma \subset G$ be a lattice. By Margulis' arithmeticity theorem, $\Gamma$ is arithemtic. Let $X = G/\Gamma$ and let $\mu_X$ be the probability Haar measure on $X$. Let $d$ be a right $G$-invariant left $\SO(4)$-invariant Riemmanian metric coming from the Killing form of $G$. It induces a Riemmanian metric $d_X$ on $X$ and natural volume forms on $X$ and its embedded submanifolds. Let $\Leb$ be the standard Lebesgue measure on $\R^2$. 

\begin{theorem}\label{thm:main equidistribution}
There exist constants $\consta\label{a:main equidistribution1} > \consta\label{a:main equidistribution2} \geq 1$ and $\kappa > 0$ depending only on $X$ so that the following holds. For all $x_0 \in X$ and large enough $R$ depending explicitly on $x_0$, for any $T \geq R^{\ref{a:main equidistribution1}}$, at least one of the following is true. 
\begin{enumerate}
\item For all $\phi \in \mathrm{C}_c^\infty(X)$, 
\begin{align*}
\Biggl|\int_{[0, 1]^2} \phi(a_{\log T} u_{r, s}.x_0) \,\mathrm{d}\Leb(r, s) - \int_X \phi \,\mathrm{d}\mu_X\Biggr| \leq \mathcal{S}(\phi) R^{-\kappa}
\end{align*}
where $\mathcal{S}(\phi)$ is a certain Sobolev norm. 
\item There exists $x \in X$ so that $H.x$ is periodic with $\vol(H.x) \leq R$ and 
\begin{align*}
    d_X(x_0, x) \leq T^{-\frac{1}{\ref{a:main equidistribution2}}}.
\end{align*}
\end{enumerate}
\end{theorem}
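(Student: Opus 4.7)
The plan is to argue by contrapositive, along the lines of the Lindenstrauss--Mohammadi--Wang--Yang strategy in \cite{LMW22,LMWY25}: assume that alternative (2) fails, i.e., there is no periodic $H$-orbit $H.x\subset X$ with $\vol(H.x)\leq R$ and $d_X(x_0,x)\leq T^{-1/\ref{a:main equidistribution2}}$, and derive alternative (1). The starting ingredient is a quantitative non-divergence estimate for the family $\{a_{\log T} u_{r,s}.x_0\}_{(r,s)\in[0,1]^2}$ of the Dani--Kleinbock--Margulis type, ensuring that outside an $(r,s)$-set of Lebesgue measure $O(R^{-\kappa_0})$, the translated orbit points lie in a compact set $\mathcal{K}_R\subset X$ whose injectivity radius is bounded below by a polynomial in $R^{-1}$. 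The hypothesis that $R$ is large enough depending on $x_0$ is used precisely to arrange that $x_0$ itself satisfies a mild non-escape condition, so that this polynomial bound is meaningful from the start of the flow.

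The technical core is then a bootstrap / thickening argument. I would pigeonhole on $\mathcal{K}_R$ to extract a positive-measure set of parameter pairs $(r,s),(r',s')\in[0,1]^2$ for which $y=a_{\log T}u_{r,s}.x_0$ and $y'=a_{\log T}u_{r',s'}.x_0$ satisfy $y'=\exp(z).y$ for a small displacement $z\in\LieG$, and then, using a second moment / Chebyshev argument, show that for a generic such pair the component of $z$ transverse to $\LieU$ is not too much smaller than its $\LieU$-part. Polynomial divergence of the $a_t$-flow, combined with translates by $u_{r,s}\in U$, then spreads this transverse component along its $\ad(U)$-orbit. Since $U$ is horospherical in $H$ for $a_t$ and $\LieH$ is $\ad(U)$-invariant, the transverse component falls into one of two $\ad(U)$-invariant pieces: $\LieH\setminus\LieU$ or $\LieG\setminus\LieH$. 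A representation-theoretic check tailored to the two specific groups $\SO(2,2)^\circ$ and $\SO(3,1)^\circ$ then yields either effective $H$-invariance of the $u_{r,s}$-average near $x_0$ on a polynomial scale, or, because there is no $\ad(U)$-invariant subspace strictly between $\LieH$ and $\LieG$, effective $G$-invariance on a polynomial scale.

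To close the argument I would rule out the intermediate $H$-invariance alternative using arithmeticity of $\Gamma$ (Margulis' theorem) together with a quantitative closing / counting lemma: effective $H$-invariance of the pushforward of the $u_{r,s}$-average near $x_0$ would produce a genuine periodic $H$-orbit of volume $\leq R$ within distance $T^{-1/\ref{a:main equidistribution2}}$ of $x_0$, contradicting our standing assumption. The bootstrap must therefore terminate in effective $G$-invariance, which by a standard Sobolev norm manipulation translates into the claimed bound
\begin{align*}
\Biggl|\int_{[0,1]^2}\phi(a_{\log T}u_{r,s}.x_0)\,\mathrm{d}\Leb(r,s) \;-\; \int_X\phi\,\mathrm{d}\mu_X\Biggr|\leq \mathcal{S}(\phi)\,R^{-\kappa},
\end{align*}
with $\kappa>0$ obtained by absorbing the various polynomial losses accumulated in the bootstrap.

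The hardest step is certainly the bootstrap itself: producing transverse displacements of controllable size that avoid the ``trivial'' direction $\LieU$, quantifying the resulting thickening, and simultaneously maintaining recurrence to $\mathcal{K}_R$ along the iteration are all delicate. An additional subtle point is the algebraic uniformity: unlike in \cite{LMWY25}, where $H$ itself is quasi-split of absolute rank two, here $\SO(3,1)^\circ$ has real rank one while $\SO(2,2)^\circ$ has real rank two, so the branches ``$\LieH\setminus\LieU$'' and ``$\LieG\setminus\LieH$'' have to be analyzed a bit differently in the two cases, and I would expect to unify them by exploiting that both $H_1$ and $H_2$ are maximal connected subgroups of $\SL_4(\R)$ containing $U$. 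Finally, converting effective $H$-invariance into an actual periodic $H$-orbit of volume $\leq R$ at distance $\leq T^{-1/\ref{a:main equidistribution2}}$ from $x_0$ requires an effective Ratner-type closing lemma; this is the second main place where delicate arithmetic / geometric estimates will be needed.
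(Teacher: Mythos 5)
Your overall scaffolding is reasonable---argue by contrapositive, use non-divergence and an avoidance principle for the first leg of the flow, invoke an effective closing lemma at some point, and finish with a Sobolev manipulation---and these elements all appear in the paper. But the core mechanism you propose for the ``bootstrap,'' and the way you expect to pass from a dynamical input to the final bound, are substantially different from what the paper does, and as described they would not yield a polynomial rate.

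Your bootstrap is a shearing/polynomial-divergence argument in the style of the qualitative proofs (Ratner, Margulis--Tomanov, Einsiedler--Margulis--Venkatesh): find nearby pairs $y=a_{\log T}u_{r,s}.x_0$, $y'=a_{\log T}u_{r',s'}.x_0$ with displacement $\exp(z)$, spread the transverse part of $z$ along the $\ad(U)$-orbit, and conclude invariance under a larger group by showing that the only possibilities are $H$ or $G$. Two issues. First, the asserted algebraic dichotomy is not correct: there \emph{are} $\ad(U)$-invariant subspaces strictly between $\LieH$ and $\LieG$ (e.g.\ $\LieH\oplus\mathfrak{r}^{(\lambda)}$ for the weight filtration of $\mathfrak{r}$ under $a_t$), so ``spreading into $\LieG\setminus\LieH$'' does not by itself force $G$-invariance. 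What is true, and does matter for the proof, is that $\mathfrak{r}$ is an \emph{irreducible} $\Ad(H)$-module, but the paper exploits this through restricted projection theorems rather than through the intermediate-invariant-subspace trichotomy. Second, and more fundamentally, this style of argument produces qualitative or at best poly-logarithmic rates (cf.\ Lindenstrauss--Margulis); the LMW/LMWY machinery for polynomial rates runs on a different track.

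The paper's actual proof has three phases in a different order and with different content. \emph{Phase 1:} apply the effective closing lemma for long unipotent orbits (Lindenstrauss--Margulis--Mohammadi--Shah--Wieser) \emph{first}, not as a last step, to produce a family of ``sheeted sets'' whose transverse cross-sections $F\subset\mathfrak{r}$ have a small but nonzero dimension $\epsilon_0>0$, recorded as a Margulis function bound $f^{(\epsilon_0)}_{\mathcal{E},\delta_0}\lesssim\#F$; this is Theorem~\ref{thm:closing lemma initial dim}, and the avoidance principle is used to propagate it across scales. The fact that no nearby small-volume periodic $H$-orbit exists is precisely what furnishes this initial dimension, so the closing lemma is the seed of the argument, not a contradiction used to eliminate a branch. \emph{Phase 2:} improve the dimension from $\epsilon_0$ to $\dim\mathfrak{r}-\theta$ by iterating a Margulis function estimate driven by the linear dimension-improvement result Theorem~\ref{thm:energy Improvement} for the action of $a_\ell u$ on finite sets $F\subset\mathfrak{r}$. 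This is the technical heart of the paper (Part~\ref{part:projection}), and its difficulty is exactly that the restricted projections $\pi^{(\mu)}\circ\Ad(u)$ are \emph{not optimal} for some weights $\mu$ (see Example~\ref{example:optimal fail for so 2 2}). The paper resolves this by combining an optimal projection theorem in the top-weight direction (via Gan--Guo--Wang, Theorem~\ref{thm:optimal}) with subcritical Bourgain-type estimates in the intermediate directions (Theorems~\ref{thm: subcritical 9 to 3}, \ref{thm: subcritical 9 to 6}), glued by a B\'enard--He multislicing argument (Theorem~\ref{thm:Multislicing}). Your proposal does not touch any of this, and it is where the paper's main novelty lies. \emph{Phase 3:} once the transverse cross-section has dimension $\geq\dim\mathfrak{r}-\theta$, equidistribution follows from the spectral gap and Venkatesh's argument (Lemma~\ref{lem:Venkatesh}, Proposition~\ref{pro:Final phase}), not from an ``effective $G$-invariance'' statement.

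In short: your proposal captures the contrapositive shape and the role of the closing lemma as a dynamical/arithmetic bridge, but it replaces the entire quantitative engine (Margulis functions, weight filtrations of $\mathfrak{r}$, restricted projection theorems with algebraic obstructions, and the mixing-based endgame) with a shearing heuristic that cannot, on its own, produce the claimed polynomial error term. The missing idea is the dimension-improvement bootstrap in the irreducible $\Ad(H)$-module $\mathfrak{r}$ and the projection theory needed to run it.
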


\begin{remark}
We remark that the dependence of $R$ on $x_0$ is of the form $R \gg \inj(x_0)^{-\star}$. See Section~\ref{sec:Global prelim} for the precise definition of $\inj(x_0)$ and the convention on $\star$-notations. The reader can trace the implied constants from \cref{eqn:FinalProofConditionR}.
\end{remark}

\begin{remark}
All unipotent elements in both $H_1$ and $H_2$ are not $\R$-regular (see \cite{And75}) in $\SL_4(\R)$. In other words, there is no principal $\SL_2(\R)$ of $G = \SL_4(\R)$ in either $H_1$ or $H_2$ (cf. \cite[Chapter VIII, \S 11, Exercise 4)]{Bou05}). 
\end{remark}

In a sequel paper, we will investigate the applications of Theorem~\ref{thm:main equidistribution} to distribution of value of indefinite quadratic forms on integer points and further counting results. In particular, we will obtain an effective version of Oppenheim conjecture for quadratic forms with $4$ variables. 
\begin{theorem}\label{thm:main quadratic form}
There exist absolute constants $A_1 > A_2 \geq 1$ and $\kappa > 0$ so that the following holds. Let $Q$ be an indefinite quadratic form of signature $(2, 2)$ or $(3, 1)$ with $\det Q = 1$. For all $R$ large enough depending on $\|Q\|$ and all $T \geq R^{A_1}$, at least one of the following is true. 
\begin{enumerate}
    \item For every $s \in [-R^{\kappa}, R^{\kappa}]$, there exists a primitive vector $v \in \Z^4$ with $0 < \|v\| \leq T$ so that 
    \begin{align*}
        |Q(v) - s| \leq R^{-\kappa}.
    \end{align*}
    \item There exists an integral quadratic form $Q'$ with $\|Q'\| \leq R$ so that 
    \begin{align*}
        \|Q - \lambda Q'\| \leq T^{-\frac{1}{A_2}} \text{ where } \lambda = \det(Q')^{-\frac{1}{4}}.
    \end{align*}
\end{enumerate}
\end{theorem}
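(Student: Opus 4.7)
The plan is to deduce Theorem~\ref{thm:main quadratic form} from Theorem~\ref{thm:main equidistribution} applied at a base point encoding $Q$. Given $Q$ of signature $(2, 2)$ or $(3, 1)$ with $\det Q = 1$, fix the corresponding standard form $Q_0 \in \{Q_1, Q_2\}$ and choose $g \in \SL_4(\R)$ with $Q(v) = Q_0(gv)$; since the discriminants agree, one can arrange $\|g^{\pm 1}\| \leq \|Q\|^{\star}$, so that $x_0 := g\, \SL_4(\Z) \in X$ has $\inj(x_0)^{-1} \leq \|Q\|^{\star}$. The values $Q(v)$ on primitive $v \in \Z^4$ are exactly the $Q_0$-values on primitive vectors of the lattice $g\Z^4$. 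Taking $R$ large in terms of $\|Q\|$ and $T \geq R^{\ref{a:main equidistribution1}}$, I apply Theorem~\ref{thm:main equidistribution} at $x_0$, and the resulting dichotomy feeds into the two alternatives of Theorem~\ref{thm:main quadratic form}.

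In the periodic case (alternative (2) of Theorem~\ref{thm:main equidistribution}), there is $x = g_1 \Gamma \in X$ with $H.x$ periodic, $\vol(H.x) \leq R$, and $d_X(x_0, x) \leq T^{-1/\ref{a:main equidistribution2}}$. By Borel density, periodicity forces $g_1^{-1} H g_1$ to be defined over $\Q$ as an algebraic subgroup of $\SL_4$; equivalently, $v \mapsto Q_0(g_1 v)$ is a scalar multiple of an integral symmetric quadratic form. Reduction theory in $H$ (bounding the heights of a basis of $\Q$-points from the covolume $\vol(H.x)$) produces an integral $Q'$ with $\|Q'\| \leq R$ and $Q_0 \circ g_1 = \lambda Q'$, $\lambda = \det(Q')^{-1/4}$. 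Writing the approximation $g \in g_1\, \SL_4(\Z) \cdot B_G(I,\, T^{-1/\ref{a:main equidistribution2}})$ afforded by the bound on $d_X(x_0, x)$ and, if necessary, replacing $Q'$ by $(\gamma^{-1})^T Q' \gamma^{-1}$ for a suitable $\gamma \in \SL_4(\Z)$ of controlled norm, the pullback of $Q_0$ by $g$ versus $g_1$ yields $\|Q - \lambda Q'\| \leq T^{-1/A_2}$ for a slightly smaller $A_2$, which is alternative (2).

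In the equidistributed case (alternative (1)), for each $s \in [-R^{\kappa}, R^{\kappa}]$ and $\eta = R^{-\kappa}$, I construct a smooth test function $\phi_{s, \eta} \in \mathrm{C}_c^\infty(X)$ that approximates a Siegel-type transform
\begin{align*}
    F_{s, \eta}(\Lambda) = \#\{v \in \Lambda_{\mathrm{prim}} : v \in \Omega_s,\ Q_0(v) \in [s - \eta, s + \eta]\},
\end{align*}
for a compact region $\Omega_s \subset \R^4 \setminus \{0\}$ chosen so that (i) its pullback by $a_{-\log T} u_{r, s'}^{-1} g^{-1}$ stays in $\{\|\cdot\| \leq T\}$ for all $(r, s') \in [0, 1]^2$, and (ii) $\vol(\Omega_s \cap \{|Q_0 - s| \leq \eta\}) \gtrsim \eta$ uniformly in $s$ (this is where the homogeneity $Q_0(tv) = t^2 Q_0(v)$ enters, to accommodate large $|s|$). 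By Siegel's mean-value formula, $\int_X \phi_{s, \eta}\, d\mu_X \asymp \eta$, while the smoothing gives $\mathcal{S}(\phi_{s, \eta}) \leq \eta^{-\star}$. Applying Theorem~\ref{thm:main equidistribution} to $\phi_{s, \eta}$ and choosing the output $\kappa$ small enough that $\eta^{-\star} R^{-\kappa} \ll \eta$, the orbit average is positive, so there exist $(r, s') \in [0, 1]^2$ and a primitive $v \in \Z^4$ with $\|v\| \leq T$ such that $w := a_{\log T} u_{r, s'} g v \in \Omega_s$ and $|Q_0(w) - s| \leq \eta$. Since $a_{\log T}, u_{r, s'} \in H \subset \SO(Q_0)$, we get $Q(v) = Q_0(gv) = Q_0(w)$, hence $|Q(v) - s| \leq R^{-\kappa}$, which is alternative (1).

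\textbf{Main obstacle.} The principal difficulty is the passage in Case (2) from a periodic $H$-orbit with $\vol(H.x) \leq R$ to an integral quadratic form with $\|Q'\| \leq R$, \emph{together} with the preservation of the $T^{-1/A_2}$ proximity in the form norm. This requires explicit height bounds for lattices in the arithmetic structure of $H$, followed by careful bookkeeping of the change-of-variables between $g$ and $g_1$ (including the $\SL_4(\Z)$-action adjustment by $\gamma$, whose norm must itself be controlled by $R^{\star}$ to keep $Q'$ of bounded height). A secondary issue, in Case (1), is uniformity in $s \in [-R^{\kappa}, R^{\kappa}]$: since the range of targets is wide, the region $\Omega_s$ must scale with $|s|$, and the Sobolev bound $\mathcal{S}(\phi_{s, \eta}) \leq \eta^{-\star}$ must hold uniformly in $s$, forcing the $\kappa$ in Theorem~\ref{thm:main quadratic form} to be chosen strictly smaller than (and polynomially related to) the $\kappa$ in Theorem~\ref{thm:main equidistribution}.
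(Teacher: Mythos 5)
The paper does not actually prove Theorem~\ref{thm:main quadratic form} here: both the abstract and the introduction state explicitly that the deduction of the quadratic-form result from Theorem~\ref{thm:main equidistribution} is deferred to a sequel paper, and no proof of Theorem~\ref{thm:main quadratic form} appears anywhere in this text. So there is no proof in the paper for me to compare yours against.

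That said, your outline is the natural one, and presumably close to what the sequel will do. A few points in your sketch are genuine gaps that would need work before this could be called a proof. In Case~(1), the Siegel-type transform $F_{s,\eta}$ counting primitive lattice vectors is \emph{unbounded} on $X$ and does not belong to $\mathrm{C}_c^\infty(X)$, so you cannot apply Theorem~\ref{thm:main equidistribution} to $\phi_{s,\eta}$ without first truncating near the cusp; you then need a quantitative non-divergence statement for $(a_{\log T})_\ast m_{\mathsf{B}_1^U}\ast\delta_{x_0}$ (in the spirit of Proposition~\ref{pro:Non divergence}) to show the discarded mass is $O(\eta)$, and a primitive Siegel mean-value formula to get $\int_X \phi_{s,\eta}\,\mathrm{d}\mu_X \asymp \eta$ uniformly in $s$. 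You also need to normalize $Q_0$ so that $\det Q_0 = 1$ before writing $Q = g^t Q_0 g$ with $g\in\SL_4(\R)$, since the standard forms $Q_1, Q_2$ in \cref{eqn:def of a} do not have unit determinant as written.

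In Case~(2), the step from ``$H.x$ periodic with $\vol(H.x)\leq R$'' to ``there is an integral $Q'$ with $\|Q'\|\leq R$'' requires the reverse direction of the comparison in Section~\ref{sec:preparation for discriminant}: the paper only records $\vol \ll \height^{c}$, not a polynomial bound on the height in terms of the volume, which is the direction you actually need (this does hold, by reduction theory, but must be supplied). More seriously, the final passage from $d_X(x_0, x)\leq T^{-1/A_2}$ to $\|Q - \lambda Q'\|\leq T^{-1/A_2}$ involves choosing a representative $\gamma\in\SL_4(\Z)$ so that the conjugated form $(\gamma^{-1})^t Q' \gamma^{-1}$ still has norm $\leq R$; since the height of $\mathpzc{v}_M$ is not $\Gamma$-invariant (as the paper itself points out), the choice of $\gamma$ has to be made carefully, with its norm controlled by $R^{\star}$ rather than by $T^{\star}$, or the height of $Q'$ escapes the allowed range. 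Your ``main obstacle'' paragraph correctly identifies this bookkeeping as the crux, but the sketch does not yet resolve it.
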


Combining Theorem~\ref{thm:main quadratic form} with the work by Lindenstrauss, Mohammadi, Wang and Yang for quadratic forms of signature $(2, 1)$ \cite[Theorem 2.5]{LMWY25} and the work by Buterus, G\"{o}tze, Hille and Margulis for quadratic forms with at least $5$ variables \cite[Corollary 1.4]{BGHM22}, we conclude the following theorem. It establish an effective Oppenheim conjecture with a polynomial rate regarding the Diophantine inequality $|Q(x)| < \epsilon$ in all dimension $d \geq 3$. 

\begin{theorem}
For all integer $d \geq 3$, there exist constants $A_1 > A_2 \geq 1$ and $\kappa > 0$ depending only on $d$ so that the following holds. Let $Q$ be a non-degenerate indefinite quadratic form with $d$ variables and $\det Q = 1$. For all $R$ large enough depending on $\|Q\|$ and all $T \geq R^{A_1}$, at least one of the following is true. 
\begin{enumerate}
    \item There exists a primitive vector $v \in \Z^d$ with $0 < \|v\| \leq T$ so that 
    \begin{align*}
        |Q(v)| \leq R^{-\kappa}.
    \end{align*}
    \item There exists an integral quadratic form $Q'$ with $\|Q'\| \leq R$ so that 
    \begin{align*}
        \|Q - \lambda Q'\| \leq T^{-\frac{1}{A_2}} \text{ where } \lambda = \det(Q')^{-\frac{1}{d}}.
    \end{align*}
\end{enumerate}
Moreover, if the dimension $d \geq 5$, case~(1) is always true. 
\end{theorem}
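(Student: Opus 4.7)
The plan is to split into the three regimes $d = 3$, $d = 4$, and $d \geq 5$ and invoke one cited theorem in each, since the paper explicitly advertises this result as such a combination. For each value of $d$, the constants $A_1, A_2, \kappa$ will be inherited from the corresponding cited result, and the (finite) dependence on $d$ comes for free.

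For $d \geq 5$, I would simply quote \cite[Corollary~1.4]{BGHM22}. That corollary produces a primitive $v \in \mathbb{Z}^d$ with $0 < \|v\| \leq T$ and $|Q(v)| \leq R^{-\kappa}$ under the normalization $\det Q = 1$, which is exactly case~(1); this also establishes the ``moreover'' clause asserting that case~(1) always holds when $d \geq 5$.

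Next, for $d = 4$, a non-degenerate indefinite $Q$ with $\det Q = 1$ has signature $(3,1)$, $(1,3)$, or $(2,2)$. In dimension $4$, $\det(-Q) = \det Q$ and $|Q(v)| = |(-Q)(v)|$, so after replacing $Q$ by $-Q$ if necessary one may assume the signature is $(2,2)$ or $(3,1)$; moreover $\|Q\| = \|-Q\|$, so the hypotheses on $R$ are unchanged. Then Theorem~\ref{thm:main quadratic form} applies directly. Specializing its conclusion~(1) at $s = 0$ produces a primitive $v \in \mathbb{Z}^4$ with $|Q(v)| \leq R^{-\kappa}$, while its conclusion~(2) is already of the desired form with the integral $Q'$ and normalization $\lambda = \det(Q')^{-1/4}$.

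For $d = 3$, the only non-degenerate indefinite signature is $(2,1)$ (up to the sign of $\det Q$, which can be absorbed by passing to $\pm Q$ together with the appropriate $|\det Q|^{1/3}$-rescaling). Here I would invoke \cite[Theorem~2.5]{LMWY25} verbatim. The only real ``obstacle'' is bookkeeping: one has to verify that the signature/determinant conventions and the meaning of $\|Q\|$, $\|Q'\|$ in the three cited sources line up with those stated here, and that $R$ being ``large enough depending on $\|Q\|$'' in each source can be absorbed into the same kind of dependence in the merged statement. Both are routine, and no new dynamical input is needed beyond the three quoted results.
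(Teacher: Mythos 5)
Your proposal is correct and matches the paper's approach exactly: the paper itself gives no separate proof of this theorem, stating only that it follows by combining Theorem~\ref{thm:main quadratic form} (for $d=4$) with \cite[Theorem~2.5]{LMWY25} (for $d=3$) and \cite[Corollary~1.4]{BGHM22} (for $d\ge 5$), which is precisely the case split you carry out. Your additional observations — passing to $-Q$ in dimension $4$ to land on signature $(2,2)$ or $(3,1)$, specializing $s=0$ in case~(1) of Theorem~\ref{thm:main quadratic form}, and flagging the determinant/sign conventions as routine bookkeeping — are the right details to make the combination explicit.
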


We now discuss the proof of Theorem~\ref{thm:main equidistribution}. For the convenience to later discussions related to \cite{LMWY25}, we extend the content of notations $G$ and $H$ as the following. This extension is only for the rest of the introduction. 

Let $\mathbf{G}$ be a connected semisimple real linear algebraic group. Let $G = \mathbf{G}(\R)^\circ$ be the connected component of the identity under the Hausdorff topology. It is a connected semisimple Lie group. Let $\LieG = \mathrm{Lie}(G)$ be the Lie algebra of $G$. Suppose $G$ is noncompact. Let $H < G$ be a noncompact semisimple connected proper Lie subgroup and let $\LieH = \mathrm{Lie}(H)$ be its Lie algebra. By semisimplicity of $H$, there exists an $\Ad(H)$-invariant complement $\mathfrak{r}$ of $\LieH$ in $\LieG$ so that $\LieG = \LieH \oplus \mathfrak{r}$. We fix such $\mathfrak{r}$ once and for all. We remark that in our case $\mathfrak{r}$ is unique although in general it might not. Let $\{a_t\}_{t \in \R}$ be a one-parameter subgroup of $H$ generated by a semisimple element in $\LieH$ and let $U$ be the expanding horospherical subgroup of $H$ corresponding to the one-parameter subgroup $\{a_t\}_{t \in \R}$. 

In our case, $(G, H) = (\SL_4(\R), \SO(2, 2)^\circ)$ or $(G, H) = (\SL_4(\R), \SO(3, 1)^\circ)$. The subgroups $\{a_t\}_{t \in \R}$ and $U$ are defined in \cref{eqn:def of a,eqn:def of U}. In \cite{LMWY25}, $\mathbf{G}$ is a semisimple connected real linear algebraic group with absolute rank $2$ which is $\R$-quasi-split. The subgroup $H$ is a principal $\SL_2(\R)$ in $G = \mathbf{G}(\R)^\circ$. The subgroups $\{a_t\}_{t \in \R}$ and $U$ are the standard diagonal subgroup and strictly upper-triangular subgroup in $\SL_2(\R)$. An important common feature in both our work and \cite{LMWY25} is that the $\Ad(H)$-invariant complement $\mathfrak{r}$ is an \emph{irreducible} representation of $H$. 

The strategy of the proof of Theorem~\ref{thm:main equidistribution} is similar to the general strategy developed in \cite{LMW22,LMWY25}. However, due to the complication from $H$ and the $\Ad(H)$-invariant complement $\mathfrak{r}$, the achievement to higher dimension is harder. Before we point out the difficulties and the solutions, let us recall the general strategy developed in \cite{LMW22,LMWY25}. 

In \cite{LMWY25} (see also \cite{LMW22}), the proof can be roughly divided into three phases: 
\begin{enumerate}
    \item Initial dimension from effective closing lemma;
    \item Improving dimension using ingredients from projection theorems;
    \item From large dimension to equidistribution. 
\end{enumerate}

The major difficulties in our setting come from phase~(1) and (2). Due to the complexity of $H$, especially the case where $H = H_1 \cong \SO(2, 2)^\circ$ which is not simple, phase~(1) cannot be proved directly as in \cite[Section 4]{LMWY25}. However, thanks to the effective closing lemma for long unipotent orbits proved by Lindenstrauss, Margulis, Mohammadi, Shah and Wieser in \cite{LMMSW24}, we obtain a similar initial phase. This phase is done in Part~\ref{part:closinglemma}. The reader can compare Theorem~\ref{thm:closing lemma initial dim} with \cite[Proposition 4.6]{LMWY25} and also \cite[Lemma 8.2]{LMWY25}. 

The difficulty from phase~(2) is more severe. In \cite{LMWY25}, phase~(2) can be very roughly further divided into three steps. First, they established a dimension improving result for the linear $\Ad(H)$-action on $\mathfrak{r}$, see \cite[Theorem 6.1]{LMWY25}. Building on this, they established a Margulis function estimate which provides dimension improvement in the transverse direction in $X$, see \cite[Lemma 7.2]{LMWY25}. With the Margulis function estimate, they ran a bootstrap process to get a high dimension (close to $\dim(\mathfrak{r})$) in the transverse direction to $H$, see \cite[Section 8]{LMWY25}. The major difficulty comes from the first step. 

In \cite{LMWY25}, the first step is proved in turn using an optimal projection theorem proved in \cite{GGW24}. Roughly speaking, we say a family of maps $\pi_r:\R^n \to \R^m$ is optimal if for all set $A$ and almost all parameter $r$ one has $\dim \pi_r(A) = \min\{\dim A, m\}$. The $\dim$ here stands for a suitable dimension notion for fractal-like set. 

The $\Ad(H)$-invariant complement $\mathfrak{r}$ can be decomposed into weight spaces $\mathfrak{r}_\lambda$ for $a_t$. Let $\mathfrak{r}^{(\mu)} = \bigoplus_{\lambda \geq \mu} \mathfrak{r}_\lambda$ and let $\pi^{(\mu)}$ be the orthogonal projection to $\mathfrak{r}^{(\mu)}$. An important feature in the setting in \cite{LMWY25} is that for \emph{all} $\mu$, the family of projections $\{\pi^{(\mu)} \circ \Ad(u)\}_{u \in U}$ are optimal thanks to the work by Gan, Guo and Wang in \cite{GGW24}. However, for some $\mathfrak{r}^{(\mu)}$ in our setting, the family of projections $\{\pi^{(\mu)} \circ \Ad(u)\}_{u \in U}$ is \emph{never} optimal due to \emph{algebraic} obstructions. See Example~\ref{example:optimal fail for so 2 2} for a discussion for that algebraic obstruction. If $\mu$ corresponds to the \emph{fastest} expanding direction in $\mathfrak{r}$, we establish that the family of projections $\{\pi^{(\mu)} \circ \Ad(u)\}_{u \in U}$ is optimal by the work in \cite{GGW24}, see Theorem~\ref{thm:optimal}. For all the other $\mu$'s, we apply ideas from representation theory and recent developments on Bourgain's discretized projection theorem \cite{He20,Shm23a,BH24} to establish subcritical estimates (see Section~\ref{sec:Subcritical}). Combining those estimates, we prove a dimension improving result for the linear $\Ad(H)$-action on $\mathfrak{r}$. This is the main novel part of this paper and the whole Part~\ref{part:projection} is devoted to it. 

Part~\ref{part:closinglemma} and Part~\ref{part:projection} are independent. Part~\ref{part:closinglemma} is devoted to phase~(1) and Part~\ref{part:projection} is devoted to the linear dimension improvement result in phase~(2). In Part~\ref{part:bootstrap}, we adapt the framework in \cite{LMWY25} with ingredients proved in \cref{part:projection,part:closinglemma} to prove Theorem~\ref{thm:main equidistribution}. In Part~\ref{part:bootstrap}, we only need the results stated in the introductory parts in \cref{part:projection,part:closinglemma} so it can be read without digging into \cref{part:closinglemma,part:projection}. 

\subsection*{Acknowledgment}
I am extremely grateful to my advisor Amir Mohammadi for introducing the topic and for his guidance, supports, and many helpful discussions throughout. 

\section{Notations and Preliminaries}\label{sec:Global prelim}
As indicated in the introduction, \cref{part:projection,part:closinglemma} can be read independently and Part~\ref{part:bootstrap} only needs the results stated in the introductory parts in \cref{part:projection,part:closinglemma}. In this section, we introduce the notations and preliminaries used in the above indicated region. New notations and preliminaries needed inside \cref{part:projection,part:closinglemma} will be introduced in those 'preparation' sections. We remark that \emph{inside} \cref{part:projection,part:closinglemma}, we might slightly change the conventions for simplicity. We will always clarify the changes at the beginning of each section. 

\subsection{Constants and \texorpdfstring{$\star$-notations}{⋆-notations}}
For $A \ll B^{\star}$, we mean there exist constants $C > 0$ and $\kappa > 0$ depending at most on the $(G, H, \Gamma)$ such that $A \leq C B^{\kappa}$. For $A \asymp B$, we mean $A \ll B$ and $B \ll A$. We also use the notion of $O(\cdot)$ where $f = O(g)$ is the same as $|f| \ll g$. For $A \ll_D B$, we mean there exist constant $C_D > 0$ depending on $D$ and at most $(G, H, \Gamma)$ so that $A \leq C_D B$. For example, in Part~\ref{part:projection}, we will heavily use the notation $A \ll_\epsilon B^{O(\sqrt{\epsilon})}$. This is equivalent to the following. There exists a constant $C_\epsilon$ depending on $\epsilon$ and at most also on $(G, H, \Gamma)$ and a constant $E$ depending at most on $(G, H, \Gamma)$ so that $A \leq C_\epsilon B^{E\sqrt{\epsilon}}$. 

\subsection{Lie groups and Lie algebras}
We use corresponding Fraktur letters for the Lie algebras of Lie groups throughout the paper. For example, $\mathfrak{s}$ is the Lie algebra of Lie group $S$. For a Lie group $S$, we use $S^\circ$ to denote its identity component under the \emph{Hausdorff topology}. For a group $G$ acting on a space $X$, we use $g.x$ to denote this action. Sometimes the action is clear from the context and we will use $g.x$ without introducing it explicitly. For example, for $v \in \LieG$ and $g \in G$, we write $g.v = \Ad(g)v$. 

Throughout Part~\ref{part:bootstrap} and the introductory parts in \cref{part:closinglemma,part:projection}, we fix the group $G$ and $H$ as in the introduction. The rest of this subsection is devoted to their basic properties and related decompositions. 

Recall that $G = \SL_4(\R)$ and $\LieG = \mathrm{Lie}(G)$. Recall $Q_1(x_1, x_2, x_3, x_4) = x_2 x_3 - x_1 x_4$ is an indefinite quadratic form of signature $(2, 2)$ on $\R^4$. There exists a symmetric matrix which we also denote as $Q_1$ so that $Q_1(x) = \langle x, Q_1x\rangle$ where $\langle \cdot, \cdot\rangle$ is the standard Euclidean inner product on $\R^4$. Recall $Q_2(x_1, x_2, x_3, x_4) = x_2^2 +  x_3^2 - 2x_1 x_4$ is an indefinite quadratic form of signature $(3, 1)$ on $\R^4$. Similarly, there exists a symmetric matrix which we also denote as $Q_2$ so that $Q_2(x) = \langle x, Q_2x\rangle$. 

Let $\sigma_i:\LieG \to 
\LieG$ defined to be $\sigma_i(x) = -Q_i x^t (Q_i)^{-1}$. This is an involution of the Lie algebra $\LieG$. Moreover we have $\LieH_i = \Fix(\sigma_i)$. Let $\mathfrak{r}_i$ be the eigenspace of $\sigma_i$ with eigenvalue $-1$. They are $\Ad(H_i)$-invariant complements of $\LieH_i$ in $\LieG$ respectively. Moreover, $\dim(\mathfrak{r}_i) = 9$ and they are irreducible representations of $H_i$-respectively. 

If a definition/result/proof in this paper can be stated simultaneously to $H_1$ and $H_2$ respectively, we drop the subscripts and denote them by $Q$, $H$ and $\mathfrak{r}$. 

Let $\theta: \LieG \to \LieG$ be the involution defined by $\theta(x) = -x^t$. It is a Cartan involution for the Lie algebra $\LieG = \mathfrak{sl}_4(\R)$. Moreover, $\theta$ commutes with $\sigma_i$. Therefore, $\theta|_{\LieH_i}$ is also a Cartan involution. We use $\LieH_i = \LieK_i \oplus \LieP_i$ to denote the corresponding Cartan decomposition. The involution $\theta$ induced an inner product on both $\LieG$ and $\LieH$ and hence a Riemannian metric $d_X$ on $X$ and a volume form on periodic $H$-orbits as in the introduction. 

Let $\LieA_i$ be the subspaces in $\LieH_i$ consists of diagonal matrices. We have $\LieA_i \subset \LieP_i$,  $\dim \LieA_1 = 2$ and $\dim \LieA_2 = 1$. Let $\LieM_i = \mathfrak{Z}_{\LieK_i}(\LieA_i)$ be the centralizer of $\LieA_i$ in $\LieK_i$. We have $\LieM_1 = \{0\}$ and $\dim \LieM_2 = 1$. Let $\LieU_i = \mathrm{Lie}(U_i)$ and $\LieU_i^- = \theta(\LieU_i)$. 
A direct calculation shows that 
\begin{align*}
\LieH_i = \LieM_i \oplus \LieA_i \oplus \LieU \oplus \LieU^-
\end{align*}
and this is a restricted root space decomposition of $\LieH_i$. Let $A_i = \exp(\LieA_i) \leq H_i$, $M_i = \exp(\LieM_i)$ and $U_i^- = \exp(\LieU_i^-)$. Let $U_i^+ = U_i$ and $\LieU_i^+ = \LieU_i$. 

As before, if a definition/result/proof can be state simultaneously for both $i = 1, 2$, we drop the subscript for simplicity \emph{except} $M_i$ and $\LieM_i$. For $M_i$ and $\LieM_i$, all definition/result/proof can be state simultaneously for both $i = 1, 2$, we use $M_0$ and $\LieM_0$ to denote it so that we can use compatible notations with the one in \cite{LMMSW24}, see Theorem~\ref{thm:closing lemma long unipotent}. 

\subsection{Norms and balls}
Let $\|\cdot\| = \|\cdot\|_\infty$ be the maximum norm from $\mathrm{Mat}_4(\R)$. For any subspace $V \subseteq \LieG \subset \mathrm{Mat}_4(\R)$ and $v \in V$, we define
\begin{align*}
B_r^{V}(v) = \{w \in V:\|w - v\| \leq r\}.
\end{align*}
If $v = 0$, we often omit it and denote the ball by $B_r^{V}$. 

We define
\begin{align*}
    \mathsf{B}^U_r = \exp(B^{\LieU}_r), \mathsf{B}^A_r = \exp(B^{\LieA}_r), \mathsf{B}^{M_0}_r = \exp(B^{\LieM_0}_r), \mathsf{B}^{U^-}_r = \exp(B^{\LieU^-}_r)
\end{align*}
and
\begin{align*}
\mathsf{B}_r^{M_0A} = \mathsf{B}^{M_0}_r\mathsf{B}^A_r = \exp(B^{\LieA \oplus \LieM_0}_r).
\end{align*}
We set
\begin{align*}
    \mathsf{B}_r^H = \mathsf{B}^{U^-}_r\mathsf{B}_r^{M_0A} \mathsf{B}^U_r,\\
    \mathsf{B}_r^{s,H} = \mathsf{B}^{U^-}_r\mathsf{B}_r^{M_0A},
\end{align*}
and $\mathsf{B}_r^G = \mathsf{B}_r^H\exp(B_r^{\mathfrak{r}})$. 

\subsection{Natural measures}
Note that $U = \exp(\LieU)$. Since $U$ is abelian, the exponential map $\exp$ is an isomorphism between Lie groups if we identify $\LieU$ with $\R^2$ using the standard coordinate in $\mathrm{Mat}_4(\R)$. Let $\tilde{m}_U$ be the push-forward of the standard Lebesgue measure $\Leb$ under the exponential map. Let $m_U$ be the rescaling of $\tilde{m}_U$ so that it assign $\mathsf{B}_1^U$ with measure $1$. This is a $U$-invariant measure on $U$. For the ball $\mathsf{B}_1^U \subset U$, we use $m_{\mathsf{B}_1^U}$ to denote the restriction of $m_U$ to $\mathsf{B}_1^U$. For simplicity, we use $\mathrm{d}u$ to denote $\mathrm{d}m_{\mathsf{B}_1^U}$ in any related integration. 

Similarly, we can define $m_A$, $m_{M_0}$, $m_{U^{-}}$ via the push-forward of the standard Lebesgue measure on subspaces in $\mathrm{Mat}_4(\R)$. They are Haar measures on the corresponding groups. Let $m_H$ be the corresponding Haar measure on $H$. It is proportional to the measure defined by the volume form induced by the Riemannian metric from the Cartan involution $\theta$. 

Recall that since $\Gamma$ is a lattice in $G$, there is a unique probability $G$ invariant measure $\mu_X$ on $X = G/\Gamma$. This measure is proportional to the measure defined by the volume form induced by the Riemannian metric $d_X$. 

\subsection{Commutation relations}
We record the following consequences of Baker--Campbell--Hausdorff formula. 

\begin{lemma}\label{lem:BCH}
    There exists $\eta_0 > 0$ and $C_0 > 0$ so that the following holds for all $0 < \eta \leq \eta_0$. For all $w_1, w_2 \in B_{\eta}^{\mathfrak{r}}(0)$, there exists $h \in H$ and $\bar{w} \in \mathfrak{r}$ with
    \begin{align*}
        \|h - \Id\| \leq C_0\eta \text{, and }\quad \|\bar{w} - (w_1 - w_2)\| \leq C_0\eta\|w_1 - w_2\|
    \end{align*}
    so that 
    \begin{align*}
        \exp(w_1) \exp(-w_2) = h \exp(\bar{w}).
    \end{align*}
    In particular, 
    \begin{align*}
        \frac{1}{2}\|w_1 - w_2\| \leq \|\bar{w}\| \leq \frac{3}{2}\|w_1 - w_2\|.
    \end{align*}
\end{lemma}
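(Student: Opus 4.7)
The plan is to prove the lemma by applying the Baker--Campbell--Hausdorff (BCH) formula twice: once to combine $\exp(w_1)\exp(-w_2)$ into a single exponential, and once to split the result along the decomposition $\LieG = \LieH \oplus \mathfrak{r}$. For $\eta$ sufficiently small compared to the radius of convergence of BCH, we may write $\exp(w_1)\exp(-w_2) = \exp(Z)$ with
\[
Z \;=\; (w_1 - w_2) + \tfrac{1}{2}[w_1, -w_2] + \sum_{k \geq 3} c_k \, P_k(w_1, -w_2),
\]
where each $P_k$ is an iterated Lie bracket of length $k$ in $w_1$ and $-w_2$.

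The first key step is to show $\|Z - (w_1 - w_2)\| \leq C\eta\|w_1 - w_2\|$. The trick is the substitution $-w_2 = -w_1 + (w_1 - w_2)$. Any nonvanishing iterated bracket $P_k(w_1, -w_2)$ must involve both arguments, so writing $-w_2 = -w_1 + d$ with $d := w_1 - w_2$ and expanding multilinearly, every surviving term contains at least one occurrence of $d$ (the pure $w_1$ terms vanish). Factoring one such $d$ out of each bracket and bounding the remaining $k-1$ factors (each of norm $\leq 2\eta$) gives $\|P_k(w_1,-w_2)\| \leq C_k \eta^{k-1}\|d\|$. Summing the BCH series (which converges geometrically for $\eta$ small) yields $\|Z - d\| \leq C\eta\|d\|$, as required.

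Next I decompose $Z = Z_\LieH + Z_\mathfrak{r}$ according to $\LieG = \LieH \oplus \mathfrak{r}$. Since $\mathfrak{r}$ is a subspace and $w_1 - w_2 \in \mathfrak{r}$, while the first-order correction $\tfrac12[w_1,-w_2] \in [\mathfrak{r},\mathfrak{r}] \subseteq \LieH$ (as $\mathfrak{r}$ is the $-1$-eigenspace of the involution $\sigma$ and $\LieH$ the $+1$-eigenspace), the estimate above gives simultaneously $\|Z_\mathfrak{r} - (w_1 - w_2)\| \leq C\eta\|w_1 - w_2\|$ and $\|Z_\LieH\| \leq C\eta\|w_1 - w_2\|$. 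Now apply BCH a second time in reverse:
\[
\exp(Z) \;=\; \exp(Z_\LieH)\exp(Z_\mathfrak{r})\exp(E),
\]
where $E = O(\|Z_\LieH\|\cdot\|Z_\mathfrak{r}\|) = O(\eta\|w_1-w_2\|\cdot\|w_1-w_2\|)$ is a commutator correction. The factor $\exp(E)$ can be absorbed: one further application of the inverse function theorem (using that $(h,w) \mapsto h\exp(w)$ from $H \times \mathfrak{r}$ to $G$ has differential equal to the identity at $(e,0)$) provides the unique decomposition $\exp(Z) = h\exp(\bar{w})$ with $h \in H$, $\bar{w} \in \mathfrak{r}$, and continuity gives $\|h - \Id\| \leq C\eta$ and $\|\bar{w} - (w_1-w_2)\| \leq C\eta\|w_1-w_2\|$ after possibly enlarging $C_0$.

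The ``in particular'' inequalities $\tfrac{1}{2}\|w_1-w_2\| \leq \|\bar w\| \leq \tfrac{3}{2}\|w_1 - w_2\|$ then follow immediately from $\|\bar w - (w_1 - w_2)\| \leq C_0\eta\|w_1 - w_2\|$ by taking $\eta_0 \leq 1/(2C_0)$. The main (mild) obstacle is the bookkeeping in the BCH-substitution argument that turns the naive $O(\eta^2)$ bound on the second-order term into the stronger $O(\eta\|w_1-w_2\|)$ bound; once that substitution trick is set up, everything else follows from standard BCH convergence together with the inverse function theorem applied to the $H \times \mathfrak{r} \to G$ multiplication map.
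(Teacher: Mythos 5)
Your proof is correct and follows the same BCH-based route the paper uses (the paper itself just cites \cite[Lemma~2.1]{LM23}, which is this argument). The substitution $-w_2 = -w_1 + (w_1 - w_2)$ to extract a factor of $\|w_1-w_2\|$ from every surviving bracket term is exactly the step that upgrades the naive $O(\eta^2)$ bound to the needed $O(\eta\|w_1-w_2\|)$; after that, the local diffeomorphism $(h, w) \mapsto h\exp(w)$ near $(e, 0)$ (together with Lipschitz continuity of its inverse) delivers the decomposition and the estimates, and the ``in particular'' inequalities follow from the triangle inequality once $\eta_0 \le 1/(2C_0)$.
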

\begin{proof}
    This is a direct application of Baker--Campbell--Hausdorff formula. See \cite[Lemma 2.1]{LM23}. 
\end{proof}

We take a further minimum so that for all $\eta \leq \eta_0$ the following holds. 
\begin{enumerate}
\item The exponential map restrict to $B_\eta^{\LieG}$ is a bi-analytic map. 
\item The maps
\begin{align}
\begin{aligned}
&B_\eta^{\LieU^+} \times B_\eta^{\LieM_0} \times B_\eta^{\LieA} \times 
B_\eta^{\LieU^-} \to H\\
&(X_{\LieU^+}, X_{\LieM_0}, X_\LieA, X_{\LieU^-}) \mapsto \exp(X_{\LieU^+})\exp(X_{\LieM_0})\exp(X_\LieA)\exp(X_{\LieU^-}),
\end{aligned}
\end{align}
\begin{align}
\begin{aligned}
&B_\eta^{\LieU^-} \times B_\eta^{\LieM_0} \times B_\eta^{\LieA} \times 
B_\eta^{\LieU^+} \to H\\
&(X_{\LieU^-}, X_{\LieM_0}, X_\LieA, X_{\LieU^+}) \mapsto \exp(X_{\LieU^-})\exp(X_{\LieM_0})\exp(X_\LieA)\exp(X_{\LieU^+}),
\end{aligned}
\end{align}
\begin{align}
\begin{aligned}
&B_\eta^{\LieU^+} \times B_\eta^{\LieU^-} \times B_\eta^{\LieM_0} \times 
B_\eta^{\LieA} \to H\\
&(X_{\LieU^+}, X_{\LieU^-}, X_\LieA, X_{\LieM_0}) \mapsto \exp(X_{\LieU^+})\exp(X_{\LieU^-})\exp(X_{\LieM_0})\exp(X_{\LieA})
\end{aligned}
\end{align}
are bi-analytic map to their images.
\item The map 
\begin{align*}
    &\mathsf{B}_\eta^{H} \times B_\eta^{\mathfrak{r}} \to G\\
    &(\mathsf{h}, X_{\mathfrak{r}}) \mapsto \mathsf{h}\exp(X_{\mathfrak{r}})
\end{align*}
is a bi-analytic map to its image.
\item Lemma~\ref{lem:BCH} holds. 
\end{enumerate}
For a parameter $\eta \leq \eta_0$ and $\beta = \eta^2$, we set
\begin{align*}
    \mathsf{E} = \mathsf{B}_{\beta}^{U^-} \mathsf{B}_\beta^{M_0A} \mathsf{B}_\eta^{U^+}.
\end{align*}
The choice of the parameter $\eta$ will always be clear from the context. 

\subsection{Injectivity radius}
For all $x \in X = G/\Gamma$, we set
\begin{align*}
    \inj(x) = \sup\{\eta: \mathsf{B}_{100C_0\eta}^G\to \mathsf{B}_{100C_0\eta}^G.x \text{ is a diffeomorphism}\}.
\end{align*}
The constant $C_0$ comes from \cref{lem:BCH}. Taking a further minimum if necessary, we always assume that the injectivity
radius of $x$ defined using the Riemannian metric $d_X$ dominates $\inj(x)$. 

For all $\eta > 0$, let
\begin{align*}
    X_\eta = \{x \in X: \inj(x) \geq \eta\}.
\end{align*}

\subsection{Different formulations for Theorem~\ref{thm:main equidistribution}}\label{subsection:different form on balls}
Recall that we set $\mathsf{B}^U_1 = \exp(B^{\LieU}_1)$ and assign $m_U$ to be the Haar measure on $U$ so that $m_U(\mathsf{B}^U_1) = 1$. We write $\mathrm{d}u = \mathrm{d}m_U(u)$ in integrals for simplicity. The following theorem is a slightly different formulation of Theorem~\ref{thm:main equidistribution}. 

\begin{theorem}\label{thm:main equidistribution different form}
There exist constants $\ref{a:main equidistribution1} > \ref{a:main equidistribution2} \geq 1$ and $\kappa > 0$ depending only on $X$ so that the following holds. For all $x_0 \in X$ and large enough $R$ depending explicitly on $x_0$, for any $T \geq R^{\ref{a:main equidistribution1}}$, at least one of the following is true. 
\begin{enumerate}
\item For all $\phi \in \mathrm{C}_c^\infty(X)$, 
\begin{align*}
\Biggl|\int_{\mathsf{B}^U_1} \phi(a_{\log T} u.x_0) \,\mathrm{d}u - \int_X \phi \,\mathrm{d}\mu_X\Biggr| \leq \mathcal{S}(\phi) R^{-\kappa}
\end{align*}
where $\mathcal{S}(\phi)$ is a certain Sobolev norm. 
\item There exists $x \in X$ so that $H.x$ is periodic with $\vol(H.x) \leq R$ and 
\begin{align*}
    d_X(x_0, x) \leq T^{-\frac{1}{\ref{a:main equidistribution2}}}.
\end{align*}
\end{enumerate}
\end{theorem}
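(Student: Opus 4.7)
\emph{Proof proposal.} The plan is to deduce Theorem~\ref{thm:main equidistribution different form} directly from Theorem~\ref{thm:main equidistribution} by a change of variables in $U$; the converse direction is handled symmetrically. The whole argument amounts to relating the integration domains $\mathsf{B}^U_1$ and $[0,1]^2$ via the abelian structure of $U$.

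First I would identify $\mathsf{B}^U_1 = \exp(B^{\LieU}_1)$ explicitly in the $(r,s)$-coordinates. Both $U_1$ and $U_2$ are abelian with $u_{r_1,s_1}u_{r_2,s_2} = u_{r_1+r_2,\, s_1+s_2}$, as one checks directly from \eqref{eqn:def of U}, and a short matrix calculation gives $\log(u_{r,s}) = r X_1 + s X_2$ for two fixed nilpotent matrices whose only nonzero entries are $r$ or $s$ respectively. Hence $\|\log(u_{r,s})\|_\infty = \max(|r|,|s|)$, so that $\mathsf{B}^U_1 = \{u_{r,s} : (r,s) \in [-1,1]^2\}$; the normalization $m_U(\mathsf{B}^U_1)=1$ then forces $\mathrm{d}m_U = \tfrac{1}{4}\,\mathrm{d}r\,\mathrm{d}s$ on this square. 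Partitioning $[-1,1]^2$ into the four unit squares $[i, i+1] \times [j, j+1]$ with $(i,j) \in \{-1,0\}^2$ and using $u_{r,s} = u_{r-i,\, s-j}\, u_{i,j}$, a change of variables yields
\begin{align*}
\int_{\mathsf{B}^U_1} \phi(a_{\log T} u.x_0)\,\mathrm{d}u = \tfrac{1}{4}\!\!\sum_{(i,j) \in \{-1,0\}^2}\!\! \int_{[0,1]^2} \phi(a_{\log T} u_{r,s}.y_{i,j})\,\mathrm{d}r\,\mathrm{d}s, \qquad y_{i,j} := u_{i,j}.x_0.
\end{align*}

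Next I would apply Theorem~\ref{thm:main equidistribution} to each $y_{i,j}$. Since each $u_{i,j}$ has uniformly bounded norm, $\inj(y_{i,j}) \asymp \inj(x_0)$, so the admissibility threshold on $R$ holds simultaneously at all four points after absorbing a constant. If alternative~(1) of Theorem~\ref{thm:main equidistribution} holds at every $y_{i,j}$, averaging the four resulting estimates yields alternative~(1) of Theorem~\ref{thm:main equidistribution different form} with the same bound $\mathcal{S}(\phi) R^{-\kappa}$ (up to a constant that can be folded into $\mathcal{S}$). If instead alternative~(2) holds at some $y_{i_0,j_0}$, producing a periodic orbit $H.x$ with $\vol(H.x) \leq R$ and $d_X(y_{i_0,j_0}, x) \leq T^{-1/\ref{a:main equidistribution2}}$, then $x' := u_{-i_0,-j_0}.x$ satisfies $H.x' = H.x$ (because $u_{-i_0,-j_0} \in U \subset H$) and hence is periodic of the same volume. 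Comparing $x_0 = u_{-i_0,-j_0}.y_{i_0,j_0}$ with $x'$ produces the required bound on $d_X(x_0, x')$, giving alternative~(2) of Theorem~\ref{thm:main equidistribution different form} after a mild adjustment of the exponent.

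The only delicate point is this last comparison: since $d_X$ is right $G$-invariant but only left $\SO(4)$-invariant, it is not preserved under left multiplication by the unipotent element $u_{-i_0,-j_0}$. The fix is the elementary observation that left multiplication by any bounded-norm element of $G$ is bi-Lipschitz on $X$ with a bounded constant, which gives $d_X(x_0, x') \le C \cdot T^{-1/\ref{a:main equidistribution2}}$, and for $T$ large enough the factor $C$ is absorbed by a small enlargement of $\ref{a:main equidistribution2}$. Beyond this single observation, the entire reduction is routine bookkeeping of constants.
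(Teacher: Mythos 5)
Your proposal does not prove Theorem~\ref{thm:main equidistribution different form}; it establishes (one direction of) the equivalence between that theorem and Theorem~\ref{thm:main equidistribution}, which the paper also records, and then invokes Theorem~\ref{thm:main equidistribution} as if it were already available. But in the paper the logical flow runs the other way: Theorem~\ref{thm:main equidistribution different form} is the statement that gets proved directly (see Section~\ref{sec:ProofOfMainEqui}, whose proof opens by saying ``We prove Theorem~\ref{thm:main equidistribution different form} here''), and Theorem~\ref{thm:main equidistribution} is then obtained as a corollary via the change-of-variables sketch in Subsection~\ref{subsection:different form on balls}. Deducing Theorem~\ref{thm:main equidistribution different form} from Theorem~\ref{thm:main equidistribution} is therefore circular within the paper's framework, and the genuine content of the theorem --- the effective closing lemma of Part~\ref{part:closinglemma}, the projection and multislicing machinery of Part~\ref{part:projection}, the Margulis-function bootstrap of Section~\ref{sec:bootstrap}, and the mixing-based endgame in Sections~\ref{sec:Venkatesh} and~\ref{sec:From large dimension to effective equidistribution} --- does not appear at all in your argument.

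Viewed purely as a proof of the equivalence (the direction you actually detail), your decomposition of $[-1,1]^2$ into the four unit squares $[i,i+1]\times[j,j+1]$, $(i,j)\in\{-1,0\}^2$, is correct: the computation that $\log u_{r,s}$ has sup-norm $\max(|r|,|s|)$ is right, commutativity of $U$ gives the translation formula, and using $u_{-i_0,-j_0}\in U\subset H$ to transport a nearby periodic $H$-orbit is sound (with the bi-Lipschitz absorption you flag). This is a genuinely different mechanism from the paper's one-line argument, which uses the affine relation $[0,1]^2 = \tfrac{1}{2}[-1,1]^2 + (\tfrac12,\tfrac12)$, absorbs the factor $\tfrac12$ into a conjugation by $a_{\log 2}$, and shifts by $u_{1/2,1/2}$. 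Your four-square decomposition cannot, however, be ``handled symmetrically'' for the converse direction as you claim: $[0,1]^2$ cannot be partitioned into translates of $[-1,1]^2$, so to pass from the $\mathsf{B}^U_1$-statement to the $[0,1]^2$-statement one genuinely needs the scaling trick, which is precisely what the paper uses. That converse direction is also the one the paper actually needs.
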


Theorem~\ref{thm:main equidistribution different form} is equivalent to Theorem~\ref{thm:main equidistribution}. Therefore we will focus on the study of the orbit of $a_{\log T}\mathsf{B}_1^U$ in this paper. 

\begin{proof}[Sketch of the proof that Theorem~\ref{thm:main equidistribution} is equivalent to Theorem~\ref{thm:main equidistribution different form}]
Recall that we set $\|\cdot\| = \|\cdot\|_\infty$ on $\mathrm{Mat}_4(\R)$. Therefore $B^{\LieU}_1$ is identified with $[-1, 1]^2$ under the standard coordinate of $\mathrm{Mat}_4(\R)$. Note that we have $[0, 1]^2 = \frac{1}{2}[-1, 1]^2 + (\frac{1}{2}, \frac{1}{2})$ in $\LieU \cong \R^2$, the rest follows from the change of variables formula and the fact that $\inj(u_{\frac{1}{2}, \frac{1}{2}}^{\pm 1}.x) \asymp \inj(x)$. 
\end{proof}

\part{Closing lemma and initial dimension}\label{part:closinglemma}
The main result of this part is Theorem~\ref{thm:closing lemma initial dim}. Before we state the result, let us fix some parameters. 

Let $0 < \epsilon' < 0.001$ be a small constant. In particular, it will be chosen to depend only on $(G, H, \Gamma)$ in Part~\ref{part:bootstrap}. Let $\beta = e^{-\epsilon' t}$ and $\eta = \beta^{1/2}$. We assume that $t$ is large enough so that $t^{100} \leq e^{\epsilon't}$ and $100C_0\eta \leq \eta_0$ where $\eta_0$ is defined in Section~\ref{sec:Global prelim} and $C_0$ is from Lemma~\ref{lem:BCH}. Recall we set
\begin{align*}
    \mathsf{E} = \mathsf{B}_{\beta}^{U^-} \mathsf{B}_\beta^{M_0A} \mathsf{B}_\eta^{U^+}.
\end{align*}

Let us introduce the notions of \emph{sheeted sets} and a \emph{Margulis function} to state the main result. A subset $\mathcal{E} \subseteq X$ is called a \emph{sheeted set} if there exists a base point $y \in X_\eta$ and a finite set of transverse cross-section $F \subset B_{\eta}^{\mathfrak{r}}$ so that the map $(\mathsf{h}, w) \mapsto \mathsf{h}\exp(w).y$ is bi-analytic on $\mathsf{E} \times B_{\eta}^{\mathfrak{r}}$ and 
\begin{align*}
    \mathcal{E} = \bigsqcup_{w \in F}\mathsf{E}\exp(w).y.
\end{align*}
For all $z \in \mathcal{E}$, let
\begin{align*}
    I_{\mathcal{E}}(z) = \{w \in \mathfrak{r}: \|w\| < \inj(z), \exp(w).z \in \mathcal{E}\}.
\end{align*}

Let us recall the (modified) Margulis function defined in \cite{LMWY25}. For every $0 < \delta < 1$ and $0 < \alpha < \dim \mathfrak{r}$, we define the (modified) Margulis function of a sheeted set $\mathcal{E}$:
\begin{align*}
    f^{(\alpha)}_{\mathcal{E}, \delta}(z) = \sum_{w \in I_\mathcal{E}(z) \setminus \{0\}} \max\{\|w\|, \delta\}^{-\alpha}.
\end{align*}
Roughly speaking, the Margulis function provides a measurement on the dimension in the transverse direction of the sheeted set $\mathcal{E}$ for scales at least $\delta$. We refer to Subsection~\ref{subsec:Margulis function} for discussion on its connection with Frostman-type condition and $\alpha$-energy. 

The statement of following theorem also needs the notion of \emph{admissible measures} introduced in \cite{LMW22}. We refer to Subsection~\ref{subsec:sheeted set admissible measure} for its precise definition, see also \cite[Appendix D]{LMWY25} or \cite[Section 7]{LMW22}. Informally, an admissible measure $\mu_{\mathcal{E}}$ associated to a sheeted set $\mathcal{E}$ is a probability measure on $\mathcal{E}$ that is equivalent to Haar measure of $H$ on each sheet. Moreover, each sheet is assigned with roughly equal weight. 

Let $\lambda$ be the normalized Haar measure on
\begin{align*}
    \mathsf{B}^{s, H}_{\beta + 100 \beta^2} = \mathsf{B}_{\beta + 100 \beta^2}^{U^-} \mathsf{B}_{\beta + 100 \beta^2}^{M_0A},
\end{align*}
and
\begin{align*}
    \nu_t = (a_t)_\ast m_{\mathsf{B}_1^U}.
\end{align*}

The following theorem is the main result of this part. 

\begin{theorem}\label{thm:closing lemma initial dim}
There exist constants $\consta\label{a:closing lemma main} > 1$, $\constc\label{c:closing lemma main} > 1$, $\constd\label{d:closing lemma main} > 1$, $\constE\label{e:closing lemma main1}, \constE\label{e:closing lemma main2}> 1$, $\constm\label{m:closing lemma main} > 1$, $\epsilon_0 > 0$, and $\mathsf{L}$ depending only on $(G, H, \Gamma)$ so that the following holds. For all $x_1 \in X_\eta$ and $R \gg \eta^{-\ref{e:closing lemma main1}}$, let $\delta_0 = R^{-\frac{1}{\ref{a:closing lemma main}}}$. For all $D \geq \ref{d:closing lemma main} + 1$, let $t = M\log R$ where $M = \ref{m:closing lemma main} + \ref{c:closing lemma main}D$ and $\mu_t = \nu_{t} \ast \delta_{x_1}$. 
    
Suppose that for all periodic orbit $H.x'$ with $\vol(H.x') \leq R$, we have
\begin{align*}
    d_X(x_1, x') > R^{-D}.
\end{align*}
Then there exists a family of sheeted sets $\mathcal{F} = \{\mathcal{E}\}$ with associated $\mathsf{L}$-admissible measures $\{\mu_{\mathcal{E}}: \mathcal{E} \in \mathcal{F}\}$ so that the following holds. 
\begin{enumerate}
    \item There exists $\{c_\mathcal{E}\}$ with $c_\mathcal{E} > 0$ and $\sum_{\mathcal{E}} c_\mathcal{E} = 1$ so that for all $u' \in \mathsf{B}_1^U$, $d \geq 0$ and all $\phi \in \mathrm{C}^{\infty}_c(X)$
    \begin{align}
        \int_{X} \phi(a_d u' x) \,\mathrm{d}(\lambda \ast \mu_t)(x) = \sum_{\mathcal{E}} c_\mathcal{E}\int_{X} \phi(a_du'.x) \,\mathrm{d}\mu_{\mathcal{E}}(x) + O(\mathcal{S}(\phi)(\beta^\star)).
    \end{align}
    \item For all sheeted set $\mathcal{E} \in \mathcal{F}$ with cross-section $F \subset B_\eta^{\mathfrak{r}}$. 
    The number of sheets satisfies 
    \begin{align}
        \beta^{29}\delta_0^{-2\epsilon_0} \leq \#F \leq \beta^{-2}e^{2t}.
    \end{align}
    Moreover, we have the Margulis function estimate
    \begin{align}
        f^{(\epsilon_0)}_{\mathcal{E}, \delta_0}(z) \leq  \beta^{-\ref{e:closing lemma main2}}\#F \quad \forall z \in \mathcal{E}.
    \end{align}
\end{enumerate}
\end{theorem}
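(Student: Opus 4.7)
The plan is to build the family $\mathcal{F}$ of sheeted sets by partitioning the support of the $H$-thickened measure $\lambda \ast \mu_t$ into local $\mathsf{E}$-boxes centered at well-chosen base points, and to control the transverse self-coincidences inside each box using the effective closing lemma for long unipotent orbits from \cite{LMMSW24} (cf.\ Theorem~\ref{thm:closing lemma long unipotent}). The convolution by $\lambda$ is the reason for working with $\lambda \ast \mu_t$ rather than $\mu_t$: integration in the stable and centralizer directions thickens $a_t \mathsf{B}_1^U.x_1$ into an $H$-shaped slab of transverse thickness $\beta$ and $U$-thickness $\eta$, matching the geometric shape of $\mathsf{E}$ and the hypothesis of the closing lemma.

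First I would cover the support of $\lambda \ast \mu_t$ by finitely many boxes $\mathsf{E}.y$ with base points $y \in X_\eta$, using a partition of unity with weights $c_\mathcal{E}$ summing to $1$. For each base point, Fubini and the bi-analyticity of the $\mathsf{E}$-parametrization produce a finite cross-section $F \subset B_\eta^{\mathfrak{r}}$ and a candidate sheeted set $\mathcal{E} = \bigsqcup_{w \in F}\mathsf{E}\exp(w).y$. The natural measure $\mu_\mathcal{E}$, which puts equal weight on each sheet and restricts to Haar on $\mathsf{E}$, is $\mathsf{L}$-admissible for a uniform $\mathsf{L} = \mathsf{L}(G,H,\Gamma)$ by the construction reviewed in \cite[Appendix D]{LMWY25}. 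Item~(1) then follows after absorbing a smoothing error of size $\mathcal{S}(\phi)\beta^\star$ coming from the mismatch between $\lambda \ast \mu_t$ and the weighted sum $\sum_\mathcal{E} c_\mathcal{E}\mu_\mathcal{E}$.

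The crux is the estimate on $I_\mathcal{E}(z) \setminus \{0\}$. A nonzero $w \in I_\mathcal{E}(z)$ records two distinct sheets of $\mathcal{E}$ passing within transverse distance $\|w\|$ of each other, which unwinds to a near-return of the long unipotent orbit $a_t \mathsf{B}_1^U.x_1$: there are $u_1, u_2 \in \mathsf{B}_1^U$ and $h_1, h_2 \in \mathsf{B}_\beta^{s,H}$ with $a_t u_1 h_1.x_1$ and $a_t u_2 h_2.x_1$ lying on two sheets of $\mathsf{E}.y$ having $\mathfrak{r}$-displacement $\approx w$. Feeding this into the closing lemma of \cite{LMMSW24} gives a dichotomy: either the displacement forces $x_1$ to lie within $R^{-D}$ of a periodic $H$-orbit of volume $\leq R$, which the Diophantine hypothesis excludes provided $t = M\log R$ with $M = \ref{m:closing lemma main} + \ref{c:closing lemma main}D$ large enough; or else $\|w\| \geq \delta_0 = R^{-1/\ref{a:closing lemma main}}$, and for each dyadic scale $\delta \in [\delta_0, \eta]$ the count of such $w$ with $\|w\| \asymp \delta$ is controlled polynomially in $\delta$. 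Summing those counts against the weight $\max\{\|w\|,\delta_0\}^{-\epsilon_0}$, with $\epsilon_0$ chosen strictly smaller than the polynomial gain supplied by the closing lemma, yields the Margulis-function bound $f^{(\epsilon_0)}_{\mathcal{E},\delta_0}(z) \leq \beta^{-\ref{e:closing lemma main2}}\#F$; the lower bound $\#F \geq \beta^{29}\delta_0^{-2\epsilon_0}$ follows from a volume comparison, since $\lambda \ast \mu_t$ has total mass $1$ distributed among the sheets.

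The main obstacle is that $H_1 \cong \SO(2,2)^\circ$ is not simple and no unipotent element of $H$ is $\R$-regular in $\SL_4(\R)$, so the single-parameter-unipotent strategy of \cite[Proposition 4.6]{LMWY25} is unavailable; this is precisely why the two-parameter, full-horospherical input of \cite{LMMSW24} is needed. Matching its hypotheses to the $(r,s)$-parametrization of $\mathsf{B}_1^U$, and balancing the parameters $\delta_0,\epsilon_0,\beta,\eta,t,R,D$ so that the three scales $\beta^\star$, $R^{-D}$, and $e^{-\star t}$ are simultaneously compatible, is the delicate bookkeeping that fixes the constants $\ref{a:closing lemma main},\ref{c:closing lemma main},\ref{d:closing lemma main},\ref{e:closing lemma main1},\ref{e:closing lemma main2},\ref{m:closing lemma main}$ and pins down the lower threshold $R \gg \eta^{-\ref{e:closing lemma main1}}$.
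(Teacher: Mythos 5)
Your skeleton is broadly right --- use the effective closing lemma for long unipotent orbits to control near-returns, cover the support of $\lambda \ast \mu_t$ by $\mathsf{E}$-boxes to produce sheeted sets, and convert a nonconcentration count into a Margulis-function bound --- but two substantive steps that the paper must carry out are missing.

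\textbf{Multi-scale nonconcentration.} Theorem~\ref{thm:closing lemma long unipotent} together with the linear-algebra Lemma~\ref{lem:Important Linear Algebra} only yields a contradiction with the Diophantine hypothesis at the single fine scale $\delta_0 = R^{-1/\ref{a:closing lemma main}}$ (this is Lemma~\ref{lem:Closing lemma one scale}). For a near-return at a coarser scale $r \in (\delta_0, \eta]$, the closing lemma would produce a periodic $H$-orbit only at distance $r^\star$ from $x_1$, which for $r$ much larger than $\delta_0$ is much larger than $R^{-D}$, so the hypothesis ``$d_X(x_1, x') > R^{-D}$ for all $\vol(H.x') \leq R$'' gives no contradiction. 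You assert ``for each dyadic scale $\delta \in [\delta_0, \eta]$ the count of such $w$ with $\|w\| \asymp \delta$ is controlled polynomially in $\delta$'' as though it falls directly out of the closing lemma; it does not. The paper's Lemma~\ref{lem:Closing lemma many scale} gets the multi-scale bound by decomposing $t = t_1 + t_2$ via the F{\o}lner property (Lemma~\ref{lem:Folner1}), applying the avoidance principle Proposition~\ref{pro:avoidance} to show that most of $a_{t_1}\mathsf{B}_1^U.x_1$ lands far from periodic orbits of the relevant smaller volume, and then re-applying the single-scale Lemma~\ref{lem:Closing lemma one scale restate} re-centered at those intermediate points. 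Without this re-centering, the multi-scale count has no justification.

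\textbf{Admissibility of the local measures.} You claim the natural measure on each $\mathsf{E}$-box, equal weight per sheet and Haar in $\mathsf{E}$, is $\mathsf{L}$-admissible, and that a partition of unity packages $\lambda \ast \mu_t$ into such pieces up to a $\beta^\star$ error. The paper explicitly flags that this is false: unlike in \cite[Proposition 4.8]{LMW22}, Lemma~\ref{lem:Closing lemma many scale} does not guarantee that $\lambda \ast \mu_t$ locally looks like renormalized Haar measure on $H$-sheets, and in particular distinct sheets inside the same box may carry wildly different weight. The measure you want to call $\mu_\mathcal{E}$ therefore need not be $\mathsf{L}$-admissible for any uniform $\mathsf{L}$. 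The paper's Section~\ref{sec:Margulis function} repairs this in two steps: first, decompose $\mu_t$ into local pieces of size $\beta^2$ in the $U^-M_0A$-directions and smear by $\lambda$ (of size $\beta + 100\beta^2$), so that each $\beta$-ball looks Haar with only $O(\eta^\star)$ boundary contribution; second, decompose again according to the weight $\mu_{j,i}(X)$ each $H$-sheet carries (Lemma~\ref{lem:throw away box with small weight} and the subsequent dyadic grouping into $F_{j,m}$), so that within each resulting sheeted set the sheets have weight comparable up to a factor of $\mathsf{L}$, which is what admissibility demands. This second decomposition is also what feeds into the lower bound $\#F \geq \beta^{29}\delta_0^{-2\epsilon_0}$ and the Frostman-to-energy translation for $f^{(\epsilon_0)}_{\mathcal{E},\delta_0}$; your proposal derives neither.
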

The proof of Theorem~\ref{thm:closing lemma initial dim} relies on the following lemma. 
It asserts that for an initial point with suitable Diophantine condition, if the expanding time $t$ is long enough, then the measure $\lambda \ast \mu_t$ has a small coarse dimension in the transverse direction. Moreover, the weaker Diophantine condition is provided, the longer the time is needed. 
\begin{lemma}\label{lem:Closing lemma many scale}
    There exist constants $\consta\label{a:closinglemma frostman} > 1$, $\constc\label{c:closinglemma frostman} > 1$, $\constd\label{d:closinglemma frostman} > 1$, $\constm\label{m:closinglemma frostman} > 1$, and $\epsilon_1 > 0$ depending only on $(G, H, \Gamma)$ so that the following holds. For all $D \geq \ref{d:closinglemma frostman} + 1$, $x_1 \in X_\eta$ and $R \gg \eta^{-\star}$, let $M = \ref{m:closinglemma frostman} + \ref{c:closinglemma frostman} D$, $t = M\log R$, $\mu_t = \nu_{t} \ast \delta_{x_1}$ and $\delta_0 = R^{-\frac{1}{\ref{a:closinglemma frostman}}}$. 

    Suppose that for all periodic orbit $H.x'$ with $\vol(H.x') \leq R$, we have
    \begin{align*}
        d(x_1, x') > R^{-D}.
    \end{align*}
    Then for all $y \in X_{3\eta}$, $r_H \leq \frac{1}{4}\min\{\inj(y), \eta_0\}$, $r \in [\delta_0, \eta]$, we have
    \begin{align*}
        (\lambda \ast \mu_t)((\mathsf{B}_{r_H}^H)^{\pm 1}\exp(B_{r}^{\mathfrak{r}}).y) \ll \eta^{-\star}r^{\epsilon_1}.
    \end{align*}
\end{lemma}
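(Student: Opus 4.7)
The plan is to prove the lemma by contrapositive: assume that for some $y \in X_{3\eta}$, some $r_H \leq \tfrac14\min\{\inj(y), \eta_0\}$ and some scale $r \in [\delta_0, \eta]$ one has
\[
(\lambda \ast \mu_t)\bigl((\mathsf{B}_{r_H}^H)^{\pm 1}\exp(B_r^{\mathfrak{r}}).y\bigr) > C\eta^{-A} r^{\epsilon_1},
\]
with a large constant $C$ to be fixed, and produce a periodic $H$-orbit of volume at most $R$ within distance $R^{-D}$ of $x_1$, contradicting the hypothesis on $x_1$.

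\emph{Step 1 (pigeonhole).} Unfolding the convolution, the set
\[
S = \bigl\{(h,u) \in \mathsf{B}^{s,H}_{\beta+100\beta^2} \times \mathsf{B}_1^U : h a_t u.x_1 \in (\mathsf{B}_{r_H}^H)^{\pm 1}\exp(B_r^{\mathfrak{r}}).y\bigr\}
\]
has $(\lambda \otimes m_U)$-mass $\gg \eta^{-A}r^{\epsilon_1}$. Cover $B_r^{\mathfrak{r}}$ by $\ll (r/r_0)^{\dim\mathfrak{r}}$ cells of radius $r_0 := r^{1+\epsilon_1/(2\dim\mathfrak{r})}$. A Fubini--pigeonhole in the transverse coordinate produces two pairs $(h_1,u_1), (h_2,u_2) \in S$ whose images in the $\mathsf h\exp(w).y$-coordinates on $\mathsf E$ lie in a common $r_0$-cell, while $\|u_1 - u_2\| \geq r^{\star}$.

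\emph{Step 2 (return element).} Writing $h_i a_t u_i.x_1 = \mathsf h_i \exp(w_i).y$, the element $g := (h_1 a_t u_1)(h_2 a_t u_2)^{-1}$ lies within $O(r_H + r_0)$ of the identity. By the $H\times\mathfrak{r}$ factorisation of Section~\ref{sec:Global prelim} together with Lemma~\ref{lem:BCH}, it decomposes as $g = \mathsf h\exp(w)$ with $\mathsf h \in H$ of size $\ll r_H$ and $w \in \mathfrak{r}$ of size $\ll r_0$. Conjugating by $a_t$ and using $a_t U a_{-t} \subseteq U$ converts this identity into a near self-return of the length-$e^{2t}$ expanded $U$-orbit of $x_1$, with transverse displacement at most $r_0$ in $\mathfrak r$ and with the genuine $U$-translation parameter of size $\|u_1 - u_2\| \geq r^{\star}$.

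\emph{Step 3 (closing lemma and identification of $L$).} This near self-return is precisely the input to the effective closing lemma for long unipotent orbits of \cite{LMMSW24} (Theorem~\ref{thm:closing lemma long unipotent}), which yields a connected Lie subgroup $U \subseteq L \subseteq G$ and a point $x' \in X$ with $L.x'$ periodic,
\[
\vol(L.x') \ll \eta^{-\star} r_0^{-\star}, \qquad d_X(x_1, x') \ll \eta^{-\star} e^{-\star t} r_0^{\star}.
\]
Since $L \supseteq U$, either $L \subseteq H$ or $L \cap \mathfrak{r} \neq 0$; in the latter case the irreducibility of $\mathfrak{r}$ as an $H$-module forces $\mathfrak{r} \subseteq \LieL$, hence $\LieL = \LieG$ and $L = G$, which is excluded by comparing $\vol(L.x') = \vol(X) \asymp 1$ with the closing-lemma lower bound $\gg r_0^{-\star}$ once $r \leq \eta$. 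Thus $L \subseteq H$; by the Ratner-type classification of connected subgroups of $H$ containing $U$ together with the Diophantine hypothesis (which excludes periodic orbits of small subgroups), $L = H$.

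\emph{Step 4 (matching constants).} Fix $\epsilon_1 > 0$ small, $\ref{a:closinglemma frostman}$ so large that $r_0^{-\star} \leq R$ holds for every $r \geq \delta_0 = R^{-1/\ref{a:closinglemma frostman}}$, and $\ref{m:closinglemma frostman}, \ref{c:closinglemma frostman}$ so that with $M = \ref{m:closinglemma frostman} + \ref{c:closinglemma frostman} D$ and $t = M\log R$ the distance bound $\eta^{-\star}e^{-\star t}r_0^{\star} \leq R^{-D}$ holds (the linear $D$-dependence is forced by $r_0 \geq \delta_0^{\star}$, so the $e^{-\star t}$ factor must swallow $R^D$). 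Then $H.x'$ is periodic with $\vol(H.x') \leq R$ and $d_X(x_1, x') \leq R^{-D}$, contradicting the Diophantine hypothesis and completing the proof.

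\textbf{Main obstacle.} The principal technical difficulty is the dichotomy in Step 3: pinning down that the subgroup $L$ produced by \cite{LMMSW24} is exactly $H$, rather than a strictly intermediate subgroup or all of $G$. The crucial algebraic input is the irreducibility of the $\Ad(H)$-invariant complement $\mathfrak{r}$ recorded in Section~\ref{sec:Global prelim}, which holds uniformly for both pairs $(G,H) = (\SL_4(\R), \SO(2,2)^\circ)$ and $(\SL_4(\R), \SO(3,1)^\circ)$ and collapses the possible intermediate subgroups to the sharp dichotomy $L \in \{H, G\}$. A secondary but persistent technical challenge is carrying the scale $r$ cleanly through the pigeonhole, the Lemma~\ref{lem:BCH} factorisation, and the closing lemma, in order to extract the precise linear dependence $M = \ref{m:closinglemma frostman} + \ref{c:closinglemma frostman} D$ and the polynomial dependence $\delta_0 = R^{-1/\ref{a:closinglemma frostman}}$.
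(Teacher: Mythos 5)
Your proposal is, in effect, a reprise of the proof of the \emph{single-scale} version (Lemma~\ref{lem:Closing lemma one scale}) at each scale $r$, and it misses the mechanism the paper actually uses to pass from one scale to all scales. The gap is concrete: the effective closing lemma (Theorem~\ref{thm:closing lemma long unipotent}) requires $S \geq \ref{e:long unipotent}\,\tau^{-\ref{a:long unipotent1}}$, where $\tau$ is the injectivity-radius threshold for the base point (here $\tau = \eta$, since all you know is $x_1 \in X_\eta$). A near-return at transverse scale $r$ forces $S \lesssim r^{-1}$, so you need $r \lesssim \eta^{\ref{a:long unipotent1}}$. But the lemma claims the bound for \emph{all} $r \in [\delta_0, \eta]$, and $\ref{a:long unipotent1} > 1$ in general, so the entire upper range $r \in (\eta^{\ref{a:long unipotent1}}, \eta]$ is unreachable by a direct application of Theorem~\ref{thm:closing lemma long unipotent}. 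Your pigeonhole down to $r_0 = r^{1+\epsilon_1/(2\dim\mathfrak{r})}$ only shaves a tiny polynomial factor off $r$ and does not reach $\eta^{\ref{a:long unipotent1}}$; pushing the pigeonhole further destroys the measure lower bound $|\mathcal{E}| > S^{-1/\ref{a:long unipotent1}}$. Nothing in Steps 1--4 addresses this constraint.

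What the paper does instead: it first isolates the single-scale statement Lemma~\ref{lem:Closing lemma one scale}, which is essentially your Steps 1--4 carried out at the fixed scale $\delta = R^{-1/\ref{a:closing lemma frostman one scale}}$, where the coupled choice of $\eta$, $\delta$, and $t$ makes all closing-lemma constraints compatible. To reach larger scales $r$, it splits $t = t_2 + t_1$ and uses the F{\o}lner property (Lemma~\ref{lem:Folner1}) to view $\mu_t$ as a two-step random walk $\nu_{t_2} \ast \mu_{t_1}$. The avoidance principle (Proposition~\ref{pro:avoidance}) then shows that after the first step $t_1$, most of the trajectory $a_{t_1}\mathsf{B}_1^U.x_1$ lands at points $x$ that simultaneously have injectivity radius $\gtrsim r^{\star}$ \emph{and} are Diophantine with respect to periodic $H$-orbits of volume $\leq R_1 := r^{-\ref{a:closing lemma frostman one scale}}$. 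The single-scale lemma is then applied at $x$ with the smaller parameter $R_1$ and the larger injectivity threshold $\tilde\eta \approx r^{\star}$, so that $S = r^{-1} \geq \tilde\eta^{-\ref{a:long unipotent1}}$ is now satisfiable. Scales near $\eta$ are handled by a trivial bound, absorbed into the $\eta^{-\star}$ prefactor. Your proposal contains neither the two-step decomposition nor the avoidance principle, and these are not cosmetic: they are exactly what lets the argument escape the injectivity-radius constraint you cannot meet at the base point $x_1$.

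A secondary imprecision: Theorem~\ref{thm:closing lemma long unipotent} produces a $\Q$-subgroup $\mathbf{M}$ together with a bound on $\|a_t u a_{-t} g.\mathpzc{v}_M\|$ and the wedge condition, not directly a connected subgroup $U \subseteq L \subseteq G$ with $L.x'$ periodic. Upgrading this to ``conjugate of $H$'' is done in the paper by Lemma~\ref{lem:Important Linear Algebra}, an effective {\L}ojasiewicz argument that uses the classification of $H$-invariant subspaces of $\LieG$ (Lemmas~\ref{lem:H invariant SO 3 1} and \ref{lem:H invariant SO 2 2}) and, crucially, the extra wedge (horospherical) condition to kill the ideal subalgebras of $\mathfrak{so}(2,2)$. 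Appealing to a ``Ratner-type classification'' does not capture this: the $\SO(2,2)$ case is precisely the place where the naive dichotomy $L \in \{H, G\}$ fails without the extra algebraic input.
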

The proof of the lemma heavily relies on the effective closing lemma proved in \cite{LMMSW24}. We record it in \cref{thm:closing lemma long unipotent}. Let us introduce notions related to the lattice $\Gamma \leq G = \SL_4(\R)$. 

By Margulis' arithmeticity theorem, $\Gamma$ is an arithmetic lattice. Without loss of generality, we assume that there exists a $\Q$-group $\mathbf{G} \subseteq \SL_N$ with $\mathbf{G}(\R)^\circ \cong G = \SL_4(\R)$ and $\Gamma \leq G \cap \SL_N(\Z)$. Later in this paper, when we say \emph{$\mathbf{M}$ is a $\Q$-subgroup of $G$}, we refer to this $\Q$-structure from $\Gamma$. Write $\LieG_\Z = \LieG \cap \mathfrak{sl}_N(\Z)$. It is invariant under $\Gamma$-action. For any $\Q$-subgroup $\mathbf{M}$ of $\mathbf{G}$, let $\LieM$ be its Lie algebra. It is a $\Q$-subspace of $\LieG$. We define $\mathpzc{v}_M \in \wedge^{\dim \LieM} \LieG$ to be one of the primitive integral vector in the line $\wedge^{\dim \LieM} \LieM$. 

For any subspace (not necessarily $\Q$-subspace) $\mathfrak{s} \subseteq \LieG$, we define $\hat{v}_{\mathfrak{s}}$ to be the corresponding point in $\mathbb{P}(\wedge^{\dim \mathfrak{s}} \LieG)$. For any $0 < r \leq \dim \LieG$, we equip $\mathbb{P}(\wedge^{r} \LieG)$ with the Fubini-Study metric $\mathrm{d}$ where $\mathrm{d}(\hat{v}, \hat{w})$ is the angle between the corresponding lines in $\mathbb{P}(\wedge^{r} \LieG)$. 

The following is the main result in \cite{LMMSW24}. Note that since $\SL_4(\R)$ has no connected normal subgroup, the case~(2) in \cite[Theorem 2]{LMMSW24} does not appear. 

\begin{theorem}[Lindenstrauss--Margulis--Mohammadi--Shah--Wieser]\label{thm:closing lemma long unipotent}
    
    There exist constants $\consta\label{a:long unipotent1}, \consta\label{a:long unipotent2} > 1$ and $\constE\label{e:long unipotent} > 1$ depending on $(G, \Gamma)$ so that the following holds. Let $\tau \in (0, 1)$ and $e^t > S \geq \ref{e:long unipotent} \tau^{-\ref{a:long unipotent1}}$. Let $x = g\Gamma \in X_\tau$ be a point. 

    Suppose there exists $\mathcal{E} \subseteq \mathsf{B}_1^U$ with the following properties.
    \begin{enumerate}
        \item $|\mathcal{E}| > S^{-\frac{1}{\ref{a:long unipotent1}}}$.
        \item For any $u, u' \in \mathcal{E}$, there exists $\gamma \in \Gamma$ with \begin{align*}
            \|a_t u a_{-t} g \gamma g^{-1} a_t (u')^{-1} a_{-t} \| \leq S^{\frac{1}{\ref{a:long unipotent1}}},\\
            \mathrm{d}(a_t u a_{-t} g \gamma g^{-1} a_t (u')^{-1} a_{-t}.\hat{v}_{\mathfrak{h}}, \hat{v}_{\mathfrak{h}}) \leq S^{-1}.
        \end{align*}
    \end{enumerate}
    Then there exists a non-trivial proper $\Q$-subgroup $\mathbf{M}$ so that 
    \begin{align*}
        \sup_{u \in \mathsf{B}_1^U} \|a_t u a_{-t} g.\mathpzc{v}_M\| {}&\leq S^{\ref{a:long unipotent2}}, \\
        \sup_{\mathpzc{z} \in B_1^{\LieU}, u \in \mathsf{B}_1^U} \|\mathpzc{z} \wedge (a_t u a_{-t} g.\mathpzc{v}_M)\| {}&\leq e^{-\frac{t}{\ref{a:long unipotent2}}}S^{\ref{a:long unipotent2}}.
    \end{align*}
\end{theorem}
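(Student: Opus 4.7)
The proof follows the general framework of effective closing lemmas developed in \cite{LMMSW24}, so I outline the key steps rather than attempting to reprove this highly technical result from scratch. Set $b(u) = a_t u a_{-t}$ for $u \in \mathsf{B}_1^U$; polynomiality gives $\|b(u)\| \ll e^{2t}$. The hypothesis says that for every pair $u, u' \in \mathcal{E}$ there exists $\gamma = \gamma(u, u') \in \Gamma$ with $\|b(u) g \gamma g^{-1} b(u')^{-1}\| \leq S^{1/\ref{a:long unipotent1}}$ and such that this element nearly stabilizes $\hat{v}_{\LieH}$ to precision $S^{-1}$. The stabilizer of $\hat{v}_{\LieH}$ in $G$ is the normalizer $N_G(H)$, a proper real subvariety of positive codimension, so the hypothesis places each conjugate in a thin $S^{-1}$-tube around a bounded piece of $N_G(H)$. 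Fixing a base point $u_0 \in \mathcal{E}$ and rewriting with $u' = u_0$, one produces for each $u \in \mathcal{E}$ an element $\gamma_u := \gamma(u, u_0)$ whose $b(u) g \cdot g^{-1} b(u_0)^{-1}$-conjugate lies in this thin tube.

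The next plan is to leverage the density hypothesis $|\mathcal{E}| > S^{-1/\ref{a:long unipotent1}}$ to produce combinatorial structure among the $\gamma_u$'s. The map $u \mapsto b(u)$ is a polynomial in two parameters of degree $O(1)$, so its action on any fixed-degree exterior power $\wedge^r \LieG$ is $(C,\alpha)$-good in the sense of Kleinbock--Margulis. A covering/pigeonhole argument at an appropriate scale then extracts from $\mathcal{E}$ a subset of relative density at least $S^{-O(1)}$ on which the conjugates $b(u) g \gamma_u^{-1} \gamma_{u_0} g^{-1} b(u)^{-1}$ cluster near a \emph{single} coset in $N_G(H)$. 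Translated back, this says that the $\Gamma$-translates $\gamma_u \hat{v}_{g^{-1} b(u_0)^{-1} H b(u_0) g}$ concentrate in a low-dimensional arithmetic subvariety of the relevant Grassmannian.

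To extract the desired $\Q$-subgroup $\mathbf{M}$, apply Minkowski-style basis reduction to the lattice $g\Gamma g^{-1}$ in the exterior powers $\wedge^r \LieG$: among all non-trivial proper $\Q$-subgroups $\mathbf{N}$ of $\mathbf{G}$ whose primitive integer wedge $\mathpzc{v}_N$ satisfies $\sup_{u \in \mathcal{E}} \|b(u) g.\mathpzc{v}_N\| \leq S^{A}$ for a carefully chosen exponent $A$, select one of minimal dimension. A standard $(C,\alpha)$-good propagation extends this supremum from the dense set $\mathcal{E}$ to all of $\mathsf{B}_1^U$, proving the first claimed inequality with $\ref{a:long unipotent2}$ chosen sufficiently large. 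For the second inequality, note that $\LieU \subset \LieH$ is the fastest-expanding weight space for $\Ad(a_t)$ acting on $\LieH$, and that the near-stabilization of $\hat{v}_{\LieH}$ by the conjugation chain forces $b(u) g.\mathpzc{v}_M$ to align with the $\LieU$-containing wedge-direction up to error that decays exponentially in $t$; wedging with any $\mathpzc{z} \in B_1^{\LieU}$ then picks up only the transverse, non-expanded components, yielding the bound $e^{-t/\ref{a:long unipotent2}} S^{\ref{a:long unipotent2}}$.

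The hardest step, which is where the bulk of the technical work in \cite{LMMSW24} lives, is the extraction of a \emph{single} proper $\Q$-subgroup from a family of $\gamma_u$'s that only individually nearly stabilize the \emph{non-rational} direction $\hat{v}_{\LieH}$. One must separate the $\Q$-structure (which sees $\Gamma$ and the ambient $\Q$-group $\mathbf{G}$) from the $\R$-structure of $H$ via a refined Dani--Margulis-type argument, carefully tracking quantitative $(C,\alpha)$-good bounds through the Plücker embedding and the Bruhat decomposition of $\mathbf{G}$. A secondary difficulty is calibrating the exponents $\ref{a:long unipotent1}, \ref{a:long unipotent2}$ and $\ref{e:long unipotent}$ consistently across the measure, norm, and angular scales so that the input density $|\mathcal{E}| > S^{-1/\ref{a:long unipotent1}}$ genuinely drives the pigeonhole through the $\dim(\wedge^r \LieG)$-dimensional geometry of numbers step.
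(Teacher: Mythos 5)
The paper does \emph{not} prove this statement; it simply cites \cite{LMMSW24} and makes two clarifying remarks: (i) that \cite[Theorem 2]{LMMSW24} is in fact a dichotomy whose second alternative concerns connected normal subgroups of $G$, and this alternative is vacuous here because $\SL_4(\R)$ is simple; and (ii) that \cite{LMMSW24} formulates $X_\tau$ via the height function rather than the injectivity radius, with the translation between the two being standard (cf.\ \cite[Proposition 26]{SS24}). Your proposal instead attempts a from-scratch re-derivation, which is a genuinely different route from the paper's one-line citation, and is not what is being asked for at this point in the text.

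Beyond that mismatch, the sketch has a real gap at its core. You describe a Minkowski-style selection: ``among all non-trivial proper $\Q$-subgroups $\mathbf{N}$ with $\sup_u\|b(u)g.\mathpzc{v}_N\|\le S^A$, choose one of minimal dimension.'' But you never explain why any proper nontrivial $\mathbf{N}$ satisfying such a bound exists, and that existence is the entire content of the closing lemma. The difficulty is to convert the hypothesis that many group elements $\gamma(u,u')\in\Gamma$ nearly stabilize the \emph{non-rational} direction $\hat{v}_{\LieH}$ into the existence of a genuinely rational structure (a $\Q$-subgroup) with controlled wedge norm; your selection step presupposes the conclusion. This is exactly where the dichotomy in \cite{LMMSW24} enters: the argument there may produce $\mathbf{G}$ itself or a normal subgroup rather than a useful proper $\mathbf{M}$, and alternative (2) of their theorem is how those contingencies are handled. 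Your sketch does not engage with the dichotomy at all, and hence silently assumes the case that the paper explicitly invokes simplicity of $\SL_4(\R)$ to secure. Your treatment of the second inequality is also informal: the bound $\|\mathpzc{z}\wedge(a_t u a_{-t}g.\mathpzc{v}_M)\|\le e^{-t/\ref{a:long unipotent2}}S^{\ref{a:long unipotent2}}$ encodes that $\LieU$ is almost contained in $\Ad(a_t u a_{-t}g)\LieM$, not that $\mathpzc{v}_M$ ``aligns with the $\LieU$-containing wedge direction,'' and the derivation requires tracking which weight components of $\mathpzc{v}_M$ are expanded versus killed by the wedge, not a vague appeal to near-stabilization. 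Finally, you omit the height-versus-injectivity-radius translation that the paper flags as necessary to even apply the cited result.
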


\begin{remark}
    We remark that in \cite{LMMSW24} the notion $X_\tau$ is defined via the \emph{heights} of points in $X$ instead of the notion $\inj$ defined in this paper. However, the transition between them is well-known, see for example \cite[Proposition 26]{SS24}.
\end{remark}

Another key ingredient for Lemma~\ref{lem:Closing lemma many scale} is the following avoidance principle. It is similar to \cite[Theorem 2]{SS24}. It will also play an important role later in Part~\ref{part:bootstrap}. 
\begin{proposition}\label{pro:avoidance}
    There exist $\mathsf{m}$, $s_0$, $\consta\label{a:avoidance}$, $\constc\label{c:avoidance}$, and $\constd\label{d:avoidance}$ depending only on $(G, H, \Gamma)$, so that the following holds. Let $R_1, R_2 \geq 1$. Suppose $x_0 \in X$ is so that
    \begin{align*}
        d_X(x_0, x) \geq(\log R_2)^{\ref{d:avoidance}} R_2^{-1}
    \end{align*}
    for all $x$ with $\vol(H.x) \leq R_1$. Then for all $s \geq \ref{a:avoidance}\max\{\log R_2, |\log \inj(x_0)|\} + s_0$ and all $\eta \in (0, 1]$, we have
    \begin{align*}
        m_U\Biggl(\Biggl\{u \in \mathsf{B}_1^U: \begin{aligned}{}&\inj(a_su.x_0) \leq \eta \text{ or }\exists x \text { with }\vol(H.x) \leq R_1\\ {}&\text { and } d_X(a_s u. x_0, x) \leq \ref{c:avoidance}^{-1} R_1^{-\ref{d:avoidance}}\end{aligned}\Biggr\}\Biggr) \leq \ref{c:avoidance} (R_1^{-1} + \eta^{\frac{1}{\mathsf{m}}}).
    \end{align*}
\end{proposition}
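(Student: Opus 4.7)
The plan is to use the classical two-step approach to avoidance in homogeneous dynamics: decompose the bad set into points where $a_su.x_0$ has small injectivity radius and points where $a_su.x_0$ is close to a periodic $H$-orbit of volume at most $R_1$, and then handle each of them via $(C,\alpha)$-good polynomial estimates combined with Dani--Margulis linearization. The structure of the statement and the explicit comparison with \cite[Theorem 2]{SS24} suggest that the proof is an essentially verbatim adaptation of that argument, with $\mathbf{H}$ taken to be the $\mathbb{Q}$-form of $\SO(2,2)^\circ$ or $\SO(3,1)^\circ$ inside $\SL_4$.

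For the cusp-excursion set, I would apply a quantitative non-divergence estimate for the polynomial orbit $u \mapsto a_s u.x_0$ in the style of Kleinbock--Margulis and Eskin--Margulis--Mozes. The coordinate functions of $u \mapsto a_sug_0.v$ (where $x_0 = g_0\Gamma$ and $v$ runs over the relevant integral vectors in $\wedge^{\bullet}\mathbb{R}^N$) are polynomials of degree bounded in terms of $G$, hence $(C,\alpha)$-good for some $\alpha = \alpha(G)$. Once $s \gg |\log\inj(x_0)|$ the contracting action of $a_s$ on $\LieU^-$ pulls the orbit away from its initial compressed configuration, giving
\[
m_U\bigl(\{u \in \mathsf{B}_1^U : \inj(a_su.x_0) \leq \eta\}\bigr) \ll \eta^{1/\mathsf{m}}
\]
for an $\mathsf{m}$ depending only on $(G,H,\Gamma)$; this is exactly where the hypothesis $s \geq \ref{a:avoidance}|\log\inj(x_0)|$ is consumed.

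The main work is in the periodic-orbit avoidance. Periodic $H$-orbits of volume $\leq R_1$ correspond to $\Gamma$-orbits of primitive integral vectors $\gamma.\mathpzc{v}_{H'} \in \wedge^{\dim\LieH}\LieG_{\mathbb{Z}}$, where $\mathbf{H}'$ runs over the finitely many $\mathbb{Q}$-conjugacy classes of $\mathbb{Q}$-subgroups of $\mathbf{G}$ whose real points are conjugate to $H$, and these vectors have norm $\ll R_1^{\star}$ at a suitable reference frame. Closeness $d_X(a_su.x_0, x) \leq \ref{c:avoidance}^{-1}R_1^{-\ref{d:avoidance}}$ therefore forces some such $\gamma$ to make $\|a_sug_0.(\gamma.\mathpzc{v}_{H'})\|$ unusually small, while the Diophantine hypothesis on $x_0$ translates, via the comparison between $d_X$ near $x_0$ and the Fubini--Study metric on $\mathbb{P}(\wedge^{\dim\LieH}\LieG)$, into a lower bound $\|g_0.(\gamma.\mathpzc{v}_{H'})\| \gg R_2^{-1}(\log R_2)^{-\ref{d:avoidance}}$ for the same vectors at $s = 0$. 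The $(C,\alpha)$-good property then bounds the $m_U$-measure of each ``bad-$\gamma$'' event by a power of the ratio of the final to initial scales, and summing over $\gamma$ yields a total contribution $\ll R_1^{-1}$. The main obstacle is the multiplicity control in this sum: one needs that only polynomially many translates $a_sug_0.(\gamma.\mathpzc{v}_{H'})$ can be simultaneously short, which is a Borel--Harish-Chandra count on $\mathbb{Q}$-subgroups conjugate to $\mathbf{H}$ of bounded height and is precisely what forces the logarithmic safety factor $(\log R_2)^{\ref{d:avoidance}}$ in the hypothesis. Assembling the cusp bound with the periodic-orbit bound produces the stated estimate with constants $\mathsf{m}$, $\ref{a:avoidance}$, $\ref{c:avoidance}$, $\ref{d:avoidance}$ depending only on $(G,H,\Gamma)$.
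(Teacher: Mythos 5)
Your outline matches the paper's treatment: the paper gives no self-contained argument, simply citing \cite[Propositions 4.2, 4.4]{LMWY25}, \cite[Theorem 2]{SS24}, and \cite[Corollary 7.2]{LMMS}, and your sketch correctly identifies the underlying quantitative non-divergence plus Dani--Margulis linearization strategy (with $(C,\alpha)$-good polynomial estimates and a height count on $\Q$-subgroups) that those sources carry out. The one detail I would not sign off on without checking is your attribution of the $(\log R_2)^{\ref{d:avoidance}}$ safety factor specifically to the Borel--Harish-Chandra multiplicity count; in the cited arguments this factor more typically accumulates from the multi-scale iteration inside the linearization, but this is a bookkeeping question about where the logarithm enters, not a gap in the approach.
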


We now sketch an outline for Part~\ref{part:closinglemma}. In Section~\ref{sec:preparation for discriminant}, we recall the relations between different measurements for complexity of a periodic orbit. With this preparation, we start by proving a single scale version for Lemma~\ref{lem:Closing lemma many scale} (namely, Lemma~\ref{lem:Closing lemma one scale}) in Section~\ref{sec:Initial one scale}. This is the main part of this section and the proof relies heavily on the effective closing lemma (recorded in Theorem~\ref{thm:closing lemma long unipotent}) proved in \cite{LMMSW24}. Then, we apply the avoidance principle (Proposition~\ref{pro:avoidance}) to prove Lemma~\ref{lem:Closing lemma many scale} in Section~\ref{sec: Improve many scales}. The last two sections in Part~\ref{part:closinglemma} are devoted to the transition from Lemma~\ref{lem:Closing lemma many scale} to Theorem~\ref{thm:closing lemma initial dim}. Section~\ref{sec:boxes} provides suitable preparation from \cite[Section 7, 8]{LMW22}. In Section~\ref{sec:Margulis function} we prove Theorem~\ref{thm:closing lemma initial dim}. Roughly speaking, one can view these two sections as a transition between two notions of dimension, i.e., from Frostman-type condition to $\alpha$-energy estimate. It roughly follows from the process in \cite[Section 11]{LMW22} with a mild modification. See Section~\ref{sec:Margulis function} for a detailed exposition.  

\section{Preparation I: Measurement for complexity of periodic orbits}\label{sec:preparation for discriminant}
For a periodic orbit, there are various ways to measure its complexity. We briefly recall their relations in this section. For a periodic orbit $Hg\Gamma$ inside $X = G/\Gamma$, one can attach the following quantities to measure its complexity. 

First, from the Riemannian metric on $X = G/\Gamma$, there is a natural volume form on all its embedded submanifolds. Therefore, we can define the volume of a periodic orbit $Hg \Gamma$. We use $\vol(Hg\Gamma)$ to denote this quantity. 

Second, we can define its discriminant which measures its arithmetic complexity. Since $Hg\Gamma$ is periodic, $g^{-1} H g \cap \Gamma$ is a lattice in $g^{-1} H g$ and therefore it is Zariski dense in $g^{-1} H g$. There exists a $\Q$-subgroup $\mathbf{M} \subseteq \mathbf{G}$ so that $g^{-1} H g = \mathbf{M}(\R)^\circ$. This implies that $\Ad(g^{-1})\LieH$ is a $\Q$-subspace of $\LieG$. Let $B$ be the Killing form of $\LieG$. Let
\begin{align*}
    V = (\wedge^{\dim(H)} \LieG)^{\otimes 2}, \qquad V_\Z = (\wedge^{\dim(H)} \LieG_\Z)^{\otimes 2}
\end{align*}
and let
\begin{align*}
    v_{Hg} = \frac{1}{\det(B(e_i, e_j))} (e_1 \wedge \cdots \wedge e_{\dim H})^{\otimes 2} \in V
\end{align*}
where $e_1, \cdots, e_{\dim H}$ is a $\Q$-basis of the $\Q$-subspace $\Ad(g^{-1})\LieH$. The discriminant of $Hg\Gamma$ is defined to be
\begin{align*}
    \disc(Hg\Gamma) = \min\{m \in \Z_{> 0}: mv_{Hg} \in V_\Z\}.
\end{align*}
Note that although the $\Q$-subspace $\Ad(g^{-1})\LieH$ \emph{does} depends on the choice of the representative $g$, $\disc(Hg\Gamma)$ is well-defined. Indeed, a different representative $g\gamma$ gives a possibly different $\Q$-subspace $\Ad(\gamma^{-1}g^{-1})\LieH$. However, $\Ad(\gamma^{-1})$ maps primitive vectors in $V_\Z$ to primitive vectors, the discriminant $\disc(Hg\Gamma)$ is unchanged. 

Lastly, recall that $\mathpzc{v}_M$ is defined as one of the primitive integer vector of the line $\wedge^{\dim \mathbf{M}} \LieM$ inside $\wedge^{\dim \mathbf{M}} \LieG$. The height of $\mathbf{M}$ is defined to be $\height(\mathbf{M}) = \|\mathpzc{v}_M\|$. However, the group $\mathbf{M}$ \emph{does} depends on the choice of representative $g$: if we change $g$ to $g\gamma$, then we need to change $\mathbf{M}$ to $\gamma^{-1}\mathbf{M} \gamma$. The length $\|\Ad(\gamma^{-1})\mathpzc{v}_M\|$ can be significantly different from $\|\mathpzc{v}_M\|$. 

By \cite[Proposition 17.1]{EMV09}, we have the following relation between volume and discriminant:
\begin{align*}
    \vol(Hg\Gamma) \ll \disc(Hg\Gamma)^{\star}.
\end{align*}
The connection between $\disc(Hg\Gamma)$ and $\height(\mathbf{M})$ is recorded in the following lemma. 

\begin{lemma}
    For all $\Q$-subgroup $\mathbf{M}$ so that $\mathbf{M}(\R)^\circ = g^{-1} H g$, we have
    \begin{align*}
        \disc(Hg\Gamma) \ll \height(\mathbf{M})^2.
    \end{align*}
    The implied constant does not depend on the choice of representative $g$. 
\end{lemma}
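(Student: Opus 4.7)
The plan is to pick a particularly convenient $\Q$-basis $e_1,\ldots,e_{\dim H}$ of the $\Q$-subspace $\LieM = \Ad(g^{-1})\LieH$ so that $v_{Hg}$ can be expressed directly in terms of $\mathpzc{v}_M$. First I would verify that $v_{Hg}$ does not depend on the choice of $\Q$-basis: a change of basis by $A\in\GL_{\dim H}(\Q)$ scales $e_1\wedge\cdots\wedge e_{\dim H}$ by $\det(A)$ and transforms the Gram matrix $(B(e_i,e_j))$ as $AGA^T$, so the $\det(A)^2$ factors cancel against each other. Using this freedom, I would then take the $e_i$ to be a $\Z$-basis of the full-rank lattice $\LieM\cap\LieG_\Z$ (full-rank because $\mathbf{M}$ is a $\Q$-subgroup). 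By the very definition of $\mathpzc{v}_M$ as a primitive generator of $\wedge^{\dim\mathbf{M}}\LieM \cap \wedge^{\dim H}\LieG_\Z$, such a basis satisfies $e_1\wedge\cdots\wedge e_{\dim H}=\pm\mathpzc{v}_M$, giving
$$v_{Hg}=\frac{1}{\det(B(e_i,e_j))}\,\mathpzc{v}_M^{\otimes 2}.$$

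Next I would argue that $\disc(Hg\Gamma)\ll |\det(B(e_i,e_j))|$. The Killing form is $\Q$-valued on $\LieG_\Z$ with a bounded denominator depending only on the $\Q$-structure, so I may write $\det(B(e_i,e_j))=p/q$ with $p,q\in\Z$ coprime and $|q|\ll 1$. Because $\mathpzc{v}_M$ is primitive in $\wedge^{\dim H}\LieG_\Z$, the tensor $\mathpzc{v}_M^{\otimes 2}$ is primitive in $V_\Z$; hence $m v_{Hg}\in V_\Z$ forces $p\mid mq$, and since $\gcd(p,q)=1$ we get $p\mid m$. The minimal such positive $m$ is $|p|$, so $\disc(Hg\Gamma)=|p|\leq |q|\cdot|\det(B(e_i,e_j))|\ll|\det(B(e_i,e_j))|$.

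The final step is to bound $|\det(B(e_i,e_j))|$ by $\|\mathpzc{v}_M\|^2$. For this I would invoke the identity
$$\det(B(e_i,e_j)) = B^{(\dim H)}(\mathpzc{v}_M,\mathpzc{v}_M),$$
where $B^{(\dim H)}$ denotes the symmetric bilinear form on $\wedge^{\dim H}\LieG$ induced by $B$ via $B^{(\dim H)}(u_1\wedge\cdots\wedge u_d,\,v_1\wedge\cdots\wedge v_d)=\det(B(u_i,v_j))$. Since $B^{(\dim H)}$ is a fixed bilinear form on a finite-dimensional space, it has bounded operator norm relative to the chosen norm, so $|\det(B(e_i,e_j))|\leq C\,\|\mathpzc{v}_M\|^2=C\,\height(\mathbf{M})^2$ for a constant $C$ depending only on $\LieG$ and the chosen norm. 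Chaining the two bounds yields $\disc(Hg\Gamma)\ll \height(\mathbf{M})^2$, with implied constant independent of $g$ and of the specific $\mathbf{M}$, because each input to the argument ($\LieG$, $\LieG_\Z$, $B$, the norm) is fixed. The only delicate point in the whole plan is the rationality bookkeeping in the middle step — verifying the uniform bound on denominators of $B$ on $\LieG_\Z$ and the primitivity of $\mathpzc{v}_M^{\otimes 2}$ in $V_\Z$ — and this constitutes the main, but rather mild, technical obstacle.
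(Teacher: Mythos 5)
Your proposal is correct and follows essentially the same route as the paper: pick a $\Z$-basis of $\LieM \cap \LieG_\Z$, relate $\disc(Hg\Gamma)$ to $\det(B(e_i,e_j))$, and bound the latter via $\det(B(e_i,e_j)) = B^{(\dim H)}(\mathpzc{v}_M,\mathpzc{v}_M) \ll \|\mathpzc{v}_M\|^2$. You are a bit more careful than the paper's one-line proof about the denominator bookkeeping for $B$ on $\LieG_\Z$ and the primitivity of $\mathpzc{v}_M^{\otimes 2}$, but these are fillings-in rather than a different argument.
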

\begin{proof}
By taking a $\Z$-basis of $\LieM = \Ad(g^{-1})\LieH$, we have
\begin{align*}
\disc(Hg\Gamma) = \det (B(e_i, e_j)) \ll \|e_1 \wedge \cdots \wedge e_{\dim H}\|^2 = \height(\mathbf{M})^2.
\end{align*}
\end{proof}
We have the following direct corollary. 
\begin{corollary}\label{cor:Volume and Height}
    There exists $c > 1$ depends only on $(G, H, \Gamma)$ so that the following holds. For all $\Q$-subgroup $\mathbf{M}$ with $\mathbf{M}(\R)^\circ = g^{-1} H g$, we have
    \begin{align*}
        \vol(Hg\Gamma) \ll \height(\mathbf{M})^{c}.
    \end{align*}
\end{corollary}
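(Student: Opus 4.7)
The statement is purely a chaining of two bounds that have just been made available in the preceding paragraphs, so the proposal is short. The plan is to take the inequality $\vol(Hg\Gamma) \ll \disc(Hg\Gamma)^{\star}$ coming from \cite[Proposition 17.1]{EMV09}, which gives some exponent $c_1 > 0$ depending only on $(G, H, \Gamma)$ with $\vol(Hg\Gamma) \leq C_1 \disc(Hg\Gamma)^{c_1}$, and combine it with the lemma immediately above the corollary, which yields $\disc(Hg\Gamma) \leq C_2 \height(\mathbf{M})^{2}$ for any $\Q$-subgroup $\mathbf{M}$ with $\mathbf{M}(\R)^\circ = g^{-1}Hg$.

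Substituting the second estimate into the first gives
\[
\vol(Hg\Gamma) \leq C_1 \bigl(C_2 \height(\mathbf{M})^{2}\bigr)^{c_1} \ll \height(\mathbf{M})^{2c_1},
\]
so the corollary holds with $c = 2c_1$ (and, if one insists on $c>1$, with $c = \max\{2c_1, 2\}$). Both constants $C_1, C_2$ and the exponent $c_1$ depend only on $(G, H, \Gamma)$, so $c$ inherits the same dependence. The crucial observation making this legitimate as stated \emph{uniformly in the representative} $g$ is that $\disc(Hg\Gamma)$ is already independent of the choice of representative (as emphasized in the definition), while the lemma bounds this representative-independent quantity by $\height(\mathbf{M})^{2}$ for \emph{any} admissible $\mathbf{M}$; the chained bound therefore automatically holds for any such $\mathbf{M}$ without any further bookkeeping. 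There is no real obstacle here — the only thing to check is that the $\star$ in the EMV bound is an absolute exponent of the form allowed by the $\star$-convention of Section~\ref{sec:Global prelim}, which it is.
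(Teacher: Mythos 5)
Your proof is correct and is precisely the intended argument: the corollary is stated without proof in the paper because it is the immediate composition of the bound $\vol(Hg\Gamma) \ll \disc(Hg\Gamma)^{\star}$ from \cite[Proposition 17.1]{EMV09} with the preceding lemma's bound $\disc(Hg\Gamma) \ll \height(\mathbf{M})^{2}$. Your remark that the representative-independence of $\disc$ makes the chained estimate automatically uniform in $g$ and $\mathbf{M}$ is also the right observation.
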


\section{Dimension estimate in one scale}\label{sec:Initial one scale}
This section is devoted to prove the following weaker version of \cref{lem:Closing lemma many scale}. It provides a dimension estimate in a single scale. Later in \cref{sec: Improve many scales}, using \cref{pro:avoidance}, we are able to extend it to all larger scales and prove \cref{lem:Closing lemma many scale}. 
\begin{lemma}\label{lem:Closing lemma one scale}
    There exist constants $\consta\label{a:closing lemma frostman one scale}, \constc\label{c:closing lemma frostman one scale}, \constE\label{e:closing lemma frostman one scale}, \constm\label{m:closing lemma frostman one scale} > 1$ and $\epsilon_2 > 0$ depending only on $(G, H, \Gamma)$ so that the following holds. For all $D > 0$, $x_1 \in X_\eta$ and $R \gg \eta^{-\ref{e:closing lemma frostman one scale}}$, let $M = \ref{m:closing lemma frostman one scale} + \ref{c:closing lemma frostman one scale} D$, $t = M\log R$, $\mu_t = \nu_{t} \ast \delta_{x_1}$ and $\delta = R^{-\frac{1}{\ref{a:closing lemma frostman one scale}}}$. 

    Suppose that for all periodic orbit $H.x'$ with $\vol(H.x') \leq R$, we have
    \begin{align*}
        d(x_1, x') > R^{-D}.
    \end{align*}
    Then for all $y \in X_{3\eta}$ and all $r_H \leq \frac{1}{2}\min\{\inj(y), \eta_0\}$, we have
    \begin{align*}
        \mu_t((\mathsf{B}_{r_H}^H)^{\pm 1}\exp(B_{\delta}^{\mathfrak{r}}).y) \leq \delta^{\epsilon_2}.
    \end{align*}
\end{lemma}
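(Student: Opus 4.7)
The plan is to argue by contradiction, producing a periodic $H$-orbit of volume at most $R$ within distance $R^{-D}$ of $x_1$, contradicting the Diophantine hypothesis. Suppose therefore that
\begin{equation*}
\mathcal{E} := \{u \in \mathsf{B}_1^U : a_tu.x_1 \in (\mathsf{B}_{r_H}^H)^{\pm 1}\exp(B_\delta^\mathfrak{r}).y\}
\end{equation*}
has $m_U(\mathcal{E}) > \delta^{\epsilon_2}$. Since $y \in X_{3\eta}$ and $r_H \leq \tfrac{1}{2}\inj(y)$, every point $a_tu.x_1$ for $u \in \mathcal{E}$ satisfies $\inj(a_tu.x_1) \gtrsim \eta$. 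A direct matrix computation shows $U$ is abelian in both cases. Fix any $u_0 \in \mathcal{E}$, set $x_0 := a_tu_0.x_1 = g_{00}\Gamma$ with $g_{00} := a_tu_0g$, and observe that for $v := uu_0^{-1} \in U$ one has $(a_tva_{-t}).x_0 = a_tu.x_1$; hence the set $\tilde{\mathcal{E}} := \mathcal{E}u_0^{-1} \subseteq \mathsf{B}_2^U$ has $m_U(\tilde{\mathcal{E}}) = m_U(\mathcal{E})$, and its $a_tva_{-t}$-orbit of $x_0$ lies entirely in the tube around $y$.

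I apply Theorem~\ref{thm:closing lemma long unipotent} with base point $x_0$, time $t$, set $\tilde{\mathcal{E}}$, and parameter $S := \delta^{-c}$ for a small absolute $c > 0$ to be fixed. The injectivity requirement $S \geq \ref{e:long unipotent}\inj(x_0)^{-\ref{a:long unipotent1}}$ holds for $R \gg \eta^{-\star}$, and the cardinality bound $m_U(\tilde{\mathcal{E}}) > S^{-1/\ref{a:long unipotent1}}$ holds once $c\epsilon_2 < 1/\ref{a:long unipotent1}$. For the pair condition, given $v, v' \in \tilde{\mathcal{E}}$ corresponding to $u, u' \in \mathcal{E}$, I write $a_tu.x_1 = h_1\exp(w_1).y$ and $a_tu'.x_1 = h_2\exp(w_2).y$ with $h_i \in (\mathsf{B}_{r_H}^H)^{\pm 1}$ and $w_i \in B_\delta^\mathfrak{r}$. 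Unfolding the $\Gamma$-relation produces $\gamma \in \Gamma$ with
\begin{equation*}
a_tva_{-t}g_{00}\gamma g_{00}^{-1}a_t(v')^{-1}a_{-t} = a_tug\gamma g^{-1}(u')^{-1}a_{-t} = h_1\exp(w_1)\exp(-w_2)h_2^{-1},
\end{equation*}
and Lemma~\ref{lem:BCH} rewrites this as $h_1hh_2^{-1}\exp(\bar w)$ with $h \in H$ near the identity and $\|\bar w\| \leq 2\delta$. Hence its norm is $O(r_H) \leq S^{1/\ref{a:long unipotent1}}$, and since the $H$-factor preserves $\hat v_\mathfrak{h}$ and $\exp(\bar w)$ for $\bar w \in \mathfrak{r}$ moves $\hat v_\mathfrak{h}$ by angle $O(\|\bar w\|) = O(\delta) \leq S^{-1}$, both hypotheses hold for $c$ small enough.

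Theorem~\ref{thm:closing lemma long unipotent} then outputs a non-trivial proper $\Q$-subgroup $\mathbf{M}$ with
\begin{equation*}
\sup_{v \in \mathsf{B}_1^U}\|a_tva_{-t}g_{00}.\mathpzc{v}_M\| \leq S^{\ref{a:long unipotent2}}, \qquad \sup_{v, \mathpzc{z} \in B_1^{\LieU}}\|\mathpzc{z} \wedge a_tva_{-t}g_{00}.\mathpzc{v}_M\| \leq e^{-t/\ref{a:long unipotent2}}S^{\ref{a:long unipotent2}}.
\end{equation*}
At $v = e$ the angle bound says $\Ad(g_{00})\LieM$ sits within Grassmannian distance $O(e^{-t/\ref{a:long unipotent2}}S^{\star})$ of containing $\LieU$, while varying $v$ combined with the norm bound controls its components in the $\Ad(a_t)$-expanded weight spaces of $\LieG$. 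I then plan to use the irreducibility of $\mathfrak{r}$ as an $\Ad(H)$-module, together with a polynomial-divergence argument exploiting the near-$\Ad(a_tUa_{-t})$-invariance of $\Ad(g_{00})\LieM$, to upgrade the approximate containment $\Ad(g_{00})\LieM \supseteq \LieU$ to the exact equality $\Ad(g_{00})\LieM = \LieH$. A perturbation of $g_{00}$ to $g'_{00}$ with $\Ad(g'_{00})\LieM = \LieH$ exactly then places a periodic $H$-orbit through $g'_{00}\Gamma$ within distance $O(e^{-t/\ref{a:long unipotent2}}S^{\star})$ of $x_0$, and Corollary~\ref{cor:Volume and Height} bounds its volume by $\height(\mathbf{M})^\star \leq S^{\star} \leq R$ once $S = R^\kappa$ with $\kappa$ small. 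Since $x_1 = (a_tu_0)^{-1}.x_0$ and $(a_tu_0)^{-1} \in H$ preserves the periodic orbit setwise, the orbit likewise passes close to $x_1$, and choosing $M = \ref{m:closing lemma frostman one scale} + \ref{c:closing lemma frostman one scale}D$ with $\ref{c:closing lemma frostman one scale}$ sufficiently large compared to $\ref{a:long unipotent2}$ drives the transverse displacement below $R^{-D}$.

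The main obstacle is the algebraic step identifying $\Ad(g_{00})\LieM = \LieH$, i.e.\ promoting the arbitrary proper $\Q$-subgroup $\mathbf{M}$ produced by Theorem~\ref{thm:closing lemma long unipotent} to a $\Q$-form of $\mathbf{H}$; ruling out intermediate subalgebras of $\mathfrak{sl}_4(\R)$ between $\LieU$ and $\LieH$ (or properly containing $\LieH$) requires the irreducibility of $\mathfrak{r}$ together with the explicit subgroup structure of $\SL_4(\R)$ around $\LieH$, and the non-simplicity of $H_1 \cong \SO(2,2)^\circ$ will likely require a separate case analysis from $H_2 \cong \SO(3,1)^\circ$. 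A secondary obstacle is ensuring that the displacement between $x_1$ and the periodic orbit is small despite the $e^{2t}$-distortion of $\Ad((a_tu_0)^{-1})$ on some weight components of $\mathfrak{r}$: the $e^{-t/\ref{a:long unipotent2}}$ factor in the closing-lemma conclusion is precisely what should compensate this, provided the norm bound $\sup_{v \in \mathsf{B}_1^U}\|a_tva_{-t}g_{00}.\mathpzc{v}_M\| \leq S^{\star}$ forces the displacement to lie in the $\Ad(a_{-t})$-contracted components. Finally, balancing the parameters $(\ref{a:closing lemma frostman one scale}, \ref{c:closing lemma frostman one scale}, \ref{m:closing lemma frostman one scale}, \epsilon_2, c, \kappa)$ against the absolute constants $(\ref{a:long unipotent1}, \ref{a:long unipotent2}, \ref{e:long unipotent})$ so that all inequalities close consistently requires careful bookkeeping.
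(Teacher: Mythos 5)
Your outline tracks the paper's strategy at a high level — argue by contradiction, re-center the orbit, apply the effective closing lemma of \cite{LMMSW24}, then upgrade the output $\Q$-subgroup to a $\Q$-form of $H$ and invoke the height/volume comparison — but two of the steps you defer are where the actual content lies, and your proposed resolution of the second one does not work.

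First, the step you call the ``main obstacle,'' upgrading the approximate $H$-invariance of $\Ad(g).\LieM$ to an exact conjugate of $\LieH$, is precisely the content of Lemma~\ref{lem:Important Linear Algebra}, and it is not a matter of ``polynomial divergence plus irreducibility.'' The paper's route is: (i) decompose $\wedge^{\dim M}\LieG$ into $H$-fixed and non-fixed parts and use the expansion $\sup_{u}\|a_t u.v^{\mathrm{non}}\| \asymp e^t\|v^{\mathrm{non}}\|$ to show $g.\mathpzc{v}_M$ is within $\tilde{R}^\star e^{-t/\star}$ of an $H$-fixed vector; (ii) invoke a quantitative {\L}ojasiewicz inequality (Solern\'o) to project that fixed vector onto the variety of $H$-fixed pure wedges; (iii) classify all $H$-invariant subspaces of $\mathfrak{sl}_4$ (\cref{lem:H invariant SO 3 1,lem:H invariant SO 2 2}) and rule out the case $W\supseteq\mathfrak{r}$ because $\mathfrak{r}$ is far from a subalgebra (\cref{lem:symmetric subgroup lie algebra}), and rule out $W = \LieH_i$ for $\SO(2,2)$ by using the second wedge bound in \cref{thm:closing lemma long unipotent}; (iv) in the remaining case $W = \LieH$, use an $\End$-equivariant projection (\cref{lem:equivariant proj}) to produce a bona fide nearby conjugate. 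Stating ``irreducibility of $\mathfrak{r}$'' is not enough — in particular, for $H_1 \cong \SO(2,2)^\circ$ the closing lemma's first conclusion alone cannot distinguish $\LieH$ from one of its $\mathfrak{sl}_2$-ideals; the extra wedge estimate \cref{eqn:Horosphere SO 2 2} is precisely what kills that case.

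Second, and more fatally, your way of returning from the re-centered point $x_0 = a_t u_0.x_1$ back to $x_1$ fails. If the linear algebra lemma is applied at the lift $g_{00} = a_tu_0g$, the resulting perturbation $g'$ satisfies $\|g'-\Id\| \leq \tilde{R}^\star e^{-t/\star}$, and transporting the periodic orbit back requires replacing $g'$ by its conjugate $(a_tu_0)^{-1}g'(a_tu_0)$; generically this conjugation amplifies the norm by $e^{2t}$, and since $\ref{a:long unipotent2} > 1$ the product $e^{2t}\cdot e^{-t/\ref{a:long unipotent2}}$ is exponentially \emph{large}. Your hope that the displacement automatically lies in the $\Ad(a_{-t})$-contracted components is unjustified: the perturbation direction produced by the equivariant projection is essentially arbitrary in $\mathfrak{r}$, and $\mathfrak{r}$ has weight spaces $\mathfrak{r}_{-2},\mathfrak{r}_{-1}$ which are expanded by $a_{-t}$. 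A secondary version of the same issue appears in the lower bound $\|g.v_M\|\geq\tilde{\eta}$ demanded by \cref{lem:Important Linear Algebra}: with $g = g_{00}$ the inverse norm $\|g_{00}^{-1}\| \sim e^t\eta^{-\star}$ makes this bound exponentially small, violating the hypothesis $\tilde{R}\gg\tilde{\eta}^{-2}$. The paper sidesteps all of this: it takes a \emph{small} reduction-theory lift $g_2$ of $x_2$, writes $g_2\gamma = a_tu_1g_1$ for a lattice element $\gamma$, and then \emph{re-expresses} the closing-lemma bounds as bounds on $\sup_{u\in\mathsf{B}_1^U}\|a_t u g_1\gamma.\mathpzc{v}_M\|$, so that \cref{lem:Important Linear Algebra} is applied directly with $g = g_1\gamma$, where $\|g_1\gamma.\mathpzc{v}_M\|\geq\eta^\star$ holds since $\gamma.\mathpzc{v}_M$ is a primitive integer vector, and the output $g'$ with $\|g'-\Id\|\leq \tilde{R}^\star e^{-t/\star}$ directly places $g'g_1\Gamma$ on a periodic orbit of volume $\leq R$ at distance $\leq R^{-D}$ from $x_1$. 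You need this re-expression before invoking the linear-algebra lemma; the path via conjugating the perturbation back is a dead end.
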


\subsection{Linear algebra lemma}
The main lemma for this subsection is of the following. 
\begin{lemma}\label{lem:Important Linear Algebra}
    There exists an absolute constant $\constc\label{c:linear algebra} > 0$ depending only on $(G, H)$ so that the following holds for all $\tilde{\eta} \in (0, 1)$, $\tilde{R} \gg \tilde{\eta}^{-2}$, and $\tilde{t} \geq \ref{c:linear algebra}(\ref{c:linear algebra} + 1)\log R$. The implied constant here depends only on $(G, H)$. 
    
    Suppose there exist a connected proper $\R$-subgroup $\mathbf{M}$ of $\mathbf{SL}_4$ and $g \in G$ with the following properties. Let $M = \mathbf{M}(\R)$ and let $v_M$ be a non-zero vector in the line corresponding to $\LieM = \mathrm{Lie}(M)$ in $\wedge^{\dim M} \LieG$, it satisfies
    \begin{align}
    \begin{aligned}
        \|g.v_M\| {}&\geq \tilde{\eta},\\
        \sup_{u \in \mathsf{B}^U_1} \|a_{\tilde{t}} u g.v_M\| &{}\leq \tilde{R}. 
    \end{aligned}
    \end{align}
    Then we have
    \begin{align*}
        \|g.v_M\| \ll \tilde{R}.
    \end{align*}
    Moreover, if $H \cong \SO(3, 1)^\circ$, there exists $g' \in G$ with $\|g' - I\| \leq \tilde{R}^{\ref{c:linear algebra}} e^{-\frac{1}{\ref{c:linear algebra}}\tilde{t}}$ so that 
    \begin{align*}
        g'gM^\circ g^{-1}(g')^{-1} = H.
    \end{align*}
    If $H \cong \SO(2, 2)^\circ$, assume further that there exists $A > 1$ so that
    \begin{align}\label{eqn:Horosphere SO 2 2}
        \sup_{\mathpzc{z} \in B_1^{\LieU}, u \in \mathsf{B}_1^U} \|\mathpzc{z} \wedge (a_{\tilde{t}} u g.v_M)\| \leq e^{-\frac{\tilde{t}}{A}}\tilde{R}.
    \end{align}
    Then if $\tilde{t} \geq A\ref{c:linear algebra}(\ref{c:linear algebra} + 1) \log \tilde{R}$, there exists $g' \in G$ with $\|g' - I\| \leq \tilde{R}^{\ref{c:linear algebra}} e^{-\frac{1}{\ref{c:linear algebra}}\tilde{t}}$ so that 
    \begin{align*}
        g'gM^\circ g^{-1}(g')^{-1} = H.
    \end{align*}
\end{lemma}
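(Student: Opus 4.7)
The first step is to exploit the commutation $a_{\tilde{t}} u(r, s) a_{-\tilde{t}} = u(e^{\tilde{t}} r, e^{\tilde{t}} s)$ in order to rewrite the hypothesis as a uniform polynomial bound. Setting $y := a_{\tilde{t}} g v_M \in \wedge^{\dim M} \LieG$, the hypothesis becomes
\begin{align*}
\|\Ad(u(r', s')) y\| \leq \tilde R \quad \text{for all } (r', s') \in [-e^{\tilde t}, e^{\tilde t}]^2.
\end{align*}
The map $(r', s') \mapsto \Ad(u(r', s')) y$ is polynomial of degree bounded by the nilpotence index of $\ad \LieU$ on $\wedge^{\dim M} \LieG$, which depends only on $(G, H)$. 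Hence classical Markov-type polynomial inequalities give, for each $i + j$ at most this degree,
\begin{align*}
\|(\ad X_1)^i (\ad X_2)^j y\| \ll \tilde R \, e^{-\tilde t (i+j)},
\end{align*}
where $X_1, X_2$ is a basis of the abelian Lie algebra $\LieU$. Setting $i = j = 0$ in particular yields $\|a_{\tilde{t}} g v_M\| \ll \tilde R$, so every $a_t$-weight component $v_\lambda$ of $g v_M$ satisfies $\|v_\lambda\| \ll \tilde R \, e^{-\lambda \tilde t}$, which already controls the non-negative weight part.

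To bound the negative-weight components and thereby obtain $\|g v_M\| \ll \tilde R$, I would perform a chain analysis: any non-trivial $v_\lambda$ with $\lambda < 0$ is moved by iterated brackets with $X_1, X_2$ into higher weight spaces, and the polynomial estimates above control the image. Since the adjoint representation of $\LieU$ on $\wedge^{\dim M} \LieG$ decomposes into finite-length weight strings, a recursion on weights produces $\|v_\lambda\| \ll \tilde R$ for every $\lambda$, summing to the claimed bound. For the conjugation statement (necessarily $\dim M = \dim H$), the first-order estimates $\|\ad X_k (g v_M)\| \ll \tilde R \, e^{-\tilde t}$ mean that the line $\R \cdot g v_M$ is $\tilde R \, e^{-\tilde t}$-close to an $\Ad(U)$-invariant line in $\wedge^{\dim H} \LieG$, which classifies via $U$-invariant subspaces of $\LieG$ of dimension $\dim H$.

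For $H \cong \SO(3, 1)^\circ$, the classification of such $U_2$-invariant subspaces singles out $\LieH_2$ (up to small perturbation), so $\Ad(g) \LieM$ lies within $\tilde R^{\star} e^{-\tilde t / \star}$ of $\LieH$; inverting the local exponential map and solving for the conjugating element then produces $g' \in G$ satisfying $\|g' - I\| \leq \tilde R^{\ref{c:linear algebra}} e^{-\tilde t / \ref{c:linear algebra}}$ with $g' g M^\circ g^{-1} (g')^{-1} = H$. The main obstacle is the $\SO(2, 2)^\circ$ case: since $\LieH_1 \cong \mathfrak{sl}_2 \oplus \mathfrak{sl}_2$ is not simple, there exist additional $U_1$-invariant $6$-dimensional subspaces of $\LieG$ arising from the factor decomposition---the algebraic obstructions alluded to in the introduction---and they genuinely obstruct the naive classification. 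The supplementary hypothesis \eqref{eqn:Horosphere SO 2 2} is precisely what rules these out: the uniform bound $\|\mathpzc{z} \wedge (a_{\tilde t} u g v_M)\| \leq e^{-\tilde t / A} \tilde R$ for all $\mathpzc{z} \in B_1^{\LieU}$ translates (after undoing $a_{\tilde t} u$) into the statement that $\LieU \subseteq \Ad(g) \LieM$ up to an error controlled by $e^{-\tilde t / A} \tilde R$, and among the candidate $U_1$-invariant subalgebras of dimension $6$ only $\LieH_1$ accommodates $\LieU$ in this way. Carrying out this classification and carefully propagating the quantitative error through the local inversion to produce $g'$ is the delicate quantitative part of the argument.
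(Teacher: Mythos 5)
There is a genuine gap in your proposal, and it is located precisely where the paper works hardest. Your Markov/weight-chain analysis targets \emph{approximate $U$-invariance} of $g.v_M$: from $\|\ad X_k\, y\|\ll\tilde R e^{-\tilde t}$ with $y=a_{\tilde t}g.v_M$ you want to conclude that $\R\cdot g.v_M$ is close to an $\Ad(U)$-fixed line, and you then propose to classify $U$-invariant $\dim H$-dimensional subspaces of $\LieG$. But this is the wrong invariance, and the classification problem you end up with is far larger than what your sketch suggests. For instance, in $\mathfrak{sl}_4$ the set of $U_2$-invariant $6$-dimensional subspaces is a positive-dimensional variety — any subspace built from weight spaces that is closed under $\ad\LieU$ qualifies — so your claim that for $\SO(3,1)^\circ$ ``the classification singles out $\LieH_2$'' is false. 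Worse, an $\Ad(U)$-fixed vector in $\wedge^{\dim H}\LieG$ need not be a pure wedge at all, so it need not correspond to \emph{any} subspace of $\LieG$; the passage from ``$g.v_M$ close to a $U$-fixed vector'' to ``$\Ad(g)\LieM$ close to a $U$-invariant subspace'' is a further unaddressed step. The paper avoids all of this by working with the decomposition $V=V^H\oplus V^{\mathrm{non}}$: the hypothesis $\sup_u\|a_{\tilde t}u\,g.v_M\|\le\tilde R$ combined with the expansion lower bound of Shah's lemma gives the much stronger conclusion $\|v^{\mathrm{non}}\|\ll\tilde R e^{-\tilde t}$, i.e.\ $g.v_M$ is close to an $H$-fixed vector, and then the effective \L ojasiewicz inequality (Theorem~\ref{thm:Lojasiewiecz}) is invoked to project that $H$-fixed vector onto the variety of $H$-fixed \emph{pure} wedges. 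Only then is the classification reduced to the short list in Lemmas~\ref{lem:H invariant SO 3 1}--\ref{lem:H invariant SO 2 2}. You omit both the upgrade from $U$-fixed to $H$-fixed and the pure-wedge approximation, and consequently also omit the exclusion of the case $W\supseteq\mathfrak{r}$ via Lemma~\ref{lem:symmetric subgroup lie algebra} (that $\mathfrak r$ is quantitatively far from a subalgebra), which is essential in the paper even for $\SO(3,1)^\circ$.

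A secondary issue: the intermediate estimate $\|\ad X_k(g.v_M)\|\ll\tilde R e^{-\tilde t}$ does not follow directly from the Markov inequality you cite. What Markov gives, after unwinding $a_{\tilde t}$, is a bound of the form $\|(\ad X_k\, g.v_M)_\mu\|\ll\tilde R e^{-\mu\tilde t}$ on the weight-$\mu$ component, which is only useful for $\mu\ge 1$; controlling the weight-$\le 0$ components of $\ad X_k(g.v_M)$ requires the same kind of inversion-of-iterated-brackets argument you appeal to in the first paragraph, and that inversion needs a uniform lower bound on the singular values of the maps $(\ad X_1)^i(\ad X_2)^j$ restricted to weight spaces. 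This is precisely the content of Shah's lemma that the paper uses directly and that your sketch invokes only implicitly. None of this is unfixable, but as written the proposal conflates $U$-invariance with $H$-invariance at the decisive step and the classification argument does not go through.
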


\subsubsection{$H$-invariant subspaces of $\LieG$}
Recall that we write $\LieG = \LieH \oplus \mathfrak{r}$ as decomposition of representation of $H$. We classify all $H$-invariant subspaces in $\LieG$ and show that the complement $\mathfrak{r}$ in our setting is far from being a subalgebra.  

\begin{lemma}\label{lem:H invariant SO 3 1}
    Let $\LieG = \mathfrak{sl}_4(\R)$ and $\LieH \cong \mathfrak{so}(3, 1)$ as in the introduction. Then $\LieH$ is a simple Lie algebra. If $W$ is a proper non-trivial $H$-invariant subspace, we have $W = \LieH$ or $W = \mathfrak{r}$.
\end{lemma}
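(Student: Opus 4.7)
The plan is to first establish the simplicity of $\mathfrak{so}(3,1)$, and then use complete reducibility together with a dimension comparison to classify the $H$-invariant subspaces of $\LieG$.

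For the simplicity of $\mathfrak{so}(3,1)$, I would invoke the standard exceptional isomorphism $\mathfrak{so}(3,1) \cong \mathfrak{sl}_2(\C)$ of real Lie algebras (where $\mathfrak{sl}_2(\C)$ is regarded as a $6$-dimensional real Lie algebra). Since $\mathfrak{sl}_2(\C)$ is simple as a complex Lie algebra, it is simple as a real Lie algebra as well: any real ideal $I$ generates a complex ideal $I + iI$ which must be $0$ or all of $\mathfrak{sl}_2(\C)$, and a short argument using the complex structure rules out the intermediate situation. Alternatively, one can cite this directly from the classification of simple real Lie algebras, where $\mathfrak{so}(3,1)$ appears as the real form of the second type (a complex simple algebra viewed as real).

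For the classification of $H$-invariant subspaces, recall $\LieG = \LieH \oplus \mathfrak{r}$ as $H$-modules. The first step is to check that both summands are irreducible: for $\LieH$, this follows from the simplicity just established since the adjoint representation of a connected group with simple Lie algebra is irreducible; for $\mathfrak{r}$, this was already recorded as a standing fact in the introduction. The second step is the crucial dimension count $\dim \LieH = 6 \neq 9 = \dim \mathfrak{r}$, which ensures that these two irreducible $H$-modules are non-isomorphic.

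Given these preparations, let $W \subseteq \LieG$ be a proper non-trivial $H$-invariant subspace, and consider the $H$-equivariant projections $\pi_\LieH \colon W \to \LieH$ and $\pi_{\mathfrak{r}} \colon W \to \mathfrak{r}$. Each image is a submodule of an irreducible module, hence is either $0$ or the whole summand. If $\pi_\LieH(W) = 0$, then $W \subseteq \mathfrak{r}$, and irreducibility of $\mathfrak{r}$ forces $W = \mathfrak{r}$; symmetrically when $\pi_{\mathfrak{r}}(W) = 0$ we get $W = \LieH$. In the remaining case where both projections are surjective, examine $W \cap \mathfrak{r} = \ker(\pi_\LieH|_W)$: if it equals $\mathfrak{r}$ then $\mathfrak{r} \subseteq W$ and combining with $\pi_\LieH(W) = \LieH$ gives $W = \LieG$, contradicting properness; if instead $W \cap \mathfrak{r} = 0$ then $\pi_\LieH|_W$ is an $H$-module isomorphism $W \simeq \LieH$, while the analogous analysis gives $W \simeq \mathfrak{r}$, contradicting the dimension mismatch. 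This exhausts all cases.

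The main obstacle is purely conceptual rather than technical: one must separate which inputs are classical (simplicity of $\mathfrak{so}(3,1)$; irreducibility of the adjoint representation) from which are specific to the embedding $H \hookrightarrow \SL_4(\R)$ (irreducibility of $\mathfrak{r}$, which the paper treats as given in the setup). Once these are in place, the classification is routine Schur-type bookkeeping for real representations.
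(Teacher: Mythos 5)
Your proof is correct and matches the paper's approach: the paper simply states that simplicity is standard and that the classification follows from $\LieG = \LieH \oplus \mathfrak{r}$ being a direct sum of non-isomorphic irreducible $H$-modules, and you have supplied the routine Schur-type details that the paper leaves implicit.
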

\begin{proof}
The first claim is standard. The second claim follows from $\LieG = \LieH \oplus \mathfrak{r}$ and the fact that $\LieH$ and $\mathfrak{r}$ are \emph{non-isomorphic} irreducible representations of $\LieH$. 
\end{proof}

\begin{lemma}\label{lem:H invariant SO 2 2}
    Let $\LieG = \mathfrak{sl}_4(\R)$ and $\LieH \cong \mathfrak{so}(2,2)$ as in the introduction. There exists a decomposition $\LieH = \LieH_1 \oplus \LieH_2$ where $\LieH_1$, $\LieH_2$ are ideals of $\LieH$ isomorphic to $\mathfrak{sl}_2(\R)$. If $W$ is a proper non-trivial $H$-invariant subspace, then $W$ satisfies one of the following:
    \begin{enumerate}
        \item $W = \LieH_i$ for some $i = 1, 2$,
        \item $W = \LieH$, 
        \item $W \supseteq \mathfrak{r}$.
    \end{enumerate}
\end{lemma}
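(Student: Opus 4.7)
The plan is to show that there are exactly six proper non-trivial $H$-invariant subspaces, arising as sums of three pairwise non-isomorphic irreducible $H$-subrepresentations of $\LieG$, and then check that these six fall into the three cases of the statement.

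First I would record the standard exceptional isogeny $\Spin(2,2) \cong \SL_2(\R)\times\SL_2(\R)$, which at the Lie algebra level gives
\begin{align*}
\LieH = \mathfrak{so}(2,2) \cong \mathfrak{sl}_2(\R)\oplus\mathfrak{sl}_2(\R) = \LieH_1\oplus\LieH_2,
\end{align*}
so that $\LieH_1,\LieH_2$ are simple ideals, each isomorphic to $\mathfrak{sl}_2(\R)$, producing the promised decomposition. Since $H$ is connected, $H$-invariant is the same as $\LieH$-invariant, so I may work with $\LieH$-submodules.

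Next I would decompose $\LieG = \mathfrak{sl}_4(\R)$ as an $\LieH$-module using the same isogeny. The defining $H$-action on $\R^4$ is conjugate to the tensor product $V_1\otimes V_2$ of the two standard $2$-dimensional representations of the $\SL_2$ factors (this is how $\SO(2,2)$ sits inside $\SL_4$ via the form $Q_1$). Therefore
\begin{align*}
\End(\R^4)\cong\End(V_1)\otimes\End(V_2)\cong(\R\oplus\mathfrak{sl}(V_1))\otimes(\R\oplus\mathfrak{sl}(V_2)),
\end{align*}
and stripping the trace gives an $\LieH$-module decomposition
\begin{align*}
\LieG \;=\; \bigl(\mathfrak{sl}(V_1)\otimes\R\bigr)\,\oplus\,\bigl(\R\otimes\mathfrak{sl}(V_2)\bigr)\,\oplus\,\bigl(\mathfrak{sl}(V_1)\otimes\mathfrak{sl}(V_2)\bigr),
\end{align*}
where the first two summands are precisely $\LieH_1$ and $\LieH_2$ (each acting on itself by its own adjoint and trivially by the other factor), and the third summand, of dimension $9$, must coincide with $\mathfrak{r}$.

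Then I would check the three key irreducibility/non-isomorphism facts. The summands $\LieH_1$ and $\LieH_2$ are irreducible because each is the adjoint representation of a simple Lie algebra; the summand $\mathfrak{r}$ is a tensor product of two absolutely irreducible real representations of the two commuting $\mathfrak{sl}_2(\R)$ factors, hence irreducible over $\R$. The three are pairwise non-isomorphic as $\LieH$-representations: restricting to each simple factor separately gives characters $(3,1)$, $(1,3)$, $(3,3)$ respectively (in dimension of isotypic components), which already distinguish them.

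Finally I would invoke Weyl's complete reducibility theorem for the semisimple algebra $\LieH$ and Schur's lemma for the pairwise distinct irreducibles to conclude that every $\LieH$-submodule of $\LieG$ is the sum of a subset of $\{\LieH_1,\LieH_2,\mathfrak{r}\}$. Discarding $\{0\}$ and $\LieG$ leaves the six possibilities $\LieH_1$, $\LieH_2$, $\LieH_1\oplus\LieH_2=\LieH$, $\mathfrak{r}$, $\mathfrak{r}\oplus\LieH_1$, $\mathfrak{r}\oplus\LieH_2$, which are exactly what items (1), (2), (3) enumerate. There is no serious obstacle here beyond carefully verifying that $\mathfrak{r}\cong\mathfrak{sl}_2(\R)\otimes\mathfrak{sl}_2(\R)$ remains irreducible over $\R$ (not merely over $\C$), which is automatic because each tensor factor is already absolutely irreducible.
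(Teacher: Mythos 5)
Your proof is correct, and it takes a genuinely different route from the paper's. The paper works entirely in the concrete coordinates coming from the quadratic form $Q_1$: it writes out the matrix form of a general element of $\LieH$, exhibits $\LieH_1$ and $\LieH_2$ as explicit $4\times 4$ matrices, checks by direct computation that they are ideals isomorphic to $\mathfrak{sl}_2(\R)$, and then reduces the classification of proper invariant subspaces of $\LieG$ to that of $\LieH$ (relying implicitly on the fact, established earlier, that $\mathfrak{r}$ is an irreducible $H$-module not isomorphic to any submodule of $\LieH$), finishing via uniqueness of the decomposition of a semisimple Lie algebra into simple ideals. You instead invoke the exceptional isogeny $\Spin(2,2)\cong\SL_2(\R)\times\SL_2(\R)$, identify $\R^4$ with $V_1\otimes V_2$, and read off the decomposition $\LieG\cong\mathfrak{sl}(V_1)\oplus\mathfrak{sl}(V_2)\oplus(\mathfrak{sl}(V_1)\otimes\mathfrak{sl}(V_2))$ into three pairwise non-isomorphic irreducibles, then apply Weyl complete reducibility and Schur. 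Your route is cleaner conceptually and makes the role of $\mathfrak{r}\cong\mathfrak{sl}_2\otimes\mathfrak{sl}_2$ transparent; the paper's hands-on coordinate approach has the advantage that the explicit matrices $\LieH_1,\LieH_2$ and the $\mathfrak{r}$-coordinates are reused later (e.g., in Subsection \ref{subsec:specialrep} and Example \ref{example:optimal fail for so 2 2}), so the computational setup is not wasted effort. One small point worth making explicit in your argument: the form $Q_1$ need only be conjugate to the tensor product of the two symplectic forms for the identification $\R^4\cong V_1\otimes V_2$, but since the statement concerns abstract invariant subspaces this conjugation is harmless.
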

\begin{proof}
    Recall that $Q_1(x_1, x_2, x_3, x_4) = x_2x_3 - x_1x_4$. Let
    \begin{align*}
        \tilde{Q}_1 = \begin{pmatrix}
             & & & 1\\
             & & -1 & \\
             & -1 & &\\
             1 & & &
        \end{pmatrix}
    \end{align*}
    be half of the corresponding matrix. Then 
    \begin{align*}
        \LieH = \{X \in \mathfrak{sl}_4(\R): X = -\tilde{Q}_1X^t\tilde{Q}_1\}
    \end{align*}
    Therefore, all element in $\LieH$ has the form
    \begin{align*}
        X = \begin{pmatrix}
            a_1 + a_2 & b_1 & b_2 & 0\\
            c_1 & -a_1+a_2 & 0 & b_2\\
            c_2 & 0 & a_1 - a_2 & b_1\\
            0 & c_2 & c_1 & -a_1 - a_2
        \end{pmatrix}.
    \end{align*}
    Let $\LieH_1$ and $\LieH_2$ be subspaces consist of following elements respectively:
    \begin{align*}
        X_1 = \begin{pmatrix}
            a_1 & b_1 & 0 & 0\\
            c_1 & -a_1 & 0 & 0\\
            0 & 0 & a_1 & b_1\\
            0 & 0 & c_1 & - a_1
        \end{pmatrix}, \qquad
        X_2 = \begin{pmatrix}
            a_2 & 0 & b_2 & 0\\
            0 & a_2 & 0 & b_2\\
            c_2 & 0 & -a_2 & 0\\
            0 & c_2 & 0 & -a_2
        \end{pmatrix}.
    \end{align*}
    A direct calculation shows that they are ideals of $\LieH$ and they both isomorphic to $\mathfrak{sl}_2(\R)$. For the second claim, it suffices to show that the only non-trivial proper $H$-invariant subspace of $\LieH$ are $\LieH_1$ and $\LieH_2$, which follows from the uniqueness of decomposition of semisimple Lie algebra to direct sum of ideals. 
\end{proof}

The following lemma asserts that the natural complement of a symmetric subalgebra is far from being a subalgebra. Recall from Section~\ref{sec:Global prelim} the maps $\sigma_i: x \mapsto -(Q_i)x^t(Q_i)^{-1}$ are Lie algebra involutions for $\LieG = \mathfrak{sl}_4(\R)$, we can apply the following lemma to $\LieG = \LieH \oplus \mathfrak{r}$ in our case. 
\begin{lemma}\label{lem:symmetric subgroup lie algebra}
    Let $\LieG$ be a semisimple Lie algebra. Let $\LieH \subset \LieG$ be a symmetric subalgebra, that is, there is a Lie algebra involution $\sigma$ so that $\LieH = \Fix(\sigma)$. Suppose $\LieG = \LieH \oplus \LieQ$ is the decompostion of eigenspace of $\sigma$ where $\LieH = \Fix(\sigma)$ and $\LieQ$ is the eigenspace for eigen value $-1$. Then there exist two elements $x_1, x_2 \in \LieQ$ with $\|x_1\| = \|x_2\| = 1$ and $[x_1, x_2] \in \LieH$ so that 
    \begin{align*}
        \|[x_1, x_2]\| \gg 1.
    \end{align*}
    The implied constant depends only on the pair $(\LieG, \LieH)$. 
\end{lemma}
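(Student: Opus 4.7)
The plan is to observe that $[\LieQ, \LieQ] \subseteq \LieH$ for free, then show $[\LieQ, \LieQ] \neq 0$ using semisimplicity of $\LieG$, and finally use a compactness argument to produce unit vectors $x_1, x_2 \in \LieQ$ whose bracket has a definite size depending only on $(\LieG, \LieH)$. The implied constant in $\gg 1$ is then simply the norm of this particular bracket, which is legitimate since the constant is permitted to depend on $(\LieG, \LieH)$.

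First I would verify the inclusions coming from $\sigma$. Since $\sigma$ is a Lie algebra automorphism with $\sigma|_\LieH = \Id$ and $\sigma|_\LieQ = -\Id$, for $x, y \in \LieQ$ we have
\begin{align*}
\sigma([x, y]) = [\sigma(x), \sigma(y)] = [-x, -y] = [x, y],
\end{align*}
so $[x, y] \in \Fix(\sigma) = \LieH$; in particular $[\LieQ, \LieQ] \subseteq \LieH$. A parallel computation shows $[\LieH, \LieQ] \subseteq \LieQ$, so that $\LieQ$ is a $\LieH$-submodule.

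The crucial step is to rule out $[\LieQ, \LieQ] = 0$. Assume for contradiction that $[\LieQ, \LieQ] = 0$. Combined with $[\LieH, \LieQ] \subseteq \LieQ$, this gives $[\LieG, \LieQ] = [\LieH, \LieQ] + [\LieQ, \LieQ] \subseteq \LieQ$, so $\LieQ$ is an ideal of $\LieG$. Since $[\LieQ, \LieQ] = 0$, it is an abelian ideal. But $\LieG$ is semisimple, so its only abelian ideal is $0$, forcing $\LieQ = 0$. This contradicts the assumption (implicit in the statement, since unit vectors in $\LieQ$ are required to exist) that $\LieH$ is a proper subalgebra.

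Therefore $[\LieQ, \LieQ] \neq 0$, so there exist $y_1, y_2 \in \LieQ \setminus \{0\}$ with $[y_1, y_2] \neq 0$. Rescaling, set $x_i = y_i / \|y_i\|$. Then $x_1, x_2 \in \LieQ$ are unit vectors, $[x_1, x_2] \in \LieH$ by the first step, and $\|[x_1, x_2]\| = \|[y_1, y_2]\| / (\|y_1\|\|y_2\|) > 0$. Taking this positive quantity as the constant (which depends only on $(\LieG, \LieH)$ since the choice of $x_1, x_2$ does) gives $\|[x_1, x_2]\| \gg 1$. There is no real obstacle here beyond noting that the quantitative $\gg 1$ is automatic once one has a single qualitative example, because the constant is allowed to depend on the pair $(\LieG, \LieH)$.
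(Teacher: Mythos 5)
Your proof is correct, but the mechanism you use to rule out $[\LieQ,\LieQ]=0$ is genuinely different from the paper's. You observe that $[\LieQ,\LieQ]=0$ together with the standard inclusion $[\LieH,\LieQ]\subseteq\LieQ$ forces $\LieQ$ to be an abelian ideal of $\LieG$, and you then invoke the fact that a semisimple Lie algebra has no nonzero abelian ideal. The paper instead argues via the Killing form: if $[\LieQ,\LieQ]=0$, then in the block decomposition $\LieG=\LieH\oplus\LieQ$ every $\ad x$ with $x\in\LieQ$ is strictly lower-triangular while every $\ad y$ with $y\in\LieH$ is block-diagonal, so $\kappa(x,\cdot)\equiv 0$ on all of $\LieG$, contradicting non-degeneracy of $\kappa$. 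The two approaches encode the same semisimplicity hypothesis through equivalent facts (no abelian ideals versus non-degenerate Killing form), but yours avoids the explicit block-matrix trace computation and is arguably more structural and shorter; the paper's is more hands-on with $\ad$-matrices. Your closing remark — that the quantitative $\gg 1$ is automatic once one nonzero bracket is found, because the constant may depend on $(\LieG,\LieH)$ — matches the paper's implicit reasoning and is the right way to read the statement. One small point: you should make explicit, as you do parenthetically, that $\LieQ\neq 0$ is part of the standing hypothesis (otherwise the statement is vacuously false), since your contradiction lands precisely on $\LieQ=0$; the paper sidesteps this by contradicting non-degeneracy of $\kappa$ directly, which already fails for $\LieQ\neq 0$ without needing the extra step.
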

\begin{proof}
    Note that $[\LieQ, \LieQ] \subseteq \LieH$. It suffices to show that $[\LieQ, \LieQ] \neq \{0\}$. Suppose not, then for all $x \in \LieQ$, the matrix of $\ad x$ under the decomposition $\LieG = \LieH \oplus \LieQ$ is of the following form
    \begin{align*}
        \ad x = \begin{pmatrix}
            0 & 0\\
            \ast & 0
        \end{pmatrix}.
    \end{align*}
    Since $[\LieH, \LieQ] \subseteq \LieQ$, for all $y \in \LieH$, its matrix representation under the decomposition $\LieG = \LieH \oplus \LieQ$ is of the following form
    \begin{align*}
        \ad y = \begin{pmatrix}
            \ast & 0\\
            0 & \ast
        \end{pmatrix}.
    \end{align*}
    Therefore, 
    \begin{align*}
        \kappa(x, y) = \tr(\ad x \ad y) = 0.
    \end{align*}
    Also, for all $z \in \LieQ$, we have $\kappa(x, z) = \tr(\ad x \ad z) = 0$. 
    This implies that the Killing form $\kappa$ is degenerate, contradicting to the fact that $\LieG$ is semisimple. 
\end{proof}

\subsubsection{An equivariant projection} 
We record an equivariant projection from \cite{EMV09}. Let $\bar{v}_H$ be a \textit{unit vector} in the line corresponding to $\LieH$ in $\wedge^{\dim \LieH} \LieG$. 
\begin{lemma}\label{lem:equivariant proj}
    There exists a neighborhood $\mathcal{N}_H$ of $\bar{v}_H$ and a projection map $\Pi: \mathcal{N}_H \to G.\bar{v}_H$ so that the following holds. 
    For all $v \in \mathcal{N}_H$ with $g.v = v$ for some $g \in B_1^G$, we have $g.\Pi(v) = \Pi(v)$. 
\end{lemma}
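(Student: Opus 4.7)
\textbf{Proof plan for Lemma~\ref{lem:equivariant proj}.}
The plan is to build $\Pi$ as a local slice projection onto the $G$-orbit, with the slice chosen \emph{$H$-invariantly}; the equivariance under small $g$ then comes for free from uniqueness of the slice decomposition.

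First, I would record the basic geometry at $\bar v_H$. Because $H$ is semisimple, the line spanned by $\bar v_H$ in $V := \wedge^{\dim \LieH}\LieG$ is fixed pointwise by $H$, so $H \subseteq \Stab_G(\bar v_H)$ and the orbit $\mathcal{O} := G\cdot\bar v_H$ is a smooth submanifold of $V$. Using the decomposition $\LieG = \LieH \oplus \mathfrak{r}$ from \S\ref{sec:Global prelim}, the tangent space is $T := \mathfrak{r}\cdot\bar v_H$, and $X \mapsto X\cdot\bar v_H$ is an isomorphism of $H$-modules $\mathfrak{r} \simeq T$. Because $H$ is semisimple, complete reducibility gives an $H$-invariant complement $N$ of $T$ in $V$; this is the crucial choice.

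Next, I would set up the tubular neighbourhood. Define
\begin{align*}
\Phi : B^{\mathfrak{r}}_{\epsilon_1} \times B^{N}_{\epsilon_1} \to V, \qquad \Phi(X,n) = \exp(X)\cdot(\bar v_H + n).
\end{align*}
Its differential at $(0,0)$ is $(X,n)\mapsto X\cdot\bar v_H + n$, which is an isomorphism $\mathfrak{r}\oplus N \to T\oplus N = V$. By the inverse function theorem $\Phi$ is a diffeomorphism onto an open neighborhood $\mathcal{N}_H$ of $\bar v_H$ for $\epsilon_1$ sufficiently small. Define
\begin{align*}
\Pi : \mathcal{N}_H \to \mathcal{O}, \qquad \Pi(\Phi(X,n)) = \exp(X)\cdot\bar v_H.
\end{align*}

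The final step is the equivariance under a small $g$. Since $\LieG = \mathfrak{r}\oplus\LieH$, the map $(X,h)\mapsto \exp(X)h$ is a local diffeomorphism $\mathfrak{r}\times H \to G$ at $(0,e)$. Shrinking $\mathcal{N}_H$ (and the ball $B_1^G$) if necessary, for $g\in B_1^G$ and $v = \Phi(X,n)\in\mathcal{N}_H$ I can write $g\exp(X) = \exp(X')h'$ with $X'$ small in $\mathfrak{r}$ and $h'$ close to identity in $H$. Using $h'\cdot\bar v_H = \bar v_H$ and the $H$-invariance of $N$ (so $h'\cdot n \in B^N_{\epsilon_1}$ after another shrink), we obtain
\begin{align*}
g\cdot v = \exp(X')\bigl(\bar v_H + h'\cdot n\bigr) = \Phi(X',\, h'\cdot n).
\end{align*}
If additionally $g\cdot v = v$, then the injectivity of $\Phi$ forces $X' = X$ and $h'\cdot n = n$, so $g\exp(X) = \exp(X)h'$, and therefore
\begin{align*}
g\cdot\Pi(v) = g\exp(X)\cdot\bar v_H = \exp(X)h'\cdot\bar v_H = \exp(X)\cdot\bar v_H = \Pi(v),
\end{align*}
as claimed.

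\textbf{Main obstacle.} The construction of $\Pi$ as a local slice is entirely routine; the only subtlety is arranging that small stabilizers of $v$ are automatically stabilizers of $\Pi(v)$. The mechanism that makes this work is the existence of an $H$-invariant complement $N$ to $T$ in $V$, which is what propagates the ``$h'$-part'' of $g\exp(X)$ harmlessly through the slice and lets the uniqueness of the decomposition do the rest. If one instead took a generic complement, the conjugation $g\exp(X)$ would land outside the chosen slice and the argument would break. Semisimplicity of $H$ (hence Weyl's theorem) is exactly what supplies this $H$-invariant complement.
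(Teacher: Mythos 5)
Your construction is a self-contained slice argument (the paper simply cites \cite{EMV09}), and the key idea — using complete reducibility of $H$ to find an $H$-invariant complement $N$ to $T = \mathfrak{r}\cdot\bar v_H$ in $V$ — is correct and is indeed the mechanism that makes the equivariance work. The identification $\mathfrak{r}\simeq T$ is also correct here because $\mathfrak{r}$ is a nontrivial irreducible $H$-module, so $\mathfrak{r}\cap\mathfrak{z}_{\LieG}(\LieH)=0$ and $X\mapsto X\cdot\bar v_H$ has trivial kernel.

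The gap is in the final step: you write ``shrinking $\mathcal{N}_H$ (and the ball $B_1^G$) if necessary'' in order to decompose $g\exp(X)=\exp(X')h'$ with $X'$ small in $\mathfrak{r}$ and $h'$ near the identity in $H$. But only $\mathcal{N}_H$ is at your disposal; the lemma must hold for every $g$ in the fixed ball $B_1^G$. For $g$ of size $\approx 1$ the decomposition generically produces $X'$ and $h'$ of size $\approx 1$, which takes $\Phi(X', h'.n)$ outside the domain of $\Phi$, and the uniqueness argument collapses. What is missing is a compactness argument: using that $\overline{B_1^G}$ is compact and that $\mathcal{N}_H$ can be made arbitrarily small, argue that $g\in B_1^G$ with $g.v=v$ for $v\in\mathcal{N}_H$ must lie close to $\Stab_G(\bar v_H)$, and then decompose $g$ accordingly. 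This also exposes a second subtlety: $\Stab_G(\bar v_H)$ can be strictly larger than $H$ (a finite extension coming from the normalizer), and you only arranged $N$ to be $H$-invariant, not invariant under these extra components; the slice-uniqueness step needs $N$ (or an averaged replacement) to be stable under $\Stab_G(\bar v_H)\cap\overline{B_1^G}$ as well. For the one place the lemma is actually invoked in this paper — generating the connected group $gM^\circ g^{-1}$ in Case~3 of the proof of Lemma~\ref{lem:Important Linear Algebra} — a shrunken ball would in fact suffice, so your weaker conclusion is usable in practice, but it does not prove the lemma as stated.
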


\begin{proof}
    See \cite[Lemma 13.2]{EMV09}. 
\end{proof}

\subsubsection{Proof of Lemma~\ref{lem:Important Linear Algebra}}
The proof uses an effective version of {\L}ojasiewiecz's inequality \cite{Loj59}. It asserts that the distance between a point to zero locus of an analytic function can be controlled by the value of that function. We use an effective version of this statement for polynomials proved in \cite{Sol91}, see also \cite[Theorem 3.2]{LMMSW24}. The height of a polynomial in $\Z[x_1, \ldots, x_n]$ is defined to be the maximum of its coefficients in absolute value. 
\begin{theorem}[Solern\'{o} \cite{Sol91}]\label{thm:Lojasiewiecz}
    For any $d \in \N$, there exists $C(d) > 1$ with the following property. 

    Let $h > 1$ and let $f_1, \ldots, f_r \in \Z[x_1, \ldots, x_n]$ have degree at most $d$ and height at most $h$. Let $\mathcal{V} \subseteq \R^n$ be the zero locus of $f_1, \ldots, f_r$. Then for $w \in \R^n$
    \begin{align*}
        \min\{1, d(w, \mathcal{V})\} \ll_d (1 + \|w\|)^{C(d)} h^{C(d)} \max_{1 \leq i \leq r} |f_i(w)|^{\frac{1}{C(d)}}
    \end{align*}
    where $d(w, \mathcal{V}) = \inf_{v \in \mathcal{V}} d(w, v)$. 
\end{theorem}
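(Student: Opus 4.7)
The plan is to follow the standard effective-\L ojasiewicz strategy: reduce to a single polynomial, cut down to a compact region by scaling, and then invoke effective one-variable semialgebraic bounds. First, I would replace the system $(f_1,\ldots,f_r)$ by the single polynomial $F = \sum_i f_i^2$. Then $\deg F \leq 2d$, the height of $F$ is bounded by $r h^2 \leq \binom{n+d}{d} h^2 = O_d(h^2)$, and $\mathcal{V}=\{F=0\}$. Since $\max_i|f_i(w)|^2 \geq F(w)/\binom{n+d}{d}$, it suffices to establish the bound for the single polynomial $F$, with all dependence on $r$ absorbed into $C(d)$.

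Second, I would handle the dependence on $\|w\|$ by a homothety. Setting $\rho := \max\{1,\|w\|\}$ and $v := w/\rho$, the polynomial $F_\rho(v) := F(\rho v)$ has degree $\leq 2d$ and height $\leq \rho^{2d}H$. Since $d(w,\mathcal{V}) = \rho \cdot d(v,\mathcal{V}(F_\rho))$, a bound for arguments in the unit ball with polynomial height dependence translates back to the original inequality with the required $(1+\|w\|)^{C(d)}$ factor.

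The core step is thus: for a polynomial $F$ of degree $\leq D$ and height $\leq H$ with $\|v\|\leq 1$, prove $\min\{1, d(v,\{F=0\})\} \ll_D H^{C} |F(v)|^{1/C}$. For this, I would consider the one-variable semialgebraic function
\[
\phi(\epsilon) \;:=\; \inf\bigl\{|F(v)|:\|v\|\leq 1,\ d(v,\{F=0\})\geq \epsilon\bigr\}.
\]
Its defining first-order formula involves only the coefficients of $F$ together with $\epsilon$, with controlled quantifier complexity. Applying effective quantifier elimination in the theory of real closed fields (this is the heart of \cite{Sol91}, building on Collins' cylindrical algebraic decomposition and Renegar's effective bounds), one obtains a quantifier-free description of the graph of $\phi$ whose polynomial atoms have degree $\leq D^{O(n)}$ and height $\leq H^{D^{O(n)}}$. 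Because every semialgebraic function of one variable is piecewise algebraic, and $\phi(\epsilon) > 0$ for every $\epsilon$ in the relevant range (the compact set $\{v:\|v\|\leq 1,\ d(v,\mathcal{V})\geq \epsilon\}$ does not meet $\{F=0\}$), near $\epsilon = 0$ the Cauchy root bound applied to the univariate polynomials defining $\phi$ yields $\phi(\epsilon) \geq c\,\epsilon^{m}$ with $c$ and $m$ controlled polynomially in $H$ and $D^{O(n)}$. This is exactly the bound we need, after rearranging and undoing the reductions.

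The main obstacle is the effective quantifier-elimination step with both degree and height of the output polynomials bounded polynomially in the data: this is the combinatorial heart of \cite{Sol91} and requires the full cylindrical algebraic decomposition machinery, which I would invoke as a black box rather than reproduce. Once it is in hand, the reduction to a single polynomial, the homothety, and the one-variable Cauchy bound are essentially formal, so the entire argument hangs on correctly propagating the degree and height bounds through the elimination procedure and keeping track of how the exponent $C(d)$ accumulates through each reduction.
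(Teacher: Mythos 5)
The paper does not supply a proof of this theorem; it is quoted verbatim as a result of Solern\'o \cite{Sol91}, with a pointer to \cite[Theorem 3.2]{LMMSW24} for a closely related formulation, and then used as a black box in the proof of Lemma~\ref{lem:Important Linear Algebra}. So there is no proof in the paper to compare against, and your write-up is really a reconstruction of the standard effective--\L ojasiewicz argument underlying Solern\'o's theorem.

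As such, your outline is sound and does match what is known to work: collapse the system to the single sum-of-squares polynomial $F = \sum_i f_i^2$, reduce to the unit ball by a homothety, encode the worst-case value of $F$ over $\{\|v\|\le 1,\ d(v,\mathcal{V})\ge\epsilon\}$ as a one-variable semialgebraic function $\phi(\epsilon)$, and extract a polynomial lower bound $\phi(\epsilon)\gg \epsilon^m$ near $\epsilon=0$ by running the defining formula of $\phi$ through effective quantifier elimination and then applying a Cauchy-type root separation bound in the single variable $\epsilon$. This is precisely the shape of the argument in \cite{Sol91} (and in the related discussion in \cite{LMMSW24}), with the effective quantifier-elimination step — degree \emph{and} height control simultaneously — being the genuinely hard part, which you correctly identify and treat as a black box.

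Two small cautions worth flagging. First, the height of $F=\sum_i f_i^2$ is bounded by (roughly) $r\cdot\binom{n+d}{d}\cdot h^2$, which depends on the \emph{number of polynomials} $r$, not just on $d$; you wrote $r\le\binom{n+d}{d}$, but nothing in the hypothesis forces that — the $f_i$ could be redundant or repeated. In the paper's use case this is harmless because the defining polynomials are the Pl\"ucker relations for pure wedges in a fixed $\wedge^m\LieG$, so $r$ is bounded by the ambient dimension, but the blanket statement as written glosses over this. Second, and relatedly, the constant $C(d)$ in the statement is asserted to depend on $d$ alone, whereas any proof through quantifier elimination (including yours) produces exponents of the form $d^{n^{O(1)}}$; so $C$ must also depend on $n$ (and on $r$ through the sum-of-squares step). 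Again this is absorbed silently in the paper because $n$ and $r$ are fixed by the ambient Lie algebra, but your proof would actually prove a statement with $C(d,n,r)$, not $C(d)$. Making that dependence explicit would bring your reconstruction in line with what Solern\'o actually proves.
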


\begin{proof}[Proof of \cref{lem:Important Linear Algebra}]
    Let $m = \dim M$. Let $V = \wedge^m \LieG$. This is a representation of $H$ with the following decomposition
    \begin{align*}
        V = V^H \oplus V^{\mathrm{non}}
    \end{align*}
    where $V^H$ consists of fixed vectors of $H$ and $V^{\mathrm{non}}$ is the direct sum of non-trivial sub-representations. We write $g.v_M = v_0 + v^{\mathrm{non}}$ according to this decomposition. 
    Since
    \begin{align*}
        \sup_{u \in \mathsf{B}_1^U}\|a_{\tilde{t}} u g.v_M\| \asymp \|v_0\| + \sup_{u \in \mathsf{B}_1^U}\|a_{\tilde{t}} u.v^{\mathrm{non}}\| &{}\leq \tilde{R},
    \end{align*}
    we have 
    \begin{align*}
        \|v^{\mathrm{non}}\| \ll \tilde{R}e^{-\tilde{t}}, \qquad \|v_0\| \leq \tilde{R}.
    \end{align*}
    where the implied constants depend only on the representation $V$, cf. \cite[Section 5]{Sh96}. 
    Therefore, we have $\|g.v_M\| \ll \tilde{R}$, which proves the first assertion. 
    
    Let $\mathcal{V}$ be the variety in $V = \wedge^m \LieG$ consists of pure $m$-wedges. Let $\mathcal{W} = \mathcal{V}^H$, that is, the fixed points of $H$ in $\mathcal{V}$. This is an affine variety defined by polynomials with integral coefficients. Moreover, their degrees and heights are bounded by absolute constants. By Theorem~\ref{thm:Lojasiewiecz}, there exists an absolute constant $C > 1$ so that
    \begin{align*}
        \min\{1, d(v_0, \mathcal{W})\} \ll (1 + \|v_0\|)^C\max_i\{|f_i(v_0)|\}^{\frac{1}{C}}.
    \end{align*}
    where $f_i$ are those polynomials defining $\mathcal{V}$. Since $f_i(g.v_M) = 0$, we have
    \begin{align*}
        |f_i(v_0)| \ll \tilde{R}^{d_i} \|v^{\mathrm{non}}\| \leq \tilde{R}^{d_i + 1} e^{-t}
    \end{align*}
    where $d_i = \deg f_i$. Therefore,
    \begin{align*}
        \min\{1, d(v_0, \mathcal{W})\} \ll \tilde{R}^{C + \max_i d_i + 1} e^{-\frac{\tilde{t}}{C}}.
    \end{align*}

    Let $\ref{c:linear algebra} = C(10 + \max_i d_i) + 1$, if $\tilde{t} \geq \ref{c:linear algebra}(\ref{c:linear algebra} + 1)\log \tilde{R}$ and $\tilde{R} \gg 1$, we have 
    \begin{align*}
        d(v_0, \mathcal{W})\leq \tilde{R}^{\ref{c:linear algebra} - 1} e^{-\frac{1}{\ref{c:linear algebra}}\tilde{t}}.
    \end{align*}
    Let $v_W \in \mathcal{W}$ be the closest point to $v_0$, we have \begin{align}\label{eqn: close to H inv space}
        \|g.v_M - v_W\| \leq \tilde{R}^{\ref{c:linear algebra} - 1} e^{-\frac{1}{\ref{c:linear algebra}}\tilde{t}}.
    \end{align}
    Since $v_W \in \mathcal{W}$, it is a pure $m$-wedge of size $\ll \tilde{R}$ coming from a $H$-invariant subspace $W$. Moreover, since $\|g.v_M\| \geq \tilde{\eta}$ and $\tilde{R} \gg \tilde{\eta}^{-2}$, $\|v_W\| \gg \tilde{\eta}$. Applying \cref{lem:H invariant SO 3 1,lem:H invariant SO 2 2}, we have the following cases. 

    \medskip
    \noindent\textit{Case 1.} $W \supseteq \mathfrak{r}$. We exclude this case using the fact that $\mathfrak{r}$ is far from being a subalgebra. Recall that for a non-zero vector $v \in V$, we use $\hat{v}$ to denote the corresponding line in $\mathbb{P}(V)$. Let $\mathrm{d}$ be the Fubini-Study metric on $\mathbb{P}(V)$. Since $\|g.v_M\| \geq \tilde{\eta}$, we have
    \begin{align*}
        \mathrm{d}(\hat{v}_W, \hat{v}_{gMg^{-1}}) \ll \tilde{\eta}^{-1}\tilde{R}^{\ref{c:linear algebra}}e^{-\frac{1}{\ref{c:linear algebra}}\tilde{t}} \leq \tilde{\eta}^{-1} \tilde{R}^{-1}.
    \end{align*}
    Since $\tilde{R} \gg \tilde{\eta}^{-2}$, we have
    \begin{align}\label{eqn:Fubini Study q}
        \mathrm{d}(\hat{v}_W, \hat{v}_{gMg^{-1}}) \leq \tilde{R}^{-1/2}.
    \end{align}
    By \cref{lem:symmetric subgroup lie algebra}, there exist two elements $x_1, x_2 \in W = \mathfrak{r}$ with $\|x_1\| = \|x_2\| = 1$ and $[x_1, x_2] \in \LieH$ so that 
    \begin{align*}
        \|[x_1, x_2]\| \gg 1.
    \end{align*}
    By \cref{eqn:Fubini Study q}, there exist two elements $x_1', x_2' \in \Ad(g)\LieM$ with $\|x_1'\| = \|x_2'\| = 1$ and 
    \begin{align*}
        \|x_i' - x_i\| \leq \tilde{R}^{-1/2} \quad \forall i = 1, 2.
    \end{align*}
    Write $x_i = x_i' + \epsilon_i$ for $i = 1, 2$. 

    Since $\Ad(g) \LieM$ is a subalgebra, $[x_1', x_2'] \in \Ad(g)\LieM$. This implies
    \begin{align*}
        \mathrm{dist}([x_1, x_2], \Ad(g)\LieM) \ll \tilde{R}^{-1/2}
    \end{align*}
    where implied constant depends only on $\LieG$. 
    By \cref{eqn:Fubini Study q}, we have
    \begin{align*}
        \mathrm{dist}([x_1, x_2], \mathfrak{r}) \ll \tilde{R}^{-1/2}.
    \end{align*}
    Since $[x_1, x_2] \in \LieH$ and $\|[x_1, x_2]\| \gg 1$, we get
    \begin{align*}
        1 \ll \mathrm{dist}([x_1, x_2], \mathfrak{r}) \ll \tilde{R}^{-1/2}.
    \end{align*}
    If $\tilde{R}$ is large enough depending only on $\LieG$, we get a contradiction. 

    \medskip
    \noindent\textit{Case 2.} $\LieH \cong \mathfrak{so}(2, 2)$ and $W = \LieH_i \cong \mathfrak{sl}_2(\R)$ for some $i = 1, 2$ as in \cref{lem:H invariant SO 2 2}. We exclude this case via the additional condition (\cref{eqn:Horosphere SO 2 2}) for $H \cong \SO(2, 2)^\circ$. 
    Set
    \begin{align*}
        \mathpzc{z} = \begin{pmatrix}
            0 & 0 & 1 & 0\\
            0 & 0 & 0 & 1\\
            0 & 0 & 0 & 0\\
            0 & 0 & 0 & 0
        \end{pmatrix} \text{ if } W = \LieH_1; \quad \mathpzc{z} = \begin{pmatrix}
            0 & 1 & 0 & 0\\
            0 & 0 & 0 & 0\\
            0 & 0 & 0 & 1\\
            0 & 0 & 0 & 0
        \end{pmatrix} \text{ if } W = \LieH_2.
    \end{align*}
    Note that $\mathpzc{z} \in \LieU$. Moreover, if $W = \LieH_1$, $\mathpzc{z} \in \LieH_2$ and if $W = \LieH_2$, $\mathpzc{z} \in \LieH_1$. Since $\|v_W\| \gg \tilde{\eta}$, we have
    \begin{align}\label{eqn:wedge lower bound}
        \|\mathpzc{z}\wedge v_W\| \gg \tilde{\eta} \gg \tilde{R}^{-\frac{1}{2}}
    \end{align}
    
    Since $\mathpzc{z}$ is fixed under $U$-action and expanded by $a_t$ with a rate $e^t$, by \cref{eqn:Horosphere SO 2 2}, we have
    \begin{align*}
        \sup_{u \in B_1^U}\|a_tu(\mathpzc{z} \wedge (g.v_M))\| \leq e^{(1 - \frac{1}{A})\tilde{t}} \tilde{R}.
    \end{align*}
    
    Let $\tilde{V} = \wedge^{m + 1} \LieG$ where $m = \dim W = \dim \LieH_i = 3$. As in $V$, there is a decomposition
    \begin{align*}
        \tilde{V} = \tilde{V}^{\mathrm{non}} \oplus \tilde{V}^H
    \end{align*}
    where $\tilde{V}^H$ consists of fixed vectors of $H$ and $\tilde{V}^{\mathrm{non}}$ is the sum of all non-trivial sub-representation of $H$. Write $\mathpzc{z} \wedge (g.v_M) = \tilde{v}_0 + \tilde{v}^{\mathrm{non}}$ according to this decomposition. 
    Similar to the argument in $V$, we have
    \begin{align*}
        \|\tilde{v}^{\mathrm{non}}\| \ll \tilde{R} e^{-\frac{\tilde{t}}{A}}.
    \end{align*}

    Since $\|g.v_M\| \ll \tilde{R}$, we have
    \begin{align*}
        \|\mathpzc{z} \wedge (g.v_M)\| \ll \tilde{R}
    \end{align*}
    and hence
    \begin{align*}
        \|\tilde{v}_0\| \ll \tilde{R}.
    \end{align*}

    Let $\tilde{\mathcal{V}}$ be the variety in $\tilde{V} = \wedge^{m + 1} \LieG$ consists of pure $m + 1$-wedge. Let $\tilde{W} = \tilde{\mathcal{V}}^H$, which is the fixed point of $H$ in $\tilde{\mathcal{V}}$. This is an affine variety defined by polynomials with integral coefficients. Moreover, their degrees and heights are bounded by absolute constants. By \cref{thm:Lojasiewiecz}, there exists $\tilde{C} > 1$ so that 
    \begin{align*}
        \min\{1, d(\tilde{v}_0, \tilde{\mathcal{W}})\} \ll (1 + \|\tilde{v}_0\|)^{\tilde{C}} \max_i\{|\tilde{f}_i(v_0)|\}^{\frac{1}{\tilde{C}}}.
    \end{align*}
    where $\tilde{f}_i$ are the defining polynomials for pure $(m + 1)$-wedge. Since $f_i(\mathpzc{z} \wedge (g.v_M)) = 0$, we have
    \begin{align*}
        |\tilde{f}_i(v_0)| \ll \tilde{R}^{\tilde{d}_i}\|\tilde{v}^{\mathrm{non}}\| \leq \tilde{R}^{\tilde{d}_i + 1} e^{-\frac{1}{A}\tilde{t}}
    \end{align*}
    where $\tilde{d}_i = \deg \tilde{f}_i$. Therefore, 
    \begin{align*}
        \min\{1, d(\tilde{v}_0, \tilde{\mathcal{W}})\} \ll \tilde{R}^{\tilde{C} + \max_i \tilde{d}_i} e^{-\frac{1}{\tilde{C}A}\tilde{t}}.
    \end{align*}
    Enlarge $\ref{c:linear algebra}$ to $\ref{c:linear algebra} = \tilde{C}(10 + \max_i \tilde{d}_i) + C(10 + \max_i d_i) + 1$, 
    If $t \geq A\ref{c:linear algebra}(\ref{c:linear algebra} + 1)\log R$ and $R \gg 1$ is large enough, we have
    \begin{align*}
        d(\tilde{v}_0, \tilde{\mathcal{W}}) \leq \tilde{R}^{\ref{c:linear algebra}} e^{-\frac{t}{A\ref{c:linear algebra}}}.
    \end{align*}
    Therefore, there is a $H$-fixed pure wedge $\tilde{w}$ in $\tilde{V} = \wedge^{m + 1}\LieG$ so that
    \begin{align*}
        \|\mathpzc{z} \wedge (g.v_M) - \tilde{w}\| \leq \tilde{R}^{\ref{c:linear algebra}} e^{-\frac{t}{A\ref{c:linear algebra}}} \leq \tilde{R}^{-1}.
    \end{align*}
    Note that by Lemma~\ref{lem:H invariant SO 2 2}, there is no non-trivial $4$-dimensional proper $H$-invariant subspace of $\LieG$. This implies $\tilde{w} = 0$ and 
    \begin{align*}
        \|\mathpzc{z} \wedge (g.v_M)\| \leq \tilde{R}^{-1}.
    \end{align*}

    Recall that $\|g.v_M - v_W\| \leq \tilde{R}^{-1}$, we have
    \begin{align*}
        \|\mathpzc{z} \wedge v_W\| \ll \tilde{R}^{-1}
    \end{align*}
    On the other hand, recall from \cref{eqn:wedge lower bound} that our choice of $\mathpzc{z}$ ensures that $\|\mathpzc{z} \wedge v_W\| \gg \tilde{R}^{-\frac{1}{2}}$. We get a contradiction if $\tilde{R}$ is large enough depending on $\LieG$. 

    \medskip
    \noindent\textit{Case 3.} $W = \LieH$. Write $v_W = \lambda \bar{v}_H$ where $\lambda > 0$ and $\bar{v}_H$ is a unit vector in the line corresponding to $\LieH$ in $V$. Note that $\lambda = \|v_W\| \gg \tilde{\eta}$. By \cref{eqn: close to H inv space}, we have
    \begin{align*}
        \|\lambda^{-1}g.v_M - \bar{v}_H\| \leq \tilde{\eta}^{-1} \tilde{R}^{-1} \leq \tilde{R}^{-1/2}.
    \end{align*}
    
    We claim that if $\tilde{R}$ is large enough depending only on $(\LieG, \LieH)$, then the above inequality forces $\Ad(g) \LieM$ to be a reductive subalgebra. Let $B$ be the Killing form of $\LieG$ and let $\wedge^{\dim \LieH} B$ be the induced bilinear form on $V$. By semisimplicity of $\LieH$, we have $\wedge^{\dim \LieH}B(\bar{v}_H, \bar{v}_H) \neq 0$. If $\tilde{R} \gg 1$, we have
    \begin{align*}
        \wedge^{\dim \LieH}B(\lambda^{-1}g.v_M,  \lambda^{-1}g.v_M) \neq 0,
    \end{align*}
    which implies that the restriction Killing form $B$ of $\LieG$ to $\Ad(g)\LieM$ is non-degenerate. Therefore, $\Ad(g) \LieM$ is a reductive subalgebra\footnote{In fact, using \cite[Chapter VII, \S 1, no.3, Lemma 2]{Bou05}, one can show that $\Ad(g)\mathbf{M}$ is a reductive subgroup of $G$. Note that since $\Ad(g)\LieM$ is the Lie algebra of the algebraic subgroup $\Ad(g)\mathbf{M}$, condition~2) in that lemma satisfies automatically. }, cf. \cite[Chapter I, \S 6, no.4, Proposition 5]{Bou89}. 

    If $\tilde{R}$ is large enough, we have $\lambda^{-1}g.v_M \in \mathcal{N}_H$ where $\mathcal{N}_H$ is the neighborhood in Lemma~\ref{lem:equivariant proj}. Apply the equivariant projection $\Pi$ in Lemma~\ref{lem:equivariant proj} to the vector $\lambda^{-1}g.v_M$ and denote $\Pi(\lambda^{-1}g.v_M) = (g')^{-1}\bar{v}_H$. We have
    \begin{align}\label{eqn:small error}
    \begin{aligned}
        \|g' - \Id\| \ll{}& \|\lambda^{-1} g.v_M - \bar{v}_H\| \leq \tilde{R}^{\ref{c:linear algebra}}e^{-\frac{1}{\ref{c:linear algebra}}\tilde{t}}.
    \end{aligned}
    \end{align}
    The last inequality follows from \cref{eqn: close to H inv space} and $\tilde{R} \gg \tilde{\eta}^{-2}$. 

    Since $\Ad(g)\LieM$ is a reductive Lie algebra, for all $m \in M^\circ$, $gmg^{-1}$ fixes the \emph{vector} $g.v_M$. Lemma~\ref{lem:equivariant proj} implies that $(g')^{-1}\bar{v}_H$ is also fixed by all elements in $B_1^{G} \cap gMg^{-1}$, which generates $gM^\circ g^{-1}$. Therefore, $g'gM^\circ g^{-1}(g')^{-1} \subseteq \Stab(\bar{v}_H) = H$. Since they are both $6$-dimensional connected subgroup, we have
    \begin{align*}
        g'gM^\circ g^{-1}(g')^{-1} = H.
    \end{align*}
    Combining with \cref{eqn:small error}, the proof is complete. 
\end{proof}

\subsection{Applying effective closing lemma for large unipotent orbit}
In this subsection, we combine \cref{thm:closing lemma long unipotent,lem:Important Linear Algebra} to prove Lemma~\ref{lem:Closing lemma one scale}. For reader's convenience, we restate Lemma~\ref{lem:Closing lemma one scale} as the following. 

\begin{lemma*}
There exist constants $\ref{a:closing lemma frostman one scale}, \ref{c:closing lemma frostman one scale}, \ref{e:closing lemma frostman one scale}, \ref{m:closing lemma frostman one scale} > 1$ and $\epsilon_2 > 0$ depending only on $(G, H, \Gamma)$ so that the following holds. For all $D > 0$, $x_1 \in X_\eta$ and $R \gg \eta^{-\ref{e:closing lemma frostman one scale}}$, let $M = \ref{m:closing lemma frostman one scale} + \ref{c:closing lemma frostman one scale} D$, $t = M\log R$, $\mu_t = \nu_{t} \ast \delta_{x_1}$ and $\delta = R^{-\frac{1}{\ref{a:closing lemma frostman one scale}}}$. 

Suppose that for all periodic orbit $H.x'$ with $\vol(H.x') \leq R$, we have
\begin{align*}
    d(x_1, x') > R^{-D}.
\end{align*}
Then for all $y \in X_{3\eta}$ and all $r_H \leq \frac{1}{2}\min\{\inj(y), \eta_0\}$, we have
\begin{align*}
    \mu_t((\mathsf{B}_{r_H}^H)^{\pm 1}\exp(B_{\delta}^{\mathfrak{r}}).y) \leq \delta^{\epsilon_2}.
\end{align*}
\end{lemma*}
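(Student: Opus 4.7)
The plan is to argue by contradiction. Suppose that for some $y\in X_{3\eta}$ and $r_H\le \tfrac12\min\{\inj(y),\eta_0\}$ the measure $\mu_t = \nu_t\ast\delta_{x_1}$ assigns mass exceeding $\delta^{\epsilon_2}$ to the transverse tube $(\mathsf{B}_{r_H}^H)^{\pm1}\exp(B_\delta^{\mathfrak r}).y$. Pulling back under $u\mapsto a_tu.x_1$ produces a subset $\mathcal{E}\subseteq\mathsf{B}_1^U$ of $m_U$-measure exceeding $\delta^{\epsilon_2}$ on which, after fixing representatives $x_1 = g_1\Gamma$ and $y = g_y\Gamma$, every $u\in\mathcal{E}$ satisfies $a_tug_1 = h_u\exp(w_u)g_y\gamma_u^{-1}$ for some $h_u\in(\mathsf{B}_{r_H}^H)^{\pm1}$, $w_u\in B_\delta^{\mathfrak r}$ and $\gamma_u\in\Gamma$.

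The first step is to feed this into the effective closing lemma. For any pair $u,u'\in\mathcal{E}$, combining the two defining identities and using \cref{lem:BCH} to collapse $\exp(w_u)\exp(-w_{u'})$ into $h^\ast\exp(\bar w)$ with $h^\ast\in H$ of norm $O(1)$ and $\bar w\in\mathfrak r$ of norm $O(\delta)$, one produces $\gamma := \gamma_u\gamma_{u'}^{-1}\in\Gamma$ such that the element
$(a_tua_{-t})(a_tg_1\gamma g_1^{-1}a_{-t})(a_tu'^{-1}a_{-t})$
has norm $O(1)$ and displaces $\hat v_{\mathfrak h}\in\mathbb P(\wedge^{\dim H}\LieG)$ by $O(\delta)$. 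Setting $g := a_tg_1$ and $S\asymp\delta^{-1}$, this is exactly the hypothesis of \cref{thm:closing lemma long unipotent} at the point $g\Gamma = a_t.x_1$. Translating if necessary by a base point of $\mathcal{E}$ so that $e$ lies in the (renormalised) set, the point $g\Gamma$ sits within $O(\eta^2)$ of $y\in X_{3\eta}$, giving $\inj(g\Gamma)\gg\eta$. Provided $\epsilon_2 < 1/\ref{a:long unipotent1}$ and $R\gg\eta^{-\ref{e:closing lemma frostman one scale}}$ so that $S\ge \ref{e:long unipotent}\eta^{-\ref{a:long unipotent1}}$, the closing lemma outputs a non-trivial proper $\Q$-subgroup $\mathbf M$ satisfying both the norm bound and the wedge bound on $a_tug_1.\mathpzc v_M$.

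Next, after replacing $\mathbf M$ by the $\Gamma$-conjugate minimising $\|a_tg_1.\mathpzc v_M\|$ (so that reduction theory applied to $x_1\in X_\eta$ yields the lower bound $\|a_tg_1.\mathpzc v_M\|\gg\eta^\star$), apply \cref{lem:Important Linear Algebra} with $\tilde R\asymp\delta^{-\ref{a:long unipotent2}}$, $\tilde t = t$ and $\tilde\eta\asymp\eta^\star$; in the $\SO(2,2)$ case the second conclusion of the closing lemma supplies the auxiliary hypothesis with $A = \ref{a:long unipotent2}$. This produces $g'\in G$ with $\|g'-I\|\le\tilde R^{\ref{c:linear algebra}}e^{-t/\ref{c:linear algebra}}$ such that $(g'a_tg_1)\mathbf M^\circ(\R)(g'a_tg_1)^{-1} = H$, so $H\cdot(g'a_tg_1\Gamma)$ is a periodic $H$-orbit. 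By \cref{cor:Volume and Height} its volume is at most $\height(\mathbf M)^c$, and combining the first closing-lemma bound with the reduction-theoretic control $\|g_1\|,\|g_1^{-1}\|\le\eta^{-\star}$ gives $\height(\mathbf M)\le\eta^{-\star}\delta^{-\star}$, which is at most $R$ after choosing $\ref{a:closing lemma frostman one scale}$ and $\ref{e:closing lemma frostman one scale}$ appropriately.

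It remains to exhibit a point of this periodic orbit at distance at most $R^{-D}$ from $x_1$. Using $a_{-t}\in H$, the natural candidate is $z := (a_{-t}g'a_t)g_1\Gamma$, and $d_X(x_1,z)\le\|a_{-t}g'a_t - I\|$. This is the most delicate step and the principal obstacle of the proof: the conjugation by $a_{-t}$ can expand the component of $g'-I$ lying in a negative $\Ad(a_t)$-weight subspace of $\mathfrak r$ by a factor up to $e^{2t}$, which would swamp the $e^{-t/\ref{c:linear algebra}}$ decay. The resolution uses more than the crude norm bound on $g'-I$: exploiting the wedge conclusion of \cref{thm:closing lemma long unipotent} together with the fact that $\bar v_H$ has $\Ad(a_t)$-weight zero, the weight components of $\lambda^{-1}a_tg_1.\mathpzc v_M-\bar v_H$ in the undesired directions of $\mathfrak r$ carry a compensating power of $e^{-t}$ that propagates through the projection $\Pi$ into the corresponding weight components of $g'-I$. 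Chasing these refined estimates through \cref{lem:Important Linear Algebra} yields $\|a_{-t}g'a_t-I\|\le R^{-D}$ provided $t = M\log R$ with $M = \ref{m:closing lemma frostman one scale}+\ref{c:closing lemma frostman one scale}D$ sufficiently large, the constants $\ref{c:closing lemma frostman one scale}$ and $\ref{m:closing lemma frostman one scale}$ being determined by $\ref{c:linear algebra}$, $\ref{a:long unipotent2}$, and the maximum absolute $\Ad(a_t)$-weight on $\mathfrak r$. This produces a periodic $H$-orbit of volume at most $R$ within $R^{-D}$ of $x_1$, contradicting the Diophantine hypothesis and completing the proof.
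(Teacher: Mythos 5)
Your high-level strategy---contradiction via the effective closing lemma followed by the linear algebra lemma---is the same as the paper's, and the first stage (applying \cref{thm:closing lemma long unipotent} after re-centering to produce a $\Q$-subgroup $\mathbf M$ with the two bounds) is essentially correct. But there is a genuine gap at the crucial step where a nearby periodic orbit is extracted, and you have identified the obstruction yourself without resolving it.

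The problem is that you apply \cref{lem:Important Linear Algebra} with $g = a_tg_1$. This produces $g'$ close to the identity with $g'(a_tg_1)\mathbf M^\circ(a_tg_1)^{-1}(g')^{-1} = H$, i.e., a periodic $H$-orbit through a point near $a_t.x_1$, not near $x_1$. To bring this back you conjugate by $a_{-t}$, and $\|a_{-t}g'a_t-\Id\|$ can blow up by a factor of $e^{2t}$ because $\Ad(a_{-t})$ expands the weight-$(-2)$ component of $\mathfrak r$ by $e^{2t}$; since the output bound is only $\|g'-\Id\| \le \tilde R^{\ref{c:linear algebra}}e^{-t/\ref{c:linear algebra}}$ with $\ref{c:linear algebra}>1$, this swamps the estimate. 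Your proposed fix---that the wedge conclusion of \cref{thm:closing lemma long unipotent} forces compensating $e^{-t}$ decay in the bad weight components of $g'-\Id$ via the equivariant projection $\Pi$ of \cref{lem:equivariant proj}---is not substantiated: $\Pi$ is a nonlinear smooth retraction and its proof supplies only a Lipschitz bound, so there is no a priori reason weight information survives it. In the paper the wedge bound is used only to rule out Case~2 of \cref{lem:Important Linear Algebra} (the $\SO(2,2)$-specific ideal case), not to refine the estimate on $g'$. The same issue also infects your volume bound: you need $\height(\mathbf M) = \|\mathpzc v_M\|\le R^{1/c}$, but $\|a_tg_1.\mathpzc v_M\|\ll\tilde R$ does not control $\|\mathpzc v_M\|$ without again paying an $a_{-t}$-expansion factor.

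The paper avoids the problem by an absorption step that you skipped. After re-centering at $x_2 = a_tu_1.x_1 = g_2\Gamma\in X_{2\eta}$ and applying the closing lemma there, one writes $g_2\gamma = a_tu_1g_1$ for some $\gamma\in\Gamma$ and observes
\begin{align*}
a_tua_{-t}g_2 = a_tua_{-t}\cdot a_tu_1g_1\gamma^{-1} = a_t(uu_1)g_1\gamma^{-1},
\end{align*}
so the closing-lemma bounds become bounds on $\sup_{v\in\mathsf{B}_1^U}\|a_tvg_1\gamma^{-1}.\mathpzc v_M\|$, with no leading $a_t$ on the group element. This is the hypothesis of \cref{lem:Important Linear Algebra} with $g = g_1\gamma^{-1}$, whose output $g'$ then directly gives the periodic orbit $Hg'g_1\Gamma$ at distance $\|g'-\Id\|\le R^{-D}$ from $x_1 = g_1\Gamma$, and whose first conclusion $\|g_1\gamma^{-1}.\mathpzc v_M\|\ll\tilde R$ combined with $\|g_1^{-1}\|\le\eta^{-A}$ yields the volume bound without any $a_{-t}$ loss. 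Without this identity the argument does not close.
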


\begin{proof}[Proof of \cref{lem:Closing lemma one scale}]
We prove the lemma for $\mathsf{B}_{r_H}^H$. The proof for $(\mathsf{B}_{r_H}^H)^{-1}$ is exactly the same. 

Let $\ref{c:linear algebra}$ be the constant coming from \cref{lem:Important Linear Algebra} and $c$ be the constant coming from the comparison between volume and height in \cref{cor:Volume and Height}. Let $\ref{a:long unipotent1} > 1$, $\ref{a:long unipotent2} > 1$ and $\ref{e:long unipotent}$ be as in \cref{thm:closing lemma long unipotent}. Let $\ref{m:closing lemma frostman one scale} = \ref{a:long unipotent2}(\ref{c:linear algebra}(\ref{c:linear algebra} + 1) + 1)/c$, $\ref{a:closing lemma frostman one scale} = c\ref{a:long unipotent2}$, and $\epsilon_2 = \frac{1}{2\ref{a:long unipotent1}}$. 

For initial point $x_1 \in X_{\eta}$, by reduction theory, we can write $x_1 = g_1 \Gamma$ where
\begin{align}\label{eqn:reduction initial point 1}
    \max\{\|g_1\|, \|g_1^{-1}\|\} \leq \eta^{-A}.
\end{align}
The constant $A$ depends only on $(G, \Gamma)$. 
Let $R \geq \ref{e:long unipotent}^{c\ref{a:long unipotent1}\ref{a:long unipotent2}}\eta^{-2c\ref{a:long unipotent1}\ref{a:long unipotent2}A}$. Let $\delta = R^{-\frac{1}{\ref{a:closing lemma frostman one scale}}} \leq \eta$ and let $D > 0$, $M = \ref{m:closing lemma frostman one scale} + \ref{c:closing lemma frostman one scale}D$, and $t = M \log R$. Let $\tilde{R} = \eta^{A} R^{\frac{1}{c}}$ and $S = \tilde{R}^{\frac{1}{\ref{a:long unipotent2}}}$. Note that $S \geq \ref{e:long unipotent}\eta^{-\ref{a:long unipotent1}}$. 

Suppose there exists $y \in X_{3\eta}$ and $r_H \leq \frac{1}{2}\min\{\inj(y), \eta_0\}$ so that 
\begin{align*}
\mu_t(\mathsf{B}_{r_H}^H\exp(B_{\delta}^{\mathfrak{r}}). y) = \nu_t \ast \delta_{x_1} (\mathsf{B}_{r_H}^H\exp(B_{\delta}^{\mathfrak{r}}). y) > \delta^{\epsilon_2}.
\end{align*}
Let
\begin{align*}
    \mathcal{E}_y = \{u \in \mathsf{B}_1^U: a_t u.x_1 \in \mathsf{B}_{r_H}^H \exp(B_{\delta}^{\mathfrak{r}}). y\},
\end{align*}
we have 
\begin{align*}
    |\mathcal{E}_y| > \delta^{\epsilon_2} = R^{-\frac{1}{2c\ref{a:long unipotent1}\ref{a:long unipotent2}}} \geq (\eta^{-A/\ref{a:long unipotent2}} R^{-\frac{1}{c\ref{a:long unipotent2}}})^{\frac{1}{\ref{a:long unipotent1}}} = S^{\frac{1}{\ref{a:long unipotent1}}}.
\end{align*}
Fix a $u_1 \in \mathcal{E}$ and let $\mathcal{E} = \mathcal{E}_y u_1^{-1} \subseteq \mathsf{B}_2^U$. For all $u, u' \in \mathcal{E}$, we have
\begin{align*}
    a_t uu_1.x_1 = h\exp(w).a_{t} u'u_1.x_1
\end{align*}
where $h \in \mathsf{B}_{2r_H}^H$ and $w \in B_{2\delta}^{\mathfrak{r}}$. 
Re-centering at $x_2 = a_{t}u_1.x_1$, we have
\begin{align*}
    a_{t} ua_{-t} x_2 = h\exp(w).a_{t} u'a_{-t} x_2.
\end{align*}
Note that $x_2 \in \mathsf{B}^H_{r_H}\exp(B_{\delta}^{\mathfrak{r}}).y \subseteq X_{2\eta}$. Write $x_2 = g_2 \Gamma$ where $\max\{\|g_2\|, \|g_2\|\} \leq \eta^{-A}$. There exists $\gamma_{u, u'} \in \Gamma$ so that 
\begin{align*}
    a_{t} ua_{-t} g_2 \gamma_{u, u'} = h\exp(w)a_{t} u'a_{-t} g_2.
\end{align*}
Therefore, 
\begin{align*}
    a_{t} ua_{-t} g_2 \gamma_{u, u'} g_2^{-1}a_{t} (u')^{-1}a_{-t} = h\exp(w).
\end{align*}

In summary, we have
\begin{enumerate}
    \item $|\mathcal{E}| > \delta^{\epsilon_2} \geq S^{\frac{1}{\ref{a:long unipotent1}}}$.
    \item For all $u, u' \in \mathcal{E}$, there exists $\gamma \in \Gamma$ so that 
    \begin{align*}
        \|a_{t} ua_{-t} g_2 \gamma_{u, u'} g_2^{-1}a_{t} (u')^{-1}a_{-t}\| = \|h\exp(w)\| \ll 1,\\
        \mathrm{d}(a_{t} ua_{-t} g_2 \gamma_{u, u'} g_2^{-1}a_{t} (u')^{-1}a_{-t}.\hat{\mathpzc{v}}_{\mathfrak{h}}, \hat{\mathpzc{v}}_\mathfrak{h}) \leq \delta = R^{-\frac{1}{c\ref{a:long unipotent2}}} \leq S^{-1}.
    \end{align*}
\end{enumerate}
Applying \cref{thm:closing lemma long unipotent} with $e^t \geq S = \eta^{A/\ref{a:long unipotent2}}R^{\frac{1}{c\ref{a:long unipotent2}}} \geq \ref{e:long unipotent}\eta^{-\ref{a:long unipotent1}}$, the base point $x_2 = g_2\Gamma \in X_{2\eta}$, there exists a non-trivial proper $\Q$-subgroup $\mathbf{M}$ so that 
\begin{align*}
    \sup_{u \in \mathsf{B}_2^U} \|a_{t} u a_{-t} g_2.\mathpzc{v}_M\| {}&\leq S^{\ref{a:long unipotent2}} = \eta^{A} R^{\frac{1}{c}},\\
    \sup_{\mathpzc{z} \in B_1^{\LieU}, u \in \mathsf{B}_2^U} \|\mathpzc{z} \wedge (a_{t} u a_{-t} g_2.\mathpzc{v}_M)\| {}&\leq e^{-\frac{t}{\ref{a:long unipotent2}}}S^{\ref{a:long unipotent2}} = e^{-\frac{t}{\ref{a:long unipotent2}}}\eta^{A} R^{\frac{1}{c}}.
\end{align*}

Since $x_2 = g_2\Gamma = a_tu_1g_1\Gamma$, there exists $\gamma \in \Gamma$ so that
\begin{align*}
    g_2 \gamma = a_t u_1 g_1. 
\end{align*}
Therefore, we have
\begin{align*}
    \sup_{u \in \mathsf{B}_2^U} \|a_{t} u u_1  g_1 \gamma.\mathpzc{v}_M\| \leq S^{\ref{a:long unipotent2}} = \eta^{A}R^{\frac{1}{c}} = \tilde{R}.
\end{align*}
Since $u_1 \in \mathsf{B}_1^U$, we have
\begin{align*}
    \sup_{u \in \mathsf{B}_1^U} \|a_{t} u g_1 \gamma.\mathpzc{v}_M\| \leq \tilde{R}.
\end{align*}
Similarly, we have
\begin{align*}
    \sup_{\mathpzc{z} \in B_1^{\LieU}, u \in \mathsf{B}_1^U} \|\mathpzc{z} \wedge (a_t u g_1 \gamma.\mathpzc{v}_M)\| \leq e^{-\frac{t}{\ref{a:long unipotent2}}} S^{\ref{a:long unipotent2}} = e^{-\frac{t}{\ref{a:long unipotent2}}} \tilde{R}.
\end{align*}

Note that $\mathbf{M}$ is a $\Q$-group and $\mathpzc{v}_M$ is a primitive integer vector in $\LieG_{\Z}$, we have $\|\mathpzc{v}_M\| \geq 1$. Combine it with \cref{eqn:reduction initial point 1}, we have
\begin{align*}
    \|g_1\gamma.\mathpzc{v}_M\| \geq \eta^{A}.
\end{align*}

We now apply \cref{lem:Important Linear Algebra} with $\tilde{t} = t$, $\tilde{R} = \eta^{A}R^{\frac{1}{c}}$, $\ref{a:long unipotent2}$, $g = g_1 \gamma$ and $\mathbf{M}(\R)^\circ$. Note that 
\begin{align*}
    {}&\tilde{R} = \eta^{A}R^{1/c} \geq \eta^{-A},\\
    {}&\tilde{t} = M\log R \geq \ref{m:closing lemma frostman one scale} \log R \geq \ref{a:long unipotent2}(\ref{c:linear algebra}(\ref{c:linear algebra} + 1) + 1) \log \tilde{R},
\end{align*}
the condition of the lemma is satisfied. There exists $g' \in G$ with $\|g' - \Id\|\leq \tilde{R}^{\ref{c:linear algebra}}e^{-\frac{1}{\ref{c:linear algebra}}\tilde{t}}$ so that 
\begin{align*}
    g'g_1\gamma \mathbf{M}(\R)^\circ \gamma^{-1} g_1^{-1}(g')^{-1} = H.
\end{align*}
Therefore, the orbit $Hg'g_1\Gamma$ is periodic. Moreover, 
\begin{align*}
    \|g' - \Id\| \leq \tilde{R}^{\ref{c:linear algebra}}e^{-\frac{1}{\ref{c:linear algebra}}t} \leq R^{-D}.
\end{align*}
Note that $g_1 \gamma \mathpzc{v}_M$ satisfies 
\begin{align*}
    \|g_1 \gamma \mathpzc{v}_M\|\ll \tilde{R} = \eta^{A}R^{\frac{1}{c}}.
\end{align*}
Combining with \cref{eqn:reduction initial point 1}, we have
\begin{align*}
    \|\gamma \mathpzc{v}_M\|\ll R^{\frac{1}{c}}.
\end{align*}
By \cref{cor:Volume and Height}, We have
\begin{align*}
    \vol(Hg'g_1 \Gamma) = \vol(Hg'g_1\gamma \Gamma) \ll \height(\gamma\mathbf{M}\gamma^{-1})^c \leq R.
\end{align*}
We get a contradiction to the initial Diophantine condition. This proves the lemma. 
\end{proof}

\section{Dimension estimate in many scales}\label{sec: Improve many scales}
This section is devoted to prove Lemma~\ref{lem:Closing lemma many scale}. We improve Lemma~\ref{lem:Closing lemma one scale} to obtain information for larger scales. The key ingredient is the following avoidance principle, \cref{pro:avoidance}. 

\subsection{Avoiding periodic orbits}
Let us recall Proposition~\ref{pro:avoidance}. It asserts that the trajectory $a_t \mathsf{B}_1^U.x_0$ is away from periodic orbits most of the time. 

\begin{proposition*}
    There exist $\mathsf{m}$, $s_0$, $\ref{a:avoidance}$, $\ref{c:avoidance}$, and $\ref{d:avoidance}$ depending only on $(G, H, \Gamma)$,  so that the following holds. Let $R_1, R_2 \geq 1$. Suppose $x_0 \in X$ is so that
    \begin{align*}
        d_X(x_0, x) \geq(\log R_2)^{\ref{d:avoidance}} R_2^{-1}
    \end{align*}
    for all $x$ with $\vol(H.x) \leq R_1$. Then for all $s \geq \ref{a:avoidance}\max\{\log R_2, |\log \inj(x_0)|\} + s_0$ and all $\eta \in (0, 1]$, we have
    \begin{align*}
        \Biggl|\Biggl\{u \in \mathsf{B}_1^U: \begin{aligned}{}&\inj(a_su.x_0) \leq \eta \text{ or }\exists x \text { with }\vol(H.x) \leq R_1\\ {}&\text { and } d_X(a_s u. x_0, x) \leq \ref{c:avoidance}^{-1} R_1^{-\ref{d:avoidance}}\end{aligned}\Biggr\}\Biggr| \leq \ref{c:avoidance} (R_1^{-1} + \eta^{\frac{1}{\mathsf{m}}}).
    \end{align*}
\end{proposition*}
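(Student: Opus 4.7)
The proposal is to split the bad set into two parts---the escape to the cusp (small injectivity radius) and the neighborhood of periodic $H$-orbits of volume $\leq R_1$---and to bound each separately. The first part yields the term $\eta^{1/\mathsf{m}}$ while the second yields the term $R_1^{-1}$.

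\textbf{Thin part.} Write $x_0=g_0\Gamma$ and note that the entries of the matrix $a_s u g_0$ are polynomials in $u$ of degree bounded only in terms of $G$. The standard Kleinbock--Margulis/Dani--Margulis quantitative non-divergence theorem then shows that
\[
m_U\bigl(\{u\in\mathsf{B}_1^U:\inj(a_su.x_0)\le \eta\}\bigr)\ll \eta^{1/\mathsf{m}},
\]
provided $s\ge \ref{a:avoidance}|\log\inj(x_0)|+s_0$. This accounts for the $\eta^{1/\mathsf{m}}$ contribution and is independent of $R_1,R_2$.

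\textbf{Periodic orbits via Plücker vectors.} Each periodic orbit $H.x$ with $\vol(H.x)\le R_1$ corresponds, for any representative $x=g\Gamma$, to a $\Q$-subgroup $\mathbf{M}\subseteq\mathbf{G}$ with $\Ad(g^{-1})\LieH=\LieM_{\Q}$. By Corollary~\ref{cor:Volume and Height}, one can choose such a representative so that $\height(\mathbf{M})\ll R_1^{\star}$, and being at $d_X$-distance at most $\ref{c:avoidance}^{-1}R_1^{-\ref{d:avoidance}}$ from $H.x$ is equivalent to the existence of $\gamma\in\Gamma$ with $a_sug_0\gamma.\mathpzc{v}_M$ lying within distance $R_1^{-\star}$ of the line spanned by $\bar v_H$ in $\wedge^{\dim H}\LieG$ (after suitable renormalization). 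In particular, for such $u$ one has $\|a_sug_0\gamma.\mathpzc{v}_M\|\le R_1^{\star}$.

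\textbf{Polynomial non-divergence and $(C,\alpha)$-good bounds.} For each admissible pair $(\mathbf{M},\gamma)$, the map $u\mapsto a_sug_0\gamma.\mathpzc{v}_M$ is polynomial in $u$ of degree bounded only in terms of $G$, and the closeness to the $H$-fixed line is detected by the vanishing of finitely many further polynomials (via the equivariant projection of Lemma~\ref{lem:equivariant proj} and a {\L}ojasiewicz-type inequality as in Theorem~\ref{thm:Lojasiewiecz}). The $(C,\alpha)$-good property of polynomials of bounded degree then gives, for each such pair,
\[
m_U\bigl(\{u\in\mathsf{B}_1^U:a_sug_0\gamma.\mathpzc{v}_M\text{ is within }R_1^{-\star}\text{ of }\R\bar v_H\}\bigr)\ll R_1^{-\star}\cdot\sup_{u\in\mathsf{B}_1^U}\|a_sug_0\gamma.\mathpzc{v}_M\|^{-\alpha}\cdot R_1^{\star}.
\]
The assumed Diophantine hypothesis on $x_0$, namely $d_X(x_0,x)\ge (\log R_2)^{\ref{d:avoidance}}R_2^{-1}$ for every periodic $x$ with $\vol(H.x)\le R_1$, translates (by the same Plücker argument run at $s=0$, together with an analogue of the linear-algebra step in the proof of Lemma~\ref{lem:Closing lemma one scale}) into a lower bound $\sup_{u\in\mathsf{B}_1^U}\|a_sug_0\gamma.\mathpzc{v}_M\|\gg R_2^{-\star}e^{s/\star}$, which is crucial once $s\ge \ref{a:avoidance}\log R_2+s_0$.

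\textbf{Summation.} Finally, the collection of pairs $(\mathbf{M},\gamma)$ contributing a non-empty set is controlled by an integer-lattice count: the wedge vectors $\gamma.\mathpzc{v}_M$ involved are primitive integer vectors of norm at most $R_1^{\star}$ inside $\wedge^{\dim H}\LieG_{\Z}$, so their number is at most $R_1^{\star}$. Combining the per-pair bound with this count and choosing $\ref{a:avoidance},\ref{d:avoidance}$ large enough to defeat the polynomial loss gives a total contribution $\ll R_1^{-1}$, completing the proof. The main obstacle is the last step: controlling the total combinatorial count of relevant Plücker vectors and verifying that the Diophantine hypothesis on $x_0$ yields a sufficiently strong lower bound for \emph{every} such vector after time $s$, so that the per-pair $(C,\alpha)$-good estimates sum to the required $R_1^{-1}$ rather than something weaker.
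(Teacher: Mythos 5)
The paper does not give a proof of this proposition at all: it simply cites \cite[Proposition 4.2, 4.4]{LMWY25}, \cite[Theorem 2]{SS24} and \cite[Corollary 7.2]{LMMS}. Your sketch is structurally the right starting point — these effective avoidance theorems are indeed proved by combining quantitative non-divergence for the cusp excursion with a $(C,\alpha)$-good/linearization analysis of the neighborhoods of periodic orbits — and your treatment of the thin part via Kleinbock--Margulis non-divergence is fine as far as it goes.

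The gap, which you yourself call "the main obstacle," is genuine and is precisely where the cited results required substantial new ideas. Your intended summation cannot go through as written. First, the claim that the contributing Plücker vectors $\gamma.\mathpzc{v}_M$ have norm $\le R_1^{\star}$ is not correct: for a fixed periodic orbit $Hg\Gamma$ of volume $\le R_1$ the relevant lifts $\gamma$ depend on $u$, and a bad pair $(u,\gamma)$ for the trajectory at time $s$ only forces $\|\gamma.\mathpzc{v}_M\|\ll e^{O(s)}\|g_0\|^{\star}R_1^{\star}$ — exponential in $s$, not polynomial in $R_1$. (Your appeal to Corollary~\ref{cor:Volume and Height} is also in the wrong direction: it gives $\vol\ll\height^c$, whereas you would need the reduction-theoretic bound $\height\ll\vol^{\star}$ for a well-chosen representative, which is not stated in the paper.) Second, even restricting the count, the per-pair Remez/$(C,\alpha)$-good bound decays like $(\epsilon/\!\sup)^{\alpha}$ with $\alpha=1/(\dim U\cdot\deg)$ fixed by $(G,H)$, while the number of contributing integer vectors of norm $\le e^{Cs}$ in $\wedge^{\dim H}\LieG_\Z$ is exponential in $s$ with a huge base ($\dim\wedge^{6}\mathfrak{sl}_4(\R)$ is several thousand); no admissible choice of $\ref{a:avoidance},\ref{d:avoidance}$ (they must depend only on $(G,H,\Gamma)$, not on $R_1,R_2,s$) makes the exponents cooperate under a naive union bound. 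The results cited in the paper avoid this entirely: the effective Dani--Margulis linearization in \cite{LMMS} does not sum over Plücker vectors but instead runs an induction over a hierarchy of families of $\Q$-subgroups, exploiting that the bad sets $E_{\mathbf M,\gamma}$ are highly correlated (if the polynomial $u\mapsto a_sug_0\gamma.\mathpzc{v}_M$ is small on a large portion of $\mathsf{B}_1^U$, the polynomial structure forces rigid constraints on $\gamma.\mathpzc{v}_M$ rather than merely a small sublevel set). That iterative mechanism is the genuine content of the avoidance theorem, and it cannot be replaced by the union bound you sketch.
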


\begin{proof}
    See \cite[Proposition 4.2, 4.4]{LMWY25} and \cite[Theorem 2]{SS24}. See also \cite[Corollary 7.2]{LMMS}. 
\end{proof}

\subsection{\texorpdfstring{F{\o}lner property for $U$}{F{\o}lner property for U}}
The following lemma allow us to view $\mu_{t_2 + t_1}$ as a $2$-step random walk. 
\begin{lemma}\label{lem:Folner1}
For all $A \subseteq X$, we have
\begin{align*}
|\mu_{t_2 + t_1}(A) - (\nu_{t_2} \ast \mu_{t_1})(A)| \ll e^{-t_1}.
\end{align*}
\end{lemma}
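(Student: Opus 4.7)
The plan is to unfold both measures explicitly and exploit the abelian structure of $U$ together with the fact that $a_{-t_1}$ contracts $U$ by a factor $e^{-t_1}$, so that the comparison reduces to a standard F{\o}lner-type estimate on a box in $\R^2$.

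Writing out the definitions,
\begin{align*}
\mu_{t_1+t_2}(A) &= \int_{\mathsf{B}_1^U} \mathbf{1}_A(a_{t_1+t_2} u.x_1)\,\mathrm{d}m_{\mathsf{B}_1^U}(u),\\
(\nu_{t_2}\ast\mu_{t_1})(A) &= \int_{\mathsf{B}_1^U}\int_{\mathsf{B}_1^U} \mathbf{1}_A(a_{t_2} u_2 a_{t_1} u_1.x_1)\,\mathrm{d}m_{\mathsf{B}_1^U}(u_2)\,\mathrm{d}m_{\mathsf{B}_1^U}(u_1).
\end{align*}
I would use the identity $a_{t_2} u_2 a_{t_1} u_1 = a_{t_1+t_2} (a_{-t_1} u_2 a_{t_1}) u_1$ and note, from the explicit form of $a_t$ and \eqref{eqn:def of U}, that conjugation by $a_{-t_1}$ scales each coordinate of $U$ by $e^{-t_1}$. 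In particular $u_2' := a_{-t_1} u_2 a_{t_1}\in\mathsf{B}_{e^{-t_1}}^U$ whenever $u_2\in\mathsf{B}_1^U$. Since $U$ is abelian, $u_2' u_1 = u_1 u_2'$ corresponds to addition of coordinates in the identification $\LieU\cong\R^2$ coming from the parametrization $u_{r,s}$.

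Passing to the coordinates $(r,s)\in[-1,1]^2$ (so that $\mathrm{d}m_{\mathsf{B}_1^U}=\tfrac{1}{4}\mathrm{d}r\,\mathrm{d}s$) and writing $u_i = u_{r_i,s_i}$, I would rewrite
\begin{align*}
(\nu_{t_2}\ast\mu_{t_1})(A) = \frac{1}{16}\int_{[-1,1]^2}\int_{[-1,1]^2}\mathbf{1}_A\bigl(a_{t_1+t_2} u_{r_1+e^{-t_1}r_2,\,s_1+e^{-t_1}s_2}.x_1\bigr)\,\mathrm{d}r_1\mathrm{d}s_1\,\mathrm{d}r_2\mathrm{d}s_2.
\end{align*}
For fixed $(r_2,s_2)\in[-1,1]^2$, the change of variables $(R,S)=(r_1+e^{-t_1}r_2, s_1+e^{-t_1}s_2)$ replaces the inner integral by an integral of $\mathbf{1}_A(a_{t_1+t_2}u_{R,S}.x_1)$ over the translated box $[-1,1]^2 + (e^{-t_1}r_2, e^{-t_1}s_2)$. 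The symmetric difference of this translated box with $[-1,1]^2$ has Lebesgue measure $O(e^{-t_1})$ uniformly in $(r_2,s_2)$, so replacing the shifted box by $[-1,1]^2$ incurs an error of at most $O(e^{-t_1})$ since $\|\mathbf{1}_A\|_\infty\le 1$. Averaging over $(r_2,s_2)\in[-1,1]^2$ then gives
\begin{align*}
(\nu_{t_2}\ast\mu_{t_1})(A) = \frac{1}{4}\int_{[-1,1]^2}\mathbf{1}_A(a_{t_1+t_2} u_{R,S}.x_1)\,\mathrm{d}R\,\mathrm{d}S + O(e^{-t_1}) = \mu_{t_1+t_2}(A) + O(e^{-t_1}),
\end{align*}
as desired.

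There is no real obstacle here: the proof is essentially the F{\o}lner property of the box $\mathsf{B}_1^U\subset U\cong\R^2$ with respect to the much smaller box $\mathsf{B}_{e^{-t_1}}^U$. The only points requiring care are (i) verifying that $U_1$ and $U_2$ really are abelian with the group law corresponding to addition in $(r,s)$-coordinates (which follows from the explicit matrix form in \eqref{eqn:def of U} and a short $\mathfrak{sl}_4$-bracket check), and (ii) that the conjugation $a_{-t_1}(\cdot)a_{t_1}$ scales both $r$ and $s$ by $e^{-t_1}$ (which is immediate from \eqref{eqn:def of a} since the relevant root spaces $E_{12}$ and $E_{13}$ have the same weight $t$ under $a_t$). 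Both are direct calculations, so the whole argument is essentially a one-page Fubini plus symmetric-difference estimate.
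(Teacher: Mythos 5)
Your proof is correct and follows essentially the same route as the paper: rewrite $a_{t_2}u_2 a_{t_1}u_1 = a_{t_1+t_2}(a_{-t_1}u_2 a_{t_1})u_1$, observe that $a_{-t_1}u_2 a_{t_1}\in\mathsf{B}_{e^{-t_1}}^U$, and then bound the difference of the two averages by the measure of the symmetric difference $\mathsf{B}_1^U\,\triangle\,(a_{-t_1}u_2 a_{t_1})\mathsf{B}_1^U\ll e^{-t_1}$. The paper states this compactly in terms of the symmetric difference of Haar boxes, while you spell it out in the $(r,s)$-coordinates; these are the same argument, and your verifications that $U$ is abelian and that $a_{-t_1}$ scales both coordinates uniformly by $e^{-t_1}$ are exactly the points being used.
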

\begin{proof}
The proof is a standard application of F{\o}lner property of $U$ as the following: 
\begin{align*}
    {}&|\mu_{t_2 + t_1}(A) - (\nu_{t_2} \ast \mu_{t_1})(A)|\\
    \leq{}& \int_{\mathsf{B}_1^U}\Biggl|\int_{\mathsf{B}_1^U} \mathds{1}_A(a_{t_2 + t_1} u_1. x_1) - \mathds{1}_A(a_{t_2 + t_1} a_{-t_1} u_2 a_{t_1} u_1. x_1)\,\mathrm{d}u_1\Biggr|\,\mathrm{d}u_2\\
    \leq{}& \sup_{u_1 \in \mathsf{B}_1^U} |\mathsf{B}_1^U \triangle a_{-t_1} u_2^{-1} a_{t_1} \mathsf{B}_1^U| \ll e^{-t_1}.
\end{align*}    
\end{proof}

\subsection{Proof of Lemma~\ref{lem:Closing lemma many scale}}
We restate Lemma~\ref{lem:Closing lemma one scale} in the following form by explicitly writing the condition $R \gg \eta^{-\ref{e:closing lemma frostman one scale}}$ for reader's convenience. 
\begin{lemma}\label{lem:Closing lemma one scale restate}
    There exist constants $\ref{a:closing lemma frostman one scale} > 1$, $\ref{c:closing lemma frostman one scale} > 1$, $\ref{e:closing lemma frostman one scale} > 1$, $\ref{m:closing lemma frostman one scale} > 1$, $\epsilon_2 > 0$ and $R_0$ depending only on $(G, H, \Gamma)$ so that the following holds. For all $R \geq R_0$, let $\eta = R_0^{\frac{1}{\ref{e:closing lemma frostman one scale}}} R^{-\frac{1}{\ref{e:closing lemma frostman one scale}}}$. For all $D > 0$, $x_1 \in X_{\eta}$, let $M = \ref{m:closing lemma frostman one scale} + \ref{c:closing lemma frostman one scale} D$, $t = M\log R$, $\mu_t = \nu_{t} \ast \delta_{x_1}$ and $\delta = R^{-\frac{1}{\ref{a:closing lemma frostman one scale}}}$. 

    Suppose that for all periodic orbit $H.x'$ with $\vol(H.x') \leq R$, we have
    \begin{align*}
        d(x_1, x') > R^{-D}.
    \end{align*}
    Then for all $y \in X_{3\eta}$ and all $r_H \leq \frac{1}{2}\min\{\inj(y), \eta_0\}$, we have
    \begin{align*}
        \mu_t((\mathsf{B}_{r_H}^H)^{\pm 1}\exp(B_{\delta}^{\mathfrak{r}}).y) \leq \delta^{\epsilon_2}.
    \end{align*}
\end{lemma}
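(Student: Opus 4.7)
The plan is to view this statement as a mechanical reparametrization of the already-proved \cref{lem:Closing lemma one scale}, in which the implicit dependence $R \gg \eta^{-\ref{e:closing lemma frostman one scale}}$ is rewritten as an explicit formula for $\eta$ in terms of $R$. First I would take the constants $\ref{a:closing lemma frostman one scale}, \ref{c:closing lemma frostman one scale}, \ref{e:closing lemma frostman one scale}, \ref{m:closing lemma frostman one scale}$ and $\epsilon_2$ exactly as in \cref{lem:Closing lemma one scale}, and unwind the $\star$-notation conventions from \cref{sec:Global prelim}: the hypothesis ``$R \gg \eta^{-\ref{e:closing lemma frostman one scale}}$'' means there is a threshold constant $C = C(G, H, \Gamma) > 0$ such that \cref{lem:Closing lemma one scale} applies whenever $R \geq C\eta^{-\ref{e:closing lemma frostman one scale}}$. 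Define $R_0 := C$; this is the constant promised by the restated lemma.

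Next, for any $R \geq R_0$, set $\eta := R_0^{1/\ref{e:closing lemma frostman one scale}} R^{-1/\ref{e:closing lemma frostman one scale}}$. Then $\eta \in (0, 1]$, and the identity
\begin{align*}
R = R_0 \, \eta^{-\ref{e:closing lemma frostman one scale}}
\end{align*}
holds by construction, so in particular $R \geq C\eta^{-\ref{e:closing lemma frostman one scale}}$, i.e.\ the size hypothesis of \cref{lem:Closing lemma one scale} is met at this specific value of $\eta$. All remaining data (the initial point $x_1 \in X_\eta$, the parameter $D > 0$, the exponent $M = \ref{m:closing lemma frostman one scale} + \ref{c:closing lemma frostman one scale} D$, the time $t = M\log R$, the measure $\mu_t = \nu_t \ast \delta_{x_1}$, the scale $\delta = R^{-1/\ref{a:closing lemma frostman one scale}}$, the Diophantine condition on $x_1$, and the parameters $y \in X_{3\eta}$, $r_H \leq \tfrac{1}{2}\min\{\inj(y), \eta_0\}$) are identical to those appearing in \cref{lem:Closing lemma one scale}.

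Finally, I would invoke \cref{lem:Closing lemma one scale} directly with this triple $(R, \eta, D)$ to deduce the bound
\begin{align*}
\mu_t((\mathsf{B}_{r_H}^H)^{\pm 1}\exp(B_\delta^{\mathfrak{r}}).y) \leq \delta^{\epsilon_2},
\end{align*}
which is exactly the conclusion of the restated lemma. There is no genuine obstacle in this argument: the content is purely notational, committing to the largest admissible $\eta$ as a function of $R$ so that later sections (in particular the proof of \cref{lem:Closing lemma many scale} in \cref{sec: Improve many scales}) can freely quote a clean quantitative statement with both $\eta$ and $\delta$ expressed as explicit powers of $R$.
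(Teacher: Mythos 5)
Your proposal is correct and matches the paper's intent exactly: the paper introduces \cref{lem:Closing lemma one scale restate} with the explicit remark that it is merely \cref{lem:Closing lemma one scale} with the condition $R \gg \eta^{-\ref{e:closing lemma frostman one scale}}$ spelled out, and provides no separate proof. Your unwinding — taking $R_0$ to be the implied constant in that $\ll$, setting $\eta = R_0^{1/\ref{e:closing lemma frostman one scale}} R^{-1/\ref{e:closing lemma frostman one scale}}$ so that $\eta^{-\ref{e:closing lemma frostman one scale}} = R/R_0$ and hence $R \geq R_0\eta^{-\ref{e:closing lemma frostman one scale}}$ holds with equality, then invoking \cref{lem:Closing lemma one scale} with identical remaining data — is precisely the observation the authors left implicit.
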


\begin{proof}[Proof of Lemma~\ref{lem:Closing lemma many scale}]
    We will prove the lemma for $\mathsf{B}_{r_H}^H$. The proof for $(\mathsf{B}_{r_H}^H)^{-1}$ is exactly the same. 
    
    Recall that $\lambda$ is the normalized Haar measure on $\mathsf{B}^{s, H}_{\beta + 100\beta^2}$. We have
    \begin{align*}
        (\lambda \ast \mu_t)(\mathsf{B}_{r_H}^H\exp(B_{r}^{\mathfrak{r}}).y) \leq{}& \int_{\mathsf{B}^{s, H}_{\beta + 100\beta^2}} \mu_t(\mathsf{h}^{-1}\mathsf{B}_{r_H}^H\exp(B_{r}^{\mathfrak{r}}).y) \,\mathrm{d}\lambda(\mathsf{h})\\
        \leq{}& \mu_t(\mathsf{B}_{2r_H}^H\exp(B_{r}^{\mathfrak{r}}).y).
    \end{align*}
    It suffices to show that for all $y \in X_{3\eta}$, $r_H \leq \frac{1}{2}\min\{\inj(y), \eta_0\}$ and $r \in [\delta_0, \eta]$, we have
    \begin{align}\label{eqn:estimate before shear}
        \mu_t(\mathsf{B}_{r_H}^H \exp(B_{r}^{\mathfrak{r}}).y) \ll \eta^{-\star} r^{\epsilon_1}.
    \end{align}
    The rest of the proof will be devoted to prove \cref{eqn:estimate before shear}. 

    Write $t = t_2 + t_1$ where $t_2$ and $t_1$ will be explicated later. By \cref{lem:Folner1}, 
    \begin{align*}
        |(\nu_{t_2} \ast \mu_{t_1})(\mathsf{B}_{r_H}^H \exp(B_{r}^{\mathfrak{r}}).y) - \mu_{t}(\mathsf{B}_{r_H}^H \exp(B_{r}^{\mathfrak{r}}).y)| \ll e^{-t_1}.
    \end{align*}
    It suffices to estimate $(\nu_{t_2} \ast \mu_{t_1})(\mathsf{B}_{r_H}^H \exp(B_{r}^{\mathfrak{r}}).y)$ if $t_1$ is large enough. We will explicate this range later. 
    
    Let $\ref{a:closing lemma frostman one scale}$, $\ref{c:closing lemma frostman one scale}$, $\ref{e:closing lemma frostman one scale}$, $\ref{m:closing lemma frostman one scale}$, $\epsilon_2$ and $R_0$ be as in \cref{lem:Closing lemma one scale restate}. Let $\mathsf{m}$, $\ref{a:avoidance}$, $\ref{d:avoidance}$, $\ref{c:avoidance}$, and $s_0$ be as in \cref{pro:avoidance}. Let $\epsilon_1 = \min\{\frac{1}{\ref{a:closing lemma frostman one scale}\mathsf{m} \ref{e:closing lemma frostman one scale}}, \epsilon_0\}$. This is a small constant depends only on $(G, H, \Gamma)$. Let $\ref{m:closinglemma frostman} = 2\ref{a:avoidance} + \ref{m:closing lemma frostman one scale} + 1$, $\ref{c:closinglemma frostman} = 2\ref{a:avoidance} + \ref{c:closing lemma frostman one scale}$, $\ref{d:closinglemma frostman} = \ref{d:avoidance} + 1$, $\ref{a:closinglemma frostman} = \ref{a:closing lemma frostman one scale}$. Suppose $R \geq \ref{c:avoidance}e^{s_0}R_0\eta^{-2\ref{e:closing lemma frostman one scale}}$. We have $\log R \geq 2|\log \eta| + s_0 \geq 2|\log \inj(x_1) | + s_0$. Suppose $R \gg_{\ref{d:avoidance}} 1$ so that $R^D \geq \ref{d:avoidance} \log(2D + 1)$ and $R^{\ref{d:avoidance}} \geq (\log R)^{\ref{d:avoidance}}$. 

    For all $\delta_0 = R^{-\frac{1}{\ref{a:closing lemma frostman one scale}}} \leq r \leq R_0^{-\frac{1}{\ref{a:closing lemma frostman one scale}}}\eta^{\frac{\ref{e:closing lemma frostman one scale}}{\ref{a:closing lemma frostman one scale}}}$, let $R_1 = r^{-\ref{a:closing lemma frostman one scale}} \geq R_0 \eta^{-\ref{e:closing lemma frostman one scale}}$. Let $R_2 = R^{2D + 1}$. Then for all $x$ with $H.x$ periodic and $\vol(H.x) \leq R_1 \leq R$, we have
    \begin{align*}
        d(x_1, x) \geq R^{-D} \geq (\log R_2)^{\ref{d:avoidance}} R_2^{-1}.
    \end{align*}

    For all $D \geq \ref{d:closinglemma frostman} + 1$, let $M = \ref{m:closinglemma frostman} + \ref{c:closinglemma frostman}D$ and $t = M \log R$. Let $t_2 = (\ref{m:closinglemma frostman} + \ref{c:closinglemma frostman}D)\log R_1$ and $t_1 = t - t_2$. We have
    \begin{align*}
        t_1 ={}& t - t_2\\
        \geq{}& (2\ref{a:avoidance}D + 2\ref{a:avoidance} + 1)\log R\\
        \geq{}& \ref{a:avoidance}\max\{\log R_2, \log |\inj(x_1)|\} + s_0.
    \end{align*}
    Apply \cref{pro:avoidance} to $x_1$, $R_1$, $R_2$, $\tilde{\eta} = R_0^{\frac{1}{\ref{e:closing lemma frostman one scale}}}R_1^{-\frac{1}{\ref{e:closing lemma frostman one scale}}}$ and $t_1$, we have
    \begin{align*}
        \Biggl|\Biggl\{u \in \mathsf{B}_1^U: \begin{aligned}{}&\inj(a_su.x_1) \leq R_0^{\frac{1}{\ref{e:closing lemma frostman one scale}}}R_1^{-\frac{1}{\ref{e:closing lemma frostman one scale}}} \text{ or }\exists x \text { with } H.x \text{ periodic}\\ {}&\text{and}\vol(H.x) \leq R_1\text { so that  } d_X(a_s u. x_1, x) \leq R_1^{-\ref{d:avoidance} - 1}\end{aligned}\Biggr\}\Biggr| \leq C (R_1^{-1} + R_0^{\frac{1}{\mathsf{m}\ref{e:closing lemma frostman one scale}}}R_1^{-\frac{1}{\mathsf{m}\ref{e:closing lemma frostman one scale}}}).
    \end{align*}
    Let 
    \begin{align*}
        X_1 = \{x \in X_{\tilde{\eta}}: \forall x' \text{ with } H.x \text{ periodic and }\vol(H.x) \leq R_1, d(x, x') > R_1^{-D}\}
    \end{align*}
    and $X_2 = X \setminus X_1$. The above inequality is equivalent to 
    \begin{align}\label{eqn:manyscale bad point}
        \mu_{t_1}(X_2) \leq C(R_1^{-1} + R_0^{\frac{1}{\mathsf{m}\ref{e:closing lemma frostman one scale}}}R_1^{-\frac{1}{\mathsf{m}\ref{e:closing lemma frostman one scale}}}).
    \end{align}
    Apply \cref{lem:Closing lemma one scale restate} to $x \in X_1$, $R_1$ and $t_2$ and note that $R_1^{-\frac{1}{\ref{a:closing lemma frostman one scale}}} = r$, for all $y \in X_{3\tilde{\eta}}$ and $r_H \leq \frac{1}{2}\min\{\inj(y), \eta_0\}$, we have
    \begin{align*}
        (\nu_{t_2} \ast \delta_{x})(\mathsf{B}_{r_H}^H \exp(B_{r}^{\mathfrak{r}}).y) \ll r^{\epsilon_2}.
    \end{align*}
    In particular, note that
    \begin{align*}
        \tilde{\eta} = R_0^{\frac{1}{\ref{e:closing lemma frostman one scale}}}R_1^{-\frac{1}{\ref{e:closing lemma frostman one scale}}} \leq \eta,
    \end{align*}
    we have
    \begin{align}\label{eqn:many scale good point}
        (\nu_{t_2} \ast \delta_{x})(\mathsf{B}_{r_H}^H \exp(B_{r}^{\mathfrak{r}}).y) \ll r^{\epsilon_2}
    \end{align}
    for all $y \in X_{3\eta}$ and $r_H \leq \frac{1}{2}\min\{\inj(y), \eta_0\}$. 

    Combine \cref{eqn:many scale good point,eqn:manyscale bad point}, for all $y \in X_{3\eta}$, $r_H \leq \frac{1}{2}\min\{\inj(y), \eta_0\}$ we have
    \begin{align*}
        (\nu_{t_2} \ast \mu_{t_1})(\mathsf{B}_{r_H}^H\exp(B_{r}^{\mathfrak{r}}).y) ={}& \int_X (\nu_{t_2} \ast \delta_x)(\mathsf{B}_{r_H}^H\exp(B_{r}^{\mathfrak{r}}).y) \, \mathrm{d}\mu_{t_1}(x)\\
        ={}& \int_{X_1} (\nu_{t_2} \ast \delta_x)(\mathsf{B}_{r_H}^H\exp(B_{r}^{\mathfrak{r}}).y) \, \mathrm{d}\mu_{t_1}(x)\\
        {}&+ \int_{X_2} (\nu_{t_2} \ast \delta_x)(\mathsf{B}_{r_H}^H\exp(B_{r}^{\mathfrak{r}}).y) \, \mathrm{d}\mu_{t_1}(x)\\
        \ll{}& r^{\epsilon_2} + \mu_{t_1}(X_2) \leq r^{\epsilon_2} + C(R_1^{-1} + R_0^{\frac{1}{\mathsf{m}\ref{e:closing lemma frostman one scale}}}R_1^{-\frac{1}{\mathsf{m}\ref{e:closing lemma frostman one scale}}}).
    \end{align*}
    Note that $C$, $R_0$ and $\ref{e:closing lemma frostman one scale}$ depends only on $(G, H, \Gamma)$, we have
    \begin{align*}
        (\nu_{t_2} \ast \mu_{t_1})(\mathsf{B}_{r_H}^H\exp(B_{r}^{\mathfrak{r}}).y) \ll r^{\epsilon_1}
    \end{align*}
    where $\epsilon_1 = \min\{\frac{1}{\mathsf{m}\ref{a:closing lemma frostman one scale}\ref{e:closing lemma frostman one scale}}, \epsilon_2\}$. 
    Since $t_1 \geq \log R$, we have
    \begin{align*}
        \mu_{t}(\mathsf{B}_{r_H}^H\exp(B_{r}^{\mathfrak{r}}).y) = \mu_{t_2 + t_1}(\mathsf{B}_{r_H}^H\exp(B_{r}^{\mathfrak{r}}).y) \ll r^{\epsilon_1} + R^{-1} \ll r^{\epsilon_1}.
    \end{align*}
    This proves \cref{eqn:estimate before shear} for all $r$ satisfying 
    \begin{align*}
        \delta_0 = R^{-\frac{1}{\ref{a:closing lemma frostman one scale}}} \leq r \leq R_0^{-\frac{1}{\ref{a:closing lemma frostman one scale}}}\eta^{\frac{\ref{e:closing lemma frostman one scale}}{\ref{a:closing lemma frostman one scale}}}.
    \end{align*}
    For all $r \geq R_0^{-\frac{1}{\ref{a:closing lemma frostman one scale}}}\eta^{\frac{\ref{e:closing lemma frostman one scale}}{\ref{a:closing lemma frostman one scale}}}$, we have
    \begin{align*}
        \mu_{t}(\mathsf{B}_{r_H}^H\exp(B_{r}^{\mathfrak{r}}).y) \leq 1 \leq R_0^{\frac{1}{\ref{a:closing lemma frostman one scale}}}\eta^{-\frac{\ref{e:closing lemma frostman one scale}}{\ref{a:closing lemma frostman one scale}}} r^{\epsilon_1}.
    \end{align*}
    Therefore, for all $y \in X_{3\eta}$, $r_H \leq \frac{1}{2}\min\{\inj(y), \eta_0\}$ and all $r \in [\delta_0, \eta]$, we have
    \begin{align*}
        \mu_{t}(\mathsf{B}_{r_H}^H\exp(B_{r}^{\mathfrak{r}}).y) \ll \eta^{-\star} r^{\epsilon_1}.
    \end{align*}
    This proves the lemma. 
\end{proof}

\section{Preparation II: Boxes, sheeted sets and admissible measures}\label{sec:boxes}
The deduction of \cref{thm:closing lemma initial dim} from \cref{lem:Closing lemma many scale} is straight forward. See the sketch in the introduction part of \cref{sec:Margulis function}. However, due to multiplicity of covering for $X$ and boundary effect of ball in $H$, the detail is lengthy and tedious. We collect needed results in \cite[Section 7]{LMW22} in this section and proceed the proof in the next section. The results there are stated for 
$H = \SL_2(\R)$, but their proof work in far more generality. In particular, in the case where $H\cong \SO(2, 2)^\circ$ or $H\cong \SO(3, 1)^\circ$, the expanding rate of $a_t$ on $\LieU$ is uniform. The proofs in \cite[Section 7]{LMW22} can be adapted easily. We will indicate the needed change in the proof. 

\subsection{Covering lemma}
Let
\begin{align*}
    \mathsf{Q}^H_{\eta, \beta^2, m} = \mathsf{B}_{e^{-m}\beta^2}^{U^-} \mathsf{B}_{\beta^2}^{M_0A} \mathsf{B}_{\eta}^{U^+}.
\end{align*}
and let
\begin{align*}
    \mathsf{Q}^G_{\eta, \beta^2, m} = \mathsf{Q}^H_{\eta, \beta^2, m} \cdot \exp(B_{2\beta^2}^{\mathfrak{r}}).
\end{align*}
For simplicity, we will denote them by $\mathsf{Q}^H_m$ and $\mathsf{Q}^G_m$ respectively. 

We also introduce the notion
\begin{align*}
    \Check{\mathsf{Q}}^H_m = (\mathsf{Q}^H_m)^{-1}
\end{align*}
and 
\begin{align*}
    \Check{\mathsf{Q}}^G_m = (\mathsf{Q}^H_m)^{-1} \exp(B_{2\beta^2}^{\mathfrak{r}}).
\end{align*}

\begin{lemma}[\text{\cite[Lemma 7.1]{LMW22}}]\label{lem: covering multiplicity}
    There exists $K \geq 1$ depends only on $X$ so that for all $m \geq 0$, there is a covering 
    \begin{align*}
        \{\mathsf{Q}^G_{\eta, \beta^2, m}.y_j: j \in \mathcal{J}_m, y_j \in X_{\frac{3}{2}\eta}\}
    \end{align*}
    of $X_{2\eta}$ with multiplicity $\leq K$. In particular, $\#\mathcal{J}_m \ll \eta^{-2}\beta^{-26}e^{2m}$. 
\end{lemma}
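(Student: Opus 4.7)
The plan is a standard Vitali-type packing/covering argument, carried out with care because the box $\mathsf{Q}^G_m$ is highly anisotropic (smallest scale $e^{-m}\beta^2$ along $\LieU^-$, largest scale $\eta$ along $\LieU^+$). First I would record, via Baker--Campbell--Hausdorff (Lemma~\ref{lem:BCH}) applied to the decomposition $\LieG = \LieU^- \oplus \LieM_0 \oplus \LieA \oplus \LieU^+ \oplus \mathfrak{r}$ and the standing assumption $100 C_0 \eta \leq \eta_0$, that the map from the corresponding product of balls to $G$ is a bi-analytic diffeomorphism onto its image, so
\begin{align*}
    \vol(\mathsf{Q}^G_m) \asymp (e^{-m}\beta^2)^2 \cdot \beta^{\dim(\LieM_0 \oplus \LieA)} \cdot \eta^2 \cdot (\beta^2)^{\dim \mathfrak{r}} = e^{-2m}\beta^{26}\eta^2,
\end{align*}
using $\dim \mathfrak{r} = 9$ and $\dim(\LieM_0 \oplus \LieA) = 2$ in both cases. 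This is precisely the inverse of the cardinality claimed, so whatever greedy construction we use will saturate at the right rate.

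Next I would select $\{y_j\}_{j \in \mathcal{J}_m}\subset X_{3\eta/2}$ maximal so that the \emph{half-sized} boxes $F_m.y_j$ are pairwise disjoint, where
\begin{align*}
    F_m = \mathsf{B}^{U^-}_{e^{-m}\beta^2/C} \, \mathsf{B}^{M_0A}_{\beta^2/C} \, \mathsf{B}^{U^+}_{\eta/C} \, \exp\bigl(B_{\beta^2/C}^{\mathfrak{r}}\bigr)
\end{align*}
for a sufficiently large absolute constant $C$, chosen so that another application of BCH yields $F_m \cdot F_m^{-1} \subseteq \mathsf{Q}^G_m$; this is the routine but delicate step, since each rearrangement of BCH picks up commutator corrections of the form $O(\eta)\cdot(\text{scale})$ which, because $\eta$ is the largest scale and the box is designed to be compatible with $\Ad(a_t)$-contraction, stays within each factor of $\mathsf{Q}^G_m$ after enlarging $C$. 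Since each $y_j \in X_{3\eta/2}$, the orbit map $F_m \to F_m.y_j$ is injective. For the covering property, fix $x \in X_{2\eta} \subset X_{3\eta/2}$: by maximality $F_m.x$ intersects some $F_m.y_j$, which by $F_m F_m^{-1} \subseteq \mathsf{Q}^G_m$ gives $x \in \mathsf{Q}^G_m.y_j$.

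Finally, for the multiplicity bound at a fixed $z \in X$, I would observe that $z \in \mathsf{Q}^G_m.y_j$ forces $y_j \in (\mathsf{Q}^G_m)^{-1}.z$, and again by BCH this set is contained in $\mathsf{Q}'.z$ for a bounded enlargement $\mathsf{Q}'$ of $\mathsf{Q}^G_m$ with volume $\asymp \vol(\mathsf{Q}^G_m)$. Hence the disjoint sets $\{F_m.y_j : z \in \mathsf{Q}^G_m.y_j\}$ all lie inside $\mathsf{Q}'\cdot F_m.z$, and dividing volumes gives a multiplicity bound $K$ depending only on $X$. For the global cardinality, the same disjointness together with $\mu_X(X)=1$ yields
\begin{align*}
    \#\mathcal{J}_m \ll \vol(F_m)^{-1} \asymp e^{2m}\beta^{-26}\eta^{-2},
\end{align*}
which is the stated bound. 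The only genuine obstacle is the bookkeeping in Step~2 to verify $F_m F_m^{-1} \subseteq \mathsf{Q}^G_m$ across the five factors of very different sizes; everything else is a volume count.
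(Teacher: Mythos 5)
Your Vitali/greedy packing scheme is the same route the paper takes (it simply cites \cite[Lemma~7.1]{LMW22}, and that proof is exactly this argument), but two things should be tightened. First, a bookkeeping slip: the middle factor in your volume formula should be $(\beta^2)^{\dim(\LieM_0\oplus\LieA)}=\beta^4$, not $\beta^{\dim(\LieM_0\oplus\LieA)}=\beta^2$, since $\mathsf{B}^{M_0A}_{\beta^2}$ has radius $\beta^2$ in a $2$-dimensional space; with the correction the product is $e^{-2m}\beta^4\cdot\beta^4\cdot\eta^2\cdot\beta^{18}=e^{-2m}\beta^{26}\eta^2$, which matches your stated answer. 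Second, the covering step needs $F_m^{-1}F_m\subseteq\mathsf{Q}^G_m$, not $F_mF_m^{-1}\subseteq\mathsf{Q}^G_m$: from $f_1.x=f_2.y_j$ you get $x=f_1^{-1}f_2.y_j$, so it is the left product that must land in $\mathsf{Q}^G_m$.

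The more substantive issue is the claim that the BCH bookkeeping goes through ``after enlarging $C$'' uniformly in $m$. Write $f_i=h_i\exp(w_i)$ with $h_i\in F_m'=\mathsf{B}^{U^-}_{e^{-m}\beta^2/C}\mathsf{B}^{M_0A}_{\beta^2/C}\mathsf{B}^{U^+}_{\eta/C}$ and $w_i\in B^{\mathfrak{r}}_{\beta^2/C}$. Then $f_1^{-1}f_2=\exp(-w_1)h_1^{-1}h_2\exp(w_2)$, and after conjugating the $\mathfrak{r}$-factor through $h_1^{-1}h_2$ and applying Lemma~\ref{lem:BCH}, one is left to control a residual $H$-element of size $\sim\|w_1\|\|w_2\|\sim\beta^4/C^2$ coming from $[\mathfrak{r},\mathfrak{r}]\subseteq\LieH$. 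This correction has a $\LieU^-$-component of order $\beta^4$, which is within the tolerance $e^{-m}\beta^2$ only when $e^m\lesssim\beta^{-2}$. For $m$ beyond that range, no choice of the constant $C$ rescues the containment, so the argument as written needs either a restriction on $m$ relative to $|\log\beta|$ or a refinement of $F_m$. You should state explicitly the range of $m$ for which your verification holds (in this paper only $m=0$ is actually used later, where the commutator is harmless since $\beta^4\ll\beta^2$), or indicate how to treat large $m$ if the full range is intended.
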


\begin{proof}
    The proof is exactly the same as in \cite[Lemma 7.1]{LMW22}. Note that $\dim \LieU^- = 2$, $\dim \LieM \oplus \LieA = 2$, $\dim \mathfrak{r} = 9$, and $\Ad(a_t) v = e^t v$ for all $v \in \LieU$. 
\end{proof}

Similarly, we have the following lemma. 
\begin{lemma}\label{lem: inverse covering initial}
    There exists $K \geq 1$ depends only on $X$ so that for all $m \geq 0$, there is a covering 
    \begin{align*}
        \{\Check{\mathsf{Q}}^G_{\eta, \beta^2, m}.y_j: j \in \Check{\mathcal{J}}_m, y_j \in X_{\frac{3}{2}\eta}\}
    \end{align*}
    of $X_{2\eta}$ with multiplicity $\leq K$. In particular, $\#\Check{\mathcal{J}}_m \ll \eta^{-2}\beta^{-26}e^{2m}$. 
\end{lemma}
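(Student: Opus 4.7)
The plan is to mimic the proof of \cref{lem: covering multiplicity} almost verbatim, the only new input being a verification that $\Check{\mathsf{Q}}^G_m$ has the same product structure and volume (up to bounded multiplicative constants) as $\mathsf{Q}^G_m$. First I would note that $\Check{\mathsf{Q}}^H_m = \mathsf{B}_{\eta}^{U^+} \mathsf{B}_{\beta^2}^{M_0 A} \mathsf{B}_{e^{-m}\beta^2}^{U^-}$, which is one of the bi-analytic product parametrizations listed in \cref{sec:Global prelim} (condition~(2)); together with the bi-analytic map in condition~(3), this gives that
\begin{align*}
    B_{\eta}^{\LieU^+} \times B_{\beta^2}^{\LieM_0 \oplus \LieA} \times B_{e^{-m}\beta^2}^{\LieU^-} \times B_{2\beta^2}^{\mathfrak{r}} \longrightarrow G
\end{align*}
is a diffeomorphism onto $\Check{\mathsf{Q}}^G_m$ with Jacobian bounded above and below by absolute constants (since the parameters are dominated by $\eta_0$). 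A direct dimension count then yields
\begin{align*}
    m_G(\Check{\mathsf{Q}}^G_m) \asymp (e^{-m}\beta^2)^2 \cdot \beta^4 \cdot \eta^2 \cdot \beta^{18} = \eta^2\beta^{26}e^{-2m},
\end{align*}
which is precisely the estimate obtained for $\mathsf{Q}^G_m$ in \cref{lem: covering multiplicity}.

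Next I would run a standard greedy/Vitali argument. Fix a small absolute constant $c>0$ coming from \cref{lem:BCH} and choose a maximal set $\{y_j\}_{j \in \Check{\mathcal{J}}_m} \subset X_{3\eta/2}$ with the property that the shrunken boxes $\Check{\mathsf{Q}}^G_{c\eta, c\beta^2, m}.y_j$ are pairwise disjoint. Maximality together with the BCH estimates shows that the doubled boxes $\Check{\mathsf{Q}}^G_m.y_j$ cover $X_{2\eta}$: given $x \in X_{2\eta}$, the set $\Check{\mathsf{Q}}^G_{c\eta, c\beta^2, m}.x$ meets some $\Check{\mathsf{Q}}^G_{c\eta, c\beta^2, m}.y_j$, and the product of these small boxes sits inside $\Check{\mathsf{Q}}^G_m$ by \cref{lem:BCH}.

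For the multiplicity bound I would pass to $X$ via the injectivity radius condition $\inj(y_j) \geq \eta$ (after taking $c$ small): if $x \in \bigcap_{j \in J} \Check{\mathsf{Q}}^G_m.y_j$ then all $y_j$ for $j \in J$ lie in a single translate $(\Check{\mathsf{Q}}^G_m)^{-1}\Check{\mathsf{Q}}^G_m.x$, which is contained in a single fundamental domain and has volume $\asymp \eta^2\beta^{26}e^{-2m}$ by the Jacobian computation above. Since the $\Check{\mathsf{Q}}^G_{c\eta, c\beta^2, m}.y_j$ are disjoint and each has volume $\asymp \eta^2\beta^{26}e^{-2m}$, the cardinality $\#J$ is bounded by an absolute constant $K$. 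The global count $\#\Check{\mathcal{J}}_m \ll \eta^{-2}\beta^{-26}e^{2m}$ then follows from $\mu_X(X)=1$ and packing.

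I do not expect any real obstacle: the only thing to watch is the order of factors, which is inverted relative to $\mathsf{Q}^G_m$, but all constants in \cref{lem:BCH} and in the bi-analyticity of the three product decompositions listed in \cref{sec:Global prelim} were stated symmetrically, so bookkeeping goes through identically. The argument is essentially the same as in \cite[Lemma~7.1]{LMW22}.
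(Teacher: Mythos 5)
Your proposal is correct and takes essentially the same route the paper intends: the paper states this lemma without proof, immediately after \cref{lem: covering multiplicity}, and the implicit proof is the same greedy/Vitali covering argument borrowed from \cite[Lemma 7.1]{LMW22}, with the dimension count $\dim \LieU^\pm = 2$, $\dim (\LieM_0 \oplus \LieA) = 2$, $\dim \mathfrak{r} = 9$ yielding $\eta^2\beta^{26}e^{-2m}$ exactly as you computed. The inversion of the product ordering $U^+ M_0 A U^-$ is indeed covered by the bi-analytic parametrizations listed after \cref{lem:BCH}, so no new input is needed.
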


From now in this paper, we fix such covers
\begin{align*}
    \{\mathsf{Q}^G_{\eta, \beta^2, m}.y_j: j \in \mathcal{J}_m, y_j \in X_{\frac{3}{2}\eta}\}
\end{align*}
as in \cref{lem: covering multiplicity} and also fix 
\begin{align*}
    \{\Check{\mathsf{Q}}^G_{0}.y_j: j \in \Check{\mathcal{J}}_0, y_j \in X_{\frac{3}{2}\eta}\}
\end{align*}
as in \cref{lem: inverse covering initial}. Let $\mathsf{k}_m(z) := \#\{j \in \mathcal{J}_m: z \in \mathsf{Q}^G_{m}.y_j\}$, then $1 \leq \mathsf{k}_m(z) \leq K$. Define $\rho_m(z) := \frac{1}{\mathsf{k}_m(z)}$ and 
\begin{align*}
    \rho_{m, j} = \rho_m \cdot \mathds{1}_{\mathsf{Q}^G_{m}.y_j},
\end{align*}
we have
\begin{align*}
    &1/K \leq \rho_{m, j} \leq 1\\
    &\sum_{j \in \mathcal{J}_m} \rho_{m, j}(z) = 1 \quad \forall z \in X_{2\eta}.
\end{align*}
Let $\Check{\mathsf{k}}_0(z) := \#\{j \in \Check{\mathcal{J}}_0: z \in \Check{\mathsf{Q}}^G_{0}.y_j\}$, then $1 \leq \Check{\mathsf{k}}_0(z) \leq K$. Define $\Check{\rho}_0(z) := \frac{1}{\Check{\mathsf{k}}_0(z)}$ and 
\begin{align*}
    \Check{\rho}_{0, j} = \Check{\rho}_0 \cdot \mathds{1}_{\Check{\mathsf{Q}}^G_{0}.y_j},
\end{align*}
we have
\begin{align*}
    &1/K \leq \Check{\rho}_{0, j} \leq 1\\
    &\sum_{j \in \Check{\mathcal{J}}_0} \Check{\rho}_{0, j}(z) = 1 \quad \forall z \in X_{2\eta}.
\end{align*}

\subsection{Boxes and complexity}
Let $\mathsf{prd}: \LieH \to H$ be the map defined by
\begin{align*}
    \mathsf{prd}: \LieH = \LieU^- \oplus \LieM_0 \oplus \LieA \oplus \LieU^+ {}&\to H\\
    (X_{\LieU^-}, X_{\LieM_0}, X_{\LieA}, X_{\LieU^+}) {}&\mapsto \exp(X_{\LieU^-})\exp(X_{\LieM_0})\exp(X_{\LieA})\exp(X_{\LieU^+}).
\end{align*}
A subset $\mathsf{D} \subseteq H$ is called a \textit{box} if there exist cubes $\mathsf{B}_{\LieU^-} \subset \LieU^-$, $\mathsf{B}_{\LieM_0} \subset \LieM_0$, $\mathsf{B}_{\LieA} \subset \LieA$, and $\mathsf{B}_{\LieU^+} \subset \LieU^+$ so that 
\begin{align*}
    \mathsf{D} = \mathsf{prd}(\mathsf{B}_{\LieU^-} \times \mathsf{B}_{\LieM_0} \times \mathsf{B}_{\LieA} \times \mathsf{B}_{\LieU^+}).
\end{align*}
\begin{example}
The set $\mathsf{Q}^H_m$ is a box. 
\end{example}
\begin{example}
Note that since we set $\|\cdot\| = \|\cdot\|_{\infty}$ on $\LieG = \mathfrak{sl}_4(\R)$, intersection of boxes is still a box. 
\end{example}

We say that a subset $\Xi \subset H$ has complexity bounded by $L$ (or at most $L$) if $\Xi$ can be written as union of at most $L$ boxes. 
We adapt the convention that the empty set is a box so that all sets of complexity at most $L$ can be written as $\Xi = \cup_{i = 1}^L \Xi_i$ where $\Xi_i$'s are boxes. 

For all ball $\mathsf{B}$ in $\LieU^-$, $\LieM_0$, $\LieA$ or $\LieU^+$, we define its (coarse) boundary to be
\begin{align*}
    \partial \mathsf{B} = \partial_{100\eta \diam(\mathsf{B})} \mathsf{B}.
\end{align*}
We define its (coarse) interior to be $\mathring{\mathsf{B}} = \mathsf{B} \setminus \partial \mathsf{B}$. For a box $\mathsf{D} = \mathsf{prd}(\mathsf{B}_{\LieU^-} \times \mathsf{B}_{\LieM_0} \times \mathsf{B}_{\LieA} \times \mathsf{B}_{\LieU^+})$, we define
\begin{align*}
    {}&\mathring{\mathsf{D}} = \mathsf{prd}(\mathring{\mathsf{B}}_{\LieU^-} \times \mathring{\mathsf{B}}_{\LieM_0} \times \mathring{\mathsf{B}}_{\LieA} \times \mathring{\mathsf{B}}_{\LieU^+}) \text{ and }\\
    {}&\partial \mathsf{D} = \mathsf{D} \setminus \mathring{\mathsf{D}}. 
\end{align*}

More generally, if $\mathsf{D} = \mathsf{prd}(\mathsf{B}_{\LieU^-} \times \mathsf{B}_{\LieM_0} \times \mathsf{B}_{\LieA} \times \mathsf{B}_{\LieU^+})$ is a box and $\Xi \subseteq \mathsf{D}$ has complexity bounded by $L$, we define 
\begin{align*}
    {}&\mathring{\Xi} := \bigcup_{i} \mathring{\Xi}_i \text{ and }\\
    {}&\partial \Xi = \bigcup_{i} \partial \Xi_i 
\end{align*}
where the union is taken over those $i$ with the following property. Writing $\Xi_i = \mathsf{prd}(\mathsf{B}_{\LieU^-, i} \times \mathsf{B}_{\LieM_0, i} \times \mathsf{B}_{\LieA, i} \times \mathsf{B}_{\LieU^+, i})$, we have 
\begin{align*}
    \diam \mathsf{B}_{\boldsymbol{\cdot}, i} \geq 100\eta \diam \mathsf{B}_{\boldsymbol{\cdot}}
\end{align*}
where $\boldsymbol{\cdot} = \LieU^-, \LieM_0, \LieA, \LieU^+$. 
\begin{lemma}[\text{\cite[Lemma 7.3]{LMW22}}]
    There exists $K'$ depending only on $X$ so that the following holds. Let $j \in \mathcal{J}_m$ and $w \in B_{2\beta^2}^{\mathfrak{r}}$. Then for all $1 \leq \mathsf{k} \leq K$, there exists $\Xi^{\mathsf{k}} = \Xi^{\mathsf{k}}(j, w) \subseteq \mathsf{Q}^H_m$ with complexity at most $K'$ so that 
    \begin{align*}
        {}&\rho_{m, j}(z) = 1/\mathsf{k} \text{ for all } z \in \Xi^{\mathsf{k}} \exp(w).y_j \text{ and }\\ 
        {}&|\{\mathsf{h} \in \mathsf{Q}^H_m: \rho_{m, j}(\mathsf{h}\exp(w)y_j) = 1/\mathsf{k}\} \setminus \Xi^{\mathsf{k}}| \ll \eta |\mathsf{Q}^H_m|
    \end{align*}
    where the implied constant depends only on $X$. 
\end{lemma}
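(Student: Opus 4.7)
The plan is to unfold the definition of $\rho_{m,j}$ and observe that the level sets in question are determined by incidence with only finitely many other boxes in the cover. Fix $j \in \mathcal{J}_m$ and $w \in B^{\mathfrak{r}}_{2\beta^2}$. By the bounded multiplicity of the cover (Lemma~\ref{lem: covering multiplicity}), there are at most $K$ indices $j' \in \mathcal{J}_m$ for which $\mathsf{Q}^G_m.y_{j'}$ can meet $\mathsf{Q}^G_m.y_j$; enumerate these as $j_1, \ldots, j_N$ with $N \leq K$. For each such $j'$ write $y_{j'} = g_{j'}.y_j$ (here $g_{j'} \in \mathsf{B}^G_{O(\eta)}$ by our choice of coverage and the map $(\mathsf{h},w)\mapsto \mathsf{h}\exp(w).y_j$ being bi-analytic on its image), and set
\[
S_{j'}(w) := \{\mathsf{h} \in \mathsf{Q}^H_m : \mathsf{h}\exp(w).y_j \in \mathsf{Q}^G_m.y_{j'}\}.
\]
The value $\mathsf{k}_m(\mathsf{h}\exp(w).y_j)$ is exactly the number of $j'$'s for which $\mathsf{h} \in S_{j'}(w)$, so the set $\{\mathsf{h} \in \mathsf{Q}^H_m: \rho_{m,j}(\mathsf{h}\exp(w).y_j) = 1/\mathsf{k}\}$ is a Boolean combination of $N \leq K$ sets $S_{j_1}(w),\ldots,S_{j_N}(w)$; there are at most $2^K$ such combinations.

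The key analytic step is to show that each $S_{j'}(w)$ differs from a \emph{box} $\mathsf{D}_{j'} \subseteq \mathsf{Q}^H_m$ of complexity $O(1)$ by a set of measure $O(\eta)|\mathsf{Q}^H_m|$. Unfolding the condition, $\mathsf{h}\in S_{j'}(w)$ means there exist $\mathsf{h}' \in \mathsf{Q}^H_m$ and $w' \in B^{\mathfrak{r}}_{2\beta^2}$ with $\mathsf{h}\exp(w) = \mathsf{h}'\exp(w')g_{j'}$. Using Lemma~\ref{lem:BCH} and the bi-analyticity of the product maps in the $\LieU^-\oplus\LieM_0\oplus\LieA\oplus\LieU^+$ and $\mathsf{B}^H_\eta\times B^{\mathfrak{r}}_\eta$ coordinates, we can eliminate $w,w'$ and express the condition on $\mathsf{h}$ as lying in a product of four rectangles in the coordinates $(X_{\LieU^-},X_{\LieM_0},X_\LieA,X_{\LieU^+})$, up to curvature corrections of size $O(\eta)$ times the relevant diameter in each factor. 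Thus $S_{j'}(w)$ coincides with a box $\mathsf{D}_{j'}$ outside the $100\eta\cdot\mathrm{diam}$-boundary of $\mathsf{D}_{j'}$. Taking the same Boolean combination with the $\mathsf{D}_{j'}$'s produces $\Xi^{\mathsf{k}}(j,w)$, a set of complexity at most $K' := 2^K \cdot C_0$ (where $C_0$ absorbs the constant number of boxes required to write each $\mathsf{D}_{j'}$).

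The first assertion is immediate from the construction: at each $\mathsf{h}\in\Xi^{\mathsf{k}}$, the indicators $\mathds{1}_{S_{j'}(w)}(\mathsf{h})$ equal $\mathds{1}_{\mathsf{D}_{j'}}(\mathsf{h})$ away from the boundary layer, and $\Xi^{\mathsf{k}}$ is precisely where the sum of these indicators is $\mathsf{k}$. For the second assertion, the symmetric difference
\[
\{\mathsf{h}\in\mathsf{Q}^H_m:\rho_{m,j}(\mathsf{h}\exp(w).y_j)=1/\mathsf{k}\}\;\triangle\;\Xi^{\mathsf{k}}
\]
is contained in $\bigcup_{j'=j_1}^{j_N}(S_{j'}(w)\triangle \mathsf{D}_{j'})$, and each such symmetric difference is contained in the $100\eta\cdot\mathrm{diam}$-boundary of $\mathsf{D}_{j'}$. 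A direct volume computation in the product coordinates of $\mathsf{Q}^H_m$ (each factor contributing a boundary layer of relative thickness $O(\eta)$) gives $|S_{j'}(w)\triangle\mathsf{D}_{j'}|\ll\eta|\mathsf{Q}^H_m|$, so the total error is $\ll K\eta|\mathsf{Q}^H_m|\ll\eta|\mathsf{Q}^H_m|$.

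The main obstacle is the box-approximation step for $S_{j'}(w)$: one must verify that when the defining condition $\mathsf{h}\exp(w)\in\mathsf{Q}^H_m\exp(B^{\mathfrak{r}}_{2\beta^2})g_{j'}$ is projected onto the $H$-factor using the $H\cdot\exp(\mathfrak{r})$ local coordinates, the resulting region in the $\LieU^-\oplus\LieM_0\oplus\LieA\oplus\LieU^+$ chart is genuinely a rectangular box up to curvature error of order $\eta$ times the side lengths $(e^{-m}\beta^2,\beta^2,\beta^2,\eta)$. This is where Lemma~\ref{lem:BCH} and the choice $\beta=\eta^2$ enter crucially, ensuring that the commutator corrections from reordering factors are absorbed into the boundary thickness and do not destroy the box structure.
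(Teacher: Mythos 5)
Your overall approach is the same as the one the paper gestures at (it cites \cite[Lemma 7.3]{LMW22} and notes the argument generalizes because $\mathsf{prd}$ is a bi-analytic homeomorphism near identity): reduce via the bounded covering multiplicity to at most $K$ indices $j'$, then use the bi-analyticity of the product charts together with $\beta = \eta^2$ to show each slice $S_{j'}(w)$ is box-like up to $O(\eta)$ relative error, and finally take Boolean combinations. The measure estimate and the complexity count are handled correctly in spirit.

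There is, however, a genuine gap in your verification of the \emph{first} assertion. You define $\Xi^{\mathsf{k}}$ by substituting $\mathsf{D}_{j'}$ for $S_{j'}(w)$ in the Boolean combination, and then claim the identity $\rho_{m,j}\equiv 1/\mathsf{k}$ on $\Xi^{\mathsf{k}}\exp(w).y_j$ is ``immediate from the construction.'' But your own argument only gives $\mathds{1}_{S_{j'}(w)}=\mathds{1}_{\mathsf{D}_{j'}}$ \emph{away} from a boundary layer, while $\Xi^{\mathsf{k}}$ as you built it still contains boundary-layer points where the counts $\#\{j':\mathsf{h}\in\mathsf{D}_{j'}\}$ and $\#\{j':\mathsf{h}\in S_{j'}(w)\}$ can disagree. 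Since assertion (1) of the lemma demands exact equality $\rho_{m,j}(z)=1/\mathsf{k}$ on all of $\Xi^{\mathsf{k}}\exp(w).y_j$ (not equality up to a small-measure exception), your $\Xi^{\mathsf{k}}$ does not satisfy it as constructed. The repair is routine: produce a sandwich $\mathsf{D}^{\mathrm{in}}_{j'}\subseteq S_{j'}(w)\subseteq\mathsf{D}^{\mathrm{out}}_{j'}$ of boxes with $|\mathsf{D}^{\mathrm{out}}_{j'}\setminus\mathsf{D}^{\mathrm{in}}_{j'}|\ll\eta|\mathsf{Q}^H_m|$, and set $\Xi^{\mathsf{k}}=\bigcup_{|I|=\mathsf{k}}\bigl(\bigcap_{j'\in I}\mathsf{D}^{\mathrm{in}}_{j'}\setminus\bigcup_{j'\notin I}\mathsf{D}^{\mathrm{out}}_{j'}\bigr)$. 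Membership in $\Xi^{\mathsf{k}}$ then forces $\mathsf{h}\in S_{j'}(w)$ for exactly the $j'\in I$, so (1) holds; the missed set is contained in $\bigcup_{j'}(\mathsf{D}^{\mathrm{out}}_{j'}\setminus\mathsf{D}^{\mathrm{in}}_{j'})$, giving (2); and each atom, being a box minus a union of at most $K$ boxes, still decomposes into $O_{K,\dim\LieH}(1)$ boxes, so the complexity bound $K'$ depends only on $X$ as required.
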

\begin{proof}
    The proof is the same as in \cite{LMW22}. Note that \cite[Equation (7.9)]{LMW22} is a formula in the case $H = \SL_2(\R)$, but the proof of \cite[Lemma 7.3]{LMW22} only uses the fact that they are analytic functions. In general it follows from the fact that near identity, the map $\mathsf{prd}$ is a bi-analytic homeomorphism. 
\end{proof}

We also introduce the notion of inverse box. It is a similar notion to boxes in the coordinate $UM_0AU^-$. 
Let $\Check{\mathsf{prd}}: \LieH \to H$ be the map defined by
\begin{align*}
    \Check{\mathsf{prd}}: \LieH = \LieU^+ \oplus \LieM_0 \oplus \LieA \oplus \LieU^- {}&\to H\\
    (X_{\LieU^+}, X_{\LieM}, X_{\LieA}, X_{\LieU^-}) {}&\mapsto \exp(X_{\LieU^+})\exp(X_{\LieM_0})\exp(X_{\LieA})\exp(X_{\LieU^-}).
\end{align*}
A subset $\Check{\mathsf{D}} \subseteq H$ is called an \textit{inverse box} if there exist cubes $\mathsf{B}_{\LieU^+} \subset \LieU^+$, $\mathsf{B}_{\LieM_0} \subset \LieM_0$, $\mathsf{B}_{\LieA} \subset \LieA$, and $\mathsf{B}_{\LieU^+} \subset \LieU^-$ so that 
\begin{align*}
    \Check{\mathsf{D}} = \Check{\mathsf{prd}}(\mathsf{B}_{\LieU^+} \times \mathsf{B}_{\LieM_0} \times \mathsf{B}_{\LieA} \times \mathsf{B}_{\LieU^-}).
\end{align*}
\begin{example}
The set $\Check{\mathsf{Q}}^H_m$ is a box. 
\end{example}
\begin{example}
If $\mathsf{D}$ is a box, then $\mathsf{D}^{-1}$ is an inverse box. 
\end{example}
\begin{example}
Note that since we are using $\|\cdot\|_{\infty}$ on $\LieG = \mathfrak{sl}_4(\R)$, intersection of inverse boxes is still an inverse box. 
\end{example}

We say that a subset $\Check{\Xi} \subset H$ has inverse complexity bounded by $L$ (or at most $L$) if $\Check{\Xi}$ can be written as union of at most $L$ inverse boxes. 
We adapt the convention that the empty set is also an inverse box so that all sets of inverse complexity at most $L$ can be written as $\Check{\Xi} = \cup_{i = 1}^L \Check{\Xi}_i$  where $\Check{\Xi}_i$'s are boxes. 

Similarly, for an inverse box $\Check{\mathsf{D}} = \Check{\mathsf{prd}}(\mathsf{B}_{\LieU^+} \times \mathsf{B}_{\LieM_0} \times \mathsf{B}_{\LieA} \times \mathsf{B}_{\LieU^-})$, we define
\begin{align*}
    {}&\mathring{\Check{\mathsf{D}}} = \Check{\mathsf{prd}}(\mathring{\mathsf{B}}_{\LieU^+} \times \mathring{\mathsf{B}}_{\LieM_0} \times \mathring{\mathsf{B}}_{\LieA} \times \mathring{\mathsf{B}}_{\LieU^-}) \text{ and }\\
    {}&\partial \mathsf{D} = \mathsf{D} \setminus \mathring{\mathsf{D}}. 
\end{align*}

More generally, if $\Check{\mathsf{D}} = \Check{\mathsf{prd}}(\mathsf{B}_{\LieU^+} \times \mathsf{B}_{\LieM_0} \times \mathsf{B}_{\LieA} \times \mathsf{B}_{\LieU^-})$ is a box and $\Check{\Xi} \subseteq \Check{\mathsf{D}}$ has inverse complexity bounded by $L$, we define 
\begin{align*}
    {}&\mathring{\Check{\Xi}} := \bigcup_{i} \mathring{\Check{\Xi}}_i \text{ and }\\
    {}&\partial \Check{\Xi} = \bigcup_{i} \partial \Check{\Xi}_i 
\end{align*}
where the union is taken over those $i$ with the following property. Writing $\Check{\Xi}_i = \Check{\mathsf{prd}}(\mathsf{B}_{\LieU^+, i} \times \mathsf{B}_{\LieM_0, i} \times \mathsf{B}_{\LieA, i} \times \mathsf{B}_{\LieU^-, i})$, we have 
\begin{align*}
    \diam \mathsf{B}_{\boldsymbol{\cdot}, i} \geq 100\eta \diam \mathsf{B}_{\boldsymbol{\cdot}}
\end{align*}
where $\boldsymbol{\cdot} = \LieU^+, \LieM_0, \LieA, \LieU^-$.  

Similar to \cite[Lemma 7.3]{LMW22}, we have the following lemma. 
\begin{lemma}\label{lem: inverse complexity bound}
    There exists $K'$ depending only on $X$ so that the following holds. Let $j \in \mathcal{J}_0$ and $w \in B_{2\beta^2}^{\mathfrak{r}}$. Then for all $1 \leq \mathsf{k} \leq K$, there exists $\Check{\Xi}^{\mathsf{k}} = \Check{\Xi}^{\mathsf{k}}(j, w) \subseteq \Check{\mathsf{Q}}^H_m$ with complexity at most $K'$ so that 
    \begin{align*}
        {}&\Check{\rho}_{0, j}(z) = 1/\mathsf{k} \text{ for all } z \in \Check{\Xi}^{\mathsf{k}} \exp(w).y_j \text{ and }\\ 
        {}&|\{\mathsf{h} \in \Check{\mathsf{Q}}^H_0: \Check{\rho}_{0, j}(\mathsf{h}\exp(w)y_j) = 1/\mathsf{k}\} \setminus \Check{\Xi}^{\mathsf{k}}| \ll \eta |\Check{\mathsf{Q}}^H_0|
    \end{align*}
    where the implied constant depends only on $X$. 
\end{lemma}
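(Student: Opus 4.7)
The plan is to mirror the proof of the analogous statement for boxes (the lemma just above, which is Lemma 7.3 in \cite{LMW22}), substituting the $\mathsf{prd}$-coordinates $U^-M_0AU^+$ by the inverse $\check{\mathsf{prd}}$-coordinates $U^+M_0AU^-$, the sets $\mathsf{Q}^H_m,\mathsf{Q}^G_m$ by $\check{\mathsf{Q}}^H_0,\check{\mathsf{Q}}^G_0$, and the bump functions $\rho_{m,j}$ by $\check{\rho}_{0,j}$. Since $\check{\mathsf{Q}}^H_0 = (\mathsf{Q}^H_0)^{-1}$ and intersections of inverse boxes are inverse boxes (we are working with $\|\cdot\|_\infty$), the combinatorics are identical; only the analytic description of the transition from one parameter region to another needs to be rewritten in the $\check{\mathsf{prd}}$-coordinates. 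I would first fix $j\in\check{\mathcal{J}}_0$ and $w\in B^{\mathfrak{r}}_{2\beta^2}$ and record that, by bi-analyticity of $\check{\mathsf{prd}}$ on the relevant range (item (2) of the list following Lemma~\ref{lem:BCH}), the map $\mathsf{h}\mapsto\mathsf{h}\exp(w).y_j$ is bi-analytic from $\check{\mathsf{Q}}^H_0$ onto its image.

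Next I would enumerate the finite set $\mathcal{I}(j,w) := \{j' \in \check{\mathcal{J}}_0 : \check{\mathsf{Q}}^H_0\exp(w).y_j \cap \check{\mathsf{Q}}^G_0.y_{j'}\neq\emptyset\}$. For each such $j'$ there is a representative $g_{j,j'}\in G$ (unique on the region of interest by the injectivity radius assumption $y_{j'}\in X_{\frac{3}{2}\eta}$) with $y_{j'}=g_{j,j'}.y_j$, and the condition
\[
\mathsf{h}\exp(w).y_j\in\check{\mathsf{Q}}^G_0.y_{j'}
\]
is equivalent to $\mathsf{h}\exp(w)g_{j,j'}^{-1}\in\check{\mathsf{Q}}^H_0\exp(B^{\mathfrak{r}}_{2\beta^2})$. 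Writing $\mathsf{h}=\check{\mathsf{prd}}(X_{\LieU^+},X_{\LieM_0},X_{\LieA},X_{\LieU^-})$ and applying the inverse function theorem to the composition with right-multiplication by $\exp(w)g_{j,j'}^{-1}$ (and then projecting out the $\mathfrak{r}$-factor via item~(3) of the list following Lemma~\ref{lem:BCH}), this condition becomes a conjunction of $O(1)$ analytic inequalities in the $(X_{\LieU^+},X_{\LieM_0},X_{\LieA},X_{\LieU^-})$-coordinates whose solution set cuts $\check{\mathsf{Q}}^H_0$ into a union of at most $O(1)$ inverse boxes. Taking the join over $j'\in\mathcal{I}(j,w)$ and intersecting, the level set $\{\check{\rho}_{0,j}=1/\mathsf{k}\}$ is a union of such inverse boxes. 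Since $\check{\mathsf{k}}_0\leq K$ by Lemma~\ref{lem: inverse covering initial}, $|\mathcal{I}(j,w)|\leq K$, so the total inverse complexity is bounded by some $K'$ depending only on $K$ and the degrees of the defining analytic functions, i.e.\ only on $X$.

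Finally, to pass from $\{\check{\rho}_{0,j}=1/\mathsf{k}\}$ to the set $\check{\Xi}^{\mathsf{k}}$ satisfying the stated containment and the $\eta|\check{\mathsf{Q}}^H_0|$ error, I would take $\check{\Xi}^{\mathsf{k}}$ to be the union of those inverse boxes from the decomposition above whose defining cubes have diameter $\geq 100\eta\diam(\check{\mathsf{Q}}^H_0)$; this guarantees the first displayed equality of the lemma (values are determined on the coarse interiors) and leaves out only the boundary strips of width $\leq 100\eta\diam$ in at least one direction. An elementary Fubini argument in $\check{\mathsf{prd}}$-coordinates (identical to the estimate in \cite[Lemma~7.3]{LMW22}) then gives
\[
|\{\mathsf{h}\in\check{\mathsf{Q}}^H_0:\check{\rho}_{0,j}(\mathsf{h}\exp(w)y_j)=1/\mathsf{k}\}\setminus\check{\Xi}^{\mathsf{k}}|\ll\eta|\check{\mathsf{Q}}^H_0|,
\]
with implied constant depending only on $K'$ and the bi-analytic regularity of $\check{\mathsf{prd}}$ on the range under consideration. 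The only real obstacle is bookkeeping: keeping track of which defining inequalities survive to produce sub-inverse-boxes of the required size versus which merely contribute boundary error, but this is exactly the dichotomy built into the definition of $\mathring{\check{\Xi}}$ and $\partial\check{\Xi}$, so the argument carries over verbatim from the box case.
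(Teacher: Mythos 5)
Your proposal takes essentially the same route as the paper: the paper's own proof is literally ``same as the previous one,'' and the previous proof is itself a remark that \cite[Lemma 7.3]{LMW22} transfers once one observes that only bi-analyticity of the product map near the identity is used. You spell this out explicitly for the $\check{\mathsf{prd}}$-coordinates, swapping $U^-M_0AU^+$ for $U^+M_0AU^-$ and boxes for inverse boxes; the combinatorics and the boundary-strip accounting then transfer verbatim, exactly as the paper intends. The argument is sound.

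One small imprecision worth flagging: your claim ``$|\mathcal{I}(j,w)| \leq K$'' does not literally follow from ``$\check{\mathsf{k}}_0 \leq K$,'' since the multiplicity bound of Lemma~\ref{lem: inverse covering initial} applies to each point, not to an entire slab $\check{\mathsf{Q}}^H_0\exp(w).y_j$. What you actually need (and is true) is $|\mathcal{I}(j,w)| = O(1)$ with an implied constant depending only on $X$. This follows by a volume-packing argument: if $\check{\mathsf{Q}}^G_0.y_{j'}$ meets the slab, then $y_{j'}$ lies in $\check{\mathsf{Q}}^G_0(\check{\mathsf{Q}}^G_0)^{-1}\check{\mathsf{Q}}^G_0.y_j$, a set of volume $\asymp \operatorname{vol}(\check{\mathsf{Q}}^G_0)$ (since the commutator corrections in $\check{\mathsf{Q}}^G_0(\check{\mathsf{Q}}^G_0)^{-1}$ are of lower order in each component direction), and the multiplicity bound $K$ on the covering then gives the claim. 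This is a standard repair and does not affect the structure of your proof.
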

\begin{proof}
    The proof is the same as the previous one. 
\end{proof}

\subsection{Sheeted set and admissible measure}\label{subsec:sheeted set admissible measure}
Recall that $\eta \leq \frac{1}{100C_0}\eta_0$ be a small parameter where $\eta_0$ and $C_0$ are from Lemma~\ref{lem:BCH}. Recall
\begin{align*}
    \mathsf{E} = \mathsf{B}_{\beta}^{U^-} \mathsf{B}_\beta^{M_0A} \mathsf{B}_\eta^{U^+}.
\end{align*}
Recall that a subset $\mathcal{E} \subseteq X$ is called a sheeted set if there exists a base point $y \in X_\eta$ and a finite set of transverse cross-section $F \subset B_{\eta}^{\mathfrak{r}}$ so that the map $(h, w) \mapsto h\exp(w).y$ is injective on $\mathsf{E} \times B_{\eta}^{\mathfrak{r}}$ and 
\begin{align*}
    \mathcal{E} = \bigsqcup_{w \in F}\mathsf{E}\exp(w).y.
\end{align*}

We now recall the definition of $\Lambda$-admissible measure in \cite[Appendix D]{LMWY25}. A probability measure $\mu_{\mathcal{E}}$ on $\mathcal{E}$ is called $\Lambda$-admissible if 
\begin{align*}
    \mu_\mathcal{E} = \frac{1}{\sum_{w \in F}\mu_w(X)} \sum_{w \in F} \mu_w
\end{align*}
where $\mu_w$ are measures on $\mathsf{E}\exp(w).y$ with the following properties. For all $w \in F$, there exists a function $\varrho_w$ defined on $\mathsf{E}$ with $\frac{1}{\Lambda} \leq \varrho_w \leq \Lambda$ so that for all $\mathsf{E}' \subseteq \mathsf{E}$, we have
\begin{align*}
    \mu_w(\mathsf{E}'\exp(w).y) = \int_{\mathsf{E}'} \varrho_w(\mathsf{h}) \,\mathrm{d}m_H(\mathsf{h}).
\end{align*}
Moreover, there exists $\mathsf{E}_w = \cup_{i = 1}^{\Lambda}\mathsf{E}_{w, i} \subseteq \mathsf{E}$ so that 
\begin{enumerate}
    \item $\mu_w\bigl((\mathsf{E}\setminus\mathsf{E}_{w, i})\exp(w).y\bigr) \leq \Lambda\eta \mu_w(X)$,
    \item the complexity of $\mathsf{E}_{w, i}$ is bounded by $\Lambda$ for all $i$, and
    \item $\Lip(\varrho_w|_{\mathsf{E}_{w, i}}) \leq \Lambda$.
\end{enumerate}

\section{Construction of sheeted sets}\label{sec:Margulis function}
This whole section is devoted to the proof of \cref{thm:closing lemma initial dim} from \cref{lem:Closing lemma many scale}. The idea is straight-forward, cf. \cite[Section 8]{LMW22}. We decompose the measure $\lambda \ast \mu_t$ into local pieces. Then \cref{lem:Closing lemma many scale} provides dimension estimate that can be translate into Margulis function estimate. Due to the difference in closing lemma (comparing \cref{lem:Closing lemma many scale} with \cite[Proposition 4.8]{LMW22}), there are two major differences comparing to \cite[Section 8]{LMW22}. 

In \cite[Proposition 4.8]{LMW22}, it is proved that the map $B_\beta^H a_t U_1 \to B_\beta^H a_t U_1 a_{8t}u_rx_1$ is injective for most $r \in [0,1]$. This ensures that the local pieces are roughly renormalized Haar measure on $H$-sheets and each $H$-sheet contribute roughly the same amount of measure. These are not guaranteed by \cref{lem:Closing lemma many scale}. Instead, locally $\lambda \ast \mu_t$ might not looks like a renormalized Haar measure. Moreover, locally $\lambda \ast \mu_t$ might assign different weight for each $H$-sheet. 

We resolve the problems in two steps. First, we decompose $\mu_t$ into local pieces of size $\beta^2$ in $U^-M_0A$-direction and then smear it using $\lambda$ which is of size $\beta + 100\beta^2$ in $U^-M_0A$-direction. This ensures that in size $\beta$ ball near the origin, it looks roughly like Haar measure and the boundary contributes only small error. Next, we decompose the measure once again according to the weight on each $H$-sheets. This ensures that we get admissible measures at the end. 

\subsection{Dimension, energy and Margulis function}\label{subsec:Margulis function}
For a finite set $F \subset \mathfrak{r}$, we set $\mu_F$ be the normalized counting measure on $F$. It is a probability measure. We say that the set $F$ has dimension $\geq \alpha$ for scales larger than $\delta$ if there exists $C > 1$ so that 
\begin{align*}
    \mu_F(B(x, r)) \leq Cr^\alpha \quad \forall x \in \mathfrak{r} \text{ and }r \geq \delta.
\end{align*}
In literatures, this is always denoted by $(C, \alpha)$-Frostman-type condition or $(C, \alpha)$-nonconcentration condition. We also define the (modified) $\alpha$-energy of $F$ as follows. 
\begin{align*}
    \mathcal{G}^{(\alpha)}_{F, \delta}(w) = \sum_{w' \in F: w' \neq w} \max\{\|w' - w\|, \delta\}^{-\alpha}.
\end{align*}

We recall the notion of (modified) Margulis function in \cite[Section 7]{LMWY25}. Suppose $\mathcal{E}$ is a sheeted set. For all $z \in \mathcal{E}$, let
\begin{align*}
    I_{\mathcal{E}}(z) = \{w \in \mathfrak{r}: \|w\| < \inj(z), \exp(w).z \in \mathcal{E}\}.
\end{align*}
For every $0 < \delta < 1$ and $0 < \alpha < \dim \mathfrak{r}$, we define the (modified) Margulis function as follows. 
\begin{align*}
    f^{(\alpha)}_{\mathcal{E}, \delta}(z) = \sum_{w \in I_\mathcal{E}(z) \setminus \{0\}} \max\{\|w\|, \delta\}^{-\alpha}.
\end{align*}

We have the following connection between those notions. 
\begin{proposition}\label{pro:frostman energy Margulis function}
    Suppose $F \subset B^{\mathfrak{r}}_1$ is a finite set and suppose $\mathcal{E} = \mathsf{E} \exp(F).y$ is a sheeted set. We have the following properties. 
    \begin{enumerate}
        \item Suppose $F$ is a set of dimension $\geq \alpha$ for scales larger than $\delta$, then for all $w \in F$ and $0 < \beta < \alpha$, 
        \begin{align*}
            \mathcal{G}_{F, \delta}^{(\beta)}(w) \leq 2^{\dim\mathfrak{r}}C\Bigl(1 + \frac{1}{1 - 2^{\beta - \alpha}}\Bigr) \#F.
        \end{align*}
        \item Suppose for all $w \in F$ we have
        \begin{align*}
            \mathcal{G}_{F, \delta}^{(\alpha)}(w) \leq C \#F,
        \end{align*}
        then for all $z \in \mathcal{E}$, we have
        \begin{align*}
            f_{\mathcal{E}, \delta}^{(\alpha)}(z) \ll C \#F.
        \end{align*}
        \item Let $\hat{\mathcal{E}} = (\overline{\mathsf{E} \setminus \partial_{5\beta^2}\mathsf{E}})\exp(F).y$. Suppose for all $z \in \mathcal{E}$, we have
        \begin{align*}
            f_{\mathcal{E}, \delta}^{(\alpha)}(z) \leq \Upsilon.
        \end{align*}
        Then for all $z \in \hat{\mathcal{E}}$ and all $w \in I_{\mathcal{E}}(z)$, we have
        \begin{align*}
            \mathcal{G}_{I_{\mathcal{E}}(z), \delta}^{(\alpha)}(w) \ll \Upsilon.
        \end{align*}
    \end{enumerate}
\end{proposition}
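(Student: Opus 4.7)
The three assertions are the standard transitions between three ways of measuring fractal-like behavior in the transverse direction: Frostman-type nonconcentration of balls in $\mathfrak{r}$ (input to part~(1)), the $\alpha$-energy of a point set in $\mathfrak{r}$ (output of (1), input of (2)), and the Margulis function on a sheeted set (output of (2), input of (3)). Part~(1) is a purely combinatorial dyadic-shell computation in $\mathfrak{r}$. Parts~(2) and (3) are Baker--Campbell--Hausdorff transfer arguments based on \cref{lem:BCH}, the $\Ad(H)$-invariance of $\mathfrak{r}$, and the bi-analyticity of the sheet parametrization $(\mathsf{h},w)\mapsto \mathsf{h}\exp(w).y$ on $\mathsf{E}\times B_\eta^{\mathfrak{r}}$.

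\textbf{Part~(1).} Fix $w\in F$ and partition $F\setminus\{w\}$ into the near shell $A_{-1}:=\{w':\|w'-w\|<\delta\}$ and, for $k\geq 0$, the dyadic annuli $A_k:=\{w':2^k\delta\leq\|w'-w\|<2^{k+1}\delta\}$, stopping at the smallest $k_{\max}$ with $2^{k_{\max}+1}\delta$ exceeding the diameter of $B_1^{\mathfrak{r}}$ in the $L^\infty$-norm. The Frostman bound gives $\#A_k\leq C\#F\cdot (2^{k+1}\delta)^\alpha$, while the weight on $A_k$ is at most $(2^k\delta)^{-\beta}$ (and $\delta^{-\beta}$ on $A_{-1}$). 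Summing produces a geometric series of ratio $2^{\alpha-\beta}>1$ whose total is $\ll C\#F\cdot(2^{k_{\max}}\delta)^{\alpha-\beta}/(1-2^{\beta-\alpha})$; since $2^{k_{\max}}\delta$ is bounded by the diameter, the factor $2^{\dim\mathfrak{r}}$ absorbs it and one arrives at the stated constant $2^{\dim\mathfrak{r}}C(1+(1-2^{\beta-\alpha})^{-1})$.

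\textbf{Part~(2).} Fix $z=\mathsf{h}_0\exp(w_0).y\in\mathcal{E}$ with $\mathsf{h}_0\in\mathsf{E}$, $w_0\in F$. For each $w\in I_{\mathcal{E}}(z)\setminus\{0\}$, the bi-analyticity hypothesis produces unique $\mathsf{h}_1\in\mathsf{E}$ and $w_1\in F$ with $\exp(w)\mathsf{h}_0\exp(w_0).y=\mathsf{h}_1\exp(w_1).y$. Using $\Ad(H)$-invariance of $\mathfrak{r}$ we rewrite $\exp(w)\mathsf{h}_0=\mathsf{h}_0\exp(\Ad(\mathsf{h}_0^{-1})w)$, and then apply \cref{lem:BCH} to the product $\exp(\Ad(\mathsf{h}_0^{-1})w)\exp(w_0)$ to conclude that $w_1-w_0=\Ad(\mathsf{h}_0^{-1})w+O(\eta\|w\|)$. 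Since $\Ad(\mathsf{h}_0^{\pm 1})$ is bilipschitz uniformly for $\mathsf{h}_0\in\mathsf{E}$, this gives $\|w\|\asymp\|w_1-w_0\|$, and $w\mapsto w_1$ is injective by bi-analyticity. Summing term by term, $f_{\mathcal{E},\delta}^{(\alpha)}(z)\ll \mathcal{G}_{F,\delta}^{(\alpha)}(w_0)\leq C\#F$.

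\textbf{Part~(3) and main obstacle.} Fix $z\in\hat{\mathcal{E}}$ and $w\in I_{\mathcal{E}}(z)$; set $z':=\exp(w).z\in\mathcal{E}$. For each $w'\in I_{\mathcal{E}}(z)\setminus\{w\}$, factor $\exp(w')\exp(-w)=\mathsf{h}\exp(\bar w)$ via \cref{lem:BCH}, obtaining $\|\bar w\|\asymp\|w'-w\|$ and $\|\mathsf{h}-\Id\|$ small. Then $\exp(\bar w).z'=\mathsf{h}^{-1}\exp(w').z$, so it suffices to show this point lies in $\mathcal{E}$ (not merely in a nearby $H$-translate of it), which will give $\bar w\in I_{\mathcal{E}}(z')$. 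This is precisely where the restriction to $\hat{\mathcal{E}}=(\overline{\mathsf{E}\setminus\partial_{5\beta^2}\mathsf{E}})\exp(F).y$ is needed: writing $z=\mathsf{h}_z\exp(w_z).y$ with $\mathsf{h}_z$ at distance at least $5\beta^2$ from $\partial\mathsf{E}$, the small $H$-element $\mathsf{h}^{-1}$ is absorbed into the sheet factor and keeps the product inside $\mathsf{E}\exp(F).y=\mathcal{E}$. Bi-analyticity again gives that $w'\mapsto\bar w$ is injective, and the term-by-term comparison yields $\mathcal{G}_{I_{\mathcal{E}}(z),\delta}^{(\alpha)}(w)\ll f_{\mathcal{E},\delta}^{(\alpha)}(z')\leq \Upsilon$. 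The only delicate point in the whole proposition is the verification of this boundary-absorption step in (3): one has to compare the quantitative size of the BCH-produced $H$-element against the $5\beta^2$ buffer built into $\hat{\mathcal{E}}$, using both the quadratic nature of the leading BCH commutator $[\mathfrak{r},\mathfrak{r}]\subset\LieH$ and the non-uniform widths ($\beta$ versus $\eta$) of $\mathsf{E}$ in the $U^-M_0A$ versus $U^+$ directions.
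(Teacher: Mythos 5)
Parts~(1) and (2) are correct and essentially match the paper's proof. In (1) you index dyadic annuli outward from scale $\delta$ while the paper indexes inward from scale $1$; the resulting geometric series and constants are the same. In (2) the paper first reduces to the base point $\exp(w_0).y$ by showing $f_{\mathcal{E},\delta}^{(\alpha)}$ is roughly constant along a sheet (using $\|\Ad(\mathsf{h})\|_{\mathrm{op}}\leq 2$ for $\mathsf{h}\in\mathsf{E}$), whereas you work at a general $z=\mathsf{h}_0\exp(w_0).y$ and invoke bilipschitzness of $\Ad(\mathsf{h}_0^{\pm1})$; both routes come down to the same \cref{lem:BCH} comparison $\frac{1}{2}\|w\|\leq\|w'-w_0\|\leq 2\|w\|$ and the same injectivity of $w\mapsto w'$.

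For part~(3) the paper gives no proof of its own; it simply cites \cite[Lemma 7.1]{LMWY25}. Your outline (BCH-factor $\exp(w')\exp(-w)=\mathsf{h}\exp(\bar w)$, send $w'\mapsto\bar w$ injectively, show $\bar w\in I_{\mathcal{E}}(z')$ for $z'=\exp(w).z$, then compare with $f_{\mathcal{E},\delta}^{(\alpha)}(z')$) is the right shape, and you are right that the crux is keeping $\exp(\bar w).z'=\mathsf{h}^{-1}\exp(w').z$ inside $\mathcal{E}$. But the absorption step you flag does not close as sketched. \cref{lem:BCH} only gives $\|\mathsf{h}-\Id\|\leq C_0\eta$, and even the quadratic improvement you gesture at ($[\mathfrak{r},\mathfrak{r}]\subset\LieH$, so $\log\mathsf{h}$ is dominated by a commutator of vectors of size $\lesssim\eta$) yields $\|\mathsf{h}-\Id\|=O(\eta^2)=O(\beta)$, which is still larger than the $5\beta^2$ interior buffer by a factor of $\beta^{-1}$. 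Moreover, what must lie in $\mathsf{E}$ after multiplying by $\mathsf{h}^{-1}$ is the $H$-coordinate of $\exp(w').z$, not that of $z$; the hypothesis $z\in\hat{\mathcal{E}}$ does not directly control the former's distance to $\partial\mathsf{E}$ without an additional BCH step that itself introduces an $O(\eta)$-size shift. The missing piece is a finer argument — most plausibly first transferring to the base point $\exp(w_z).y$, where the $H$-coordinate is trivial (exactly the device the paper uses in part~(2)), together with a scale-by-scale treatment of the regime where $\|w'-w\|$ is comparable to $\eta$. That is presumably the content of \cite[Lemma 7.1]{LMWY25}, and as written your sketch leaves a genuine quantitative gap at this step.
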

\begin{proof}
For property~(1), note that
\begin{align*}
    \mathcal{G}^{(\beta)}_{F, \delta}(w) ={}& \sum_{w' \in F: w' \neq w} \max\{\|w' - w\|, \delta\}^{-\beta}\\
    ={}& \sum_{k = 0}^{\lceil|\log \delta|\rceil} \sum_{2^{-k} \leq \|w' - w\| < 2^{-k + 1}} \max\{\|w' - w\|, \delta\}^{-\beta} + \delta^{-\beta}C\delta^{\alpha}\#F\\
    \leq{}& \sum_{k = 0}^{\lceil|\log \delta|\rceil} 2^\beta 2^{k\beta}C2^{-k\alpha}\#F + \delta^{-\beta}C\delta^{\alpha}\#F\\
    \leq{}& 2^{\dim\mathfrak{r}}C\Bigl(1 + \frac{1}{1 - 2^{\beta - \alpha}}\Bigr) \#F.
\end{align*}
For property~(2), we first show that the value of $f_{\mathcal{E}, \delta}^{(\alpha)}$ remains roughly the same on a single $H$-sheet. In particular, for all $z \in \mathcal{E}$ and $\mathsf{h} \in \mathsf{B}^{s, H}_{\beta}\mathsf{B}^U_{\eta}$ so that $\mathsf{h}.z \in \mathcal{E}$, we claim that
\begin{align*}
    2^{-\dim \mathfrak{r}} f_{\mathcal{E}, \delta}^{(\alpha)}(z) \leq f_{\mathcal{E}, \delta}^{(\alpha)}(\mathsf{h}.z) \leq 2^{\dim \mathfrak{r}} f_{\mathcal{E}, \delta}^{(\alpha)}(z).
\end{align*}
Indeed, note that $\|\Ad(\mathsf{h})\|_{\mathrm{op}} \leq 2$ for all $\mathsf{h} \in \mathsf{E}$, we have
\begin{align*}
    f_{\mathcal{E}, \delta}^{(\alpha)}(\mathsf{h}.z) ={}& \sum_{w \in I_{\mathcal{E}}(\mathsf{h}.z) \setminus \{0\}} \max\{\|w\|, \delta\}^{-\alpha}\\
    \leq{}& \sum_{w \in I_{\mathcal{E}}(z) \setminus \{0\}} \max\{\|\Ad(\mathsf{h})w\|, \delta\}^{-\alpha}\\
    \leq{}& 2^{\dim \mathfrak{r}} f_{\mathcal{E}, \delta}^{(\alpha)}(z).
\end{align*}
It now suffices to estimate $f_{\mathcal{E}, \delta}^{(\alpha)}(\exp(w_0).y)$ for $w_0 \in F$. For all $w \in I_{\mathcal{E}}(\exp(w_0).y)$, by Lemma~\ref{lem:BCH}, we have
\begin{align*}
    \exp(w)\exp(w_0) = \mathsf{h}_w\exp(w')
\end{align*}
where $\|\mathsf{h}_w - \Id\| \leq C_0\eta$ and $\|w' - w - w_0\| \leq C_0\|w_0\|\|w\|$. If $\eta$ is small enough, we have
\begin{align*}
    \frac{1}{2}\|w\| \leq \|w' - w_0\| \leq 2\|w\|.
\end{align*}
We also have $\mathsf{h}_w\exp(w').y \in \mathcal{E}$. Using the local injectivity, we have $w' \in I_{\mathcal{E}}(y) = F$ and also that the map $w \mapsto w'$ is injective. Therefore, 
\begin{align*}
    f_{\mathcal{E}, \delta}^{(\alpha)}(\exp(w_0).y) \leq \sum_{w \in I_{\mathcal{E}}(\exp(w_0).y)\setminus \{0\}} \max\{2\|w' - w_0\|, \delta\}^{-\alpha} \ll \mathcal{G}_{F, \delta}^{(\alpha)}(w_0).
\end{align*}
The proof for property~(2) is complete. Property~(3) follows directly from \cite[Lemma 7.1]{LMWY25}. 
\end{proof}

\subsection{Non-divergence result}
The following result assert that the trajectory is away from cusp most of the time. 

\begin{proposition}\label{pro:Non divergence}
    There exists $\mathsf{m} > 0$ depending only on $(G, H)$, $\kappa > 0$ and $C \geq 1$ depending only on $X$ with the following property. Let $0 < \delta, \eta < 1$ and let $B \subseteq \mathsf{B}_{10}^{U}$ be an open ball with radius $\geq \delta$. For all $x \in X$ and $t \geq \mathsf{m} |\log (\delta\inj(x))| + C$, we have
    \begin{align*}
        |\{u \in B: a_t u.x \notin X_{\eta} \}| \leq C\eta^{\frac{1}{\mathsf{m}}} |B|.
    \end{align*}
\end{proposition}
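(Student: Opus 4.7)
The plan is to invoke the Kleinbock--Margulis quantitative non-divergence theorem for polynomial trajectories in the arithmetic quotient $X = G/\Gamma$. Since $\Gamma \leq G \cap \SL_N(\Z)$ is arithmetic, the ``thin part'' $X \setminus X_\eta$ is characterized, via reduction theory, by the existence of short vectors in certain $\Gamma$-invariant integer structures inside representations of $G$.

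First I would translate the injectivity-radius condition into arithmetic terms: there exists a finite collection of primitive integer wedges $\{\mathpzc{v}_i\} \subset \bigoplus_k \wedge^k \LieG_\Z$ such that, for $x = g\Gamma \in X$, one has $x \notin X_\eta$ iff there exist an index $i$ and $\gamma \in \Gamma$ with $\|\Ad(g\gamma)\mathpzc{v}_i\| \leq c\eta^{\star}$. This is the usual dictionary between heights and injectivity radius, exactly as invoked in the remark following Theorem~\ref{thm:closing lemma long unipotent} (see \cite[Proposition 26]{SS24}).

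Next, for every such integer vector $v$, the map $u \mapsto \Ad(a_t u)\Ad(g\gamma)v$ is polynomial in the two coordinates of $u \in \LieU \cong \R^2$ of degree bounded by an absolute constant depending only on $G$. Hence each component of $\|\Ad(a_t u)\Ad(g\gamma)v\|$ is a $(C_0, \alpha_0)$-good function in the sense of Kleinbock--Margulis, with $C_0, \alpha_0$ depending only on the degree. The Kleinbock--Margulis non-divergence theorem then yields
\[
|\{u \in B: a_tu g\Gamma \notin X_\eta\}| \ll \left(\eta/\rho\right)^{\alpha_0} |B|,
\]
where $\rho := \inf_{i,\gamma} \sup_{u \in B} \|\Ad(a_t u)\Ad(g\gamma)\mathpzc{v}_i\|$ and the infimum is over the finitely many $\gamma$-translates of each $\mathpzc{v}_i$ that are relevant near $x$ (by reduction theory, only boundedly many matter at each location in a Siegel set).

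The main obstacle is verifying a uniform lower bound $\rho \gg 1$. Expanding $\Ad(a_t u)$ acting on a fixed vector as a polynomial in $u$, the top-degree monomial has a coefficient of size $\gtrsim e^{ct}\|\Ad(g\gamma)v\|$ for an absolute constant $c > 0$ coming from the expansion of $a_t$ on the highest weight space. On a ball $B$ of radius $\geq \delta$ we therefore obtain
\[
\sup_{u \in B} \|\Ad(a_t u)\Ad(g\gamma)v\| \gtrsim e^{ct}\delta^{d} \|\Ad(g\gamma)v\| \gtrsim e^{ct}\delta^{d}\inj(x)^{\star},
\]
where $d$ is the degree. This exceeds $1$ precisely when $t \geq \mathsf{m}|\log(\delta\inj(x))| + C$ for suitable $\mathsf{m}$ depending on $(G, H)$ and $C$ depending on $X$. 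Substituting back into the Kleinbock--Margulis bound and absorbing $\rho^{-\alpha_0}$ into the constants yields the claimed estimate $C\eta^{1/\mathsf{m}}|B|$, after adjusting $\mathsf{m}$ so that $1/\mathsf{m} \leq \alpha_0$.
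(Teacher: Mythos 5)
Your approach — translating the injectivity-radius condition into arithmetic terms and running a Kleinbock--Margulis-style $(C,\alpha)$-good quantitative non-divergence argument — is the standard route and matches the spirit of the citation the paper uses (which simply invokes \cite[Proposition 26, Theorem 16]{SS24} together with a Chebyshev inequality). However, there is a genuine gap in step 5, where you assert that the top-degree coefficient of $u \mapsto \Ad(a_t u)\Ad(g\gamma)v$ is $\gtrsim e^{ct}\|\Ad(g\gamma)v\|$ and conclude $\rho \gg 1$ once $t$ is large. This lower bound on the polynomial sup is what Shah's lemma (in the form of Lemma~\ref{lem:VariantOfShah}) gives for vectors in $H$-representations with positive highest weight, and it does hold for $v \in \LieG$ since $\LieG = \LieH \oplus \mathfrak{r}$ has no $H$-fixed vectors. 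But the Kleinbock--Margulis theorem requires the lower bound $\sup_{u\in B}\|\Ad(a_tu)\Ad(g\gamma)\mathpzc{v}_{\Delta'}\|\geq\rho$ for \emph{every} primitive sublattice $\Delta'\subset\LieG_\Z$, not just for individual vectors in $\LieG$, and you have allowed yourself wedges $\mathpzc{v}_i\in\bigoplus_k\wedge^k\LieG_\Z$ accordingly. For $k \in \{3,6,9\}$, the exterior powers $\wedge^k\LieG$ contain $H$-\emph{fixed} vectors — precisely the directions $\mathpzc{v}_{\LieH}$, $\mathpzc{v}_{\mathfrak{r}}$, and in the $\SO(2,2)$ case $\mathpzc{v}_{\LieH_1},\mathpzc{v}_{\LieH_2}$, corresponding to the $H$-invariant subspaces classified in Lemma~\ref{lem:H invariant SO 3 1} and Lemma~\ref{lem:H invariant SO 2 2}. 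If $\Ad(g\gamma)\mathpzc{v}_{\Delta'}$ lies close to such an $H$-fixed wedge (i.e.\ $x$ is near a periodic $H$-orbit or near a closed orbit of a group containing $H$), then $\Ad(a_tu)$ acts essentially trivially on it, no amount of expanding time helps, and your lower bound on $\rho$ breaks down.

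Closing this gap requires more than adjusting $\mathsf{m}$ and $C$. Either one runs a recursive Dani--Margulis / Shah-style argument, where the small invariant sublattice forces you to re-apply non-divergence \emph{inside} the corresponding proper subspace (whose $H$-module structure is non-trivial, so the expansion argument works there), or one bypasses the all-sublattices condition entirely by using a Margulis/Eskin--Margulis--Mozes $\alpha$-function $\alpha(x) = \max_{\Delta'}\|\mathpzc{v}_{\Delta'}\|^{-\beta_{\mathrm{rank}\,\Delta'}}$ with exponents chosen so that the invariant wedges are dominated (via Minkowski) by low-rank non-invariant ones, establishes a contraction estimate $\int_B \alpha(a_tu.x)\,\mathrm{d}u \leq c\,\alpha(x) + b$, and then applies Chebyshev. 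The ``Chebyshev inequality'' in the paper's one-line proof strongly suggests the latter strategy is what \cite[Theorem 16]{SS24} provides; your sketch, as written, applies the $(C,\alpha)$-good machinery without supplying the input needed to handle the degenerate $H$-invariant directions.
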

\begin{proof}
    It follows from \cite[Proposition 26, Theorem 16]{SS24} and Chebyshev inequality. See also \cite[Proposition 4.2]{LMWY25}. 
\end{proof}

\subsection{Proof of Theorem~\ref{thm:closing lemma initial dim}}We now proceed the proof. Let all parameter be as in Lemma~\ref{lem:Closing lemma many scale}. By Lemma~\ref{lem:Closing lemma many scale}, for all $y \in X_{3\eta}$, $r_H \leq \frac{1}{4}\min\{\inj(y), \eta_0\}$, $r \in [\delta_0, \eta]$, we have
\begin{align*}
    (\lambda \ast \mu_t)((\mathsf{B}_{r_H}^H)^{\pm 1}\exp(B_{r}^{\mathfrak{r}}).y) \ll \eta^{-\star}r^{\epsilon_1}.
\end{align*}

\subsubsection{\text{Boundary effect for $\nu_t$ and $\lambda$}}Due to the boundary effect of balls in $H$, we consider the (coarse) \textit{interior} of $\nu_t$ and $\lambda$. Recall that $\lambda$ is the normalized Haar measure on $\mathsf{B}^{s, H}_{\beta + 100 \beta^2}$. Let 
\begin{align*}
    \lambda_1 = \lambda|_{\mathsf{B}^{s, H}_{\beta - 100 \beta^2}}, \quad \mathring{\lambda} = \lambda|_{\mathsf{B}^{s, H}_{\beta}}
\end{align*}
and write
\begin{align*}
    \lambda = \lambda_1 + \lambda_2, \quad\lambda = \mathring{\lambda} + \partial\lambda.
\end{align*}

Recall that $\nu_t = a_t.m_{\mathsf{B}_1^U}$. Let $\nu_{t, 1}'$ be the restriction of $\nu_t$ to $a_t \mathsf{B}_{1 - e^{-t}}^U$. Note that for every $h \in \supp (\nu_{t, 1}')$, we have $\mathsf{B}_1^U h \in \supp (\nu_t)$. 

By \cref{pro:Non divergence} applying to $10\eta$ and $x_1 \in X_\eta$ and $B = \mathsf{B}_{1 - e^{-t}}^U$, we can decompose
\begin{align*}
    \nu_t = \nu_{t, 1} + \nu_{t, 2}
\end{align*}
where $\supp(\nu_{t, 1} \ast \delta_{x_1}) \subset X_{10\eta}$, for all $h \in \supp(\nu_{t, 1})$, we have $\mathsf{B}_1^U.h \subseteq \supp(\nu_t)$ and $\nu_{t, 2}(H) \ll \eta^\star$. 

Similarly, write $\nu_t = \mathring{\nu}_{t} + \partial\nu_{t}$ where $\supp(\mathring{\nu}_t \ast \delta_{x_1}) \subset X_{10\eta}$, for all $h \in \supp(\mathring{\nu}_{t})$, we have $\mathsf{B}_{1 - 100\eta}^U.h \subseteq \supp(\nu_t)$ and $\partial\nu_{t}(H) \ll \eta^\star$. 
Note that 
\begin{align*}
    \supp(\nu_{t, 1}) \subset \supp(\mathring{\nu}_t) \quad \supp(\lambda_{1}) \subset \supp(\mathring{\lambda}).
\end{align*}

\subsubsection{Decomposition of the space}
Recall that 
\begin{align*}
    \Check{\mathsf{Q}}^H_0 = \mathsf{B}_\eta^{U^+} \mathsf{B}_{\beta^2}^{M_0A} \mathsf{B}_{\beta^2}^{U^-}
\end{align*}
and
\begin{align*}
    \Check{\mathsf{Q}}^G_0 = \Check{\mathsf{Q}}^H_0\exp(B_{2\beta^2}^{\mathfrak{r}}).
\end{align*}
Recall that in \cref{lem: inverse covering initial}, there is a covering \begin{align*}
        \{\Check{\mathsf{Q}}^G_{0}.y_j: j \in \Check{\mathcal{J}}_0, y_j \in X_{5\eta}\}
\end{align*}
of $X_{10\eta}$ with multiplicity $\ll 1$. We fix this covering. 

For every $j \in \mathcal{J}_0$ and every $z \in \supp(\nu_{t, 1} \ast \delta_{x_1}) \cap \Check{\mathsf{Q}}^G_{0}.y_j$, we have that 
\begin{align*}
    z = \mathsf{u}\mathsf{ma} \mathsf{u^-} \exp(w).y_j
\end{align*}
for $\mathsf{u} \in \mathsf{B}_\eta^U$, $\mathsf{mau^-} \in \mathsf{B}_{\beta^2}^{s, H}$ and $w \in B_{2\beta^2}^{\mathfrak{r}}$. Note that 
\begin{align*}
    \mathsf{B}_{2\eta}^U.z \subset \supp(\mathring{\nu}_{t} \ast \delta_{x_1}),
\end{align*}
which implies
\begin{align*}
    \mathsf{B}_{\eta}^{U}\mathsf{ma} \mathsf{u^-} \exp(w).y_j \subseteq \supp(\mathring{\nu}_{t} \ast \delta_{x_1}) \cap \Check{\mathsf{Q}}^G_{0}.y_j.
\end{align*}
Therefore, for all $j \in \mathcal{J}_0$, we have a decomposition
\begin{align*}
    (\mu_t)|_{\mathsf{Q}^G_0.y_j} = \mu_j' + \sum_{i = 1}^{N_j} \sum_{k = 1}^{M_{j, i}} \bar{\mu}_{j, i, k}
\end{align*}
where for all $i, k$ there exist $w_i \in B_{2\beta^2}^{\mathfrak{r}}$ and $\mathsf{h}_{j, i, k}  \in \mathsf{B}^{s, H}_{\beta^2}$ so that 
\begin{align*}
    \bar{\mu}_{j, i, k} = (\mathring{\nu}_t \ast \delta_{x_1})|_{\mathsf{B}_{\eta}^U\mathsf{h}_{j, i, k} \exp(w_i).y_j}.
\end{align*}
We also have
\begin{align*}
    \mu_j'(X) \leq (\partial \nu_t \ast \delta_{x_1})(X) \leq \partial \nu_t(H) \ll \eta^{\star}.
\end{align*}

For all $j \in \mathcal{J}_0$, consider the set 
\begin{align*}
    \mathfrak{F}_j = \{(w_i, \mathsf{h}_{j, i, k} ): \bar{\mu}_{j, i, k} = (\mathring{\nu}_t \ast \delta_{x_1})|_{\mathsf{B}_{\eta}^U\mathsf{h}_{j, i, k} \exp(w_i).y_j}\}.
\end{align*}
\begin{lemma}\label{lem:number of sheet 1}
    We have 
    \begin{align*}
        \# \mathfrak{F}_j \ll \eta^{-2}e^{2t}.
    \end{align*}
\end{lemma}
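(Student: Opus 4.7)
The plan is a simple mass-counting argument. Each member of the decomposition $\bar{\mu}_{j,i,k}$ is the restriction of $\mathring{\nu}_t\ast\delta_{x_1}$ to a translate of the $U$-ball $\mathsf{B}_\eta^U$ based at the single point $\mathsf{h}_{j,i,k}\exp(w_i).y_j$. So I will try to combine a lower bound on the mass of each such $U$-shift with the trivial upper bound $\mu_t(\Check{\mathsf{Q}}_0^G.y_j)\leq 1$ on the total mass.

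First I would compute the $(\nu_t\ast\delta_{x_1})$-mass of a ball $\mathsf{B}_\eta^U.z$ for $z=a_tu_z.x_1$ in the support. By definition $\nu_t\ast\delta_{x_1}$ is the push-forward of $m_{\mathsf{B}_1^U}$ under $u\mapsto a_tu.x_1$, and by local injectivity of this map (which holds because $x_1\in X_\eta$), the preimage of $\mathsf{B}_\eta^U.z$ equals $(a_{-t}\mathsf{B}_\eta^U a_t)u_z\cap \mathsf{B}_1^U$. Since $a_t$ acts on $\LieU$ with weight $e^t$ (uniformly; see \cref{eqn:def of a,eqn:def of U}), one has $a_{-t}\mathsf{B}_\eta^U a_t\subseteq \mathsf{B}_{C\eta e^{-t}}^U$, hence
\begin{align*}
(\nu_t\ast\delta_{x_1})(\mathsf{B}_\eta^U.z)\asymp \eta^2 e^{-2t}
\end{align*}
whenever $u_z$ is a bounded distance from the boundary of $\mathsf{B}_1^U$. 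The restriction to $\mathring{\nu}_t$ was arranged precisely so that every base point $\mathsf{h}_{j,i,k}\exp(w_i).y_j$ lies in the interior region where the lower bound $\gtrsim \eta^2e^{-2t}$ applies uniformly.

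Next I would use that the decomposition $(\mu_t)|_{\mathsf{Q}_0^G.y_j}=\mu_j'+\sum_{i,k}\bar{\mu}_{j,i,k}$ is a partition (or at worst has bounded multiplicity), so the masses $\bar{\mu}_{j,i,k}(X)$ add up to at most $\mu_t(\Check{\mathsf{Q}}_0^G.y_j)\leq 1$. Combining with the lower bound from the previous step gives
\begin{align*}
\#\mathfrak{F}_j\cdot \eta^2 e^{-2t}\ll \sum_{i,k}\bar{\mu}_{j,i,k}(X)\leq 1,
\end{align*}
which is the desired inequality $\#\mathfrak{F}_j\ll \eta^{-2}e^{2t}$.

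The only non-routine point is confirming that the partition is genuinely (almost) disjoint with only bounded overlap, so that one really does have $\sum_{i,k}\bar{\mu}_{j,i,k}(X)\leq \mu_t(\Check{\mathsf{Q}}_0^G.y_j)$; this follows from the construction of the decomposition, since each $\bar{\mu}_{j,i,k}$ is defined as a restriction of the same measure to a specific $U$-shift and the shifts are chosen to tile (up to a bounded number of overlaps) the $U$-direction inside the inverse box $\Check{\mathsf{Q}}_0^G.y_j$. All the remaining estimates, including the comparison between $a_{-t}\mathsf{B}_\eta^U a_t$ and $\mathsf{B}_{C\eta e^{-t}}^U$, are straightforward from the explicit formulas for $a_t$ and $U$.
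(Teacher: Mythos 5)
Your proof is correct and takes essentially the same approach as the paper: the paper's own proof simply cites \cite[Lemma 8.1]{LMW22} and notes it is a volume-counting argument, which is precisely what you carry out — each $\bar{\mu}_{j,i,k}$ has mass $\gg \eta^2 e^{-2t}$ (since $a_{-t}\mathsf{B}_\eta^U a_t = \mathsf{B}_{\eta e^{-t}}^U$ by the uniform weight-one action of $a_t$ on $\LieU$, and the construction guarantees each $U$-translate lies fully in $\supp(\mathring{\nu}_t\ast\delta_{x_1})$), while the total mass is at most $1$ because the $\bar{\mu}_{j,i,k}$ appear with coefficient $1$ in an exact decomposition of $(\mu_t)|_{\Check{\mathsf{Q}}^G_0.y_j}$ into nonnegative pieces. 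One small remark: strictly speaking, the preimage of $\mathsf{B}_\eta^U.z$ under $u\mapsto a_t u.x_1$ need not be a single translate of $a_{-t}\mathsf{B}_\eta^U a_t$ because of possible recurrence, but that only affects the upper bound, and for this lemma you correctly use only the lower bound on the mass of each piece.
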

\begin{proof}
    This is proved directly by volume counting. See \cite[Lemma 8.1]{LMW22}. 
\end{proof}
For all $j \in \mathcal{J}_0$, $1 \leq i \leq N_j$ and $1 \leq k \leq M_{j,i}$, define $d\mu_{j, i, k}(z) = \Check{\rho}_{0, j}(z)d\bar{\mu}_{j, i, k}(z)$. We have
\begin{align*}
    \mu_t = \mu' + \sum_{j \in \mathcal{J}_0} \sum_{i = 1}^{N_j} \sum_{k = 1}^{M_{i, k}} \mu_{j, i, k}
\end{align*}
where $\mu'(X) \ll \eta^\star$. Let 
\begin{align}
    \hat{c}_j = \sum_{i = 1}^{N_j} \sum_{k = 1}^{M_{i, k}} \mu_{j, i, k}(X).
\end{align}

\begin{lemma}\label{lem:throw away box with small weight}
    If $\hat{c}_j \geq \beta^{28}$, then $\#\mathfrak{F}_j \gg e^{2t} \beta^{27}$. Moreover, 
    \begin{align*}
        1 - \sum_{\hat{c}_j \geq \beta^{28}} \hat{c}_j = O(\beta).
    \end{align*}
\end{lemma}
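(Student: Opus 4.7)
The plan is to prove both assertions by a direct mass accounting, using the specific form of $\nu_t = (a_t)_\ast m_{\mathsf{B}_1^U}$ and the sizes of the covering in Lemma~\ref{lem: inverse covering initial}.

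For the first assertion, I would begin by obtaining a uniform upper bound on each $\mu_{j,i,k}(X)$. Since $\mu_{j,i,k} \leq \bar{\mu}_{j,i,k} \leq (\nu_t \ast \delta_{x_1})|_{\mathsf{B}_\eta^U \mathsf{h}_{j,i,k}\exp(w_i).y_j}$, and since $\nu_t \ast \delta_{x_1}$ is the pushforward of $m_{\mathsf{B}_1^U}$ under $u \mapsto a_t u.x_1$, the mass of this restriction equals the $m_U$-measure of the preimage. Using the conjugation relation $a_{-t}\mathsf{B}_\eta^U a_t = \mathsf{B}_{\eta e^{-t}}^U$ together with the injectivity assumption on the local chart at $y_j$, the preimage is contained in a translate of $\mathsf{B}_{\eta e^{-t}}^U$, hence has $m_U$-measure $\ll \eta^2 e^{-2t} = \beta e^{-2t}$, where I use $\eta = \beta^{1/2}$. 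Consequently,
\begin{align*}
\beta^{28} \leq \hat{c}_j = \sum_{i,k} \mu_{j,i,k}(X) \leq \#\mathfrak{F}_j \cdot \max_{i,k}\mu_{j,i,k}(X) \ll \#\mathfrak{F}_j \cdot \beta e^{-2t},
\end{align*}
which yields $\#\mathfrak{F}_j \gg e^{2t}\beta^{27}$, as claimed.

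For the second assertion, I would start from the decomposition $\mu_t = \mu' + \sum_{j} \sum_{i,k} \mu_{j,i,k}$ with $\mu'(X) \ll \eta^\star$ derived just above the lemma. Taking total mass gives $\sum_j \hat{c}_j = 1 - O(\eta^\star) = 1 - O(\beta^\star)$. Splitting the sum according to whether $\hat{c}_j \geq \beta^{28}$ or not,
\begin{align*}
1 - \sum_{\hat{c}_j \geq \beta^{28}} \hat{c}_j = O(\beta^\star) + \sum_{\hat{c}_j < \beta^{28}} \hat{c}_j \leq O(\beta^\star) + \beta^{28} \cdot \#\check{\mathcal{J}}_0.
\end{align*}
By Lemma~\ref{lem: inverse covering initial} with $m=0$, $\#\check{\mathcal{J}}_0 \ll \eta^{-2}\beta^{-26} = \beta^{-27}$, so the second term is $O(\beta)$, dominating the error $O(\beta^\star)$ (which is at worst $O(\beta)$ under our conventions, since $\epsilon'$ is tiny so no constant is forced below). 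This gives the desired bound $1 - \sum_{\hat{c}_j \geq \beta^{28}} \hat{c}_j = O(\beta)$.

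Neither step looks like a serious obstacle: the whole content is the volume estimate $\mu_{j,i,k}(X) \ll \beta e^{-2t}$, coming from the fact that $\nu_t$ expands the $\eta$-ball in $U$ by the factor $e^t$, combined with the cardinality bound on the inverse cover at scale $0$. The only thing to be careful about is making the implicit constants in ``$O(\beta^\star)$'' and in the volume comparison consistent with $\eta = \beta^{1/2}$ and the normalization $m_U(\mathsf{B}_1^U) = 1$, which is routine.
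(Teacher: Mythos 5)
Your argument is essentially the same as the paper's. For the first assertion you derive $\mu_{j,i,k}(X) \ll \beta e^{-2t}$ by a volume/injectivity computation, which is exactly the content of the paper's relation $\hat{c}_j \asymp \#\mathfrak{F}_j\, \eta^2 e^{-2t}$ (recorded just before Lemma~\ref{lem:shear no multiplicity} as $\bar{\mu}_{j,i,k}(X) \asymp e^{-2t}\eta^2$), and for the second assertion both use the cardinality bound $\#\check{\mathcal{J}}_0 \ll \eta^{-2}\beta^{-26}$ with $\eta = \beta^{1/2}$. The one caveat: your parenthetical remark that $O(\beta^{\star})$ ``is at worst $O(\beta)$'' is not generally true in the paper's $\star$-convention, since $\star$ may denote an exponent smaller than $2$; however the paper's own proof also quietly drops the $\mu'(X) = O(\eta^{\star})$ term, and in the downstream use (Theorem~\ref{thm:closing lemma initial dim} property~(1)) all such errors are reabsorbed into an $O(\beta^{\star})$, so this does not affect correctness of the overall argument.
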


\begin{proof}
    Recall that $\bar{\mu}_{j, i, k} = (\mathring{\nu}_t \ast \delta_{x_1})|_{\mathsf{B}_{\eta}^U\mathsf{h}_{j, i, k} \exp(w_i).y_j}$ and $d\mu_{j, i, k} = \Check{\rho}_{0, j}d\bar{\mu}_{j, i, k}$, we have
    \begin{align*}
        \hat{c}_j \asymp \#\mathfrak{F}_j \eta^2e^{-2t}.
    \end{align*}
    If $\hat{c}_j \geq \beta^{28}$, we have $\#\mathfrak{F}_j \gg \beta^{28} \eta^{-2} e^{2t} = e^{2t}\beta^{27}$. For the second statement, recall that $\#\mathcal{J}_0 \ll \eta^{-2}\beta^{-26}$, we have
    \begin{align*}
        \sum_{\hat{c}_j < \beta^{28}} \hat{c}_j \leq \hat{c}_j\#\mathcal{J}_0 \ll \beta.
    \end{align*}
\end{proof}

\subsubsection{\text{Smearing along the $H$-direction}}
We now smear along the $H$-direction. Recall that $\lambda$ is the normalized Haar measure on
\begin{align*}
    \mathsf{B}^{s, H}_{\beta + 100 \beta^2} = \mathsf{B}_{\beta + 100 \beta^2}^{U^-} \mathsf{B}_{\beta + 100 \beta^2}^{M_0A}.
\end{align*}
Let 
\begin{align*}
    \bar{\mu}_{j, i} = \sum_{k = 1}^{M_{j, i}} \bar{\mu}_{j, i, k}, \quad \bar{\mu}_{j} = \sum_{i = 1}^{N_j} \bar{\mu}_{j, i},
\end{align*}
and 
\begin{align*}
    \mu_{j, i} = \sum_{k = 1}^{M_{j, i}} \mu_{j, i, k}, \quad \mu_{j} = \sum_{i = 1}^{N_j} \mu_{j, i}.
\end{align*}
Recall that by definition, $\bar{\mu}_{j, i, k}$ is proportional to the push-forward of the Haar measure on $\mathsf{B}_\eta^{U}$ under $\mathsf{B}_\eta^{U} \to \mathsf{B}_\eta^{U}\mathsf{h}_{j, i, k} \exp(w_i).y_j$. Moreover, the factor is independent to $i$ and $k$. In fact, we have $\bar{\mu}_{j, i, k}(X) \asymp e^{-2t}\eta^2$. 

\begin{lemma}\label{lem:shear no multiplicity}
    Let $\bar{\mu}_{j, i, k}^U$ be the Haar measure on $\mathsf{B}_\eta^{U}\mathsf{h}_{j, i, k} $ with 
    \begin{align*}
        \bar{\mu}_{j, i, k}^U(H) = \bar{\mu}_{j, i, k}(X) \asymp e^{-2t}\eta^2.
    \end{align*}
    \begin{enumerate}
        \item For all $j, i$, there exists a function $\sigma_{j, i}$ so that
        \begin{align*}
            \mathrm{d}\Biggl(\lambda \ast \Biggl(\sum_{k = 1}^{M_{i, k}} \bar{\mu}_{j, i, k}^U\Biggr)\Biggr)(h) = \sigma_{j, i}(h) \,\mathrm{d} m_H(h)
        \end{align*}
        where 
        \begin{align*}
            0 \leq \sigma_{j, i}(h) \leq \frac{\bar{\mu}_{j, i}(X)}{m_{H}(\mathsf{B}^{s, H}_{\beta + 100\beta^2}\mathsf{B}^U_\eta)}.
        \end{align*}
        Moreover, for $h \in \mathsf{B}^{s, H}_{\beta}\mathsf{B}^U_{\eta - O(\beta^2\eta^2)}$, we have
        \begin{align*}
            \sigma_{j, i}(h) = \frac{\bar{\mu}_{j, i}(X)}{m_{H}(\mathsf{B}^{s, H}_{\beta + 100\beta^2}\mathsf{B}^U_\eta)}.
        \end{align*}
        \item We have
        \begin{align*}
            \Biggl(\lambda \ast \Biggl(\sum_{k = 1}^{M_{i, k}} \bar{\mu}_{j, i, k}^U\Biggr)\Biggr)\Bigl(H \setminus \mathsf{B}^{s, H}_{\beta} B_{\eta - O(\beta^2\eta^2)}^U\Bigr) \ll \eta \bar{\mu}_{j, i}(X).
        \end{align*}
    \end{enumerate}
    All implied constant depends only on $(G, H, \Gamma)$. 
\end{lemma}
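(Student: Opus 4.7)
The plan is to work in the local product coordinates on $H$ coming from the factorization $H = P^- U^+$ with $P^- = U^- M_0 A$, which restricts to a bi-analytic diffeomorphism on balls of radius $\eta_0$ by the setup in Section~\ref{sec:Global prelim}. Under this factorization, Haar measure decomposes as $dm_H(pu) = \rho(p, u)\, dm_{P^-}(p)\, dm_U(u)$ where $\rho$ is smooth, $\rho(e,e)=1$, and $\rho = 1 + O(\eta)$ on all balls involved. The strategy is to compute $\lambda \ast \bar{\mu}_{j,i,k}^U$ for each $k$ in these coordinates, observe that each produces a nearly flat density on a ``tile'' that almost agrees with $\mathsf{B}^{s,H}_{\beta+100\beta^2}\mathsf{B}^U_\eta$, and sum.

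For a single $k$, I would write $\mathsf{h}_{j,i,k} = p_k q_k$ in $P^- \cdot U^+$ coordinates with $p_k \in \mathsf{B}^{s,H}_{O(\beta^2)}$ and $q_k \in \mathsf{B}^U_{O(\beta^2)}$, via Lemma~\ref{lem:BCH}. For $g \in \mathsf{B}^{s,H}_{\beta+100\beta^2}$ and $u \in \mathsf{B}^U_\eta$, BCH allows one to re-express $u p_k$ in $P^- \cdot U^+$ coordinates as $\tilde p_k(u)\, \tilde u(u)$, where $\tilde p_k(u) \in P^-$ is within $O(\eta \beta^2)$ of $p_k$ and $\tilde u(u) \in U^+$ is within $O(\eta\beta^2)$ of $u$. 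Thus
\begin{align*}
 g u \mathsf{h}_{j,i,k} \;=\; \bigl(g\, \tilde p_k(u)\bigr)\bigl(\tilde u(u)\, q_k\bigr),
\end{align*}
and the map $(g,u) \mapsto \bigl(g\tilde p_k(u),\, \tilde u(u) q_k\bigr)$ from $P^- \times U^+$ to itself has Jacobian $1 + O(\eta)$. Pushing $d\lambda \otimes d\bar{\mu}^U_{j,i,k}$ forward and dividing by $\rho$, the convolution $\lambda \ast \bar{\mu}^U_{j,i,k}$ is absolutely continuous with respect to $m_H$, with density equal to $(1+O(\eta))\,\bar{\mu}^U_{j,i,k}(H)/m_H(\mathsf{B}^{s,H}_{\beta+100\beta^2}\mathsf{B}^U_\eta)$ on a tile $T_k$ which is essentially $\mathsf{B}^{s,H}_{\beta+100\beta^2}\mathsf{B}^U_\eta$ translated by $(p_k, q_k)$ and fuzzed by $O(\eta\beta^2)$ at the boundary.

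Summing over $k$ and using $\bar{\mu}_{j,i}(X) = \sum_k \bar{\mu}^U_{j,i,k}(H)$ yields $\sigma_{j,i}$. Because each shift $(p_k, q_k)$ has norm $O(\beta^2) \ll \beta$, every tile $T_k$ contains the common core $\mathsf{B}^{s,H}_{\beta}\mathsf{B}^U_{\eta - O(\beta^2\eta^2)}$; on this core every $k$ contributes its full flat density, giving the equality in assertion (1). Off the core, at most every $k$ contributes, giving the pointwise upper bound. For assertion (2), the complement of the core inside the total support $\mathsf{B}^{s,H}_{\beta + O(\beta^2)}\mathsf{B}^U_{\eta + O(\beta^2)}$ is a boundary layer of relative Haar volume $O(\beta)$ in the $P^-$-direction and $O(\beta^2/\eta)$ in the $U^+$-direction; since $\beta = \eta^2$ both are $O(\eta)$, and multiplying by the density ceiling gives mass $\ll \eta\, \bar{\mu}_{j,i}(X)$.

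The main obstacle will be making the change-of-variables around $u p_k = \tilde p_k(u)\, \tilde u(u)$ quantitative and uniform in $u$ and $g$, since $U^-$ (a component of $P^-$) does not normalize $U^+$, so $\tilde p_k(u)$ genuinely depends on $u$. This is handled by Lemma~\ref{lem:BCH} together with the bi-analyticity of the product maps recorded after that lemma; the essential smallness comes from $\beta = \eta^2 \ll \eta$, which guarantees that all commutation corrections are of strictly lower order than the ball radii, so Jacobians are $1 + O(\eta)$ uniformly on the relevant domains.
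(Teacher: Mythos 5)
There is a genuine gap: the lemma asserts \emph{exact} constancy of $\sigma_{j,i}$ on the core $\mathsf{B}^{s,H}_{\beta}\mathsf{B}^U_{\eta-O(\beta^2\eta^2)}$, namely $\sigma_{j,i}(h) = \bar\mu_{j,i}(X)/m_H(\mathsf{B}^{s,H}_{\beta+100\beta^2}\mathsf{B}^U_\eta)$, and an exact upper bound by that same constant; but your argument produces a density that is only $(1+O(\eta))$-flat on each tile. You cannot conclude the displayed equality from $(1+O(\eta))$-flatness. This matters downstream: the admissibility condition requires a density with controlled Lipschitz constant, and the clean route to that is the exact piecewise-constant structure established here.

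The paper's computation is different in a way that avoids this. It observes that the convolution of a single term is
\begin{align*}
\lambda \ast \bar{\mu}^U_{j,i,k} \;=\; \frac{\bar{\mu}_{j,i,k}(X)}{m_H\bigl(\mathsf{B}^{s,H}_{\beta+100\beta^2}\mathsf{B}^U_\eta\bigr)}\;\mathds{1}_{\mathsf{B}^{s,H}_{\beta+100\beta^2}\mathsf{B}^U_\eta\,\mathsf{h}_{j,i,k}}\,m_H,
\end{align*}
i.e.\ an \emph{exact} multiple of the indicator of a right-translate of $\mathsf{B}^{s,H}_{\beta+100\beta^2}\mathsf{B}^U_\eta$. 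This uses two things your proposal does not exploit: (a) the product decomposition $dm_H(\mathsf{h}u) = c\,dm_{U^-M_0A}(\mathsf{h})\,dm_U(u)$ on the cell $U^-M_0A\cdot U^+$, which gives a single normalizing constant that cancels between numerator and denominator, and (b) \emph{right}-invariance of Haar measure, which makes the right-translation by $\mathsf{h}_{j,i,k}$ cost nothing — the density is literally the translated indicator, with no BCH error term at all. Your decomposition $\mathsf{h}_{j,i,k}=p_kq_k$ and the accompanying commutator estimates are also unnecessary: $\mathsf{h}_{j,i,k}\in\mathsf{B}^{s,H}_{\beta^2}\subset U^-M_0A$ is already in the $P^-$-factor, so $q_k=e$ from the start. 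Once each term is an exact constant times an indicator, summing over $k$ gives $\sigma_{j,i}$ as a finite sum of indicators, the upper bound follows immediately, and the equality on the core follows from the inclusion $\mathsf{B}^{s,H}_{\beta}\mathsf{B}^U_{\eta-O(\beta^2\eta^2)}\subseteq \mathsf{B}^{s,H}_{\beta+100\beta^2}\mathsf{B}^U_\eta\,\mathsf{h}_{j,i,k}$ — and \emph{that} is the one place where BCH/bi-analyticity of $\mathsf{prd},\mathsf{prd}'$ is needed, not for any Jacobian estimate. To repair your argument you should replace the approximate change-of-variables by the exact Haar product formula plus right-invariance, and reserve the BCH estimates only for the set-theoretic inclusion giving the common core.
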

\begin{proof}
    For all $\phi \in \mathrm{C}_c^{\infty}(H)$, we have
    \begin{align*}
        \int_H \phi \,\mathrm{d}(\lambda \ast \bar{\mu}^U_{j, i, k}) ={}& \frac{\bar{\mu}_{j, i, k}(X)}{m_U(\mathsf{B}^U_\eta) m_{U^-M_0A}(\mathsf{B}^{s, H}_{\beta + 100\beta^2})}\int_{\mathsf{B}^{s, H}_{\beta + 100\beta^2}} \int_{\mathsf{B}_\eta^U} \phi(\mathsf{h}u\mathsf{h}_{j, i, k} ) \,\mathrm{d}u \,\mathrm{d}m_{U^-M_0A}(\mathsf{h})\\
        ={}& \frac{\bar{\mu}_{j, i, k}(X)}{m_{H}(\mathsf{B}^{s, H}_{\beta + 100\beta^2}\mathsf{B}^U_\eta)} \int_{\mathsf{B}^{s, H}_{\beta + 100\beta^2}\mathsf{B}_\eta^U} \phi(h\mathsf{h}_{j, i, k} ) \,\mathrm{d}m_H(h)\\
        ={}& \frac{\bar{\mu}_{j, i, k}(X)}{m_{H}(\mathsf{B}^{s, H}_{\beta + 100\beta^2}\mathsf{B}^U_\eta)} \int_{\mathsf{B}^{s, H}_{\beta + 100\beta^2}\mathsf{B}_\eta^U \mathsf{h}_{j, i, k} } \phi(h) \,\mathrm{d}m_H(h).
    \end{align*}
    Therefore, we have
    \begin{align}\label{eqn:shear alon H calculation 1}
    \begin{aligned}
        {}&\int_H \phi \,\mathrm{d}\Biggl(\lambda \ast \Biggl(\sum_{k = 1}^{M_{i, k}} \bar{\mu}_{j, i, k}^U\Biggr)\Biggr)\\
        ={}& \frac{1}{m_{H}(\mathsf{B}^{s, H}_{\beta + 100\beta^2}\mathsf{B}^U_\eta)} \int_{H} \phi(h) \Biggl(\sum_{k = 1}^{M_{i, k}} \bar{\mu}_{j, i, k}(X) \mathds{1}_{\mathsf{B}^{s, H}_{\beta + 100\beta^2}\mathsf{B}_\eta^U \mathsf{h}_{j, i, k} }\Biggr) \,\mathrm{d}m_H(h).
    \end{aligned}
    \end{align}
    Let 
    \begin{align*}
        \sigma_{j, i} := \sum_{k = 1}^{M_{i, k}} \bar{\mu}_{j, i, k}(X) \mathds{1}_{\mathsf{B}^{s, H}_{\beta + 100\beta^2}\mathsf{B}_\eta^U \mathsf{h}_{j, i, k} }.
    \end{align*}

    Note that the following two maps are bi-analytic in $\eta_0$-neighborhood of $0$: 
    \begin{align*}
        \mathsf{prd}: \LieU^{-} \oplus \LieM \oplus \LieA \oplus \LieU^+ {}&\to H\\
        (X_{\LieU^-}, X_{\LieM}, X_{\LieA}, X_{\LieU^+}) {}&\mapsto \exp(X_{\LieU^-})\exp(X_{\LieM})\exp(X_{\LieA})\exp(X_{\LieU^+}),
    \end{align*}
    \begin{align*}
        \mathsf{prd}': \LieU^- \oplus \LieM \oplus \LieA \oplus \LieU^{+} {}&\to H\\
        (X_{\LieU^-}, X_{\LieM}, X_{\LieA}, X_{\LieU^+}) {}&\mapsto \exp(X_{\LieU^+})\exp(X_{\LieU^-})\exp(X_{\LieM})\exp(X_{\LieA}).
    \end{align*}
    Since $\mathsf{h}_{j, i, k}  \in \mathsf{B}^{s, H}_{\beta^2}$, we have
    \begin{align}\label{eqn:shear along H 2 support}
        \mathsf{B}^{s, H}_{\beta} \mathsf{B}_{\eta - O(\beta^2\eta^2)}^U \subseteq \mathsf{B}^{s, H}_{\beta + 100\beta^2} \mathsf{B}_\eta^U \mathsf{h}_{j, i, k} .
    \end{align}
    Combining \cref{eqn:shear alon H calculation 1,eqn:shear along H 2 support}, we prove property~(1). Property~(2) follows from a direct calculation. 
\end{proof}

The previous lemma implies the following. 
\begin{lemma}\label{lem:admissible complexity}
    The measure $\hat{\mu}_{j, i}$ satisfies the following properties. 
    \begin{enumerate}
        \item For all $\phi \in \mathrm{C}_c^\infty(X)$ we have
        \begin{align*}
            \int \phi(z) \,\mathrm{d}(\lambda \ast \mu_{j, i})(z) = \int_{\mathsf{E}} \phi(\mathsf{h}\exp(w_i).y_j)\Check{\rho}_{0, j}(z)\sigma_{j, i}(\mathsf{h}) \,\mathrm{d}m_H(\mathsf{h})
        \end{align*}
        where 
        \begin{align*}
            0 \leq \sigma_{j, i}(h) \leq \frac{\bar{\mu}_{j, i}(X)}{m_{H}(\mathsf{B}^{s, H}_{\beta + 100\beta^2}\mathsf{B}^U_\eta)}.
        \end{align*}
        \item For all $1 \leq \mathsf{k} \leq K$, there exists $\mathsf{E}^{\mathsf{k}} \subseteq \mathsf{E}$ with complexity $\ll 1$ so that 
        \begin{align*}
            {}&\Check{\rho}_{0, j}(z) = 1/\mathsf{k} \text{ for all } z \in \mathsf{E}^{\mathsf{k}} \exp(w_i).y_j, \\
            {}&\sigma_{j, i}(\mathsf{h}) = \frac{\bar{\mu}_{j, i}(X)}{m_{H}(\mathsf{B}^{s, H}_{\beta + 100\beta^2}\mathsf{B}^U_\eta)} \text{ for all } \mathsf{h} \in \mathsf{E}^k \text{ and }\\ 
            {}& (\lambda \ast \mu_{j, i})\Bigl(\Bigl\{z \in \mathsf{E}\exp(w_i).y_j: \Check{\rho}_{0, j}(z) = \frac{1}{\mathsf{k}}\Bigr\}\setminus \mathsf{E}^{\mathsf{k}}.\exp(w_i).y_j\Bigr) \ll \eta \mu_{j, i}(X).
        \end{align*}
    \end{enumerate}
    The implied constants depend only on $(G, H, \Gamma)$. 
\end{lemma}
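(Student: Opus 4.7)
The plan is to reduce the two properties to \cref{lem:shear no multiplicity} and \cref{lem: inverse complexity bound}, handling the multiplicity weight $\check{\rho}_{0,j}$ by decomposing along its (finitely many) level sets.

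For property~(1), observe that each $\bar{\mu}_{j,i,k}$ is proportional to the push-forward, via the orbit map $h \mapsto h\exp(w_i).y_j$, of the Haar measure $\bar{\mu}_{j,i,k}^U$ on the $U$-plaque $\mathsf{B}_\eta^U \mathsf{h}_{j,i,k}$, with a proportionality constant independent of $k$. Left convolution by $\lambda$ commutes with this orbit map, so
\begin{align*}
\int\phi\,\mathrm{d}(\lambda\ast\bar\mu_{j,i})(z) = \int_H \phi(h\exp(w_i).y_j)\,\mathrm{d}(\lambda\ast\bar\mu^U_{j,i})(h),
\end{align*}
and \cref{lem:shear no multiplicity} directly yields a density $\sigma_{j,i}$ on $H$ satisfying the stated upper bound and the interior constancy. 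To pass from $\bar\mu_{j,i}$ to $\mu_{j,i}$, I would split $\bar\mu_{j,i} = \sum_{\mathsf{k}=1}^{K} \bar\mu_{j,i}^{(\mathsf{k})}$ according to the value $1/\mathsf{k}$ of $\check{\rho}_{0,j}$; applying the same commutation-plus-push-forward argument to each piece produces the integral representation claimed, with $\check{\rho}_{0,j}$ entering as a factor in the density. Since $\mathsf{h}_{j,i,k}\in\mathsf{B}^{s,H}_{\beta^2}$ and $\lambda$ is supported on $\mathsf{B}^{s,H}_{\beta+100\beta^2}$, the effective support of $\sigma_{j,i}$ on $H$ lies inside $\mathsf{E}$, as required.

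For property~(2), fix $\mathsf{k}\in\{1,\ldots,K\}$ and apply \cref{lem: inverse complexity bound} at basepoint $y_j$ with transverse vector $w_i$ to produce a subset $\check\Xi^{\mathsf{k}}\subseteq\check{\mathsf{Q}}^H_0$ of inverse complexity $\leq K'$ on which $\check{\rho}_{0,j}\equiv 1/\mathsf{k}$, with Haar complement of measure $\ll\eta\,m_H(\check{\mathsf{Q}}^H_0)$. Set $\mathsf{E}^{\mathsf{k}}$ to be the intersection of $\check\Xi^{\mathsf{k}}$ with the interior region $\mathsf{B}^{s,H}_\beta\mathsf{B}^U_{\eta-O(\beta^2\eta^2)}$ furnished by \cref{lem:shear no multiplicity}(1); the constancy of $\sigma_{j,i}$ on $\mathsf{E}^{\mathsf{k}}$ is then automatic. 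Converting the $\eta\,m_H(\check{\mathsf{Q}}^H_0)$-bound into a $\mu_{j,i}$-bound uses the upper bound $\sigma_{j,i}(h)\leq \bar\mu_{j,i}(X)/m_H(\mathsf{B}^{s,H}_{\beta+100\beta^2}\mathsf{B}^U_\eta)$ together with the comparison of the two Haar volumes, which only differ by a multiplicative constant depending on $(G,H,\Gamma)$. A minor mismatch between the value of $\check{\rho}_{0,j}$ at a sheet point and at its $\lambda$-translate is absorbed into the same $\eta\mu_{j,i}(X)$ boundary error, since the $\check{\mathsf{Q}}^G_0$-boxes defining $\check{\rho}_{0,j}$ have $U^-M_0A$-width $\beta^2$, which is much smaller than the convolution scale $\beta$.

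The main technical obstacle is verifying that $\mathsf{E}^{\mathsf{k}}$ has bounded complexity in the $U^-M_0AU^+$-coordinates defining $\mathsf{E}$, even though $\check\Xi^{\mathsf{k}}$ is a priori a bounded union of inverse boxes in the $U^+M_0AU^-$-coordinates. The required conversion goes through the bi-analyticity of both $\mathsf{prd}$ and $\check{\mathsf{prd}}$ on $B^{\LieH}_{\eta_0}$: the change of coordinates has bounded Lipschitz constant, so a bounded union of inverse boxes intersected with $\mathsf{E}$ corresponds to a bounded union of ordinary boxes, and the coarse-boundary thickenings expand by at most a multiplicative constant. This is the same bookkeeping carried out in \cite[Section~8]{LMW22}; the only novelty here is that $\dim H \in \{4,6\}$ rather than $3$, so the constant $K'$ is dimension-dependent but still absolute for fixed $(G,H,\Gamma)$.
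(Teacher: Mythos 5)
Your treatment of property~(1) is in line with the paper's (terse) argument, and your identification of Lemma~\ref{lem:shear no multiplicity} and Lemma~\ref{lem: inverse complexity bound} as the two ingredients is right. But your construction of $\mathsf{E}^{\mathsf{k}}$ for property~(2) has a genuine gap.

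You set $\mathsf{E}^{\mathsf{k}} = \check{\Xi}^{\mathsf{k}} \cap \mathsf{B}^{s,H}_{\beta}\mathsf{B}^U_{\eta - O(\beta^2\eta^2)}$. Since $\check{\Xi}^{\mathsf{k}} \subseteq \check{\mathsf{Q}}^H_0 = \mathsf{B}^U_\eta\mathsf{B}^{M_0A}_{\beta^2}\mathsf{B}^{U^-}_{\beta^2}$, your $\mathsf{E}^{\mathsf{k}}$ has $U^-M_0A$-width of order $\beta^2$, so $m_H(\mathsf{E}^{\mathsf{k}}) \ll \beta^4\, m_H(\mathsf{E})$. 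That is far too small. The set $\mathsf{E}^{\mathsf{k}}$ must have $U^-M_0A$-width of order $\beta$: recall that $\lambda \ast \mu_{j,i}$ is carried by $\mathsf{E}\exp(w_i).y_j$, and on each $U$-sheet the factor $\check{\rho}_{0,j}$ enters the density of $\lambda \ast \mu_{j,i}$ evaluated at the \emph{pre-image} point $u\mathsf{h}_{j,i,k}\exp(w_i).y_j$ on the sheet, not at the $\lambda$-translated point. After the $\lambda$-smearing, the region of $\mathsf{E}$ on which this lifted value is $1/\mathsf{k}$ is a union of boxes of the form $\mathsf{B}^{s,H}_{\beta - O(\beta^2)}\exp(\mathsf{B}^{\mathsf{k}}_{\LieU,l})$, i.e.\ the $\LieU$-cube from the inverse box fattened by $\mathsf{B}^{s,H}_{\beta}$. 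Without producing this fattened box, you cannot verify the third inequality in property~(2), nor can you produce the $\mathsf{L}$-admissible structure used in the very next lemma, since the density would vanish (or be undefined) on the bulk of $\mathsf{E}$. The paper handles this precisely through its Claim constructing the boxes $\mathsf{D}^{\mathsf{k}}_l \subseteq \mathsf{B}^{s,H}_{\beta - 100\beta^2}\exp(\mathsf{B}^{\mathsf{k}}_{\LieU,l})\mathsf{h}_{i,k}$ simultaneously for all $k$; the non-trivial point is that a single box works for all the sheet indices $k$, which is where the bi-analyticity of $\mathsf{prd}$ and $\mathsf{prd}'$ is really used (cf.\ the step around Equation~\eqref{eqn:change box}). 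Your ``minor mismatch absorbed into boundary error'' heuristic does not substitute for this, because the $\lambda$-translate moves a point by distance $\beta$ in the $U^-M_0A$-direction, which is \emph{larger} than the $\beta^2$-width of the $\check{\mathsf{Q}}^G_0$-boxes, so the translated point generically leaves the box.

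Two smaller points. Your remark that $m_H(\check{\mathsf{Q}}^H_0)$ and $m_H(\mathsf{B}^{s,H}_{\beta+100\beta^2}\mathsf{B}^U_\eta)$ ``differ by a multiplicative constant'' is false; they differ by a factor of order $\beta^4$ (though this particular slip does not itself kill the estimate, since the error margin is actually more generous than $\eta\mu_{j,i}(X)$). And both $\SO(2,2)^\circ$ and $\SO(3,1)^\circ$ have dimension $6$, so ``$\dim H \in \{4,6\}$'' is a typo on your part.
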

\begin{proof}
    Property~(1) follows from the definition of $\mu_{j, i}$ and property~(1) of Lemma~\ref{lem:shear no multiplicity}. 

    For property~(2), let $\Check{\Xi}^{\mathsf{k}} = \Check{\Xi}^{\mathsf{k}}(j, w_i)$ be the subset of $\Check{\mathsf{Q}}^H_0$ with inverse complexity $K'$ from \cref{lem: inverse complexity bound}. The number $K'$ depends only on $(G, H, \Gamma)$. Write 
    \begin{align*}
        \Check{\Xi}^{\mathsf{k}} = \bigcup_{l = 1}^{K'} \Check{\Xi}^{\mathsf{k}}_l
    \end{align*}
    where each $\Check{\Xi}^{\mathsf{k}}_l$ is an inverse box. Let $\mathsf{B}^{s, H}_{\beta}\mathsf{B}^U_{\eta - C\beta^2\eta^2}$ be as in the property~(1) of Lemma~\ref{lem:shear no multiplicity} where $C$ is a constant depends only on $(G, H, \Gamma)$. Since $\Check{\Xi}_l^{\mathsf{k}}$ is an inverse box, there exists a cube $\mathsf{B}_{\LieU, l}^{\mathsf{k}} \subseteq B^\LieU_\eta$ so that for all $1 \leq k \leq M_{j, i}$, 
    \begin{align*}
        \Check{\Xi}_l^{\mathsf{k}} \cap \mathsf{B}^U_\eta \mathsf{h}_{i, k} = \exp(\mathsf{B}_{\LieU, l}^{\mathsf{k}})\mathsf{h}_{i, k}.
    \end{align*}

    \medskip
    \noindent\textit{Claim.} There exists a box $\mathsf{D}_l^{\mathsf{k}} \subseteq \mathsf{E}$ with the following two properties. 
    \begin{enumerate}
        \item For all $1 \leq k \leq M_{j, i}$, $\mathsf{D}_l^{\mathsf{k}} \subseteq \mathsf{B}^{s, H}_{\beta - 100\beta^2}\exp(\mathsf{B}_{\LieU, l}^{\mathsf{k}})\mathsf{h}_{i, k}$. 
        \item We have
        \begin{align*}
            m_H\Biggl(\bigcup_{i = 1}^{M_{j, i}}\mathsf{B}^{s, H}_{\beta - 100\beta^2}\exp(\mathsf{B}_{\LieU, l}^{\mathsf{k}})\mathsf{h}_{i, k} \setminus \mathsf{D}_l^{\mathsf{k}}\Biggr) \ll \eta m_H(\mathsf{E}).
        \end{align*}
    \end{enumerate}
    Indeed, the first property follows from the bi-analyticity of the map $\mathsf{prd}$ and $\mathsf{prd}'$ in as the following. Let $x_1$ be the center of the cube $\mathsf{B}_{\LieU, l}^{\mathsf{k}}$ and write $\mathsf{B}_{\LieU, l}^{\mathsf{k}} = \mathsf{B}^{\LieU}_r(x_1)$. The bi-analyticity implies that 
        \begin{align}\label{eqn:change box}
            \mathsf{B}^{s, H}_{\beta - 200\beta^2} \exp(\mathsf{B}^{\LieU}_{r - O(\beta^2r^2)}(x_1)) \subseteq \mathsf{B}^{s, H}_{\beta - 100\beta^2}\exp(\mathsf{B}_{\LieU, l}^{\mathsf{k}})\mathsf{h}_{i, k} \quad \forall 1 \leq k \leq M_{j, i}.
        \end{align}
        The second claim follows from a direct calculation. 

    Let 
    \begin{align*}
        \mathsf{E}^{\mathsf{k}}_i := \bigcup_{l = 1}^{K'} \biggl(\mathsf{D}^{\mathsf{k}}_l \cap \mathsf{B}^{s, H}_{\beta}\mathsf{B}^U_{\eta - C\beta^2\eta^2}\biggr).
    \end{align*}
    It is a subset of $\mathsf{E}$ with complexity $\ll 1$. By the construction of $\mathsf{D}^{\mathsf{k}}_l$ and $\mathsf{B}^{s, H}_{\beta}\mathsf{B}^U_{\eta - C\beta^2\eta^2}$, 
    \begin{align*}
        {}&\Check{\rho}_{0, j}(z) = 1/\mathsf{k} \text{ for all } z \in \mathsf{E}^{\mathsf{k}}_i \exp(w_i).y_j, \text{ and }\\
        {}&\sigma_{j, i}(\mathsf{h}) = \frac{\bar{\mu}_{j, i}(X)}{m_{H}(\mathsf{B}^{s, H}_{\beta + 100\beta^2}\mathsf{B}^U_\eta)} \text{ for all } \mathsf{h} \in \mathsf{E}^k_i.
    \end{align*}
    To prove the last estimate, note that by property~(1) and $\frac{1}{K} \leq \Check{\rho}_{0, j} \leq 1$, it suffices to show that
    \begin{align*}
        m_{H}\Bigl(\Bigl\{\mathsf{h} \in \mathsf{E}: \Check{\rho}_{0, j}(\mathsf{h}\exp(w_i).y_j) = \frac{1}{\mathsf{k}}\Bigr\}\setminus \mathsf{E}^{\mathsf{k}}_i\Bigr) \ll \eta m_H(\mathsf{E}).
    \end{align*}
    Note that
    \begin{align*}
        {}&m_H\Bigl(\Bigl\{\mathsf{h} \in \mathsf{E}: \Check{\rho}_{0, j}(\mathsf{h}\exp(w_i).y_j) = \frac{1}{\mathsf{k}}\Bigr\}\setminus \mathsf{E}^{\mathsf{k}}_i\Bigr)\\
        \leq{}& m_H\Bigl(\mathsf{B}^{s, H}_{\beta + 100 \beta^2} \cdot \Bigl\{\mathsf{h} \in \Check{\mathsf{Q}}^H_0: \Check{\rho}_{0, j}(\mathsf{h}\exp(w_i).y_j) = \frac{1}{\mathsf{k}}\Bigr\}\Bigr) \setminus \Bigl(\mathsf{B}^{s, H}_{\beta + 100 \beta^2} \cdot \Check{\Xi}^{\mathsf{k}}\Bigr)\Bigr)\\
        {}&+ \sum_{l = 1}^{K'}m_H\Bigl(\mathsf{B}^{s, H}_{\beta + 100\beta^2} \cdot \Check{\Xi}_l^{\mathsf{k}} \setminus \bigcup_{i = 1}^{M_{j, i}}\mathsf{B}^{s, H}_{\beta - 100\beta^2}\exp(\mathsf{B}_{\LieU, l}^{\mathsf{k}})\mathsf{h}_{i, k}\Bigr)\\
        {}&+ \sum_{l = 1}^{K'} m_H\Bigl(\bigcup_{i = 1}^{M_{j, i}}\mathsf{B}^{s, H}_{\beta - 100\beta^2}\exp(\mathsf{B}_{\LieU, l}^{\mathsf{k}})\mathsf{h}_{i, k} \setminus \mathsf{D}_l^{\mathsf{k}}\Bigr).
    \end{align*}
    The last term is estimated by property~(2) in the claim. It suffices to deal with the first two term. 
    
    We now deal with the second term. By definition, $\Check{\Xi}^{\mathsf{k}}_l \cap \mathsf{B}^U_\eta \mathsf{h} = \exp(\mathsf{B}_{\LieU, l}^{\mathsf{k}})$, similar to \cref{eqn:change box},  we have
    \begin{align*}
        m_H\Bigl(\mathsf{B}^{s, H}_{\beta + 100\beta^2} \cdot \Check{\Xi}_l^{\mathsf{k}} \setminus \bigcup_{i = 1}^{M_{j, i}}\mathsf{B}^{s, H}_{\beta - 100\beta^2}\exp(\mathsf{B}_{\LieU, l}^{\mathsf{k}})\mathsf{h}_{i, k}\Bigr) \ll \eta m_H(\mathsf{E}).
    \end{align*}
    
    For the first term, note that 
    \begin{align*}
        d(\lambda \ast m_H)(\mathsf{h}) = \hat{\rho}(\mathsf{h}) \,\mathrm{d}m_H(\mathsf{h})
    \end{align*}
    where $\hat{\rho}(\mathsf{h}) \asymp 1$ for all $\mathsf{h} \in \mathsf{B}^{s, H}_{\beta + 100 \beta^2}\mathsf{B}^U_\eta$, it suffices to show
    \begin{align*}
        {}&\lambda \ast m_H\Bigl(\mathsf{B}^{s, H}_{\beta + 200 \beta^2} \cdot \Bigl\{\mathsf{h} \in \Check{\mathsf{Q}}^H_0: \Check{\rho}_{0, j}(\mathsf{h}\exp(w_i).y_j) = \frac{1}{\mathsf{k}}\Bigr\}\Bigr) \setminus \Bigl(\mathsf{B}^{s, H}_{\beta + 100 \beta^2} \cdot \Check{\Xi}^{\mathsf{k}}\Bigr)\Bigr)\\
        \ll{}& \eta m_H(\mathsf{E}),
    \end{align*}
    which follows from \cref{lem: inverse complexity bound}.     
\end{proof}

\subsubsection{Decomposition of the local measure according to the weight on $H$-sheets}
Recall that for all $j \in \mathcal{J}_0$, we have a decomposition
\begin{align*}
    (\mu_t)|_{\mathsf{Q}^G_0.y_j} = \mu_j' + \sum_{i = 1}^{N_j} \sum_{k = 1}^{M_{j, i}} \bar{\mu}_{j, i, k}
\end{align*}
where for all $i, k$ there exists $w_i \in B_{2\beta^2}^{\mathfrak{r}}$ and $\mathsf{h}_{j, i, k}  \in \mathsf{B}^{s, H}_{\beta^2}$ so that 
\begin{align*}
    \bar{\mu}_{j, i, k} = (\mathring{\nu}_t \ast \delta_{x_1})|_{\mathsf{B}_{\eta}^U\mathsf{h}_{j, i, k} \exp(w_i).y_j}.
\end{align*}
We also have
\begin{align*}
    \mu_j'(X) \leq (\partial \nu_t \ast \delta_{x_1})(X) \leq \partial \nu_t(H) \ll \eta^{\star}.
\end{align*}
Recall that we set
\begin{align*}
    \bar{\mu}_{j, i} = \sum_{k = 1}^{M_{j, i}} \bar{\mu}_{j, i, k}
\end{align*}
and 
\begin{align*}
    d\mu_{j, i}(z) = \Check{\rho}_{0, j}(z)d\hat{\mu}_{j, i}(z)
\end{align*}
Note that by the dimension estimate in Lemma~\ref{lem:Closing lemma many scale}, for all $j \in \mathcal{J}_0$ and $1 \leq i \leq M_{j, i}$,
\begin{align*}
    \eta^2 e^{-2t} \ll \bar{\mu}_{j, i}(X) \ll \delta_0^{\epsilon_1}.
\end{align*}
Since $\frac{1}{K} \leq \Check{\rho}_{0, j} \leq 1$, there exists a large integer $L$ depending only on $(G, H, \Gamma)$ so that 
\begin{align}\label{eqn:weight on each sheet estimate 1}
    L^{-1}\eta^2 e^{-2t} \leq \mu_{j, i}(X) \leq L \delta_0^{\epsilon_1}.
\end{align}

For all $j \in \mathcal{J}_0$, let
\begin{align*}
    F_j = \{w_i: \bar{\mu}_{j, i, k} = (\mathring{\nu}_t \ast \delta_{x_1})|_{\mathsf{B}_{\eta}^U\mathsf{h}_{j, i, k} \exp(w_i).y_j} \forall k\}.
\end{align*}
By Lemma~\ref{lem:number of sheet 1}, we have
\begin{align*}
    \#F_j \leq \mathfrak{F}_j \ll \eta^{-2}e^{2t}.
\end{align*}

Let $\mathsf{L}$ be an integer so that $\mathsf{L} > L$ and also takes care of all constants in Lemma~\ref{lem:admissible complexity}. Note that $\mathsf{L}$ depends only on $(G, H, \Gamma)$. We now decompose the measure according to its weight on each sheet. 
For all integer $m \geq 0$, let
\begin{align*}
    F_{j, m} = \{w_i \in F_j: \mathsf{L}^{-m}\delta_0^{\epsilon_1} \leq \mu_{j, i}(X) < \mathsf{L}^{-m + 1}\delta_0^{\epsilon_1}\}.
\end{align*}
Since $\mu_{j, i}(X) \geq \mathsf{L}^{-1}\eta^2e^{-2t}$, the set $F_{j, m} = \emptyset$ for all $m > \lceil2t/\log(\mathsf{L})\rceil$. From now on we only consider $F_{j, m}$ for $1 \leq m \leq \lceil2t/\log(\mathsf{L})\rceil$ and $j \in \mathcal{J}_0$ with 
\begin{align}\label{eqn:condition on weight of box}
    \hat{c}_j = \sum_{i = 1}^{N_j} \sum_{k = 1}^{M_{i, k}} \mu_{j, i, k}(X) \geq \beta^{28}.
\end{align}
Denote the set consists of such index $j$ by $\mathcal{J}_0'$

For all $1 \leq m \leq \lceil2t/\log(\mathsf{L})\rceil$ so that 
\begin{align}\label{eqn:condition on weight of sheets}
    \sum_{i:w_i \in F_{j, m}} \mu_{j, i}(X) \geq \beta\hat{c}_j \geq \beta^{29}, 
\end{align}
we have
\begin{align}\label{eqn:number of sheet lower bound}
    \#F_{j, m} \geq \beta^{29}\delta_0^{-\epsilon_1}.
\end{align}
Denote the set consists of index $m$ satisfying \cref{eqn:condition on weight of sheets} by $\mathcal{M}_j'$

Let 
\begin{align*}
    \hat{c}_{j, m} = \sum_{i:w_i \in F_{j, m}} \mu_{j, i}(X), 
\end{align*}
and
\begin{align*}
    c_{j, m} = \Biggl(\sum_{j \in \mathcal{J}_0'} \sum_{m \in \mathcal{M}_j'} \hat{c}_{j, m}\Biggr)^{-1} \hat{c}_{j, m}.
\end{align*}

\begin{lemma}
    We have 
    \begin{align*}
        \sum_{j \in \mathcal{J}_0'} \sum_{m \in \mathcal{M}_j'} \hat{c}_{j, m} \geq 1 - O(\beta^\star).
    \end{align*}
\end{lemma}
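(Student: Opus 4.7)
The plan is to combine Lemma~\ref{lem:throw away box with small weight}, which disposes of light boxes, with a pigeonhole argument across the dyadic weight scales $m$. First I would observe that since $\mu_t$ is a probability measure and $\mu'(X) \ll \eta^\star = O(\beta^\star)$, we have $\sum_{j \in \mathcal{J}_0} \hat{c}_j = 1 - O(\beta^\star)$, and combined with the second assertion of Lemma~\ref{lem:throw away box with small weight} this yields
\begin{align*}
\sum_{j \in \mathcal{J}_0'} \hat{c}_j \geq 1 - O(\beta).
\end{align*}

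Next I would fix $j \in \mathcal{J}_0'$ and decompose $\hat{c}_j$ across the scales $m$. By \eqref{eqn:weight on each sheet estimate 1} and the choice $\mathsf{L} > L$ (absorbing, if necessary, the constant $L$ into $\mathsf{L}$, or equivalently into $\epsilon_1$), the dyadic classes $\{F_{j,m}\}$ partition $F_j$, with only the finitely many indices $m \in \{1,\dotsc,\lceil 2t/\log\mathsf{L}\rceil\}$ contributing. Hence $\hat{c}_j = \sum_m \hat{c}_{j,m}$ with at most $O(t)$ nonzero terms. By the very definition of $\mathcal{M}_j'$, every index $m \notin \mathcal{M}_j'$ contributes less than $\beta\hat{c}_j$, so a one-line pigeonhole gives
\begin{align*}
\sum_{m \in \mathcal{M}_j'} \hat{c}_{j,m} \;=\; \hat{c}_j - \!\!\sum_{m \notin \mathcal{M}_j'}\!\! \hat{c}_{j,m} \;\geq\; \bigl(1 - O(t\beta)\bigr)\,\hat{c}_j.
\end{align*}

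The final step converts $t\beta$ into an $O(\beta^\star)$ error using the standing hypothesis $t^{100} \leq e^{\epsilon' t} = \beta^{-1}$ imposed at the start of Part~\ref{part:closinglemma}: it forces $t \leq \beta^{-1/100}$, whence $t\beta \leq \beta^{99/100} = O(\beta^\star)$. Summing the previous display over $j \in \mathcal{J}_0'$ and combining with the first paragraph then produces
\begin{align*}
\sum_{j \in \mathcal{J}_0'}\sum_{m \in \mathcal{M}_j'} \hat{c}_{j,m} \;\geq\; (1 - O(\beta^\star))\!\sum_{j \in \mathcal{J}_0'}\!\hat{c}_j \;\geq\; 1 - O(\beta^\star).
\end{align*}
I do not anticipate a genuine obstacle here: the whole argument is bookkeeping plus a single pigeonhole, with all the substantive work already carried out in Lemma~\ref{lem:throw away box with small weight} and in the construction of the classes $F_{j,m}$. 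The only mild point of care is ensuring the dyadic classes really do partition $F_j$ (which is why $\mathsf{L}$ is chosen strictly larger than $L$), and keeping track of the fact that the number of contributing scales is only $O(t) = O(\log(1/\beta))$, well below any negative power of $\beta$.
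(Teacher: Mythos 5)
Your proof is correct and takes essentially the same route as the paper. The paper bounds the two sources of loss directly, namely $\sum_{j \in \mathcal{J}_0'} \sum_{m \notin \mathcal{M}_j'} \hat{c}_{j,m} \leq \lceil 2t/\log\mathsf{L}\rceil\,\beta \leq \beta^{1/2}$ (the same pigeonhole over the $O(t)$ dyadic scales, using the standing assumption $t^{100}\leq e^{\epsilon't}=\beta^{-1}$) and $\sum_{j\notin\mathcal{J}_0'}\hat{c}_j = O(\beta)$ from Lemma~\ref{lem:throw away box with small weight}, then combines; you package the same two estimates in a slightly different order, first lower-bounding $\sum_{j\in\mathcal{J}_0'}\hat{c}_j$ and then working per~$j$, but the argument is identical.
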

\begin{proof}
    Recall that we take $j, m$ so that they satisfy the following properties:
    \begin{align*}
        \hat{c}_{j, m} \geq \beta \hat{c}_j, \quad \hat{c}_j \geq \beta^{28}.
    \end{align*}
    Therefore, 
    \begin{align*}
        \sum_{j \in \mathcal{J}_0'} \sum_{m \notin \mathcal{M}_j'} \hat{c}_{j, m} \leq \sum_{j \in \mathcal{J}_0} \sum_{1 \leq m \leq \lceil2t/\log \mathsf{L}\rceil:m \notin \mathcal{M}_j'} \beta\hat{c}_{j} \leq \lceil2t/\log \mathsf{L}\rceil\beta \leq \beta^{\frac{1}{2}}.
    \end{align*}
    By \cref{lem:throw away box with small weight}, we also have
    \begin{align*}
        \sum_{j \notin \mathcal{J}_0} \sum_{1 \leq m \leq \lceil2t/\log \mathsf{L}\rceil} \hat{c}_{j, m} = \sum_{j \notin \mathcal{J}_0} \hat{c}_j = O(\beta). 
    \end{align*}
    Combine both estimates, we prove the lemma. 
\end{proof}

For $j \in \mathcal{J}_0'$ and $m \in \mathcal{M}_j'$, we set
\begin{align*}
    \mathcal{E}_{j, m} = \mathsf{E}\exp(F_{j, m}).y_j.
\end{align*}

\begin{lemma}
    For all $j \in \mathcal{J}_0'$ and $m \in \mathcal{M}_j'$, there exists a $\mathsf{L}$-admissible measure $\mu_{\mathcal{E}_{j, m}}$ so that for all $\phi \in \mathrm{C}_c(X)$, 
    \begin{align*}
        \Biggl|\int_X \phi \,\mathrm{d}\mu_{\mathcal{E}_{j, m}} - \int_X \phi \,\mathrm{d}\Biggl(\lambda \ast \Biggl(\sum_{i: w_i \in F_{j, m}}\mu_{j, i}\Biggr)\Biggr)\Biggr| \ll \hat{c}_{j, m}\|\phi\|_\infty\eta^\star
    \end{align*}
\end{lemma}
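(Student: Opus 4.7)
The natural candidate is $\mu_{\mathcal{E}_{j,m}} := \lambda \ast \bigl(\sum_{i:\, w_i \in F_{j,m}} \mu_{j,i}\bigr)$, modified only on a small boundary set so that the conditions of \cref{subsec:sheeted set admissible measure} are met. By \cref{lem:admissible complexity}(1), on each sheet $\mathsf{E}\exp(w_i).y_j$ this measure already has the shape $\varrho_{w_i}(\mathsf{h})\,\mathrm{d}m_H(\mathsf{h})$ with $\varrho_{w_i}(\mathsf{h}) = \Check{\rho}_{0,j}(\mathsf{h}\exp(w_i).y_j)\,\sigma_{j,i}(\mathsf{h})$, which is exactly the form demanded by admissibility; so the task reduces to producing the two-sided density bound, the complexity/Lipschitz control, and then accounting for the error introduced by the boundary modification.

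For the two-sided density bound, \cref{lem:admissible complexity}(1) gives immediately the upper bound $\varrho_{w_i} \leq \bar{\mu}_{j,i}(X)/m_H(\mathsf{B}^{s,H}_{\beta+100\beta^2}\mathsf{B}^U_\eta)$. The binning defining $F_{j,m}$, namely $\mathsf{L}^{-m}\delta_0^{\epsilon_1} \leq \mu_{j,i}(X) < \mathsf{L}^{-m+1}\delta_0^{\epsilon_1}$, together with $1/K \leq \Check{\rho}_{0,j} \leq 1$ (so $\bar{\mu}_{j,i}(X)$ and $\mu_{j,i}(X)$ are comparable up to factor $K$), forces $\bar{\mu}_{j,i}(X)$ to vary within a multiplicative factor of $O(\mathsf{L})$ as $w_i$ ranges over $F_{j,m}$. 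After normalising the total mass of $\mu_{\mathcal{E}_{j,m}}$ to $\hat{c}_{j,m}$, the densities $\varrho_{w_i}$ thus lie in $[\mathsf{L}^{-1},\mathsf{L}]$ times a single constant, enlarging $\mathsf{L}$ if necessary.

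For the complexity, the step function $\Check{\rho}_{0,j}$ takes only the $K$ values $1,\tfrac12,\ldots,\tfrac1K$. Applying \cref{lem:admissible complexity}(2) at each value $1/\mathsf{k}$ produces a set $\mathsf{E}^{\mathsf{k}} \subseteq \mathsf{E}$ of complexity bounded by a constant depending only on $(G,H,\Gamma)$, on which both $\Check{\rho}_{0,j}$ and $\sigma_{j,i}$ are exactly constant. Hence on $\bigcup_\mathsf{k}\mathsf{E}^{\mathsf{k}}$ the density $\varrho_{w_i}$ is piecewise constant, has Lipschitz constant $0$, and inherits the required box decomposition. On the complement, the same lemma supplies the mass bound $\ll \eta\,\mu_{j,i}(X)$. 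Redefine $\varrho_{w_i}$ on this complement to be the appropriate constant value prescribed on the same sheet; the resulting measure $\mu_{\mathcal{E}_{j,m}}$ then satisfies all three admissibility requirements with a single constant $\mathsf{L}$ independent of $j$ and $m$.

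The cost of this modification is, per sheet, the $\lambda \ast \mu_{j,i}$-mass of the altered region together with the mass of the replacement, both $\ll \eta\,\mu_{j,i}(X)$ by \cref{lem:admissible complexity}(2); summing over $w_i \in F_{j,m}$ gives a total contribution of $\ll \eta\,\hat{c}_{j,m}$, which against $\|\phi\|_\infty$ yields the stated $O(\hat{c}_{j,m}\|\phi\|_\infty \eta^\star)$ bound. The only delicate point is ensuring that the complexity constant emerging from \cref{lem:admissible complexity}(2), together with the grouping of sheets by $F_{j,m}$, is genuinely uniform in $j$ and $m$; this uniformity is built into the conclusion of that lemma, so no substantive obstacle remains and the whole argument amounts to bookkeeping on top of the local structure already established by \cref{lem:shear no multiplicity} and \cref{lem:admissible complexity}.
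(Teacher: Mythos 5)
Your proposal is essentially the same approach as the paper's proof: both take $\lambda \ast \bigl(\sum_{i:\,w_i \in F_{j,m}}\mu_{j,i}\bigr)$ as the starting measure, both use \cref{lem:admissible complexity}~(1) for the density description and \cref{lem:admissible complexity}~(2) for the complexity and Lipschitz bounds, and both exploit the binning of $F_{j,m}$ (mass comparable across $w_i$ up to factor $\mathsf{L}$, combined with $1/K \leq \Check{\rho}_{0,j} \leq 1$) to get the two-sided density control after rescaling. The paper's proof defines $\mu_w$ with an explicit scaling factor $\frac{m_H(\mathsf{B}^{s,H}_{\beta+100\beta^2}\mathsf{B}^U_\eta)}{\mathsf{L}^{-m}\delta_0^{\epsilon_1}}$ and restricts to $\mathsf{B}^{s,H}_\beta\mathsf{B}^U_{\eta - C\beta^2\eta^2}\exp(F_{j,m}).y_j$ before normalizing, rather than your ``modify on the bad set and renormalize,'' but these are cosmetic variants of the same idea; your phrasing ``normalising the total mass \ldots to $\hat{c}_{j,m}$'' should read ``to $1$'' (and the scaling should be absorbed into the definition of the individual $\mu_w$, as the paper does), since an $\mathsf{L}$-admissible measure is by definition a probability measure. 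With that correction the bookkeeping goes through as you describe.
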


\begin{proof}
    Let $\mathsf{B}^{s, H}_{\beta}\mathsf{B}^U_{\eta - C\beta^2\eta^2}$ be as in the property~(1) of Lemma~\ref{lem:shear no multiplicity} where $C$ is a constant depends only on $(G, H, \Gamma)$. Let $\mu_{\mathcal{E}_{j, m}}$ be the restriction of 
    \begin{align*}
        \lambda \ast \Biggl(\sum_{i: w_i \in F_{j, m}}\mu_{j, i}\Biggr)
    \end{align*}
    to $\mathsf{B}^{s, H}_{\beta}\mathsf{B}^U_{\eta - C\beta^2\eta^2}\exp(F_{j, m}).y_j$ and normalized to probability measure. By Lemma~\ref{lem:admissible complexity}, we have
    \begin{align*}
        \Biggl|\int_X \phi \,\mathrm{d}\mu_{\mathcal{E}_{j, m}} - \int_X \phi \,\mathrm{d}\Biggl(\lambda \ast \Biggl(\sum_{i: w_i \in F_{j, m}}\mu_{j, i}\Biggr)\Biggr)\Biggr| \ll \hat{c}_{j, m}\|\phi\|_\infty\eta^\star.
    \end{align*}
    It suffices to show that $\mu_{\mathcal{E}_{j, m}}$ is $\mathsf{L}$-admissible. 
    
    For all $w = w_i \in F_{j, m}$, let 
    \begin{align*}
        \mu_{w} := \frac{m_H(\mathsf{B}^{s, H}_{\beta + 100 \beta^2}\mathsf{B}^U_\eta)}{\mathsf{L}^{-m}\delta_0^{\epsilon_1}} \lambda \ast \mu_{j, i}|_{\mathsf{B}^{s, H}_{\beta}\mathsf{B}^U_{\eta - C\beta^2\eta^2}\exp(w).y_j},
    \end{align*}
    we have
    \begin{align*}
        \mu_{\mathcal{E}_{j, m}} = \frac{1}{\sum_{w \in F_{j, m}}\mu_w(X)}\sum_{w \in F_{j, m}} \mu_w.
    \end{align*}
    By Lemma~\ref{lem:admissible complexity}~(1), we have
    \begin{align*}
        \mathrm{d}\mu_{w_i}(z) = \frac{m_H(\mathsf{B}^{s, H}_{\beta + 100 \beta^2}\mathsf{B}^U_\eta)}{\mathsf{L}^{-m}\delta_0^{\epsilon_1}}\Check{\rho}_{0, j}(z)\sigma_{j, i}(\mathsf{h}) \, \mathrm{d}m_H(\mathsf{h})
    \end{align*}
    where $z = \mathsf{h}\exp(w_i).y_j$. Moreover, we have
    \begin{align*}
        \mathsf{L}^{-1} \leq \frac{m_H(\mathsf{B}^{s, H}_{\beta + 100 \beta^2}\mathsf{B}^U_\eta)}{\mathsf{L}^{-m}\delta_0^{\epsilon_1}}\Check{\rho}_{0, j}(z)\sigma_{j, i}(\mathsf{h}) \leq \mathsf{L}
    \end{align*}
    for all $\mathsf{h} \in \mathsf{B}^{s, H}_{\beta}\mathsf{B}^U_{\eta - C\beta^2\eta^2}$    
    
    Let $\mathsf{E}^{\mathsf{k}}$ be as in Lemma~\ref{lem:admissible complexity}~(2). It has complexity $\ll 1$ and the function $\Check{\rho}_{0, j}\sigma_{j, i}$ is constant on $\mathsf{E}^{\mathsf{k}}$. This proves the remaining properties of $\mathsf{L}$-admissible measure. 
\end{proof}

For $j \in \mathcal{J}_0'$ and $m \in \mathcal{M}_j'$, let
\begin{align*}
    c_{\mathcal{E}_{j, m}} = c_{j, m}.
\end{align*}
From now on, to reduce complicated subscript, we will drop $j,m$ in the subscript. The sum $\sum_{\mathcal{E}}$ will be the same as $\sum_{j \in \mathcal{J}_0'} \sum_{m \in \mathcal{M}_j'}$. 

The above lemmas provides a decomposition
\begin{align*}
    \lambda \ast \mu_t = \mu'' + \sum_{\mathcal{E}} c_{\mathcal{E}} \mu_{\mathcal{E}}
\end{align*}
with $\mu''(X) \ll \eta^\star$. 

Therefore, for all $d \geq 0$ and $u' \in \mathsf{B}_1^U$, we have
\begin{align*}
    \int_{X} \phi(a_d u' x) \,\mathrm{d}(\lambda \ast \mu_t) = \sum_{\mathcal{E}} c_{\mathcal{E}} \int_{X} \phi(a_d u' x) \,\mathrm{d}\mu_{\mathcal{E}} + O(\|\phi\|_\infty \eta^\star).
\end{align*}
This proves property~(1) in Theorem~\ref{thm:closing lemma initial dim}. 

Let $\epsilon_0 = \epsilon_1/2$. We show Theorem~\ref{thm:closing lemma initial dim} property~(2) holds for this $\epsilon_0$. 
\begin{lemma}
    For all $j$ and $m$ satisfying \cref{eqn:condition on weight of box,eqn:condition on weight of sheets}, Write $\mathcal{E} = \mathcal{E}_{j, m} = \mathsf{E}\exp(F_{j, m}).y_j$ and $F = F_{j, m}$. It satisfies the following conditions. 
    \begin{enumerate}
        \item The number of sheets satisfies
        \begin{align*}
            \beta^{29}\delta_0^{-2\epsilon_0} \leq \#F \leq \beta^{-2}e^{2t}.
        \end{align*}
        \item We have the Margulis function estimate
        \begin{align*}
            f^{(\epsilon_0)}_{\mathcal{E}, \delta_0}(x) \ll \beta^{-\star} \#F \quad \forall x \in \mathcal{E}.
        \end{align*}
    \end{enumerate}
\end{lemma}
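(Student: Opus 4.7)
The plan is to derive both claims from the dimension estimate in Lemma~\ref{lem:Closing lemma many scale} combined with the mass information for the sheets that is already built into the construction of $F = F_{j,m}$. The upper bound in (1) is immediate: $F \subseteq F_j \subseteq \mathfrak{F}_j$, so Lemma~\ref{lem:number of sheet 1} gives $\#F \leq \eta^{-2}e^{2t} \leq \beta^{-2}e^{2t}$ since $\eta^2 = \beta$. For the lower bound, every $w_i \in F_{j,m}$ satisfies $\mu_{j,i}(X) < \mathsf{L}^{-m+1}\delta_0^{\epsilon_1}$ by definition of $F_{j,m}$, while the sheets indexed by $F_{j,m}$ carry total weight $\hat{c}_{j,m} \geq \beta^{29}$ by \cref{eqn:condition on weight of sheets}; hence $\#F \geq \beta^{29}\mathsf{L}^{m-1}\delta_0^{-\epsilon_1} \geq \beta^{29}\delta_0^{-2\epsilon_0}$ using $2\epsilon_0 = \epsilon_1$ and $\mathsf{L} \geq 1$.

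The heart of the argument is the Margulis function estimate in (2). The strategy is to first establish a Frostman-type nonconcentration bound on $F$ of exponent $\epsilon_1$ at every scale $r \in [\delta_0, \eta]$, and then to feed it into Proposition~\ref{pro:frostman energy Margulis function}. Fix $w \in F$ and $r \in [\delta_0, \eta]$, and choose a slight perturbation $y$ of $\exp(w).y_j$ lying in $X_{3\eta}$ (possible because $y_j \in X_{5\eta}$ and $w \in B_\eta^{\mathfrak{r}}$). Applying Lemma~\ref{lem:Closing lemma many scale} at base point $y$ and radius $r$ gives
\begin{equation*}
(\lambda \ast \mu_t)\bigl(\mathsf{B}_\beta^H \exp(B_{3r}^{\mathfrak{r}}).y\bigr) \ll \eta^{-\star} r^{\epsilon_1}.
\end{equation*}
On the other hand, for every $w' \in F$ with $\|w' - w\| \leq r$, Lemma~\ref{lem:BCH} shows that after recentering, the whole sheet $\mathsf{E}\exp(w').y_j$ — and in particular the thickening produced by $\lambda \ast \mu_{j,i'}$ via Lemma~\ref{lem:shear no multiplicity} — sits inside $\mathsf{B}_\beta^H \exp(B_{3r}^{\mathfrak{r}}).y$. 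By Lemma~\ref{lem:shear no multiplicity} and the pointwise bound $\Check{\rho}_{0,j} \geq 1/K$, each such sheet contributes mass $\gtrsim \mu_{j,i'}(X) \geq \mathsf{L}^{-m}\delta_0^{\epsilon_1}$ to $\lambda \ast \mu_t$ inside this region. Summing yields
\begin{equation*}
\#\{w' \in F : \|w' - w\| \leq r\}\cdot \mathsf{L}^{-m}\delta_0^{\epsilon_1} \ll \eta^{-\star} r^{\epsilon_1},
\end{equation*}
and combining with $\#F \geq \beta^{29}\mathsf{L}^{m-1}\delta_0^{-\epsilon_1}$ gives the desired bound $\mu_F(B(w,r)) \ll \beta^{-\star} r^{\epsilon_1}$ (and it is trivial for $r > \eta$).

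Proposition~\ref{pro:frostman energy Margulis function}(1), applied with $\alpha = \epsilon_1$ and intermediate exponent $\epsilon_0 = \epsilon_1/2 < \alpha$, then promotes this Frostman bound to the energy estimate $\mathcal{G}^{(\epsilon_0)}_{F,\delta_0}(w) \ll \beta^{-\star}\#F$ for every $w \in F$. Proposition~\ref{pro:frostman energy Margulis function}(2) transports this to a pointwise estimate $f^{(\epsilon_0)}_{\mathcal{E},\delta_0}(z) \ll \beta^{-\star}\#F$ for all $z \in \mathcal{E}$, which is exactly the claimed Margulis function bound. The main obstacle in executing this plan is purely bookkeeping: one has to verify that the $\lambda$-convolution really spreads the $U$-piece $\mu_{j,i}$ onto an $\mathsf{E}$-shaped portion of the $H$-sheet without losing more than a $\beta^{\star}$ factor (so that the counting of sheets faithfully translates into the measure estimate), and one must confirm that the perturbed base points $\exp(w).y_j$ satisfy the injectivity-radius hypothesis of Lemma~\ref{lem:Closing lemma many scale}; both points are already encoded in Lemma~\ref{lem:shear no multiplicity} and the covering $\{\Check{\mathsf{Q}}^G_0.y_j\}$ respectively, so no essentially new input is required.
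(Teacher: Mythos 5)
The treatment of property~(1) is fine and matches the paper. For property~(2) you take a genuinely different route from the paper's, and the pivotal containment claim in that route is false.

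You assert that ``the whole sheet $\mathsf{E}\exp(w').y_j$ --- and in particular the thickening produced by $\lambda\ast\mu_{j,i'}$ --- sits inside $\mathsf{B}_\beta^H\exp(B_{3r}^{\mathfrak{r}}).y$.'' But $\mathsf{E} = \mathsf{B}_\beta^{U^-}\mathsf{B}_\beta^{M_0A}\mathsf{B}_\eta^{U}$ and $\lambda\ast\bar\mu_{j,i'}$ is carried on $\mathsf{B}_{\beta+100\beta^2}^{s,H}\mathsf{B}_\eta^{U}\mathsf{h}_{j,i',k}\exp(w_{i'}).y_j$: the $U$-range of both objects is $\eta$, whereas $\mathsf{B}_\beta^H$ has $U$-range $\beta=\eta^2\ll\eta$. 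The smeared sheet does \emph{not} fit inside $\mathsf{B}_\beta^H\exp(B_{3r}^{\mathfrak{r}}).y$; since $\lambda\ast\bar\mu_{j,i'}$ is essentially uniform on a region of $m_H$-volume $\asymp\beta^4\eta^2$ while $\mathsf{B}_\beta^H$ has volume $\asymp\beta^6$, only a $\asymp(\beta/\eta)^{2}=\beta$ fraction of each sheet's mass lands in your target region, not $\gtrsim 1$. Thus the stated lower bound ``contributes mass $\gtrsim\mu_{j,i'}(X)$'' fails; the correct contribution is $\gtrsim\beta\,\mu_{j,i'}(X)$. The extra $\beta^{-1}$ happens to be absorbed into $\beta^{-\star}$ so the conclusion survives, but the step as written is wrong and would need the missing factor tracked to be rigorous.

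The paper sidesteps all of this by going in the opposite direction. It observes that the measures $\bar\mu_{j,i',k}$ are \emph{disjoint restrictions} of $\mu_t$ to the pieces $\mathsf{B}_\eta^U\mathsf{h}_{j,i',k}\exp(w_{i'}).y_j$, so the nearby-sheet total satisfies the clean upper bound
\[
\sum_{\|w_{i'}-w_i\|\le r}\bar\mu_{j,i'}(X)\ \le\ \mu_t\bigl(\Check{\mathsf{Q}}_0^H\exp(B_r^{\mathfrak{r}}(w_i)).y_j\bigr)\ \le\ \mu_t\bigl(\mathsf{B}_{C\eta}^H\exp(B_{2r}^{\mathfrak{r}}).\exp(w_i).y_j\bigr),
\]
to which the $\mu_t$-version of Lemma~\ref{lem:Closing lemma many scale} applies with $r_H\asymp\eta$. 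No lower bound on sheet--target overlaps is needed, and the $U$-ranges align automatically because the region is chosen of $H$-size $\asymp\eta$, not $\beta$. This is the cleaner move; your lower-bound route can be repaired (use $\mathsf{B}_{C\eta}^H$ rather than $\mathsf{B}_\beta^H$, or track the $\beta$-fraction explicitly), but as presented it contains a gap.
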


\begin{proof}
    Property~(1) follows from \cref{lem:number of sheet 1,eqn:number of sheet lower bound}. 

    For property~(2), by \cref{pro:frostman energy Margulis function} and $F \subset B_\eta^{\mathfrak{r}}$, it suffices to show that $\mu_{F}$ satisfies
    \begin{align}\label{eqn:final set frostman}
        \mu_F(B_r^{\mathfrak{r}}(w)) \ll \beta^{-\star}r^{\epsilon_1} \quad \forall w \in \mathfrak{r} \text{ and } \delta_0 \leq r \leq \eta.
    \end{align}
    Indeed, if \cref{eqn:final set frostman} is satisfied, applying \cref{pro:frostman energy Margulis function}~(1) with $\epsilon_0 = \epsilon_1/2$ and then applying \cref{pro:frostman energy Margulis function}~(2), we prove the Margulis function estimate. 
    Moreover, it suffices to show \cref{eqn:final set frostman} holds for all $w \in F$. 

    Recall that 
    \begin{align*}
        F = F_{j, m} = \{w_i \in F_j: \mathsf{L}^{-m}\delta_0^{\epsilon_1} \leq \mu_{j, i}(X) < \mathsf{L}^{-m + 1}\delta_0^{\epsilon_1}\}.
    \end{align*}
    For all $w_i \in F_{j, m}$, we have
    \begin{align*}
        \mu_{F_{j, m}}(B_r^{\mathfrak{r}}(w_i)) ={}& \frac{\#\{w_{i'} \in F_{j, m}: \|w_{i'} - w_i\| \leq r\}}{\#F_{j, m}}\\
        \leq{}& \frac{\mathsf{L}^{m}\delta_0^{-\epsilon_1}\sum_{i':w_{i'} \in F_{j, m}, \|w_{i'} - w_i\| \leq r} \mu_{j, i'}(X)}{\mathsf{L}^{m - 1}\delta_0^{-\epsilon_1}\sum_{i':w_{i'} \in F_{j, m}}\mu_{j, i'}(X)}\\
        \leq{}& \mathsf{L}\beta^{-29}\sum_{i':w_{i'} \in F_{j, m}, \|w_{i'} - w_i\| \leq r} \mu_{j, i'}(X).
    \end{align*}
    The last inequality follows from \cref{eqn:condition on weight of sheets}. 

    Recall that $d\mu_{j, i'}(z) = \Check{\rho}_{0, j}(z)d\bar{\mu}_{j, i'}(z)$ where $\Check{\rho}_{0, j} \leq 1$, we have
    \begin{align*}
        \mu_{F_{j, m}}(B_r^{\mathfrak{r}}(w_i)) \leq \mathsf{L} \beta^{-29} \sum_{i':w_{i'} \in F_{j, m}, \|w_{i'} - w_i\| \leq r} \bar{\mu}_{j, i'}(X).
    \end{align*}
    Since $\bar{\mu}_{j, i'} = \mu_t|_{\Check{\mathsf{Q}}^H_0\exp(w_i).y_j}$, we have
    \begin{align*}
        \mu_{F_{j, m}}(B_r^{\mathfrak{r}}(w_i)) \leq \mathsf{L} \beta^{-29} \mu_t(\Check{\mathsf{Q}}^H_0 \exp(B_{r}^{\mathfrak{r}}(w_i)).y_j).
    \end{align*}

    Using \cref{lem:BCH}, for all $w \in B_{r}^{\mathfrak{r}}(w_i)$, we have
    \begin{align*}
        \exp(w).y_j = \exp(w)\exp(-w_i)\exp(w_i).y_j = \mathsf{h}\exp(\bar{w})\exp(w_i).y_j
    \end{align*}
    where $\|\bar{w}\| \leq 2 \|w - w_i\| \leq 2r$, $\|\mathsf{h} - \Id\| \leq C_0 \eta$. Therefore, 
    \begin{align*}
        \mu_{F_{j, m}}(B_r^{\mathfrak{r}}(w_i)) \leq \mathsf{L} \beta^{-29} \mu_t(\mathsf{B}_{C\eta}^H \exp(B_{2r}^{\mathfrak{r}}(0))\exp(w_i).y_j) \ll \beta^{-\star} r^{\epsilon_1}.
    \end{align*}
    The last inequality follows from \cref{lem:Closing lemma many scale} and $100C_0\eta \leq \eta_0$.  
\end{proof}

\part{Dimension improvement in the transverse complement}\label{part:projection}

The main result of this part is Theorem~\ref{thm:energy Improvement}. It is a linear dimension improvement result in the representations $\mathfrak{r}_1$ and $\mathfrak{r}_2$ of $H_1$ and $H_2$ respectively. It is an analog of \cite[Theorem 6.1]{LMWY25}. We first fix some notations. 

Recall $G = \SL_4(\R)$ and $\LieG = \mathrm{Lie}(G)$. For $v \in \LieG$ and $g \in G$, we write $g.v = \Ad(g)v$. 

Recall that $H_1$ preserves the quadratic form $Q_1(x_1, x_2, x_3, x_4) = x_2 x_3 - x_1 x_4$ and $H_1 \cong \SO(2, 2)^\circ$. Recall that $H_2$ preserves the quadratic form $Q_2(x_1, x_2, x_3, x_4) = x_2^2 +  x_3^2 - 2x_1 x_4$ and $H_2 \cong \SO(3, 1)^\circ$. For both $\LieH_1$ and $\LieH_2$, there exist unique $\Ad(H_i)$-invariant complements $\mathfrak{r}_1$ and $\mathfrak{r}_2$ of $\LieH_1$ and $\LieH_2$ respectively in $\LieG$. Moreover, they are $9$-dimensional irreducible representations for $H_i$ correspondingly. 

If a definition/result/proof in this part can be state simultaneously to $H_1$ and $H_2$ respectively, we drop the subscripts and denote them by $Q$, $H$ and $\mathfrak{r}$. 

Recall that both $H_1$ and $H_2$ contain the following one-parameter diagonal subgroup:
\begin{align*}
    a_t = \begin{pmatrix}
        e^t & & & \\
         & 1 & & \\
         & & 1 & \\
         & & & e^{-t}
    \end{pmatrix}.
\end{align*}
The corresponding horospherical subgroups $U_1 \leq H_1$ and $U_2 \leq H_2$ consists of the following elements respectively:
\begin{align*}
    u_{r, s}^{(1)} = \begin{pmatrix}
        1 & r & s & sr\\
         & 1 & & s\\
         & & 1 & r\\
         & & & 1
    \end{pmatrix}, \qquad u_{r, s}^{(2)} = \begin{pmatrix}
        1 & r & s & \frac{r^2 + s^2}{2}\\
         & 1 & & r\\
         & & 1 & s\\
         & & & 1
    \end{pmatrix}.
\end{align*}
As before, if a definition/statement/proof can be formulated simultaneously to $U_1$ and $U_2$, we drop the subscripts for $U$ and superscripts for $u_{r, s}$ for simplicity. When the explicit parametrization is not needed, we drop the $(r, s)$ in the subscripts and use $u$ to denote the elements in $U$. Recall that $\mathsf{B}^U_1 = \exp(B_1^\LieU(0))$ and $m_U$ is the Haar measure on $U$ so that $m_U(\mathsf{B}^U_1) = 1$. 

Recall that we fix a norm on $\LieG$ by restricting the maximum norm on $\mathrm{Mat}_4(\R)$. We will us $|\cdot|_\delta$ to denote the $\delta$-covering number according to this metric. We remark that for the results in this part, changing to a different norm will only affect the estimate by a constant factor. 

For a finite set $F$, let $\mu_F$ be the uniform probability measure on $F$. For all $\alpha \in (0, \dim(\mathfrak{r}))$ and scale $\delta \in (0, 1)$, recall we defined the following (modified) $\alpha$-energy of the set $F$ in Subsection~\ref{subsec:Margulis function}:
\begin{align*}
    \mathcal{G}^{(\alpha)}_{F, \delta}(w) = \sum_{w' \in F, w' \neq w} \max\{\|w' - w\|, \delta\}^{-\alpha}.
\end{align*}
Let $\hat{\varphi}$ be the following function:
\begin{align*}
    \hat{\varphi}(\alpha) = \min\{\alpha, 1\} - \frac{1}{9}\alpha = \begin{cases}
        \frac{8}{9} \alpha & \text{ if } 0 \leq \alpha \leq 1;\\
        1 - \frac{1}{9} \alpha & \text{ if } 1 < \alpha \leq 9.
    \end{cases}
\end{align*}
Let $\varphi = \frac{1}{36}\hat{\varphi}$. 

The following is the main result of this part. 
\begin{theorem}\label{thm:energy Improvement}
Let $\alpha \in (0, \dim(\mathfrak{r}))$, $\delta \in (0, 1)$ and $\epsilon \in (0, 10^{-10}\alpha)$. Suppose there exists a finite set $F \subset B_1^{\mathfrak{r}}(0)$ with $\#F \gg_\epsilon 1$ satisfying
\begin{align*}
    \mathcal{G}_{F, \delta}^{(\alpha)}(w) \leq \Upsilon \quad \forall w \in F.
\end{align*}

Then for all $\ell \gg_\epsilon 1$, there exists $J \subset \mathsf{B}_1^U$ with $m_U(\mathsf{B}_1^U \setminus J) \ll_\epsilon |\log \delta|e^{-\epsilon \ell}$ so that the following holds. For all $u \in J$ there exists $F_{u} \subseteq F$ with $\#(F \setminus F_u) \ll_\epsilon |\log \delta|e^{-\epsilon \ell}\#F$ so that for all $w \in F_u$
\begin{align*}
    \mathcal{G}^{(\alpha)}_{F_{u}(w), \delta'}(a_\ell u.w) \ll_\epsilon e^{-\varphi(\alpha)\ell} \delta^{-O(\sqrt{\epsilon})}\Upsilon
\end{align*}
where the new scale $\delta' = e^{2\ell}\max\{\delta, \#F^{-\frac{1}{\alpha}}\}$ and the set
\begin{align*}
    F_{u}(w) = \{a_{\ell} u.w': w' \in F_{u}, \|a_{\ell} u.w' - a_{\ell} u.w\|\leq e^{-2\ell}\}.
\end{align*}
\end{theorem}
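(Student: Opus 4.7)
The plan is to exploit the $\Ad(a_t)$-weight space decomposition $\mathfrak{r} = \bigoplus_{\lambda \in \{-2,-1,0,1,2\}} \mathfrak{r}_\lambda$, whose multiplicities are $(1,2,3,2,1)$, summing to $9 = \dim\mathfrak{r}$. Set $\mathfrak{r}^{(\mu)} = \bigoplus_{\lambda \geq \mu} \mathfrak{r}_\lambda$ with orthogonal projection $\pi^{(\mu)}\colon \mathfrak{r}\to\mathfrak{r}^{(\mu)}$. The theorem is then reduced to a family of linear dimension improvement statements for the projection families $\{\pi^{(\mu)}\circ \Ad(u)\}_{u\in U}$ applied to the difference set of $F$, reassembled into an energy bound via a dyadic decomposition in scale.

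The first step is to recast the energy as a pair-counting problem. Fix $w\in F$. The constraint $\|a_\ell u.(w'-w)\|\leq e^{-2\ell}$ forces $\|\pi_\lambda \Ad(u)(w'-w)\|\leq e^{-(2+\lambda)\ell}$ for every weight $\lambda$, the most stringent condition being on the one-dimensional fastest direction $\mathfrak{r}_2$: the weight-$2$ component of $\Ad(u)(w'-w)$ must have norm at most $e^{-4\ell}$. By Theorem~\ref{thm:optimal}, which is deduced from \cite{GGW24}, the family $\{\pi^{(2)}\circ \Ad(u)\}_{u\in U}$ is optimal, so for $u$ outside a set of $m_U$-measure $\ll_\epsilon e^{-\epsilon\ell}$ the image $\pi^{(2)}\Ad(u)(F)$ satisfies a Frostman condition of exponent $\min\{\alpha,1\}-\epsilon$, which is the source of the $\min\{\alpha,1\}$ term in $\hat\varphi$. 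The $-\alpha/9$ correction then comes from balancing this gain against the hypothesis $\mathcal{G}^{(\alpha)}_{F,\delta}\leq \Upsilon$, whose $\alpha$-dimensional mass is distributed across the full $9$ coordinates of $\mathfrak{r}$, only one of which enjoys the optimal projection.

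For the intermediate projections $\pi^{(\mu)}$ with $\mu\in\{-1,0,1\}$, the main obstacle appears: the family $\{\pi^{(\mu)}\circ \Ad(u)\}_{u\in U}$ is \emph{not} optimal, by the algebraic obstruction illustrated in Example~\ref{example:optimal fail for so 2 2}. Here I would invoke the subcritical projection estimates of Section~\ref{sec:Subcritical}, which combine representation-theoretic information about the $H$-module structure of $\mathfrak{r}$ with the discretized projection theorems of \cite{He20,Shm23a,BH24}. These yield just enough quantitative nonconcentration at each intermediate flag level to handle the pairs whose fastest-direction component is not the binding constraint, so that the full energy estimate in the local ball of radius $e^{-2\ell}$ may be reassembled.

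Finally, to pass from these pointwise projection statements to the integrated energy bound, I would integrate in $u$ and $w$, apply Chebyshev's inequality to extract the near-full sets $J\subseteq \mathsf{B}_1^U$ and $F_u \subseteq F$, and take a union bound over the $O(|\log\delta|)$ dyadic scales between $\delta$ and $e^{-2\ell}$. The aggregate losses from converting back and forth between Frostman conditions and the modified $\alpha$-energy $\mathcal{G}^{(\alpha)}$, together with the Cauchy--Schwarz steps used to bootstrap through the flag, account for the prefactor $\tfrac{1}{36}$ in $\varphi=\tfrac{1}{36}\hat\varphi$ and the multiplicative error $\delta^{-O(\sqrt\epsilon)}$.
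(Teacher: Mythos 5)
Your outline correctly identifies the key ingredients — the weight-space flag $\mathfrak{r}^{(\mu)}$, the optimal restricted projection result for $\pi^{(2)}_u$ via \cite{GGW24} (Theorem~\ref{thm:optimal}), the subcritical estimates of Section~\ref{sec:Subcritical} for the obstructed intermediate projections, and the final Chebyshev/union-bound passage over the $O(|\log\delta|)$ dyadic scales that produces $J$ and $F_u$. This is indeed the paper's reduction chain: Theorem~\ref{thm:energy Improvement} follows from the covering-number version Theorem~\ref{thm:ImprovementMain}, which follows from Theorem~\ref{thm:ImprovementSlabExpansion}, which in turn rests on the anisotropic covering estimate Theorem~\ref{thm:Multislicing}.

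However, the central technical step — how to actually ``reassemble'' the optimal and subcritical projection estimates into a single Frostman-type improvement on $a_\ell u.F$ — is left as an assertion in your proposal, and it is precisely where the main work lives. The paper introduces the anisotropic tube partitions $\mathcal{D}_\rho^{\mathbf{r}}$ adapted to the flag, regularizes $F$ with respect to the filtration $\mathcal{D}_1 \prec \mathcal{D}_{\rho^{\mathsf{r}_1}} \prec \cdots \prec \mathcal{D}_\rho$ via Bourgain's argument (Lemma~\ref{lem:regularization set}, Lemma~\ref{lem:regularization exhaust}), and then iterates the B\'enard--He submodularity inequality (Lemma~\ref{lem:submodularity}) to decompose $|u.F'|_{\mathcal{D}_\rho^{\mathbf{r}}}$ into a product of covering numbers: one factor of the form $|u.F''|_{\mathcal{D}_\rho^{\mathbf{t}_3}}$ with $\mathbf{t}_3 = (\mathsf{r_4},\mathsf{r_4},\mathsf{r_4},\mathsf{r_4},\mathsf{r_5})$ (to which Corollary~\ref{cor:optimal top multislicing} applies, giving the $\min\{\alpha,1\}$ gain from \cite{GGW24}), and several factors handled by the subcritical Proposition~\ref{pro:SlabSubcritical}. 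Your attribution of the prefactor $\tfrac{1}{36}$ to ``Cauchy--Schwarz steps'' does not quite match; the losses come from the submodularity inequality and from the division of the scale-gap $\mathsf{r_5}-\mathsf{r_4}=\tfrac14$ among the factors in the multislicing (and the ``excess'' upper bounds in \eqref{eqn:multislicing dimension not too large} that make the factorization close). Without the tube partitions and the submodularity machinery, the exponent $\varphi(\alpha)$ and the error $\delta^{-O(\sqrt\epsilon)}$ cannot be derived, so the proposal has a genuine gap at its heart even though it names the right inputs.
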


Theorem~\ref{thm:energy Improvement} follows from the following theorem which is inspired by \cite[Lemma 6.2]{LMWY25}. 
\begin{theorem}\label{thm:ImprovementMain}
    Let $F \subset B^{\mathfrak{r}}_1(0)$ be a finite set satisfying
    \begin{align*}
        \mu_F(B^{\mathfrak{r}}_\delta(x)) \leq C\delta^\alpha \quad \forall x \in \mathfrak{r}
    \end{align*}
    for some $C \geq 1$, $\alpha \in (0, \dim(\mathfrak{r}))$ and all $\delta \geq \delta_0$. 

    Let $\epsilon \in (0, 10^{-10}\alpha)$. For all $\ell \gg_{\epsilon} 1$ and $\delta \in [e^{2\ell}\delta_0, e^{-2\ell}]$, there exists $J_{\ell, \delta} \subseteq \mathsf{B}_1^U$ with $m_U(\mathsf{B}_1^U \setminus J_{\ell, \delta}) \ll_\epsilon e^{-\epsilon\ell}$ so that the following holds. Let $u \in J_{\ell, \delta}$, there exists $F_{\ell, \delta, u} \subseteq F$ with 
    \begin{align*}
        \mu_F(F \setminus F_{\ell, \delta, u}) \ll_\epsilon e^{-\epsilon\ell}
    \end{align*}
    such that for all $w \in F_{\ell, \delta, u}$ we have
    \begin{align*}
        \mu_F(\{w' \in F_{\ell, \delta, u}: \|a_\ell u.w' - a_\ell u.w\| \leq \delta\}) \ll_\epsilon Ce^{-\varphi(\alpha)\ell}\delta^{\alpha - O(\sqrt{\epsilon})}.
    \end{align*}
\end{theorem}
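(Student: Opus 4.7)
The plan is to reduce the estimate to a projection statement on the difference set $F-F$ and then feed it into the optimal/subcritical projection machinery advertised in the introduction. Decompose $\mathfrak{r} = \bigoplus_\lambda \mathfrak{r}_\lambda$ into $\Ad(a_t)$-weight spaces, list the positive weights as $0 < \lambda_1 < \lambda_2 < \cdots < \lambda_{\max}$, and let $\pi^{(\lambda)}$ be the orthogonal projection onto $\mathfrak{r}^{(\lambda)} = \bigoplus_{\mu \geq \lambda}\mathfrak{r}_\mu$. The key elementary observation is that the condition $\|a_\ell u.w' - a_\ell u.w\| \leq \delta$ forces $\|\pi_\mu \Ad(u)(w' - w)\| \leq \delta e^{-\mu\ell}$ for every weight $\mu$; in particular the projection onto the \emph{top} weight space is squeezed to scale $\delta e^{-\lambda_{\max}\ell}$. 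Thus the quantity we need to bound is essentially the concentration of the pushforward measure $(\pi^{(\lambda_{\max})} \circ \Ad(u))_*\mu_F$ on a small box, which is exactly the output of a projection theorem applied to the family $\{\pi^{(\lambda_{\max})} \circ \Ad(u)\}_{u \in U}$.

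First I would partition the set of `bad' pairs $(w,w')$ by a dyadic level-set decomposition according to the size of $\|\pi^{(\mu)}\Ad(u)(w'-w)\|$ at each weight level $\mu$. For the top level $\mu = \lambda_{\max}$ apply Theorem~\ref{thm:optimal}: for $u$ outside an exceptional set of measure $\ll_\epsilon e^{-\epsilon\ell}$, the image $\pi^{(\lambda_{\max})}\Ad(u)(F)$ inherits a Frostman-type bound with exponent $\min\{\alpha, \dim\mathfrak{r}^{(\lambda_{\max})}\} = \min\{\alpha,1\}$ down to scale $\delta_0$, up to a $\delta^{-O(\sqrt\epsilon)}$ loss. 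This is the source of the $\min\{\alpha,1\}$ term in $\hat\varphi$. For intermediate weights $\mu < \lambda_{\max}$, where the family $\{\pi^{(\mu)}\circ\Ad(u)\}_{u \in U}$ is \emph{not} optimal (the algebraic obstruction of Example~\ref{example:optimal fail for so 2 2}), invoke the subcritical discretized projection estimates from Section~\ref{sec:Subcritical}, derived from the Bourgain--He--Shmerkin line of work; these yield a strictly positive but not maximal dimension gain for most $u$, again with a polynomial $\delta^{-O(\sqrt\epsilon)}$ loss.

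Next I would combine the gains. The trivial Frostman bound for the target set is $\ll C\delta^\alpha$, and the goal $Ce^{-\varphi(\alpha)\ell}\delta^{\alpha - O(\sqrt\epsilon)}$ asks for an extra factor $e^{-\varphi(\alpha)\ell}$. Running the level-set analysis with Fubini in $u$ on the optimal top layer produces a gain of size $e^{-\min\{\alpha,1\}\ell}$ for the relative proportion of concentrated pairs; averaging this over the nine-dimensional ambient $\mathfrak{r}$ via the subcritical estimates at the deeper layers washes out a uniform fraction $\alpha/9$ (the $-\alpha/9$ term in $\hat\varphi$), so the net gain has the shape $\tfrac{1}{c}\hat\varphi(\alpha)\ell$ for some absolute $c$; tracking the pigeonhole losses between discrete projection statements and measure non-concentration as in \cite{LMWY25} fixes $c = 36$ and yields $\varphi(\alpha) = \tfrac{1}{36}\hat\varphi(\alpha)$. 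Finally, discarding the union of the bad $w$'s and bad $u$'s across all $O_\epsilon(|\log\delta|)$ dyadic scales and all finitely many weight layers produces $F_{\ell,\delta,u}$ and $J_{\ell,\delta}$ with the required $\ll_\epsilon e^{-\epsilon\ell}$ exceptional measure.

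The main obstacle is the middle step: establishing the subcritical estimates for $\{\pi^{(\mu)}\circ\Ad(u)\}_{u \in U}$ at the non-optimal intermediate weights. Here one cannot simply quote the Gan--Guo--Wang optimality theorem; instead one must verify, using the explicit structure of $\Ad(U)$ on $\mathfrak{r}$ as a representation of $H_1$ or $H_2$, that the family is nevertheless non-degenerate enough to enter the hypotheses of the discretized projection theorems of \cite{He20,Shm23a,BH24}, and that the resulting quantitative gain is uniform in $u$ and compatible across the two cases $H \cong \SO(2,2)^\circ$ and $H \cong \SO(3,1)^\circ$. Handling the $\SO(2,2)^\circ$ case is the most delicate because $\LieH$ splits as $\LieH_1 \oplus \LieH_2$, so certain $U$-invariant subspaces of $\mathfrak{r}$ appear for algebraic reasons and must be carefully identified and excluded from the projection argument.
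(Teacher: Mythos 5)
Your proposal goes directly for a Frostman bound on the pushforward, decomposing the condition $\|a_\ell u.(w'-w)\|\le\delta$ into weight-space constraints and then invoking the optimal top-layer projection (GGW) plus subcritical projections at the intermediate layers. The paper instead proves Theorem~\ref{thm:ImprovementMain} as an almost trivial consequence (a Markov-inequality / pigeonhole argument) of a \emph{covering-number} statement, Theorem~\ref{thm:ImprovementSlabExpansion}: once one knows that $|a_\ell u.F'|_\delta$ is large for every large $F'\subseteq F$, the cubes on which $(a_\ell u)_*\mu_F$ exceeds the target threshold are few, hence carry little mass, and discarding their preimages yields $F_{\ell,\delta,u}$ directly. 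The hard content lives in Theorems~\ref{thm:ImprovementSlabExpansion} and~\ref{thm:Multislicing}, proved via anisotropic tubes, the B\'enard--He submodularity inequality (Lemma~\ref{lem:submodularity}) and a regularization by filtrations. So your route collapses a three-tier deduction into a direct attack on the Frostman statement; that is conceptually possible, but it relocates rather than removes the difficulty.

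The genuine gap is in your third paragraph. The sentence ``Running the level-set analysis with Fubini in $u$ on the optimal top layer produces a gain of size $e^{-\min\{\alpha,1\}\ell}$ \ldots; averaging this over the nine-dimensional ambient $\mathfrak{r}$ via the subcritical estimates at the deeper layers washes out a uniform fraction $\alpha/9$'' is an assertion, not an argument. In the paper, the $\alpha/9$ loss comes out of a specific computation with the weight-multiplicity vector $\mathbf{d}=(1,2,3,2,1)$ inside the proof of Theorem~\ref{thm:Multislicing}, and the factor $\tfrac{1}{36}$ emerges from the chain of submodularity applications with the auxiliary tuples $\mathbf{s}_1,\mathbf{s}_2,\mathbf{s}_3,\mathbf{t}_3$. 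Nothing in your sketch exhibits the mechanism by which the projection gains at distinct weight layers, each holding only for a generic $u$ and only for subsets of large relative measure, are spliced together; this requires the submodularity inequality, a dichotomy (either one of the intermediate covering numbers is already large, or none are), and a Bourgain-style regularization to make the level-set decomposition compatible across layers. Without these, the claim that the exceptional sets remain $\ll_\epsilon e^{-\epsilon\ell}$ after unioning over $O_\epsilon(|\log\delta|)$ dyadic scales and all weight layers is also unsupported. To repair the argument you would in effect need to reconstruct Theorem~\ref{thm:Multislicing}; at that point the cleaner path is to state the covering-number bound first and then run the short Markov reduction, as the paper does.
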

\cref{thm:ImprovementMain} follows from the following theorem on covering numbers. 

\begin{theorem}\label{thm:ImprovementSlabExpansion}
    Let $F \subset B^{\mathfrak{r}}_1(0)$ be a finite set satisfying
    \begin{align*}
        \mu_F(B^{\mathfrak{r}}_\delta(x)) \leq C\delta^\alpha \quad \forall x \in \mathfrak{r}
    \end{align*}
    for some $C \geq 1$, $\alpha \in (0, \dim(\mathfrak{r}))$ and all $\delta \geq \delta_0$. 

    Then for all $\epsilon \in (0, 10^{-10}\alpha)$, there exists $C_{\epsilon} > 0$ so that the following holds. 
    For all $\ell \gg_{\epsilon} 1$ and $\delta \in [e^{2\ell}\delta_0, e^{-2\ell}]$, we define the exceptional set $\mathcal{E}(F)$ to be
    \begin{align*}
        \mathcal{E}(F) = \{u \in \mathsf{B}_1^U: \exists F' \subseteq F \text{ with }& \mu_F(F') \geq e^{-\epsilon\ell}\\
        \text{ and }&|a_\ell u.F'|_{\delta} < C_{\epsilon}^{-1}C^{-1}e^{(\varphi(\alpha) - O(\sqrt{\epsilon}))\ell} \delta^{-\alpha}\}.
    \end{align*}

    We have
    \begin{align*}
        m_U(\mathcal{E}(F)) \leq C_{\epsilon}e^{-\epsilon\ell}.
    \end{align*}
\end{theorem}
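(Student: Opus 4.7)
The plan is to analyse the covering number through the $\Ad(a_t)$-weight decomposition of $\mathfrak{r}$. Write $\mathfrak{r} = \bigoplus_\lambda \mathfrak{r}_\lambda$, let $\lambda_1 > \lambda_2 > \cdots > \lambda_k$ be the distinct weights, and set $\mathfrak{r}^{(i)} = \bigoplus_{j \leq i} \mathfrak{r}_{\lambda_j}$ with orthogonal projection $\pi^{(i)} : \mathfrak{r} \to \mathfrak{r}^{(i)}$. Since $\Ad(a_\ell)$ dilates $\mathfrak{r}_{\lambda_j}$ by $e^{\lambda_j \ell}$, a cover of $a_\ell u.F'$ at scale $\delta$ in $\mathfrak{r}$ decomposes, up to absolute constants, into covers of $\pi^{(i)}(\Ad(u).F')$ at the effective scales $\delta\, e^{-\lambda_i \ell}$ slice-by-slice along the filtration $\mathfrak{r}^{(1)} \subset \mathfrak{r}^{(2)} \subset \cdots \subset \mathfrak{r}^{(k)} = \mathfrak{r}$. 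The heart of the argument will therefore be to control, for each $i$, the covering number of the projection of a typical fiber of $\Ad(u).F'$ onto $\mathfrak{r}_{\lambda_i} \simeq \mathfrak{r}^{(i)} / \mathfrak{r}^{(i-1)}$ at the correct scale.

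For $i = 1$ the top weight space is one-dimensional, and I will invoke Theorem~\ref{thm:optimal} to get an optimal projection estimate for the family $\{\pi^{(1)} \circ \Ad(u)\}_{u \in \mathsf{B}_1^U}$: off an exceptional set of $m_U$-measure at most $C_\epsilon e^{-\epsilon\ell}$, and for any $F' \subseteq F$ with $\mu_F(F') \geq e^{-\epsilon\ell}$, one has
\begin{align*}
\bigl|\pi^{(1)}(\Ad(u).F')\bigr|_{\delta'} \geq C_\epsilon^{-1} C^{-1} (\delta')^{-\min\{\alpha,1\} + O(\sqrt{\epsilon})}
\end{align*}
at scale $\delta' = \delta\, e^{-\lambda_1 \ell}$, which contributes a factor $e^{\lambda_1 \min\{\alpha,1\}\ell}$ of expansion in the top direction. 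For $i \geq 2$, the family $\{\pi^{(i)}\circ \Ad(u)\}$ fails to be optimal because of the algebraic obstruction flagged in Example~\ref{example:optimal fail for so 2 2}, but the subcritical projection estimates established in Section~\ref{sec:Subcritical} (built on the discretised projection theorems of \cite{He20,Shm23a,BH24}) yield, on each $\pi^{(i-1)}$-fiber, a strictly subcritical dimension improvement above the trivial baseline $(\dim \mathfrak{r}_{\lambda_i}/\dim \mathfrak{r})\alpha$.

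Combining these fiber-wise estimates via Fubini and pigeonhole over the $\pi^{(i-1)}$-slices, and summing the logarithmic gains across the weight levels weighted by the corresponding $\lambda_i$, produces a total gain exponent $\varphi(\alpha) = \tfrac{1}{36}(\min\{\alpha,1\} - \tfrac{1}{9}\alpha)$. Here the $\tfrac{1}{9} = \dim \mathfrak{r}_{\lambda_1}/\dim \mathfrak{r}$ is the trivial baseline from uniform spreading across the nine weight directions, and the $\tfrac{1}{36}$ absorbs the cumulative losses from discretisation and from interpolating between the optimal (top weight) and subcritical (lower weight) regimes. The exceptional set $\mathcal{E}(F)$ is assembled as the union, over each weight index $i$ and each dyadic scale in $[\delta_0, 1]$, of the bad set for the corresponding projection estimate; since there are $O(1)$ weights and $O(|\log \delta_0|)$ scales, and one may shrink $\epsilon$ slightly to absorb the polynomial factor, the total measure satisfies $m_U(\mathcal{E}(F)) \leq C_\epsilon e^{-\epsilon\ell}$.

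The main obstacle will be propagating the $(C, \alpha)$-Frostman condition through the slicing: conditioning $\mu_F$ on a fiber of $\pi^{(i-1)}$ does not generally preserve Frostman regularity, and one must replace it at each step by a conditional non-concentration statement strong enough to feed the subcritical input at level $i$. I plan to handle this with a standard Markov-type truncation that, at each level, discards an $e^{-\epsilon\ell}$-fraction of ``heavy'' fibers on which the conditional Frostman exponent degrades, and absorbs the cumulative losses into the $\delta^{-O(\sqrt{\epsilon})}$ term in the final bound. Carefully bookkeeping the interaction between this truncation and the product of fiberwise covering improvements is the delicate step that lets the local gains accumulate cleanly to $\varphi(\alpha)\ell$.
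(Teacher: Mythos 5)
Your high-level picture — decompose into weight spaces, recognize that covering $a_\ell u.F'$ at scale $\delta$ is the same as covering $u.F'$ by anisotropic tubes whose aspect ratios match the eigenvalues of $a_\ell$, and feed in the optimal projection theorem for the top weight together with subcritical estimates for the rest — is directionally correct, and the paper does indeed start (in Section~\ref{sec:Anisotropic implies expansion}) from exactly this translation into anisotropic coverings, with the $5$-tuple $\mathbf{r}=(0,\tfrac14,\tfrac12,\tfrac34,1)$, $\rho = e^{-4\ell}$, $\tilde\delta = e^{2\ell}\delta$. But there are two concrete gaps.

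First, you misattribute the source of the gain $\varphi(\alpha)$. You say the subcritical estimates at levels $i\geq 2$ ``yield \dots a strictly subcritical dimension improvement above the trivial baseline $(\dim\mathfrak{r}_{\lambda_i}/\dim\mathfrak{r})\alpha$.'' They do not: Theorems~\ref{thm:subcritical 9 to 1}, \ref{thm: subcritical 9 to 3}, \ref{thm: subcritical 9 to 6}, \ref{thm:subcritical 9 to 8} give precisely the baseline exponents $\tfrac{1}{9}\alpha, \tfrac{3}{9}\alpha, \tfrac{6}{9}\alpha, \tfrac{8}{9}\alpha$ for $|\pi^{(\lambda)}_u(A')|_\delta$, and because of the algebraic obstruction (Example~\ref{example:optimal fail for so 2 2}) even hitting the baseline is non-automatic. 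The entire positive gain $\hat\varphi(\alpha)=\min\{\alpha,1\}-\tfrac{1}{9}\alpha$ comes from the single optimal projection to the highest-weight line via Theorem~\ref{thm:optimal}, exploited in Corollary~\ref{cor:optimal top multislicing}. The intermediate weights only supply the baselines that keep the bookkeeping from losing ground; your ``summing weighted gains across weight levels'' is not the mechanism.

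Second, and more seriously, the combination step — ``Fubini and pigeonhole over the $\pi^{(i-1)}$-slices'' plus ``a standard Markov-type truncation'' — is exactly where the paper cannot afford to be vague, and your proposed route would not close. The subcritical theorems bound the covering number of the \emph{global} projection $\pi^{(\lambda)}_u(A')$; they say nothing fiber-by-fiber, and conditioning on fibers of $\pi^{(i-1)}$ does not directly give you control of the projection of each fiber onto $\mathfrak{r}_{\lambda_i}$. The paper instead isolates a genuine multislicing theorem (Theorem~\ref{thm:Multislicing}) purely about the anisotropic partition $\mathcal{D}_\rho^{\mathbf{r}}$ and proves it by the B\'enard--He induction: Bourgain-type regularization (Lemmas~\ref{lem:regularization set}--\ref{lem:regularization exhaust}) along the filtration of partitions to control Frostman propagation, and — crucially — the submodularity inequality (Lemma~\ref{lem:submodularity}), which converts bounds on $|A|_{\mathcal{P}}$ and $|A|_{\mathcal{Q}}$ into a bound on $|A|_{\mathcal{P}\vee\mathcal{Q}}\cdot|A'|_{\mathcal{S}}$ so that contributions from different projections combine multiplicatively without compounding losses. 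A straight Fubini over fibers does not give this multiplicativity, and a Markov truncation of heavy fibers does not produce the regular sets on which the submodularity-induction keeps the covering numbers comparable from scale to scale. Those two lemmas are the missing idea in your proposal.
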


A key step in the proof of the above theorem is an estimate of covering number using certain anisotropic tubes explicated later, see Theorem~\ref{thm:Multislicing}. Similar anisotropic tubes were studied in the case of irreducible representations of $\SL_2(\R)$ in \cite{LMWY25,OL25}. Before we state Theorem~\ref{thm:Multislicing}, let us introduce some notations. 

The one-parameter diagonal subgroup $\{a_t\}_{t \in \R}$ is generated by the following element $\mathbf{a} \in \LieH \subset \LieG$:
\begin{align*}
    \mathbf{a} = \begin{pmatrix}
        1 & & & \\
         & 0 & & \\
         & & 0 & \\
         & & & -1
    \end{pmatrix}.
\end{align*}

As a representation of $\LieH$, $\mathfrak{r}$ can be decomposed into eigenspaces of $\ad \mathbf{a}$. Here the eigenvalues are exactly $-2, -1, 0, 1, 2$. We denote those eigenspaces by $\mathfrak{r}_{\lambda}$, where $\lambda$ is the corresponding eigenvalue. Let $\pi_{\lambda}$ be the orthogonal projection to $\mathfrak{r}_{\lambda}$ with respect to the standard inner product on $\mathrm{Mat}_4(\R)$. We also use the notion $\mathfrak{r}^{(\lambda)}$ as sum of eigenspaces with eigenvalues $\geq \lambda$. Let $\pi^{(\lambda)}$ be the orthogonal projection to $\mathfrak{r}^{(\lambda)}$. Note that those $\mathfrak{r}^{(\lambda)}$'s are $U$-submodules. We use $\pi_{r, s}^{(\lambda)}$ to denote the projections $\pi^{(\lambda)} \circ u_{r, s}$. When the exact parametrization for $U$ is not important, we use $\pi_{u}^{(\lambda)}$ to denote the projections $\pi^{(\lambda)} \circ u$. Those eigenspaces form a flag with dimension $(9, 8, 6, 3, 1)$:
\begin{align*}
    \mathfrak{r} = \mathfrak{r}^{(-2)} \supset \mathfrak{r}^{(-1)} \supset \mathfrak{r}^{(0)} \supset 
    \mathfrak{r}^{(1)} \supset 
    \mathfrak{r}^{(2)} = \mathfrak{r}_2.
\end{align*}
For simplicity, we write $\mathbf{d} = (\mathsf{d_1}, \mathsf{d_2}, \mathsf{d_3}, \mathsf{d_4}, \mathsf{d_5}) = (1, 2, 3, 2, 1)$ as the dimension difference for the above flag. 

We adapt the notations in \cite{BH24} for partitions using anisotropic tubes associated to the above flag. Let $\mathcal{D}_\delta$ be the partition of $\mathfrak{r}$ via $\delta$-cubes. For a $5$-tuple $\mathbf{r} = (\mathsf{r_1}, \mathsf{r_2}, \mathsf{r_3}, \mathsf{r_4}, \mathsf{r_5})$ satisfying $0 \leq \mathsf{r_1} \leq \mathsf{r_2} \leq \mathsf{r_3} \leq \mathsf{r_4} \leq \mathsf{r_5} = 1$, we define
\begin{align*}
    \mathcal{D}_{\delta}^{\mathbf{r}} = \vee_{i} (\pi^{(i - 3)})^{-1} \mathcal{D}_{\delta^{\mathsf{r_i}}}
\end{align*}
to be the partition consisting of (possibly anisotropic) tubes. We will use $T$ to denote an atom in $\mathcal{D}_{\delta}^{\mathbf{r}}$. Roughly, $T$ is a tube of size
\begin{align*}
    \delta^{\mathsf{r_1}} \times \delta^{\mathsf{r_2}} \times \delta^{\mathsf{r_2}} \times
    \delta^{\mathsf{r_3}} \times
    \delta^{\mathsf{r_3}} \times
    \delta^{\mathsf{r_3}} \times
    \delta^{\mathsf{r_4}} \times
    \delta^{\mathsf{r_4}} \times
    \delta^{\mathsf{r_5}}
\end{align*}
with edges parallel to an orthogonal basis compatible with the weight space decomposition $\mathfrak{r} = \mathfrak{r}_{-2} \oplus \mathfrak{r}_{-1} \oplus \mathfrak{r}_0 \oplus \mathfrak{r}_1 \oplus \mathfrak{r}_2$. Its volume satisfies
\begin{align*}
    \vol(T) \sim \delta^{\sum_{i = 1}^5\mathsf{d_i} \mathsf{r_i}}.
\end{align*}

In this paper, we always assume the $5$-tuple $\mathbf{r} = (\mathsf{r_1}, \mathsf{r_2}, \mathsf{r_3}, \mathsf{r_4}, \mathsf{r_5})$ satisfies $0 \leq \mathsf{r_1} \leq \mathsf{r_2} \leq \mathsf{r_3} \leq \mathsf{r_4} \leq \mathsf{r_5} \leq 1$. We remark that to prove \cref{thm:ImprovementSlabExpansion}, one only needs to focus on the case where
\begin{align*}
    \mathbf{r} = \Bigl(0, \frac{1}{4}, \frac{1}{2}, \frac{3}{4}, 1\Bigr),
\end{align*}
which is compatible with the expanding rates of $a_\ell$ on $\mathfrak{r}$. 

\begin{theorem}\label{thm:Multislicing}
    Let $F \subset B^{\mathfrak{r}}_{1}(0)$ be a finite set satisfying
    \begin{align*}
        \mu_F(B^{\mathfrak{r}}_{\rho}(x)) \leq C\rho^\alpha, \forall x \in \mathfrak{r}
    \end{align*}
    for some $C \geq 1$, $\alpha \in (0, 9)$ and all $\rho \geq \rho_0$. 

    Fix a $5$-tuple $\mathbf{r}$. Then for all $0 < \epsilon \ll \mathsf{r_5} - \mathsf{r_4}$, there exists $C_{\epsilon, \mathbf{r}}$ so that the following holds. 
    
    For all $\rho_0 \leq \rho \ll_{\epsilon, \mathbf{r}}1$, we define the exceptional set $\mathcal{E}(F)$ to be
    \begin{align*}
        \mathcal{E}(F) = \{u \in \mathsf{B}_1^U: &\exists F' \subseteq F \text{ with }\mu_F(F') \geq \rho^\epsilon \text{ and }\\
        &|u.F'|_{\mathcal{D}_{\rho}^{\mathbf{r}}} < C_{\epsilon, \mathbf{r}}^{-1}C^{-1}\vol(T)^{-\frac{1}{9}\alpha}\rho^{-(\mathsf{r_5} - \mathsf{r_4})\varphi(\alpha) + O_{\mathbf{r}}(\sqrt{\epsilon})}\}.
    \end{align*}

    We have
    \begin{align*}
        m_U(\mathcal{E}(F)) \leq C_{\epsilon, \mathbf{r}}\rho^{\epsilon}.
    \end{align*}
\end{theorem}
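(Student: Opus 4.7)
The plan is to prove Theorem~\ref{thm:Multislicing} by a multi-scale slicing argument along the flag
\[
\mathfrak{r} = \mathfrak{r}^{(-2)} \supset \mathfrak{r}^{(-1)} \supset \mathfrak{r}^{(0)} \supset \mathfrak{r}^{(1)} \supset \mathfrak{r}^{(2)},
\]
combining the optimal projection estimate for $\pi^{(2)} \circ u$ (namely \cref{thm:optimal}) with the subcritical projection estimates developed in Section~\ref{sec:Subcritical} for the intermediate strata. First I would use standard dyadic pigeonholing to replace $F'$ by a refined subset that, at the target scale $\rho$, satisfies a uniform $(C\rho^{-O(\sqrt{\epsilon})}, \alpha - O(\sqrt{\epsilon}))$-Frostman condition across all relevant intermediate scales $\rho^{\mathsf{r_i}}$; this costs only a $\rho^{O(\epsilon)}$ fraction of $\mu_F(F')$, which is absorbed into the $\rho^\epsilon$ threshold.

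After regularization, I would decompose the covering number telescopically along the flag. Each atom $T \in \mathcal{D}_\rho^{\mathbf{r}}$ is by definition an intersection of preimages of $\rho^{\mathsf{r_i}}$-cubes under the projections $\pi^{(i-3)}$, so
\[
|u.F'|_{\mathcal{D}_\rho^{\mathbf{r}}} \gtrsim \rho^{O(\epsilon)} \prod_{i=1}^{5} N_i,
\]
where $N_i$ is the typical fiber-wise covering number: the number of $\rho^{\mathsf{r_i}}$-cubes in the complement $\mathfrak{r}_{i-3}$ needed to cover the projection of a typical slice $u.F' \cap (\pi^{(i-2)})^{-1}(Q_{i+1})$ to $\mathfrak{r}_{i-3}$. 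The base-line bound from the Frostman hypothesis alone is $N_i \gtrsim \rho^{-\mathsf{d_i}\mathsf{r_i}\alpha/9}$ for each $i$, which would produce only $\vol(T)^{-\alpha/9}$ and no gain.

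The actual gain $\rho^{-(\mathsf{r_5}-\mathsf{r_4})\varphi(\alpha)}$ comes entirely from the top stratum. The $1$-dimensional quotient $\mathfrak{r}_2$ is the fastest $\Ad(a_t)$-expanding direction, and \cref{thm:optimal} asserts that the family $\{\pi^{(2)} \circ u\}_{u \in U}$ is optimal in the discretized projection sense. Consequently, for $u$ outside an exceptional set of measure $\lesssim \rho^\epsilon$, the projection $\pi^{(2)} \circ u$ applied to the regularized $F'$ has dimension at least $\min\{\alpha,1\}-O(\sqrt{\epsilon})$ at every scale in $[\rho^{\mathsf{r_4}}, \rho^{\mathsf{r_5}}]$. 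Comparing this with the default $\alpha/9$ and integrating over the scale window $(\mathsf{r_5}-\mathsf{r_4})|\log \rho|$ gives the exponent
\[
(\mathsf{r_5}-\mathsf{r_4})\bigl(\min\{\alpha,1\} - \tfrac{1}{9}\alpha\bigr) = (\mathsf{r_5}-\mathsf{r_4})\hat{\varphi}(\alpha),
\]
and the factor $\frac{1}{36}$ built into $\varphi$ absorbs the losses from pigeonholing, averaging over intermediate slices, and the $O(\sqrt{\epsilon})$ discrepancies. For the middle strata $i < 5$, one only needs that slicing does not destroy the gain obtained at the top; this is where the subcritical estimates from Section~\ref{sec:Subcritical} (resting on \cite{He20,Shm23a,BH24}) enter: they ensure the fiber distributions inherit enough regularity so that the product structure of the covering number is preserved up to $\rho^{O(\sqrt{\epsilon})}$.

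The main obstacle, in my view, will be the \emph{uniformity in $F'$}: \cref{thm:optimal} produces an exceptional set of $u$ that a priori depends on the specific Frostman subset being projected, whereas Theorem~\ref{thm:Multislicing} requires a single exceptional set $\mathcal{E}(F)$ that works simultaneously for \emph{every} $F' \subseteq F$ of measure $\geq \rho^\epsilon$. I would handle this by quantizing the collection of admissible $F'$ via dyadic decomposition of $F$ into level sets of the Frostman density, observing that only $\rho^{-O(\epsilon)}$ "types" of refined subset arise up to the tolerance we need, and then taking a union bound over these types. A secondary technical issue is that the subcritical input in the middle strata must be interpreted with respect to the \emph{twisted} projections $\pi^{(i-3)} \circ u$ rather than $\pi^{(i-3)}$ itself; this is handled by a change of variables exploiting that $U$ acts unipotently on the flag, and that Example~\ref{example:optimal fail for so 2 2} identifies precisely where the naive optimal statement fails, pointing to exactly the subcritical replacement one needs.
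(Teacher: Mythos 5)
Your high-level plan---regularize, slice along the flag, get the gain from the top stratum $\mathfrak{r}_2$ via \cref{thm:optimal}, and feed the subcritical estimates from Section~\ref{sec:Subcritical} into the middle strata---matches the paper's strategy, which adapts the B\'enard--He multislicing argument \cite{BH24}. However, there are two structural gaps. First, the ``telescopic'' bound $|u.F'|_{\mathcal{D}_\rho^{\mathbf{r}}} \gtrsim \rho^{O(\epsilon)}\prod_i N_i$ you assert is not available directly: different fibers over atoms at scale $\rho^{\mathsf{r}_{i}}$ have wildly varying slice counts, and a naive product decomposition fails. The paper's mechanism is the submodularity inequality (\cref{lem:submodularity}), which yields
\[
|u.F'|_{\mathcal{D}_\rho^{\mathbf{r}}}\cdot\prod_{i=2}^{4}|u.F'|_{\rho^{\mathsf{r}_i}}\gtrsim_{\epsilon}|u.F_3|_{\mathcal{D}_\rho^{\mathbf{t}_3}}\cdot\prod_{i=1}^{3}|u.F_i|_{\mathcal{D}_\rho^{\mathbf{s}_i}},
\]
i.e.\ extra factors $|u.F'|_{\rho^{\mathsf{r}_i}}$ appear on the left. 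Closing the argument therefore requires \emph{upper} bounds on these intermediate covering numbers, which the paper supplies via a dichotomy at the outset of the proof: if some $|F|_{\rho^{\mathsf{r}_i}}$ already exceeds $C^{-1}\rho^{-\mathsf{r}_i\alpha-\frac{\mathsf{r}_5-\mathsf{r}_4}{4\mathsf{d}_i}\hat\varphi(\alpha)+\cdots}$, then the subcritical estimate \cref{pro:SlabSubcritical} gives the result directly; otherwise all intermediate covering numbers lie in a narrow band, \cref{eqn:multislicing dimension not too large} holds, and submodularity closes. Your sketch omits this dichotomy, without which the product argument does not converge. Incidentally, this is also where the $\tfrac{1}{36}$ in $\varphi=\tfrac{1}{36}\hat\varphi$ comes from (it is $\frac{\mathsf{d}_i}{9}\cdot\frac{1}{4\mathsf{d}_i}$, the gain per stratum from the dichotomy threshold), not from absorbing pigeonholing losses.

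Second, your resolution of the uniformity-in-$F'$ problem does not hold up. You propose to quantize the admissible $F'$ into $\rho^{-O(\epsilon)}$ ``types'' and union bound; but the number of subsets of $F$ of relative measure $\geq\rho^\epsilon$ is exponential in $\#F$, and while the branching \emph{profiles} produced by regularization number only polylogarithmically many, two subsets with the same profile can project very differently, so a union bound over profiles does not control the exceptional set. The paper sidesteps this entirely: every subsidiary projection result in Sections~\ref{sec:dim 1 or codim 1}--\ref{sec:optimal} (and \cref{pro:SlabSubcritical}, \cref{cor:optimal top multislicing}) is stated with the quantifier ``$\exists A'\subseteq A$'' \emph{inside} the definition of the exceptional set, so the bad set of parameters $u$ depends only on $A$, not on which subset one applies it to. With that form, the uniformity needed in \cref{thm:Multislicing} is inherited for free and no union bound over $F'$ is ever taken. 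You would need to formulate and prove your intermediate lemmas in this ``$\exists F'$'' shape from the start; as written, your plan defers the hard uniformity question to a step that cannot carry it.
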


Part~\ref{part:projection} is organized as the following. We first deduce \cref{thm:energy Improvement,thm:ImprovementMain,thm:ImprovementSlabExpansion} from \cref{thm:Multislicing} in Section~\ref{sec:Anisotropic implies expansion}. The arguments are similar to \cite{LMWY25,OL25}. In Section~\ref{sec:prepareRegular}, we collect results for regular sets and measures needed later. In Section~\ref{sec:prepareIrrep}, we collect the properties of certain class of irreducible representation of semisimple Lie groups. In section~\ref{sec:dim 1 or codim 1}, we study behaviors of lines or hyperplanes in irreducible representation of semisimple Lie groups and prove subcritical estimates for $\{\pi^{(\lambda)}_{u}\}_{\lambda = 2, -1}$. In Section~\ref{sec:Subcritical}, we study the representation $\mathfrak{r}$ in details and prove subcritical estimates for $\{\pi^{(\lambda)}_{u}\}_{\lambda = 1, 0}$. In Section~\ref{sec:optimal}, we prove an optimal projection theorem for $\pi^{(2)}_{u}$. The key ingredient is the restricted projection theorem proved by Gan, Guo and Wang in \cite{GGW24}. In Section~\ref{sec:proof of multislice}, we adapt the arguments in the Multislicing theorem proved by B{\'e}nard and He in \cite[Theorem 2.1]{BH24} to combine the above ingredients to prove Theorem~\ref{thm:Multislicing}. 

\section{\texorpdfstring{Proof of \cref{thm:energy Improvement,thm:ImprovementMain,thm:ImprovementSlabExpansion} assuming Theorem~\ref{thm:Multislicing}}{Proof of Theorem~\nameref{thm:energy Improvement,thm:ImprovementMain,thm:ImprovementSlabExpansion} assuming Theorem~\ref{thm:Multislicing}}}\label{sec:Anisotropic implies expansion}
We first deduce \cref{thm:energy Improvement} from \cref{thm:ImprovementMain}. 
\begin{proof}[Proof of \cref{thm:energy Improvement} assuming \cref{thm:ImprovementMain}]
    The statement can be proved by following the proof of \cite[Theorem 6.1]{LMWY25} step-by-step and replacing \cite[Lemma 6.2]{LMWY25} by \cref{thm:ImprovementMain}. 
\end{proof}

We now deduce \cref{thm:ImprovementMain} from \cref{thm:ImprovementSlabExpansion}. This procedure is well-known. We reproduce it here for completeness. 
\begin{proof}[Proof of \cref{thm:ImprovementMain} assuming \cref{thm:ImprovementSlabExpansion}]
Applying Theorem~\ref{thm:ImprovementSlabExpansion} with $\epsilon$, there exists $\mathcal{E} \subset \mathsf{B}^U_1$ with $m_U(\mathcal{E}) \ll_\epsilon e^{-\epsilon \ell}$ such that for all $u \notin \mathcal{E}$ and all $F'$ with $\mu_F(F') \geq e^{-\epsilon\ell}$, we have
\begin{align*}
    |a_{\ell}u.F'|_{\delta} \geq C_{\epsilon}^{-1} C^{-1} e^{(\varphi(\alpha) - O(\sqrt{\epsilon}))\ell} \delta^{-\alpha}.
\end{align*}

We define
\begin{align*}
    \mathcal{D}_{\delta, \text{bad}}^{\ell, u} = \{Q \in \mathcal{D}_{\delta}: (a_{\ell}u)_{*} \mu_F (Q) > C_{\epsilon}^{-1} C e^{-(\varphi(\alpha) - O(\sqrt{\epsilon}))\ell} \delta^{\alpha}\}.
\end{align*}

Let
\begin{align*}
    F_{\ell, \delta, u}' = (a_{\ell} u)^{-1} \bigcup_{Q \in \mathcal{D}_{\delta, \text{bad}}^{\ell, u}} ((a_{\ell} u.F) \cap Q).
\end{align*}
Since $(a_{\ell} u)_{*} \mu_F$ is a probability measure, we have
\begin{align*}
    \# \mathcal{D}_{\delta, \text{bad}}^{\ell, u} < C_{\epsilon}^{-1} C^{-1} e^{(\varphi(\alpha) - O(\sqrt{\epsilon}))\ell} \delta^{-\alpha}, 
\end{align*}
which is equivalent to
\begin{align*}
    |a_{\ell} u F_{\ell, \delta, u}'|_{\delta} < C_{\epsilon}^{-1} C^{-1} e^{(\varphi(\alpha) - O(\sqrt{\epsilon}))\ell} \delta^{-\alpha}.
\end{align*}
Therefore, we have
\begin{align*}
    \mu_F(F_{\ell, \delta, u}') \leq e^{-\epsilon\ell}.
\end{align*}

Let $F_{\ell, \delta, u} = F \backslash F_{\ell, \delta, u}'$. For all $\delta$-(dyadic) cube $Q$, we have
\begin{align*}
    (a_{\ell} u)_{*}(\mu_F|_{F_{\ell, \delta, u}}) (Q) \leq{}& C_{\epsilon} C e^{-(\varphi(\alpha) - O(\sqrt{\epsilon}))\ell} \delta^{\alpha}\\
    \leq{}& C_{\epsilon} C e^{-\varphi(\alpha)\ell} \delta^{\alpha - O(\sqrt{\epsilon})}
\end{align*}
which proves the theorem. 
\end{proof}

We now prove \cref{thm:ImprovementSlabExpansion} assuming \cref{thm:Multislicing}. Before we proceed the proof, let us introduce the following notations. For a dyadic cube $Q$ in $\R^n$, $\Hom_Q$ is the unique homothety that map $Q$ to $[0, 1)^n$. For a space $X$, a partition $\mathcal{P}$ of it and a subset $A \subseteq X$, we use $|A|_{\mathcal{P}}$ to denote the number of atoms needed in $\mathcal{P}$ to cover $A$. Also, we use the notion $\mathcal{P}(A)$ to denote the atoms in $\mathcal{P}$ intersecting $A$ non-trivially. 

\begin{proof}[Proof of \cref{thm:ImprovementSlabExpansion} assuming \cref{thm:Multislicing}]
   We will only use the case where $\mathbf{r} = (0, \frac{1}{4}, \frac{1}{2}, \frac{3}{4}, 1)$ in \cref{thm:Multislicing}. In the rest of the proof, $\mathbf{r}$ will always be this $5$-tuple. 
   
   For simplicity, let $\tilde{\delta} = e^{2\ell} \delta$ and $\rho = e^{-4\ell}$. For all $u \in \mathsf{B}^U_1$ all subset $F' \subseteq F$ with $\mu_F(F') \geq e^{-\epsilon\ell}$, we have
    \begin{align*}
        |a_{\ell}u. F'|_{\delta} ={}& |u. F'|_{\tilde{\delta}\mathcal{D}_{\rho}^{\mathbf{r}}}\\
        \gg{}& \sum_{Q \in \mathcal{D}_{\tilde{\delta}}} |u. F'_Q|_{\tilde{\delta}\mathcal{D}_{\rho}^{\mathbf{r}}}\\
        ={}& \sum_{Q \in \mathcal{D}_{\tilde{\delta}}} |u. \Hom_Q F'_Q|_{\mathcal{D}_{\rho}^{\mathbf{r}}}.
    \end{align*}

    We use $F^Q$ to denote $\Hom_Q F_Q$. We note that $F^Q$ satisfies the following Frostman-type condition:
    \begin{align*}
        \mu_{F^Q} (B^{\mathfrak{r}}_{\rho'}(x)) ={}& \frac{1}{\mu_F(Q)} \mu_F(B^{\mathfrak{r}}_{\tilde{\delta}\rho'}(x'))\\
        \leq{}& \frac{C(\tilde{\delta})^\alpha}{\mu_F(Q)} (\rho')^{\alpha}
    \end{align*}
    for all $\rho' \geq (\tilde{\delta})^{-1} \delta_0$. 

    Note that by our restriction to $\delta$, $\rho \geq (\tilde{\delta})^{-1} \delta_0$. Therefore, for all $Q \in \mathcal{D}_{\tilde{\delta}}$ so that $\mu_{F_Q}(F'_Q) \geq \rho^{\epsilon}$, applying \cref{thm:Multislicing} to $\mu_{F^Q}$, there exists $\mathcal{E}_Q \subset \mathsf{B}^U_1$ for all $Q$ with $m_U(\mathcal{E}_Q) \leq C_{\epsilon} \rho^{\epsilon}$, and for all $u \notin \mathcal{E}_Q$, we have  \begin{align}\label{eqn:SlabExpansionLocal}
        |u. \Hom_Q F'_Q|_{\mathcal{D}_{s}^{\mathbf{r}}} \geq{}& C_\epsilon^{-1}\frac{\mu_F(Q)}{C(\tilde{\delta})^{\alpha}} \rho^{-\frac{1}{2}\alpha - \frac{1}{4}\varphi(\alpha) + O(\sqrt{\epsilon})}\\
        ={}& C_\epsilon^{-1}\mu_F(Q)C^{-1}e^{(\varphi(\alpha) - O(\sqrt{\epsilon}))\ell}\delta^{-\alpha}.
    \end{align}
    Let
    \begin{align*}
        \mathcal{D}_{\tilde{\delta}}(u) = \{Q \in \mathcal{D}_{\tilde{\delta}}(F): u \in \mathcal{E}_Q\}
    \end{align*}
    and let
    \begin{align*}
        \mathcal{D}_{\tilde{\delta}}^{\mathrm{large}}(F') = \{Q \in \mathcal{D}_{\tilde{\delta}}: \mu_{F_Q}(F_Q') \geq e^{-2\epsilon\ell}\}.
    \end{align*}
    Since $\mu_F(F') \geq e^{-\epsilon\ell}$, we have
    \begin{align*}
        \sum_{Q \notin \mathcal{D}_{\tilde{\delta}}^{\mathrm{large}}(F')} \mu_F(Q) \geq e^{-2\epsilon\ell}.
    \end{align*}
    By Fubini's theorem, there exists $\mathcal{E} \subseteq \mathsf{B}^U_1$ with $m_U(\mathcal{E}) \ll_\epsilon e^{-\epsilon\ell}$ so that for all $u \notin \mathcal{E}$, we have
    \begin{align*}
        \sum_{Q \in \mathcal{D}_{\tilde{\delta}}(u)} \mu_F(Q) \geq e^{-\epsilon\ell}.
    \end{align*}
    Therefore, we have
    \begin{align*}
        |a_\ell u.F'|_{\delta} \gg{}& \Biggl(\sum_{Q \in \mathcal{D}_{\tilde{\delta}}^{\mathrm{large}}(F') \setminus \mathcal{D}_{\tilde{\delta}}(u)} \mu_F(Q)\Biggr) C_\epsilon^{-1}C^{-1} e^{(\varphi(\alpha) - O(\sqrt{\epsilon}))\ell} \delta^{-\alpha}\\
        \geq{}& (e^{-\epsilon\ell} - e^{-2\epsilon\ell}) C_\epsilon^{-1}C^{-1} e^{(\varphi(\alpha) - O(\sqrt{\epsilon}))\ell} \delta^{-\alpha}\\
        \gg{}& C_\epsilon^{-1}C^{-1} e^{(\varphi(\alpha) - O(\sqrt{\epsilon}))\ell} \delta^{-\alpha}.
    \end{align*}
    This completes the proof of the theorem. 
\end{proof}

\section{Preparation III: Regular sets and regular measures}\label{sec:prepareRegular}
\subsection{Covering numbers, measures and projections}
For a space $X$, a partition $\mathcal{P}$ of it and a subset $A \subseteq X$, we use $|A|_{\mathcal{P}}$ to denote the number of atoms needed in $\mathcal{P}$ to cover $A$. Also, we use the notion $\mathcal{P}(A)$ to denote the atoms in $\mathcal{P}$ intersecting $A$ non-trivially. For a finite set $F$, we use $\mu_F$ to denote the uniform probability measure on $F$. For any measure $\mu$ on $X$ and any partition $\mathcal{P}$ of $X$, we use $\mathcal{P}(\mu)$ to denote the collection of atoms in $\mathcal{P}$ with positive measure. For a dyadic cube $Q$ in $\R^n$, we set $\Hom_Q$ to be the unique homothety that map $Q$ to $[0, 1)^n$. 

We say $\mathcal{Q}$ roughly refines $\mathcal{P}$ with a parameter $L \geq 1$, and write $\mathcal{P} \overset{L}\prec \mathcal{Q}$, if 
\begin{align*}
    \max_{Q \in \mathcal{Q}}|Q|_{\mathcal{P}} \leq L.
\end{align*}
We say $\mathcal{Q}$ and $\mathcal{P}$ are roughly equivalent with a parameter $L \geq 1$, and write $\mathcal{P} \overset{L}\sim \mathcal{Q}$ if $\mathcal{P} \overset{L}\prec \mathcal{Q}$ and $\mathcal{Q} \overset{L}\prec \mathcal{P}$. This is the same as each atom of $P$ is contained in at most $L$ atoms in $\mathcal{Q}$ and vice versa. 
\subsubsection{Regular sets and regular measures}
Fix a filtration $\mathcal{P}_0 \prec \cdots \prec \mathcal{P}_n$, we set $d_i = \log_2 \max_{P \in \mathcal{P}_{i - 1}} |P|_{\mathcal{P}_i}$ for all $i = 1, \cdots, n$. Fix an $n$-tuple $(\sigma_1, \cdots, \sigma_n)$ with $\sigma_i \in [1, d_i + 1]$ for all $i$. For a set $A \subseteq X$, we say it is $(\sigma_1, \cdots, \sigma_n)$-regular with respect to the filtration $\mathcal{P}_0 \prec \cdots \prec \mathcal{P}_n$ if for all $i = 1, \cdots, n$ and all $P \in \mathcal{P}_{i - 1}(A)$, we have
\begin{align*}
    2^{\sigma_i - 1} \leq |A \cap P|_{\mathcal{P}_{i}} < 2^{\sigma_i}.
\end{align*}
We omit the $n$-tuple $(\sigma_1, \cdots, \sigma_n)$ and just call it regular throughout the paper for simplicity. We remark that this is slightly weaker than the usual notion of regular sets, cf. \cite{Shm23b}, but the following lemma shows that they are closely related. 
\begin{lemma}\label{lem:weaker regular}
    Suppose $A$ is regular with respect to a filtration $\mathcal{P}_0 \prec \cdots \prec \mathcal{P}_n$, then for all $i = 1, \ldots, n$ and $P \in \mathcal{P}_{i - 1}$, we have
    \begin{align*}
        \frac{1}{2}\frac{|A|_{\mathcal{P}_i}}{|A|_{\mathcal{P}_{i - 1}}} \leq |A \cap P|_{\mathcal{P}_i} \leq 2\frac{|A|_{\mathcal{P}_i}}{|A|_{\mathcal{P}_{i - 1}}}.
    \end{align*}
    Moreover, for any subset $A' \subseteq A$, we have
    \begin{align*}
        \frac{|A'|_{\mathcal{P}_{i - 1}}}{|A|_{\mathcal{P}_{i - 1}}} \geq \frac{1}{2}\frac{|A'|_{\mathcal{P}_{i}}}{|A|_{\mathcal{P}_{i}}}.
    \end{align*}
\end{lemma}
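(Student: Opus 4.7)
The proof is a direct counting argument using the definition of regularity, so the main task is to unpack the bookkeeping cleanly rather than to find any conceptual idea. The plan is to first establish the two-sided estimate in the first display, and then derive the second display from the upper bound in the first.

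For the first claim, I would fix $i$ and decompose $|A|_{\mathcal{P}_i} = \sum_{P \in \mathcal{P}_{i-1}(A)} |A \cap P|_{\mathcal{P}_i}$. By the regularity hypothesis, each summand lies in the interval $[2^{\sigma_i - 1}, 2^{\sigma_i})$, so summing over the $|A|_{\mathcal{P}_{i-1}}$ atoms of $\mathcal{P}_{i-1}$ intersecting $A$ gives
\begin{equation*}
2^{\sigma_i - 1} |A|_{\mathcal{P}_{i-1}} \;\leq\; |A|_{\mathcal{P}_i} \;<\; 2^{\sigma_i}|A|_{\mathcal{P}_{i-1}}.
\end{equation*}
Thus $\tfrac{|A|_{\mathcal{P}_i}}{|A|_{\mathcal{P}_{i-1}}} \in [2^{\sigma_i - 1}, 2^{\sigma_i})$, and comparing this with the same interval containing $|A \cap P|_{\mathcal{P}_i}$ immediately gives the factor-$2$ two-sided bound claimed. (If $P \notin \mathcal{P}_{i-1}(A)$ then $|A \cap P|_{\mathcal{P}_i} = 0$ and the lower bound is trivial, while the upper bound still holds.)

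For the second claim, I would use only the upper bound from the first part. For any $A' \subseteq A$, decompose $|A'|_{\mathcal{P}_i} = \sum_{P \in \mathcal{P}_{i-1}(A')} |A' \cap P|_{\mathcal{P}_i}$, and note that $|A' \cap P|_{\mathcal{P}_i} \leq |A \cap P|_{\mathcal{P}_i} \leq 2 \tfrac{|A|_{\mathcal{P}_i}}{|A|_{\mathcal{P}_{i-1}}}$ by the part already proved. Summing yields
\begin{equation*}
|A'|_{\mathcal{P}_i} \;\leq\; 2\,|A'|_{\mathcal{P}_{i-1}} \cdot \frac{|A|_{\mathcal{P}_i}}{|A|_{\mathcal{P}_{i-1}}},
\end{equation*}
which is exactly the second inequality of the lemma after dividing by $|A|_{\mathcal{P}_i}$.

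There is no real obstacle here; the only thing to be careful about is handling the case $P \notin \mathcal{P}_{i-1}(A)$ in the first claim (where regularity does not give a lower bound on $|A \cap P|_{\mathcal{P}_i}$), and to verify that the factors of $2$ work out after replacing the open interval $[2^{\sigma_i - 1}, 2^{\sigma_i})$ by its ratio $2$. Both are routine.
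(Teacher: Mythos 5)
Your argument matches the paper's proof essentially verbatim: decompose $|A|_{\mathcal{P}_i}$ over atoms of $\mathcal{P}_{i-1}(A)$ to locate $|A|_{\mathcal{P}_i}/|A|_{\mathcal{P}_{i-1}}$ in $[2^{\sigma_i-1},2^{\sigma_i})$, compare with the regularity interval for $|A\cap P|_{\mathcal{P}_i}$, then bound $|A'|_{\mathcal{P}_i}$ termwise. One small slip in the parenthetical: when $P\notin\mathcal{P}_{i-1}(A)$ you have $|A\cap P|_{\mathcal{P}_i}=0$, so it is the \emph{upper} bound that holds trivially, while the \emph{lower} bound actually fails (unless $|A|_{\mathcal{P}_i}=0$) — the lemma is really only meaningful for $P\in\mathcal{P}_{i-1}(A)$, which is indeed how it is used, and the paper's own proof silently restricts to that case as well.
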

\begin{proof}
    Note that
    \begin{align*}
        |A|_{\mathcal{P}_i} = \sum_{P \in \mathcal{P}_{i - 1}(A)} |A \cap P|_{\mathcal{P}_i},
    \end{align*}
    we have
    \begin{align*}
        2^{\sigma_i - 1} |A|_{\mathcal{P}_{i - 1}} \leq \sum_{P \in \mathcal{P}_{i - 1}(A)} |A \cap P|_{\mathcal{P}_i} < 2^{\sigma_i}|A|_{\mathcal{P}_{i - 1}}.
    \end{align*}
    Therefore, 
    \begin{align*}
        2^{\sigma_i - 1} \leq \frac{|A|_{\mathcal{P}_i}}{|A|_{\mathcal{P}_{i - 1}}} < 2^{\sigma_i}.
    \end{align*}
    Combining with the definition of regularity, this proves the first statement. 

    For the second statement, note that
    \begin{align*}
        |A'|_{\mathcal{P}_i} = \sum_{P \in \mathcal{P}_{i - 1}(A')} |A' \cap P|_{\mathcal{P}_i} \leq \sum_{P \in \mathcal{P}_{i - 1}(A')} |A \cap P|_{\mathcal{P}_i} \leq |A'|_{\mathcal{P}_{i - 1}} 2\frac{|A|_{\mathcal{P}_i}}{|A|_{\mathcal{P}_{i - 1}}}.
    \end{align*}
    This proves the second statement. 
\end{proof}

For a probability measure $\mu$ on $X$, we say it is $(\sigma_1, \cdots, \sigma_n)$-regular with respect to the same filtration if for all $i = 1, \cdots, n$ and all $\widehat{P} \in \mathcal{P}_{i - 1}(\mu)$ and all $P \in \mathcal{P}_{i}(\mu)$ with $P \subseteq \widehat{P}$, we have
\begin{align*}
    2^{-\sigma_i} < \frac{\mu(P)}{\mu(\widehat{P})} \leq 2^{-\sigma_i + 1}.
\end{align*}
We omit the $n$-tuple $(\sigma_1, \cdots, \sigma_n)$ and just call it regular throughout the paper for simplicity. 

The connection between being regular for a set $F$ and the corresponding measure $\mu_F$ is recorded in the following lemma. 
\begin{lemma}\label{lem:regular measure vs regular set}
    For a finite set $F$, if $\mu_F$ is regular with respect to the filtration $\mathcal{P}_0 \prec \cdots \prec \mathcal{P}_n$, then the set $F$ is also regular with respect to the filtration. 

    Moreover, if $F$ lies in one atom of $\mathcal{P}_0$, then for any subset $F' \subseteq F$, we have
    \begin{align*}
        \frac{|F'|_{\mathcal{P}_n}}{|F|_{\mathcal{P}_n}} \geq \frac{1}{2^n} \mu_F(F').
    \end{align*}
    Conversely, let
    \begin{align*}
        F'' = \cup_{P \in \mathcal{P}_n(F')} (P \cap F) \supseteq F',
    \end{align*}
    we have
    \begin{enumerate}
        \item $|F''|_{\mathcal{P}_i} = |F'|_{\mathcal{P}_i}$ for all $i$, 
        \item $\mu_F(F'') \geq \frac{1}{2^n} \frac{|F'|_{\mathcal{P}_n}}{|F|_{\mathcal{P}_n}}$.
    \end{enumerate}
\end{lemma}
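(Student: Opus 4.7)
My plan exploits the fact that $\mu_F$ is the uniform probability measure on $F$, so mass ratios translate directly into ratios of cardinalities of intersections.

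For the first assertion, I would fix a parent $\widehat{P} \in \mathcal{P}_{i-1}(F) = \mathcal{P}_{i-1}(\mu_F)$ and sum the ratios $\mu_F(P)/\mu_F(\widehat{P})$ over children $P \in \mathcal{P}_i(F)$ with $P \subseteq \widehat{P}$. Each ratio lies in $(2^{-\sigma_i}, 2^{-\sigma_i + 1}]$ and the sum equals $1$, so the number of such children is pinched between $2^{\sigma_i - 1}$ and $2^{\sigma_i}$, which is exactly the definition of $(\sigma_1, \ldots, \sigma_n)$-regularity for the set $F$.

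The key estimate behind the remaining statements is the two-sided control
\begin{equation*}
    2^{-S} < \mu_F(P) \leq 2^{-S + n} \quad \text{for every } P \in \mathcal{P}_n(F),
\end{equation*}
where $S = \sigma_1 + \cdots + \sigma_n$. This is obtained by iterating the defining ratio bound for $\mu_F$ along the chain from the unique atom $P_0 \in \mathcal{P}_0(F)$ (using $\mu_F(P_0) = 1$, which is precisely where the hypothesis that $F$ lies in one atom of $\mathcal{P}_0$ enters) down to $P$. Summing gives $2^{S - n} \leq |F|_{\mathcal{P}_n} < 2^S$. Part~(2) is then immediate: $\mu_F(F') \leq |F'|_{\mathcal{P}_n} \cdot 2^{-S + n}$ while $|F|_{\mathcal{P}_n} < 2^S$, so division yields $|F'|_{\mathcal{P}_n}/|F|_{\mathcal{P}_n} \geq 2^{-n}\mu_F(F')$.

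Finally, for the saturation $F'' = \bigcup_{P \in \mathcal{P}_n(F')}(P \cap F)$, the equality $|F''|_{\mathcal{P}_i} = |F'|_{\mathcal{P}_i}$ follows from $\mathcal{P}_n(F'') = \mathcal{P}_n(F')$ together with the fact that $\mathcal{P}_n$ refines $\mathcal{P}_i$: any atom of $\mathcal{P}_i$ meeting $F''$ contains some $P \in \mathcal{P}_n(F')$ and hence meets $F'$, and the reverse inclusion is trivial since $F' \subseteq F''$. For the mass bound,
\begin{equation*}
    \mu_F(F'') = \sum_{P \in \mathcal{P}_n(F')} \mu_F(P) > |F'|_{\mathcal{P}_n}\cdot 2^{-S},
\end{equation*}
and combining with $|F|_{\mathcal{P}_n} \geq 2^{S - n}$ gives the desired inequality. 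No serious obstacle is expected; the whole lemma is bookkeeping, and the uniform factor $2^n$ loss is exactly the accumulated factor-two slack one level at a time.
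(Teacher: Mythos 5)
Your proof is correct and follows essentially the same route as the paper: parent-to-child counting for set regularity, then iterating the ratio bound from the unique $\mathcal{P}_0$-atom to get $2^{-S} < \mu_F(P) \leq 2^{n-S}$ for $P \in \mathcal{P}_n(F)$, from which both mass-versus-covering inequalities fall out by summing. You additionally spell out item~(1), $|F''|_{\mathcal{P}_i} = |F'|_{\mathcal{P}_i}$, which the paper's proof leaves implicit.
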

\begin{proof}
    For all $\widehat{P} \in \mathcal{P}_{i - 1}(F)$, we have
\begin{align*}
    &1 = \sum_{P \in \mathcal{P}_{i}(F \cap \widehat{P})} \frac{\mu_F(P)}{\mu_F(\widehat{P})} \leq 2^{-\sigma_i + 1} |F \cap \widehat{P}|_{\mathcal{P}_i},\\
    &1 = \sum_{P \in \mathcal{P}_{i}(F \cap \widehat{P})} \frac{\mu_F(P)}{\mu_F(\widehat{P})} > 2^{-\sigma_i} |F \cap \widehat{P}|_{\mathcal{P}_i},
\end{align*}
which implies
\begin{align*}
    2^{\sigma_i - 1} \leq |F \cap \widehat{P}|_{\mathcal{P}_i} < 2^{\sigma_i}.
\end{align*}
Therefore, $F$ is also regular. 

We now suppose $F$ lies in just one atom of $\mathcal{P}_0$. This implies that for all $P \in \mathcal{P}_n(F)$, we have
\begin{align*}
    2^{-(\sigma_1 + \cdots + \sigma_n)} < \mu_F(P) \leq 2^n 2^{-(\sigma_1 + \cdots + \sigma_n)}.
\end{align*}
This implies
\begin{align*}
    \frac{|F'|_{\mathcal{P}_n}}{|F|_{\mathcal{P}_n}} \geq \frac{1}{2^n} \mu_F(F').
\end{align*}
For $F'' = \cup_{P \in \mathcal{P}_n(F')} (P \cap F) \supseteq F'$, we have
\begin{align*}
    \mu_F(F'') = \sum_{P \in \mathcal{P}_n(F')} \mu_F(P) \geq 2^{\sigma_1 + \cdots + \sigma_n} |F'|_{\mathcal{P}_n},
\end{align*}
which implies the last statement. 
\end{proof}

We have the following regularization process due to Bourgain. 

\begin{lemma}\label{lem:regularization set}
    Let $\mathcal{P}_0 \prec \cdots \prec \mathcal{P}_n$ be a filtration of partitions of $X$. Let $A$ be a subset of $X$. Then there exists $A' \subseteq A$ so that $A'$ is regular with respect to the filtration and 
    \begin{align*}
        |A'|_{\mathcal{P}_n} \geq |A|_{\mathcal{P}_n} \prod_{i = 1}^n\frac{1}{2(1 + \log_2\max_{P \in \mathcal{P}_{i - 1}}|P|_{\mathcal{P}_{i}})}.
    \end{align*}
    Moreover, the subset $A'$ can be taken as intersection of $A$ with disjoint union of atoms $\mathcal{P}_n(A)$. 
\end{lemma}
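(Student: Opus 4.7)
The plan is to construct $A'$ by downward induction on the levels $i = n, n-1, \ldots, 1$, producing a chain $A = A^{(n)} \supseteq A^{(n-1)} \supseteq \cdots \supseteq A^{(0)} =: A'$ in which each $A^{(i-1)}$ acquires regularity at level $i$ while retaining the regularity at the already-processed levels $i+1, \ldots, n$ inherited from $A^{(i)}$. Write $D_i := \max_{P \in \mathcal{P}_{i-1}} |P|_{\mathcal{P}_i}$, so that the positive integers up to $D_i$ split into at most $K_i := 1 + \log_2 D_i$ dyadic classes. At step $i$, I partition $\mathcal{P}_{i-1}(A^{(i)})$ according to the dyadic class $C_\sigma = \{P : 2^{\sigma-1} \leq |A^{(i)} \cap P|_{\mathcal{P}_i} < 2^\sigma\}$, and apply pigeonhole with the weights $|A^{(i)} \cap P|_{\mathcal{P}_n}$, whose total equals $|A^{(i)}|_{\mathcal{P}_n}$. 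This yields a class $C_{\sigma_i}$ with
\begin{equation*}
\sum_{P \in C_{\sigma_i}} |A^{(i)} \cap P|_{\mathcal{P}_n} \;\geq\; \frac{|A^{(i)}|_{\mathcal{P}_n}}{K_i},
\end{equation*}
and I set $A^{(i-1)} := A^{(i)} \cap \bigcup_{P \in C_{\sigma_i}} P$.

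By construction, for every $P \in \mathcal{P}_{i-1}(A^{(i-1)})$ one has $|A^{(i-1)} \cap P|_{\mathcal{P}_i} = |A^{(i)} \cap P|_{\mathcal{P}_i} \in [2^{\sigma_i-1}, 2^{\sigma_i})$, so $A^{(i-1)}$ is regular at level $i$. Regularity at higher levels $j > i$ is not disturbed: since $\mathcal{P}_{i-1} \prec \mathcal{P}_j$, each atom of $\mathcal{P}_{j-1}$ is contained in a unique atom of $\mathcal{P}_{i-1}$, and therefore either survives intact into $A^{(i-1)}$ with its internal counts at levels $j, \ldots, n$ unchanged, or disappears entirely from $\mathcal{P}_{j-1}(A^{(i-1)})$. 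Iterating down to $i = 1$ thus produces $A'$ regular at every level $i = 1, \ldots, n$. Because at each step we remove a union of $\mathcal{P}_{i-1}$-atoms, hence automatically a union of $\mathcal{P}_n$-atoms, $A'$ is visibly the intersection of $A$ with a disjoint union of atoms of $\mathcal{P}_n(A)$, which gives the \emph{Moreover} clause.

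The size bound is then immediate from the weighted pigeonhole: at each step $|A^{(i-1)}|_{\mathcal{P}_n} = \sum_{P \in C_{\sigma_i}} |A^{(i)} \cap P|_{\mathcal{P}_n} \geq |A^{(i)}|_{\mathcal{P}_n}/K_i$, so iterating gives
\begin{equation*}
|A'|_{\mathcal{P}_n} \;\geq\; |A|_{\mathcal{P}_n} \prod_{i=1}^n \frac{1}{K_i} \;\geq\; |A|_{\mathcal{P}_n} \prod_{i=1}^n \frac{1}{2(1 + \log_2 D_i)},
\end{equation*}
which is in fact stronger than required. The argument is a routine instance of Bourgain's pigeonhole regularization scheme, so no step presents a serious obstacle; the only points requiring care are (a) running the induction in the downward direction and (b) weighting the pigeonhole by the finest-level count $|A^{(i)} \cap P|_{\mathcal{P}_n}$ rather than the level-$i$ count, which together ensure that each regularization both preserves higher-level regularity and costs only one factor $K_i$ in the covering number at the finest scale.
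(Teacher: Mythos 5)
Your proof is correct and is precisely the standard Bourgain dyadic pigeonhole regularization that the paper invokes by citation (\cite[Section 2]{Bou10}, \cite[Lemma 2.5]{BH24}) without reproducing; the key points — iterating from the finest level downward so that pruning entire $\mathcal{P}_{i-1}$-atoms leaves previously established regularity at levels $j>i$ intact, and weighting the pigeonhole by the $\mathcal{P}_n$-covering number so each step costs only one factor $K_i$ in $|\cdot|_{\mathcal{P}_n}$ — are exactly the right ones. Your bound $\prod_i K_i^{-1}$ is in fact slightly sharper than the stated $\prod_i (2K_i)^{-1}$, so nothing is lost; the one cosmetic slip is that preservation of level-$j$ regularity uses $\mathcal{P}_{i-1}\prec\mathcal{P}_{j-1}$ rather than $\mathcal{P}_{i-1}\prec\mathcal{P}_j$ as you wrote, but both hold for $j>i$.
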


\begin{proof}
    See \cite[Section 2]{Bou10} or \cite[Lemma 2.5]{BH24}. 
\end{proof}

We also have the following variant of Bourgain’s regularization argument for measure. 
\begin{lemma}\label{lem:regularization measure}
    Let $\mathcal{P}_0 \prec \cdots \prec \mathcal{P}_n$ be a filtration of partitions of $X$. Let $F$ be a finite subset of $X$. Then there exists $F' \subseteq F$ so that the conditional measure $\mu_{F'}$ is regular with respect to the filtration and 
    \begin{align*}
        \mu_F(F') \geq \prod_{i = 1}^n\frac{1}{2(1 + \log_2\max_{P \in \mathcal{P}_{i - 1}}|P|_{\mathcal{P}_{i}})}.
    \end{align*}
     Moreover, $F'$ can be taken as intersection of $F$ with disjoint union of atoms in $\mathcal{P}_n(F)$. 
\end{lemma}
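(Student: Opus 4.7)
The proof is an adaptation of Bourgain's dyadic pigeonhole argument used in Lemma~\ref{lem:regularization set}, carried out at the level of measure ratios rather than atom counts. The plan is to proceed by induction on the level $i$, producing a nested sequence $F = F^{(0)} \supseteq F^{(1)} \supseteq \cdots \supseteq F^{(n)} = F'$ in which each $F^{(i)}$ is a union of atoms in $\mathcal{P}_n(F)$ and the conditional measure $\mu_{F^{(i)}}$ satisfies the required dyadic regularity at levels $1, \ldots, i$.

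At step $i$, every atom $P \in \mathcal{P}_i(F^{(i-1)})$ lies inside a unique $\widehat{P} \in \mathcal{P}_{i-1}(F^{(i-1)})$ and carries ratio $r_P := \mu_{F^{(i-1)}}(P)/\mu_{F^{(i-1)}}(\widehat{P}) = |F^{(i-1)} \cap P|/|F^{(i-1)} \cap \widehat{P}|$. Since $|F^{(i-1)} \cap \widehat{P}| \leq \max_{P' \in \mathcal{P}_{i-1}}|P'|_{\mathcal{P}_i}$, each $r_P$ falls in one of at most $L_i := 1 + \log_2 \max_{P' \in \mathcal{P}_{i-1}}|P'|_{\mathcal{P}_i}$ dyadic intervals $(2^{-\sigma}, 2^{-\sigma+1}]$. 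A global pigeonhole, summing $\mu_{F^{(i-1)}}(\widehat{P})$ over the parents $\widehat{P}$ weighted by the fraction of mass in each dyadic bucket, yields a single exponent $\sigma_i$ so that, on setting $F^{(i)}$ to be the union of those $F^{(i-1)} \cap P$ with $r_P \in (2^{-\sigma_i}, 2^{-\sigma_i+1}]$, one obtains $\mu_{F^{(i-1)}}(F^{(i)}) \geq L_i^{-1}$.

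The one subtlety is that after restricting to $F^{(i)}$ the new ratios $\mu_{F^{(i)}}(P)/\mu_{F^{(i)}}(\widehat{P})$ differ from the original $r_P$ because $|F^{(i)} \cap \widehat{P}|$ has shrunk. A direct computation shows that the surviving ratios within any common parent $\widehat{P}$ still lie within a factor of $2$ of one another, hence are contained in at most two consecutive dyadic intervals; a secondary pigeonhole at each level, costing one further factor of $2$, aligns them into a single interval $(2^{-\sigma'_i}, 2^{-\sigma'_i+1}]$. This accounts for the $2$ in the denominator of the stated bound. Multiplying the losses $2 L_i$ across the $n$ levels gives the product estimate for $\mu_F(F')$, and the "moreover" clause is automatic since every $F^{(i)}$ is by construction a union of atoms of $\mathcal{P}_n$. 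I expect the main (and mild) obstacle to be the bookkeeping of the factor-of-$2$ slack from this post-restriction re-bucketing, and in particular verifying that the exponent $\sigma_i$ can indeed be chosen uniformly across all parents $\widehat{P}$; both become routine once the dyadic buckets are indexed globally as above.
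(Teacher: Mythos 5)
The paper gives no self-contained argument for this lemma; the proof text is a single citation to \cite[Lemma 3.4]{KS19}, so there is nothing in the paper to compare against. Taken on its own, your reconstruction contains two concrete gaps.

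\emph{The pigeonhole justification is wrong.} The inequality $|F^{(i-1)} \cap \widehat{P}| \leq \max_{P' \in \mathcal{P}_{i-1}}|P'|_{\mathcal{P}_i}$ is false if $|\cdot|$ is read as a point count: the right-hand side bounds the number of $\mathcal{P}_i$-atoms inside a $\mathcal{P}_{i-1}$-atom and has nothing to do with how many elements of $F$ it holds. If you instead read the left-hand side as the number of surviving children $|F^{(i-1)}\cap \widehat{P}|_{\mathcal{P}_i}$, the inequality becomes true but still does not imply that each $r_P$ lies in one of the first $L_i$ dyadic intervals: a parent with one heavy child and many light ones produces arbitrarily small $r_P$. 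What is actually true is a mass estimate, not a pointwise one: because $\widehat{P}$ has at most $2^{d_i}$ children, the total $\mu_{F^{(i-1)}}$-mass of those $P$ with $r_P\leq 2^{-d_i-1}$ is at most $1/2$, so at least half the mass sits in the buckets $\sigma\in\{1,\ldots,\lfloor d_i+1\rfloor\}$ and pigeonhole yields a single $\sigma_i$ with mass $\geq \frac{1}{2(1+d_i)}$. This already consumes the full per-level factor in the lemma's bound, leaving no room for the extra factor of $2$ you set aside for re-bucketing.

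\emph{The re-bucketing repair does not work.} After restricting to bucket $\sigma_i$, the new ratio of a surviving $P$ is $r_P/\rho_{\widehat{P}}$, where $\rho_{\widehat{P}}\in(2^{-\sigma_i},1]$ is the fraction of $\mu_{F^{(i-1)}}(\widehat{P})$ that survives. Within one parent the new ratios do span at most two consecutive dyadic intervals, as you say, but \emph{which} two is governed by $\rho_{\widehat{P}}$, which varies from parent to parent over the whole range $(2^{-\sigma_i},1]$. A concrete example: one parent with four children each of ratio $1/4$, another with children of ratios $1/4$ and $3/4$; keeping the bucket $(1/8,1/4]$ gives new ratios $1/4$ in the first parent and $1$ in the second, which are not in adjacent intervals. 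A pigeonhole aligning these windows across parents costs a further factor of order $\sigma_i\lesssim d_i$, not $2$. Finally, restricting at level $i$ also changes $|F^{(i)}\cap Q|$ for all coarser atoms $Q$, so the regularity you assume was already established at levels $1,\ldots,i-1$ is itself disturbed; your proposal does not address this.
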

\begin{proof}
    See \cite[Lemma 3.4]{KS19}. 
\end{proof}

For a finite set $F$, iterating the above process with $\mu_F$, we can decompose a large portion of $F$ into regular pieces as in the following lemma. 
\begin{lemma}\label{lem:regularization exhaust}
    Let $\mathcal{P}_0 \prec \cdots \prec \mathcal{P}_n$ be a filtration of partitions of $X$. Let $d_i = \log_2 \max_{P \in \mathcal{P}_{i - 1}} |P|_{\mathcal{P}_i}$ for all $i = 1, \cdots, n$. Let $F$ be a finite subset of $X$. 
    
    For all $\mathsf{c} \in (0, 1)$, there exists a family of disjoint subsets $\{F_j\}_{j = 1}^N$ so that the following holds. 
    \begin{enumerate}
        \item For all $j$, the measure $\mu_{F_j}$ is regular. 
        \item We have $\mu_F(\sqcup F_j) \geq 1 - \mathsf{c}$. 
        \item For each $F_j$, we have $\mu_F(F_j) \geq \mathsf{c}\prod_{i = 1}^n\frac{1}{2(1 + d_i)}$.
    \end{enumerate}
    Moreover, all $F_j$ can be taken as intersection of $F$ with disjoint union of atoms in $\mathcal{P}_n(F)$. 
\end{lemma}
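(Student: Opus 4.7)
The plan is to iterate \cref{lem:regularization measure} in a greedy way, peeling off a regular piece each time until less than $\mathsf{c}$ mass remains. First, I would set $F^{(0)} = F$ and, inductively, at step $j \geq 1$, check whether $\mu_F(F^{(j-1)}) < \mathsf{c}$; if so I stop and declare $N = j - 1$. Otherwise, I apply \cref{lem:regularization measure} to the finite set $F^{(j-1)}$ (with respect to the same filtration $\mathcal{P}_0 \prec \cdots \prec \mathcal{P}_n$) to obtain a subset $F_j \subseteq F^{(j-1)}$ such that the uniform measure $\mu_{F_j}$ is regular and
\begin{align*}
\mu_{F^{(j-1)}}(F_j) \geq \prod_{i=1}^n \frac{1}{2(1 + d_i)}.
\end{align*}
I then set $F^{(j)} = F^{(j-1)} \setminus F_j$ and continue.

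For the bookkeeping: since $\mu_F(F_j) = \mu_F(F^{(j-1)}) \cdot \mu_{F^{(j-1)}}(F_j)$ and the loop only runs while $\mu_F(F^{(j-1)}) \geq \mathsf{c}$, property~(3) follows immediately. Property~(2) is exactly the stopping criterion: when the process halts, $\mu_F(\bigsqcup_j F_j) = 1 - \mu_F(F^{(N)}) \geq 1 - \mathsf{c}$. Property~(1) is the content of \cref{lem:regularization measure} applied at each step. The process terminates in finitely many steps because each selected $F_j$ satisfies $\mu_F(F_j) \geq \mathsf{c} \prod_i \frac{1}{2(1+d_i)} > 0$, so $N$ is bounded by the reciprocal of this quantity.

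The only point that requires a little care is the "moreover" clause, i.e., that each $F_j$ is a union of atoms of $\mathcal{P}_n(F)$ (so that we can keep applying the lemma to the leftover $F^{(j)}$ at the next step, and so that the final $F_j$'s have the stated structural form). This follows by an easy induction: $F^{(0)} = F$ is trivially a union of atoms of $\mathcal{P}_n(F)$; the "moreover" clause of \cref{lem:regularization measure} applied to $F^{(j-1)}$ gives $F_j$ as $F^{(j-1)}$ intersected with a union of atoms of $\mathcal{P}_n(F^{(j-1)}) \subseteq \mathcal{P}_n(F)$, so $F_j$ is itself a union of atoms of $\mathcal{P}_n(F)$, and hence $F^{(j)} = F^{(j-1)} \setminus F_j$ is too.

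There is no real obstacle here; the statement is essentially a standard greedy exhaustion argument from \cref{lem:regularization measure}. The only mild subtlety, as noted, is tracking that the "union of atoms" property is preserved under the iteration so that \cref{lem:regularization measure} can be legitimately reapplied and so that the final conclusion matches the claimed "moreover" form.
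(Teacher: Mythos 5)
Your proof is correct and uses the same greedy iteration of \cref{lem:regularization measure} as the paper's (which itself credits \cite[Corollary 3.5]{KS19}). One small but genuine point where your version is tighter: you stop the iteration the moment the remainder has $\mu_F$-mass below $\mathsf{c}$, which directly yields $\mu_F(F_j) \geq \mathsf{c}\prod_{i=1}^n \frac{1}{2(1+d_i)}$ for every retained piece. The paper instead fixes $N$ a priori as the smallest integer with $(1-\lambda)^N < \mathsf{c}$, where $\lambda = \prod_i \frac{1}{2(1+d_i)}$, and then asserts $\mu_F(B_{j-1}) \geq \mathsf{c}$ for all $j \leq N$; but the construction only supplies the \emph{upper} bound $\mu_F(B_{j-1}) \leq (1-\lambda)^{j-1}$ (a regularization step can strip off more than a $\lambda$-fraction of the current remainder), so that assertion does not quite follow as written. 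Your explicit stopping rule is the correct fix, and your induction verifying that each $F_j$ is $F$ intersected with a union of atoms of $\mathcal{P}_n(F)$ is also sound and corresponds to the paper's brief closing remark.
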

\begin{proof}
    This is essentially \cite[Corollary 3.5]{KS19}. We reproduce the argument here. For simplicity, we let 
    \begin{align*}
        \lambda = \prod_{i = 1}^n\frac{1}{2(1 + d_i)}.
    \end{align*}
    Applying \cref{lem:regularization measure} to $F$, we get a regular subset $F_1$ with $\mu_F(F_1) \geq \lambda$. Let $B_0 = F$ and $B_1 = F \setminus F_1$. We now construct $\{B_j\}$ and $\{F_j\}$ inductively. Suppose $B_j$ is constructed, applying \cref{lem:regularization measure}, we get $F_{j + 1}$ with $\mu_{B_j}(F_{j + 1}) \geq \lambda$. Let $B_{j + 1} = B_{j} \setminus F_{j + 1}$. Note that by construction, we have
    \begin{align*}
        \mu_{B_j}(B_{j + 1}) \leq (1 - \lambda).
    \end{align*}
    Therefore, 
    \begin{align*}
        \mu_{F}(B_{j}) \leq (1 - \lambda)^j.
    \end{align*}
    Let $N$ be the smallest integer so that $(1 - \lambda)^N < \mathsf{c}$. We now show that $\{F_j\}_{j = 1}^N$ is a family of subset satisfying the lemma. The regularity of each $F_j$ follows directly from \cref{lem:regularization measure}. They are disjoint by the construction. Note that $B_{N} = F \setminus (\sqcup_{j = 1}^N F_j)$, we have
    \begin{align*}
        \mu_F(F \setminus (\sqcup_{j = 1}^N F_j)) = \mu_F(B_N) \leq (1 - \lambda)^N < \mathsf{c}.
    \end{align*}
    For each $F_j$ where $j \in \{1, \cdots, N\}$, we have $F_j \subseteq B_{j - 1}$ with $\mu_{B_{j - 1}}(F_j) \geq \lambda$. Since $j - 1 < N$, $\mu_F(B_{j - 1}) \geq \mathsf{c}$, we have
    \begin{align*}
        \mu_{F}(F_j) = \mu_F(B_{j - 1})\mu_{B_{j - 1}}(F_j) \geq \mathsf{c}\lambda.
    \end{align*}
    The last claim follows directly from the construction and \cref{lem:regularization measure}. 
\end{proof}
\subsubsection{Submodularity inequality}The following inequality is taken from \cite[Lemma 2.6]{BH24}. This provides us tools to connect covering number of tubes of different sizes. 
\begin{lemma}[{\cite[Lemma 2.6]{BH24}}]\label{lem:submodularity}
Let $\mathcal{P}$, $\mathcal{Q}$, $\mathcal{R}$, $\mathcal{S}$ be partitions of some space $X$ and
$A$ a subset of $X$. Assume that $\mathcal{R} = \mathcal{P}\vee\mathcal{Q}$, $\mathcal{S} \prec \mathcal{P}$
and $\mathcal{S} \prec \mathcal{Q}$. Then for every $c > 0$, there is a subset $A' \subseteq A$ such that $|A'|_{\mathcal{R}} \geq (1 - c)|A|_{\mathcal{R}}$ and
\begin{align*}
    |A|_{\mathcal{P}}\cdot|A|_{\mathcal{Q}} \geq \frac{c^2}{4}|A|_{\mathcal{R}}\cdot|A'|_{\mathcal{S}}.
\end{align*}
Moreover, the subset $A'$ can be taken as intersection of $A$ with disjoint union of atoms $\mathcal{S}(A)$. 
\end{lemma}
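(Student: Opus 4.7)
My plan is to prove the lemma in two steps: (i) a Cauchy--Schwarz lower bound on $|A|_{\mathcal{P}}|A|_{\mathcal{Q}}$ in terms of local counts on $\mathcal{S}$-atoms, and (ii) a pigeon-hole selection of $A'$ as the union of those $\mathcal{S}$-atoms carrying the most $\mathcal{R}$-mass. The main technical obstacle will lie in step~(ii), where I must extract a subset containing a $(1-c)$-fraction of the $\mathcal{R}$-atoms while sharply controlling its $\mathcal{S}$-cardinality. I plan to handle this by a single-scale argument that avoids any dyadic loss, which is what produces the clean constant $c^{2}/4$.

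For step~(i), to each $\mathcal{S}$-atom $S \in \mathcal{S}(A)$ I associate the local counts
\begin{align*}
p(S) := |A\cap S|_{\mathcal{P}},\quad q(S) := |A\cap S|_{\mathcal{Q}},\quad r(S) := |A\cap S|_{\mathcal{R}}.
\end{align*}
Because $\mathcal{S} \prec \mathcal{P}, \mathcal{Q}$ (and hence $\mathcal{S} \prec \mathcal{R}$), every atom of $\mathcal{P}, \mathcal{Q}, \mathcal{R}$ meeting $A$ lies in a unique $\mathcal{S}$-atom, so $|A|_{\mathcal{P}} = \sum_{S} p(S)$ and analogously for $\mathcal{Q}$ and $\mathcal{R}$. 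Since $\mathcal{R} = \mathcal{P}\vee\mathcal{Q}$, the pointwise bound $r(S) \leq p(S)q(S)$ holds, and Cauchy--Schwarz gives
\begin{align*}
|A|_{\mathcal{P}}|A|_{\mathcal{Q}} = \Bigl(\sum_{S} p(S)\Bigr)\Bigl(\sum_{S} q(S)\Bigr) \geq \Bigl(\sum_{S} \sqrt{p(S)q(S)}\Bigr)^{2} \geq \Bigl(\sum_{S} \sqrt{r(S)}\Bigr)^{2} =: T^{2}.
\end{align*}

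For step~(ii), enumerate $\mathcal{S}(A) = \{S_{1},\dots,S_{N}\}$ in decreasing order of $r(S_{k})$, and for each $K$ set $A'_{K} := A \cap \bigsqcup_{k \leq K} S_{k}$. By construction $A'_{K}$ is the intersection of $A$ with a disjoint union of $\mathcal{S}$-atoms, and $|A'_{K}|_{\mathcal{S}} = K$. Let $K_{c}$ be the smallest $K$ with $|A'_{K_{c}}|_{\mathcal{R}} \geq (1-c)|A|_{\mathcal{R}}$, and set $A' := A'_{K_{c}}$. Writing $r := |A|_{\mathcal{R}}$ and assuming $c \in (0,1]$ (for $c > 1$ take $A' = \emptyset$), I plan to show $K_{c} \leq 4T^{2}/(c^{2}r)$; combined with step~(i) this gives $|A|_{\mathcal{P}}|A|_{\mathcal{Q}} \geq T^{2} \geq \tfrac{c^{2}}{4}\, r\,|A'|_{\mathcal{S}}$, as desired. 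For the bound on $K_{c}$, I argue by contradiction: if $K := K_{c}-1 > T^{2}/(c^{2}r)$, then by minimality of $K_{c}$,
\begin{align*}
\sum_{k > K} r(S_{k}) > cr, \qquad r(S_{K+1}) \leq \tfrac{1}{K}\sum_{k \leq K} r(S_{k}) < (1-c)r/K.
\end{align*}
Using $\sqrt{r(S_{k})} \geq r(S_{k})/\sqrt{r(S_{K+1})}$ for $k > K$ (by monotonicity of $r$),
\begin{align*}
T \geq \sum_{k > K} \sqrt{r(S_{k})} \geq \frac{\sum_{k > K} r(S_{k})}{\sqrt{r(S_{K+1})}} > \frac{cr}{\sqrt{(1-c)r/K}} = c\sqrt{Kr/(1-c)},
\end{align*}
whence $K < T^{2}(1-c)/(c^{2}r) < T^{2}/(c^{2}r)$, contradicting the assumption on $K$. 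Hence $K_{c} \leq T^{2}/(c^{2}r) + 1$, and combined with the elementary lower bound $T^{2} = \sum_{S} r(S) + 2\sum_{S \neq S'}\sqrt{r(S)r(S')} \geq r$ this gives $K_{c} \leq 4T^{2}/(c^{2}r)$ for $c \in (0,1]$, completing the argument.
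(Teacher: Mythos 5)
Your proof is correct, and it recovers the lemma by the same mechanism that drives the cited result in \cite[Lemma~2.6]{BH24}: a Cauchy--Schwarz bound $|A|_{\mathcal{P}}|A|_{\mathcal{Q}} \geq \bigl(\sum_{S}\sqrt{r(S)}\bigr)^{2}$ using $r(S) \leq p(S)q(S)$ (which relies on $\mathcal{R}=\mathcal{P}\vee\mathcal{Q}$ and the fact that $\mathcal{S}$ refines $\mathcal{P}$, $\mathcal{Q}$, hence $\mathcal{R}$), followed by a greedy threshold selection of $\mathcal{S}$-atoms ranked by $\mathcal{R}$-mass. The paper does not reproduce the proof, but there is no dyadic pigeonhole to be avoided here: the clean polynomial-in-$c$ constant already comes from exactly this kind of single-threshold argument, so your step~(ii) is not doing something structurally different from the reference. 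Two small points worth noting. First, your contradiction step implicitly requires $K = K_c-1 \geq 1$ so that the pigeonhole $r(S_{K+1}) \leq \frac{1}{K}\sum_{k\leq K}r(S_k)$ makes sense; when $K_c = 1$ the final bound $K_c \leq 4T^2/(c^2 r)$ follows directly from $T^2 \geq r$ and $c\leq 1$, so the argument is complete but the reader has to supply this case. Second, your estimate $K_c \leq T^2/(c^2r)+1 \leq 2T^2/(c^2r)$ actually yields the constant $c^2/2$ rather than $c^2/4$; the lemma only claims $c^2/4$, so you are giving away a harmless factor of two.
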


\section{Preparation IV: Irreducible representations of semisimple Lie groups}\label{sec:prepareIrrep}
We discuss properties of irreducible representations of semisimple Lie groups in this section. We remark that the notations in this section is compatible with the notations for $H = \SO(Q_1)^\circ$ or $\SO(Q_2)^\circ$ with irreducible representation $\mathfrak{r}$ introduced in Section~\ref{sec:Global prelim}. Let $\mathbf{H}$ be a connected semisimple $\R$-group and let $H = \mathbf{H}(\R)^\circ$ be the identity component of its $\R$-points under the Hausdorff topology. Suppose $H$ is noncompact. Let $\LieH = \mathrm{Lie}(H)$ be its Lie algebra. Fix a maximal split $\R$-torus $\mathbf{A}$ in $\mathbf{H}$. Let $\LieA$ be its corresponding Lie algebra. Let $\Phi \subset \LieA^*$ be the associated restricted root system. Let $\Phi^\pm \subset \Phi$ be sets of positive and negative roots with respect to some lexicographic order on $\LieA^*$ and $\Pi \subset \Phi^+$ be the set of simple roots. Let $\LieA^+ \subset \LieA$ be the corresponding closed positive Weyl chamber. Then, we have the restricted root space decomposition
\begin{align*}
\LieH = \LieA \oplus \LieM_0 \oplus \LieU^+ \oplus \LieU^- = \LieA \oplus \LieM_0 \oplus \bigoplus_{\alpha \in \Phi} \LieH_\alpha
\end{align*}
where $\LieM_0 = Z_{\LieK}(\LieA) \subset \LieK$ and $\LieU^\pm = \bigoplus_{\alpha \in \Phi^+} \LieH_{\pm\alpha}$. Define the Lie subgroups
\begin{align}
\label{eqn:SubgroupsOfG}
A &= \exp(\LieA) < G, & U^\pm &= \exp(\LieU^\pm) < G, & M_0 = Z_K(A) < K < H.
\end{align}
Define the closed subset $A^+ = \exp(\LieA^+) \subset A$. Denote
\begin{align*}
a_v = \exp(v) \in A, \qquad \text{$v \in \LieA$}.
\end{align*}
The middle two subgroups in \cref{eqn:SubgroupsOfG} are the maximal expanding and contracting horospherical subgroups in $H$, i.e.,
\begin{align*}
U^\pm = \Bigl\{u^\pm \in H: \lim_{t \to \pm\infty} a_{-tv}u^\pm a_{tv} = e\Bigr\}
\end{align*}
for any $v \in \interior(\LieA^+)$. We often denote $U := U^+$. Note that the set $U^+M_0AU^-$ is an open dense subset of $H$. 

we fix a norm $\|\cdot\|$ on $\LieH$ and for any subalgebra $\mathfrak{s} \subseteq \LieH$ let
\begin{align*}
    B^{\mathfrak{s}}_r(x_0) = \{x \in W: \|x - x_0\| \leq r\}.
\end{align*}
The choice of the norm $\|\cdot\|$ will only affect the result in this part by a constant factor. We set $\mathsf{B}^S_r = \exp(B^{\mathfrak{s}}_r(0))$ and $m_S$ is the left invariant Haar measure on $S$ so that $m_S(\mathsf{B}^S_1) = 1$. 

Let $(\rho, V)$ be an irreducible representation of $\mathbf{H}$. For weights $\lambda$ associated to $\LieA$, we use $V_\lambda$ to denote the corresponding weight space. By the fixed choice of positive roots, we have a partial order on the set of weights. As for $\mathfrak{r}$, we denote $V^{(\lambda)} = \oplus_{\mu \geq \lambda} V_{\mu}$. The representation has the following property. 
\begin{theorem}\label{thm:good basis}
    There exists a $K$-invariant inner product on $V$ so that for all $a \in A$, $\rho(a)$ is symmetric. 
\end{theorem}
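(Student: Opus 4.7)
The plan is to produce an \emph{admissible} inner product on $V$: one for which $\rho(X)^* = -\rho(\theta X)$ for every $X \in \LieH$, where $\theta$ is the Cartan involution (so $\Fix(\theta) = \LieK$ and $\theta|_{\LieP} = -\Id$), $*$ denotes the adjoint, and $\LieA \subseteq \LieP$. Granting this, the relation applied to $X \in \LieK$ gives $\rho(X)^* = -\rho(X)$, so $\rho(K)$ consists of orthogonal transformations and the form is $K$-invariant; applied to $X \in \LieA \subseteq \LieP$ it yields $\rho(X)^* = \rho(X)$, so $\rho(a_v) = \exp(\rho(v))$ is symmetric for every $v \in \LieA$, which is the statement of the theorem.

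To construct such a form, I would use Weyl's unitary trick. Form the complexification $\LieH_\mathbb{C} = \LieH \oplus i\LieH$ and consider the compact real form $\LieK \oplus i\LieP \subset \LieH_\mathbb{C}$; this is a real Lie subalgebra by the standard Cartan bracket relations $[\LieK,\LieK]\subseteq\LieK$, $[\LieK,\LieP]\subseteq\LieP$, $[\LieP,\LieP]\subseteq\LieK$, and it integrates (after possibly passing to a finite cover of $H$) to a connected compact Lie group $\U_c$ acting on $V_\mathbb{C}$ through the $\mathbb{C}$-linear extension $\rho_\mathbb{C}$ of the differential of $\rho$. Averaging any starting Hermitian form on $V_\mathbb{C}$ over $\U_c$ against Haar measure produces a $\U_c$-invariant positive definite Hermitian form $h$ on $V_\mathbb{C}$. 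The desired real inner product on $V$ is then $\langle v,w\rangle := h(v,w)$ for $v,w \in V$, provided $h$ takes real values on $V \times V$.

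The admissibility relation follows at once from $\U_c$-invariance: for any $X$ in the compact Lie algebra $\LieK \oplus i\LieP$, the one-parameter group $\exp(\R X) \subset \U_c$ acts by unitary operators on $(V_\mathbb{C}, h)$, so $\rho_\mathbb{C}(X)$ is skew-Hermitian. Taking $X \in \LieK$ gives $\rho(X)$ skew-symmetric on $V$; taking $X = iY$ with $Y \in \LieP$ gives $\rho_\mathbb{C}(iY) = i\rho(Y)$ skew-Hermitian, whence $\rho(Y)$ is Hermitian on $V_\mathbb{C}$ and therefore symmetric on $V$. Together these yield $\rho(X)^* = -\rho(\theta X)$ for every $X \in \LieH$.

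The principal obstacle, which I expect to be the main delicate point, is verifying that the averaged Hermitian form $h$ is indeed real on $V \times V$. This is handled by observing that the complex conjugation $\sigma: V_\mathbb{C} \to V_\mathbb{C}$ fixing $V$ is compatible with $\U_c$: at the level of Lie algebras $\sigma$ acts on $\LieH_\mathbb{C}$ by $X + iY \mapsto X - iY$, which preserves the decomposition $\LieK \oplus i\LieP$, so $\sigma$ normalizes $\U_c$. Starting from the complex-bilinear extension of an arbitrary real inner product on $V$, which is $\sigma$-invariant by construction, Haar averaging over $\U_c$ preserves $\sigma$-invariance, and a Hermitian form on $V_\mathbb{C}$ is $\sigma$-invariant exactly when it takes real values on $V \times V$.
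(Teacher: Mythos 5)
Your argument is correct and is the classical Weyl unitary-trick proof of what the paper handles by simply citing Mostow's simultaneous Cartan decomposition theorem \cite[Theorem 6]{Mos55}. The content is the same (an admissible inner product on $V$: $\rho(K)$ orthogonal, $\rho(\exp\LieP)$ positive symmetric, which of course implies the statement since $\LieA\subseteq\LieP$), but you supply the construction that the paper delegates to the reference.

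Two small imprecisions are worth tightening, neither fatal. First, ``the complex-bilinear extension of a real inner product'' is not a Hermitian form: it is a symmetric $\C$-bilinear form, and in particular isotropic on $iV$, so averaging it over the compact group would not produce what you want. What you intend is the \emph{sesquilinear} (Hermitian) extension, i.e.\ the unique Hermitian form on $V_\C$ for which a chosen real orthonormal basis of $V$ is Hermitian-orthonormal; that form is $\sigma$-invariant in the sense $h(\sigma v,\sigma w)=\overline{h(v,w)}$, which is exactly what your change-of-variables argument over $\mathrm{U}_c$ requires. Second, ``passing to a finite cover of $H$'' is misplaced: the group one may need to cover is the compact one, not $H$. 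More directly, $\LieK\oplus i\LieP$ has negative-definite Killing form (negative definite on $\LieK$; on $i\LieP$ one has $B(iY,iY)=-B(Y,Y)<0$), so it is a compact semisimple Lie algebra, its simply connected Lie group is compact, the complexified representation integrates to a representation of that simply connected group, and its image in $\GL(V_\C)$ is the compact group $\mathrm{U}_c$ you average over. With these adjustments the proof is complete.
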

\begin{proof}
    This is a direct consequence of Mostow's simultaneous Cartan decomposition theorem, see \cite[Theorem 6]{Mos55}. 
\end{proof}
With the above theorem, we can find an orthonormal basis of $V$ so that all elements $a \in A$ acts diagonally and $\rho(U)$ consists of strictly upper-triangular matrices and  $\rho(U^-)$ consists of strictly lower-triangular matrices. For all $h \in H$, the matrix transpose $\rho(h)^t$ is the adjoint operator of $\rho(h)$ with respect to this inner product. 

We will only consider the case where $\dim \Fix(U) = \dim \Fix(U^-) = 1$. In this case, there is a highest weight $\chi$. 

\begin{remark}
    There exists irreducible representation of semisimple $\R$-groups with $\dim_{\R} \Fix(U) > 1$. For example, the adjoint representation of $\SO(n, 1)$ is irreducible but $\dim_{\R} \Fix(U) = \dim_{\R} U = n - 1$. 
\end{remark} 

\subsection*{Constants and \texorpdfstring{$\star$-notations}{⋆-notations}}
Since we will discuss results on irreducible representation of general semisimple Lie groups in this part, we make the following convention on implied constants and $\star$-notations. For $A \ll B^{\star}$, we mean there exist constants $C > 0$ and $\kappa > 0$ depend at most on $H$ and the representation $V$ such that $A \leq C B^{\kappa}$. For $A \ll_D B$, we mean there exist constant $C_D > 0$ depending on $D$ and at most on $H$ and the representation $V$ so that $A \leq C_D B$. We will apply those results to $H$ and $\mathfrak{r}$. It is compatible with our previous convention. 

\subsection{Projections}\label{subsec:Equivalent Projections}
For any representation $V$ of $H$ in this paper, we fix an inner product from \cref{thm:good basis}. For a subspace $W \subseteq V$, we set $\pi_W$ to be the orthogonal projection to $W$ with respect to this inner product. For a linear operator $A$ on $V$, we use $(A)^t$ to denote the adjoint operator of $A$ under this inner product. In the particular representation $\mathfrak{r}$, we take the inner product on $\mathfrak{r}$ by the restricting of the inner product on $\LieG = \mathfrak{sl}_4$ defined by the Cartan involution $\theta: x \mapsto -x^t$. Under this inner product, the matrix transpose $h^t$ acting on the representation $\mathfrak{r}$ is the adjoint action of $h$. Later in this paper we will use this inner product and $h^t$ stands for the matrix transpose of $h$. 

Recall that in the introduction in Part~\ref{part:projection}, we define the projections
\begin{align*}
    \pi_{r, s}^{(\lambda)} = \pi^{(\lambda)} \circ u_{r, s}.
\end{align*}
In general, for an irreducible representation $V$ of a semisimple Lie group $H$, we can define
\begin{align*}
    \pi_{u}^{(\lambda)} = \pi^{(\lambda)} \circ u
\end{align*}
where $\pi^{(\lambda)}$ is the orthogonal projection to $V^{(\lambda)}$ under the above inner product and $u \in U$ is an element of the horospherical subgroup of $H$ defined in the previous section. There are also the following closely related orthogonal projections:
\begin{align*}
    \pi_{u^t.V^{(\lambda)}}.
\end{align*}
This is the orthogonal projection to the subspace $u^t.V^{(\lambda)}$. We define the following linear map
\begin{align*}
    f: V^{(\lambda)} &\to V^{(\lambda)}\\
    w &\mapsto \pi_{V^{(\lambda)}}((uu^t).w).
\end{align*}

\begin{lemma}
The linear map $f$ satisfies the following properties. 
    \begin{enumerate}
        \item We have $\pi^{(\lambda)}_u = f \circ (u^{-1})^t\pi_{u^t.V^{(\lambda)}}$.
        \item There exists $\beta > 0$ depends only on $H$ so that for all $u \in B_\beta^U$, the map $f$ is an invertible linear map with 
        \begin{align*}
            \max\{\|f\|, \|f^{-1}\|\} \ll 1
        \end{align*}
        where the constant depends only on the ambient representation. 
    \end{enumerate}
\end{lemma}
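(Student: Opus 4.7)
The plan is to reduce identity~(1) to an orthogonal decomposition argument and then handle~(2) by a direct perturbation estimate around $u = e$.

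For~(1), first I would identify the kernels of the two maps $V \to V$ given by $\pi^{(\lambda)} \circ u$ and $\pi_{u^t.V^{(\lambda)}}$ and show that they coincide. By definition, $\pi^{(\lambda)}(u.v) = 0$ iff $u.v \in (V^{(\lambda)})^\perp$. Using the adjunction $\langle u.v, w\rangle = \langle v, u^t.w\rangle$, the orthogonal complement $(u^t.V^{(\lambda)})^\perp$ equals exactly $u^{-1}((V^{(\lambda)})^\perp)$, which matches the kernel of $\pi^{(\lambda)} \circ u$. Hence any $w \in V$ decomposes as $w = w_1 + w_2$ with $w_1 \in u^t.V^{(\lambda)}$ and $w_2$ in the common kernel, so that $\pi^{(\lambda)}(u.w) = \pi^{(\lambda)}(u.w_1)$. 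Writing $w_1 = u^t.v$ with $v = (u^{-1})^t w_1 = (u^{-1})^t \pi_{u^t.V^{(\lambda)}}(w) \in V^{(\lambda)}$, one has
\begin{equation*}
\pi^{(\lambda)}(u.w_1) \;=\; \pi^{(\lambda)}\bigl((uu^t).v\bigr) \;=\; \pi_{V^{(\lambda)}}\bigl((uu^t).v\bigr) \;=\; f(v),
\end{equation*}
where the middle equality uses that $(uu^t).v$ may be projected to $V^{(\lambda)}$ with either $\pi^{(\lambda)}$ or $\pi_{V^{(\lambda)}}$ since both are orthogonal projections onto the same subspace. This gives the factorization $\pi^{(\lambda)}_u = f \circ (u^{-1})^t\pi_{u^t.V^{(\lambda)}}$.

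For~(2), the key observation is that when $u = e$ the operator $uu^t$ acts as the identity, so $f = \mathrm{Id}_{V^{(\lambda)}}$. For general $u \in \mathsf{B}_\beta^U$, using continuity of the representation map $u \mapsto \rho(u)$ together with the fact that $\rho(u^t) = \rho(u)^t$ (via the chosen inner product from Theorem~\ref{thm:good basis}), one has
\begin{equation*}
\|\rho(uu^t) - \mathrm{Id}\|_{\mathrm{op}} \;\ll\; \|u - e\| \;\ll\; \beta,
\end{equation*}
with an implied constant depending only on the representation. Since $f$ is the restriction-then-project of $\rho(uu^t)$ to $V^{(\lambda)}$, this yields $\|f - \mathrm{Id}_{V^{(\lambda)}}\|_{\mathrm{op}} \ll \beta$. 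Choosing $\beta > 0$ small enough (depending only on $H$ and $V$) so that this quantity is at most $1/2$, a Neumann-series argument produces the inverse with $\|f^{-1}\| \leq 2$, and $\|f\| \leq 2$ trivially.

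The argument is essentially routine linear algebra; the only point that requires care is ensuring that the inner product on $V$ behaves well with the adjoint relation $\rho(h^t) = \rho(h)^t$ for $h \in H$, which is exactly the content of Theorem~\ref{thm:good basis} (or rather its $A$-diagonalizable version, extended to $U$ by the polar/Cartan structure). I do not foresee a substantial obstacle, though one should be slightly careful about the constant $\beta$ in~(2): it must be chosen uniformly in $u \in \mathsf{B}_\beta^U$, which is fine because the perturbation bound depends only on the norm of $u - e$ and on $\rho$.
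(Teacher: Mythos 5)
Your proof of part~(1) is essentially the paper's proof in different clothing: the orthogonal decomposition you obtain by matching kernels, namely $(u^t.V^{(\lambda)})^\perp = u^{-1}.\bigl(\oplus_{\mu<\lambda}V_\mu\bigr)$, is exactly the decomposition $V = u^t.V^{(\lambda)}\oplus u^{-1}.\bigl(\oplus_{\mu<\lambda}V_\mu\bigr)$ the paper writes down directly, and the rest of the verification is identical.

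For part~(2) your route genuinely diverges from the paper's, and the divergence matters. You perturb around $u=e$, where $f=\mathrm{Id}$, and invert by Neumann series; this only certifies invertibility and the bound $\max\{\|f\|,\|f^{-1}\|\}\ll 1$ for $u$ in a possibly very small ball $B_\beta^U$. The paper instead shows injectivity of $f$ directly for \emph{every} $u\in B_1^U$: if $f(w)=0$ then $uu^t.w\in\oplus_{\mu<\lambda}V_\mu$, hence $\langle uu^t.w,w\rangle = 0$, and since $u^t=u^*$ this reads $\|u^t.w\|^2=0$, forcing $w=0$. Combined with the observation that $\|f\|^2$ and $\det f$ are polynomials in $u$ (so they are bounded above, and $|\det f|$ is bounded below by compactness once nonvanishing on $B_1^U$ is known), this yields the operator-norm bounds uniformly over the full unit ball. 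The distinction is not cosmetic: the sentence immediately after the lemma applies the bi-Lipschitz equivalence of $\pi^{(\lambda)}_u$ and $\pi_{u^t.V^{(\lambda)}}$ for $u\in B_1^U$, and the projection results of Sections~\ref{sec:dim 1 or codim 1}--\ref{sec:optimal} are all stated over $\mathsf{B}_1^U$. Your proof would therefore satisfy the lemma as literally worded (it does produce \emph{some} $\beta>0$), but it does not deliver what the surrounding arguments actually consume. If you want a self-contained fix along your lines, you would need to prove injectivity on all of $B_1^U$ first (as the paper does) rather than rely on proximity to the identity.
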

\begin{proof}
For property~(1), note that we have the following orthogonal decomposition
\begin{align*}
    V = u^t. V^{(\lambda)} \oplus u^{-1}. (\oplus_{\mu < \lambda} V_\mu).
\end{align*}
If we write $v = u^t.w + u^{-1}.w'$ where $w \in V^{(\lambda)}$ and $w' \in \oplus_{\mu < \lambda} V_\mu$, then we have
\begin{align*}
    \pi^{(\lambda)}_u(v) = \pi_{V^{(\lambda)}}((uu^t).w),\\
    (u^{-1})^t\pi_{u^t.V^{(\lambda)}}(v) = w,
\end{align*}
which proves property~(1). 

For property~(2), note that both $\|f\|^2$ and $\det f$ are polynomial on $u$ and $u \in B_1^U$, it suffices to show that $f$ is invertible for all $u \in B_1^U$. Suppose $f(w) = 0$ for some $w \in V^{(\lambda)}$. Then we have $uu^t.w \in \oplus_{\mu < \lambda} V_\mu$ and in particular, 
\begin{align*}
    \langle uu^t.w , w \rangle = 0
\end{align*}
where $\langle \cdot , \cdot \rangle$ is the inner product compatible to the weight space decomposition chosen in the beginning of the section. Note that $u^t$ is the adjoint operator of $u$ under this inner product, we have
\begin{align*}
    \langle u^t.w , u^t.w \rangle = 0
\end{align*}
which implies $u^t.w = 0$ hence $w = 0$. This shows that $f$ is injective and therefore invertible. 
\end{proof}

The above lemma implies that the projections $\pi^{(\lambda)}_u$ and $\pi_{u^t.V^{(\lambda)}}$ differs by a bi-Lipschitz map. Moreover, when we pick $u \in B_1^U$, the Lipschitz constants depend only on the ambient representation. Therefore, the estimate on covering numbers after projections $\pi^{(\lambda)}_u$ and $\pi_{u^t.V^{(\lambda)}}$ are equivalent up to an absolute constant. We will not distinguish them in this paper. 

\subsection{Non-degenerate measures on Grassmannians}\label{subsec:nondegeneracydef}
Recall that we identify $V$ with $\R^n$ under the basis given by Theorem~\ref{thm:good basis}. For two subspaces $U$ and $W$ of $V$, we define
\begin{align*}
    d_{\measuredangle}(U, W) = \|u_1 \wedge \cdots \wedge u_k \wedge w_1 \wedge \cdots \wedge w_l\|
\end{align*}
where $\{u_i\}_{i = 1}^k$ and $\{w_j\}_{j = 1}^l$ are orthonormal basis of $U$ and $W$ respectively. This is independent to the choice of $\{u_i\}_{i = 1}^k$ and $\{w_j\}_{j = 1}^l$. Similarly, for subspaces $V_1, \ldots, V_q$ of $V$, we define
\begin{align*}
    d_{\measuredangle}(V_1, \ldots, V_q) = \|\mathbf{v}_1 \wedge \cdots \wedge \mathbf{v}_q\|
\end{align*}
where $\mathbf{v}_i$'s are wedge of an orthonormal basis of $V_i$. This is independent to the choice of $\{\mathbf{v}_i\}_{i = 1}^q$. 

Let $W \in \Gr(n, n - k)$, we define
\begin{align*}
    \mathcal{V}(W, \rho) = \{U \in \Gr(n, k): d_{\measuredangle}(U, W) \leq \rho\}.
\end{align*}
If $\rho = 0$, $\mathcal{V}(W, 0)$ is the collection of $k$-dimensional subspaces intersecting $W$ non-trivially. It belongs
to the class of algebraic subvarieties of the grassmannian known as Schubert varieties. 

\begin{definition}[$(C, \kappa)$-non-degeneracy]
    
For a probability measure $\sigma$ on $\Gr(n, m)$, we say it satisfies $(C, \kappa)$-non-degeneracy condition at scales larger than $\delta$ if the following holds. 

There exist constants $C \geq 1$, $\kappa > 0$ such that for all $\rho \geq \delta$ and all $W \in \Gr(n, n - m)$, one has
\begin{align}\label{eqn:Non-degenerate}
    \sigma(\mathcal{V}(W, \rho)) \leq C\rho^{\kappa}.
\end{align}
\end{definition}
\begin{remark}
Most literature use the terminology non-concentration condition. We use the terminology non-degeneracy here to distinguish it from the non-concentration condition on the set or the measure in the representation space $V$. Also, due to the polynomial nature of unipotent flow, this condition corresponds to non-degeneracy for some polynomials, as we will show in the next lemma. 
\end{remark}

In practice, we always allow $C = O(\delta^{-O(\epsilon)})$. 
We will say a family of subspaces satisfies the non-degeneracy condition if the measure and scale are clear in the context. 

In this note, we will only consider the following family of subspaces. Let $(\psi, V)$ be an irreducible representation of semisimple Lie group $H$. Recall that we set $V^{(\lambda)} = \oplus_{\mu \geq \lambda} V_{\mu}$. We will mainly consider the family $\{u^t.V^{(\lambda)}\}_{u \in U}$. The associated measure is the push forward of $m_U|_{\mathsf{B}^U_1}$. The following lemma relate the non-degeneracy condition of this measure to non-degeneracy of some polynomial. 

\begin{lemma}\label{lem:angle proportional to poly}
    For all $r > 0$, there exists a constant $A_r > 1$ depending only on $r$, $V$ and $H$ so that the following holds.  For all $q \geq 2$ and subspaces $V_1, \ldots, V_q$ of $V$, there exists a polynomial $P$ on $H^q$ so that for all $(h_1, \ldots, h_q) \in (B_{r}^{H})^q$
    \begin{align*}
        \frac{1}{A_r}d_{\measuredangle}(h_1.V_1, \ldots, h_q.V_q)^2 \leq P(h_1, \ldots, h_q) \leq A_r d_{\measuredangle}(h_1.V_1, \ldots, h_q.V_q)^2.
    \end{align*}
\end{lemma}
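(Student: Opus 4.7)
The plan is to simply take
\[
P(h_1,\ldots,h_q) = \|h_1.\mathbf{v}_1 \wedge \cdots \wedge h_q.\mathbf{v}_q\|^2,
\]
where $\mathbf{v}_i = v_{i,1}\wedge\cdots\wedge v_{i,k_i}$ is the wedge of a fixed orthonormal basis of $V_i$, and to verify that $P$ satisfies the two desired properties. First I would check that $P$ really is a polynomial on $H^q$. Since $H$ is a semisimple linear algebraic group and $(\psi,V)$ is a rational representation, the entries of $\psi(h)$ are polynomials in the matrix entries of $h$; hence for each fixed $\mathbf{v}_i\in \wedge^{k_i}V$, the coordinates of $h_i.\mathbf{v}_i$ in a fixed basis of $\wedge^{k_i}V$ are polynomials in the entries of $h_i$. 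The wedge product is multilinear and the squared norm in an orthonormal basis is the sum of squares of coordinates, so $P$ is indeed polynomial in $(h_1,\ldots,h_q)$.

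Next I would relate $P$ to $d_\measuredangle(h_1.V_1,\ldots,h_q.V_q)^2$. Let $\mathbf{v}_i'$ denote the wedge of an orthonormal basis of $h_i.V_i$; then $h_i.\mathbf{v}_i = \epsilon_i \|h_i.\mathbf{v}_i\|\,\mathbf{v}_i'$ with $\epsilon_i\in\{\pm 1\}$, so
\[
\|h_1.\mathbf{v}_1\wedge\cdots\wedge h_q.\mathbf{v}_q\|^2
= \Bigl(\prod_{i=1}^q\|h_i.\mathbf{v}_i\|^2\Bigr)\, d_\measuredangle(h_1.V_1,\ldots,h_q.V_q)^2.
\]
Thus the desired two-sided comparison between $P$ and $d_\measuredangle^2$ will follow once I show the uniform estimate
\[
\tfrac{1}{A_r}\le \|h_i.\mathbf{v}_i\|^2\le A_r \qquad\text{for all } h_i\in B_r^H,
\]
with a constant depending only on $r$, $V$, $H$ (and the fixed $V_i$, but $\|\mathbf{v}_i\|=1$ is built into the normalization and cancels in the final inequality).

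The upper bound is immediate: $\|h_i.\mathbf{v}_i\|\le \|\psi(h_i)\|_{\mathrm{op}}^{k_i}$, and $\psi$ being continuous on the bounded set $B_r^H$ gives a uniform bound. For the lower bound, use that elements of $H$ are invertible and that $\mathbf{v}_i=\psi(h_i^{-1})\,h_i.\mathbf{v}_i$, whence
\[
1=\|\mathbf{v}_i\|\le \|\psi(h_i^{-1})\|_{\mathrm{op}}^{k_i}\,\|h_i.\mathbf{v}_i\|,
\]
so $\|h_i.\mathbf{v}_i\|\ge \|\psi(h_i^{-1})\|_{\mathrm{op}}^{-k_i}$. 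Since $B_r^H$ is a compact neighborhood of the identity (in the intended interpretation of this ball), $h_i\mapsto \|\psi(h_i^{-1})\|_{\mathrm{op}}$ is also uniformly bounded there. Combining the two bounds on $\|h_i.\mathbf{v}_i\|^2$ with the identity above yields the claim with a constant $A_r$ depending only on $r$, $V$, and $H$.

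The only point that needs care is the uniform control of $\|\psi(h_i^{-1})\|_{\mathrm{op}}$ on $B_r^H$; this is the reason the constant $A_r$ is allowed to depend on $r$. Other than this continuity-plus-compactness observation, the argument is purely formal, so I do not expect any substantive obstacle.
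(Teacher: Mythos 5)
Your proposal uses exactly the polynomial $P(h_1,\ldots,h_q)=\|h_1.\mathbf{v}_1\wedge\cdots\wedge h_q.\mathbf{v}_q\|^2$ that the paper uses, and the argument — the identity relating $P$ to $\prod\|h_i.\mathbf{v}_i\|^2\cdot d_\measuredangle^2$ together with uniform two-sided bounds on $\|h_i.\mathbf{v}_i\|$ via invertibility and relative compactness of $B_r^H$ — is precisely what the paper's one-line proof is alluding to. This is the same approach, just written out in full.
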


\begin{proof}
    Let $\mathbf{v}_i$ be the wedge of an orthonormal basis of $V_i$ and let 
    \begin{align*}
        P(h_1, \ldots, h_q) = \|h_1.\mathbf{v}_1\wedge \cdots \wedge h_q.\mathbf{v}_q\|^2.
    \end{align*}
    The rest follows from the fact that $h_i$'s are invertible and $(B_r^{H})^q$ is relatively compact. 
\end{proof}

For the families of subspaces $u^-.V^{(\lambda)}$, we have the following lemma. It says for generic $u_1^-$ and $u_2^-$, $u_1^-.V^{(\lambda)}$ and $u_2^-.V^{(\lambda)}$ are in general position. 
\begin{lemma}\label{lem:General two piece transverse}
    We have the following properties for the family of subspaces $\{u^-.V^{(\lambda)}\}$. 
    \begin{enumerate}
        \item If $2\dim V^{(\lambda)} \leq \dim V$, then the set of $(u_1, u_2)$ so that 
        \begin{align*}
            u_1^-.V^{(\lambda)} \cap u_2^-.V^{(\lambda)}\neq \{0\}
        \end{align*}
        is a proper Zariski closed subset of $U^-$. 
        \item If $2\dim V^{(\lambda)} > \dim V$, then the set of $(u_1, u_2)$ so that 
        \begin{align*}
            u_1^-.V^{(\lambda)} + u_2^-.V^{(\lambda)} \neq V
        \end{align*}
        is a proper Zariski closed subset of $U^-$.  
    \end{enumerate}
\end{lemma}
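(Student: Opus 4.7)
My plan is to handle both parts by (i) observing Zariski closedness, (ii) reducing properness to the existence of a single witness in $U^-$, and (iii) constructing that witness via the Bruhat decomposition and the longest Weyl element.

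Closedness is straightforward: in case~(1), the condition $u_1^-.V^{(\lambda)}\cap u_2^-.V^{(\lambda)}\neq\{0\}$ is the vanishing of $(u_1^-.\mathbf{v})\wedge(u_2^-.\mathbf{v})\in\wedge^{2\dim V^{(\lambda)}}V$ (which is a nonzero space since $2\dim V^{(\lambda)}\leq\dim V$), where $\mathbf{v}$ is a pure wedge of a basis of $V^{(\lambda)}$; in case~(2), the condition $u_1^-.V^{(\lambda)}+u_2^-.V^{(\lambda)}\neq V$ is the simultaneous vanishing of all $(\dim V)$-minors of the matrix whose columns are the $u_i^-$-images of a basis of $V^{(\lambda)}$. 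Both are polynomial equations in the coordinates of $(u_1^-,u_2^-)$.

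For properness, the identity $u_1^-.V^{(\lambda)}\cap u_2^-.V^{(\lambda)} = u_1^-\bigl(V^{(\lambda)}\cap(u_1^-)^{-1}u_2^-.V^{(\lambda)}\bigr)$, and the analogous one for sums, reduces both cases to producing a single $u\in U^-$ for which the condition fails when paired with the identity. The key point is that the parabolic $P = M_0 A U^+$ stabilizes $V^{(\lambda)}$—the subgroups $M_0$ and $A$ preserve each weight space, and $U^+$ only raises weights—so the big Bruhat cell $U^-\cdot P$ is Zariski open and dense in $H$, and $g.V^{(\lambda)} = u^-.V^{(\lambda)}$ whenever $g = u^-p\in U^-P$. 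Consequently, it suffices to exhibit \emph{any} $g\in H$ with the required property: the complement of the closed condition will then be a non-empty Zariski open subset of $H$, and any two such subsets of the irreducible variety $H$ must meet.

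I would take $g=\tilde w_0$, a representative of the longest Weyl element in $N_H(A)$. A weight computation gives $\tilde w_0.V^{(\lambda)} = \bigoplus_{\mu\geq\lambda}V_{w_0.\mu}$, which in the rank-one setting relevant to this paper (where $w_0$ negates all weights) equals $\bigoplus_{\mu\leq -\lambda}V_\mu$. Then $V^{(\lambda)}\cap\tilde w_0.V^{(\lambda)} = \bigoplus_{\lambda\leq\mu\leq-\lambda}V_\mu$ is zero precisely when $\lambda>0$, and $V^{(\lambda)}+\tilde w_0.V^{(\lambda)}$ exhausts all weights precisely when $\lambda\leq 0$. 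For the representations $\mathfrak r_1,\mathfrak r_2$ (weights $-2,-1,0,1,2$ with symmetric multiplicities $1,2,3,2,1$), the hypothesis $2\dim V^{(\lambda)}\leq\dim V=9$ forces $\lambda\in\{1,2\}$ and the opposite hypothesis forces $\lambda\in\{-2,-1,0\}$, which matches the two regimes exactly. The main care is in this weight-disjointness step; in the generality of arbitrary irreducible representations one would either perform a more delicate weight analysis, or dualize case~(2) by passing to $V^*$ and the subspace $(V^{(\lambda)})^\perp$ of dimension $\dim V-\dim V^{(\lambda)}$, thereby converting its sum-condition into an intersection-condition of the same shape as case~(1).
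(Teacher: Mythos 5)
Your proof is correct and follows essentially the same strategy as the paper's: Zariski closedness from a polynomial criterion, reduction to a single witness in $H$ via $P$-invariance of $V^{(\lambda)}$ and density of the big Bruhat cell $U^-P$, and taking the longest Weyl element $w_0$ as that witness. The one place you diverge is the final verification that the condition fails at $w_0$. You appeal to the explicit weight multiplicities $1,2,3,2,1$ of $\mathfrak r$ to see that $2\dim V^{(\lambda)}\leq\dim V$ forces $\lambda\in\{1,2\}$, so the overlap $\bigoplus_{\lambda\leq\mu\leq-\lambda}V_\mu$ is vacuously zero; the paper instead gives a uniform dimension count (valid whenever $w_0$ negates weights): $V^{(\lambda)}$ and $w_0.V^{(\lambda)}$ are equidimensional and, when $\lambda\leq 0$, together span $V$, so $2\dim V^{(\lambda)}\leq\dim V$ forces the intersection to have dimension $\leq 0$. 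Since the lemma is stated for a \emph{general} irreducible $V$, this is exactly the ``more delicate weight analysis'' you flag as needed, and it is short. For property~(2) you observe that $w_0$ works directly (the sum exhausts all weights when $\lambda\leq 0$), whereas the paper dualizes the sum condition to an intersection condition for $\bigoplus_{\mu<\lambda}V_\mu$ over $U^+$ and conjugates by $w_0$ to reduce to property~(1); both routes are valid and of comparable length.
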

\begin{proof}
    We first prove property~(1). Let $\mathbf{v}$ be the wedge of an orthonormal basis of $V^{(\lambda)}$. By Lemma~\ref{lem:angle proportional to poly},  $d_{\measuredangle}(u_1^-. V^{(\lambda)}, u_2^-. V^{(\lambda)})^2$ is proportional to $\|u_1^-.\mathbf{v} \wedge u_2^-.\mathbf{v}\|^2$ which is polynomial in $u_1^-$ and $u_2^-$. It suffices to show the latter is a non-zero polynomial. Suppose not, then for all $u^- \in U^-$, we have
    \begin{align*}
        \|u^-.\mathbf{v} \wedge \mathbf{v}\|^2 = 0
    \end{align*}
    Consider the following Zariski closed subset $\mathcal{V}$ of $\mathbf{H}(\R)$:
    \begin{align*}
        \mathcal{V} = \{h \in \mathbf{H}(\R): \|h.\mathbf{v} \wedge \mathbf{v}\|^2 = 0\} = \{h \in \mathbf{H}(\R): h.V^{(\lambda)} \cap V^{(\lambda)} \neq \{0\}\}.
    \end{align*}
    Note that since $V^{(\lambda)}$ is sum of weight spaces, $A$ and $M$ leave $V^{(\lambda)}$ invariant. Moreover, since $V^{(\lambda)} = \oplus_{\mu \geq \lambda} V_{\mu}$, the subgroup $U^+$ leaves $V^{(\lambda)}$ invariant. Therefore, for all $h \in U^-M_0AU^+$, it lies in $\mathcal{V}$. Since $U^-M_0AU^+$ is a Zariski dense subset of $H$, we have $\mathcal{V} = \mathbf{H}(\R)$. Let $w \in \mathbf{H}(\R)$ be a representative of the longest element in the Weyl group $W = \mathbf{N_H(A)}/\mathbf{C_H(A)}$. Note that since $V^{(\lambda)}$ is a sum of weight space associated to $\LieA$, $w.V^{(\lambda)}$ does not depend on the choice of the representative. Also, we have 
    \begin{align*}
        w.V^{(\lambda)} = \oplus_{\mu \leq -\lambda} V_{\mu}
    \end{align*}
    intersect $V^{(\lambda)}$ trivially by the dimension condition. This leads to a contradiction. 

    For property~(2), note that the condition $2\dim V^{(\lambda)} > \dim V$ is equivalent to $2\dim \oplus_{\mu < \lambda} V_\mu < \dim V$. Also, the conclusion that $u_1^-.V^{(\lambda)} + u_2^-.V^{(\lambda)} = V$ holds for generic $u_1^-, u_2^-$ is equivalent to the statement that
    \begin{align*}
        u_1.(\oplus_{\mu < \lambda} V_\mu) \cap u_2.(\oplus_{\mu < \lambda} V_\mu) = \{0\}
    \end{align*}
    holds for generic $u_1, u_2$. 

    Conjugating via the longest element in the Weyl group, the above proof of property~(1) works in same words if we replace $u^-.V^{(\lambda)}$ by $u^+.(\oplus_{\mu \leq \lambda} V_\mu)$. This completes the proof of property~(2). 
\end{proof}

\section{Projections to lines and hyperplanes in irreducible representations}\label{sec:dim 1 or codim 1}
This section is devoted to the subcritical estimates for projections to the families of lines of the shape $\{u^-.\ell\}_{u^- \in U^-}$ in irreducible representations of semisimple Lie groups. We also discuss its $\codim 1$ analog, projections to the families of hyperplanes of the shape $\{u^-.W\}_{u^- \in U^-}$. Roughly speaking, we provide algebraic criteria to the following estimates. For most of $u^-$, we have
\begin{align*}
    |\pi_{u^-.\ell}(A)|_\delta \geq |A|_{\delta}^{\frac{1}{\dim V}}, \quad |\pi_{u^-.W}(A)|_\delta \geq |A|_{\delta}^{\frac{\dim V - 1}{\dim V}}.
\end{align*}We will make use of the polynomial nature of actions by unipotent groups. 

As in Section~\ref{sec:prepareIrrep}, we set $H = \mathbf{H}(\R)^\circ$ where $\mathbf{H}$ is a semisimple connected real linear algebraic group and $V$ is an irreducible representation of $H$ with $\dim \Fix(U) = 1$. Let $\chi$ to be the highest weight of $V$. Note that $V_\chi = \Fix(U)$. We fix an inner product and a basis from \cref{thm:good basis} to identify $V$ with $\R^n$. Under this basis, the weight spaces are orthogonal and $\rho(U^-) = \rho(U)^t$ where $(\cdot)^t$ is the matrix transpose. Therefore, the families $\{u^-.\ell\}_{u^- \in U^-}$ and $\{u^-.W\}_{u^- \in U^-}$ are the same as $\{u^t.\ell\}_{u \in U}$ and $\{u^t.W\}_{u \in U}$ respectively. We will mainly use the latter notations.

Recall that we set $\mathsf{B}^U_1 = \exp(B^{\LieU}_1(0))$ and $\mathsf{B}^{U^-}_1 = \exp(B^{\LieU^-}_1(0))$. We also set $m_U$ and $m_{U^-}$ to be the Haar measure on $U$ and $U^-$ respectively so that $m_U(\mathsf{B}^U_1) = 1$ and $m_{U^-}(\mathsf{B}^{U^-}_1) = 1$. 

The following two theorems are the main results of this section. 

\begin{theorem}\label{thm:subcritical 9 to 1}
    Let $v \in V$ be a nonzero unit vector satisfying 
    \begin{align*}
        \pi_{V_{\chi}}(v) \neq 0.
    \end{align*}
    Then there exist $M > 1$ depending only on $V$ and $E > 1$ depending only on the dimension of $U$ and $V$ so that the following holds.  
    
    For all $0 < \epsilon \ll 1$, $\delta \ll_\epsilon \|\pi_{V_\chi}(v)\|^{\frac{E}{\epsilon}}$ and $A \subseteq B^{\R^n}_1(0)$, we define the following exceptional set:
    \begin{align*}
        \mathcal{E}(A) = \biggl\{u \in \mathsf{B}^U_1: \exists A' \subseteq A \text{ with } |A'|_\delta \geq \delta^\epsilon|A|_\delta \text{ and } |\pi_{u^t.\R v}(A')|_\delta < \delta^{M\epsilon}|A|_{\delta}^{\frac{1}{n}} \biggr\}.
    \end{align*}

    We have
    \begin{align*}
        m_U(\mathcal{E}(A)) \leq \delta^{\epsilon}.
    \end{align*}
\end{theorem}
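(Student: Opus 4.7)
The proof proceeds in two main steps: (i) an algebraic spanning lemma that ensures the family of lines $\{u^t.\R v\}_{u \in U}$ is non-degenerate under the hypothesis $\pi_{V_\chi}(v) \neq 0$, and (ii) a Kleinbock--Margulis-style non-degeneracy estimate combined with a pigeonhole/$n$-tuple argument to deduce the discretized projection bound.

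For Step (i), the key algebraic claim I would establish is: if $\pi_{V_\chi}(v) \neq 0$, then $\mathrm{span}\{u^-.v : u^- \in U^-\} = V$. Since the Taylor expansion $\exp(X).v = v + X.v + \tfrac{1}{2}X^2.v + \cdots$ shows that this span coincides with the $U(\LieU^-)$-submodule $U(\LieU^-)\cdot v$, I would argue by passing to the associated graded. Write $v = v_\chi + v_{<\chi}$ with $v_\chi \in V_\chi\setminus\{0\}$, and equip $V$ with the weight filtration $F_i = \bigoplus_{\mu:\mathrm{depth}(\chi - \mu) \geq i}V_\mu$, on which each $\LieU^-$ root vector has degree $+1$. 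The class of $v$ in $F_0/F_1 \cong V_\chi$ is exactly $v_\chi \neq 0$, and the induced $U(\LieU^-)$-action on $\mathrm{gr}(V)$ agrees with the original $U(\LieU^-)$-action on weight spaces. By the theorem of the highest weight, $U(\LieU^-) \cdot v_\chi = V$, so $\mathrm{gr}(U(\LieU^-) \cdot v)$ contains $U(\LieU^-) \cdot v_\chi = \mathrm{gr}(V)$, which forces $U(\LieU^-) \cdot v = V$. As a consequence, the multi-linear polynomial
\begin{align*}
P(u_1,\ldots,u_n) = \|u_1^t.v \wedge u_2^t.v \wedge \cdots \wedge u_n^t.v\|^2
\end{align*}
on $U^n$ is not identically zero; a compactness/bilinearity argument on the Grassmannian of hyperplanes yields the quantitative lower bound $\sup_{(u_i) \in (\mathsf{B}_1^U)^n} P \gtrsim \|\pi_{V_\chi}(v)\|^{2E_0}$ with $E_0$ depending only on $H,V$.

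For Step (ii), applying the Kleinbock--Margulis $(C,\alpha)$-good property to the bounded-degree polynomial $P$ gives an $n$-tuple non-degeneracy estimate
\begin{align*}
m_U^{\otimes n}\bigl(\{(u_1,\ldots,u_n): \|u_1^t.v\wedge\cdots\wedge u_n^t.v\| \leq \rho\}\bigr) \ll \|\pi_{V_\chi}(v)\|^{-E_1}\rho^\kappa,
\end{align*}
with $\kappa > 0$ depending only on $V,H$ and $E_1$ on $\dim U$ and $V$. I would then conclude as follows. Whenever $(u_1,\ldots,u_n)$ is a \emph{spanning} $n$-tuple with wedge $\geq \rho$, the combined map $\pi_{u_1^t.\R v} \times \cdots \times \pi_{u_n^t.\R v}: V \to \R^n$ is $\rho^{-O(1)}$-bi-Lipschitz on its image, giving $|A|_\delta \leq \rho^{-O(1)}\prod_i |\pi_{u_i^t.\R v}(A)|_\delta$ and hence $\max_i |\pi_{u_i^t.\R v}(A)|_\delta \geq \rho^{O(1)}|A|_\delta^{1/n}$. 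Suppose $m_U(\mathcal{E}(A)) > \delta^\epsilon$. By iterated Fubini applied to the incidence set $\{(u,x) : x \in A'(u)\}$, I would extract an $n$-tuple $(u_1,\ldots,u_n) \in \mathcal{E}(A)^n$ sharing a common subset $A'' \subseteq \bigcap_i A'(u_i)$ with $|A''|_\delta \geq \delta^{O(\epsilon)}|A|_\delta$; combining with the non-degeneracy to ensure the $n$-tuple is spanning at scale $\rho = \delta^{O(\epsilon)}$ yields $\max_i |\pi_{u_i^t.\R v}(A'')|_\delta \geq \delta^{O(\epsilon)}|A|_\delta^{1/n}$, contradicting $|\pi_{u_i^t.\R v}(A'')|_\delta \leq |\pi_{u_i^t.\R v}(A'(u_i))|_\delta < \delta^{M\epsilon}|A|_\delta^{1/n}$ for $M$ chosen sufficiently large in terms of $\kappa, n, E_0$.

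The main obstacle I anticipate is the delicate pigeonhole step for extracting a common large $A'' \subseteq A$ from the $n$ distinct witnesses $A'(u_i)$: iterated Fubini loses a factor of $\delta^{O(\epsilon)}$ at each of the $n$ stages, and these losses must be synchronized with the $\rho$-loss from the non-degeneracy (itself scaling as $\|\pi_{V_\chi}(v)\|^{-E_1}\rho^{\kappa}$) so that everything absorbs into the final $\delta^\epsilon$ exceptional-set bound. Tracking this absorption is precisely what forces both the scale restriction $\delta \ll_\epsilon \|\pi_{V_\chi}(v)\|^{E/\epsilon}$ (to swallow the $\|\pi_{V_\chi}(v)\|^{-E_1}$ factor) and the value of $M$ (to compensate the $\rho$-losses in the product bound).
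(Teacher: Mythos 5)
Your proposal is correct in its broad strokes but takes a genuinely different route from the paper at the final step, and there are two places where the execution needs more care than the sketch suggests.

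On Step (i): your spanning lemma is the same algebraic fact the paper isolates as Lemma~\ref{lem:VariantOfShah}, namely that $\sup_{u^-\in\mathsf{B}_1^{U^-}}|\langle u^-.v, w\rangle| \gg \|\pi_{V_\chi}(v)\|^{\dim V}$ for any unit vectors $v,w$. Your argument via the weight filtration and the associated graded is clean and equivalent in content; the paper instead writes out the PBW monomials $\{\mathpzc{z}^J.e_\chi\}$ explicitly and observes that the matrix sending $w$ to the tuple of coefficients $\langle\mathpzc{z}^J.v, w\rangle$ is upper triangular with diagonal $\|\pi_{V_\chi}(v)\|$. The advantage of the paper's version is that it hands you the exponent $\dim V$ and a \emph{quantitative} lower bound directly. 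Your appeal to ``a compactness/bilinearity argument on the Grassmannian'' to get $\sup P \gtrsim \|\pi_{V_\chi}(v)\|^{2E_0}$ is the weakest point here: compactness of the unit sphere only gives a qualitative positive lower bound, not a polynomial rate in $\|\pi_{V_\chi}(v)\|$, so you still need an algebraic argument (either the paper's determinant computation or a Remez-type inequality applied jointly in $(u_1,\ldots,u_n,v)$) to pin down the power $E_0$ and justify the scale restriction $\delta\ll_\epsilon\|\pi_{V_\chi}(v)\|^{E/\epsilon}$.

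On Step (ii): this is where your route diverges. Having established the non-degeneracy, the paper simply applies \cite[Proposition 29]{He20} (recorded here as Proposition~\ref{pro:He subcritical}) as a black box: that proposition takes a $(\delta^{-\epsilon},\kappa)$-non-degenerate measure on the Grassmannian and produces exactly the subcritical exceptional-set bound with the exponent $m/n$, with constant $M$ depending only on $m,n,\alpha,\kappa$. You instead attempt to re-derive this proposition from scratch via an $n$-tuple extraction plus a Loomis--Whitney/bi-Lipschitz product bound. Your high-level strategy is sound, but the ``iterated Fubini'' step deserves more than a passing comment. Plain Fubini on the incidence set produces, at each stage, a fresh subset and a fresh direction, and the losses are not additive in $n$ as you suggest --- what actually works is a second-moment (Cauchy--Schwarz) argument on the quantity $\int\int|A'(u)\cap A'(u')|_\delta\,du\,du'$, which, when iterated, yields a common intersection $A''\subseteq\bigcap_i A'(u_i)$ of size $\gtrsim \delta^{O(2^n\epsilon)}|A|_\delta$ together with a set of candidate directions of measure $\gtrsim\delta^{O(2^n\epsilon)}$ from which a $\rho$-spanning $n$-tuple can be extracted via the non-degeneracy (Remez) bound, provided $\rho = \delta^{C\epsilon}$ for a sufficiently large constant $C$. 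The geometric (rather than linear) compounding in the exponent is absorbed into $M$ since $n$ is fixed, so the statement still comes out right --- but this is genuinely the technical content of He's Proposition 29, and re-proving it is both more work and more error-prone than citing it. The paper's choice to verify the non-degeneracy hypothesis (Theorem~\ref{thm:Non-degenerate-dim1}) and black-box the rest is the cleaner move, and it is also the pattern the paper reuses in Theorems \ref{thm:subcritical 9 to 8}, \ref{thm: subcritical 9 to 3}, and \ref{thm: subcritical 9 to 6}.
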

The codimension $1$ analog of Theorem~\ref{thm:subcritical 9 to 1} is of the following. 
\begin{theorem}\label{thm:subcritical 9 to 8}
    Let $W \in \Gr(n, n - 1)$ be a hyperplane with unit normal vector $\nu$ satisfying 
    \begin{align*}
        \pi_{V_{-\chi}}(\nu) \neq 0.
    \end{align*}    
    Then there exist $M > 1$ depending only on $V$ and $E > 1$ depending only on the dimension of $U$ and $V$ so that the following holds.  
    
    For all $0 < \epsilon \ll 1$, $\delta \ll_\epsilon \|\pi_{V_{-\chi}}(\nu)\|^{\frac{E}{\epsilon}}$ and $A \subseteq B^{\R^n}_1(0)$, we define the following exceptional set:
    \begin{align*}
        \mathcal{E}(A) = \biggl\{u \in \mathsf{B}^{U}_1: \exists A' \subseteq A \text{ with } |A'|_\delta \geq \delta^\epsilon|A|_\delta \text{ and } |\pi_{u^t.W}(A')|_\delta < \delta^{M\epsilon}|A|_{\delta}^{\frac{n - 1}{n}} \biggr\}.
    \end{align*}

    We have
    \begin{align*}
        m_U(\mathcal{E}(A)) \leq \delta^{\epsilon}.
    \end{align*}
\end{theorem}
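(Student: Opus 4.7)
The plan is to prove \cref{thm:subcritical 9 to 8} in close parallel to \cref{thm:subcritical 9 to 1}, swapping the roles of the highest and lowest weight spaces and of lines and hyperplanes. The first step is to identify the correct family of normal directions: since $\langle u^{-t}.y,\nu\rangle = \langle y, u^{-1}.\nu\rangle$, the unit normal to $u^t.W$ is proportional to $u^{-1}.\nu$, not $u^t.\nu$. Thus $u^t.W = (u^{-1}.\nu)^\perp$, and after the bi-Lipschitz reparametrization $u \mapsto u^{-1}$ (which distorts $m_U$ on $\mathsf{B}^U_1$ only by a bounded factor), the family of hyperplanes $\{u^t.W\}_{u \in \mathsf{B}^U_1}$ is parametrized by the $U$-orbit of $\nu$. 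This is the step at which $\pi_{V_{-\chi}}(\nu) \neq 0$ becomes the natural dual of the hypothesis $\pi_{V_\chi}(v) \neq 0$ in \cref{thm:subcritical 9 to 1}.

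The second step will establish polynomial non-concentration of the orbit map $\Phi: u \mapsto [u.\nu] \in \mathbb{P}(V)$. In the ordered weight basis furnished by \cref{thm:good basis}, $U$ acts by strictly upper-triangular matrices, so the $V_{-\chi}$-component of $u.\nu$ is constantly $\nu_{-\chi} := \pi_{V_{-\chi}}(\nu)$, while the higher weight components are nontrivial polynomial functions of $u$; indeed, the leading-order contribution in $X = \log u$ is $X.\nu_{-\chi} \in \bigoplus_\alpha V_{-\chi+\alpha}$, which is nonzero by irreducibility of $V$ together with $\nu_{-\chi} \neq 0$. Consequently, the polynomial $P_w(u) = \|u.\nu \wedge w\|^2$ is nontrivial for every unit $w \in V$, with a quantitative lower bound controlled by $\|\nu_{-\chi}\|$. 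A Remez/{\L}ojasiewicz-type polynomial non-concentration estimate on $U$ (in the spirit of \cref{lem:angle proportional to poly} and the argument establishing \cref{thm:subcritical 9 to 1}) will then yield
\[
m_U\bigl(\{u \in \mathsf{B}^U_1 : d_{\measuredangle}(u.\nu, w) \leq \rho\}\bigr) \ll \|\nu_{-\chi}\|^{-O(1)} \rho^\kappa
\]
uniformly in unit $w \in V$ and $\rho \geq \delta$, with $\kappa > 0$ depending only on $V$ and $H$. The explicit dependence on $\|\nu_{-\chi}\|$ is exactly what forces the scale restriction $\delta \ll_\epsilon \|\pi_{V_{-\chi}}(\nu)\|^{E/\epsilon}$ in the conclusion.

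The third step will convert this non-concentration of the family of normals into the claimed subcritical bound by a pigeonhole argument. If $u \in \mathcal{E}(A)$, there is $A' \subseteq A$ with $|A'|_\delta \geq \delta^\epsilon |A|_\delta$ that can be covered by at most $N := \delta^{M\epsilon}|A|_\delta^{(n-1)/n}$ many $\delta$-tubes parallel to $u.\nu$; pigeonholing in these tubes produces $\gg N^{-1}|A'|_\delta^2$ pairs $(x,y) \in A' \times A'$ with $x-y$ in a $\delta$-neighborhood of the line $\R u.\nu$. Summing these incidences over $u \in \mathcal{E}(A)$, switching the order of integration, and invoking the Step~2 non-concentration estimate with $w = (x-y)/\|x-y\|$ for each pair bounds the total contribution and, for $M$ large relative to $\kappa$ and $E$ large relative to $\dim U$ and $\dim V$, forces $m_U(\mathcal{E}(A)) \leq \delta^\epsilon$.

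The main obstacle is the third step. Unlike the line-projection case, the subcritical covering estimate for hyperplane projections does not follow from a single off-the-shelf discretized projection theorem, and I expect to either adapt the multilinear flattening arguments of \cite{He20,Shm23a,BH24} to the codimension-one regime (essentially the $\mathbf{r} = (0,\ldots,0,1)$ case of the multislicing machinery of \cref{sec:proof of multislice}) or to reduce to \cref{thm:subcritical 9 to 1} using the self-duality of the representation $\mathfrak{r}$ together with a longest Weyl element $\dot w_0$ that simultaneously swaps $V_\chi$ with $V_{-\chi}$ and $U$ with $U^-$. A secondary bookkeeping difficulty is tracking the quantitative dependence on $\|\pi_{V_{-\chi}}(\nu)\|$ through both the non-concentration estimate of Step~2 and the pigeonhole of Step~3, since this dependence governs the admissible range of scales in the final statement.
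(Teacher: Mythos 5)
Your Steps 1 and 2 track the paper's route to \cref{thm:Non-degenerate-codim1}: the observation that $u^t.W = (u^{-1}.\nu)^\perp$ (slightly more careful than the paper's one-line remark, though the inversion $u \mapsto u^{-1}$ is a measure-preserving involution of $\mathsf{B}^U_1$, so it costs nothing), the constancy of the $V_{-\chi}$-component of $u.\nu$ together with the nontriviality of its higher-weight components is precisely the dual form of \cref{lem:VariantOfShah}, and Remez's inequality then yields $(C,\kappa)$-non-degeneracy with $C = O(\|\pi_{V_{-\chi}}(\nu)\|^{-\star})$, which is exactly the source of the scale restriction $\delta \ll_\epsilon \|\pi_{V_{-\chi}}(\nu)\|^{E/\epsilon}$.

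The gap is in Step 3, and it rests on a factual error: you assert that the subcritical covering estimate for hyperplane projections does not follow from a single off-the-shelf discretized projection theorem, but \cref{pro:He subcritical} (He's Proposition 29 in \cite{He20}) is stated for every $0 < m < n$ with non-degeneracy of the Grassmannian measure as its sole hypothesis, so $m = n-1$ is covered exactly as directly as $m = 1$. The paper deduces both \cref{thm:subcritical 9 to 1} and \cref{thm:subcritical 9 to 8} by combining \cref{thm:Non-degenerate-dim1}, resp.\ \cref{thm:Non-degenerate-codim1}, with that one proposition; no multislicing, Weyl-element self-duality, or ad hoc incidence argument is needed. Also, the pigeonhole you sketch does not close as written: after producing $\gtrsim N^{-1}|A'|_\delta^2$ pairs per $u \in \mathcal{E}(A)$ and switching the order of summation, the per-pair bound $m_U(\{u : d_{\measuredangle}(\R u.\nu, \R(x-y)) \leq \delta/\|x-y\|\}) \ll (\delta/\|x-y\|)^\kappa$ degenerates for close pairs, and with no Frostman hypothesis on $A$ you cannot control the resulting $\kappa$-energy $\sum_{x,y}(\delta/\|x-y\|)^\kappa$. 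Closing that requires the scale regularization and iteration already built into He's proof of \cref{pro:He subcritical}, so the detour would amount to reproving the result you can simply cite.
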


Note that $\dim \mathfrak{r}^{(2)} = 1$ and $\dim \mathfrak{r}^{(-1)} = 8 = \dim \mathfrak{r} - 1$, the results in this subsection hold for the families of projections $\{\pi_{u^t.\mathfrak{r}^{(\lambda)}}\}_{u \in \mathsf{B}_1^{U}}$ where $\lambda = 2$ or $\lambda = -1$. By the discussions in Subsection~\ref{subsec:Equivalent Projections}, same subcritical estimates hold for the families of projections $\{\pi^{(\lambda)}_{u} = \pi^{(\lambda)} \circ u\}_{u \in \mathsf{B}^U_1}$ where $\lambda = 2$ or $\lambda = -1$. 

We now proceed the proof of \cref{thm:subcritical 9 to 1,thm:subcritical 9 to 8}. Recall from Subsection~\ref{subsec:nondegeneracydef}, we say a probability measure $\sigma$ on $\Gr(n, m)$ satisfies $(C, \kappa)$-non-degeneracy condition at scales larger than $\delta$ if there exist constants $C \geq 1$ and $\kappa > 0$ so that for all $W \in \Gr(n, n - m)$, 
\begin{align*}
    \sigma(\mathcal{V}(W, \rho)) \leq C\rho^{\kappa}, \quad \forall \rho \geq \delta.
\end{align*}
Under the setting of \cref{thm:subcritical 9 to 1,thm:subcritical 9 to 8}, we will show that the push-forward of $m_U|_{\mathsf{B}^U_1}$ via $u \mapsto u^{t}.\R v \in \Gr(n, 1)$ or $u \mapsto u^{t}.W \in \Gr(n, n - 1)$ satisfies the non-degeneracy condition. The rest follows from \cite[Proposition 29]{He20} recorded in the following proposition. 

\begin{proposition}[{\cite[Proposition 29]{He20}}]\label{pro:He subcritical}
Given $0 < m \leq n$, $0 < \alpha < n$ and $\kappa > 0$, there exists $M > 1$
such that for all $0 < \epsilon < \kappa/M$, the following is true for all $\delta > 0$ sufficiently small depending on $\epsilon$. 

Let $A \subseteq \R^n$ be a subset contained in the unit ball and $\sigma$ a probability measure on
$\Gr(n, m)$. Let the exceptional set be defined as the following:
\begin{align*}
    \mathcal{E}(A) = \biggl\{V \in \Gr(n,m):\exists A' \subseteq A \text{ with } |A'|_\delta \geq \delta^\epsilon|A|_\delta \text{ and } |\pi_{V}(A')|_\delta < \delta^{M\epsilon}|A|_{\delta}^{\frac{m}{n}}\biggr\}.
\end{align*}

If $m < n$, suppose $\sigma$ satisfies $(\delta^{-\epsilon}, \kappa)$-non-degeneracy condition for all scales larger than $\delta$. Then
\begin{align*}
    \sigma(\mathcal{E}(A)) \leq \delta^\epsilon.
\end{align*}
\end{proposition}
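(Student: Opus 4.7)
The plan is to follow Bourgain's blueprint for discretized projection theorems: an exceptional $V \in \Gr(n,m)$ forces $A$ to concentrate in thin $\delta$-tubes parallel to $V^\perp$, and the $(\delta^{-\epsilon}, \kappa)$-non-degeneracy of $\sigma$ then limits how many such exceptional directions can coexist. First I would regularize $A$ at scale $\delta$ via multi-scale pigeonholing (cf.\ \cref{lem:regularization set}), passing to a subset $\tilde A \subseteq A$ whose $\delta$-covering number factorizes cleanly along a dyadic filtration of scales; at a cost of a $\delta^{O(\epsilon)}$-factor, this reduces the problem to the case where $\tilde A$ is approximately a regular $s$-set with $s = \log|A|_\delta/\log(1/\delta) \in (0, n)$, and the target becomes $|\pi_V \tilde A|_\delta \geq \delta^{M\epsilon}|\tilde A|_\delta^{m/n}$ for $\sigma$-generic $V$.

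Next, for each $V \in \mathcal{E}(A)$ fix the witness $A'_V$. Fiber pigeonholing for $\pi_V$ produces a family $\mathcal{T}_V$ of at most $\delta^{M\epsilon}|A|_\delta^{m/n}$ width-$\delta$ tubes parallel to $V^\perp$ that cover a positive proportion of $A'_V$; a second pigeonhole forces most of these tubes to be ``heavy'', each containing $\gtrsim \delta^{O(\epsilon)}|A|_\delta^{1-m/n}$ points of $A$. Thus every exceptional $V$ encodes a tube-concentration of $A$ in the $(n-m)$-direction $V^\perp$.

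The third and main step is to compare tube-concentrations across distinct exceptional subspaces. For $V_1, V_2 \in \mathcal{E}(A)$ with $d_{\measuredangle}(V_1^\perp, V_2^\perp) \geq \rho$, a tube from $\mathcal{T}_{V_1}$ intersected with a tube from $\mathcal{T}_{V_2}$ is a parallelepiped of volume $\sim \delta^n \rho^{-(n-m)}$, and an incidence double-count of $|A|_\delta$ against such cells, combined with the regularity from Step 1, bounds the $\sigma \times \sigma$-mass of exceptional pairs at angular scale $\geq \rho$. Summing this dyadically over $\rho \in [\delta, 1]$ and plugging in $\sigma(\mathcal{V}(W,\rho)) \leq \delta^{-\epsilon}\rho^\kappa$ at each scale yields $\sigma(\mathcal{E}(A))^2 \leq \delta^{2\epsilon}$ provided $M$ is taken sufficiently large in terms of $\kappa$, $n$, $m$.

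The main obstacle is this final dyadic cascade: one has to keep the loss at each scale bounded by $\delta^{O(\epsilon)}$ even though the non-degeneracy hypothesis itself carries a $\delta^{-\epsilon}$ slack, so the naive argument accumulates an uncontrolled $\delta^{-O(\epsilon \log(1/\delta))}$-loss across the $\log(1/\delta)$ scales. Taming this is exactly the place where Bourgain's sum-product phenomenon enters, and the $(\delta^{-\epsilon}, \kappa)$-non-degeneracy condition is chosen precisely so that the iterative improvement runs uniformly in $\delta$. The cleanest way to make the cascade rigorous would be to invoke the refinements of Bourgain's projection theorem due to He and Shmerkin as a black box rather than re-derive them here.
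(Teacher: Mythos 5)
First, a point of orientation: the paper does not prove this statement. It is imported verbatim from He~\cite{He20} as a black box, so there is no in-paper proof to compare against. The relevant comparison is therefore against He's argument for~\cite[Proposition~29]{He20}.

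Your proposal has a reasonable first two steps (Bourgain regularization, tube pigeonholing for the witness $A'_V$), but Step~3 goes down the wrong road. The exponent $m/n$ in the conclusion is the \emph{subcritical} projection exponent, not the Marstrand-sharp exponent $\min\{1, m/s\}$ one gets from the full Bourgain discretized projection theorem. The subcritical bound is elementary and does not invoke the sum--product phenomenon at all. The standard route (and the one He takes) is a submodularity iteration rather than an angular-scale cascade: apply the submodularity inequality (recorded here as Lemma~\ref{lem:submodularity}, with $\mathcal{P} = \pi_{V_1}^{-1}\mathcal{D}_\delta$, $\mathcal{Q} = \pi_{V_2}^{-1}\mathcal{D}_\delta$, $\mathcal{R} = \pi_{V_1+V_2}^{-1}\mathcal{D}_\delta$, $\mathcal{S} = \pi_{V_1\cap V_2}^{-1}\mathcal{D}_\delta$) to combine projections along $k \approx n/m$ subspaces $V_1, \dots, V_k \in \mathcal{E}(A)$ whose sum spans $\R^n$; this yields $\prod_i |\pi_{V_i}(A)|_\delta \gtrsim |A|_\delta$ in a \emph{bounded} number of steps and hence some $|\pi_{V_i}(A)|_\delta \gtrsim |A|_\delta^{m/n}$. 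The $(\delta^{-\epsilon},\kappa)$-non-degeneracy of $\sigma$ is used precisely to show that a $\sigma^{\otimes k}$-positive fraction of $k$-tuples is in general position, so that Lemma~\ref{lem:angle proportional to poly} and a Remez-type estimate apply. This is also exactly the strategy the paper deploys downstream in Proposition~\ref{pro:SlabSubcritical} and in \cref{thm: subcritical 9 to 3,thm: subcritical 9 to 6}.

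Two concrete problems with your write-up. First, your ``main obstacle'' is spurious for this statement: the $\delta^{-\epsilon}$ slack in the non-degeneracy condition is a fixed multiplicative constant, so the dyadic sum over $\rho$ of $\delta^{-\epsilon}\rho^\kappa$ is $O(\delta^{-\epsilon})$, not $\delta^{-O(\epsilon\log(1/\delta))}$; no accumulated loss across $\log(1/\delta)$ scales occurs, and one does not need sum--product to tame a cascade because for the subcritical exponent there is no cascade. Second, and fatally, your final sentence closes the proof by ``invoking the refinements of Bourgain's projection theorem due to He and Shmerkin as a black box.'' That is circular --- the statement you are asked to prove \emph{is} He's Proposition~29. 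As written, the argument is therefore incomplete: it identifies a plausible double-counting set-up but never establishes the bound, and it misattributes the difficulty of the result to sum--product machinery that the subcritical exponent does not require.
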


We now provide two criteria of non-degeneracy for family of dimension $1$ or codimension $1$ subspaces in irreducible representation $V$ under the condition $\dim \Fix(U) = 1$. \cref{thm:subcritical 9 to 1,thm:subcritical 9 to 8} will be direct consequences of the following criteria and Proposition~\ref{pro:He subcritical}. 
\begin{theorem}\label{thm:Non-degenerate-dim1}
    Let $v \in V$ be a unit vector satisfying 
    \begin{align*}
        \pi_{V_{\chi}}(v) \neq 0.
    \end{align*}
    Consider the lines $\{u^t.\R v\}_{u \in \mathsf{B}^U_1} \subset \Gr(V, 1)$ and let the measure $\sigma$ be the push-forward of $m_U|_{\mathsf{B}^U_1}$ under the map $u \mapsto u^t.\R v$. 
    
    Then $\sigma$ satisfies $(C, \kappa)$-non-degeneracy condition for some $C = O(\|\pi_{V_{\chi}}(v)\|^{-\star})$ and $\kappa$ depending only on the dimension of $U$ and $V$. The implied constant depends only on the ambient representation. 
\end{theorem}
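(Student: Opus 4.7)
The plan is to translate the angle condition into a polynomial non-vanishing estimate on $U$ and then exploit the $(C,\alpha)$-good property of polynomials of bounded degree on affine space.

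First, for a unit normal $\nu$ to $W$ we have
\[
d_\measuredangle(u^t.\R v, W) \;=\; \frac{|\langle u^t v, \nu\rangle|}{\|u^t v\|} \;=\; \frac{|\langle v, u\nu\rangle|}{\|u^t v\|}.
\]
In the orthonormal basis from Theorem~\ref{thm:good basis}, elements $u \in U$ act by strictly upper triangular matrices and $u^t \in U^-$ by strictly lower triangular ones. Consequently the $V_\chi$-component of $u^t v$ equals $v_0 := \pi_{V_\chi}(v)$ for every $u \in U$, so $\|u^t v\| \geq \|v_0\|$; on the other hand $\|u^t v\| \ll 1$ uniformly on $\mathsf{B}^U_1$. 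Hence $d_\measuredangle(u^t.\R v, W) \leq \rho$ forces $|P(u)| \ll \rho$ where $P(u) := \langle v, u\nu\rangle$. Parametrizing $u = \exp(X)$ with $X \in \LieU$, the function $P$ is a polynomial in $X$ of degree bounded by some $d = d(V)$, so the standard $(C, 1/d)$-good property yields
\[
m_U(\{u \in \mathsf{B}^U_1 : |P(u)| \leq \rho\}) \;\ll\; \bigl(\rho/\sup_{u\in\mathsf{B}^U_1}|P(u)|\bigr)^{1/d}.
\]

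The theorem thus reduces to the uniform lower bound
\[
\sup_{u \in \mathsf{B}^U_1}|\langle v, u\nu\rangle| \;\gg\; \|v_0\|^{N}
\]
for every unit $\nu \in V$, with $N$ depending only on the ambient representation. Qualitatively: if the supremum vanished, polynomial extension would give $\langle v, u\nu\rangle = 0$ for all $u \in U$, so the $\LieU$-invariant subspace $U(\LieU)\cdot\nu$ of $V$ would lie in the hyperplane $v^\perp$. However, any nonzero $\LieU$-invariant subspace of $V$ contains a nonzero vector of maximal weight, and such a vector is a highest weight vector of $V$; under the standing hypothesis $\dim\Fix(U) = 1$, this vector must lie in the line $V_\chi$. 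Hence $v_0 \in V_\chi \subseteq v^\perp$, contradicting $\langle v, v_0\rangle = \|v_0\|^2 > 0$.

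The quantitative form of this bound is the main technical obstacle. I would derive it by combining the qualitative statement with an effective {\L}ojasiewiecz-type inequality in the spirit of Theorem~\ref{thm:Lojasiewiecz}. Expanding $P(u) = \sum_{k\geq 0}\frac{1}{k!}\langle v, X^k\nu\rangle$, one sees that $\sup_{\mathsf{B}^U_1}|P|$ is comparable to the maximum size of the coefficients $\langle v, X^k\nu\rangle$, which are bilinear forms in $(v, \nu)$ whose joint zero locus was identified in the qualitative step. An explicit descent, iteratively applying suitable raising operators of $\LieU$ to $\nu$ until one reaches $V_\chi$ and pairs against $v_0$, produces a coefficient of absolute value $\gg \|v_0\|$ after a case analysis on the weight level at which $\nu$ first has a non-negligible component; alternatively one applies an effective Nullstellensatz to the semialgebraic supremum. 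Assembling the three steps yields $\sigma(\mathcal{V}(W,\rho)) \ll \|v_0\|^{-N/d}\rho^{1/d}$, the desired $(C,\kappa)$-non-degeneracy with $C = O(\|v_0\|^{-\star})$ and $\kappa = 1/d$ depending only on $\dim U$ and $V$.
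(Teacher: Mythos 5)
Your strategy matches the paper's: both reduce to the polynomial $P(u) = \langle u^t v, \nu\rangle$ via the angle--polynomial comparison (the paper's Lemma~\ref{lem:angle proportional to poly}), then appeal to the $(C,\alpha)$-good property / Remez inequality (Lemma~\ref{lem:RemezKT}), leaving the uniform lower bound on $\sup_{u \in \mathsf{B}^U_1}|P(u)|$ as the crux. However, your proof of that crux step is a genuine gap. The paper proves it cleanly in Lemma~\ref{lem:VariantOfShah}: it chooses a PBW monomial basis $\{\mathpzc{z}^J\}$ for $\mathcal{U}(\LieU^-)$ so that $\{\mathpzc{z}^J.e_\chi\}$ is a basis of $V$, forms the linear map $T\colon w \mapsto (\langle\mathpzc{z}^J.v, w\rangle)_J$, and observes that in the weight-graded ordering $T$ is (up to sign) triangular with every diagonal entry equal to $\|\pi_{V_\chi}(v)\|$; hence $|\det T| \gg \|\pi_{V_\chi}(v)\|^{\dim V}$, which forces some coefficient of $P$ to be $\gg \|\pi_{V_\chi}(v)\|^{\dim V}$, with \emph{no} case analysis on $\nu$.

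Your two sketched alternatives do not carry this weight. The ``explicit descent'' that raises $\nu$ to the highest weight line requires that, at each weight level, the $\LieU$-orbit of the nonzero part of $\nu$ hits the right direction to continue up, which is precisely the issue the triangular-determinant argument sidesteps; the ``case analysis on the weight level at which $\nu$ first has a non-negligible component'' is not shown to terminate with a uniform bound, and your claimed output $\gg \|v_0\|$ (rather than $\|v_0\|^{\dim V}$) is suspicious and unsubstantiated — errors from the subleading components of $v$ generically pollute each raising step, which is exactly why the determinant picks up a power of $\|v_0\|$. The effective Nullstellensatz / {\L}ojasiewicz alternative is also not spelled out; the relevant varieties depend on both $v$ and $\nu$ as parameters, so invoking Theorem~\ref{thm:Lojasiewiecz} would require verifying that the degree and height bounds are uniform in $(v,\nu)$ and then extracting the dependence on $\|\pi_{V_\chi}(v)\|$ from a resultant-type computation, none of which is done. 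You have correctly identified the shape of the argument, but the quantitative core — the analog of Lemma~\ref{lem:VariantOfShah} — remains unproved in your proposal.
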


\begin{theorem}\label{thm:Non-degenerate-codim1}
    Let $W \in \Gr(n, n - 1)$ be a hyperplane with normal vector $\nu$ satisfying 
    \begin{align*}
        \pi_{V_{-\chi}}(\nu) \neq 0.
    \end{align*}
    Consider the family of hyperplanes $\{u^t.W\}_{u \in \mathsf{B}^U_1} \subset \Gr(V, \dim V - 1)$ and let the measure $\sigma$ be the push-forward of $m_U|_{\mathsf{B}^U_1}$ under the map $u \mapsto u^t.W$. 
    
    Then $\sigma$ satisfies $(C, \kappa)$-non-degeneracy condition for some $C = O(\|\pi_{V_{-\chi}}(\nu)\|^{-\star})$ and $\kappa$ depending only on the dimension of $U$ and $V$. The implied constant depends only on the ambient representation. 
\end{theorem}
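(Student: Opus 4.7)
My plan is to deduce Theorem~\ref{thm:Non-degenerate-codim1} from Theorem~\ref{thm:Non-degenerate-dim1} via the orthogonal complement duality on Grassmannians, combined with the symmetry that exchanges $U$ with $U^-$ and the highest weight $\chi$ with $-\chi$.

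First I would verify the identity $(u^t.W)^\perp = \R(u^{-1}.\nu)$ for $W = \nu^\perp$ and $u \in U$; this follows from the direct computation $x \in u^t.W$ if and only if $\langle u^{-t}.x, \nu\rangle = 0$ if and only if $\langle x, u^{-1}.\nu\rangle = 0$. Moreover, the orthogonal complement map is an isometry between $\Gr(n, n-1)$ and $\Gr(n, 1)$ with respect to $d_{\measuredangle}$, since for a hyperplane $W_1$ with unit normal $\nu_1$ and a unit vector $v'$ one has $d_{\measuredangle}(W_1, \R v') = |\langle \nu_1, v'\rangle| = d_{\measuredangle}(W_1^\perp, (v')^\perp)$. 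Consequently, the $(C, \kappa)$-non-degeneracy of $\sigma$ on $\Gr(n, n-1)$ at scales larger than $\delta$ is equivalent to the corresponding non-degeneracy of the pushforward measure on $\Gr(n, 1)$ induced by $u \mapsto \R(u^{-1}.\nu)$.

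Next, applying the change of variables $u \mapsto u^{-1}$, a Haar-preserving polynomial diffeomorphism sending $\mathsf{B}^U_1$ to a set comparable to $\mathsf{B}^U_1$ with bi-Lipschitz constants depending only on $H$ and $V$, reduces the problem to showing that the pushforward of $m_U|_{\mathsf{B}^U_1}$ under $u \mapsto \R(u.\nu)$ is $(C, \kappa)$-non-degenerate on $\Gr(n, 1)$ with $C = O(\|\pi_{V_{-\chi}}(\nu)\|^{-\star})$ and $\kappa$ depending only on $\dim U$ and $\dim V$. This is precisely the mirror of Theorem~\ref{thm:Non-degenerate-dim1} under the interchange $U \leftrightarrow U^-$ and $\chi \leftrightarrow -\chi$. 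Since the setup of Section~\ref{sec:prepareIrrep} is invariant under the choice of positive root system, the proof of Theorem~\ref{thm:Non-degenerate-dim1} transcribes verbatim to the opposite root system and delivers the required analog. Equivalently, the interchange can be implemented by conjugation with a representative of the longest Weyl element $w_0$, which sends $U$ to $U^-$ and $V_\chi$ to $V_{-\chi}$ whenever $-\chi$ lies in the Weyl orbit of $\chi$ (the case in our application, where $\mathfrak{r}$ has symmetric weight set $\{-2, -1, 0, 1, 2\}$).

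The main technical obstacle will be the quantitative bookkeeping in this last step: verifying that $w_0 \mathsf{B}^U_1 w_0^{-1}$ is comparable to $\mathsf{B}^{U^-}_1$ with constants depending only on $H$ and $V$, and that the inversion map on $U$ distorts Haar measure by at most a bounded multiplicative factor (both consequences of the polynomial nature of the group law on a unipotent group). Once these points are in place, the non-degeneracy constants transfer with the asserted dependence on $\|\pi_{V_{-\chi}}(\nu)\|$.
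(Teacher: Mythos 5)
Your proposal takes essentially the same route as the paper: the paper's two-sentence proof is precisely your orthogonal-complement duality (the normal of $u^t.W$ is $u^{-1}.\nu$, and non-degeneracy of the hyperplane family on $\Gr(n,n-1)$ transfers to non-degeneracy of the line family $u\mapsto \R(u^{-1}.\nu)$ on $\Gr(n,1)$ via $d_{\measuredangle}(W_1, \R v') = |\langle \nu_1, v'\rangle| = d_{\measuredangle}(W_1^\perp, (v')^\perp)$), followed by the observation that the Remez estimate then "follows from the same line" as Theorem~\ref{thm:Non-degenerate-dim1}. The only point worth spelling out in a full write-up is the dual form of Lemma~\ref{lem:VariantOfShah}, namely $\sup_{u\in\mathsf{B}^U_1}|\langle u.\nu, v_L\rangle| \gg \|\pi_{V_{-\chi}}(\nu)\|^{\dim V}$, which is obtained by repeating the PBW argument with $V = \mathcal{U}(\LieU^+).e_{-\chi}$ in place of $V = \mathcal{U}(\LieU^-).e_{\chi}$; this is exactly your "transcribe verbatim to the opposite root system" step, and it works unconditionally (so the caveat about $-\chi$ lying in the Weyl orbit of $\chi$ is unnecessary).
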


The idea of the above criteria is straight-forward. Note that for a hyperplane $W$ with normal vector $w$, by Lemma~\ref{lem:angle proportional to poly}, on $\mathsf{B}^{U}_1$ we have
\begin{align*}
    d_{\measuredangle}(u^t.\R v, W) \asymp \langle u^t.v, w \rangle.
\end{align*}
Due to the polynomial nature of actions of unipotent groups, we need an estimate on the size of the set where
the polynomial function $(u \mapsto \langle u^t.v, w \rangle)$ is small. This is known as Remez's inequality and is used by Kleinbock and Margulis and later Kleinbock and Tomanov in \cite{KM98,KT07} to verify the '$(C, \alpha)$-good' property. We record the form we need in the following lemma. 

\begin{lemma}[\text{\cite[Lemma 3.4]{KT07}}]\label{lem:RemezKT}
    For all $d, k \in \N$, there exists a constant $C = C_{d, k} > 0$ so that the following holds. Let $P \in \R[x_1, \ldots, x_d]$ be a polynomial with degree at most $k$. For all ball $B \subset \R^d$ and $\epsilon > 0$, we have
    \begin{align*}
        \Leb\{x \in B:|P(x)| < \epsilon\} \leq C\biggl(\frac{\epsilon}{\|P\|_{L^\infty(B)}}\biggr)^{\frac{1}{dk}}\Leb(B).
    \end{align*}
\end{lemma}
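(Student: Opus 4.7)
The plan is to derive this standard multidimensional Remez-type estimate from the classical one-dimensional Remez inequality; the statement itself is lifted verbatim from \cite[Lemma 3.4]{KT07}, so I would follow the usual two-step argument.

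\emph{One-dimensional case.} For a polynomial $P$ of degree $\leq k$ on an interval $I \subset \R$ and a measurable subset $E \subseteq I$, the classical Remez inequality reads
\[\|P\|_{L^\infty(I)} \leq T_k\bigl(\tfrac{|I|+|E|}{|I|-|E|}\bigr)\|P\|_{L^\infty(I \setminus E)},\]
where $T_k$ is the degree-$k$ Chebyshev polynomial. Applied to $E = \{x \in I : |P(x)| < \epsilon\}$ the right-hand $L^\infty$-norm is automatically $\leq \epsilon$, and since $T_k(1+2x) \asymp (4x)^k$ for small $x > 0$, inverting gives $|E|/|I| \ll_k (\epsilon/\|P\|_{L^\infty(I)})^{1/k}$, which already matches the claimed exponent in dimension one.

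\emph{Passage to $d$ dimensions.} For a ball $B \subset \R^d$, I would first fix a near-maximizer $x^\star \in B$ with $|P(x^\star)| \geq \tfrac12 \|P\|_{L^\infty(B)}$ and parametrize $B$ in polar coordinates centered at $x^\star$. For each direction $\theta \in S^{d-1}$, the restriction of $P$ to the chord $\ell_\theta = B \cap \{x^\star + \R\theta\}$ is a univariate polynomial of degree $\leq k$ whose $L^\infty$-norm is at least $\tfrac12\|P\|_{L^\infty(B)}$ (the point $x^\star$ lies on $\ell_\theta$ by construction). Applying the one-dimensional bound to $P|_{\ell_\theta}$ and integrating against the Jacobian $r^{d-1}\,dr\,d\theta$ then produces a multidimensional Remez estimate. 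In fact the argument yields the sharper exponent $1/k$ (the Brudnyi--Ganzburg inequality), so the weaker $1/(dk)$ in the statement follows immediately; the latter is simply the convenient uniform form singled out in \cite{KT07}.

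The only mild technical point is verifying the uniform lower bound $\|P|_{\ell_\theta}\|_{L^\infty(\ell_\theta)} \gtrsim \|P\|_{L^\infty(B)}$ across $\theta$, which is automatic once every chord is forced to pass through $x^\star$ by the polar parametrization. Beyond this routine check no further work is required, and I would conclude by simply referring to \cite[Lemma 3.4]{KT07}.
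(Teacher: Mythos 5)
The paper does not prove this lemma; it only cites \cite[Lemma 3.4]{KT07}, so there is no internal proof to compare against. Your sketch reconstructs a valid route to the statement, and in fact (via Brudnyi--Ganzburg rather than the coordinate-wise Fubini iteration used in Kleinbock--Tomanov) yields the stronger exponent $1/k$ in place of $1/(dk)$. However, your one-dimensional step contains a sign error worth fixing. In the Remez inequality $\|P\|_{L^\infty(I)} \leq T_k\bigl(\tfrac{|I|+|E|}{|I|-|E|}\bigr)\|P\|_{L^\infty(I\setminus E)}$, the set $E$ is the \emph{removed} set, so with your choice $E=\{|P|<\epsilon\}$ the right-hand $L^\infty$-norm is taken over $\{|P|\geq\epsilon\}$ and equals $\|P\|_{L^\infty(I)}$, not $\leq\epsilon$; the inequality becomes vacuous. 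You should instead remove $E=\{|P|\geq\epsilon\}$, so that $I\setminus E=\{|P|<\epsilon\}$ and $\|P\|_{L^\infty(I\setminus E)}\leq\epsilon$. Correspondingly, when the sublevel set is small the argument $\tfrac{|I|+|E|}{|I|-|E|}$ is \emph{large}, so the relevant asymptotics is $T_k(y)\leq(2y)^k$ for $y\geq 1$, not the small-$x$ behaviour of $T_k(1+2x)$ (near $x=0$ one has $T_k(1+2x)\approx 1+2k^2x$, not $(4x)^k$). With that corrected, the one-dimensional sublevel bound $|\{|P|<\epsilon\}|\leq 4|I|(\epsilon/\|P\|_{L^\infty(I)})^{1/k}$ follows, and your polar-coordinate passage to dimension $d$ is fine: on the chord in direction $\theta$ of length $\rho_\theta$ one bounds $\int_0^{\rho_\theta}\mathds{1}_{|P|<\epsilon}\,r^{d-1}\,dr\leq\rho_\theta^{d-1}\cdot 4\rho_\theta(2\epsilon/\|P\|_{L^\infty(B)})^{1/k}$, and integrating over $\theta$ using $\int_{S^{d-1}}\rho_\theta^d\,d\theta = d\,\Leb(B)$ gives the claim. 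None of this affects the validity of the cited lemma, but as a stand-alone derivation the Remez step needs the fix above.
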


By Remez's inequality, it suffices to estimate the supreme of the polynomial $\langle u^t.v, w \rangle$ on $\mathsf{B}^{U}_1$, which is done in the following lemma. It is a variant of \cite[Lemma 5.1]{Sh96}, see also \cite[Lemma 3.1, 3.2]{Kat23}. Recall that we fix an inner product $\langle \cdot, \cdot \rangle$ and a basis of the representation $(\rho, V)$ from Theorem~\ref{thm:good basis} so that the weight spaces are orthogonal. Under this basis, $\rho(U^-) = \rho(U)^t$ where $(\cdot)^t$ is the matrix transpose. 

\begin{lemma}\label{lem:VariantOfShah}
    Suppose $v, w$ are unit vectors in $V$. We have
    \begin{align*}
        \sup_{u^- \in \mathsf{B}^{U^-}_1} \langle u^-.v, w \rangle \gg \|\pi_{V_\chi}(v)\|^{\dim V}.
    \end{align*}
    Equivalently, we have
    \begin{align*}
        \sup_{u \in \mathsf{B}^U_1} \|\pi_{\R v}(u.w)\| = \sup_{u \in \mathsf{B}^U_1} \langle v, u.w \rangle \gg \|\pi_{V_\chi}(v)\|^{\dim V}.
    \end{align*}
\end{lemma}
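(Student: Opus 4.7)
The two displayed inequalities are equivalent via $\langle u^-.v,w\rangle=\langle v,(u^-)^t.w\rangle$ together with $\rho(U^-)^t=\rho(U)$ in the orthonormal weight basis of \cref{thm:good basis}; I will work with the $U^-$-form. The plan is to exhibit $n:=\dim V$ test points $u^-_1,\ldots,u^-_n\in\mathsf{B}_1^{U^-}$ for which the matrix
\[
M := \bigl[\,u^-_1.v\mid\cdots\mid u^-_n.v\,\bigr]
\]
satisfies $|\det M|\gg\|\pi_{V_\chi}(v)\|^n$. Once such $u^-_i$ are found, the uniform bound $\|u^-_i.v\|=O(1)$ gives $\sigma_n(M)\geq|\det M|/\|M\|_{\mathrm{op}}^{n-1}\gg\|\pi_{V_\chi}(v)\|^n$, and for any unit $w$,
\[
\max_i|\langle u^-_i.v,w\rangle|\geq\|M^\top w\|_2/\sqrt n\geq\sigma_n(M)/\sqrt n\gg\|\pi_{V_\chi}(v)\|^n,
\]
which yields the lemma.

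The lower bound on $|\det M|$ rests on two ingredients. First, since $V$ is irreducible and $\dim V_\chi=1$, any highest-weight vector $v_\chi$ satisfies $\mathcal{U}(\LieU^-)\cdot v_\chi=V$, so one can choose PBW monomials $Z_1=1,Z_2,\ldots,Z_n\in\mathcal{U}(\LieU^-)$ of bounded degree such that $\{Z_jv_\chi\}_{j=1}^n$ is a basis of $V$; I order them by weakly decreasing weight $\chi_j$ of $Z_jv_\chi$. Writing $v=\pi_{V_\chi}(v)+v'$ with $v'\in\bigoplus_{\lambda<\chi}V_\lambda$ and $\pi_{V_\chi}(v)=c\,v_\chi$, the decomposition $Z_jv=c\,Z_jv_\chi+Z_jv'$ together with the fact that $Z_jv'$ is supported at weights strictly below $\chi_j$ shows that the matrix $A:=[Z_jv]_{j=1}^n$, expressed in the basis $\{Z_jv_\chi\}$ ordered by decreasing $\chi_j$, is block lower-triangular with diagonal blocks $c\cdot I$. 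Hence $|\det A|\asymp|c|^n\asymp\|\pi_{V_\chi}(v)\|^n$.

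Second, I pass from $A$ to $M$ by a Vandermonde/Wronskian argument. In the model case $H\cong\SL_2(\R)$, take $Z_k:=\rho(X)^{k-1}/(k-1)!$ for $k=1,\ldots,n$ with $X\in\LieU^-$ a principal nilpotent, and take $u^-_i=\exp(t_iX)$ for distinct $t_1,\ldots,t_n\in[-\tfrac12,\tfrac12]$; the exponential expansion $u^-_i.v=\sum_{k=1}^nt_i^{k-1}Z_kv$ (which truncates because $\rho(X)^nv=0$) gives $M=A\cdot T$, where $T=[t_i^{k-1}]$ is a standard Vandermonde with $|\det T|\gg 1$ for suitably separated $t_i$. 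Thus $|\det M|=|\det A|\cdot|\det T|\gg\|\pi_{V_\chi}(v)\|^n$, and $u^-_i\in\mathsf{B}_1^{U^-}$ since $|t_i|\cdot\|X\|=O(1)$. For higher-rank $H$ one replaces the single $X$ by a bounded family $\{X_\alpha\}\subset\LieU^-$ realizing the PBW directions of the chosen $Z_j$, takes $u^-_i=\exp\bigl(\sum_\alpha s_{i,\alpha}X_\alpha\bigr)$, and argues via a multi-variable Vandermonde (or Cauchy--Binet) expansion of the Taylor coefficients; a rescaling of $s_{i,\alpha}$ by a small parameter, if needed, makes the leading minor dominate the sub-leading contributions.

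The main obstacle will be this higher-rank Vandermonde step: one must verify that a bounded choice of $(X_\alpha,s_{i,\alpha})$ makes a designated PBW-indexed Taylor minor non-degenerate uniformly in $V$ and $H$, and that the sub-leading Cauchy--Binet terms do not cancel the leading one. This is a finite-dimensional algebraic question once the PBW basis $\{Z_j\}$ is chosen, and can be handled by a finite case analysis using the restricted root system and weight multiplicities of $V$. The $\SL_2$ model above captures the essential mechanism and explains the appearance of the exponent $\dim V$ in the conclusion.
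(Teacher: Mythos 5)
Your core idea matches the paper's: expand $u^-.v$ via a PBW monomial basis $\{\mathpzc{z}^J.e_\chi\}$ of $V$, observe that the coefficient matrix $A=[\mathpzc{z}^J.v]_J$ is (block-)triangular in a weight-ordered basis with diagonal entries $c=\|\pi_{V_\chi}(v)\|$, and deduce $|\det A|\asymp|c|^{\dim V}$. That analysis is correct, including the block-diagonal $cI$ observation which handles weight multiplicities.

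Where you diverge is the endgame, and this is where the gap lies. The paper does \emph{not} sample at $n$ test points. Instead it parametrizes $u^-=\prod_{\alpha,k}\exp(t_{\alpha,k}\mathpzc{z}_{\alpha,k})$, so that $u^-.v=\sum_J \tfrac{t^J}{J!}\mathpzc{z}^J.v$ and hence $\langle u^-.v,w\rangle$ is a polynomial in $t$ whose $t^J$-coefficients are $\langle\mathpzc{z}^J.v,w\rangle/J!$. The vector of these coefficients indexed by $J\in\mathcal{J}$ is $T(w)$, and the determinant bound on $T$ yields $\|T(w)\|\gg c^{\dim V}\|w\|$, so the polynomial has a coefficient $\gg c^{\dim V}$. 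The conclusion then follows from the elementary (finite-dimensional) fact that a fixed-degree polynomial on a fixed ball has sup norm comparable to its largest coefficient. No point evaluation, no Vandermonde, no Cauchy--Binet.

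Your Vandermonde route has two concrete problems that the paper sidesteps. First, you parametrize $u^-=\exp(\sum_\alpha s_{\alpha}X_\alpha)$ with a \emph{single} exponential, so the Taylor coefficients of $u^-.v$ in $s$ are symmetrized products $(\sum s_\alpha\rho(X_\alpha))^k v/k!$, not the PBW monomials $\mathpzc{z}^J.v$; the clean factorization $M=A\cdot T$ you exploit in the $\SL_2$ model does not carry over. You should switch to the ordered-product coordinates that PBW gives for free. Second, even after that fix, the multi-variable "Vandermonde" — a uniform lower bound on $\bigl|\det[t_i^J]_{i,J}\bigr|$ over $\mathcal{J}$-indexed monomials — is left as a sketch and flagged by you as "the main obstacle." It can indeed be done (the monomials are linearly independent, so generic evaluation points work, with constants depending on $(H,V)$ as allowed), but it is unnecessary extra machinery: the coefficient-to-sup comparison for bounded-degree polynomials on a unit ball is exactly the tool that removes the obstacle, and that is what the paper uses.
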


\begin{proof}
The proof is also a variant of \cite[Lemma 5.1]{Sh96}. We include it for completeness. We will show that 
\begin{align*}
    \sup_{u^- \in \mathsf{B}^U_1} \langle u^-.v, w \rangle \gg \|\pi_{V_\chi}(v)\|^{\dim V}.
\end{align*}

For any Lie algebra $\mathfrak{s}$, let $\mathcal{U}(\mathfrak{s})$ be the universal enveloping algebra of $\mathfrak{s}$. Let $e_\chi \in V_\chi$ be a unit vector so that $\langle v, e_\chi\rangle = \|\pi_{V_\chi}(v)\|$. Let $v_{< \chi} = v - \langle v, e_\chi\rangle e_\chi$ be the orthogonal projection of $v$ to $\oplus_{\lambda < \chi} V_{\lambda}$. 

Write $\Phi^+ = \{\alpha_1, \ldots, \alpha_{l}\}$. Later when we write product over $\alpha \in \Phi^+$, we refer to this order. For all positive root $\alpha \in \Phi^+$, let $\LieU^{-}_\alpha = \LieH_{-\alpha}$. We have $\LieU^- = \oplus_{\alpha \in \Phi^+} \LieU^{-}_\alpha$. Suppose $\dim \LieU^{-}_\alpha = m_{\alpha}$. Let $\{\mathpzc{z}_{\alpha, k}\}_{k = 1}^{m_\alpha}$ be an orthonormal basis of $\LieU^{-}_\alpha$. We introduce the following multi-index
\begin{align*}
    I_{\alpha} = (i_{\alpha, k})_{k = 1, \ldots, m_{\alpha}}, \quad J = (I_{\alpha})_{\alpha \in \Phi^+} = (I_{\alpha_1}, \ldots, I_{\alpha_l}).
\end{align*}
For $t_{\alpha} = (t_{\alpha, 1}, \ldots, t_{\alpha, m_{\alpha}}) \in \R^{m_{\alpha}}$, we define
\begin{align*}
    t_{\alpha}^{I_{\alpha}} = t_{\alpha, 1}^{i_{\alpha, 1}} \cdots t_{\alpha, m_{\alpha}}^{i_{\alpha, m_{\alpha}}}, \qquad \mathpzc{z}_{\alpha}^{I_\alpha} = \mathpzc{z}_{\alpha, 1}^{i_{\alpha, 1}} \cdots \mathpzc{z}_{\alpha, m_{\alpha}}^{i_{\alpha, m_{\alpha}}} \in \mathcal{U}(\LieU^-).
\end{align*}
For all $t = (t_\alpha)_{\alpha \in \Phi^+} = (t_{\alpha_1}, \ldots, t_{\alpha_l})$ and $J \in \mathcal{J}$, we define
\begin{align*}
    t^J = \prod_{\alpha \in \Phi^+} t_\alpha^{I_\alpha}, \qquad \mathpzc{z}^J = \prod_{\alpha \in \Phi^+} \mathpzc{z}_\alpha^{I_\alpha} = \mathpzc{z}_{\alpha_1}^{I_{\alpha_1}} \cdots \mathpzc{z}_{\alpha_{l}}^{I_{\alpha_{l}}} \in \mathcal{U}(\LieU^-).
\end{align*}
By Poincar{\'e}--Birkhoff--Witt's theorem, $\{\mathpzc{z}^J\}_J$ forms a basis of $\mathcal{U}(\LieU^-)$. 

Note that we have
\begin{align*}
    V = \mathcal{U}(\LieU^-).e_\chi.
\end{align*}
There exists a finite set $\mathcal{J}$ of multi-indices $J$ so that $\{\mathpzc{z}^J.e_\chi\}_{J \in \mathcal{J}}$ forms a basis of $V$. For all $u^- \in U^-$ that can be written as
\begin{align*}
    u^- = \prod_{\alpha \in \Phi^+} \prod_{k = 1}^{m_{\alpha}} \exp(t_{\alpha, k}\mathpzc{z}_{\alpha, k}) = \sum_{J \in \mathcal{J}} t^J \mathpzc{z}^J,
\end{align*}
we calculate $\langle u^-.v, w\rangle$ as the following. 
\begin{align*}
    \langle u^-.v, w\rangle = \sum_{J  \in \mathcal{J}} t^J\langle\mathpzc{z}^J.v, w\rangle.
\end{align*}
Consider the map 
\begin{align*}
    T:V &\to \R^{\mathcal{J}}\\
    w &\mapsto (\langle\mathpzc{z}^J.v, w\rangle)_{J \in \mathcal{J}}.
\end{align*}
We have $\|T\| \ll 1$. The partial order on the set of weights associated to $V$ defined by $\Phi^+$ ensures that $T$ can be written as an upper-triangular matrix with diagonal entries $\|\pi_{V_\chi}(v)\|$. Therefore, $|\det T| \gg \|\pi_{V_\chi}(v)\|^{\dim V}$ and 
\begin{align*}
    \|T(w)\| \gg \|\pi_{V_\chi}(v)\|^{\dim V}\|w\|.
\end{align*}
This implies that $\langle u^-.v, w\rangle$ is a polynomial with maximum coefficient $\gg \|\pi_{V_\chi}(v)\|^{\dim V}$ and 
\begin{align*}
    \sup_{u^- \in \mathsf{B}_1^{U^-}} \langle u^-.v, w \rangle \gg \|\pi_{V_\chi}(v)\|^{\dim V}.
\end{align*}
\end{proof}

\begin{proof}[Proof of Theorem~\ref{thm:Non-degenerate-dim1}]
For all $W \in \Gr(n, n - 1)$, let $w$ be its normal vector. Note that by Lemma~\ref{lem:angle proportional to poly}, on $B^U_1$ we have
\begin{align*}
    d_{\measuredangle}(u^t.\R v, W)^2 \asymp \langle u^t.v, w \rangle
\end{align*}
where $\langle u^t.v, w \rangle$ is a polynomial on $u$. Lemma~\ref{lem:VariantOfShah} implies that 
\begin{align*}
    \sup_{u \in B_1^U} |\langle u^t.v, w \rangle| \gg \|\pi_{V_{\chi}}(v)\|^{\dim V}.
\end{align*}
Remez's inequality (Lemma~\ref{lem:RemezKT}) implies that $\sigma$ satisfies a $(C, \kappa)$-non-degeneracy condition for some $C = O(\|\pi_{V_{\chi}}(v)\|^{-\star})$ and $\kappa$ depends only on the dimension of $U$ and $V$. 
\end{proof}

\begin{proof}[Proof of Theorem~\ref{thm:Non-degenerate-codim1}]
    Note that all hyperplanes $\{u^t.W\}_{u \in U}$ containing a line is the same as the normal vectors $\{u.\nu\}_{u \in U}$ lies in the orthogonal hyperplane of that line. The rest follows from the same line as the previous \cref{thm:Non-degenerate-dim1}. 
\end{proof}

\section{\texorpdfstring{Subcritical estimates for projections to $\mathfrak{r}^{(1)}$ and $\mathfrak{r}^{(0)}$}{Subcritical estimates for projections to \unichar{"1D597}\unichar{"005E}(1) and \unichar{"1D597}\unichar{"005E}(0)}}\label{sec:Subcritical}
This section is devoted to the subcritical estimates for the families of projections $\{\pi^{(\lambda)}_{u}\}_{u \in \mathsf{B}^U_1}$ where $\lambda = 0, 1$. These are the cases with algebraic obstructions so that the method in the previous section does not work. We will make use of the properties of the specific representation $\mathfrak{r}$. Recall that $\dim \mathfrak{r}^{(1)} = 3$ and $\dim \mathfrak{r}^{(0)} = 6$. The subcritical estimates we expect are
\begin{align*}
    |\pi^{(1)}_{u}(A)|_\delta \geq |A|_{\delta}^{\frac{3}{9}}, \quad |\pi^{(0)}_{u}(A)|_\delta \geq |A|_{\delta}^{\frac{6}{9}}.
\end{align*}

The following two theorems are the main results of this section. Recall that $\mathsf{B}^U_1 = \exp(B_1^\LieU(0))$ and $m_U$ is the Haar measure on $U$ so that $m_U(\mathsf{B}^U_1) = 1$. 

\begin{theorem}\label{thm: subcritical 9 to 3}
    There exists $M$ depending only on the ambient representation so that the following holds for all $0 < \epsilon \ll 1$ and $\delta \ll_{\epsilon} 1$. 

    For all $A \subseteq B^{\mathfrak{r}}_1(0)$, we define the following exceptional set:
    \begin{align*}
        \mathcal{E}(A) = \biggl\{u \in \mathsf{B}^U_1: \exists A' \subseteq A \text{ with } |A'|_\delta \geq \delta^\epsilon|A|_\delta \text{ and } |\pi^{(1)}_{u}(A')|_{\delta} < \delta^{M\epsilon}|A|_\delta^{\frac{3}{9}}\biggr\}.
    \end{align*}

    We have
    \begin{align*}
        m_U(\mathcal{E}(A)) \leq \delta^{\epsilon}.
    \end{align*}
\end{theorem}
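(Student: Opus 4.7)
I would follow the template of Theorems~\ref{thm:subcritical 9 to 1} and~\ref{thm:subcritical 9 to 8}, reducing to a non-degeneracy statement on the Grassmannian and invoking Proposition~\ref{pro:He subcritical}. By the discussion in Subsection~\ref{subsec:Equivalent Projections}, the map $\pi^{(1)}_u$ differs from the orthogonal projection $\pi_{u^t.\mathfrak{r}^{(1)}}$ by a bi-Lipschitz map whose constants depend only on the ambient representation, so the corresponding $\delta$-covering numbers agree up to a constant factor. The theorem therefore reduces to a discretized projection estimate for the family $\{\pi_{u^t.\mathfrak{r}^{(1)}}\}_{u \in \mathsf{B}^U_1}$ of orthogonal projections onto $3$-planes in $\mathfrak{r}\cong \R^9$, which one then obtains by applying Proposition~\ref{pro:He subcritical} with $(n, m) = (9, 3)$ and push-forward measure $\sigma = (u \mapsto u^t.\mathfrak{r}^{(1)})_* m_U|_{\mathsf{B}^U_1}$ on $\Gr(9, 3)$.

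The main task is to verify the non-degeneracy hypothesis of Proposition~\ref{pro:He subcritical}, namely
\begin{align*}
    \sigma(\mathcal{V}(W, \rho)) \ll \delta^{-O(\epsilon)}\rho^\kappa \qquad \forall\, W \in \Gr(9, 6),\ \rho \geq \delta,
\end{align*}
for a fixed $\kappa > 0$ depending only on $(H, \mathfrak{r})$. By Lemma~\ref{lem:angle proportional to poly}, $d_{\measuredangle}(u^t.\mathfrak{r}^{(1)}, W)^2$ is comparable on $\mathsf{B}^U_1$ to the polynomial $P_W(u) := \|u^t.\mathbf{v} \wedge \mathbf{w}\|^2$, where $\mathbf{v} \in \wedge^3\mathfrak{r}$ and $\mathbf{w} \in \wedge^6 \mathfrak{r}$ are unit decomposable vectors representing $\mathfrak{r}^{(1)}$ and $W$; the degree of $P_W$ is bounded in terms of $(H, \mathfrak{r})$ only. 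Remez's inequality in the form of Lemma~\ref{lem:RemezKT} then reduces the non-degeneracy to the uniform lower bound
\begin{align*}
    \inf_{W \in \Gr(9, 6)}\ \sup_{u \in \mathsf{B}^U_1}\, \|u^t.\mathbf{v} \wedge \mathbf{w}\| \gg 1.
\end{align*}

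This uniform lower bound is the main obstacle, and it is precisely the point at which the algebraic obstruction alluded to in the introduction must be addressed. Since $\mathbf{v}$ does not lie on the highest weight line of $\wedge^3\mathfrak{r}$, the PBW argument of Lemma~\ref{lem:VariantOfShah} does not by itself produce a nonvanishing leading coefficient for every test $\mathbf{w}$. My plan is to combine two ingredients. The first is a purely algebraic step: $P_W$ vanishes identically on $U$ if and only if $u^t.\mathfrak{r}^{(1)} \cap W \neq \{0\}$ for every $u \in U$, i.e.\ if and only if the $U^-$-orbit of $\mathfrak{r}^{(1)}$ is contained in the Schubert hypersurface $\{V \in \Gr(9, 3): V \cap W \neq \{0\}\}$. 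I would rule this out by decomposing $\wedge^3 \mathfrak{r}$ into isotypic $H$-submodules, locating the cyclic $U^-$-module $\mathcal{U}(\LieU^-).\mathbf{v}$, and using the weight-multiplicity profile $(1,2,3,2,1)$ of $\mathfrak{r}$ together with the Pl\"ucker relations constraining decomposable $\mathbf{w}$ to show that no $W \in \Gr(9, 6)$ can force such a containment. The second ingredient is effective: once $P_W \not\equiv 0$ is established for every $W$, a uniform lower bound on $\|P_W\|_{L^\infty(\mathsf{B}^U_1)}$ follows by running the PBW expansion of Lemma~\ref{lem:VariantOfShah} on the isotypic component carrying the nonzero pairing, with implied constants depending only on the dimensions involved. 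The hardest part is the first ingredient, the representation-theoretic verification that the $U^-$-orbit of $\mathfrak{r}^{(1)}$ escapes every Schubert hypersurface; this genuinely exploits the specific structure of $\mathfrak{r}$ as an irreducible $H$-module, and must be handled separately in the two cases $H \cong \SO(2,2)^\circ$ (where $\mathfrak{r}$ is the outer tensor product of two $\SL_2(\R)$-modules and the weight spaces admit an additional grading) and $H \cong \SO(3,1)^\circ$ (where $\mathfrak{r}$ is self-conjugate and irreducible even as an $H$-module).
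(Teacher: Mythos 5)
Your reduction to Proposition~\ref{pro:He subcritical} cannot work, because the non-degeneracy hypothesis you are trying to verify is genuinely \emph{false} for the family $\{u^t.\mathfrak{r}^{(1)}\}_{u \in U}$. The uniform lower bound
\begin{align*}
    \inf_{W \in \Gr(9, 6)}\ \sup_{u \in \mathsf{B}^U_1}\, \|u^t.\mathbf{v} \wedge \mathbf{w}\| \gg 1
\end{align*}
that your Remez-inequality step requires has an explicit counterexample. In the $\SO(2,2)^\circ$ case, take $W$ to be the six-dimensional subspace consisting of elements $\bigl(\begin{smallmatrix}0 & B\\ C & 0\end{smallmatrix}\bigr)$ with $B, C \in \mathfrak{sl}_2(\R)$ as in Example~\ref{example:nondegenerate fails}: the paper's computation there shows that $u_{r,s}^t.\mathfrak{r}^{(1)} \cap W \neq \{0\}$ for \emph{every} $(r, s)$, so $P_W(u) = \|u^t.\mathbf{v} \wedge \mathbf{w}\|^2$ vanishes identically on $U$ and the supremum is zero. (The paper notes the analogous obstruction in the $\SO(3,1)^\circ$ case.) You anticipate the possibility of a ``Schubert containment'' and plan to ``rule it out'' via a PBW/isotypic analysis, but the containment actually occurs; no amount of representation-theoretic bookkeeping can produce a nonzero leading coefficient that isn't there. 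This is precisely the ``algebraic obstruction'' the introduction flags, and Remark~\ref{rem:3 copies dim 8} makes it quantitative: $\dim\bigl(\sum_{i=1}^3 u_i^t.\mathfrak{r}^{(1)}\bigr) \leq 8 < 9$ for \emph{all} triples $(u_1, u_2, u_3)$, so three copies of the $3$-plane never span.

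The paper's proof does not fight the obstruction; it exploits it. Rather than treating $\pi_u^{(1)}$ in isolation, it considers three copies $\pi_{u_3}^{(1)}$, $\pi_{u_1 u_3}^{(1)}$, $\pi_{u_2 u_3}^{(1)}$ simultaneously. For generic $(u_1, u_2)$ (Lemmas~\ref{lem:Special Calculation1} and \ref{lem:SpecialCalculation2}), the sum $\mathfrak{r}^{(1)} + u_1^t.\mathfrak{r}^{(1)} + u_2^t.\mathfrak{r}^{(1)}$ is $8$-dimensional and the intersection $\mathfrak{r}^{(1)} \cap (u_1^t.\mathfrak{r}^{(1)} + u_2^t.\mathfrak{r}^{(1)})$ is $1$-dimensional, and crucially (Lemma~\ref{lem:Nondegenerate of intersection sum}) the normal vector of the former has a nonzero $\mathfrak{r}_{-2}$ component and the line spanning the latter has a nonzero $\mathfrak{r}_{2}$ component, so the hyperplane and line families $\{u_3^t.(\cdots)\}_{u_3}$ \emph{do} satisfy the non-degeneracy criteria of Theorems~\ref{thm:Non-degenerate-dim1} and \ref{thm:Non-degenerate-codim1}. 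Submodularity (Lemma~\ref{lem:submodularity}) applied to the partitions induced by $u_3^t.\mathfrak{r}^{(1)}$, $u_3^t.(u_1^t.\mathfrak{r}^{(1)} \oplus u_2^t.\mathfrak{r}^{(1)})$, their join (the $8$-plane) and their meet (the line) then yields, after multiplying three ``bad'' estimates $|\pi^{(1)}_{\cdot}(A')|_\delta < \delta^{M\epsilon}|A|_\delta^{3/9}$, a contradiction with the lower bound $\delta^{-O(\epsilon)}|A|_\delta^{1/9}\cdot |A|_\delta^{8/9}$ coming from Theorems~\ref{thm:subcritical 9 to 1} and \ref{thm:subcritical 9 to 8}. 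The arithmetic $\tfrac{1}{9} + \tfrac{8}{9} = 3\cdot \tfrac{3}{9}$ is exactly what makes the subcritical exponent $3/9$ recoverable despite the three copies never spanning $\mathfrak{r}$. Your proposal is missing this pivot from ``project once'' to ``project three times and use submodularity''; without it, the reduction to Proposition~\ref{pro:He subcritical} does not get off the ground.
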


\begin{theorem}\label{thm: subcritical 9 to 6}
    There exists $M$ depending only on the ambient representation so that the following holds for all $0 < \epsilon \ll 1$ and $\delta \ll_{\epsilon} 1$. 

    For all $A \subseteq B^{\mathfrak{r}}_1(0)$, we define the following exceptional set:
    \begin{align*}
        \mathcal{E}(A) = \biggl\{u \in \mathsf{B}^U_1: \exists A' \subseteq A \text{ with } |A'|_\delta \geq \delta^\epsilon|A|_\delta \text{ and } |\pi^{(0)}_{u}(A')|_{\delta} < \delta^{M\epsilon}|A|_\delta^{\frac{6}{9}}\biggr\}.
    \end{align*}

    We have
    \begin{align*}
        m_U(\mathcal{E}(A)) \leq \delta^{\epsilon}.
    \end{align*}
\end{theorem}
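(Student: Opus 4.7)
The plan is to adapt the template used in \cref{thm:subcritical 9 to 1} and \cref{thm:subcritical 9 to 8}. Using the bi-Lipschitz equivalence from Section~\ref{subsec:Equivalent Projections}, it suffices to analyze the family of $6$-dimensional orthogonal projections $\pi_{u^t.\mathfrak{r}^{(0)}}$ for $u \in \mathsf{B}_1^U$. I would invoke \cref{pro:He subcritical} with $n=9$, $m=6$, reducing the theorem to establishing a $(\delta^{-\epsilon}, \kappa)$-non-degeneracy condition on $\Gr(9,6)$ for the pushforward measure $\sigma$ of $m_U|_{\mathsf{B}_1^U}$ under $u \mapsto u^t.\mathfrak{r}^{(0)}$.

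Verifying this non-degeneracy amounts to showing that for every $W \in \Gr(9,3)$ the polynomial
\begin{align*}
u \mapsto \|u^t.\mathbf{v}_0 \wedge \mathbf{w}\|,
\end{align*}
where $\mathbf{v}_0$ and $\mathbf{w}$ are wedges of orthonormal bases of $\mathfrak{r}^{(0)}$ and $W$, admits a uniform polynomial lower bound on $\sup_{u \in \mathsf{B}_1^U}$ depending on $\|\mathbf{w}\|$; the needed decay then follows from Remez's inequality (\cref{lem:RemezKT}) via \cref{lem:angle proportional to poly}. Since $H$ acts trivially on $\wedge^9\mathfrak{r}$, we have $u^t.\mathbf{v}_0 \wedge \mathbf{w} = \mathbf{v}_0 \wedge u^{-t}.\mathbf{w}$, and the wedge pairing realizes $\wedge^3\mathfrak{r}$ as the $H$-dual of $\wedge^6\mathfrak{r}$. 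The vector $\mathbf{v}_0$ is a highest-weight vector of $\wedge^6\mathfrak{r}$ of weight $4$ under $\mathbf{a}$ (because $U^+$ preserves $\mathfrak{r}^{(0)}$ and acts by determinant one), so the pairing reads off the lowest-weight (weight $-4$) component of $u^{-t}.\mathbf{w}$ inside the $H$-irreducible subrepresentation $V_1 \subseteq \wedge^3\mathfrak{r}$ generated by the vector $\mathbf{e}^{(<0)} \in \wedge^3(\mathfrak{r}_{-2}\oplus\mathfrak{r}_{-1})$. A variant of \cref{lem:VariantOfShah} applied to $V_1$ then yields
\begin{align*}
\sup_{u \in \mathsf{B}_1^U} |\mathbf{v}_0 \wedge u^{-t}.\mathbf{w}| \gg \|\pi_{V_1}(\mathbf{w})\|^{\dim V_1},
\end{align*}
which supplies the required polynomial control whenever $\pi_{V_1}(\mathbf{w})$ is not too small.

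The main obstacle is the ``algebraic obstruction'' highlighted in Example~\ref{example:optimal fail for so 2 2}: there may exist pure $3$-wedges $\mathbf{w}$ — corresponding to subspaces $W \in \Gr(9,3)$ that intersect $\mathfrak{r}^{(0)}$ in a distinguished way — for which $\pi_{V_1}(\mathbf{w})$ vanishes or is polynomially small, so that the Shah-type estimate degenerates. To handle this, I would stratify $\Gr(9,3)$ by the order of vanishing of $\pi_{V_1}$ on the Pl\"ucker image, using a quantitative version of the transversality statement of \cref{lem:General two piece transverse}. On the ``generic'' stratum the argument above gives the bound directly; on the degenerate strata, the problem descends to a subcritical projection question for the residual representation, which I would control by bootstrapping from the already-established \cref{thm:subcritical 9 to 1} and \cref{thm:subcritical 9 to 8} via the submodularity inequality \cref{lem:submodularity}. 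The hardest part will be patching these strata quantitatively into a single polynomial bound on $\sigma(\mathcal{V}(W,\rho))$ uniform in $W$; I expect this to require the multiscale Bourgain--Shmerkin framework of \cite{He20,Shm23a,BH24}, so that polynomial thresholds between strata can be summed without losing the $\delta^\epsilon$ savings needed to invoke \cref{pro:He subcritical}.
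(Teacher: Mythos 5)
Your main route is blocked at the very first step. You propose to verify a $(\delta^{-\epsilon},\kappa)$-non-degeneracy condition for the pushforward of $m_U|_{\mathsf{B}_1^U}$ under $u \mapsto u^t.\mathfrak{r}^{(0)}$, and then invoke \cref{pro:He subcritical}. But \cref{pro:He subcritical} requires the non-degeneracy bound to hold for \emph{every} $W \in \Gr(9,3)$, and this is exactly what fails here: the paper shows in \cref{example:nondegenerate fails} (and the remark after \cref{eqn:coordinate so 3 1}) that there are dual obstructions for the family $\{u^t.\mathfrak{r}^{(0)}\}$, i.e.\ fixed $W$'s meeting every $u^t.\mathfrak{r}^{(0)}$ nontrivially. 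In that situation $\sigma(\mathcal{V}(W,\rho)) = 1$ for all $\rho$, and no stratification of $\Gr(9,3)$ can repair this, because the bad $W$'s are not an issue of measure on the parameter side $U$ --- they violate the hypothesis of \cref{pro:He subcritical} outright. Your highest-weight analysis via a subrepresentation $V_1 \subset \wedge^3\mathfrak{r}$ is a sensible way to organize Remez on the generic stratum, but it cannot salvage a hypothesis that is genuinely false.

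The paper's actual proof never attempts non-degeneracy on $\Gr(9,6)$. It dualizes the proof of \cref{thm: subcritical 9 to 3}: pick three generic members $W_1, W_2, W_3$ of $\{u^-.\mathfrak{r}^{(0)}\}$; by \cref{lem:Special Calculation1}~(2), $W_1 + W_2 = \mathfrak{r}$ for generic parameters, so submodularity (\cref{lem:submodularity}) gives $|\pi_{W_1}(A)|\cdot|\pi_{W_2}(A)| \gg |A|\cdot|\pi_{W_1\cap W_2}(A)|$; applying submodularity again with $W_3$ gives $|\pi_{W_3}(A)|\cdot|\pi_{W_1\cap W_2}(A)| \gg |\pi_{W_3 + (W_1\cap W_2)}(A)|\cdot|\pi_{W_1\cap W_2\cap W_3}(A)|$. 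By \cref{lem:SpecialCalculation2}~(2), $W_3 + (W_1 \cap W_2)$ is generically $8$-dimensional and $W_1\cap W_2\cap W_3$ is $1$-dimensional, and \cref{lem:nondegenerate intersection sum2} gives the non-vanishing of the relevant highest/lowest weight components so that \cref{thm:subcritical 9 to 1} (contributing $|A|^{1/9}$) and \cref{thm:subcritical 9 to 8} (contributing $|A|^{8/9}$) apply to those families. Combining and taking cube roots gives $|\pi^{(0)}_u(A)| \gtrsim |A|^{2/3}$. So the submodularity bootstrap you mention as a ``patch'' for the degenerate strata is in fact the whole argument, and the Remez/Shah machinery is applied only at the $\Gr(9,1)$ and $\Gr(9,8)$ levels, where non-degeneracy actually holds.
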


\subsection{\texorpdfstring{Properties of the representation $\mathfrak{r}$}{Properties of the representation \unichar{"1D597}}}\label{subsec:specialrep}
This subsection is devoted to the study of $\mathfrak{r}_1$ and $\mathfrak{r}_2$. In this subsection $H = H_1 = \SO(Q_1)^\circ$ or $H = H_2 = \SO(Q_2)^\circ$. 

We first give a convenient coordinate of $\mathfrak{r}_1$. Using the coordinate from $\mathfrak{sl}_4(\R)$, we can write elements of $\mathfrak{r}$ as the following $4 \times 4$ matrices:
\begin{align}\label{eqn:coordinate so 2 2}
    \begin{pmatrix}
        A & B\\
        C & -A
    \end{pmatrix}
\end{align}
where $A, B, C \in \mathfrak{sl}_2$. We use $A^{\pm}$, $A^0$, $B^{\pm}$, $B^0$, $C^{\pm}$, $C^0$ to denote the corresponding subspaces to strictly upper(lower)-triangular matrices and diagonal matrices. In this coordinate, $\mathfrak{r}^{(1)}$ is spanned by $A^+$, $B^0$ and $B^+$. 

We now provide the algebraic obstruction for getting the optimal dimension estimate for projections $\{\pi^{(1)}_{r, s} = \pi^{(1)} \circ \Ad(u_{r, s})\}_{r, s \in [-1, 1]^2}$. 
\begin{example}\label{example:optimal fail for so 2 2}
Let $W$ be the subspace of $\mathfrak{r}$ as the following
\begin{align*}
W = \biggl\{\begin{pmatrix}
        0 & B\\
        0 & 0
    \end{pmatrix}: B \in \mathfrak{sl}_2(\R)\biggr\}.
\end{align*}
We have $\dim W = 3$. The action of $U_1$ leaves $W$ invariant. Therefore, $\pi_{r, s}^{(1)}(W) = \pi^{(1)}(W) = \R B^0 \oplus \R B^+$. We have $\dim \pi_{r, s}^{(1)}(W) = 2 < 3 =\min\{\dim W, \dim \mathfrak{r}^{(1)}\}$. This implies that the family of projections $\{\pi_{r, s}^{(1)}\}_{r, s}$ is never optimal. 

We now give a slightly more conceptual interpretation of $W$. As a representation of $\mathfrak{so}(2, 2) \cong \mathfrak{sl}_2(\R) \oplus \mathfrak{sl}_2(\R)$, $\mathfrak{r}$ is isomorphic to $\mathfrak{sl}_2(\R) \otimes \mathfrak{sl}_2(\R)$. Let $e$ be the fixed vector in $\mathfrak{sl}_2(\R)$ by the adjoint action of strictly upper triangular matrices. Then $W$ is identified to $\mathfrak{sl}_2(\R) \otimes \R e$. It is invariant under the action of $U_1$ but does not contain the expanding direction coming from the second copy of $\mathfrak{sl}_2(\R)$. 

The obstruction for $\{\pi^{(0)}_{r, s}\}_{r, s}$ can be constructed in a similar way. 
\end{example}

We now show that the non-degeneracy condition in the previous section does not hold for the family of subspace $\{u_{r, s}^t.\mathfrak{r}^{(1)}_1\}$. 
\begin{example}\label{example:nondegenerate fails}
    Let 
    \begin{align*}
        W = \Biggl\{\begin{pmatrix}
            0 & B\\
            C & 0
        \end{pmatrix}: B, C \in \mathfrak{sl}_2(\R)\Biggr\}.
    \end{align*}
    This is a $6$-dimensional subspace of $\mathfrak{r}$. We will show that 
    \begin{align*}
        u_{r, s}^t. \mathfrak{r}^{(1)} \cap W \neq \{0\}
    \end{align*}
    for all $r, s \in \R$. For simplicity, we write $u_r = \begin{pmatrix}
            1 & r\\
            0 & 1
        \end{pmatrix}$ in this example. 

    We can calculate that 
    \begin{align*}
        u_{r, s}^t. \mathfrak{r}^{(1)} = \Biggl\{\begin{pmatrix}
            u_r^t A^+ u_{-r}^t - s u_r^t B^{0, +} u_{-r}^t & u_r^t B^{0, +} u_{-r}^t\\
            s(2u_r^t A^+ u_{-r}^t - s u_r^t B^{0, +} u_{-r}^t)  & - u_r^t A^+ u_{-r}^t + s u_r^t B^{0, +} u_{-r}^t
        \end{pmatrix}\Biggr\}.
    \end{align*}

    Therefore, we have
    \begin{align*}
        \Biggl\{\begin{pmatrix}
            0 & u_r^t B^+ u_{-r}^t\\
            s^2 u_r^t B^{+} u_{-r}^t& 0
        \end{pmatrix}\Biggr\} \subseteq u_{r, s}^t. \mathfrak{r}^+ \cap W.
    \end{align*}
    This shows that $u_{r, s}^t.\mathfrak{r}^{(1)}$ lies in $\mathcal{V}(W, 0)$ for all $r, s$. 
\end{example}

We now give a convenient coordinate of $\mathfrak{r}_2$. Using the coordinate from $\mathfrak{sl}_4(\R)$, we can write elements of $\mathfrak{r}$ as the following $4 \times 4$ matrices:
\begin{align}\label{eqn:coordinate so 3 1}
    \begin{pmatrix}
        a_4 & a_2 & a_3 & a_1\\
        a_7 & a_5 & a_6 & -a_2\\
        a_8 & a_6 & -2a_4 - a_5 & -a_3\\
        a_9 & -a_7 & -a_8 & a_4
    \end{pmatrix}.
\end{align}
Under this coordinate, $\mathfrak{r}^{(1)}_2$ is spanned by $\R a_1 \oplus \R a_2 \oplus \R a_3$ and $\mathfrak{r}^{(2)}_2$ is spanned by $\R a_1 \oplus \cdots \oplus \R a_6$. The matrix of the adjoint action of $u_{r, s}$ under this coordinate can be written as a strictly upper-triangular matrix. 

Using this coordinate, one can also show that for the family of $3$-dimensional subspaces $\{u_{r, s}.\mathfrak{r}_2^{(1)}\}_{r, s}$, the non-degeneracy condition is not satisfied. One can also construct dual obstructions for the families of $6$-dimensional subspaces $\{u_{r, s}^t.\mathfrak{r}^{(0)}\}_{r, s}$. 

Nevertheless, the family of $3$-dimensional subspaces $\{u^t. \mathfrak{r}^{(1)}\}$ and the family of $6$-dimensional subspaces $\{u^t. \mathfrak{r}^{(0)}\}$ satisfies some weaker non-degeneracy condition recorded in the following four lemmas. 

Recall that we say two partitions $\mathcal{Q}$ and $\mathcal{P}$ are roughly equivalent with a parameter $L \geq 1$, and write $\mathcal{P} \overset{L}\sim \mathcal{Q}$ if each atom of $\mathcal{P}$ is contained in at most $L$ atoms in $\mathcal{Q}$ and vice versa. Recall that $\mathcal{D}_\delta$ is the partition of the ambient space by $\delta$-cubes. 

The following lemma is a consequence of Lemma~\ref{lem:General two piece transverse}. It says that $u_{1}^t. \mathfrak{r}^{(1)}$ and $u_{2}^t. \mathfrak{r}^{(1)}$ are transversal for generic $(u_1, u_2)$. Similar result holds for the family $\{u^t.\mathfrak{r}^{(0)}\}_u$. 
\begin{lemma}\label{lem:Special Calculation1}
There exist constant $E$ and polynomials $P_1, P_2$ on $U^2$ satisfying $\sup_{(\mathsf{B}_1^U)^2} |P_i| \gg 1$ for $i = 1, 2$ so that the following holds. 
\begin{enumerate}
    \item We have 
    \begin{align*}
        \{(u_1, u_2) \in U^2: \dim u_1^t.\mathfrak{r}^{(1)} + u_2^t.\mathfrak{r}^{(1)} = 6\} = \{P_1(u_1, u_2) \neq 0\}.
    \end{align*}
    Moreover, for $(u_1, u_2) \in (\mathsf{B}_1^U)^2$ so that $P_1(u_1, u_2) \geq c_1 > 0$,
    \begin{align*}
        \pi_{u_1^t.\mathfrak{r}^{(1)}}^{-1} \mathcal{D}_\delta \vee \pi_{u_2^t.\mathfrak{r}^{(1)}}^{-1} \mathcal{D}_\delta \overset{O(c_1^{-E})}{\sim}  \pi_{u_1^t.\mathfrak{r}^{(1)} \oplus u_2^t.\mathfrak{r}^{(1)}}^{-1} \mathcal{D}_\delta.
    \end{align*}
    \item We have
    \begin{align*}
        \{(u_1, u_2) \in U^2: \dim u_1^t.\mathfrak{r}^{(0)} + u_2^t.\mathfrak{r}^{(0)} = 9\} = \{P_2(u_1, u_2) \neq 0\}.
    \end{align*}
    Moreover, for $(u_1, u_2) \in (\mathsf{B}_1^U)^2$ so that $P_2(u_1, u_2) \geq c_2 > 0$,
    \begin{align*}
        \pi_{u_1^t.\mathfrak{r}^{(0)}}^{-1} \mathcal{D}_\delta \vee \pi_{u_2^t.\mathfrak{r}^{(0)}}^{-1} \mathcal{D}_\delta \overset{O(c_2^{-E})}{\sim} \pi_{u_1^t.\mathfrak{r}^{(0)} \oplus u_2^t.\mathfrak{r}^{(0)}}^{-1} \mathcal{D}_\delta.
    \end{align*}
\end{enumerate}
The constant $E$ depends only on dimension of $\mathfrak{r}$ and $U$. 
\end{lemma}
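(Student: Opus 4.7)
My plan is to build the polynomials $P_i$ from wedge products whose vanishing loci match the two transversality conditions, invoke Lemma~\ref{lem:General two piece transverse} to conclude that these polynomials are non-zero, and then translate a quantitative lower bound on $P_i$ into the rough equivalence of partitions via a linear-algebraic comparison. First I would set $\mathbf{v}_1 \in \wedge^3 \mathfrak{r}$ to be a unit wedge of an orthonormal basis of $\mathfrak{r}^{(1)}$ and put
\begin{align*}
P_1(u_1, u_2) = \|u_1^t.\mathbf{v}_1 \wedge u_2^t.\mathbf{v}_1\|^2.
\end{align*}
Since $U$ is unipotent, the map $u \mapsto u^t$ is polynomial, so $P_1$ is a polynomial in $(u_1, u_2)$ of degree depending only on the representation. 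Because $2\dim\mathfrak{r}^{(1)} = 6 \leq 9 = \dim\mathfrak{r}$, the wedge $u_1^t.\mathbf{v}_1 \wedge u_2^t.\mathbf{v}_1$ vanishes iff $u_1^t.\mathfrak{r}^{(1)} \cap u_2^t.\mathfrak{r}^{(1)} \neq \{0\}$, iff $\dim(u_1^t.\mathfrak{r}^{(1)} + u_2^t.\mathfrak{r}^{(1)}) < 6$. By Lemma~\ref{lem:General two piece transverse}(1) this locus is proper Zariski closed, so $P_1 \not\equiv 0$; since $P_1$ is a fixed polynomial of bounded degree, $\sup_{(\mathsf{B}_1^U)^2}|P_1| \gg 1$. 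For (2) the naive wedge vanishes identically because $2\dim\mathfrak{r}^{(0)} > \dim\mathfrak{r}$; instead I would pass to orthogonal complements, using the identity $(u^t.\mathfrak{r}^{(0)})^\perp = u^{-1}.(\mathfrak{r}^{(0)})^\perp$ valid in any inner product space. Letting $\mathbf{w}_0$ be a unit wedge of an orthonormal basis of the $3$-dimensional space $(\mathfrak{r}^{(0)})^\perp = \mathfrak{r}_{-1} \oplus \mathfrak{r}_{-2}$, and noting $u^{-1}$ is polynomial in $u$ for $u \in U$, I define
\begin{align*}
P_2(u_1, u_2) = \|u_1^{-1}.\mathbf{w}_0 \wedge u_2^{-1}.\mathbf{w}_0\|^2,
\end{align*}
which vanishes exactly when $u_1^t.\mathfrak{r}^{(0)} + u_2^t.\mathfrak{r}^{(0)} \neq \mathfrak{r}$; non-vanishing follows from Lemma~\ref{lem:General two piece transverse}(2), and the supremum bound is as before.

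For the rough equivalence of partitions, fix $(u_1, u_2) \in (\mathsf{B}_1^U)^2$ with $P_i(u_1, u_2) \geq c_i$, write $W_1, W_2$ for the two relevant subspaces and $W = W_1 + W_2$. The key structural observation is that both partitions factor through $\pi_W$: the inclusion $W_i \subseteq W$ forces $\pi_{W_i} = \pi_{W_i} \circ \pi_W$, so atoms on either side are pullbacks under $\pi_W$ of subsets of $W$. Inside $W$, I would then compare $\delta$-cubes of $W$ with the common refinement of the preimages of $\delta$-cubes under $\pi_{W_i}|_W : W \to W_i$. The map
\begin{align*}
\phi : W \longrightarrow W_1 \oplus W_2, \qquad w \longmapsto (\pi_{W_1}(w), \pi_{W_2}(w))
\end{align*}
is a linear bijection (its kernel $W \cap W_1^\perp \cap W_2^\perp$ is contained in $W \cap W^\perp = \{0\}$), has $\|\phi\| \leq O(1)$, and the hypothesis $P_i \geq c_i$ is precisely a lower bound on the Gram determinant measuring the transversality of $(W_1, W_2)$ (resp.\ of their orthogonal complements). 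A standard computation expressing $\det\phi$ in terms of principal angles yields $\|\phi^{-1}\| \ll c_i^{-O(1)}$, with exponent depending only on $\dim U$ and $\dim \mathfrak{r}$. Consequently, every $\delta$-cube in $W$ is mapped by $\phi$ into a parallelepiped of diameter $O(\delta)$ in $W_1 \oplus W_2$, which meets at most $O(1)$ atoms of $\mathcal{D}_\delta \times \mathcal{D}_\delta$; conversely every atom $C_1 \times C_2 \subset W_1 \oplus W_2$ pulls back via $\phi^{-1}$ to a set of diameter $O(c_i^{-O(1)}\delta)$, which is covered by $O(c_i^{-O(1)\dim W})$ many $\delta$-cubes in $W$. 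These two counts give the desired rough equivalence with constant $O(c_i^{-E})$ for an $E$ depending only on $\dim U$ and $\dim \mathfrak{r}$.

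The only non-routine step is the quantitative conversion from the polynomial lower bound $P_i \geq c_i$ to the operator bound on $\|\phi^{-1}\|$; I expect to handle this by choosing adapted orthonormal bases of $W_1$ and of $W_2 \cap W_1^\perp$ (so that $\phi$ acquires a block-triangular form) and expanding the resulting Gram determinant in terms of $\|u_1^t.\mathbf{v}_1 \wedge u_2^t.\mathbf{v}_1\|$ (resp.\ the dual quantity for case (2)), giving a clean polynomial identity up to uniform constants. Everything else is bookkeeping, and the factor $E$ emerges from the usual conversion between diameter and $\delta$-covering number in a $\dim W$-dimensional Euclidean space.
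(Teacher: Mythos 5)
Your proposal is correct and takes essentially the same route as the paper: the paper builds $P_1$ as the sum of squares of the $6\times 6$ minors of the $6\times 9$ matrix of $T_{u_1,u_2}\colon v\mapsto(\pi^{(1)}_{u_1}v,\pi^{(1)}_{u_2}v)$, which is, up to a bounded factor on $(\mathsf{B}_1^U)^2$, the same as your $\|u_1^t.\mathbf{v}_1\wedge u_2^t.\mathbf{v}_1\|^2$, and then observes $P_1\asymp d_\measuredangle(u_1^t.\mathfrak{r}^{(1)},u_2^t.\mathfrak{r}^{(1)})^2$, which is exactly the quantity you are controlling via the Gram determinant of $\phi$. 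The paper then invokes Lemma~\ref{lem:General two piece transverse} for non-vanishing and a singular-value argument (spelled out in the next lemma) to translate the lower bound on $P_i$ into the $O(c_i^{-E})$ equivalence of partitions, which matches your operator-norm bound on $\phi^{-1}$. Your dual formulation of $P_2$ via $(u^t.\mathfrak{r}^{(0)})^\perp=u^{-1}.(\mathfrak{r}^{(0)})^\perp$ is a pleasant explicit shortcut around the fact that $2\dim\mathfrak{r}^{(0)}>\dim\mathfrak{r}$; the paper just says the argument is "similar", and indeed one can either dualize as you do or work with the $12\times 9$ matrix $T_{u_1,u_2}$ for $\pi^{(0)}$ and take $9\times 9$ minors — the two give comparable polynomials. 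The one point you should keep in mind when writing this up is the bookkeeping factor between your wedge normalization (using an orthonormal wedge $\mathbf{v}_1$ but a non-orthogonal $u^t$) and the paper's $d_\measuredangle$ defined via orthonormal bases of the image subspaces; on the compact set $(\mathsf{B}_1^U)^2$ these differ by an absolute $\asymp$-constant, which is harmless but worth stating.
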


\begin{proof}
    By Lemma~\ref{lem:General two piece transverse}, such sets are Zariski open dense subsets in $U^2$. We now show the condition on partitions in property~(1) via the following construction of $P_1$. Property~(2) can be proved in a similar way. 

    Consider the map
    \begin{align*}
        T_{u_1, u_2}: \mathfrak{r} &\to \mathfrak{r}^{(1)} \times \mathfrak{r}^{(1)}\\
        v &\mapsto (\pi_{u_1}^{(1)}(v), \pi_{u_2}^{(1)}(v)).
    \end{align*}
    Using the coordinates in \cref{eqn:coordinate so 2 2,eqn:coordinate so 3 1}, $T_{u_1, u_2}$ can be written as a $6 \times 9$ matrix which we also denote by $T_{u_1, u_2}$. Let $P_1$ be the sum of squares of its $6 \times 6$ minors. Note that the columns of $T_{u_1, u_2}^t$ spans $u_1^t.\mathfrak{r}^{(1)} + u_2^t.\mathfrak{r}^{(1)}$, Lemma~\ref{lem:General two piece transverse} implies that $P_1$ is non-zero and its construction implies $\sup_{(\mathsf{B}_1^U)^2} |P_1| \gg 1$. Also note that $P_1 = 0$ if and only if $\mathrm{rank} (T_{u_1, u_2}) = \dim u_1^t.\mathfrak{r} + u_2^t.\mathfrak{r} < 6$, we have
    \begin{align*}
        \{(u_1, u_2) \in U^2: \dim u_1^t.\mathfrak{r}^{(1)} + u_2^t.\mathfrak{r}^{(1)} = 6\} = \{P_1(u_1, u_2) \neq 0\}.
    \end{align*}
    Note that $P_1 \asymp d_{\measuredangle}(u_1^t.\mathfrak{r}^{(1)}, u_2^t.\mathfrak{r}^{(1)})^2$, this implies the statement on partitions. 
\end{proof}

The following lemma says that $u_1^t. \mathfrak{r}^{(1)}, u_2^t. \mathfrak{r}^{(1)}$ and $u_3^t. \mathfrak{r}^{(1)}$ span an $8$-dimensional subspace for generic $u_1, u_2, u_3$. 
Similar result holds for the family of $6$-dimensional subspaces $\{u^t. \mathfrak{r}^{(0)}\}$. For convenience in later applications, we consider $\mathfrak{r}^{(1)}, u_1^t. \mathfrak{r}^{(1)}$ and $u_2^t. \mathfrak{r}^{(1)}$ for generic $(u_1, u_2)$. 

\begin{lemma}\label{lem:SpecialCalculation2}
    There exist constant $E$ and polynomials $R_1, R_2$ on $U^2$ satisfying $\sup_{(\mathsf{B}^U_1)^2} |R_i| \gg 1$ for $i = 1, 2$ so that the following holds. 
    \begin{enumerate}
        \item We have
        \begin{align*}
            \{(u_1, u_2) \in U^2: \dim \mathfrak{r}^{(1)} + u_1^t.\mathfrak{r}^{(1)} + u_2^t.\mathfrak{r}^{(1)} = 8\} = \{R_1(u_1, u_2) \neq 0\}.
        \end{align*}
        Moreover, for $(u_1, u_2) \in (\mathsf{B}^U_1)^2$ with $R_1(u_1, u_2) \geq c > 0$, 
        \begin{align*}
            \pi_{\mathfrak{r}^{(1)}}^{-1} \mathcal{D}_{\delta} \vee \pi_{u_1^t.\mathfrak{r}^{(1)}}^{-1} \mathcal{D}_\delta \vee \pi_{u_2^t.\mathfrak{r}^{(1)}}^{-1} \mathcal{D}_\delta \overset{O(c^{-E})}{\sim} \pi_{\mathfrak{r}^{(1)} + u_1^t.\mathfrak{r}^{(1)} + u_2^t.\mathfrak{r}^{(1)}}^{-1} \mathcal{D}_\delta.
        \end{align*}
        \item We have
        \begin{align*}
            \{(u_1, u_2) \in U^3: \dim  \mathfrak{r}^{(0)} \cap u_1^t.\mathfrak{r}^{(0)} \cap u_2^t.\mathfrak{r}^{(0)} = 1\} = \{R_2(u_1, u_2) \neq 0\}.
        \end{align*}
        Moreover, for $(u_1, u_2) \in (\mathsf{B}^U_1)^2$ with $R_2(u_1, u_2) \geq c > 0$, 
        \begin{align*}
            \pi_{\mathfrak{r}^{(0)}}^{-1} \mathcal{D}_{\delta} \vee \pi_{u_1^t.\mathfrak{r}^{(0)} \cap u_2^t.\mathfrak{r}^{(0)}}^{-1} \mathcal{D}_\delta \overset{O(c^{-E})}{\sim} \pi_{\mathfrak{r}^{(0)} + (u_1^t.\mathfrak{r}^{(0)} \cap u_2^t.\mathfrak{r}^{(0)})}^{-1} \mathcal{D}_\delta.
        \end{align*}
    \end{enumerate}
    The constant $E$ depends only on dimension of $\mathfrak{r}$ and $U$. 
\end{lemma}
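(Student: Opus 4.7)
The plan is to follow the template of Lemma~\ref{lem:Special Calculation1}: construct $R_1$ and $R_2$ as sums of squares of appropriate minors of explicit linear maps, show these polynomials are not identically zero and bounded below on $(\mathsf{B}_1^U)^2$, and then derive the partition equivalences from quantitative transversality. The key new issue is that the generic dimensions of the subspaces in question ($8$ for the sum in property~(1), $1$ for the triple intersection in property~(2)) are \emph{not} the naive ones ($9$ and $0$), so one must first identify and quantify the algebraic obstruction before applying the same formal machinery.

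For property~(1), consider the linear map
\begin{align*}
T_{u_1,u_2}: \mathfrak{r} &\to \mathfrak{r}^{(1)} \oplus \mathfrak{r}^{(1)} \oplus \mathfrak{r}^{(1)},\\
v &\mapsto \bigl(\pi^{(1)}(v),\,\pi^{(1)}_{u_1}(v),\,\pi^{(1)}_{u_2}(v)\bigr).
\end{align*}
Using the coordinates of Subsection~\ref{subsec:specialrep} this is a $9\times 9$ matrix polynomial in $(u_1,u_2)$, and the columns of $T_{u_1,u_2}^t$ span $\mathfrak{r}^{(1)} + u_1^t.\mathfrak{r}^{(1)} + u_2^t.\mathfrak{r}^{(1)}$, so $\rank T_{u_1,u_2}$ equals the dimension of this sum. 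Define $R_1$ as the sum of squares of the $8\times 8$ minors of $T_{u_1,u_2}$; then $\{R_1 \neq 0\} = \{\rank T_{u_1,u_2} \geq 8\}$. For property~(2), we work dually: note that $(u^t.\mathfrak{r}^{(0)})^\perp = u^{-1}.(\mathfrak{r}^{(0)})^\perp$ since $u^t$ is the adjoint of $u$, so the triple intersection $\mathfrak{r}^{(0)} \cap u_1^t.\mathfrak{r}^{(0)} \cap u_2^t.\mathfrak{r}^{(0)}$ has dimension $1$ if and only if $(\mathfrak{r}^{(0)})^\perp + u_1^{-1}.(\mathfrak{r}^{(0)})^\perp + u_2^{-1}.(\mathfrak{r}^{(0)})^\perp$ has dimension $8$. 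Since $\dim (\mathfrak{r}^{(0)})^\perp = 3$, we may construct $R_2$ exactly as in property~(1) using three $3$-dimensional subspaces in place of three $3$-dimensional $\mathfrak{r}^{(1)}$'s.

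The main obstacle is verifying that $R_1$ and $R_2$ are non-zero polynomials, which is equivalent to showing that the generic dimensions are indeed $8$ and $1$ respectively, rather than the values $9$ and $0$ suggested by a naive dimension count. The \emph{upper bound} $\dim \leq 8$ in property~(1) is the algebraic obstruction already discussed in Subsection~\ref{subsec:specialrep} (compare Example~\ref{example:nondegenerate fails}): in each of the two cases $H = \SO(Q_1)^\circ,\SO(Q_2)^\circ$ one identifies a non-trivial $H$-subrepresentation of $\bigwedge^9\mathfrak{r}^*$-type invariant that forces the identity $\det T_{u_1,u_2} \equiv 0$; equivalently, in the coordinates of \cref{eqn:coordinate so 2 2,eqn:coordinate so 3 1} one exhibits, for each $(u_1,u_2)$, an explicit nonzero vector lying in all three orthogonal complements. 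The \emph{lower bound}---that dimension $8$ (resp.\ $1$) is generically attained---is then checked by computing a single pair $(u_1,u_2) \in (\mathsf{B}_1^U)^2$ where some $8\times 8$ minor is visibly nonzero; combined with the upper bound one obtains a non-zero polynomial whose sup on $(\mathsf{B}_1^U)^2$ is therefore $\gg 1$. The symmetric computation handles property~(2). I expect this compatibility check with the $H$-module structure of $\mathfrak{r}$ to be the longest piece of the argument, although conceptually it is a finite linear-algebraic computation.

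Once $R_1$ and $R_2$ are in hand, the partition equivalences follow by the same argument as in Lemma~\ref{lem:Special Calculation1}. Indeed, $R_i(u_1,u_2) \geq c$ translates (by homogeneity and Lemma~\ref{lem:angle proportional to poly}) into a quantitative lower bound on the relevant generalized angle, e.g.\ $d_{\measuredangle}(\mathfrak{r}^{(1)},u_1^t.\mathfrak{r}^{(1)},u_2^t.\mathfrak{r}^{(1)}) \gg c^{1/E}$ for property~(1), so that any atom of $\pi_{\mathfrak{r}^{(1)}+u_1^t.\mathfrak{r}^{(1)}+u_2^t.\mathfrak{r}^{(1)}}^{-1}\mathcal{D}_\delta$ is covered by $O(c^{-E})$ atoms of the common refinement and vice versa. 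For property~(2) one replaces the sum by the intersection and uses that $u_1^t.\mathfrak{r}^{(0)} \cap u_2^t.\mathfrak{r}^{(0)}$ is a regular function of $(u_1,u_2)$ on the open set where $R_2 \neq 0$, together with quantitative transversality between $\mathfrak{r}^{(0)}$ and this intersection, again controlled by $R_2$. Taking the exponent $E$ to be the maximum arising from the two properties, which depends only on $\dim\mathfrak{r}$ and $\dim U$, completes the proof.
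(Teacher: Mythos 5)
Your construction of $R_1$ as the sum of squares of $8\times 8$ minors of $T_{u_1,u_2}$, and the dualization of property~(2) via $(u^t.\mathfrak{r}^{(0)})^\perp = u^{-1}.(\mathfrak{r}^{(0)})^\perp$, coincide with the paper's approach. Two points, one a genuine gap.

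On the nonvanishing of $R_1$ (that the generic dimension is exactly $8$): you propose to exhibit a single explicit $(u_1,u_2)$ with a nonvanishing $8\times 8$ minor, which is valid in principle but is not carried out, as you acknowledge. The paper instead argues representation-theoretically: if the locus $\{\dim < 8\}$ contained all of $(U^-)^2$, then because $U^-M_0AU^+$ is Zariski dense in $\SO(Q)$ and $M_0AU^+$ stabilizes $\mathfrak{r}^{(1)}$, the corresponding locus in $\SO(Q)^2$ would be everything; evaluating at $(h_1,h_2)=(w,u_{r,r}^t)$ and decomposing $\mathfrak{r}$ under the principal $S\cong\SL_2(\R)$ as in Figure~\ref{fig:decompositionRep} gives a contradiction for generic $r$. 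This avoids the explicit matrix computation entirely and is where the two approaches diverge.

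The genuine gap is in your derivation of the partition equivalence. You assert that $R_1(u_1,u_2)\geq c$ yields $d_{\measuredangle}(\mathfrak{r}^{(1)},u_1^t.\mathfrak{r}^{(1)},u_2^t.\mathfrak{r}^{(1)})\gg c^{1/E}$ via Lemma~\ref{lem:angle proportional to poly}, but that generalized angle is a $9$-wedge of vectors spanning a space of dimension at most $8$ for \emph{every} $(u_1,u_2)$ (Remark~\ref{rem:3 copies dim 8}), so it vanishes identically and cannot be bounded below. Lemma~\ref{lem:angle proportional to poly} compares the angle to the polynomial $\|\mathbf{v}_1\wedge\mathbf{v}_2\wedge\mathbf{v}_3\|^2$ (the unique $9\times 9$ minor), which is identically zero here; $R_1$, being the sum of $8\times 8$ minors, is a different polynomial and is not comparable to the angle. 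The correct quantitative conclusion from $R_1\geq c$ is a lower bound on the smallest nonzero singular value of $T_{u_1,u_2}$ restricted to $(\ker T_{u_1,u_2})^\perp=\mathfrak{r}^{(1)}+u_1^t.\mathfrak{r}^{(1)}+u_2^t.\mathfrak{r}^{(1)}$; the paper obtains this by completing an orthonormal basis $(w_1,\dots,w_8)$ of the image to a basis $(w_1,\dots,w_9)$ of $\mathfrak{r}$ and matching $8\times 8$ minors, from which the bi-Lipschitz control on the partitions follows. You need this singular-value argument (or an equivalent substitute) in place of the degenerate generalized angle; as written, the step "same argument as Lemma~\ref{lem:Special Calculation1}" does not go through.
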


\begin{figure}
\centering
\includegraphics{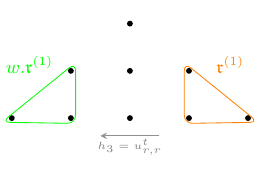}
\caption{This figure depicts the decomposition of $\mathfrak{r}$ into irreducible representations of $S \cong \SL_2(\R)$. }
\label{fig:decompositionRep}
\end{figure}

\begin{remark}\label{rem:3 copies dim 8}
We remark that using the coordinates introduced in \cref{eqn:coordinate so 2 2,eqn:coordinate so 3 1}, one can show by calculation that \emph{for all} $(u_1, u_2, u_3)$, we have $\dim \sum_{i = 1}^3 u_{i}^t.\mathfrak{r}^{(1)} \leq 8$ and $\dim \cap_{i = 1}^3 u_{i}^t.\mathfrak{r}^{(0)} \geq 1$. 
\end{remark}

\begin{proof}[Proof of Lemma~\ref{lem:SpecialCalculation2}]
    We prove property~(1). Property~(2) can be obtained in a similar way. 
    
    We first show that $\mathfrak{r}^{(1)}$, $u_1^t.\mathfrak{r}^{(1)}$ and $u_2^t.\mathfrak{r}^{(1)}$ span an $8$-dimensional subspace for $(u_1, u_2)$ from a Zariski open dense subset of $U^2$. Suppose not, then for all $(u_1, u_2) \in U^2$, 
    \begin{align*}
        \dim \mathfrak{r}^{(1)} + u_{1}^t.\mathfrak{r}^{(1)} + u_{2}^t.\mathfrak{r}^{(1)} < 8.
    \end{align*}
    As in Lemma~\ref{lem:General two piece transverse}, we consider the following subset of $\SO(Q)^2$:
    \begin{align*}
        \mathcal{V} = \biggl\{(h_1, h_2) \in \SO(Q)^2: \dim \mathfrak{r}^{(1)} + h_1.\mathfrak{r}^{(1)} + h_2.\mathfrak{r}^{(1)}< 8\biggr\}.
    \end{align*}
    This is a Zariski closed subset of $\SO(Q)^2$. Since $U^-M_0AU^+$ forms Zariski dense subset of $\SO(Q)$ and $M_0AU^+$ leaves $\mathfrak{r}^{(1)}$ invariant, we must have $\mathcal{V} = \SO(Q)^2$. 
    
    Note that both $H_1$ and $H_2$ contains a copy of $\SL_2(\R)$ generated by 
    \begin{align*}
        u_{r, r}^{t} = \begin{pmatrix}
        1 &  &  & \\
        r & 1 & & \\
        r & & 1 & \\
        r^2 & r & r & 1
    \end{pmatrix}, \quad 
    a_t = \begin{pmatrix}
        e^t & & & \\
         & 1 & & \\
         & & 1 & \\
         & & & e^{-t}
    \end{pmatrix}, \quad 
    u_{r, r} = \begin{pmatrix}
        1 & r & r & r^2\\
         & 1 & & r\\
         & & 1 & r\\
         & & & 1
    \end{pmatrix}.
    \end{align*}
    We denote this subgroup to be $S$.\footnote{It is a principal $\SL_2(\R)$ of both $H_1$ and $H_2$. } The representation $\mathfrak{r}$ is decomposed into irreducible representation of $S$ as in Figure~\ref{fig:decompositionRep}. Let $h_1 = w$ where $w \in \SO(Q)$ is a representative of of the longest element in the Weyl group. Let $h_2 = u_{r,r}^t$, we get
    \begin{align*}
        \dim \mathfrak{r}^{(1)} + h_1.\mathfrak{r}^{(1)} + h_2.\mathfrak{r}^{(1)} = 8 \text{ for generic }r,
    \end{align*}
    contradicting to the fact $\mathcal{V} = \SO(Q)^2$.
    
    We now construct $R_1$ explicitly. For simplicity of the notations, let $u_0 = \Id$. Consider the map
    \begin{align*}
        T_{u_1, u_2}: \mathfrak{r} &\to \mathfrak{r}^{(1)} \times \mathfrak{r}^{(1)} \times \mathfrak{r}^{(1)}\\
        v &\mapsto (\pi^{(1)}(v), \pi_{u_1}^{(1)}(v), \pi_{u_2}^{(1)}(v)).
    \end{align*}
    Under the coordinates in \cref{eqn:coordinate so 2 2,eqn:coordinate so 3 1}, the map can be written as a $9 \times 9$ matrix which we also denote by $T_{u_1, u_2}$. Let $R_1$ be the sum of squares of its $8 \times 8$ minors. The above argument shows that $R_1$ is non-zero. By construction, $\sup_{(\mathsf{B}^U_1)^2}|R_1| \gg 1$. Note that under the same coordinates, the span of columns of $T_{u_1, u_2}^t$ is $\sum_{i = 0}^2 u_i^t. \mathfrak{r}^{(1)}$. Therefore, 
    \begin{align*}
        \{R_1(u_1, u_2) \neq 0\} = \biggl\{(u_1, u_2) \in U^2: \dim \sum_{i = 0}^2 u_i^t. \mathfrak{r}^{(1)} = 8\biggr\}.
    \end{align*}

    We now show the statement on the partition. By Lemma~\ref{subsec:Equivalent Projections}, we can replace the projection $\pi_{u^t.\mathfrak{r}^{(1)}}$ by $\pi_{u}^{(1)} = \pi^{(1)} \circ u$. It suffices to show that
    \begin{align*}
        \bigvee_{i = 0}^2 (\pi_{u_i}^{(1)})^{-1} \mathcal{D}_\delta
        \overset{O(c^{-\star})}{\sim} \pi_{\sum_{i = 0}^2 u_i^t.\mathfrak{r}^{(1)}}^{-1} \mathcal{D}_\delta.
    \end{align*}
    Note that
    \begin{align*}
        \ker T_{u_1, u_2} = \bigcap_{i = 0}^2 u_i^{-1} (\bigoplus_{\lambda \leq 0} \mathfrak{r}_{\lambda}),
    \end{align*}
    which implies
    \begin{align*}
        (\ker T_{u_1, u_2})^{\perp} = \sum_{i = 0}^2 u_i^{t} \mathfrak{r}^{(1)}.
    \end{align*}
    Therefore, the restriction of $T_{u_1, u_2}$ to $\sum_{i = 0}^2 u_i^{t} \mathfrak{r}^{(1)}$ is a linear isomorphism. Since $(u_1, u_2) \in (\mathsf{B}^U_1)^2$, $\|T_{u_1, u_2}\| \ll 1$. It suffices to show that if $R_1(u_1, u_2) \geq c > 0$,
    \begin{align*}
        \|T_{u_1, u_2}v\| \gg c^{\frac{1}{2}}\|v\|
    \end{align*}
    for all $v \in \sum_{i = 0}^2 u_i^{t} \mathfrak{r}^{(1)}$. Take an orthonormal basis $w_1, \ldots, w_8$ of $\sum_{i = 0}^2 u_i^{t} \mathfrak{r}^{(1)}$ and a unit vector $w_9$ in $\cap_{i = 0}^2 u_i^{-1} (\oplus_{\lambda \leq 0} \mathfrak{r}_{\lambda})$. This forms an orthonormal basis of $\mathfrak{r}$. Let $k = (w_1, \ldots, w_8)$ and $\tilde{k} = (w_1, \ldots, w_9)$. View $k$ as an $8 \times 9$ matrix, it suffices to estimate $\|T_{u_1, u_2}k v\|$ for all $v \in \R^8$. By singular value decomposition, $\|T_{u_1, u_2}k v\| \gg \lambda^{\frac{1}{2}}\|v\|$ where $\lambda$ is the sum of squares of $8 \times 8$ minors of $T_{u_1, u_2, u_3}k$. Note that since $\tilde{k}$ is an orthogonal matrix, $c$ is also the sum of squares of $8 \times 8$ minors of $T_{u_1, u_2}\tilde{k} = (T_{u_1, u_2}k, 0)$. Therefore, $c = \lambda$ and 
    \begin{align*}
        \|T_{u_1, u_2}v\| \gg c^{\frac{1}{2}}\|v\|
    \end{align*}
    for all $v \in \sum_{i = 0}^2 u_i^{t} \mathfrak{r}^{(1)}$. 
\end{proof}

The discussion in the introductory part suggests to study families of the subspaces as $W_3 \cap (W_2 \oplus W_1)$ and $W_3 + W_2 + W_1$ where $W_i$ are taken from $\{u^t. \mathfrak{r}^{(1)}\}_{u \in \mathsf{B}^U_1}$. This is the goal of the following lemma. 

\begin{lemma}\label{lem:Nondegenerate of intersection sum}
There exist polynomial maps $V_1: U^2 \to \mathfrak{r}$ and $V_2:U^2 \to \mathfrak{r}$ with $\sup_{(\mathsf{B}^U_1)^2}\|V_i\| \gg 1$ for $i = 1, 2$ satisfy the following properties. Recall $P_1$ from Lemma~\ref{lem:Special Calculation1} and $R_1$ from Lemma~\ref{lem:SpecialCalculation2}. 
\begin{enumerate}
    \item For all $(u_1, u_2)$ so that $R_1(u_1, u_2) \neq 0$, we have
    \begin{align*}
        V_1(u_1, u_2) \perp \mathfrak{r}^{(1)} + u_{1}^t\mathfrak{r}^{(1)} + u_{2}^t\mathfrak{r}^{(1)}.
    \end{align*}
    Moreover, let $S_1$ be the $\mathfrak{r}_{-2}$ component of $V_1(u_1, u_2)$. The polynomial $S_1$ satisfies
    \begin{align*}
        \sup_{(u_1, u_2) \in (\mathsf{B}^U_1)^2} |S_1(u_1, u_2)| \gg 1.
    \end{align*}
    \item For all $(u_1, u_2)$ so that $P_1(u_1, u_2)R_1(u_1, u_2) \neq 0$ and $V_2(u_1, u_2) \neq 0$, we have
    \begin{align*}
        \Span V_2(u_1, u_2) = \mathfrak{r}^{(1)} \cap (u_{1}^t\mathfrak{r}^{(1)} + u_{2}^t\mathfrak{r}^{(1)}).
    \end{align*}
    Moreover, let $S_2$ be the $\mathfrak{r}_2$ component of $V_2(u_1, u_2)$. The polynomial $S_2$ satisfies
    \begin{align*}
        \sup_{(u_1, u_2) \in (\mathsf{B}^U_1)^2} |S_2(u_1, u_2)| \gg 1.
    \end{align*}
\end{enumerate}
\end{lemma}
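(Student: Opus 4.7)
My plan is to construct $V_1$ and $V_2$ by wedge-product and Hodge-duality manipulations, so that polynomial dependence on $(u_1, u_2)$ and the required orthogonality / spanning properties are immediate from the construction. The substantive content is then to verify that the distinguished weight components $S_1, S_2$ do not vanish identically; this I accomplish by an explicit matrix computation at a well-chosen point of $(\mathsf{B}_1^U)^2$, in the coordinates of \cref{subsec:specialrep}.

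Fix an orthonormal basis $a_1, a_2, a_3$ of $\mathfrak{r}^{(1)}$ with $a_1 \in \mathfrak{r}_2$, let $\star$ denote the Hodge star on $\mathfrak{r}$ relative to the fixed inner product and a chosen orientation, and define
\[
V_1(u_1, u_2) := \star\bigl(a_2 \wedge a_3 \wedge u_1^t a_1 \wedge u_1^t a_2 \wedge u_1^t a_3 \wedge u_2^t a_1 \wedge u_2^t a_2 \wedge u_2^t a_3\bigr) \in \mathfrak{r}.
\]
Each component is polynomial in the matrix entries of $(u_1, u_2)$. $V_1$ is orthogonal to the eight vectors in the wedge by construction; on the Zariski-open locus where $R_1 \neq 0$ and these eight are linearly independent, they already span the full $8$-dimensional sum $\mathfrak{r}^{(1)} + u_1^t\mathfrak{r}^{(1)} + u_2^t\mathfrak{r}^{(1)}$, yielding the required orthogonality. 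For $V_2$, I parametrize $v = c_1 a_1 + c_2 a_2 + c_3 a_3 \in \mathfrak{r}^{(1)}$ and encode the condition $v \in u_1^t\mathfrak{r}^{(1)} + u_2^t\mathfrak{r}^{(1)}$ as the seven-form identity $v \wedge \Omega_U = 0$ with $\Omega_U := u_1^t a_1 \wedge \cdots \wedge u_2^t a_3 \in \wedge^6\mathfrak{r}$; this is a linear system on $(c_1, c_2, c_3)$ whose coefficient matrix has polynomial entries, and Cramer's rule applied to a fixed $2 \times 2$ minor of a rank-$2$ sub-system extracts polynomial $c_i(u_1, u_2)$. The dimension count $3 + 6 - 8 = 1$ on $\{P_1 R_1 \neq 0\}$ then forces $V_2 := \sum_i c_i a_i$ to span the one-dimensional intersection.

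To verify $S_1, S_2 \not\equiv 0$ I take the specimen point $u_1^0 = u_{r, 0}$, $u_2^0 = u_{0, s}$ for $r, s \in (0, 1]$ with $s \neq \pm r$, i.e., moving in the two complementary one-parameter unipotent directions of $\mathfrak{u}$ while avoiding the symmetric locus. The six projected vectors $\pi_{\bigoplus_{\lambda \leq 0}\mathfrak{r}_\lambda}(u_i^{0,t} a_j) \in \mathbb{R}^6$ are computed from at most four iterated commutators of the generators of $\mathfrak{u}^-$ with $a_1, a_2, a_3$ (the chain terminates in four steps since the weights of $\mathfrak{r}$ lie in $\{-2, \dots, 2\}$), producing an explicit $6 \times 6$ matrix with entries in $\mathbb{Q}[r, s]$. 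A short Gaussian elimination computes a generator of its one-dimensional left nullspace, whose $\mathfrak{r}_{-2}$-coordinate is a non-zero constant; therefore a basis vector of $\mathfrak{r}_{-2}$ does not lie in the $5$-dimensional projected span, which is equivalent to $\mathfrak{r}_{-2} \not\subseteq \mathfrak{r}^{(1)} + u_1^{0,t}\mathfrak{r}^{(1)} + u_2^{0,t}\mathfrak{r}^{(1)}$, and $S_1(u_1^0, u_2^0) \neq 0$. The same null vector encodes an explicit relation among the $u_i^{0,t} a_j$ whose $\mathfrak{r}^{(1)}$-part is the vector $V_2$, and the $a_1$-coefficient of this $V_2$ turns out to be a nonzero scalar multiple of $(s^2 - r^2)$, nonzero precisely when $s^2 \neq r^2$. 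The main obstacle is that the structural degeneracies in \cref{example:optimal fail for so 2 2,example:nondegenerate fails} force this careful choice of test point: confining $(u_1, u_2)$ to a single one-parameter unipotent subgroup already forces $R_1 \equiv 0$, and even at the symmetric point $(u_{1,0}, u_{0,1})$ one has the accidental cancellation $S_2 = 0$. With the asymmetric choice $r^2 \neq s^2$ in hand, the verification reduces to the small determinant calculation described, repeated once for $H \cong \SO(2,2)^\circ$ and once for $H \cong \SO(3,1)^\circ$.
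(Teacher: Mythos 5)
Your constructions of $V_1$ and $V_2$ are, up to normalization, equivalent to the paper's: the Hodge star of the 8-fold wedge is (component-wise) the column of cofactors of the $9\times 9$ matrix $T_{u_1,u_2}$ that the paper uses, and your linear-system-via-$v\wedge\Omega_U$ is the same as the paper's $S_{u_1,u_2}$-kernel construction realized through the $2\times 2$ minor formulas. The genuine divergence is in how the non-vanishing of the distinguished weight components $S_1$, $S_2$ is established. The paper explicitly notes that direct calculation in the coordinates of \cref{eqn:coordinate so 2 2,eqn:coordinate so 3 1} is possible, but then deliberately gives a conceptual proof instead: one fixes a point $(u_1^0,u_2^0)$ where both $\|V_i\|$ and the non-degeneracy polynomials are bounded below, invokes \cref{lem:VariantOfShah} (Shah's lemma) to find $u\in U$ making the $\mathfrak{r}_{\mp 2}$-component of $u^{\pm t}.V_i(u_1^0,u_2^0)$ comparable to $\|V_i\|$, and then uses the bi-analytic $U^-M_0AU^+$ decomposition (\cref{eqn:analytic commute u u}) to identify $u^{\pm t}.V_i(u_1^0,u_2^0)$, up to scaling, with $V_i$ evaluated at a nearby argument $(\hat u_1(u)u_1^0,\hat u_2(u)u_2^0)$. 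That argument is uniform across $\SO(2,2)^\circ$ and $\SO(3,1)^\circ$ and does not touch matrix entries. What your proposal buys instead is concreteness; what it loses is the robustness the paper explicitly singles out (``a proof that can be adapted to the general cases'').

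The gap in your proposal is that the two load-bearing computational claims are asserted, not verified. You write that at $u_1^0=u_{r,0}$, $u_2^0=u_{0,s}$ the left null vector's $\mathfrak{r}_{-2}$-coordinate ``is a non-zero constant'' and that the $a_1$-coefficient of $V_2$ ``turns out to be a nonzero scalar multiple of $(s^2-r^2)$'', but neither the $6\times 6$ matrix nor the determinant is computed; the hedged phrasing makes clear the Gaussian elimination hasn't been carried out. This matters because the required conclusion is not just $S_i\neq 0$ at a point but $\sup_{(\mathsf{B}_1^U)^2}|S_i|\gg 1$ with an implied constant controlled by the representation, so you need to track the normalization (the chosen $2\times 2$ minor for $V_2$, the orientation and inner-product normalization in $\star$ for $V_1$) all the way to an absolute numerical lower bound. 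Neither of these is shown. You are also implicitly assuming that the rank of the $6\times 6$ projected matrix is exactly $5$ at your test point, i.e., that $R_1(u_{r,0},u_{0,s})\neq 0$ for generic $r,s$; this should be checked (or deduced from \cref{lem:SpecialCalculation2}), since the whole argument collapses if your specimen point happens to lie in $\{R_1=0\}$. Until these calculations are actually performed --- twice, once for each $H$ --- the proposal is a plausible plan rather than a proof.
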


\begin{proof}
    We write $u_0 = \Id$ for simplicity. 
    
    We start by constructing the map $V_1$. The idea is straight-forward: finding normal vector is the same as solving linear equations. Recall the following map from the proof of the previous lemma:
    \begin{align*}
        T_{u_1, u_2}: \mathfrak{r} &\to \mathfrak{r}^{(1)} \times \mathfrak{r}^{(1)} \times \mathfrak{r}^{(1)}\\
        v &\mapsto (\pi^{(1)}(v), \pi^{(1)}_{u_1}(v), \pi^{(1)}_{u_2}(v)).
    \end{align*}
    We have
    \begin{align*}
        \ker T_{u_1, u_2} = \bigcap_{i = 0}^2 u_i^{-1} (\bigoplus_{\lambda \leq 0} \mathfrak{r}_{\lambda}), \quad \text{ and } (\ker T_{u_1, u_2})^{\perp} = \sum_{i = 0}^2 u_i^{t} \mathfrak{r}^{(1)}.
    \end{align*}
    Therefore, to find a normal vector of $\mathfrak{r}^{(1)} + u_1^t.\mathfrak{r}^{(1)} + u_1^t.\mathfrak{r}^{(2)}$, it suffices to solve the homogeneous linear equation $T_{u_1, u_2}(v) = 0$. We use the same notation $T_{u_1, u_2}$ to denote the corresponding matrix of $T_{u_1, u_2}$ under the basis constructed in \cref{eqn:coordinate so 2 2} for the $(2,2)$-case or \cref{eqn:coordinate so 3 1} for the $(3, 1)$-case. 
    
    By Remark~\ref{rem:3 copies dim 8}, $\ker T_{u_1, u_2} \neq \{0\}$ for all $(u_1, u_2)$. Therefore, $\det T_{u_1, u_2}$ is a zero polynomial. Let $C_{u_1, u_2}$ be the co-factor matrix of $T_{u_1, u_2}$. Its entries are polynomials of $(u_1, u_2)$. By Lemma~\ref{lem:SpecialCalculation2}, when $R_1(u_1, u_2) \neq 0$, there exists one $8 \times 8$-minor of $T_{u_1, u_2}$, i.e., one entry $C_{ij}$ of $C_{u_1, u_2}$ which is a nonzero polynomial. Let $V_1(u_1,u_2)$ be the column containing that entry. Since
    \begin{align*}
        T_{u_1, u_2} C_{u_1, u_2} = \det (T_{u_1, u_2}) \Id_{\mathfrak{r}} = 0,
    \end{align*}
    the vector $V_1(u_1, u_2)$ lies in $\ker T_{u_1, u_2}$. Moreover, it has a non-zero entry $C_{ij}$ and by construction ($8\times 8$ minor of $T_{u_1, u_2}$), 
    \begin{align*}
        \sup_{(u_1, u_2) \in (\mathsf{B}^U_1)^2}\|V_1(u_1, u_2)\| \geq \sup_{(u_1, u_2) \in (\mathsf{B}^U_1)^2}|C_{ij}| \gg 1.
    \end{align*}

    Let $S_1(u_1, u_2)$ the $\mathfrak{r}_{-2}$ entry of $V_1(u_1, u_2)$. We now establish the estimate on $S_1$. It can be calculated directly using coordinates in \cref{eqn:coordinate so 2 2,eqn:coordinate so 3 1}. We present a proof that can be adapted to the general cases. The idea is also straight-forward: the action by $U^-$ shears $V_1(u_1, u_2)$ to $\mathfrak{r}_{-2}$ and the $U^-AM_0U^+$ decomposition ensures that $u^t.V_1(u_1, u_2)$ is parallel to $V_1(u_1', u_2')$ for some other $(u_1', u_2')$. 
    
    Since both $R_1$ and $V_1$ are polynomial on $(u_1, u_2)$, there exist $(u_1^0, u_2^0) \in (\mathsf{B}^U_\frac{1}{2})^2$ and $\rho_0 > 0$ so that the following holds. For all $(u_1', u_2') \in (\mathsf{B}^U_{\rho_0})^2$, 
    \begin{align*}
        \|V_1(u_1'u_1^0, u_2'u_2^0)\| \gg 1, \quad |R_1(u_1'u_1^0, u_2'u_2^0)| \gg 1.
    \end{align*}

    Recall that since the map
    \begin{align*}
        \LieH = \LieU^- \oplus \LieA \oplus \LieM_0 \oplus \LieU^+ &\to H\\
        (X_{\LieU^-}, X_{\LieA}, X_{\LieM_0}, X_{\LieU^+}) &\mapsto \exp(X_{\LieU^-})\exp(X_{\LieA})\exp(X_{\LieM_0})\exp(X_{\LieU^+})
    \end{align*}
    is bi-analytic near $0$, there exist analytic maps $u \mapsto \hat{u}_i(u) \in U$, $u \mapsto \hat{a}_i^+(u) \in A$, $u \mapsto \hat{m}_i^+(u) \in M_0$, $u \mapsto \hat{u}_i'(u) \in U$ for $i = 1, 2$ so that the following holds:
    \begin{align}\label{eqn:analytic commute u u}
    \begin{aligned}
        &u(u_1^0)^t = (u_1^0)^t (\hat{u}_1(u))^t \hat{a}_1(u) \hat{m}_1(u) \hat{u}_1'(u)\\
        &u(u_2^0)^t = (u_2^0)^t (\hat{u}_2(u))^t \hat{a}_2(u) \hat{m}_2(u) \hat{u}_2'(u).
    \end{aligned}
    \end{align}
    Moreover, there exists constant $C > 1$ depending only on $H$ so that for all $\eta \leq \eta_0$ and $u \in \mathsf{B}_{\eta}^{U}$, $\hat{u}_i(u) \in \mathsf{B}_{C\eta}^{U}$. 

    By Lemma~\ref{lem:VariantOfShah} (or apply \cite[Lemma 5.1]{Sh96} directly), we have
    \begin{align*}
        \sup_{u \in \mathsf{B}^{U^+}_{C^{-1}\rho_0}}\|\pi_{\mathfrak{r}_{-2}}(u^t.V_1(u_1^0, u_2^0))\| \gg \|V_1(u_1^0, u_2^0)\| \gg 1.
    \end{align*}
    Fix $u \in \mathsf{B}^{U^+}_{C^{-1}\rho_0}$ so that 
    \begin{align*}
        \|\pi_{\mathfrak{r}_{-2}}((u^{-1})^t.V_1(u_1^0, u_2^0))\| \gg \|V_1(u_1^0, u_2^0)\| \gg 1.
    \end{align*}Note that we have $(u^{-1})^t.V_1(u_1^0, u_2^0)$ is orthogonal to the subspace
    \begin{align*}
        u(\mathfrak{r}^{(1)} + (u_1^0)^t. \mathfrak{r}^{(1)} + (u_2^0)^t. \mathfrak{r}^{(1)}) = \mathfrak{r}^{(1)} + u(u_1^0)^t. \mathfrak{r}^{(1)} + u(u_2^0)^t. \mathfrak{r}^{(1)}.
    \end{align*}
    By \cref{eqn:analytic commute u u}, we have
    \begin{align*}
        \mathfrak{r}^{(1)} + u(u_1^0)^t. \mathfrak{r}^{(1)} + u(u_2^0)^t. \mathfrak{r}^{(1)}
        = \mathfrak{r}^{(1)} + (u_1^0)^t (\hat{u}_1(u))^t \mathfrak{r}^{(1)} + (u_2^0)^t (\hat{u}_2(u))^t \mathfrak{r}^{(1)}.
    \end{align*}
    Therefore, $(u^{-1})^t.V_1(u_1^0, u_2^0)$ is parallel to $V_1(\hat{u}_1(u)u_1^0, \hat{u}_2(u)u_2^0)$. Since $u \in \mathsf{B}^{U^+}_{C^{-1}\rho_0}$, both $\hat{u}_1(u)$ and $\hat{u}_2(u)$ lies in $\mathsf{B}^U_{\rho_0}$. This implies that 
    \begin{align*}
        \|V_1(\hat{u}_1(u)u_1^0, \hat{u}_2(u)u_1^0)\| \asymp (u^{-1})^t.V_1(u_1^0, u_2^0)
    \end{align*}
    and hence
    \begin{align*}
        \sup_{(u_1, u_2) \in \mathsf{B}^U_1}|S_1(u_1, u_2)| \geq \|\pi_{\mathfrak{r}_{-2}}(V_1(\hat{u}_1(u)u_1^0, \hat{u}_2(u)u_1^0)\| \gg 1.
    \end{align*}

    Property~(2) can be proved in a similar way. Let $\mathfrak{r}_{\leq 0} = \oplus_{\mu \leq 0} \mathfrak{r}_\mu$ and let $\pi_{\leq 0}$ be the orthogonal projection to it. We start by constructing $V_2$. Consider the following map:
    \begin{align*}
        S_{u_1, u_2}: \mathfrak{r}^{(1)} \times \mathfrak{r}^{(1)} &\to \bigoplus_{\lambda \leq 0} \mathfrak{r}_{\lambda}\\
        (v_1, v_2) &\mapsto \pi_{\leq 0}(u_1^t.v_1 + u_2^t.v_2).
    \end{align*}
    We use the same notation $S_{u_1, u_2}$ to denote the matrix corresponding to $S_{u_1, u_2}$ under the basis constructed in \cref{eqn:coordinate so 2 2,eqn:coordinate so 3 1}. Note that a vector $v$ lies in $\mathfrak{r}^{(1)} \cap (u_1^t.\mathfrak{r}^{(1)} + u_2^t.\mathfrak{r}^{(1)})$ if and only if 
    \begin{align*}
        v = u_1^t.v_1 + u_2^t.v_2
    \end{align*}
    for some $(v_1, v_2) \in \ker S_{u_1, u_2}$. For all $(u_1, u_2)$ so that $P_1(u_1, u_2)R_1(u_1, u_2) \neq 0$, we have
    \begin{align*}
        \dim u_1^t.\mathfrak{r}^{(1)} + u_2^t.\mathfrak{r}^{(1)} = 6, \quad \dim \mathfrak{r}^{(1)} + u_1^t.\mathfrak{r}^{(1)} + u_2^t.\mathfrak{r}^{(1)} = 8.
    \end{align*}
    For such $(u_1, u_2)$, $\dim \mathrm{Im} (S_{u_1, u_2}) = 5$. As in property~(1), we can construct $(\tilde{v_1}(u_1, u_2),\tilde{v_2}(u_1, u_2)) \in \ker S_{u_1, u_2}$ via the nonzero $5 \times 5$-minors. Let
    \begin{align*}
        V_2(u_1, u_2) = u_1^t.\tilde{v_1}(u_1, u_2) + u_2^t.\tilde{v_2}(u_1, u_2) \in \mathfrak{r}^{(1)} \cap (u_1^t.\mathfrak{r}^{(1)} + u_2^t.\mathfrak{r}^{(1)}).
    \end{align*}
    The construction implies that
    \begin{align*}
        \sup_{(u_1, u_2) \in (\mathsf{B}^U_1)^2}\|V_2(u_1, u_2)\| \gg 1.
    \end{align*}

    Let $S_2(u_1, u_2)$ the $\mathfrak{r}_{2}$ entry of $V_2(u_1, u_2)$. We now establish the estimate on $R_2$. It can be calculated directly using coordinates in \cref{eqn:coordinate so 2 2,eqn:coordinate so 3 1}. A conceptual proof can be obtained in a similar way as the following. 

    Since $P_1$, $R_1$ and $V_2$ are polynomial on $(u_1, u_2)$, there exist $(u_1^0, u_2^0) \in (\mathsf{B}^U_\frac{1}{2})^2$ and $\rho_0 > 0$ so that the following holds. For all $(u_1', u_2') \in (\mathsf{B}^U_{\rho_0})^2$, 
    \begin{align*}
        \|V_2(u_1'u_1^0, u_2'u_2^0)\| \gg 1, \quad |P_1(u_1'u_1^0, u_2'u_2^0)| \gg 1, \quad |R_1(u_1'u_1^0, u_2'u_2^0)| \gg 1.
    \end{align*}

    By Lemma~\ref{lem:VariantOfShah} (or apply \cite[Lemma 5.1]{Sh96} directly), we have
    \begin{align*}
        \sup_{u \in \mathsf{B}^{U}_{C^{-1}\rho_0}}\|\pi_{\mathfrak{r}_{2}}(u.V_2(u_1^0, u_2^0))\| \gg \|V_2(u_1^0, u_2^0)\| \gg 1.
    \end{align*}
    Fix $u \in \mathsf{B}^{U^+}_{C^{-1}\rho_0}$ so that 
    \begin{align*}
        \|\pi_{\mathfrak{r}_{2}}(u.V_2(u_1^0, u_2^0))\| \gg \|V_2(u_1^0, u_2^0)\| \gg 1.
    \end{align*}
    Note that by \cref{eqn:analytic commute u u} we have
    \begin{align*}
        u.V_2(u_1^0, u_2^0) ={}& u(u_1^0)^t.\tilde{v_1}(u_1^0, u_2^0) + u(u_2^0)^t.\tilde{v_2}(u_1^0, u_2^0)\\
        ={}& (u_1^0)^t(\hat{u}_1(u))^t \hat{a}_1(u) \hat{m}_1(u) \hat{u}_1'(u).\tilde{v_1}(u_1^0, u_2^0)\\
        {}&+ (u_2^0)^t(\hat{u}_2(u))^t \hat{a}_2(u) \hat{m}_2(u) \hat{u}_2'(u).\tilde{v_2}(u_1^0, u_2^0).
    \end{align*}
    Since $\mathfrak{r}^{(0)}$ is invariant under the action of $AM_0U^+$, we have
    \begin{align*}
         u.V_2(u_1^0, u_2^0) \in \mathfrak{r}^{(1)} \cap (u_1^0)^t(\hat{u}_1(u))^t.\mathfrak{r}^{(1)} + (u_2^0)^t(\hat{u}_2(u))^t. \mathfrak{r}^{(1)})
    \end{align*}
    Therefore, $u.V_2(u_1^0, u_2^0)$ is parallel to $V_2(\hat{u}_1(u)u_1^0, \hat{u}_2(u)u_2^0)$. Since $u \in \mathsf{B}^{U^+}_{C^{-1}\rho_0}$, both $\hat{u}_1(u)$ and $\hat{u}_2(u)$ lies in $\mathsf{B}^U_{\rho_0}$. This implies that 
    \begin{align*}
        \|V_2(\hat{u}_1(u)u_1^0, \hat{u}_2(u)u_2^0)\| \asymp u.V_2(u_1^0, u_2^0)
    \end{align*}
    and hence
    \begin{align*}
        \sup_{(u_1, u_2) \in \mathsf{B}^U_1}|S_2(u_1, u_2)| \geq \|\pi_{\mathfrak{r}_{2}}(V_2(\hat{u}_1(u)u_1^0, \hat{u}_2(u)u_2^0)\| \gg 1.
    \end{align*}
\end{proof}

We also have the following lemma for intersection and sum of the family $\{u^t.\mathfrak{r}^{(0)}\}_{u \in \mathsf{B}^U_1}$. 
\begin{lemma}\label{lem:nondegenerate intersection sum2}
    There exist polynomial maps $W_1: U^2 \to \mathfrak{r}$ and $W_2:U^2 \to \mathfrak{r}$ with $\sup_{(\mathsf{B}^U_1)^2}\|W_i\| \gg 1$ for $i = 1, 2$ satisfy the following properties. Recall $P_2$ from Lemma~\ref{lem:Special Calculation1} and $R_2$ from Lemma~\ref{lem:SpecialCalculation2}. 
\begin{enumerate}
    \item For all $(u_1, u_2)$ so that $R_2(u_1, u_2) \neq 0$ and $W_1(u_1, u_2) \neq 0$, we have
    \begin{align*}
        \Span W_1(u_1, u_2) = \mathfrak{r}^{(0)} \cap u_{1}^t\mathfrak{r}^{(0)} \cap u_{2}^t\mathfrak{r}^{(0)}.
    \end{align*}
    Moreover, let $L_1$ be the $\mathfrak{r}_2$ component of $W_1(u_1, u_2)$. The polynomial $L_1$ satisfies
    \begin{align*}
        \sup_{(u_1, u_2) \in (\mathsf{B}^U_1)^2} |L_1(u_1, u_2)| \gg 1.
    \end{align*}
    \item For all $(u_1, u_2)$ so that $P_2(u_1, u_2)R_2(u_1, u_2) \neq 0$, we have
    \begin{align*}
        W_2(u_1, u_2) \perp \mathfrak{r}^{(0)} + (u_{1}^t\mathfrak{r}^{(0)} \cap u_{2}^t\mathfrak{r}^{(0)}).
    \end{align*}
    Moreover, let $L_2$ be the $\mathfrak{r}_{-2}$ component of $W_2(u_1, u_2)$. The polynomial $L_2$ satisfies
    \begin{align*}
        \sup_{(u_1, u_2) \in (\mathsf{B}^U_1)^2} |L_2(u_1, u_2)| \gg 1.
    \end{align*}
\end{enumerate}
\end{lemma}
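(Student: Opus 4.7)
The plan is to mirror the proof of Lemma~\ref{lem:Nondegenerate of intersection sum} with the roles of $\mathfrak{r}^{(1)}$ and $\mathfrak{r}^{(0)}$ exchanged: part~(1) will be the analog of the $V_2$ construction (an intersection) and part~(2) the analog of the $V_1$ construction (the orthogonal complement of a sum). All the relevant generic dimension counts have already been carried out. On the locus $R_2(u_1,u_2) \neq 0$, Lemma~\ref{lem:SpecialCalculation2}(2) says $\mathfrak{r}^{(0)} \cap u_1^t\mathfrak{r}^{(0)} \cap u_2^t\mathfrak{r}^{(0)}$ is exactly one-dimensional; on the locus $P_2 R_2 \neq 0$, Lemmas~\ref{lem:Special Calculation1}(2) and \ref{lem:SpecialCalculation2}(2) together give $\dim(\mathfrak{r}^{(0)} + (u_1^t\mathfrak{r}^{(0)} \cap u_2^t\mathfrak{r}^{(0)})) = 6 + 3 - 1 = 8$, so its orthogonal complement is also one-dimensional.

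For part~(1), I will consider the linear map $T_{u_1,u_2}\colon \mathfrak{r} \to (\mathfrak{r}^{(0)})^\perp \times (u_1^t\mathfrak{r}^{(0)})^\perp \times (u_2^t\mathfrak{r}^{(0)})^\perp$ defined by the three orthogonal projections; written in the coordinates of \cref{eqn:coordinate so 2 2} or \cref{eqn:coordinate so 3 1}, it becomes a $9 \times 9$ matrix with polynomial entries whose kernel is the intersection in question. On $\{R_2 \neq 0\}$ its rank is $8$, so a suitable $8 \times 8$ cofactor produces a polynomial column $W_1(u_1,u_2)$ lying in this kernel with $\sup_{(\mathsf{B}^U_1)^2}\|W_1\| \gg 1$ by construction. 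For part~(2), I will instead use the map $T''_{u_1,u_2}\colon \mathfrak{r} \to \mathfrak{r}^{(0)} \times (u_1^t\mathfrak{r}^{(0)} \cap u_2^t\mathfrak{r}^{(0)})$ whose kernel is precisely $(\mathfrak{r}^{(0)} + (u_1^t\mathfrak{r}^{(0)} \cap u_2^t\mathfrak{r}^{(0)}))^\perp$; on $\{P_2 R_2 \neq 0\}$ the same cofactor recipe yields $W_2(u_1,u_2)$ with $\sup\|W_2\| \gg 1$. Both constructions are literal transcriptions of the $V_1$/$V_2$ steps in Lemma~\ref{lem:Nondegenerate of intersection sum}.

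For the weight-component lower bounds on $L_1$ and $L_2$, I will reuse the shearing argument. Pick a base point $(u_1^0, u_2^0) \in (\mathsf{B}^U_{1/2})^2$ and a radius $\rho_0 > 0$ on which $\|W_i\|$ and the relevant polynomials are $\gg 1$. Using that $AM_0U^+$ preserves $\mathfrak{r}^{(0)}$, the commutation identity \cref{eqn:analytic commute u u} gives $u.(u_i^0)^t\mathfrak{r}^{(0)} = (u_i^0)^t \hat{u}_i(u)^t\mathfrak{r}^{(0)}$ for $u \in \mathsf{B}^U_{C^{-1}\rho_0}$. For part~(1) this yields $u.W_1(u_1^0, u_2^0)$ parallel to $W_1(\hat{u}_1(u) u_1^0, \hat{u}_2(u) u_2^0)$, and Lemma~\ref{lem:VariantOfShah} applied with a unit vector in the one-dimensional highest weight space $\mathfrak{r}_2$ (after noting the $U^+/U^-$ symmetry that exchanges $\mathfrak{r}_2$ with $\mathfrak{r}_{-2}$) produces $u$ so that $\|\pi_{\mathfrak{r}_2}(u.W_1(u_1^0, u_2^0))\| \gg 1$, hence $|L_1(\hat{u}_1(u) u_1^0, \hat{u}_2(u) u_2^0)| \gg 1$. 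For part~(2), the same commutation together with $u^{-1}.\mathfrak{r}^{(0)} = \mathfrak{r}^{(0)}$ gives $(u^{-1})^t W_2(u_1^0, u_2^0) \propto W_2(\hat{u}_1(u) u_1^0, \hat{u}_2(u) u_2^0)$, and a direct application of Lemma~\ref{lem:VariantOfShah} supplies the $\mathfrak{r}_{-2}$-component estimate.

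No step looks like a real obstacle: the dimension counts are already in hand, the cofactor constructions copy those of $V_1, V_2$ verbatim, and the weight-component bounds each reduce to one application of Lemma~\ref{lem:VariantOfShah}. The only care-demanding point will be using the correct form of that lemma ($U^+$ for the highest weight $\mathfrak{r}_2$ in part~(1), $U^-$ for the lowest weight $\mathfrak{r}_{-2}$ in part~(2)), which is directly parallel to the treatment in Lemma~\ref{lem:Nondegenerate of intersection sum}.
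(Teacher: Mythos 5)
Your approach is genuinely different from the paper's. The paper gives a two-line proof: it observes $(\mathfrak{r}^{(0)})^\perp = \mathfrak{r}_{\leq -1}$, writes
$(\mathfrak{r}^{(0)} \cap u_1^t\mathfrak{r}^{(0)} \cap u_2^t\mathfrak{r}^{(0)})^\perp = \mathfrak{r}_{\leq -1} + u_1\mathfrak{r}_{\leq -1} + u_2\mathfrak{r}_{\leq -1}$
and $(\mathfrak{r}^{(0)} + (u_1^t\mathfrak{r}^{(0)} \cap u_2^t\mathfrak{r}^{(0)}))^\perp = \mathfrak{r}_{\leq -1} \cap (u_1\mathfrak{r}_{\leq -1} + u_2\mathfrak{r}_{\leq -1})$,
and then simply cites the version of Lemma~\ref{lem:Nondegenerate of intersection sum} obtained by conjugating with the longest Weyl element (which swaps $\mathfrak{r}^{(1)}$ with $\mathfrak{r}_{\leq -1}$, $U$ with $U^-$, and $\mathfrak{r}_{\pm 2}$). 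Your direct cofactor route is a legitimate alternative for part~(1), with one caveat: the factors $\pi_{(u_i^t\mathfrak{r}^{(0)})^\perp}$ you describe as ``orthogonal projections'' are not polynomial in $u_i$; to get a polynomial matrix on which the cofactor trick applies you should instead use $\pi_{\mathfrak{r}_{\leq -1}} \circ (u_i^{-1})^t$, which has the same kernel $u_i^t.\mathfrak{r}^{(0)}$ (this is exactly the point of Subsection~\ref{subsec:Equivalent Projections}, and the paper's $V_1$ construction uses $\pi^{(1)}_{u_i} = \pi^{(1)}\circ u_i$ for the same reason). Your shearing argument for $L_1$, $L_2$ is fine and matches the paper's treatment of $S_1$, $S_2$; the parenthetical about ``$U^+/U^-$ symmetry'' for part~(1) is unnecessary (Lemma~\ref{lem:VariantOfShah} applies directly since $\mathfrak{r}_2 = V_\chi$), while for part~(2) you do in fact need the lowest-weight analog of that lemma, just as the paper implicitly does for $S_1$.

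The genuine gap is in your part~(2) construction. The map $T''_{u_1,u_2}\colon \mathfrak{r} \to \mathfrak{r}^{(0)} \times (u_1^t\mathfrak{r}^{(0)} \cap u_2^t\mathfrak{r}^{(0)})$ has a codomain that \emph{varies} with $(u_1,u_2)$: the intersection $u_1^t\mathfrak{r}^{(0)} \cap u_2^t\mathfrak{r}^{(0)}$ has no fixed basis, and the orthogonal projection onto it depends rationally at best, not polynomially, on $(u_1,u_2)$. So $T''$ is not a polynomial matrix of fixed size and ``the same cofactor recipe'' does not apply. This is \emph{not} a literal transcription of the $V_1$ step: there the codomain $\mathfrak{r}^{(1)} \times \mathfrak{r}^{(1)} \times \mathfrak{r}^{(1)}$ was fixed. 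Two ways to repair: (i) first run a separate cofactor construction on the fixed-domain map $\mathfrak{r}^{(0)} \times \mathfrak{r}^{(0)} \to \mathfrak{r}$, $(v_1,v_2)\mapsto u_1^t v_1 - u_2^t v_2$, to produce a polynomial basis of the 3-dimensional kernel (which parametrizes $u_1^t\mathfrak{r}^{(0)} \cap u_2^t\mathfrak{r}^{(0)}$), and then compose; or, more cleanly, (ii) rewrite the target as $(\mathfrak{r}^{(0)} + (u_1^t\mathfrak{r}^{(0)} \cap u_2^t\mathfrak{r}^{(0)}))^\perp = \mathfrak{r}_{\leq -1} \cap (u_1^{-1}\mathfrak{r}_{\leq -1} + u_2^{-1}\mathfrak{r}_{\leq -1})$ and mirror the $V_2$ construction verbatim (a map with fixed domain $\mathfrak{r}_{\leq -1}\times\mathfrak{r}_{\leq -1}$ into $\mathfrak{r}^{(0)}$), which is essentially what the paper's duality shortcut does in disguise.
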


\begin{proof}
    Let $\mathfrak{r}_{< \lambda} = \oplus_{\mu < \lambda} \mathfrak{r}_\mu$ and $\mathfrak{r}_{\leq \lambda} = \oplus_{\mu \leq \lambda} \mathfrak{r}_\mu$. Note that $(\mathfrak{r}_{< \lambda})^\perp = \mathfrak{r}^{(\lambda)}$. Conjugating via a representative $w$ of the longest element of the Weyl group $W$, one can show the similar results in Lemma~\ref{lem:Nondegenerate of intersection sum} hold for the family of subspace $u.\mathfrak{r}_{< 0} = u.\mathfrak{r}_{\leq -1}$. Note that 
    \begin{align*}
        &\Bigl(\mathfrak{r}^{(0)} \cap u_{1}^t\mathfrak{r}^{(0)} \cap u_{2}^t\mathfrak{r}^{(0)}\Bigr)^\perp = \mathfrak{r}_{\leq -1} + u_{1}\mathfrak{r}_{\leq -1} + u_{2}\mathfrak{r}_{\leq -1}\\
        &\Bigl(\mathfrak{r}^{(0)} + (u_{1}^t\mathfrak{r}^{(0)} \cap u_{2}^t\mathfrak{r}^{(0)})\Bigr)^\perp = \mathfrak{r}_{\leq -1} \cap (u_{1}\mathfrak{r}_{\leq -1} + u_{2}\mathfrak{r}_{\leq -1}),
    \end{align*}
    the rest follows from Lemma~\ref{lem:Nondegenerate of intersection sum}. 
\end{proof}

\subsection{Proof of Theorem~\ref{thm: subcritical 9 to 3} and \ref{thm: subcritical 9 to 6}}\label{sec:Subcritical dim 3 and 6}

With preparations in the previous subsection, we now proceed the proof. 
\begin{proof}[Proof of \cref{thm: subcritical 9 to 3}]
    Let $C > 0$ and $M > 0$ be two large constants which will be determined later in the proof. In particular, $C$ will be chosen depending only on $\dim \mathfrak{r}$ and $\dim U$. Let $M_1$ and $E_1$ be the maximum of the constants $M$'s and $E$'s respectively from \cref{thm:subcritical 9 to 1,thm:subcritical 9 to 8}. Let $E_2$ be the maximum of the constants $E$'s in \cref{lem:Special Calculation1,lem:SpecialCalculation2}. Let $0 < \epsilon < \frac{1}{100CE_1^2E_2^2}$. We let $\delta$ be small enough depending explicitly on $\epsilon$ so that all implied constants appeared later in the proof are dominated by $\delta^{-\epsilon}$. 

    Recall that we defined $\mathcal{E}(A)$ as
    \begin{align*}
        \mathcal{E}(A) = \biggl\{u \in \mathsf{B}^U_1: \exists A' \subseteq A \text{ with } |A'|_\delta \geq \delta^\epsilon|A|_\delta \text{ and } |\pi^{(1)}_{u}(A')|_{\delta} < \delta^{M\epsilon}|A|_\delta^{\frac{3}{9}}\biggr\}.
    \end{align*}  
    Suppose the theorem does not hold, then $m_U(\mathcal{E}(A)) > \delta^{\epsilon}$. 

    We now collect and briefly review the polynomials constructed in \cref{lem:Special Calculation1,lem:SpecialCalculation2,lem:Nondegenerate of intersection sum}. Recall $P_1$ on $U^2$ from Lemma~\ref{lem:Special Calculation1} with the following property. For $(u_1, u_2) \in (\mathsf{B}^U_2)^2$ with $P_1(u_1, u_2) > \delta^{C\epsilon} > 0$, 
    \begin{align*}
        \pi_{u_1^t.\mathfrak{r}^{(1)}}^{-1} \mathcal{D}_\delta \vee \pi_{u_2^t.\mathfrak{r}^{(1)}}^{-1} \mathcal{D}_\delta \overset{O(\delta^{-CE_2\epsilon})}{\sim}  \pi_{u_1^t.\mathfrak{r}^{(1)} \oplus u_2^t.\mathfrak{r}^{(1)}}^{-1} \mathcal{D}_\delta.
    \end{align*}
    Recall $R_1$ on $U^2$ from Lemma~\ref{lem:SpecialCalculation2} with the following property. For $(u_1, u_2) \in (\mathsf{B}^U_2)^2$ with $R_1(u_1, u_2) > \delta^{C\epsilon} > 0$, we have
    \begin{align*}
        \dim \mathfrak{r}^{(1)} + u_1^t.\mathfrak{r}^{(1)} + u_2^t.\mathfrak{r}^{(1)} = 8
    \end{align*}
    and moreover, 
    \begin{align*}
        \pi_{\mathfrak{r}^{(1)}}^{-1} \mathcal{D}_{\delta} \vee \pi_{u_1^t.\mathfrak{r}^{(1)}}^{-1} \mathcal{D}_\delta \vee \pi_{u_2^t.\mathfrak{r}^{(1)}}^{-1} \mathcal{D}_\delta \overset{O(\delta^{-CE_2\epsilon})}{\sim} \pi_{\mathfrak{r}^{(1)} + u_1^t.\mathfrak{r}^{(1)} + u_2^t.\mathfrak{r}^{(1)}}^{-1} \mathcal{D}_\delta.
    \end{align*}
    Recall $V_1$ and $V_2$ on $U^2$ from Lemma~\ref{lem:Nondegenerate of intersection sum} with the following property. For all $(u_1, u_2)$ with $P_1(u_1, u_2)R_1(u_1, u_2) \neq 0$, 
    \begin{align*}
        &V_1(u_1, u_2) \perp \mathfrak{r}^{(1)} + u_1^t.\mathfrak{r}^{(1)} + u_2^t.\mathfrak{r}^{(1)}, \text{ and }\\
        &\Span V_2(u_1, u_2) = \mathfrak{r}^{(1)} \cap (u_1^t.\mathfrak{r}^{(1)} + u_2^t.\mathfrak{r}^{(1)}).
    \end{align*}
    The polynomial $S_1$ is the lowest weight component of $V_1$ and the polynomial $S_2$ is the highest weight component of $V_2$. All the above polynomials satisfy
    \begin{align*}
        \sup_{(u_1, u_2) \in (\mathsf{B}^U_1)^2} |(\cdot)(u_1, u_2)| \gg 1 \quad \text{ where } (\cdot) = P_1, R_1, S_1, S_2.
    \end{align*}
    
    Let $\mathcal{J}_1$ be the subset of $(\mathsf{B}^U_2)^2$ so that for all $(u_1, u_2) \in \mathcal{J}_1$, we have:
    \begin{enumerate}
        \item $P_1(u_1, u_2) > \delta^{C\epsilon}$,
        \item $R_1(u_1, u_2) > \delta^{C\epsilon}$,
        \item $S_1(u_1, u_2) > \delta^{C\epsilon}$,
        \item $S_2(u_1, u_2) > \delta^{C\epsilon}$.
    \end{enumerate}
    Roughly speaking, $\mathcal{J}_1$ is the set of $(u_1, u_2)$ so that the subspaces $\mathfrak{r}^{(1)}$, $u_1^t.\mathfrak{r}^{(1)}$ and $u_2^t.\mathfrak{r}^{(1)}$ are quantitatively in general position in $\mathfrak{r}$. By Remez's inequality, the measure  $m_U((\mathsf{B}^U_2)^2 \setminus \mathcal{J}_1) < \delta^{\frac{C}{d}\epsilon}$ for some constant $d$ depending only on the ambient representation. Let
    \begin{align*}
        \tilde{\mathcal{E}} = \{(u_1, u_2, u_3) \in (\mathsf{B}^U_1)^3: (u_1, u_2) \in \mathcal{J}_1, u_1u_3, u_2u_3, u_3 \in \mathcal{E}(A)\}.
    \end{align*}
    We now estimate the measure of $\tilde{\mathcal{E}}$. For all $u_3 \in U$, we define
    \begin{align*}
        \mathcal{E}(A)u_3^{-1} = \{u u_3^{-1}: u \in \mathcal{E}(A)\}
    \end{align*}
    to be the translation of $\mathcal{E}(A)$ by $u_3^{-1}$. By Fubini's theorem, we have
    \begin{align*}
        m_U(\tilde{\mathcal{E}}) ={}& \int_{\mathcal{E}(A)} m_U\bigl(\mathcal{J}_1 \cap (\mathcal{E}(A)u_3^{-1} \times \mathcal{E}(A)u_3^{-1})\bigr) \,\mathrm{d}u_3\\
        \geq{}& \int_{\mathcal{E}(A)} \bigl(m_U(\mathcal{E}(A))^2 - \delta^{\frac{C}{d}\epsilon}\bigr)\,\mathrm{d}u_3 \geq \delta^{4\epsilon}
    \end{align*}
    if $C \geq 4d + 1$. Applying Fubini's theorem again, there exists $\mathcal{J}_1' \subseteq \mathcal{J}_1$ so that for all $(u_1, u_2) \in \mathcal{J}_1'$, we have
    \begin{align*}
        m_U\bigl(\{u_3 \in \mathsf{B}^U_1: u_1u_3, u_2u_3, u_3 \in \mathcal{E}(A)\}\bigr) > \delta^{5\epsilon}.
    \end{align*}
    From now on, we fix one $(u_1, u_2) \in \mathcal{J}_1'$ and let
    \begin{align*}
        \mathcal{E}(A)' = \{u_3 \in \mathsf{B}^U_1: u_1u_3, u_2u_3, u_3 \in \mathcal{E}(A)\}.
    \end{align*}
    We have $m_U(\mathcal{E}(A)') > \delta^{5\epsilon}$. 

    By part~(1) of Lemma~\ref{lem:Special Calculation1}, for all $A' \subseteq A$, we have
    \begin{align*}
        |\pi^{(1)}_{u_1}(A')|_\delta \cdot |\pi^{(1)}_{u_2}(A')|_\delta \geq |A'|_{(\pi^{(1)}_{u_1})^{-1}\mathcal{D}_\delta \vee (\pi^{(1)}_{u_2})^{-1}\mathcal{D}_\delta}
        \gg \delta^{CE_2\epsilon} |\pi_{u_1^t.\mathfrak{r}^{(1)} \oplus u_2^t. \mathfrak{r}^{(1)}}(A')|_\delta.
    \end{align*}
    The last inequality follows from the fact that $P_1(u_1, u_2) > \delta^{C\epsilon}$ for all $(u_1, u_2) \in \mathcal{J}_1$. Since $u_3 \in \mathsf{B}^U_1$, we have  
    \begin{align*}
        d_{\measuredangle}(u_3.W_1, u_3. W_2) \asymp d_{\measuredangle}(W_1, W_2)
    \end{align*}
    for any subspaces $W_1, W_2$. Therefore, \begin{align}\label{eqn:subcritical 2 transverse}
        |\pi^{(1)}_{u_1u_3}(A')|_\delta \cdot |\pi^{(1)}_{u_2u_3}(A')|_\delta 
        \gg \delta^{CE_2\epsilon} |\pi_{u_3^t.(u_1^t.\mathfrak{r}^{(1)} \oplus u_2^t. \mathfrak{r}^{(1)})}(A')|_\delta.
    \end{align}

    Since $(u_1, u_2) \in \mathcal{J}_1$,  $|S_2(u_1, u_2)| > \delta^{C\epsilon}$. Applying \cref{thm:subcritical 9 to 1} to the set $A$, $6CE_1\epsilon$ and the line $\mathfrak{r}^{(1)} \cap (u_1^t.\mathfrak{r}^{(1)} \oplus u_2^t. \mathfrak{r}^{(1)})$, we get an exceptional set $\mathcal{E}_1$ with $m_U(\mathcal{E}_1) \leq \delta^{6CE_1\epsilon}$. Since $(u_1, u_2) \in \mathcal{J}_1$,  $|S_1(u_1, u_2)| > \delta^{C\epsilon}$. Applying \cref{thm:subcritical 9 to 8} to the set $A$, $6CE_1\epsilon$ and the hyperplane $\mathfrak{r}^{(1)} + u_1^t.\mathfrak{r}^{(1)} + u_2^t. \mathfrak{r}^{(1)}$, we get an exceptional set $\mathcal{E}_2$ with $m_U(\mathcal{E}_2) \leq \delta^{6CE_1\epsilon}$. By letting $\delta \ll_\epsilon 1$, we have
    \begin{align*}
        m_U(\mathcal{E}(A)' \setminus (\mathcal{E}_1 \cup \mathcal{E}_1)) > \delta^{6\epsilon}.
    \end{align*}
    From now on we fixed some $u_3 \in \mathcal{E}(A)' \setminus (\mathcal{E}_1 \cup \mathcal{E}_1)$. Let 
    \begin{align*}
        \mathcal{P} ={}& (\pi_{u_3^t.\mathfrak{r}^{(1)}})^{-1}\mathcal{D}_\delta,\\
        \mathcal{Q} ={}& (\pi_{u_3^t u_1^t.\mathfrak{r}^{(1)} + u_3^t u_2^t. \mathfrak{r}^{(1)})})^{-1} \mathcal{D}_\delta,\\
        \mathcal{R} ={}& \mathcal{P} \vee \mathcal{Q},\\
        \mathcal{S} ={}&  (\pi_{u_3^t.(\mathfrak{r}^{(1)} \cap (u_1^t.\mathfrak{r}^{(1)} \oplus u_2^t. \mathfrak{r}^{(1)}))})^{-1} \mathcal{D}_\delta.
    \end{align*}
    Since $(u_1, u_2) \in \mathcal{J}_1$, by Lemma~\ref{lem:Nondegenerate of intersection sum} we have
    \begin{align*}
        \mathcal{R} = \mathcal{P} \vee \mathcal{Q} \overset{O(\delta^{-CE_2\epsilon})}{\sim} (\pi_{u_3^t.(\mathfrak{r}^{(1)} + u_1^t.\mathfrak{r}^{(1)} + u_2^t.\mathfrak{r}^{(1)})})^{-1}\mathcal{D}_\delta.
    \end{align*}
        
    For all $A' \subseteq A$ with $|A'|_\delta \geq \delta^\epsilon|A|_\delta$, we apply Lemma~\ref{lem:regularization set} to $A'$ and the filtration
    \begin{align*}
        \mathcal{S} \prec \mathcal{R} \prec \mathcal{D}_\delta.
    \end{align*}
    We get $A_1 \subseteq A'$ with $|A_1|_\delta \geq \delta^{\epsilon}|A'|_\delta \geq \delta^{2\epsilon}|A|_\delta$ so that $A_1$ is regular with respect to the above filtration. Moreover, $A_1$ is the intersection of $A'$ with some disjoint union of $\delta$-cubes. 

    Now, we apply \cref{lem:submodularity} to $A_1$ and the above partitions $\mathcal{P}$, $\mathcal{Q}$, $\mathcal{R}$ and $\mathcal{S}$. There exist constants $A_2 \subseteq A_1$ with $|A_2|_{\mathcal{R}} \gg |A_1|_{\mathcal{R}}$ so that
    \begin{align*}
        |A_1|_\mathcal{P} \cdot |A_1|_\mathcal{Q} \gg |A_1|_\mathcal{R} \cdot |A_2|_\mathcal{S}.
    \end{align*}

    By regularity of $A_1$, we have
    \begin{align*}
        |A_2|_\mathcal{S} \geq \frac{1}{2}\frac{|A_1|_\mathcal{S}}{|A_1|_\mathcal{R}}|A_2|_\mathcal{R} \gg |A_1|_\mathcal{S}.
    \end{align*}
    Therefore, we have
    \begin{align}\label{eqn:subcritical 9 to 3 submodularity}
        |A_1|_\mathcal{P} \cdot |A_1|_\mathcal{Q} \gg  |A_1|_\mathcal{R} \cdot |A_1|_\mathcal{S}.
    \end{align}

    Since $u_3 \in \mathcal{E}(A)'$, we have
    \begin{align}\label{eqn:subcritical 9 to 3 each copy of exceptional 2}
    \begin{aligned}
        |\pi^{(1)}_{u_1u_3}(A')|_\delta <{}& \delta^{M\epsilon}|A|_\delta^{\frac{3}{9}},\\
        |\pi^{(1)}_{u_2u_3}(A')|_\delta <{}& \delta^{M\epsilon}|A|_\delta^{\frac{3}{9}},
    \end{aligned}
    \end{align}
    and
    \begin{align}\label{eqn:subcritical 9 to 3 each copy of exceptional 1}
        |A|_\mathcal{P} = |\pi^{(1)}_{u_3}(A')|_\delta < \delta^{M\epsilon}|A|_\delta^{\frac{3}{9}}.
    \end{align}

    On the other hand, since $u_3 \notin \mathcal{E}_1 \cup \mathcal{E}_2$ and $|A_1| \geq \delta^{2\epsilon}|A|_\delta$, we have
    \begin{align}\label{eqn:subcritical 9 to 3 one copy 9 to 1}
        |A|_\mathcal{S} = |\pi_{u_3^t.(\mathfrak{r}^{(1)} \cap (u_1^t.\mathfrak{r}^{(1)} \oplus u_2^t.\mathfrak{r}^{(1)}))}(A_1)|_\delta \geq \delta^{6M_1CE_1\epsilon}|A|_\delta^{\frac{1}{9}}
    \end{align}
    and
    \begin{align}\label{eqn:subcritical 9 to 3 one copy 9 to 8}
        |\pi_{u_3^t.(\mathfrak{r}^{(1)} + u_1^t.\mathfrak{r}^{(1)} + u_2^t.\mathfrak{r}^{(1)})}(A_1)|_\delta \geq \delta^{6M_1CE_1\epsilon}|A|_\delta^{\frac{8}{9}}.
    \end{align}
    Combining \cref{eqn:subcritical 2 transverse,eqn:subcritical 9 to 3 submodularity,eqn:subcritical 9 to 3 each copy of exceptional 2,eqn:subcritical 9 to 3 each copy of exceptional 1,eqn:subcritical 9 to 3 one copy 9 to 1,eqn:subcritical 9 to 3 one copy 9 to 8}, we have
    \begin{align*}
        \delta^{3M\epsilon}|A|_\delta^{\frac{3}{9}} \cdot |A|_\delta^{\frac{3}{9}} \cdot |A|_\delta^{\frac{3}{9}} \gg \delta^{2CE_2\epsilon}\delta^{12M_1CE_1\epsilon}|A|_\delta^{\frac{1}{9}} |A|_\delta^{\frac{8}{9}}.
    \end{align*}
    By letting $M$ large enough depending $C$, $M_1$, $E_1$ and $E_2$, we get a contradiction. Note that since $M_1$ and $C$ depend only on the ambient representation, $M$ depends only on the ambient representation. 
\end{proof}

\begin{proof}[Proof of \cref{thm: subcritical 9 to 6}]
The proof follows from dualizing the argument in the above proof. For reader's convenience, we provide an outline. In what follows we will use $\{W_i\}_{i = 1, 2, 3}$ to represent three copies of $u_i^-.\mathfrak{r}^{(0)}$ for generic $u_i^-$, $i = 1, 2, 3$. 

By Lemma~\ref{lem:Special Calculation1} property~(2), we know that $W_1$ and $W_2$ are in general position, i.e., $W_1 + W_2 = \mathfrak{r}$. By sub-modularity inequality (Lemma~\ref{lem:submodularity}), we have
\begin{align*}
    |\pi_{W_1}(A)|_\delta \cdot |\pi_{W_2}(A)|_\delta \gg_{d_{\measuredangle}(W_1, W_2)} |\pi_{W_1 + W_2}(A)|_\delta |\pi_{W_1 \cap W_2}(A)|_\delta = |A|_\delta |\pi_{W_1 \cap W_2}(A)|_\delta.
\end{align*}
We now add $W_3$. Applying the sub-modularity inequality (Lemma~\ref{lem:submodularity}) again, we have
\begin{align*}
    |\pi_{W_3}(A)|_\delta \cdot |\pi_{W_1 \cap W_2}(A)|_\delta \gg |\pi_{W_3 + (W_1 \cap W_2)}(A)|_\delta \cdot |\pi_{W_1 \cap W_2 \cap W_3}(A)|_\delta.
\end{align*}
By Lemma~\ref{lem:SpecialCalculation2} property~(2), we have that $W_3 + (W_1 \cap W_2)$ is a family of $8$-dimensional subspaces and $W_1 \cap W_2 \cap W_3$ is a family of $1$-dimensional subspaces for generic choice of $W_1, W_2, W_3$. By Lemma~\ref{lem:nondegenerate intersection sum2}, they satisfy the algebraic conditions in \cref{thm:subcritical 9 to 1,thm:subcritical 9 to 8} and contribute $8/9$ and $1/9$ of the entropy respectively. Combine these estimates and the above inequalities, we prove the theorem. 
\end{proof}

\section{Optimal projections to the highest weight direction}\label{sec:optimal}
The main theorem in this section is of the following. 
\begin{theorem}\label{thm:optimal}
    Let $E \subset B^{\mathfrak{r}}_1(0)$ be a finite set. Suppose there exist $\alpha \in (0, 9)$ and $C \geq 1$ such that 
    \begin{align*}
        \mu_E(B^{\mathfrak{r}}_{\rho}(x)) \leq C \rho^{\alpha}
    \end{align*}
    for all $\rho_0 \leq \rho \leq 1$. 
    
    Then for all $\mathsf{c} > 0$, there exists $C_{\mathsf{c}}$ such that the following holds. 
    For all $\rho_0 \leq \rho \ll_{\mathsf{c}} 1$, we define the exceptional set $\mathcal{E}(E)$ to be
    \begin{align*}
        \mathcal{E}(E) = \{u \in \mathsf{B}^U_1: \exists E' \subset E \text{ with }& \mu_E(E') \geq \rho^{\mathsf{c}}\\
        \text{ and }& |\pi^{(2)}_{u}(E')|_{\rho} < C_{\mathsf{c}}^{-1}C^{-1} \rho^{- \min\{\alpha, 1\} + O(\sqrt{\mathsf{c}})}\}.
    \end{align*}

    We have
    \begin{align*}
        m_U(\mathcal{E}(E)) \leq C_{\mathsf{c}}\rho^{\mathsf{c}}.
    \end{align*}
\end{theorem}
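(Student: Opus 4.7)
The plan is to reduce Theorem~\ref{thm:optimal} to the restricted projection theorem of Gan--Guo--Wang \cite{GGW24} via a four-step strategy.

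\emph{Step 1 (Orthogonal reformulation).} By the discussion in Subsection~\ref{subsec:Equivalent Projections}, the projection $\pi^{(2)}_u = \pi^{(2)}\circ u$ differs from the orthogonal projection $\pi_{u^t.\mathfrak{r}^{(2)}}$ onto the line $u^t.\mathfrak{r}^{(2)}$ by a linear bi-Lipschitz factor that is uniform in $u \in \mathsf{B}^U_1$. Hence covering-number estimates for $\pi^{(2)}_u(E')$ and $\pi_{u^t.\mathfrak{r}^{(2)}}(E')$ agree up to an absolute multiplicative constant, and it suffices to analyze the latter family of one-dimensional orthogonal projections.

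\emph{Step 2 (Parametrization and non-degeneracy).} Fix a unit vector $v_0$ spanning the highest weight space $\mathfrak{r}_2 = \mathfrak{r}^{(2)}$, which is one-dimensional in both the $(2,2)$ and $(3,1)$ cases. The map
\[
\Psi:U \longrightarrow \mathfrak{r}\setminus\{0\},\qquad u_{r,s}\longmapsto u_{r,s}^{t}.v_{0},
\]
is polynomial of low degree in $(r,s)$ by the explicit formulas \eqref{eqn:def of U}. Using the weight-space coordinates \eqref{eqn:coordinate so 2 2} and \eqref{eqn:coordinate so 3 1}, I would compute $\Psi$ explicitly and verify that its image is a non-degenerate curve/surface in $\R^{9}$ in the sense of \cite{GGW24}: namely, that the iterated derivatives of $\Psi$ span $\mathfrak{r}$ in a quantitative Wronskian sense, expressible as the non-vanishing of an explicit polynomial in $(r,s)$. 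The algebraic reason for non-degeneracy is that $v_0$ is highest weight, so $\{u^t.v_0\}$ is (up to reparametrization) an orbit of $U^-$ through a lowest weight vector, and Lemma~\ref{lem:VariantOfShah}--type arguments (essentially \cite[Lemma~5.1]{Sh96}) guarantee that this orbit spans the entire irreducible representation $\mathfrak{r}$.

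\emph{Step 3 (Application of GGW).} Once non-degeneracy is established, the optimal restricted projection theorem of \cite{GGW24} applies to the $9$-dimensional ambient space and the non-degenerate family $\{u^t.\mathfrak{r}^{(2)}\}_{u \in \mathsf{B}^U_1}$. It yields the optimal exponent $\min\{\alpha,1\}$ for the push-forward of any $\alpha$-Frostman measure under $\pi_{u^t.\mathfrak{r}^{(2)}}$ for $m_U$-almost every $u$.

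\emph{Step 4 (Discretization).} Finally, convert the continuous projection statement into the discretized exceptional-set estimate in Theorem~\ref{thm:optimal}. This is a routine $\rho$-discretization: a combination of Frostman's lemma to produce a Frostman measure from $\mu_E$, pigeonholing over the atoms of $\mathcal{D}_\rho$, and Remez-type inequalities (as in Lemma~\ref{lem:RemezKT}) to control the Lebesgue measure of the exceptional $u$ where the discretized bound fails. This conversion follows the same pattern as in \cite[Section~6]{LMWY25} and the arguments of Section~\ref{sec:dim 1 or codim 1}.

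The main obstacle is Step 2: one must identify the precise algebraic non-degeneracy condition required by \cite{GGW24} in dimension $9$ and verify it for the concrete parametrizations of $U_1$ and $U_2$. Because $\dim U = 2$ while $\dim \mathfrak{r} = 9$, one expects that combining the restricted projection theorem applied along one-parameter subfamilies with Fubini in the remaining parameter suffices; alternatively, one may pass to a suitable curve obtained by restricting $\Psi$ to a generic affine line in $U$. In either case the non-vanishing of the relevant Wronskian polynomial is an algebraic check that ultimately reduces to the fact that $\mathfrak{r}$ is an irreducible $H$-representation with highest weight line $\mathfrak{r}_2$. Steps 1, 3, and 4 are then routine.
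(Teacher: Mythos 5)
Your broad outline matches the paper's approach: reformulate $\pi^{(2)}_u$ as an orthogonal projection to the line $u^t.\mathfrak{r}^{(2)}$, produce a non-degenerate curve, apply the discretized Gan--Guo--Wang theorem (recorded as Theorem~\ref{thm:GGW}), and use Fubini to handle the extra parameter. Steps 1, 3, and 4 are indeed routine, and Step 1 is exactly the content of Subsection~\ref{subsec:Equivalent Projections}.

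The gap is in Step 2, and it is a real one. Your second alternative -- restricting $\Psi$ to a generic affine line in $U$ -- cannot produce a non-degenerate curve in $\R^9$. Each coordinate of $\Psi(u_{r,s}) = u_{r,s}^t.v_0$ is a polynomial of total degree at most $4$ in $(r,s)$: in the $(2,2)$ case the inner-product vector is $(1, r, r^2/2, s, sr, sr^2/2, s^2/2, s^2r/2, s^2r^2/4)$, and in the $(3,1)$ case the top-degree term is $(r^2+s^2)^2/4$. Along any affine line $(r,s) = (at+b, ct+d)$ the resulting curve has degree at most $4$ in $t$, so $\gamma^{(5)} = \cdots = \gamma^{(8)} \equiv 0$ and the $9\times 9$ Wronskian vanishes identically. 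Your algebraic heuristic (irreducibility of $\mathfrak{r}$ plus a Shah/Lemma~\ref{lem:VariantOfShah}-type spanning argument) correctly shows the $2$-parameter orbit $\{u^t.v_0\}$ spans $\mathfrak{r}$, but that is a statement about the surface, and it does not by itself produce a single $1$-parameter curve whose derivatives span.

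The paper's fix is a non-affine re-parametrization: set $(r,s) = (x, y+x^3)$ and then freeze $y$. For fixed $y$, the curve $\gamma_y(x) = u_{x,\,y+x^3}^t.v_0$ has degree $8$ in $x$ (the last coordinate contributes an $x^8$ term), which is exactly what is needed so that $\gamma_y, \gamma_y^{(1)}, \dots, \gamma_y^{(8)}$ can span $\R^9$. One then checks the Wronskian $\gamma_y \wedge \gamma_y^{(1)} \wedge \cdots \wedge \gamma_y^{(8)}$: in the $(2,2)$ case it is bounded below uniformly in $y$, while in the $(3,1)$ case it is a nonzero polynomial in $(x,y)$ and a Remez-type estimate in the $y$-parameter is applied before Fubini. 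This cubic shift is the actual content of Step 2 and does not come for free from irreducibility.
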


The key ingredient of the proof is the following consequence of \cite[Theorem 2.1]{GGW24}. Let $\gamma: [-1, 1] \to \R^n$ be a curve in $\R^n$ satisfying 
\begin{align*}
    \|\gamma^{(1)}(t) \wedge \cdots \wedge \gamma^{(n)}(t)\| \geq c > 0
\end{align*}
for all $t \in [-1, 1]$. We also assume $\|\gamma^{(i)}(t)\| \leq L$ for all $i$ and $t \in [-1, 1]$. We use $\pi_t^{(i)}$ to denote the orthogonal projection to the $i$-dimensional subspace spanned by $\gamma^{(1)}, \cdots, \gamma^{(i)}$. 
\begin{theorem}\label{thm:GGW}
    Let $E \subset B^{\R^n}_1(0)$ be a finite set. Suppose there exist $\alpha \in (0, n)$ and $C \geq 1$ such that 
    \begin{align*}
        \mu_E(B^{\R^n}_{\rho}(x)) \leq C \rho^{\alpha}
    \end{align*}
    for all $\rho_0 \leq \rho \leq 1$. 
    
    Then for all $\epsilon > 0$, there exists $C_{\epsilon, c, L}$ such that the following holds. 
    For all $\rho_0 \leq \rho \ll_{\epsilon} 1$, we define the exceptional set $\mathcal{E}(E)$ to be
    \begin{align*}
        \mathcal{E}(E) = \{t \in [-1, 1]: \exists E' \subset E \text{ with }& \mu_E(E') \geq \rho^{\epsilon}\\
        \text{ and }& |\pi^{(i)}_{t}(E')|_{\rho} < C_{\epsilon, c, L}^{-1}C^{-1} \rho^{- \min\{\alpha, i\} + O(\sqrt{\epsilon})}\}.
    \end{align*}

    We have
    \begin{align*}
        |\mathcal{E}(E)| \leq C_{\epsilon, c, L}\rho^{\epsilon}.
    \end{align*}
    Moreover, the constant $C_{\epsilon, c, L}$ satisfies
    \begin{align*}
        C_{\epsilon, c, L} \ll_\epsilon c^{-\star}L^{\star}.
    \end{align*}
\end{theorem}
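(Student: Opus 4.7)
The plan is to derive Theorem~\ref{thm:GGW} directly from the restricted projection theorem of Gan, Guo and Wang \cite[Theorem 2.1]{GGW24}. In its natural formulation, that theorem asserts that for a curve $\gamma$ of maximal torsion in $\R^n$, the family of projections $\pi_t^{(i)}$ preserves Hausdorff dimension up to saturation at $i$; its quantitative/discretized counterpart yields bounds on the pushforward of a Frostman measure $\mu_E$ off a polynomially small exceptional set of parameters $t$.

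First, a normalization step. By standard rescaling and partition arguments, covering $[-1,1]$ by $O((L/c)^{\star})$ sub-intervals and performing a local affine change of variables on each reduces to the case where both $c$ and $L$ are of order $1$. The contribution of this step to the final constant is precisely the factor $c^{-\star}L^{\star}$ claimed in the theorem; summing the exceptional sets over the pieces only costs a factor $(L/c)^{\star}$, and all remaining estimates are carried out with absolute non-degeneracy constants.

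Next, I would convert the Frostman hypothesis on $E$ into a truncated $s$-energy bound $\mathcal{G}^{(s)}_{E,\rho}(w) \ll C \#E$ for every $s < \alpha$ (valid with only a logarithmic loss by Proposition~\ref{pro:frostman energy Margulis function}(1)). The quantitative form of \cite[Theorem 2.1]{GGW24} applied to $\mu_E$ then produces a set $\mathcal{E}_0 \subseteq [-1,1]$ of measure $\leq \rho^{\epsilon}$ outside of which the pushforward energy satisfies
\begin{align*}
\mathcal{G}^{(s)}_{\pi_t^{(i)}(E),\rho}\bigl(\pi_t^{(i)}(w)\bigr) \ll_{\epsilon} C\rho^{-O(\sqrt{\epsilon})}\,\#E
\quad\text{for } s < \min(\alpha,i) - \sqrt{\epsilon}.
\end{align*}
For any $E' \subseteq E$ with $\mu_E(E') \geq \rho^{\epsilon}$, localizing the above energy to $E'$ loses only a further factor $\rho^{-\epsilon}$, and Proposition~\ref{pro:frostman energy Margulis function}(2) converts it into the desired lower bound $|\pi_t^{(i)}(E')|_{\rho} \gg_{\epsilon} C^{-1}\rho^{-s + O(\sqrt{\epsilon})}$. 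The super-critical regime $\alpha \geq i$ is then handled automatically by applying the sub-critical case with $\alpha' = i - \sqrt{\epsilon}$: the Frostman condition with any exponent $\alpha' \leq \alpha$ holds with the same constant $C$, and the bound saturates at $\rho^{-i + O(\sqrt{\epsilon})}$ as required.

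The main technical obstacle is verifying that the discretized, single-scale version of \cite[Theorem 2.1]{GGW24} with polynomially small exceptional set is indeed available with the polynomial dependence on $c,L$ claimed above. In \cite{GGW24} the theorem is typically phrased either as a Hausdorff dimension statement or as an $L^p$-Fourier estimate, and passage to the discretized covering-number formulation proceeds by standard Chebyshev-type arguments on the pushforward density together with the rescaling described in the normalization step. None of this requires new ideas beyond \cite{GGW24}, but the bookkeeping needed to simultaneously certify the exceptional-set bound $\rho^{\epsilon}$, the $\rho^{-O(\sqrt{\epsilon})}$ energy loss, and the polynomial dependence $c^{-\star}L^{\star}$ is the most delicate aspect of the argument and is where essentially all of the effort in the proof is concentrated.
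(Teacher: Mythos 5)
Your proposal follows the same high-level route as the paper: deduce a discretized, single-scale covering statement with a polynomially small exceptional set from \cite[Theorem 2.1]{GGW24}, with the Frostman--energy--covering chain (plus a Chebyshev/pigeonholing step producing the exceptional set) being precisely what the paper outsources to \cite[Appendix C]{LMWY25}.

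Where you genuinely diverge is the mechanism for the polynomial dependence $C_{\epsilon,c,L} \ll_\epsilon c^{-\star}L^{\star}$. The paper leaves the curve untouched and tracks $c,L$ through the GGW argument via the decoupling constant of the non-degenerate curve; you instead propose to partition $[-1,1]$ into $O((L/c)^{\star})$ subintervals and locally normalize. This is a legitimate alternative, but as stated your ``local affine change of variables'' is under-specified and the obvious reading fails: an affine reparametrization $t \mapsto t/a$ alone multiplies each $\|\gamma^{(j)}\|$ by $a^{-j}$ while multiplying the non-degeneracy by $a^{-n(n+1)/2}$, so no single choice of $a$ normalizes $c$ and $L$ simultaneously. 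What does work is to apply the inverse frame matrix $A_{t_0}^{-1} = [\gamma^{(1)}(t_0),\ldots,\gamma^{(n)}(t_0)]^{-1}$ to the \emph{ambient} $\R^n$; this distorts the covering scale $\rho$ and replaces the orthogonal projections $\pi_t^{(i)}$ by non-orthogonal ones, with all distortions controlled by $\|A_{t_0}^{\pm 1}\| \ll c^{-\star}L^{\star}$ (the same point handled in Subsection~\ref{subsec:Equivalent Projections}), and the admissible subinterval length over which the normalization persists additionally requires a bound on $\gamma^{(n+1)}$ --- true for the polynomial curves arising in Theorem~\ref{thm:optimal}, but not literally stated in the hypotheses and worth making explicit. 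Once these details are filled in your normalization route gives the same $c^{-\star}L^{\star}$ loss as the paper's, trading an internal tour of the GGW proof for a self-contained but fussier rescaling argument.
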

\begin{proof}
    The deduction from \cite[Theorem 2.1]{GGW24} to this consequence is standard, see \cite[Appendix C]{LMWY25}. The dependence of the constant in \cite[Theorem 2.1]{GGW24} to the non-degeneracy $\|\gamma^{(1)}\wedge \cdots \wedge\gamma^{(n)}\|$ is not explicitly written. Still, one can track through the proof and show that it is a polynomial on the decoupling constant of non-degenerate curve. The latter is polynomial on $\|\gamma^{(1)}\wedge \cdots \wedge\gamma^{(n)}\|^{-1}$  and $\max_{j = 1, \ldots, i}\|\gamma^{(j)}\|$. For a calculation in similar setting, see \cite[Proposition 6.13]{JL24}. 
\end{proof}

\begin{proof}[Proof of \cref{thm:optimal}]
    Recall that the set $B^{\LieU}_1(0)$ can be identified with $[-1, 1]^2$ under our parametrization $u_{r, s}$. 
    
    We first deal with the case of $\SO^+(2, 2)$ and $\mathfrak{r}^{(1)}$. Note that under the coordinate in \cref{eqn:coordinate so 2 2}, the projection $\pi_{r, s}^{(2)}$ can be viewed as taking inner product with the following vector in $\R^9$:
    \begin{align*}
        (1, r, \frac{r^2}{2}, s, sr, s\frac{r^2}{2}, \frac{s^2}{2}, \frac{s^2}{2}r, \frac{s^2}{2}\frac{r^2}{2}).
    \end{align*}
    We have the following re-parametrization of $(r, s) \in [-1, 1]^2$. We set 
    \begin{align*}
        (x, y) \mapsto (x, y + x^3).
    \end{align*}

    Note that the Jacobian of this map is $1$ and it gives a family of nondegenerate curves: 
    \begin{align*}
        \gamma_y(x)
        ={}& \Bigl(1, x, \frac{x^2}{2}, x^3 + y, x^4 + xy, \frac{1}{2}(x^5 + x^2y), \frac{1}{2}(x^6 + 2x^3y + y^2), \\&\frac{1}{2}(x^7 + 2x^4y + xy^2), \frac{1}{2}(x^8 + 2x^5y + x^2y^2)\Bigr).
    \end{align*}
    Moreover, a direct calculation shows that the non-degeneracy $\|\gamma_y \wedge \gamma_y^{(1)}\wedge \cdots \wedge\gamma_y^{(8)}\|$ is bounded from below by an absolute constant does not depend on $y$. The theorem now follows directly from \cref{thm:GGW}. 

    For the case of $\SO(3, 1)^\circ$ and $\mathfrak{r}_2$, note that by the coordinate in \cref{eqn:coordinate so 3 1}, the projection $\pi_{r, s}^{(2)}$ can be viewed as taking inner product with the following vector in $\R^9$:
    \begin{align*}
        \Biggl(1, -2r, -2s, r^2 + 3s^2, s^2 - r^2, -2rs, r(r^2 + s^2), s(r^2 + s^2), \Bigl(\frac{r^2 + s^2}{2}\Bigr)^2\Biggr).
    \end{align*}
    We use the same re-parametrization of $(r, s) \in [-1, 1]^2$. We set 
    \begin{align*}
        (x, y) \mapsto (x, y + x^3).
    \end{align*}
    and let $\gamma_y(x)$ be the re-parametrized curve. 

    A direct calculation shows that the non-degeneracy $\|\gamma_y(x) \wedge \gamma_y^{(1)}(x)\wedge \cdots \wedge\gamma_y^{(8)}(x)\|$ is a non-zero polynomial in $(x, y)$. Let $\mathcal{E}_1$ be the set of $y$ so that the coefficient of $\|\gamma_y(x) \wedge \gamma_y^{(1)}(x)\wedge \cdots \wedge\gamma_y^{(8)}(x)\|$ as polynomial of $x$ is $\geq \rho^{\mathsf{c}}$ and apply \cref{thm:GGW} to the curve $\gamma_y$. The rest follows from Fubini's theorem. 
\end{proof}

As a corollary, we prove a special case of \cref{thm:Multislicing} for $5$-tuples $\mathbf{r} = (\mathsf{r_4}, \mathsf{r_4}, \mathsf{r_4}, \mathsf{r_4}, \mathsf{r_5})$ with $0 \leq \mathsf{r_4} \leq \mathsf{r_5} \leq 1$. Note that the improvement $\hat{\varphi}(\alpha)$ here is better than the $\varphi(\alpha) = \frac{1}{36}\hat{\varphi}(\alpha)$ in the main theorem. Recall that for a dyadic cube $Q$ in $\R^n$, $\Hom_Q$ is the unique homothety that map $Q$ to $[0, 1)^n$ and 
\begin{align*}
    \hat{\varphi}(\alpha) = \min\{\alpha, 1\} - \frac{1}{9}\alpha = \begin{cases}
        \frac{8}{9} \alpha & \text{ if } 0 \leq \alpha \leq 1;\\
        1 - \frac{1}{9} \alpha & \text{ if } 1 < \alpha \leq 9.
    \end{cases}
\end{align*}

\begin{corollary}\label{cor:optimal top multislicing}
    Fix a $5$-tuples $\mathbf{r} = (\mathsf{r_4}, \mathsf{r_4}, \mathsf{r_4}, \mathsf{r_4}, \mathsf{r_5})$ with $0 \leq \mathsf{r_4} \leq \mathsf{r_5} \leq 1$. 
    
    Let $E \subset B^{\mathfrak{r}}_1(0)$ be a finite set. Suppose there exist $\alpha \in (0, 9)$ and $C \geq 1$ such that 
    \begin{align*}
        \mu_E(B^{\mathfrak{r}}_{\rho}(x)) \leq C \rho^{\alpha}
    \end{align*}
    for all $\rho_0 \leq \rho \leq 1$. 
    
    Then for all $\mathsf{c} \ll \mathsf{r_5} - \mathsf{r_4}$, there exists $C_{\mathsf{c}, \mathbf{r}} > 0$ such that the following holds. 
    For all $\rho_0 \leq \rho \ll_{\mathsf{c}} 1$, we define the exceptional set $\mathcal{E}(E)$ to be
    \begin{align*}
        \mathcal{E}(E) = \{u \in \mathsf{B}^U_1: &\exists E' \subset E \text{ with } \mu_E(E') \geq \rho^{\mathsf{c}}\\
        &\text{ and } |u.E'|_{\mathcal{D}_{\rho}^{\mathbf{r}}} < C_{\mathsf{c}, \mathbf{r}}^{-1}C^{-1} \vol(T)^{-\frac{\alpha}{9}}\rho^{-(\mathsf{r_5} - \mathsf{r_4})\hat{\varphi}(\alpha) + O(\sqrt{\mathsf{c}})}\}.
    \end{align*}

    We have
    \begin{align*}
        m_U(\mathcal{E}(E)) \leq C_{\mathsf{c}, \mathbf{r}}\rho^{\mathsf{c}}.
    \end{align*} 
\end{corollary}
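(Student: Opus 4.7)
The plan is to leverage the special shape of $\mathbf{r}$: since $\mathsf{r_1}=\mathsf{r_2}=\mathsf{r_3}=\mathsf{r_4}$, the first four factors in the definition of $\mathcal{D}_\rho^{\mathbf{r}}$ collapse to the ordinary cube partition $\mathcal{D}_{\rho^{\mathsf{r_4}}}$ of $\mathfrak{r}$, and the only genuine refinement comes from the highest weight line $\mathfrak{r}_2=\mathfrak{r}^{(2)}$, so
\begin{align*}
\mathcal{D}_\rho^{\mathbf{r}} = \mathcal{D}_{\rho^{\mathsf{r_4}}} \vee (\pi^{(2)})^{-1}\mathcal{D}_{\rho^{\mathsf{r_5}}}.
\end{align*}
Equivalently, the atoms of $\mathcal{D}_\rho^{\mathbf{r}}$ inside any cube $Q'\in\mathcal{D}_{\rho^{\mathsf{r_4}}}$ are exactly the $\rho^{\mathsf{r_5}}$-slabs perpendicular to $\mathfrak{r}_2$. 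A direct computation (using $\vol(T)\asymp\rho^{8\mathsf{r_4}+\mathsf{r_5}}$ and $\hat\varphi(\alpha)=\min\{\alpha,1\}-\alpha/9$) rewrites the target lower bound as
\begin{align*}
\vol(T)^{-\alpha/9}\rho^{-(\mathsf{r_5}-\mathsf{r_4})\hat\varphi(\alpha)} \asymp \rho^{-\mathsf{r_4}\alpha-(\mathsf{r_5}-\mathsf{r_4})\min\{\alpha,1\}},
\end{align*}
which one reads as ``Frostman gain $\rho^{-\mathsf{r_4}\alpha}$ at the cube scale times the gain $\rho^{-(\mathsf{r_5}-\mathsf{r_4})\min\{\alpha,1\}}$ from Theorem~\ref{thm:optimal} at the slab scale''. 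This suggests a fiber-wise reduction to Theorem~\ref{thm:optimal}.

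Let $\mathcal{P}=\mathcal{D}_{\rho^{\mathsf{r_4}}}$ on $\mathfrak{r}$ and, for $Q\in\mathcal{P}$, set $E_Q=E\cap Q$, $E'_Q=E'\cap Q$ and $F^Q=\Hom_Q(E_Q)\subseteq[0,1)^9$. The bi-Lipschitz bound $\|u\|,\|u^{-1}\|\ll 1$ on $\mathsf{B}^U_1$ implies that each image $u.Q$ is contained in $O(1)$ target cubes of $\mathcal{P}$, and a multiplicity argument combined with the slab description in the previous paragraph yields the first key identity
\begin{align*}
|u.E'|_{\mathcal{D}_\rho^{\mathbf{r}}} \gg \sum_{Q\in\mathcal{P}(E')}|\pi^{(2)}_u(E'_Q)|_{\rho^{\mathsf{r_5}}} = \sum_{Q\in\mathcal{P}(E')}|\pi^{(2)}_u(\Hom_Q(E'_Q))|_{\rho^{\mathsf{r_5}-\mathsf{r_4}}},
\end{align*}
the last equality using that $u$ is linear and that $\Hom_Q$ is affine. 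The rescaled set $F^Q$ inherits the Frostman condition
\begin{align*}
\mu_{F^Q}(B^{\mathfrak{r}}_{\rho'}(x)) \leq \frac{C\rho^{\mathsf{r_4}\alpha}}{\mu_E(Q)}(\rho')^\alpha, \quad \forall \rho'\geq \rho_0\rho^{-\mathsf{r_4}},
\end{align*}
which is valid in particular at the scale $\rho'=\rho^{\mathsf{r_5}-\mathsf{r_4}}$ since $\rho^{\mathsf{r_5}}\geq\rho\geq\rho_0$. Applying Theorem~\ref{thm:optimal} to each $F^Q$ at scale $\rho^{\mathsf{r_5}-\mathsf{r_4}}$ with parameter $\mathsf{c}'=2\mathsf{c}/(\mathsf{r_5}-\mathsf{r_4})$, one obtains an exceptional set $\mathcal{E}_Q\subset\mathsf{B}^U_1$ with $m_U(\mathcal{E}_Q)\leq C_{\mathsf{c}'}\rho^{2\mathsf{c}}$ such that, outside $\mathcal{E}_Q$, every subset of $F^Q$ of conditional mass $\geq\rho^{2\mathsf{c}}$ has top-weight projection of size at least $C_{\mathsf{c}'}^{-1}\mu_E(Q)C^{-1}\rho^{-\mathsf{r_4}\alpha}(\rho^{\mathsf{r_5}-\mathsf{r_4}})^{-\min\{\alpha,1\}+O(\sqrt{\mathsf{c}'})}$.

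To glue these local estimates globally, set $A=\{Q\in\mathcal{P}:\mu_{E_Q}(E'_Q)\geq\tfrac{1}{2}\mu_E(E')\}$; a standard dichotomy yields $\sum_{Q\in A}\mu_E(Q)\geq\tfrac{1}{2}\mu_E(E')\geq\tfrac{1}{2}\rho^{\mathsf{c}}$, and for $\rho\ll_{\mathsf{c}}1$ the threshold $\mu_{E_Q}(E'_Q)\geq\rho^{\mathsf{c}}/2$ comfortably exceeds $\rho^{2\mathsf{c}}$. Define
\begin{align*}
\mathcal{E}=\Bigl\{u\in\mathsf{B}^U_1:\sum_{Q\in A}\mu_E(Q)\mathds{1}_{u\in\mathcal{E}_Q}\geq\tfrac{1}{4}\rho^{\mathsf{c}}\Bigr\};
\end{align*}
Fubini and Markov bound $m_U(\mathcal{E})\ll C_{\mathsf{c}'}\rho^{\mathsf{c}}$, and for $u\notin\mathcal{E}$ the $\mu_E$-mass of good cubes $\{Q\in A:u\notin\mathcal{E}_Q\}$ is at least $\tfrac{1}{4}\rho^{\mathsf{c}}$. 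Summing the local bounds collapses the factor $\mu_E(Q)$ to a universal $\rho^{\mathsf{c}}/4$ which is absorbed into $\rho^{O(\sqrt{\mathsf{c}})}$; combined with $(\rho^{\mathsf{r_5}-\mathsf{r_4}})^{O(\sqrt{\mathsf{c}'})}=\rho^{O(\sqrt{\mathsf{c}})}$ this produces the desired inequality with $C_{\mathsf{c},\mathbf{r}}\asymp C_{\mathsf{c}'}$. The main obstacle is the multiplicity bookkeeping in the first identity above: one must carefully verify that the post-$u$ tube count dominates, up to a universal constant, the sum of pre-$u$ projected slab counts, using the $O(1)$-bi-Lipschitz nature of $u\in\mathsf{B}^U_1$ together with translation-invariance of the slab partition in the $\mathfrak{r}_2$ direction. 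The remaining steps are routine adaptations of the deduction of Theorem~\ref{thm:ImprovementSlabExpansion} from Theorem~\ref{thm:Multislicing} already carried out in this part.
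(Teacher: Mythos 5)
Your proposal is correct and takes essentially the same approach as the paper: decompose into cubes at scale $\rho^{\mathsf{r_4}}$, observe that $\mathcal{D}_\rho^{\mathbf{r}}$ refines to slabs transverse to $\mathfrak{r}_2$ inside each cube, rescale via $\Hom_Q$ so the within-cube slab count becomes $|\pi^{(2)}_u(F^Q)|_{\rho^{\mathsf{r_5}-\mathsf{r_4}}}$, apply Theorem~\ref{thm:optimal} to each rescaled piece $F^Q$ with rescaled parameter, and glue with a Fubini/Markov argument. The multiplicity step you flag is the same $O(1)$ bi-Lipschitz bookkeeping the paper uses implicitly (the paper's chain of inequalities is best read with the intermediate term $\sum_Q |u.F'_Q|_{\mathcal{D}_\rho^{\mathbf{r}}}$), and your choice $\mathsf{c}'=2\mathsf{c}/(\mathsf{r_5}-\mathsf{r_4})$ versus the paper's $\tilde{c}=4\mathsf{c}/(\mathsf{r_5}-\mathsf{r_4})$ differs only by a harmless constant.
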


\begin{proof}
    The proof is similar to the proof of \cref{thm:ImprovementSlabExpansion} assuming \cref{thm:Multislicing} in \cref{sec:Anisotropic implies expansion}. Recall that for an atom $T \in \mathcal{D}_\rho^{\mathbf{r}}$, its volume satisfies the following estimate
    \begin{align*}
        \vol(T) \sim \rho^{8\mathsf{r_4} + \mathsf{r_5}}.
    \end{align*}

    Without loss of generality, we assume that $\mathsf{r_4} < \mathsf{r_5}$. Otherwise the corollary is obvious. For simplicity, let $\rho_1 = \rho^{\mathsf{r_4}}$, $\rho_2 = \rho^{\mathsf{r_5} - \mathsf{r_4}}$ and $\mathbf{s} = (0, 0, 0, 0, \mathsf{r_5} - \mathsf{r_4})$. For all $u \in \mathsf{B}^U_1$ and all subset $F' \subseteq F$ with $\mu_F(F') \geq \rho^{\mathsf{c}}$, we have
    \begin{align*}
        |u. F'|_{\mathcal{D}_{\rho}^{\mathbf{r}}} \gg{}& |u. F'|_{\rho_1\mathcal{D}_{\rho}^{\mathbf{s}}}\\
        \gg{}& \sum_{Q \in \mathcal{D}_{\rho_1}} |u_{r, s}. F'_Q|_{\rho_1\mathcal{D}_{\rho}^{\mathbf{s}}}\\
        \gg{}& \sum_{Q \in \mathcal{D}_{\rho_1}} |u.\Hom_Q (F'_Q)|_{\mathcal{D}_{\rho}^{\mathbf{s}}}.
    \end{align*}

    Recall that for any subset $A$, we use $A^Q$ to denote the image of $A_Q = A \cap Q$ under the homothety $\Hom_Q$. It is the rescaling of $A \cap Q$ to size $1$. We now study the Frostman-type condition that $F^Q$ satisfies: 
    \begin{align*}
        \mu_{F^Q} (B^{\mathfrak{r}}_{\rho'}(x)) ={}& \frac{1}{\mu_F(Q)} \mu_F(B^{\mathfrak{r}}_{\rho_1\rho'}(x'))\\
        \leq{}& \frac{C\rho_1^\alpha}{\mu_F(Q)} (\rho')^{\alpha}
    \end{align*}
    for all $\rho' \geq \rho_1^{-1}\rho_0$. 

    Note that by our restriction to $\rho$ and $\mathsf{r_5} \leq 1$, $\rho_2 = \rho^{\mathsf{r_5} - \mathsf{r_4}} \geq \rho_1^{-1}\rho_0$. Suppose $\mathsf{c}$ is small enough so that $\tilde{c} = \frac{4}{\mathsf{r_5} - \mathsf{r_4}}\mathsf{c}$ is small enough to apply \cref{thm:optimal}. This means that $\mathsf{c} \ll \mathsf{r_5} - \mathsf{r_4}$. We use $C_{\mathsf{c}, \mathbf{r}}$ to denote $C_{\tilde{\mathsf{c}}}$ in \cref{thm:optimal}. For all $Q \in \mathcal{D}_{\rho_1}$ so that $\mu_{F_Q}(F'_Q) \geq \rho^{2\mathsf{c}}$, applying \cref{thm:optimal} to $\mu_{F^Q}$ and $\tilde{c} = \frac{4}{\mathsf{r_5} - \mathsf{r_4}}\mathsf{c}$, there exists $\mathcal{E}_Q$ with $m_U(\mathcal{E}_Q) \leq C_{\tilde{\mathsf{c}}} \rho^{4c}$, and for all $u \notin \mathcal{E}_Q$, we have \begin{align*}
        |u. \Hom_Q (F'_Q)|_{\mathcal{D}_{\rho}^{\mathbf{s}}} \geq{}& C_{\tilde{\mathsf{c}}}^{-1}\mu_F(Q)C^{-1} \rho_1^{-\alpha} \rho_2^{-\min(\alpha, 1) + O(\sqrt{\tilde{\mathsf{c}}})}\\
        ={}& \mu_F(Q)C_{\mathsf{c}, \mathbf{r}}^{-1}C^{-1}\rho^{-\mathsf{r_4}\alpha - (\mathsf{r_5} - \mathsf{r_4})\min(\alpha, 1) + O(\sqrt{(\mathsf{r_5} - \mathsf{r_4})\mathsf{c}})}\\
        ={}& \mu_F(Q)C_{\mathsf{c}, \mathbf{r}}^{-1}C^{-1}\vol(T)^{-1}\rho^{- (\mathsf{r_5} - \mathsf{r_4})\hat{\varphi}(\alpha) + O(\sqrt{(\mathsf{r_5} - \mathsf{r_4})\mathsf{c}})}.
    \end{align*}
    Similar to the proof of \cref{thm:ImprovementSlabExpansion} assuming \cref{thm:Multislicing} in \cref{sec:Anisotropic implies expansion}, we proceed by Fubini's theorem to combine those information from local pieces. Let
    \begin{align*}
        \mathcal{D}_{\rho_1}(u) = \{Q \in \mathcal{D}_{\rho_1}(F):u \in \mathcal{E}_Q\}
    \end{align*}
    and let
    \begin{align*}
        \mathcal{D}_{\rho_1}^{\mathrm{large}}(F') = \{Q \in \mathcal{D}_{\rho_1}(F):\mu_F(F' \cap Q) \geq \rho^{\mathsf{c}}\mu_F(Q)\}.
    \end{align*}
    Since $\mu_F(F') \geq \rho^{\mathsf{c}}$, we have
    \begin{align*}
        \sum_{Q \in \mathcal{D}_{\rho_1}^{\mathrm{large}}(F')} \mu_F(Q) \geq \rho^{2\mathsf{c}}.
    \end{align*}
    By Fubini's theorem, there exists $\mathcal{E} \subseteq \mathsf{B}^U_1$ with $m_U(\mathcal{E}) \ll \rho^{\mathsf{c}}$ so that for all $u \notin \mathcal{E}$, we have
    \begin{align*}
        \sum_{Q \in \mathcal{D}_{\rho_1}(u)} \mu_F(Q) \leq \rho^{3\mathsf{c}}.
    \end{align*}
    Combining all the above estimates, for all $u \notin \mathcal{E}$, we have
    \begin{align*}
        |u.F'|_{\mathcal{D}_{\rho}^{\mathbf{r}}} \gg{}& \Biggl(\sum_{Q \in \mathcal{D}_{\rho_1}^{\mathrm{large}} \setminus \mathcal{D}_{\rho_1}(u)} \mu_F(Q)\Biggr) C_{\mathsf{c}, \mathbf{r}}^{-1} C^{-1}\vol(T)^{-1} \rho^{-(\mathsf{r_5} - \mathsf{r_4})\hat{\varphi}(\alpha) + O(\sqrt{(\mathsf{r_5} - \mathsf{r_4})\mathsf{c}})}\\
        \geq{}& C_{\mathsf{c}, \mathbf{r}}^{-1} C^{-1}\vol(T)^{-1} \rho^{-(\mathsf{r_5} - \mathsf{r_4})\hat{\varphi}(\alpha) + 3\mathsf{c} +  O(\sqrt{(\mathsf{r_5} - \mathsf{r_4})\mathsf{c}})}\\
        \geq{}& C_{\mathsf{c}, \mathbf{r}}^{-1} C^{-1}\vol(T)^{-1} \rho^{-(\mathsf{r_5} - \mathsf{r_4})\hat{\varphi}(\alpha) + O(\sqrt{\mathsf{c}})}.
    \end{align*}
    This complete the proof of the corollary. 
\end{proof}

\section{Proof of Theorem~\ref{thm:Multislicing}}\label{sec:proof of multislice}
In this section we prove \cref{thm:Multislicing}. 
\subsection{Subcritical multi-slicing theorem}We first prove the following subcritical estimate for covering via tubes in $\mathcal{D}_{\rho}^{\mathbf{r}}$. Recall that we always assume the $5$-tuple $\mathbf{r} = (\mathsf{r_1}, \mathsf{r_2}, \mathsf{r_3}, \mathsf{r_4}, \mathsf{r_5})$ satisfying $0 \leq \mathsf{r_1} \leq \mathsf{r_2} \leq \mathsf{r_3} \leq \mathsf{r_4} \leq \mathsf{r_5} \leq 1$. 
\begin{proposition}\label{pro:SlabSubcritical}
    Fix a $5$-tuple $\mathbf{r} = (\mathsf{r}_1, \mathsf{r}_2, \mathsf{r}_3, \mathsf{r}_4, \mathsf{r}_5)$.     
    There exists an absolute constant $M_2 > 0$ such that the following holds for all $0 < \iota \ll_{\mathbf{r}} 1$ and $\rho \ll_{\iota, \mathbf{r}} 1$. 

    Let $A \subseteq B^{\mathfrak{r}}_1(0)$ be a set that is regular with respect to the filtration 
    \begin{align*}
        \mathcal{D}_{\rho^{\mathsf{r}_1}} \prec \mathcal{D}_{\rho^{\mathsf{r}_2}} \prec \mathcal{D}_{\rho^{\mathsf{r}_3}} \prec \mathcal{D}_{\rho^{\mathsf{r}_4}} \prec \mathcal{D}_{\rho^{\mathsf{r}_5}} \prec \mathcal{D}_{\rho}
    \end{align*}
    We define the exceptional set $\mathcal{E}(A)$ to be the following:
    \begin{align*}
        \mathcal{E}(A) = \Biggl\{u \in \mathsf{B}^U_1: \exists A' \subseteq A {}&\text{ with } |A'|_\rho \geq \rho^\iota|A|_\rho\\{}&\text{ and } |u.A'|_{\mathcal{D}_{\rho}^{\mathbf{r}}} < \rho^{M_2\iota} \prod_{i = 1}^5 |A|_{\rho^{\mathsf{r_i}}}^\frac{\mathsf{d_i}}{9}\Biggr\}.
    \end{align*}

    We have
    \begin{align*}
        m_U(\mathcal{E}(A)) \leq \rho^{\iota}.
    \end{align*}
\end{proposition}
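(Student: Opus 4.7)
The target product admits the telescoping rewrite
\begin{align*}
\prod_{i=1}^{5} |A|_{\rho^{\mathsf{r}_i}}^{\mathsf{d}_i/9} = \prod_{j=1}^{5} \biggl(\frac{|A|_{\rho^{\mathsf{r}_j}}}{|A|_{\rho^{\mathsf{r}_{j-1}}}}\biggr)^{\dim\mathfrak{r}^{(j-3)}/9}
\end{align*}
(with the conventions $\mathsf{r}_0 := 0$ and $|A|_{\rho^0} := 1$), and the key observation is that $\dim\mathfrak{r}^{(j-3)}/9$ is exactly the exponent furnished by the subcritical theorem for $\pi^{(j-3)}$. I plan to argue by induction along the filtration $\mathcal{P}_j := \bigvee_{i=1}^{j} (\pi^{(i-3)})^{-1}\mathcal{D}_{\rho^{\mathsf{r}_i}}$ (so $\mathcal{P}_0 = \{\mathfrak{r}\}$ and $\mathcal{P}_5 = \mathcal{D}_\rho^{\mathbf{r}}$), establishing at each level $j$ a branching lower bound
\begin{align*}
|u.A'|_{\mathcal{P}_j} \geq \rho^{O(\iota)} |u.A'|_{\mathcal{P}_{j-1}} \biggl(\frac{|A|_{\rho^{\mathsf{r}_j}}}{|A|_{\rho^{\mathsf{r}_{j-1}}}}\biggr)^{\dim\mathfrak{r}^{(j-3)}/9}
\end{align*}
valid for every large subset $A' \subseteq A$ and every $u$ outside an exceptional set of measure $\ll \rho^{\iota}$. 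Multiplying the five bounds and choosing $M_2$ larger than the sum of the implicit exponents yields the proposition.

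At level $j\geq 2$, the refinement from $\mathcal{P}_{j-1}$ to $\mathcal{P}_j$ adds the constraint on $\pi^{(j-3)}$ at scale $\rho^{\mathsf{r}_j}$; since $\pi^{(j-3)} = \pi^{(j-3)}\circ\pi^{(j-4)}$, inside any $\mathcal{P}_{j-1}$-atom the image of $u.A'$ under $\pi^{(j-3)}$ lies in a region of diameter $\ll\rho^{\mathsf{r}_{j-1}}$, so after rescaling by $\rho^{-\mathsf{r}_{j-1}}$ the branching becomes a covering-number lower bound for $\pi^{(j-3)}_u$ on a rescaled set at scale $\rho^{\mathsf{r}_j-\mathsf{r}_{j-1}}$. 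I will apply to this rescaled problem the matching subcritical theorem: Theorem~\ref{thm:subcritical 9 to 8} for $j=2$, Theorem~\ref{thm: subcritical 9 to 6} for $j=3$, Theorem~\ref{thm: subcritical 9 to 3} for $j=4$, and Theorem~\ref{thm:subcritical 9 to 1} for $j=5$. The algebraic hypotheses in Theorems~\ref{thm:subcritical 9 to 1} and \ref{thm:subcritical 9 to 8} are automatic, since $\mathfrak{r}^{(2)}=\mathfrak{r}_2$ and $(\mathfrak{r}^{(-1)})^{\perp}=\mathfrak{r}_{-2}$ are the highest- and lowest-weight lines of $\mathfrak{r}$. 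The level $j=1$ is trivial because $u\in\mathsf{B}^U_1$ acts on $\mathfrak{r}$ as a bi-Lipschitz linear map with distortion $\asymp 1$, so $|u.A'|_{\mathcal{P}_1}=|u.A'|_{\rho^{\mathsf{r}_1}}\asymp|A'|_{\rho^{\mathsf{r}_1}}\geq \rho^{O(\iota)}|A|_{\rho^{\mathsf{r}_1}}$. Regularity of $A$ plays a dual role: it guarantees that the rescaled copy $A^Q$ inside each non-empty $\rho^{\mathsf{r}_{j-1}}$-cube $Q$ is again nearly regular with $\rho^{\mathsf{r}_j-\mathsf{r}_{j-1}}$-covering number $\asymp |A|_{\rho^{\mathsf{r}_j}}/|A|_{\rho^{\mathsf{r}_{j-1}}}$ up to a factor $\rho^{\pm O(\iota)}$, and it supplies for $A^Q$ a Frostman constant absorbed into the $\rho^{\iota}$ bookkeeping.

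The main technical obstacle is uniformity: each subcritical theorem requires the global set-size hypothesis $|A'|_\delta\geq\delta^{\epsilon}|A|_\delta$, and this must be propagated into most atoms at each of the five levels simultaneously. I plan to handle this by applying each of the five subcritical theorems once to a globally regularized subset of $A$ (obtained via \cref{lem:regularization set}), producing exceptional sets $\mathcal{E}_j\subseteq\mathsf{B}^U_1$ of measure $\ll\rho^{c_j\iota}$ whose union still has measure $\leq\rho^{\iota}$ after tuning the internal parameters. For $u$ outside $\bigcup_j\mathcal{E}_j$, a Fubini/pigeonhole step combined with the regularity shows that in all but a $\rho^{O(\iota)}$-fraction of the atoms $Q$ at each level, the rescaled local version of the set-size hypothesis is inherited, so the local branching bound applies; the exceptional atoms contribute only $\rho^{O(\iota)}|u.A'|_{\mathcal{P}_{j-1}}$ by regularity. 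Summing the polynomial losses across the five levels, choosing $M_2$ suitably, and telescoping yields $|u.A'|_{\mathcal{P}_5}\geq\rho^{M_2\iota}\prod_i|A|_{\rho^{\mathsf{r_i}}}^{\mathsf{d_i}/9}$ as required.
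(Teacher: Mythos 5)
Your telescoping identity $\prod_i |A|_{\rho^{\mathsf{r}_i}}^{\mathsf{d}_i/9} = \prod_j\bigl(|A|_{\rho^{\mathsf{r}_j}}/|A|_{\rho^{\mathsf{r}_{j-1}}}\bigr)^{\dim\mathfrak{r}^{(j-3)}/9}$ is correct, and the algebraic observations about Theorems~\ref{thm:subcritical 9 to 1} and \ref{thm:subcritical 9 to 8} being automatically applicable (highest- and lowest-weight lines) are also right. The ingredients you invoke --- the four subcritical projection theorems, localization to cubes, Fubini over the unipotent parameter, and regularity of $A$ --- are precisely the ingredients of the paper's proof, and your plan is plausible. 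However, the organization is genuinely different from the paper's. The paper (following B\'enard--He) proves the proposition by induction on $m$, the number of \emph{distinct} entries in $\mathbf{r}$: the base case $m=2$ is a single localization at the coarsest scale $\rho^{\mathsf{r}_1}$ followed by one application of the relevant subcritical theorem, while the inductive step applies the submodularity inequality (Lemma~\ref{lem:submodularity}) with $\mathcal{P}=u^{-1}\mathcal{D}_\rho^{\mathbf{r}}$, $\mathcal{Q}=u^{-1}\mathcal{D}_{\rho^{\mathsf{r}_2}}$ to merge the two coarsest scales and recurse on a tuple with $m-1$ distinct values. Your plan instead does a flat five-level sweep along the filtration $\mathcal{P}_0\prec\cdots\prec\mathcal{P}_5$, proving a branching bound at each level. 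What this difference buys: the paper's submodularity step ensures that localization only ever happens at the coarsest scale $\rho^{\mathsf{r}_1}$ in each base case, so the set being localized is always a subset of a cube; your branching bound at level $j\geq 3$ requires localizing inside $\mathcal{P}_{j-1}$-atoms, which are \emph{tubes} rather than cubes, and you then must choose a good $\rho^{\mathsf{r}_{j-1}}$-cube inside each tube and show (by a Fubini/pigeonhole over $u$) that most tubes hit by $u.A'$ contain such a cube. This is doable but adds combinatorial overhead that the paper's recursion sidesteps; in particular, it is not enough to know that most cubes are good globally, since the bad cubes could \emph{a priori} concentrate in few tubes, so you genuinely need the Fubini step you sketch rather than a simple counting bound. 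Two small inaccuracies: the subcritical theorems are pure entropy statements and do not take a Frostman condition as input, so the remark about ``supplying for $A^Q$ a Frostman constant'' is unnecessary; and the statement that each subcritical theorem is ``applied once to a globally regularized subset'' does not match your own localization strategy, which requires applying them to $A_Q$ for each cube $Q$ (with a Fubini to control the union of the resulting exceptional sets), exactly as the paper does in its base case.
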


\begin{proof}
    This follows from the proof of \cite[Proposition 2.8]{BH24} and the projection theorems in \cref{sec:Subcritical}. Replace the base case $m = 1$, $\mathsf{r_1} = 0$ in \cite[Proof of proposition 2.8]{BH24} by \cref{thm:subcritical 9 to 1,thm: subcritical 9 to 3,thm: subcritical 9 to 6,thm:subcritical 9 to 8}, the rest arguments are the same. We record the proof here for reader's convenience. 

    Let $m$ be the cardinality of the \emph{set} $\{\mathsf{r_1}, \mathsf{r_2}, \mathsf{r_3}, \mathsf{r_4}, \mathsf{r_5}\}$. The proposition is obvious when $m = 1$. We will prove it by induction when $m \geq 2$. 

    Suppose $m = 2$. Write $\mathbf{r} = (\mathsf{r_1}, \cdots, \mathsf{r_1}, \mathsf{r_2}, \cdots, \mathsf{r_2})$. Without loss of generality, we assume $\mathsf{r_2} = 1$. If not, replace $\rho$ by $\rho^{\mathsf{r_2}}$ and the proposition follows immediately. 
    
    The tuple $\mathbf{r} = (\mathsf{r_1}, \cdots, \mathsf{r_1}, \mathsf{r_2}, \cdots, \mathsf{r_2})$ corresponds to a flag $\mathfrak{r} \supset \mathfrak{r}^{(\lambda)} \supset \{0\}$ for some $\lambda$. We set $j_1 = \dim \mathfrak{r} - \dim \mathfrak{r}^{(\lambda)}$ and $j_2 = \dim \mathfrak{r}^{(\lambda)}$. For example, if $\mathbf{r} = (\mathsf{r_1}, \mathsf{r_1}, \mathsf{r_1}, \mathsf{r_2}, \mathsf{r_2})$, then $\lambda = 1$, $j_1 = 6$, $j_2 = 3$. 
    
    Let $\rho_1 = \rho^{\mathsf{r_1}}$, $\rho_2 = \rho^{\mathsf{r_2}} = \rho$ and $\mathbf{s} = (0, \cdots, 0, 1, \cdots, 1)$. We have
    \begin{align*}
        |u.A'|_{\mathcal{D}_{\rho}^{\mathbf{r}}} \gg{}& \sum_{Q \in \mathcal{D}_{\rho_{1}}} |u.A'_Q |_{\mathcal{D}_{\rho}^{\mathbf{r}}}\\
        \gg{}& \sum_{Q \in \mathcal{D}_{\rho_{1}}} |u.A'_Q |_{\mathcal{D}_{\rho_2}^{\mathbf{s}}}.
    \end{align*}
    Let $M$ be a positive constant so that the conclusions in \cref{thm:subcritical 9 to 1,thm: subcritical 9 to 3,thm: subcritical 9 to 6,thm:subcritical 9 to 8} hold. Applying one of \cref{thm:subcritical 9 to 1,thm: subcritical 9 to 3,thm: subcritical 9 to 6,thm:subcritical 9 to 8} according to the corresponding $\lambda$, we have that for all $Q \in \mathcal{D}_{\rho_{1}}$ with $|A'_Q|_{\rho_2} \geq \rho_2^{2\iota} |A_Q|_{\rho_2}$, there exists $\mathcal{E}_Q$ with $m_U(\mathcal{E}_Q) \leq \rho_2^{4\iota}$ so that for all $u \notin \mathcal{E}_Q$, we have
    \begin{align*}
        |u.A'_Q|_{\mathcal{D}_{\rho_2}^{\mathbf{s}}} \geq{}& \rho_2^{4M\iota} |A_Q|_{\rho_2}^{\frac{j_2}{9}}\\
        \geq{}& \rho_2^{5M\iota} |A|_{\rho_1}^{-\frac{j_2}{9}}|A|_{\rho_2}^{\frac{j_2}{9}}.
    \end{align*}
    The last inequality follows from the regularity of $A$. 
        
    Let 
    \begin{align*}
        \mathcal{D}_{\rho_1}^{\mathrm{large}}(A') = \{Q \in \mathcal{D}_{\rho_1}(A):|A' \cap Q|_{\rho_2} \geq \rho_2^{2\iota}|A \cap Q|_{\rho_2}\}
    \end{align*}
    and let
    \begin{align*}
        \mathcal{D}_{\rho_1}(u) = \{Q \in \mathcal{D}_{\rho_1}(A): u \in \mathcal{E}_Q\}.
    \end{align*}
    Since $|A'|_\rho \geq \rho^{\iota}|A|_\rho$, we have
    \begin{align*}
        \# \mathcal{D}_{\rho_1}^{\mathrm{large}}(A') \geq \rho^{2\iota}|A|_{\rho_1}.
    \end{align*}
    Applying Fubini's theorem, there exists $\mathcal{E} \subseteq \mathsf{B}^U_1$ with $m_U(\mathcal{E}) \leq \rho^{\iota}$ so that for all $u \notin \mathcal{E}$, we have
    \begin{align*}
        \# \mathcal{D}_{\rho_1}(u) \leq \rho^{3\iota}|A|_{\rho_1}.
    \end{align*}
    Combining all above estimates, we have
    \begin{align*}
        |u.A'|_{\mathcal{D}_{\rho}^{\mathbf{r}}} \gg{}& \Biggl(\# \mathcal{D}_{\rho_1}^{\mathrm{large}}(A') -\#\mathcal{D}_{\rho_1}(u)\Biggr)\rho^{5M\iota}|A|_{\rho_1}^{-\frac{j_2}{9}}|A|_{\rho_2}^{\frac{j_2}{9}}\\
        \geq{}& \rho^{(5M + 3)\iota}|A|_{\rho_1}^{\frac{j_1}{9}}|A|_{\rho_2}^{\frac{j_2}{9}}.
    \end{align*}
    This proves the proposition in the case where $m = 2$. 

    We now prove the inductive step. Suppose the proposition holds for $m$ and we now prove it holds for $m + 1$. As in the base case $m = 2$, we write 
    \begin{align*}
        \mathbf{r} = (\mathsf{r_1}, \ldots, \mathsf{r_1}, \mathsf{r_2}, \ldots, \mathsf{r_2}, \ldots, \mathsf{r_{m + 1}}, \ldots, \mathsf{r_{m + 1}}).
    \end{align*}
    Without loss of generality, we assume $\mathsf{r_m} = 1$. Otherwise we replace $\rho$ by $\rho^{\mathsf{r_m}}$ and the proposition follows immediately. 
    
    There is a flag associate to the tuple $\mathbf{r}$:
    \begin{align*}
        \mathfrak{r} = \mathfrak{r}^{(\lambda_1)} \supset \mathfrak{r}^{(\lambda_2)} \supset \ldots \supset \mathfrak{r}^{(\lambda_{m + 1})} \supset \{0\}
    \end{align*}
    where $\lambda_1 = -2$. Let $j_i = \dim \mathfrak{r}^{(\lambda_i)} - \dim \mathfrak{r}^{(\lambda_{i + 1})}$ for $i = 1, \ldots, m$ and $j_{m + 1} = \dim \mathfrak{r}^{(\lambda_{m+ 1})}$. For example, if $m + 1 = 5$, then $j_i = \mathsf{d_i}$. 

    Let
    \begin{align*}
        \mathbf{s} ={}& \mathbf{r} \vee (\mathsf{r_2}, \ldots, \mathsf{r_2}) = (\mathsf{r_2}, \ldots, \mathsf{r_2}, \mathsf{r_2}, \ldots, \mathsf{r_2}, \ldots, \mathsf{r_{m + 1}}, \ldots, \mathsf{r_{m + 1}}),\\
        \mathbf{t} ={}& \mathbf{r} \wedge (\mathsf{r_2}, \ldots, \mathsf{r_2}) = (\mathsf{r_1}, \ldots, \mathsf{r_1}, \mathsf{r_2}, \ldots, \mathsf{r_2},  \ldots, \mathsf{r_2}, \ldots, \mathsf{r_2}).
    \end{align*}
    Then $\mathbf{s}$ will corresponds to the flag
    \begin{align*}
        \mathfrak{r} = \mathfrak{r}^{(\lambda_1)} \supset \mathfrak{r}^{(\lambda_3)} \supset \ldots \supset \mathfrak{r}^{(\lambda_{m + 1})} \supset \{0\}
    \end{align*}
    with dimension difference corresponds to $(j_1 + j_2, j_3, \ldots, j_{m + 1})$. Similarly, $\mathbf{t}$ will corresponds to the flag
    \begin{align*}
        \mathfrak{r} = \mathfrak{r}^{(\lambda_1)} \supset \mathfrak{r}^{(\lambda_2)} \supset \{0\}
    \end{align*}
    with dimension difference corresponds to $(j_1, j_2 + \cdots + j_{m + 1})$. 
    
    We note that we have the following relations between those partitions:
    \begin{align*}
        &\mathcal{D}_{\rho}^{\mathbf{r}} \vee \mathcal{D}_{\rho^{\mathsf{r_2}}} = \mathcal{D}_{\rho}^{\mathbf{s}},\\
        &\mathcal{D}_{\rho}^{\mathbf{t}} \prec \mathcal{D}_{\rho}^{\mathbf{r}}, \mathcal{D}_{\rho}^{\mathbf{t}} \prec \mathcal{D}_{\rho^{\mathsf{r_2}}}.
    \end{align*}
    For any subset $A' \subseteq A$ with $|A'|_\rho \geq \rho^\iota|A|_{\rho}$, we apply \cref{lem:regularization set} with respect to the filtration
    \begin{align*}
        u^{-1}\mathcal{D}_{\rho}^{\mathbf{t}} \prec u^{-1}\mathcal{D}_{\rho}^{\mathbf{s}} \prec u^{-1}\mathcal{D}_{\rho}.
    \end{align*}
    This provides $A_1 \subseteq A'$ with $|A_1|_{u^{-1}\mathcal{D}_{\rho}} \geq \rho^\iota|A'|_{u^{-1}\mathcal{D}_{\rho}}$ which is regular with respect to the above filtration. Since $u \in \mathsf{B}^U_1$, we have
    \begin{align*}
        |B|_{\rho} \asymp |u.B|_\rho = |B|_{u^{-1}\mathcal{D}_{\rho}}
    \end{align*}
    for any set $B \subseteq B_{\mathfrak{r}}(0, 1)$. Therefore, 
    \begin{align*}
        |A_1|_\rho \gg \rho^\iota|A'|_\rho \geq \rho^{2\iota}|A|_\rho.
    \end{align*}
    By picking $\rho$ small enough depending only on $\iota$, we have
    \begin{align*}
        |A_1|_\rho \geq \rho^{3\iota}|A|_\rho.
    \end{align*}

    We now apply \cref{lem:submodularity} to $A_1$ and $c = \frac{1}{2}$ with respect to the partitions
    \begin{align*}
        &\mathcal{P} = u^{-1}\mathcal{D}_{\rho}^{\mathbf{r}}, \mathcal{Q} = u^{-1}\mathcal{D}_{\rho^{\mathsf{r_2}}},\\
        &\mathcal{R} = \mathcal{P} \vee \mathcal{Q} = u^{-1}\mathcal{D}_{\rho}^{\mathbf{r}}, \\
        &\mathcal{S} = u^{-1}\mathcal{D}_{\rho}^{\mathbf{t}}.
    \end{align*}
    There exists $A_1'$ with $|A_1'|_{\mathcal{R}} \gg |A_1|_{\mathcal{R}}$ so that 
    \begin{align*}
        |u.A_1|_{\mathcal{D}_{\rho}^{\mathbf{r}}} |u.A_1|_{\rho^{\mathsf{r_2}}} \gg |u.A_1|_{\mathcal{D}_{\rho}^{\mathbf{s}}}|u.A_1'|_{\mathcal{D}_{\rho}^{\mathbf{t}}}.
    \end{align*}
    Since $A_1$ is regular with respect to $u^{-1}\mathcal{D}_{\rho}^{\mathbf{t}} \prec u^{-1}\mathcal{D}_{\rho}^{\mathbf{s}}$, by \cref{lem:weaker regular} we have
    \begin{align*}
        |u.A_1'|_{\mathcal{D}_{\rho}^{\mathbf{t}}} \gg |u.A_1|_{\mathcal{D}_{\rho}^{\mathbf{t}}}.
    \end{align*}
    Therefore, we have
    \begin{align}\label{eqn:subcritical submodularity}
        |u.A'|_{\mathcal{D}_{\rho}^{\mathbf{r}}} |A'|_{\rho^{\mathsf{r_2}}} \gg{}&|u.A_1|_{\mathcal{D}_{\rho}^{\mathbf{r}}} |u.A_1|_{\rho^{\mathsf{r_2}}}\\
        \gg{}& |u.A_1|_{\mathcal{D}_{\rho}^{\mathbf{s}}}|u.A_1|_{\mathcal{D}_{\rho}^{\mathbf{t}}}.
    \end{align}
    We now estimate each term in the right side of the inequality using the inductive hypothesis and the base case. 
    
    Recall that by our construction of $A_1$, we have $|A_1|_\rho \geq \rho^{3\iota}|A|_{\rho}$. Applying the inductive hypothesis to $\mathbf{s}$, $A$, and $3\iota$, there exist $M(\mathbf{s}) > 0$ and $\mathcal{E}_{\mathbf{s}} \subset \mathsf{B}^U_1$ with $m_U(\mathcal{E}_{\mathbf{s}}) \leq \rho^{3\iota}$ so that for all $u \notin \mathcal{E}_{\mathbf{s}}$, we have
    \begin{align}\label{eqn:subcritical inductive hypothesis}
        |u.A_1|_{\mathcal{D}_{\rho}^{\mathbf{s}}} \geq \rho^{3M(\mathbf{s})\iota} |A|_{\rho^{\mathsf{r_2}}}^{\frac{j_1 + j_2}{9}}\prod_{i = 3}^{m + 1}|A|_{\rho^{\mathsf{r_i}}}^{\frac{j_i}{9}}.
    \end{align}
    Applying the base case where $m = 2$ to $\mathbf{t}$, $A$, and $3\iota$, there exist $M(\mathbf{t}) > 0$ and $\mathcal{E}_{\mathbf{t}}$ with $m_U(\mathcal{E}_{\mathbf{t}}) \leq \rho^{3\iota}$ so that for all $u \notin \mathcal{E}_{\mathbf{t}}$, we have
    \begin{align}\label{eqn:subcritical base in inductive step}
        |u.A_1|_{\mathcal{D}_{\rho}^{\mathbf{t}}} \geq \rho^{3M(\mathbf{t})\iota} |A|_{\rho^{\mathsf{r_1}}}^{\frac{j_1}{9}}|A|_{\rho^{\mathsf{r_2}}}^{\frac{9 - j_1}{9}}.
    \end{align}

    Combine \cref{eqn:subcritical submodularity,eqn:subcritical inductive hypothesis,eqn:subcritical base in inductive step}, we have
    \begin{align*}
        |u.A'|_{\mathcal{D}_{\rho}^{\mathbf{r}}} \gg \rho^{3(M(\mathbf{s}) + M(\mathbf{t}))\iota}\prod_{i = 1}^{m + 1}|A|_{\rho^{\mathsf{r_i}}}^{\frac{j_i}{9}}
    \end{align*}
    for all $u \notin \mathcal{E}_{\mathbf{s}} \cup \mathcal{E}_{\mathbf{t}}$. Let $\mathcal{E} = \mathcal{E}_{\mathbf{s}} \cup \mathcal{E}_{\mathbf{t}}$, we have
    \begin{align*}
        m_U(\mathcal{E}) \leq \rho^{\iota}
    \end{align*}
    if $\rho \ll_{\iota} 1$. For all $u \notin \mathcal{E}$, we have
    \begin{align*}
        |u.A'|_{\mathcal{D}_{\rho}^{\mathbf{r}}} \geq \rho^{3(M(\mathbf{s}) + M(\mathbf{t}) + 1)\iota}\prod_{i = 1}^{m + 1}|A|_{\rho^{\mathsf{r_i}}}^{\frac{j_i}{9}}
    \end{align*}
    if $\rho \ll_{\iota} 1$. By our construction, both $\mathbf{s}$ and $\mathbf{t}$ depends only on $\mathbf{r}$. Therefore, the new constant $M = 3(M(\mathbf{s}) + M(\mathbf{t}) + 1)$ depends only on $\mathbf{r}$. This complete the proof of the inductive step and hence the proposition. 
\end{proof}

\subsection{Proof of Theorem~\ref{thm:Multislicing}}
For simplicity, we say a measure or a set is regular in this subsection if it is regular with respect to the filtration
\begin{align*}
    \mathcal{D}_1 \prec \mathcal{D}_{\rho^{\mathsf{r}_1}} \prec \mathcal{D}_{\rho^{\mathsf{r}_2}} \prec \mathcal{D}_{\rho^{\mathsf{r}_3}} \prec \mathcal{D}_{\rho^{\mathsf{r}_4}} \prec \mathcal{D}_{\rho^{\mathsf{r}_5}} \prec \mathcal{D}_\rho.
\end{align*}

\begin{proof}[Proof of \cref{thm:Multislicing} when $\mu_F$ is regular]
This is a variant of \cite[Proof of theorem 2.1]{BH24}. The idea is straightforward. We use $\mathcal{D}_{\rho^{\mathsf{r_i}}}$ to refine $\mathcal{D}_{\rho}^{\mathbf{r}}$ and apply the sub-modularity inequality \cref{lem:submodularity}. At the end, we will end up with one partition of form $\mathcal{D}_{\rho}^{\mathbf{t}}$ with $\mathbf{t} = (\mathsf{r_4}, \mathsf{r_4}, \mathsf{r_4}, \mathsf{r_4}, \mathsf{r_5})$ and some other partitions. We apply \cref{cor:optimal top multislicing} to the former and \cref{pro:SlabSubcritical} to the latter and prove the estimate. 

Let $\epsilon \ll_{\mathbf{r}} 1$ as in \cref{pro:SlabSubcritical,cor:optimal top multislicing} and let $\rho$ small enough so that all quantity of the form $O(|\mathsf{r_i}\log \rho|)$ is dominated by $\rho^{-\epsilon}$. 

We recall that by \cref{lem:regular measure vs regular set}, the set $F$ is also regular with respect to the filtration
\begin{align*}
    \mathcal{D}_1 \prec \mathcal{D}_{\rho^{\mathsf{r}_1}} \prec \mathcal{D}_{\rho^{\mathsf{r}_2}} \prec \mathcal{D}_{\rho^{\mathsf{r}_3}} \prec \mathcal{D}_{\rho^{\mathsf{r}_4}} \prec \mathcal{D}_{\rho^{\mathsf{r}_5}} \prec \mathcal{D}_\rho.
\end{align*}
Therefore we can apply \cref{pro:SlabSubcritical} to $F$. 
    
In this case, when $\mathsf{r_4} = \mathsf{r_5}$, the theorem follows directly from \cref{pro:SlabSubcritical}. Therefore we will assume $\mathsf{r_4} < \mathsf{r_5}$. 

Let
\begin{align*}
    \mathbf{t}_1 ={}& \mathbf{r} \vee (\mathsf{r}_2, \cdots, \mathsf{r}_2)\\
    \mathbf{t}_2 ={}& \mathbf{r} \vee (\mathsf{r}_3, \cdots, \mathsf{r}_3)\\
    \mathbf{t}_3 ={}& \mathbf{r} \vee (\mathsf{r}_4, \cdots, \mathsf{r}_4) = (\mathsf{r_{4}}, \mathsf{r_{4}}, \mathsf{r_{4}}, \mathsf{r_{4}}, \mathsf{r_{5}})\\
    \mathbf{s}_1 ={}& (\mathsf{r_{1}}, \mathsf{r_{2}}, \mathsf{r_{2}}, \mathsf{r_{2}}, 
    \mathsf{r_{2}})\\
    \mathbf{s}_2 ={}& (\mathsf{r_{2}}, \mathsf{r_{2}}, \mathsf{r_{3}}, \mathsf{r_{3}}, 
    \mathsf{r_{3}})\\
    \mathbf{s}_3 ={}& (\mathsf{r_{3}}, \mathsf{r_{3}}, \mathsf{r_{3}}, \mathsf{r_{4}}, 
    \mathsf{r_{4}}).
\end{align*}
Let $M_2$ be as in \cref{pro:SlabSubcritical}. Recall that we set
\begin{align*}
    \hat{\varphi}(\alpha) = \min\{\alpha, 1\} - \frac{1}{9}\alpha
\end{align*}
and $\varphi(\alpha) = \frac{1}{36}\hat{\varphi}(\alpha)$. If for one of $i \in \{1, 2, 3, 4, 5\}$, we have
\begin{align*}
    |F|_{\rho^{\mathsf{r_{i}}}} \geq C^{-1}\rho^{-\mathsf{r_{i}}\alpha}\rho^{- \frac{\mathsf{r_5} - \mathsf{r_4}}{4\mathsf{d_{i}}}\hat{\varphi}(\alpha)-\frac{18}{\mathsf{d_{i}}}M_2\epsilon},
\end{align*}
then \cref{pro:SlabSubcritical} proves the theorem directly. Indeed, for all $F' \subseteq F$ with $\mu_F(F') \geq \rho^{\epsilon}$, we have $|F'|_\rho \geq \rho^{2\epsilon}|F|_\rho$ via \cref{lem:regular measure vs regular set}. Applying \cref{pro:SlabSubcritical} with $2\epsilon$, there exists $\mathcal{E} \subset \mathsf{B}^U_1$ with $m_U(\mathcal{E}) \leq \rho^{2\epsilon}$ so that for all $u \notin \mathcal{E}$, we have
\begin{align*}
    |u.F'|_{\mathcal{D}_\rho^{\mathbf{r}}} \geq{}& \rho^{2M_2\epsilon} \prod_{i = 1}^5 |F|_{\rho^{\mathsf{r_i}}}^\frac{\mathsf{d_i}}{9}\\
    \geq{}& \rho^{-\frac{(\mathsf{r_5} - \mathsf{r_4})}{36}\hat{\varphi}(\alpha)}\prod_{i = 1}^5 C^{-\frac{\mathsf{d_i}}{9}}\rho^{-\mathsf{r_i}\alpha\frac{\mathsf{d_i}}{9}}.
\end{align*}
Note that for an atom $T \in \mathcal{D}_{\rho}^{\mathbf{r}}$, its volume satisfies the following estimate
\begin{align*}
    \vol(T) \sim \rho^{\sum_{i = 1}^5 \mathsf{d_i r_i}}.
\end{align*}
Therefore, in this case, we have
\begin{align*}
    |u.F'|_{\mathcal{D}_\rho^{\mathbf{r}}} \geq C^{-1} \rho^{-(\mathsf{r_5 - r_4})\varphi(\alpha)} \vol(T)^{-\frac{\alpha}{9}}
\end{align*}
which proves the theorem. 

If not, we have
\begin{align*}
    C^{-1}\rho^{-\mathsf{r_{i}}\alpha} \leq |F|_{\rho^{\mathsf{r_{i}}}} \leq C^{-1}\rho^{-\mathsf{r_{i}}\alpha}\rho^{- \frac{\mathsf{r_5} - \mathsf{r_4}}{4\mathsf{d_{i}}}\hat{\varphi}(\alpha)-\frac{9}{\mathsf{d_{i}}}M_2\epsilon}
\end{align*}
for all $i = 1, \cdots, 5$. Note that since $u \in \mathsf{B}^U_1$, for all $i = 1, \cdots, 5$, we have
\begin{align}\label{eqn:multislicing dimension not too large}
    C^{-1}\rho^{-\mathsf{r_{i}}\alpha} \ll |u.F|_{\rho^{\mathsf{r_{i}}}} \ll C^{-1}\rho^{-\mathsf{r_{i}}\alpha}\rho^{- \frac{\mathsf{r_5} - \mathsf{r_4}}{4\mathsf{d_{i}}}\hat{\varphi}(\alpha)-\frac{9}{\mathsf{d_{i}}}M_2\epsilon}
\end{align}

Applying \cref{cor:optimal top multislicing} to $F$, $\mathbf{t}_3$ and $4\epsilon$, we get an exceptional set $\mathcal{E}_{\mathbf{t}_3} \subseteq \mathsf{B}^U_1$ with $m_U(\mathcal{E}_{\mathbf{t}_3}) \ll_\epsilon \rho^{6\epsilon}$. For $i = 1, 2, 3$, applying \cref{pro:SlabSubcritical} to $F$, $\mathbf{s}_i$ and $4\epsilon$, we get exceptional sets $\mathcal{E}_{\mathbf{s}_i} \subseteq \mathsf{B}^U_1$ with $m_U(\mathcal{E}_{\mathbf{s}_i}) \ll_\epsilon \rho^{6\epsilon}$. 

For all $u \notin (\cup_{i} \mathcal{E}_{\mathbf{s}_i}) \cup \mathcal{E}_{\mathbf{t}_3}$ and all $F' \subseteq F$ with $\mu_F(F') \geq \rho^{\epsilon}$, we apply \cref{lem:regularization measure} to $F'$ and the filtration
\begin{align}\label{eqn:multislicing filtration 1}
u^{-1}\mathcal{D}_{\rho}^{\mathbf{s_1}} \prec u^{-1}\mathcal{D}_{\rho}^{\mathbf{t_1}} \prec u^{-1}\mathcal{D}_{\rho^{\mathsf{r_{5}}}} \prec u^{-1}\mathcal{D}_{\rho},
\end{align}
This provides $F_1 \subseteq F'$ with $\mu_{F'}(F_1) \geq \rho^{\epsilon}$ so that $\mu_{F_1}$ is regular with respect to the above filtration. Applying \cref{lem:submodularity} to $F_1$, $u^{-1}\mathcal{D}_{\rho}^{\mathbf{r}}$, $u^{-1}\mathcal{D}_{\rho^{\mathsf{r_2}}}$, $u^{-1}\mathcal{D}_{\rho}^{\mathbf{t}_1} = u^{-1}\mathcal{D}_{\rho}^{\mathbf{r}} \vee u^{-1}\mathcal{D}_{\rho^{\mathsf{r_2}}}$, $u^{-1}\mathcal{D}_{\rho}^{\mathbf{s}_1}$ and $c = \frac{1}{2}$, there exists $F_1' \subseteq F_1$ with 
\begin{align*}
    |F_1'|_{u^{-1}\mathcal{D}_{\rho}^{\mathbf{t}_1}} \gg |F_1|_{u^{-1}\mathcal{D}_{\rho}^{\mathbf{t}_1}}.
\end{align*}
so that the following holds:
\begin{align*}
    |u.F'|_{\mathcal{D}_{\rho}^{\mathbf{r}}} \cdot |u.F'|_{\mathcal{D}_{\rho^{\mathsf{r_2}}}} \geq |u.F_1|_{\mathcal{D}_{\rho}^{\mathbf{t}_1}} \cdot |u.F_1|_{\mathcal{D}_{\rho^{\mathsf{r_2}}}} \gg |u.F_1|_{\mathcal{D}_{\rho}^{\mathbf{t}_1}} \cdot |u.F_1'|_{\mathcal{D}_{\rho}^{\mathbf{s}_1}}
\end{align*}

Since $F_1$ is regular with respect to the filtration in \cref{eqn:multislicing filtration 1}, we have
\begin{align*}
    |F_1'|_{u^{-1}\mathcal{D}_{\rho}^{\mathbf{s}_1}} \gg |F_1|_{u^{-1}\mathcal{D}_{\rho}^{\mathbf{s}_1}}.
\end{align*}
Therefore, we have
\begin{align}\label{eqn:submodularity 1}
    |u.F'|_{\mathcal{D}_{\rho}^{\mathbf{r}}} \cdot |u.F'|_{\mathcal{D}_{\rho^{\mathsf{r_2}}}} \gg |u.F_1|_{\mathcal{D}_{\rho}^{\mathbf{t}_1}} \cdot |u.F_1|_{\mathcal{D}_{\rho}^{\mathbf{s}_1}}.
\end{align}

Applying \cref{lem:regularization measure} to $F_1$ and the filtration
\begin{align}\label{eqn:multislicing filtration 2}
    u^{-1}\mathcal{D}_{\rho}^{\mathbf{s}_2} \prec u^{-1}\mathcal{D}_{\rho}^{\mathbf{t_2}} \prec u^{-1}\mathcal{D}_{\rho^{\mathsf{r_{5}}}} \prec u^{-1}\mathcal{D}_{\rho},
\end{align}
we get a subset $F_2 \subset F_1$ with $\mu_{F_1}(F_2) \geq \rho^\epsilon$ so that $\mu_{F_2}$ is regular with respect to the above filtration. Applying \cref{lem:submodularity} to $F_2$, $u^{-1}\mathcal{D}_{\rho}^{\mathbf{t}_1}$, $u^{-1}\mathcal{D}_{\rho^{\mathsf{r_3}}}$, $u^{-1}\mathcal{D}_{\rho}^{\mathbf{t}_2} = u^{-1}\mathcal{D}_{\rho}^{\mathbf{t}_1} \vee u^{-1}\mathcal{D}_{\rho^{\mathsf{r_3}}}$, $u^{-1}\mathcal{D}_{\rho}^{\mathbf{s}_2}$ and $c = \frac{1}{2}$, there exists $F_2' \subseteq F_2$ with 
\begin{align*}
    |F_2'|_{u^{-1}\mathcal{D}_{\rho}^{\mathbf{t}_2}} \gg |F_2|_{u^{-1}\mathcal{D}_{\rho}^{\mathbf{t}_2}}
\end{align*}
so that the following holds:
\begin{align*}
    |u.F_1|_{\mathcal{D}_{\rho}^{\mathbf{t}_1}} |u.F_1|_{\mathcal{D}_{\rho^{\mathsf{r_3}}}} \geq |u.F_2|_{\mathcal{D}_{\rho}^{\mathbf{t}_1}} |u.F_2|_{\mathcal{D}_{\rho^{\mathsf{r_3}}}} \geq |u.F_2|_{\mathcal{D}_{\rho}^{\mathbf{t}_2}} |u.F_2'|_{\mathcal{D}_{\rho}^{\mathbf{s}_2}}.
\end{align*}

Since $F_2$ is regular with respect to the filtration in \cref{eqn:multislicing filtration 2}, we have
\begin{align*}
    |F_2'|_{u^{-1}\mathcal{D}_{\rho}^{\mathbf{s}_2}} \gg |F_2|_{u^{-1}\mathcal{D}_{\rho}^{\mathbf{s}_2}}.
\end{align*}
Therefore, we have
\begin{align}\label{eqn:submodularity 2}
    |u.F_1|_{\mathcal{D}_{\rho}^{\mathbf{t}_1}} |u.F_1|_{\mathcal{D}_{\rho^{\mathsf{r_3}}}} \gg  |u.F_2|_{\mathcal{D}_{\rho}^{\mathbf{t}_2}} |u.F_2|_{\mathcal{D}_{\rho}^{\mathbf{s}_2}}.
\end{align}

Applying \cref{lem:regularization measure} to $F_2$ and the filtration
\begin{align}\label{eqn:multislicing filtration 3}
    u^{-1}\mathcal{D}_{\rho}^{\mathbf{s}_3} \prec u^{-1}\mathcal{D}_{\rho}^{\mathbf{t_3}} \prec u^{-1}\mathcal{D}_{\rho^{\mathsf{r_{5}}}} \prec u^{-1}\mathcal{D}_{\rho},
\end{align}
we get a subset $F_3 \subset F_2$ with $\mu_{F_2}(F_3) \geq \rho^\epsilon$ so that $\mu_{F_3}$ is regular with respect to the above filtration. Applying \cref{lem:submodularity} to $F_3$, $u^{-1}\mathcal{D}_{\rho}^{\mathbf{t}_2}$, $u^{-1}\mathcal{D}_{\rho^{\mathsf{r_4}}}$, $u^{-1}\mathcal{D}_{\rho}^{\mathbf{t}_3} = u^{-1}\mathcal{D}_{\rho}^{\mathbf{t}_2} \vee u^{-1}\mathcal{D}_{\rho^{\mathsf{r_4}}}$, $u^{-1}\mathcal{D}_{\rho}^{\mathbf{s}_3}$ and $c = \frac{1}{2}$, there exists $F_3' \subseteq F_3$ with 
\begin{align*}
    |F_3'|_{u^{-1}\mathcal{D}_{\rho}^{\mathbf{t}_3}} \gg |F_3|_{u^{-1}\mathcal{D}_{\rho}^{\mathbf{t}_3}}
\end{align*}
so that the following holds:
\begin{align*}
    |u.F_2|_{\mathcal{D}_{\rho}^{\mathbf{t}_2}} |u.F_2|_{\mathcal{D}_{\rho^{\mathsf{r_4}}}} \geq |u.F_3|_{\mathcal{D}_{\rho}^{\mathbf{t}_2}} |u.F_3|_{\mathcal{D}_{\rho^{\mathsf{r_4}}}} \geq |u.F_3|_{\mathcal{D}_{\rho}^{\mathbf{t}_3}} |u.F_3'|_{\mathcal{D}_{\rho}^{\mathbf{s}_3}}.
\end{align*}

Since $F_3$ is regular with respect to the filtration in \cref{eqn:multislicing filtration 3}, we have
\begin{align*}
    |F_3'|_{u^{-1}\mathcal{D}_{\rho}^{\mathbf{s}_3}} \gg |F_3|_{u^{-1}\mathcal{D}_{\rho}^{\mathbf{s}_3}}.
\end{align*}
Therefore, we have
\begin{align}\label{eqn:submodularity 3}
    |u.F_2|_{\mathcal{D}_{\rho}^{\mathbf{t}_2}} |u.F_2|_{\mathcal{D}_{\rho^{\mathsf{r_4}}}} \gg  |u.F_3|_{\mathcal{D}_{\rho}^{\mathbf{t}_3}} |u.F_3|_{\mathcal{D}_{\rho}^{\mathbf{s}_3}}.
\end{align}

Combining \cref{eqn:submodularity 1,eqn:submodularity 2,eqn:submodularity 3}, we have
\begin{align}\label{eqn:submodularity final}
    |u.F'|_{\mathcal{D}_{\rho}^{\mathbf{r}}} \prod_{i = 2}^4 |u.F'|_{\mathcal{D}_\rho^{\mathsf{r_i}}} \gg_\epsilon |u.F_3|_{\mathcal{D}_{\rho}^{\mathbf{t}_3}} \prod_{i = 1}^3 |u.F_i|_{\mathcal{D}_\rho^{\mathbf{s}_i}}.
\end{align}

We now apply \cref{cor:optimal top multislicing,pro:SlabSubcritical} to bound the right hand side of the above inequality. Note that since $\mu_F(F_i) \geq \rho^{4\epsilon}$, by \cref{lem:regular measure vs regular set}, we have
\begin{align*}
    |F_i|_{\rho} \gg \rho^{4\epsilon}|F|_{\rho}.
\end{align*}
By our choice of $\rho$, we have
\begin{align*}
    |F_i|_{\rho} \geq \rho^{5\epsilon}|F|_{\rho}.
\end{align*}

Recall we have
\begin{align*}
    \mathbf{t}_3 ={}& (\mathsf{r_{4}}, \mathsf{r_{4}}, \mathsf{r_{4}}, \mathsf{r_{4}}, \mathsf{r_{5}})\\
    \mathbf{s}_1 ={}& (\mathsf{r_{1}}, \mathsf{r_{2}}, \mathsf{r_{2}}, \mathsf{r_{2}}, 
    \mathsf{r_{2}})\\
    \mathbf{s}_2 ={}& (\mathsf{r_{2}}, \mathsf{r_{2}}, \mathsf{r_{3}}, \mathsf{r_{3}}, 
    \mathsf{r_{3}})\\
    \mathbf{s}_3 ={}& (\mathsf{r_{3}}, \mathsf{r_{3}}, \mathsf{r_{3}}, \mathsf{r_{4}}, 
    \mathsf{r_{4}}).
\end{align*}
The volume of an atom $T^{\mathbf{t}_3}$ in $\mathcal{D}_{\rho}^{\mathbf{t}_3}$ has the following estimate:
\begin{align*}
    \vol(T^{\mathbf{t}_3}) \sim \rho^{8\mathsf{r_4} + \mathsf{r_5}}.
\end{align*}
Since $u \notin \mathcal{E}_{\mathbf{t}_3}$, we have the following lower bound for $|u.F_3|_{\mathcal{D}_{\rho}^{\mathbf{t}_3}}$ via \cref{cor:optimal top multislicing}:
\begin{align*}
    |u.F_3|_{\mathcal{D}_{\rho}^{\mathbf{t}_3}} \geq{}& C_{\epsilon, \mathbf{r}}^{-1} C^{-1}\vol(T^{\mathbf{t}_3})^{-\frac{\alpha}{9}} \rho^{-(\mathsf{r_5} - \mathsf{r_4})\hat{\varphi}(\alpha) + O_{\mathbf{r}}(\sqrt{\epsilon})}\\
    ={}& C_{\epsilon, \mathbf{r}}^{-1} C^{-1}\rho^{-(8\mathsf{r_4} + \mathsf{r_5})\frac{\alpha}{9}} \rho^{-(\mathsf{r_5} - \mathsf{r_4})\hat{\varphi}(\alpha) + O_{\mathbf{r}}(\sqrt{\epsilon})}.
\end{align*}
Since $u \notin \cup_{i = 1}^3 \mathcal{E}_{\mathbf{s}_i}$, we have the following lower bound for $|u.F_i|_{\mathcal{D}_{\rho}^{\mathbf{s}_i}}$ via \cref{pro:SlabSubcritical}:
\begin{align*}
    |u.F_1|_{\mathcal{D}_{\rho}^{\mathbf{s}_1}} &\geq \rho^{M_2\epsilon}|F|_{\rho^{\mathsf{r_1}}}^{\frac{1}{9}} |F|_{\rho^{\mathsf{r_2}}}^{\frac{8}{9}} \geq C^{-1}\rho^{-(\mathsf{r_1} + 8\mathsf{r_2})\frac{\alpha}{9} + O(\epsilon)},\\
    |u.F_2|_{\mathcal{D}_{\rho}^{\mathbf{s}_2}} &\geq \rho^{M_2\epsilon}|F|_{\rho^{\mathsf{r_2}}}^{\frac{3}{9}} |F|_{\rho^{\mathsf{r_3}}}^{\frac{6}{9}} \geq C^{-1}\rho^{-(3\mathsf{r_2} + 6\mathsf{r_3})\frac{\alpha}{9} + O(\epsilon)},\\
    |u.F_3|_{\mathcal{D}_{\rho}^{\mathbf{s}_3}} &\geq \rho^{M_2\epsilon}|F|_{\rho^{\mathsf{r_3}}}^{\frac{6}{9}} |F|_{\rho^{\mathsf{r_4}}}^{\frac{3}{9}} \geq C^{-1}\rho^{-(6\mathsf{r_3} + 3\mathsf{r_4})\frac{\alpha}{9} + O(\epsilon)}.
\end{align*}
Recall that 
\begin{align*}
    (\mathsf{d_1}, \mathsf{d_2}, \mathsf{d_3}, \mathsf{d_4}, \mathsf{d_5}) = (1, 2, 3, 2, 1).
\end{align*}
Putting all above estimate into \cref{eqn:submodularity final}, we have
\begin{align*}
    |u.F'|_{\mathcal{D}_{\rho}^{\mathbf{r}}} \prod_{i = 2}^4 |u.F'|_{\mathcal{D}_\rho^{\mathsf{r_i}}} \gg_\epsilon C_{\epsilon, \mathbf{r}}^{-1} C^{-4}\rho^{-(\mathsf{r_2} + \mathsf{r_3} + \mathsf{r_4})\alpha}\rho^{-(\sum_{i = 1}^5 \mathsf{d_i}\mathsf{r_i})\frac{\alpha}{9}}\rho^{-(\mathsf{r_5} - \mathsf{r_4})\hat{\varphi}(\alpha) + O_{\mathbf{r}}(\sqrt{\epsilon})}.
\end{align*}
Recall that $\mathbf{r} = (\mathsf{r_1}, \mathsf{r_2}, \mathsf{r_3}, \mathsf{r_4}, \mathsf{r_5})$. For an atom $T \in \mathcal{D}_\rho^{\mathbf{r}}$, its volume has the following estimate
\begin{align*}
    \vol(T) \sim \rho^{\sum_{i = 1}^5 \mathsf{d_i}\mathsf{r_i}}.
\end{align*}
Therefore, we have
\begin{align}\label{eqn:final calculation1}
    |u.F'|_{\mathcal{D}_{\rho}^{\mathbf{r}}} \prod_{i = 2}^4 |u.F'|_{\mathcal{D}_\rho^{\mathsf{r_i}}} \gg_\epsilon C_{\epsilon, \mathbf{r}}^{-1} C^{-4}\rho^{-(\mathsf{r_2} + \mathsf{r_3} + \mathsf{r_4})\alpha}\vol(T)^{-\frac{\alpha}{9}}\rho^{-(\mathsf{r_5} - \mathsf{r_4})\hat{\varphi}(\alpha) + O_{\mathbf{r}}(\sqrt{\epsilon})}.
\end{align}

Recall that we have upper bounds for $|u.F'|_{\rho^{\mathsf{r_i}}}$ for all $i = 2, 3, 4$ as in \cref{eqn:multislicing dimension not too large}: 
\begin{align*}
    |u.F'|_{\rho^{\mathsf{r_i}}} \leq{}& |u.F|_{\rho^{\mathsf{r_i}}}\\
    \ll{}& C^{-1}\rho^{-\mathsf{r_{i}}\alpha}\rho^{- \frac{\mathsf{r_5} - \mathsf{r_4}}{4\mathsf{d_{i}}}\hat{\varphi}(\alpha)-\frac{9}{\mathsf{d_{i}}}M_2\epsilon}.
\end{align*}
Combine it with \cref{eqn:final calculation1}, we have
\begin{align*}
    |u.F'|_{\mathcal{D}_{\rho}^{\mathbf{r}}} \geq C_{\epsilon, \mathbf{r}}^{-1}C^{-1}\vol(T)^{-\frac{\alpha}{9}} \rho^{-(1 - \sum_{i = 2}^4 \frac{1}{4\mathsf{d_i}})(\mathsf{r_5} - \mathsf{r_4})\hat{\varphi}(\alpha) + O_{\mathbf{r}}(\sqrt{\epsilon})}.
\end{align*}
Recall that $(\mathsf{d_1}, \mathsf{d_2},\mathsf{d_3}, \mathsf{d_4}, \mathsf{d_5}) = (1,2,3,2,1)$, we have
\begin{align*}
    |u.F'|_{\mathcal{D}_{\rho}^{\mathbf{r}}} \geq C_{\epsilon, \mathbf{r}}^{-1}C^{-1}\vol(T)^{-\frac{\alpha}{9}} \rho^{-\frac{2}{3}(\mathsf{r_5} - \mathsf{r_4})\hat{\varphi}(\alpha) + O_{\mathbf{r}}(\sqrt{\epsilon})},
\end{align*}
which proves the theorem. 
\end{proof}

\begin{proof}[Proof of \cref{thm:Multislicing} in general case]
    Replacing the exhaustion process in \cite[Proof of theorem 2.1, general case]{BH24} by \cref{lem:regularization exhaust}, the rest arguments are the same. We just remark here that applying \cref{lem:regularization exhaust} to $F$ with $\mathsf{c} = \rho^{2\epsilon}$, the output family of subsets $\{F_j\}$ satisfies the following Frostman-type condition:
    \begin{align*}
        \mu_{F_j}(B^{\mathfrak{r}}_r(x)) \leq \rho^{-3\epsilon}Cr^{\alpha} \quad \forall r \geq \rho_0. 
    \end{align*}
\end{proof}

\part{Proof of polynomially effective equidistribution theorem}\label{part:bootstrap}
As indicated in the introduction, the framework of the proof is similar to \cite{LMWY25}. We now show how to put the new ingredients from \cref{part:projection,part:closinglemma} into this framework. 

Recall that in \cite{LMWY25} (see also \cite{LMW22,LMWY23}), the proof can be roughly divided into three phases: 
\begin{enumerate}
    \item Initial dimension from effective closing lemma;
    \item Improving dimension using ingredients from projection theorems;
    \item From large dimension to equidistribution. 
\end{enumerate}

The last phase in our setting will be exactly the same as \cite[Section 5, 9]{LMWY25}. We will only state the result and point out the corresponding changes for parameters. This is done in \cref{sec:Venkatesh,sec:From large dimension to effective equidistribution}. 

The second phase is a bootstrap process and is the core of the proof. Section~\ref{sec:bootstrap} is devoted to this phase. In each step of the bootstrap, we need an improved estimate on Margulis function from a (linear) dimension improving lemma in the transverse complement $\mathfrak{r}$. In \cite{LMWY25}, the latter was established in section 6 (see Theorem 6.1 there) and the Margulis function estimate was established in section 7 (see Lemma 7.2 there). In this paper, the dimension improving lemma in $\mathfrak{r}$ is replaced by Theorem~\ref{thm:energy Improvement} proved in \cref{part:projection} and the Margulis function estimate is recorded in \cref{prop: integrated inductive step}. 

The whole bootstrap process in \cite[Section 8]{LMWY25} was initiated with the input \cite[Proposition 4.6]{LMWY25}. Here the initiating input is replaced by Theorem~\ref{thm:closing lemma initial dim} proved in \cref{part:closinglemma}. It is slightly weaker comparing to \cite[Proposition 4.6]{LMWY25}. However, it is enough to feed into the bootstrap process and produce a suitable output which can be in turn served as an input for the last phase. Due to this difference, we provide details on this process in \cref{sec:Important bootstrap calculation}. 

Combining all the ingredients, we prove Theorem~\ref{thm:main equidistribution} in Section~\ref{sec:ProofOfMainEqui}.

\section{Mixing and Equidistribution}\label{sec:Venkatesh}
The main result of this section is Lemma~\ref{lem:Venkatesh}. It is an analog of \cite[Lemma 5.2]{LMWY25}. 
\begin{lemma}\label{lem:Venkatesh}
    There exists $\varrho_0 \in (0, 1)$ depending only on $(G, H)$ so that the following holds. Let $\delta_0 \in (0, 1)$. Let $\ell_1, \ell_2 > 0$ with $\kappa_1\ell_2 \geq \max\{\ell_1, |\log \eta|\}$ and $8\ell_2 \leq |\log \delta_0|$, and let $\varrho \in (0, \varrho_0]$. Let $\mu$ be a probability measure on $B^{\mathfrak{r}}_{\varrho}(0)$ satisfying
    \begin{align*}
        \mu(B_\delta^{\mathfrak{r}}(w)) \leq \Upsilon \delta^{\dim\mathfrak{r}} \quad \forall w \in \mathfrak{r}, \delta \geq \delta_0.
    \end{align*}
    Then for all $\phi \in \mathrm{C}^\infty_c(X) + \C\mathds{1}_X$ and all $x \in X_\eta$, we have
    \begin{align*}
        &\int_{\mathsf{B}_1^U} \int_{\mathsf{B}_1^U}\int_{\mathfrak{r}} \phi(a_{\ell_1}u_1 a_{\ell_2} u_2 \exp(w).x)\,\mathrm{d}\mu(w)\,\mathrm{d}u_2\,\mathrm{d}u_1\\
        ={}& \int_X \phi \,\mathrm{d}\mu_X + O(\mathcal{S}(\phi)(\varrho^\star + \eta + \Upsilon^{\frac{1}{2}}\varrho^{-3}e^{-\kappa_1\ell_1})).
    \end{align*}
\end{lemma}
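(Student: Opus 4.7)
The plan is to reduce the statement to a pointwise effective equidistribution bound for expanding horospherical translates, and then integrate the resulting error against $du_2\, d\mu(w)$ using the Frostman hypothesis on $\mu$. The starting point is that exponential mixing of the $a_t$-action on $X$ (an $L^2$ spectral gap input, essentially the content of \cite[\S 2]{KM96} in our setting) yields, for every $y \in X$,
\begin{equation*}
    \Bigl|\int_{\mathsf{B}_1^U} \phi(a_{\ell_1} u_1 y)\,\mathrm{d}u_1 - \int_X \phi\,\mathrm{d}\mu_X\Bigr| \ll \mathcal{S}(\phi)\, e^{-\kappa_1 \ell_1}\, \inj(y)^{-E}
\end{equation*}
for constants $\kappa_1, E$ depending only on $(G,H)$. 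Applying this pointwise with $y = y(u_2,w) := a_{\ell_2} u_2 \exp(w).x$, the problem reduces to controlling the averaged injectivity weight
\begin{equation*}
    \int_{\mathsf{B}_1^U}\int_{\mathfrak{r}} \inj\bigl(y(u_2,w)\bigr)^{-E}\,\mathrm{d}u_2\,\mathrm{d}\mu(w).
\end{equation*}

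To control this integral I would separate the $(u_2,w)$-domain via a parameter $\eta_1 > 0$ (to be optimized) into a ``regular'' part where $\inj(y(u_2,w)) \geq \eta_1$ and its complement. On the complement, I bound $|\phi| \leq \mathcal{S}(\phi)$ trivially and estimate the measure of the bad set by Proposition~\ref{pro:Non divergence}: since $x \in X_\eta$ and $\varrho \leq \varrho_0$ is small we have $\inj(\exp(w).x) \gg \eta$ for every $w \in B_\varrho^{\mathfrak{r}}$, and the hypothesis $\kappa_1 \ell_2 \geq |\log\eta|$ allows us to apply non-divergence to the orbit $\{a_{\ell_2} u_2 \exp(w).x\}_{u_2 \in \mathsf{B}_1^U}$ uniformly in $w$, giving $\ll \eta_1^{1/\mathsf{m}} + \eta$ for the bad mass after integrating against $\mu$.

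On the regular set, the crucial ingredient is the Frostman condition at the top exponent $\dim\mathfrak{r}$, which implies that $\mu$ is absolutely continuous with respect to Lebesgue with density $\ll \Upsilon$. Applying Cauchy–Schwarz and this density bound,
\begin{equation*}
    \int \inj(y)^{-E}\,\mathrm{d}u_2\,\mathrm{d}\mu(w)
    \ll \Upsilon^{1/2}\Bigl(\int\int \inj\bigl(a_{\ell_2} u_2 \exp(w).x\bigr)^{-2E}\,\mathrm{d}u_2\,\mathrm{d}w\Bigr)^{1/2}.
\end{equation*}
The inner $L^2$ integral is finite by a layer-cake argument using non-divergence at scales down to $\delta_0$ (the hypothesis $8\ell_2 \leq |\log\delta_0|$ ensures the scales are compatible with the Frostman range), and its size is polynomial in $\varrho$, producing the factor $\varrho^{-3}$. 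Combining the trivial bound on the bad set with the $L^2$ bound on the regular set, and optimizing $\eta_1$, gives the claimed error $\mathcal{S}(\phi)\bigl(\varrho^\star + \eta + \Upsilon^{1/2}\varrho^{-3}e^{-\kappa_1\ell_1}\bigr)$; the $\varrho^\star$ term absorbs the rescaling and boundary losses from comparing $\mu$ to Lebesgue at the boundary of $B_\varrho^{\mathfrak{r}}$.

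The main obstacle is the bookkeeping around the $\Upsilon^{1/2}\varrho^{-3}$ factor: passing from the pointwise bound $\inj(y)^{-E}$ to an $L^2$ bound against $\mu$ forces one to balance the Frostman exponent against the polynomial decay rate from non-divergence, and the precise power $\varrho^{-3}$ reflects the Sobolev scale at which the measure $\mu$ is absolutely continuous. The remaining steps—effective mixing and non-divergence—are standard in this framework, but matching the three error terms requires carefully choosing $\eta_1$ so that the tail of $\inj^{-2E}$ outside the regular set does not destroy the $L^2$ bound.
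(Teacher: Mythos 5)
Your proposal's central step is a pointwise effective equidistribution estimate
\begin{equation*}
    \Bigl|\int_{\mathsf{B}_1^U} \phi(a_{\ell_1} u_1 y)\,\mathrm{d}u_1 - \int_X \phi\,\mathrm{d}\mu_X\Bigr| \ll \mathcal{S}(\phi)\, e^{-\kappa_1 \ell_1}\, \inj(y)^{-E},
\end{equation*}
claimed to follow from exponential mixing. This is false, and it is not a small gap: an estimate of this strength for an \emph{arbitrary} basepoint $y$ is essentially the hard theorem the whole paper is trying to prove (with polynomial rate in $\ell_1$ controlled only by the injectivity radius, i.e. no Diophantine condition). Decay of matrix coefficients is an $L^2$ statement; to convert it into a pointwise statement at $y$ one must thicken $y$ into a full-dimensional piece of $G$ and compare. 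But $a_{\ell_1}$ expands the $\mathfrak{r}_1 \oplus \mathfrak{r}_2$ directions (three of the nine transverse directions), so any thickening in those directions blows up under $a_{\ell_1}$ at rate up to $e^{2\ell_1}$, and the comparison error overwhelms $e^{-\kappa_1\ell_1}$. One simply cannot localize the basepoint in all the directions needed for mixing. The hypothesis that $\mu$ has Frostman exponent $\dim\mathfrak{r}$ is precisely what is supposed to substitute for this missing thickening — using it after a pointwise claim is circular.

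What the paper actually does is Venkatesh's argument, which applies Cauchy--Schwarz in the \emph{basepoint} variable rather than asserting a pointwise bound. After pushing $\mu$ by $\exp(\cdot)$, smearing across a small $H$-neighborhood, and flowing by $a_{\ell_2}u_2$ (the purpose of the conditions $\kappa_1\ell_2 \geq \max\{\ell_1, |\log\eta|\}$ and $8\ell_2 \leq |\log\delta_0|$ is to keep this push-forward at a compatible scale), one obtains a measure $\nu$ on $X$ that is nearly absolutely continuous with density bounded by $\Lambda \asymp \Upsilon\varrho^{-6}$; the factor $\varrho^{-6} = m_H(\mathsf{B}_\varrho^H)^{-1}$ comes from the volume of the $H$-smearing ball, not from a Sobolev scale as you suggest. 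Then
\begin{equation*}
    \Bigl|\int_{\mathsf{B}_1^U}\!\int\phi(a_{\ell_1}u_1.y)\,\mathrm{d}\nu(y)\,\mathrm{d}u_1 - \bar\phi\Bigr|^2
    \leq \Bigl\|\tfrac{\mathrm{d}\nu}{\mathrm{d}m_X}\Bigr\|_{L^2}^2 \int_X \Bigl|\int_{\mathsf{B}_1^U}\phi(a_{\ell_1}u_1.y)\,\mathrm{d}u_1 - \bar\phi\Bigr|^2 \mathrm{d}m_X(y),
\end{equation*}
and expanding the square in $u_1, u_1'$ and applying \cref{eqn:spectral gap} to the element $a_{\ell_1}(u_1 u_1'^{-1})a_{-\ell_1}$ yields $e^{-\kappa_1\ell_1}$. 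This is exactly the content of the unnumbered Proposition preceding \cref{lem:Venkatesh}. Your Cauchy--Schwarz step is pointed in a useful direction, but it is applied to the wrong quantity (the injectivity weight of a nonexistent pointwise bound) rather than to the basepoint integral. A secondary issue: the Frostman condition at scales $\geq\delta_0$ does not imply $\mu\ll\Leb$ with density $\ll\Upsilon$ — $\mu$ could be purely atomic — so one mollifies at scale $\delta_0$ before applying the density bound, which is another reason the scale condition $8\ell_2 \leq |\log\delta_0|$ enters.
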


The proof of this lemma relies on spectral gap in the ambient space $X = G/\Gamma$ and Venkatesh's argument. 

Recall the following estimate on decay of matrix coefficient for the space $X$ from \cite[Section 2.4]{KM96}. There exists $\kappa_0 \in (0, 1)$ so that 
\begin{align}\label{eqn:spectral gap}
    \Biggl|\int_X \varphi(g.x)\psi(x) \,\mathrm{d}\mu_X(x) - \int_X \varphi \,\mathrm{d}\mu_X \int_X \psi \,\mathrm{d}\mu_X \Biggr| \ll \mathcal{S}(\varphi) \mathcal{S}(\psi) e^{-\kappa_0 d(e, g)}
\end{align}
for all $\varphi, \psi \in \mathrm{C}^\infty_c(X) + \C \mathds{1}_X$. Here $\mathcal{S}(\cdot)$ is a certain Sobolev norm on $\mathrm{C}_c^\infty(X) + \C \mathds{1}_X$ so that it dominates $\|\cdot\|_{\infty}$ and the Lipschitz norm $\|\cdot\|_{\Lip}$.

The following is an analog of \cite[Proposition 5.1]{LMWY25}. 
\begin{proposition}
    There exists $\kappa_1 \in (0, 1/3)$ with $\kappa_1 \gg \kappa_0$ so that the following holds. Let $\Lambda \geq 1$ and let $\nu \ll m_G$ be a probability measure on $\mathsf{B}_1^G$ with 
    \begin{align*}
        \frac{\mathrm{d}\nu}{\mathrm{d}m_G}(g) \leq \Lambda \quad \forall g \in \supp(\nu).
    \end{align*}
    Let $\ell_1, \ell_2 > 0$ and $\eta \in (0, 1)$ satisfy the following
    \begin{align*}
        \kappa_1 \ell_2 \geq \max\{\ell_1, |\log \eta|\}.
    \end{align*}
    Then for all $x \in X_\eta$ and all $\phi \in \mathrm{C}^\infty_c(X) + \C \mathds{1}_X$, we have
    \begin{align*}
        \int_{\mathsf{B}_1^U} \int_G \phi(a_{\ell_1}u a_{\ell_2}g.x) \,\mathrm{d}\nu(g)\,\mathrm{d}u = \int_X \phi \,\mathrm{d}\mu_X + O(\mathcal{S}(\phi)(\eta + \Lambda^{\frac{1}{2}}e^{-\kappa_1\ell_1})).
    \end{align*}
\end{proposition}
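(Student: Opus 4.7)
The statement is a Venkatesh-style effective double-mixing estimate: the average
$I(\phi) := \int_{\mathsf{B}_1^U} \int_G \phi(a_{\ell_1} u a_{\ell_2} g.x) \,\mathrm{d}\nu(g)\,\mathrm{d}u$ equidistributes toward $\int \phi \,\mathrm{d}\mu_X$. The plan combines three ingredients: smoothing of $\delta_x$ to a smooth density (using $x \in X_\eta$), reduction to a matrix coefficient of the regular representation $\pi$ of $G$ on $L^2(X,\mu_X)$, and the quantitative spectral gap \eqref{eqn:spectral gap}. The hypothesis $\kappa_1 \ell_2 \geq |\log \eta|$ is designed so that the exponential mixing decay from the $a_{\ell_2}$ factor absorbs the inverse-polynomial-in-$\eta$ losses from smoothing.

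For the smoothing and matrix coefficient step, one fixes a smooth density $\Psi_x$ on $X$ with $\int \Psi_x \,\mathrm{d}\mu_X = 1$ supported in a neighborhood of $x$ of size $\eta$ (well-defined by $x \in X_\eta$). A short computation using $G$-invariance of $\mu_X$ and interchange of integrals yields
\begin{equation*}
I(\phi) = \int_{\mathsf{B}_1^U} \langle \phi, \pi(a_{\ell_1} u a_{\ell_2}) F_\nu \rangle \,\mathrm{d}u + O(\eta \mathcal{S}(\phi)),
\end{equation*}
where $F_\nu(y) := \int_G \Psi_x(g^{-1}.y) \,\mathrm{d}\nu(g)$ satisfies $\int F_\nu \,\mathrm{d}\mu_X = 1$. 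The key quantitative input is Young's convolution inequality on $G$, together with the fact that $\|\mathrm{d}\nu/\mathrm{d}m_G\|_{L^2(m_G)} \leq \Lambda^{1/2}$ (since $\|\mathrm{d}\nu/\mathrm{d}m_G\|_\infty \leq \Lambda$ and $\nu(G) = 1$); this yields Sobolev bounds of the form $\mathcal{S}(F_\nu) \ll \Lambda^{1/2} \eta^{-N}$ for some $N$ depending only on $\dim G$ and the order of $\mathcal{S}$, and is the origin of the $\Lambda^{1/2}$ in the final error.

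Writing $\phi_0 = \phi - \int \phi \,\mathrm{d}\mu_X$ and $F_\nu^\perp = F_\nu - 1$, the spectral gap \eqref{eqn:spectral gap} combined with the distance estimate $d(e, a_{\ell_1} u a_{\ell_2}) \geq c(\ell_1 + \ell_2) - O(1)$ (for a constant $c > 0$ depending on the metric) yields, uniformly in $u \in \mathsf{B}_1^U$,
\begin{equation*}
|\langle \phi_0, \pi(a_{\ell_1} u a_{\ell_2}) F_\nu^\perp \rangle| \ll \mathcal{S}(\phi) \cdot \Lambda^{1/2} \eta^{-N} \cdot e^{-\kappa_0 c (\ell_1 + \ell_2)}.
\end{equation*}
Integrating in $u$ over $\mathsf{B}_1^U$ (of $m_U$-mass $1$) preserves the bound. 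Choosing $\kappa_1$ small enough that $\kappa_0 c \geq (N+1)\kappa_1$, the hypothesis $\kappa_1 \ell_2 \geq |\log \eta|$ forces $\eta^{-N} e^{-\kappa_0 c \ell_2} \leq e^{-\kappa_1 \ell_2}$, so the total error collapses to $\mathcal{S}(\phi) \Lambda^{1/2} e^{-\kappa_1 (\ell_1 + \ell_2)} \leq \mathcal{S}(\phi) \Lambda^{1/2} e^{-\kappa_1 \ell_1}$, as stated. The main obstacle is the careful Sobolev bookkeeping in the smoothing step: one must verify that Young's inequality can extract a $\Lambda^{1/2}$ factor (rather than $\Lambda$), and that the translate $\pi(a_{\ell_1} u a_{\ell_2}) F_\nu^\perp$ has controlled Sobolev norm despite the fact that left-translations by large $G$-elements distort Sobolev norms by $\|\mathrm{Ad}(\cdot)\|$; the secondary hypotheses $\kappa_1 \ell_2 \geq \ell_1$ and $\kappa_1 < 1/3$ provide the room to close this bookkeeping.
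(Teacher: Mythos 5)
Your overall approach—smoothing $\delta_x$ to a compactly supported density $F_\nu$ using $x \in X_\eta$, expressing the average as $\int_{\mathsf{B}_1^U}\langle\phi,\pi(a_{\ell_1}ua_{\ell_2})F_\nu\rangle\,\mathrm{d}u$, applying the decay of matrix coefficients \eqref{eqn:spectral gap}, and using $\kappa_1\ell_2\geq|\log\eta|$ to absorb the Sobolev loss $\eta^{-N}$ from the mollifier—captures the content of the paper's argument, which is itself an appeal to \cite[Proposition 5.1]{LMWY25}. Two points deserve clarification, however.

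First, the worry in your last sentence about $\pi(a_{\ell_1}ua_{\ell_2})F_\nu^\perp$ having ``controlled Sobolev norm despite the fact that left-translations by large $G$-elements distort Sobolev norms'' is misplaced: the spectral gap estimate \eqref{eqn:spectral gap} is already stated as a bound on $\int\varphi(g.x)\psi(x)\,\mathrm{d}\mu_X$ in terms of $\mathcal{S}(\varphi)\mathcal{S}(\psi)e^{-\kappa_0 d(e,g)}$, so all distortion from the translate is encoded in the factor $e^{-\kappa_0 d(e,g)}$ and no separate bookkeeping is required. What you actually need is $d(e,a_{\ell_1}ua_{\ell_2})\gg\ell_1+\ell_2$ uniformly in $u\in\mathsf{B}_1^U$, which follows from $a_{\ell_1}ua_{\ell_2}=a_{\ell_1+\ell_2}\exp(e^{-\ell_2}v)$.

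Second, the Young's-inequality step $\mathcal{S}(F_\nu)\ll\Lambda^{1/2}\eta^{-N}$ is the right idea ($\|\mathrm{d}\nu/\mathrm{d}m_G\|_{L^2}\leq\Lambda^{1/2}$ and $\|D^\alpha\tilde\Psi\|_{L^1}\ll\eta^{-|\alpha|}$), but the computation happens on $G$, whereas the Sobolev norm in \eqref{eqn:spectral gap} is on $X$. Since $\inj(x)$ is only $\geq\eta\ll 1$, the ball $\mathsf{B}_1^G.x$ wraps around itself with multiplicity potentially as large as $\eta^{-\star}$, so passing from $L^2(G)$ bounds to $L^2(X)$ bounds costs an additional $\eta^{-\star}$. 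This is still of the same shape $\Lambda^{1/2}\eta^{-N}$ (with a larger $N$, which is then compensated by taking $\kappa_1$ smaller), so it does not break the argument, but it is where most of the unglamorous work lies and it is worth stating explicitly rather than deferring to ``Young's inequality.'' Finally, note that your argument is a \emph{pointwise-in-$u$} mixing estimate and the $\mathsf{B}_1^U$-average is not used in any essential way; this is consistent with the generous hypothesis $\kappa_1\ell_2\geq\max\{\ell_1,|\log\eta|\}$, which makes $\ell_2$ dominate everything. The genuinely Venkatesh-style Cauchy--Schwarz trick over $\mathsf{B}_1^U$ enters the paper at the level of Lemma~\ref{lem:Venkatesh}, where the measure in the transverse direction has no $L^\infty$ density and one must exploit the $U$-average; here, where $\nu$ has bounded density, the direct spectral-gap bound suffices.
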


\begin{proof}
    The statement can be proved by following the proof of \cite[Proposition 5.1]{LMWY25} step-by-step. 
\end{proof}

\begin{proof}[Proof of Lemma~\ref{lem:Venkatesh}]
    The statement can be proved by following the proof of \cite[Lemma 5.2]{LMWY25} step-by-step. We indicate the change of parameter here. 
    
    For the condition $8\ell_2 \leq |\log \delta_0|$, it comes from the condition $e^{2(\ell_1 + \ell_2)}\delta_0 \leq e^{-\ell_1}$. See the paragraph before \cite[Equation (5.8)]{LMWY25}. We remark that the $\mathsf{m}$ in \cite{LMWY25} is the fastest expanding rate of $a_t$ in the complement $\mathfrak{r}$ and here is replaced by $2$. 

    For the $\varrho^{-3}$ in the last error term, it comes from the fact that $m_H(B^H_{\varrho}) \asymp \varrho^{6}$. Therefore, comparing to \cite[Equation (5.10)]{LMWY25}, the corresponding mollified measure $\nu$ should satisfy
    \begin{align*}
        \nu(B_\delta^G(g)) \ll \Upsilon \varrho^{-6} \delta^{\dim G} \quad \forall g \in G, \delta \in (0, 1).
    \end{align*}
\end{proof}

\section{Margulis function estimate and dimension improvement}\label{sec:bootstrap}
The main result of this section is \cref{pro:dim improving main}. It is an analog of \cite[Proposition 8.1]{LMWY25}. We first fix the following parameters. 

Let $\epsilon_0$ be the initial dimension in Theorem~\ref{thm:closing lemma initial dim} and let $\kappa_1$ be as in Theorem~\ref{lem:Venkatesh}. Set $\theta = (\frac{\min\{\kappa_1, \epsilon_0\}}{80})^{2} \in (0, \min\{\kappa_1, \epsilon_0\})$ and $p_{\mathrm{fin}} = \lceil 6480(\frac{9 - \epsilon_0}{\theta} - 1)\rceil$. 
We choose an arithmetic progression $\{\alpha_j\}_{j = 0}^{p_{\mathrm{fin}}}$ satisfying
\begin{itemize}[label=$\bullet$]
\item $\epsilon_0 = \alpha_0 < \alpha_1 < \alpha_2 < \cdots < \alpha_{p_{\mathrm{fin}}}$,
\item $\alpha_{j} - \alpha_{j - 1} = \frac{1}{72 \cdot 9 \cdot 10}\theta$ for all $1 \leq j \leq p_{\mathrm{fin}}$,
\item $\alpha_{{p_{\mathrm{fin}}} - 1} < 9 - \theta \leq \alpha_{p_{\mathrm{fin}}} < 9$.
\end{itemize}
Let $\epsilon = 10^{-10}(\frac{3}{4})^{p_{\mathrm{fin}}} \theta$. Note that all of these constants are absolute and that $\epsilon$ is much smaller than both $\epsilon_0$ and $\theta$. Moreover, $\frac{\epsilon}{\theta}$ is much smaller than both $\epsilon_0$ and $\theta$. 

Let
\begin{equation}\label{eq:def_Nj}
\begin{split}
N_0 = 0, N_1 = \Bigl\lceil \frac{25}{2\epsilon}\Bigr\rceil, \text{ and }
N_{j} = \lceil N_1(\tfrac{3}{4})^{j - 1}\rceil \text{ for } j = 1, \cdots, p_{\mathrm{fin}}.
\end{split}
\end{equation}
Set $d = \sum_{j = 0}^{p_{\mathrm{fin}}} N_j$. Note that all of $N_j$ depends only on $(G, H, \Gamma)$. 

Let us recall the constants $\ref{a:closing lemma main} > 1$, $\ref{c:closing lemma main} > 1$, $\ref{d:closing lemma main} > 1$, $\ref{e:closing lemma main1}, \ref{e:closing lemma main2}> 1$, $\ref{m:closing lemma main} > 1$, $\epsilon_0 > 0$ from the effective closing lemma (Theorem~\ref{thm:closing lemma initial dim}). They depend only on $(G, H, \Gamma)$. Also, let us recall the constant $\ref{d:avoidance}$ from the avoidance principle (Proposition~\ref{pro:avoidance}). It also depends only on $(G, H, \Gamma)$. Fix $D = \max\{\ref{d:closing lemma main}, \ref{d:avoidance}\} + 2$ where $\ref{d:closing lemma main}$ is as in the effective closing lemma (Theorem~\ref{thm:closing lemma initial dim}) and $\ref{d:avoidance}$ is as in the avoidance principle (Proposition~\ref{pro:avoidance}). Let $M = \ref{m:closing lemma main} + \ref{c:closing lemma main} D$ be as in Theorem~\ref{thm:closing lemma initial dim}. 

Fix $R > 1$ and $t = M\log R$. We will assume $R$ to be sufficiently large depending on the space $X$. Set 
\begin{align*}
\beta = e^{-\frac{1}{10^{10}M\ref{a:closing lemma main}\ref{e:closing lemma main1}\ref{e:closing lemma main2}d^2}t}
\end{align*}
and $\ell = \frac{\epsilon}{100M\ref{a:closing lemma main}}t$. Set $\eta = \beta^{1/2}$. Note that $R \gg \eta^{-\ref{e:closing lemma main1}}$ as in Theorem~\ref{thm:closing lemma initial dim}. Let $\delta_0 = R^{-\frac{1}{\ref{a:closing lemma main}}} = e^{-\frac{t}{M\ref{a:closing lemma main}}}$ be as in Theorem~\ref{thm:closing lemma initial dim}. 

Note that $e^{-\ell}$ is a much smaller scale than $\beta$. In particular, they satisfy the following relations:
\begin{align}\label{eqn:Much smaller scale}
    e^{-\epsilon^2 \ell} \leq \beta^{10^{10}\ref{e:closing lemma main1}\ref{e:closing lemma main2}}.
\end{align}
We assume $R$ is large enough so that 
\begin{align}\label{eqn:R large enough absolutely}
    e^{-\theta\ell} \leq 10^{-10000}.
\end{align}

\begin{proposition}\label{pro:dim improving main}
Let $x_1 \in X_\eta$. 
Suppose that for all periodic orbit $H.x'$ with $\vol(H.x') \leq R$, we have
\begin{align*}
    d_X(x_1, x') > R^{-D}.
\end{align*}

Then there exist a family of sheeted sets $\bigl\{\mathcal{E}_i^{\mathrm{fin}}\bigr\}_i$ with cross-sections $\bigl\{F_i^{\mathrm{fin}}\bigr\}_i$ and associated admissible measures $\bigl\{\mu_{\mathcal{E}_i^{\mathrm{fin}}}\bigr\}_i$ satisfying the following properties.
\begin{enumerate}
\item For all $\phi \in \mathrm{C}_c^{\infty}(X) + \C \mathds{1}_X$, $\ell' \geq 0$, and $u' \in \mathsf{B}_1^U$, we have
\begin{align*}
    \int_{\mathsf{B}_1^U} \phi(a_{\ell'}u' a_{d\ell + t} u.x_1) \,\mathrm{d}u
    = \sum_i c_i\int_{\mathcal{E}_i^{\mathrm{fin}}} \phi( a_{\ell'}u'.x) \,\mathrm{d}\mu_{\mathcal{E}_i^{\mathrm{fin}}}(x) + O(\mathcal{S}(\phi)\beta^\star)
\end{align*}
for some $c_i>0$ with $\sum_i c_i=1$.
\item For all $i$, we have
\begin{align*}
\#F_i^{\mathrm{fin}} \geq \delta_0^{-\frac{4\epsilon_0}{3}} = e^{\frac{4\epsilon_0 t}{3M\ref{a:closing lemma main}}}.
\end{align*}
\item Let $\delta_{\mathrm{fin}} = \delta_0^{\frac{2\epsilon}{\theta}} = e^{-\frac{2\epsilon t}{\theta M\ref{a:closing lemma main}}}$. For all $i$ we have
\begin{align*}
f_{\mathcal{E}_i^{\mathrm{fin}},\delta_{\mathrm{fin}}}^{(9 - \theta)}(x) \leq 2^{p_{\mathrm{fin}}}e^{20\ell}\#F_i^{\mathrm{fin}} \qquad \text{for all $x \in \mathcal{E}_i^{\mathrm{fin}}$}.
\end{align*}
\end{enumerate}
\end{proposition}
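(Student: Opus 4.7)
The plan is to run a bootstrap starting from the output of Theorem~\ref{thm:closing lemma initial dim} and improving the Margulis function exponent from $\alpha_0 = \epsilon_0$ up to $\alpha_{p_{\mathrm{fin}}} \geq 9 - \theta$ in $p_{\mathrm{fin}}$ steps, where the $j$-th step is iterated $N_j$ times. First I apply Theorem~\ref{thm:closing lemma initial dim} at $x_1$ (permissible since $D \geq \ref{d:closing lemma main}+1$ is built into our choice of $D$), obtaining an initial family $\mathcal{F}_0$ of sheeted sets with $\mathsf{L}$-admissible measures $\{\mu_{\mathcal{E}}\}$, convex weights $\{c_{\mathcal{E}}\}$, cross-section sizes in $[\beta^{29}\delta_0^{-2\epsilon_0}, \beta^{-2}e^{2t}]$, and the Margulis bound $f^{(\epsilon_0)}_{\mathcal{E},\delta_0} \leq \beta^{-\ref{e:closing lemma main2}}\#F$. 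The scale separation $e^{-\epsilon^2\ell} \leq \beta^{10^{10}\ref{e:closing lemma main1}\ref{e:closing lemma main2}}$ baked into our parameters guarantees that every $\beta^{-\star}$ loss appearing below is dominated by $e^{\star \epsilon^2 \ell}$, hence harmless at each single step of the bootstrap.

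For the inductive step from $\alpha_j$ to $\alpha_{j+1} = \alpha_j + \frac{\theta}{720}$, suppose $\mathcal{F}_j$ is a family of $\mathsf{L}_j$-admissible sheeted sets with Margulis bound $f^{(\alpha_j)}_{\mathcal{E},\delta_j} \leq \Upsilon_j \#F$ at an intermediate scale $\delta_j$. I push each $\mu_{\mathcal{E}}$ forward by $a_\ell u$ and use Proposition~\ref{pro:frostman energy Margulis function}(3) on a large interior sub-sheet $\hat{\mathcal{E}}$ to transfer the Margulis bound into the $\alpha_j$-energy condition $\mathcal{G}^{(\alpha_j)}_{F,\delta_j}(w) \leq \Upsilon_j \#F$ that feeds Theorem~\ref{thm:energy Improvement}. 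That theorem then supplies, for all $u$ outside a $\beta^{-\star}e^{-\epsilon \ell}$-small exceptional set, a subset $F_u \subseteq F$ covering most of the mass such that for every $w \in F_u$
\[
\mathcal{G}^{(\alpha_j)}_{F_u(w),\delta'}(a_\ell u.w) \ll e^{-\varphi(\alpha_j)\ell}\,\delta_0^{-O(\sqrt{\epsilon})}\,\Upsilon_j
\]
at the dilated scale $\delta' = e^{2\ell}\max\{\delta_j, \#F^{-1/\alpha_j}\}$. Applying Proposition~\ref{pro:frostman energy Margulis function}(1)--(2) in reverse converts this into a Margulis bound on the resliced transverse balls at the new exponent $\alpha_{j+1}$, using that $\varphi(\alpha_j) \geq \frac{1}{36} \cdot \frac{1}{9}$ for $\alpha_j \leq 9 - \theta$ dominates $\alpha_{j+1}-\alpha_j$. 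The new family $\mathcal{F}_{j+1}$ is obtained by repartitioning each pushed sheet into admissible sub-sheets centered at the points of $a_\ell u.F_u$, exactly as in \cite[Section 8]{LMWY25}, which produces the convex decomposition claimed in (1) and multiplies the admissibility constant by an absolute factor. After $N_{j+1}$ iterations of this step, the accumulated gain $e^{-N_{j+1}\varphi(\alpha_j)\ell}$ defeats the compounded losses $\delta_0^{-O(N_{j+1}\sqrt{\epsilon})}$ because $\epsilon \ll \theta$; this is the point of the geometric-in-$j$ choice \eqref{eq:def_Nj}. A critical ancillary ingredient at each step is Proposition~\ref{pro:avoidance} applied with $R_1 = R$, which ensures that the new base points $y_j$ still avoid the $R^{-D}$-neighborhood of periodic $H$-orbits of volume $\leq R$ and hence verify the Diophantine hypothesis needed to continue the construction; the non-divergence Proposition~\ref{pro:Non divergence} disposes of the cusp-escaping portion.

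After $p_{\mathrm{fin}} = O(1)$ iterations the final family $\{\mathcal{E}_i^{\mathrm{fin}}\}$ satisfies Margulis bounds at exponent $\alpha_{p_{\mathrm{fin}}} \geq 9 - \theta$ and scale $\delta_{\mathrm{fin}} = e^{2p_{\mathrm{fin}}\ell}\delta_0 \asymp \delta_0^{2\epsilon/\theta}$, giving (3); the prefactor $2^{p_{\mathrm{fin}}}e^{20\ell}$ comes from tracking the $\mathsf{L}_j$'s and the factor $e^{O(\ell)}$ produced at each reslicing. The lower bound (2) on $\#F_i^{\mathrm{fin}}$ is obtained by noting that at each step we discard from $F$ at most an $e^{-\epsilon\ell}$-fraction in mass (from $F \setminus F_u$) and the remaining points inflate by the $e^{2\ell}$-dilation, so $\#F$ is essentially preserved; combined with the initial $\#F \geq \beta^{29}\delta_0^{-2\epsilon_0}$ and $\beta^{29} \geq \delta_0^{2\epsilon_0/3}$ (which follows from \eqref{eqn:Much smaller scale} and $\delta_0 = e^{-t/(M\ref{a:closing lemma main})}$), this yields $\#F_i^{\mathrm{fin}} \geq \delta_0^{-4\epsilon_0/3}$. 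The main obstacle is not a new idea but the bookkeeping: one must verify that the sheeted/admissible structure is stable under push-forward by $a_\ell u$ followed by reslicing driven by Theorem~\ref{thm:energy Improvement}, that the $\beta^{-\star}$, $\delta_0^{-O(\sqrt{\epsilon})}$, and $e^{O(\ell)}$ losses compound without overwhelming the $e^{-\varphi(\alpha_j)\ell}$ gain (which is precisely why $\epsilon \ll \theta$ and why $\beta$ is taken so tiny compared to $\ell$), and that the Diophantine hypothesis survives through all $p_{\mathrm{fin}}$ iterations thanks to Proposition~\ref{pro:avoidance}. All of these are the same computations as in \cite[Section 8]{LMWY25}, carried out with Theorem~\ref{thm:energy Improvement} in place of \cite[Lemma 7.2]{LMWY25} and Theorem~\ref{thm:closing lemma initial dim} in place of \cite[Proposition 4.6]{LMWY25}, the slightly weaker initial sheet count $\beta^{29}\delta_0^{-2\epsilon_0}$ being absorbed by the slack in our choice of $\beta$.
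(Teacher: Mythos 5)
Your overall strategy matches the paper's: launch with Theorem~\ref{thm:closing lemma initial dim}, then bootstrap through the arithmetic progression $\alpha_0 < \cdots < \alpha_{p_{\mathrm{fin}}}$, running $N_j$ inner iterations at exponent $\alpha_j$ via the Margulis function estimate of Proposition~\ref{prop: integrated inductive step} (which is the wrapper around Theorem~\ref{thm:energy Improvement}). However, there is a genuine conceptual error and a second more minor confusion.

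The error: you claim that ``a critical ancillary ingredient at each step is Proposition~\ref{pro:avoidance} applied with $R_1 = R$ ... to verify the Diophantine hypothesis needed to continue the construction,'' and that Proposition~\ref{pro:Non divergence} disposes of cusp escape. This is not how the bootstrap works. The Diophantine hypothesis on $x_1$ is consumed exactly once, by Theorem~\ref{thm:closing lemma initial dim}, to produce the \emph{initial} family of sheeted sets with a Margulis bound at exponent $\epsilon_0$. Once that initial datum exists, the inductive step Proposition~\ref{prop: integrated inductive step} takes as input only a sheeted set $\mathcal{E}$ with admissible measure and a Margulis function bound $f^{(\alpha)}_{\mathcal{E},\delta}\leq\Upsilon$; no Diophantine condition on base points is required or used, and neither Proposition~\ref{pro:avoidance} nor Proposition~\ref{pro:Non divergence} appears anywhere in the proof of Proposition~\ref{pro:dim improving main}. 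The sheeted set / admissible measure structure is preserved under push-forward by $a_\ell\mathsf{B}_1^U$ and re-slicing by the covers of Section~\ref{sec:boxes}; the fact that points stay in $X_\eta$ is built into the sheeted-set definition (base points lie in $X_\eta$ by construction of the cover). Re-inserting the avoidance principle at every step would actually break the argument, since $\mu_{\mathcal{E}}$ is supported on a set of measure zero and the avoidance estimate controls only the $m_U$-measure of a bad set of $u$'s from a fixed base point. (Proposition~\ref{pro:avoidance} \emph{is} used, but only in the proof of Theorem~\ref{thm:main equidistribution} in Section~\ref{sec:ProofOfMainEqui}, to choose a good initial $x_1 = a_{t_0}u_0.x_0$ before Proposition~\ref{pro:dim improving main} is even invoked.)

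The secondary confusion: you say the $N_{j+1}$ iterations defeat ``the compounded losses $\delta_0^{-O(N_{j+1}\sqrt{\epsilon})}$.'' In fact the $\delta^{-O(\sqrt{\epsilon})}$ loss from Theorem~\ref{thm:energy Improvement} is already absorbed \emph{inside} Proposition~\ref{prop: integrated inductive step} (that is why the drop there is $e^{-\frac{3}{4}\varphi(\alpha)\ell}$ rather than $e^{-\varphi(\alpha)\ell}$), and it does not compound over iterations. What the $N_{j+1}$ inner iterations defeat is the factor $\delta_j^{-(\alpha_{j+1}-\alpha_j)}$ incurred when one raises the Margulis exponent from $\alpha_j$ to $\alpha_{j+1}$ at the current scale $\delta_j = e^{2d_j\ell}\delta_0$. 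Since $\delta_j$ grows geometrically in $j$, this factor shrinks geometrically, which is precisely why $N_j = \lceil N_1(3/4)^{j-1}\rceil$ decays geometrically. Relatedly, the final scale is $\delta_{p_{\mathrm{fin}}} = e^{2d\ell}\delta_0$ with $d = \sum_{j} N_j$, not $e^{2p_{\mathrm{fin}}\ell}\delta_0$ as you wrote; the two differ by a factor $e^{\Theta(\ell/\epsilon)}$, which matters for getting $\delta_{\mathrm{fin}} = \delta_0^{2\epsilon/\theta}$.
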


\subsection{Margulis function estimate}
The following proposition provides a general iterative process for improving the dimension. It is the analog of \cite[Lemma 5.2]{LMWY25} in this setting. 
\begin{proposition}\label{prop: integrated inductive step}
Let $\delta > 0$, $\alpha\in [\epsilon_0,\dim(\mathfrak{r}))$, and $0 < \Upsilon \leq e^{1/\beta}$.
Suppose that $\mathcal{E}$ is a sheeted set with cross-section $F$ so that
\begin{align*}
f_{\mathcal{E},\delta}^{(\alpha)}(x) \leq \Upsilon \qquad \text{for all $x \in \mathcal{E}$}.
\end{align*}
Assume further $\mathcal{E}$ is assigned with an admissible measure $\mu_{\mathcal{E}}$, see \cref{sec:boxes}.
Then there exists a family of sheeted sets $\{\mathcal{E}_i\}_i$ with cross-sections $\{F_i\}_i$ and associated admissible measures $\{\mu_{\mathcal{E}_i}\}_i$ satisfying the following properties.
\begin{enumerate}
    \item For all $\phi \in \mathrm{C}_c^{\infty}(X) + \C \mathds{1}_X$, $\ell' \geq 0$, and $u' \in \mathsf{B}_1^U$, we have
    \begin{align*}
        \int_{\mathsf{B}_1^U} \int_{\mathcal{E}} \phi(a_{\ell'}u' a_{\ell} u.x) \,\mathrm{d}\mu_{\mathcal{E}}(x) \,\mathrm{d}u
        = \sum_i c_{i}\int_{\mathcal{E}_i} \phi( a_{\ell'}u'.z) \,\mathrm{d}\mu_{\mathcal{E}_i}(z) + O(\mathcal{S}(\phi)\beta^\star)
    \end{align*}
    for some $c_i > 0$ with $\sum_i c_i=1$. 
    \item For all $i$, we have
    \begin{align*}
    \beta^{29} \#F 
    \leq \#F_i
    \leq e^{2\ell}\#F.
    \end{align*}
    \item For all $i$, we have
    \begin{align*}
        f_{\mathcal{E}_i, \delta'}^{(\alpha)}(x) \leq e^{-\frac{3}{4}\varphi(\alpha)\ell}\Upsilon + e^{2\alpha\ell}\beta^{-\alpha}\#F_i \qquad \text{for all $x \in \mathcal{E}_i$}
    \end{align*}
    where $\delta' = e^{2\ell}\max\{\delta, \#F^{-\frac{1}{\alpha}}\}$ and
    \begin{align*}
        \varphi(\alpha) = \frac{1}{36}\min\Bigl\{\frac{8}{9}\alpha, 1 - \frac{1}{9}\alpha\Bigr\}
    \end{align*}
    as in Theorem~\ref{thm:energy Improvement}. 
\end{enumerate}
\end{proposition}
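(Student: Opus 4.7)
The proposition is a structural analogue of \cite[Lemma 7.2]{LMWY25}. I would follow that template, using \cref{thm:energy Improvement} from \cref{part:projection} as the linear engine (in place of \cite[Theorem 6.1]{LMWY25}) and the measure-decomposition machinery from \cref{sec:Margulis function} (covering by boxes, smearing along the $H$-direction, dyadic grouping of sheet weights) to produce the new admissible pieces. The first step is to transfer the Margulis-function hypothesis into an energy estimate on the cross-section: after restricting to the coarse interior $\hat{\mathcal{E}} = (\overline{\mathsf{E}\setminus\partial_{5\beta^2}\mathsf{E}})\exp(F).y$, whose complement has $\mu_\mathcal{E}$-measure $O(\beta^\star)$, \cref{pro:frostman energy Margulis function}(3) yields $\mathcal{G}^{(\alpha)}_{F,\delta}(w) \ll \Upsilon$ for every $w \in F$. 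The $O(\beta^\star)$ lost here is absorbed into the final $O(\mathcal{S}(\phi)\beta^\star)$.

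\textbf{Applying the linear improvement.} I would then apply \cref{thm:energy Improvement} to $F$ at expansion time $\ell$ with a parameter $\epsilon'$ chosen small relative to $\varphi(\alpha)$ but sufficiently large relative to $|\log\beta|/\ell$. Using \cref{eqn:Much smaller scale}, both the exceptional set $\mathsf{B}^U_1 \setminus J$ of bad $u$ and, for good $u$, the sparsification $F \setminus F_u$ have measure at most $\beta^\star$, again absorbed into the final error. For $u \in J$ and $w \in F_u$ the theorem gives
\begin{align*}
\mathcal{G}^{(\alpha)}_{F_u(w),\delta'}(a_\ell u.w) \ll_{\epsilon'} e^{-\varphi(\alpha)\ell}\delta^{-O(\sqrt{\epsilon'})}\Upsilon
\end{align*}
at the new scale $\delta' = e^{2\ell}\max\{\delta,\#F^{-1/\alpha}\}$. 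Since $|\log\delta| \leq |\log\delta_0| = t/(M\ref{a:closing lemma main})$ and $\ell = \epsilon t/(100 M\ref{a:closing lemma main})$, the factor $\delta^{-O(\sqrt{\epsilon'})}$ is absorbed into $e^{\tfrac{1}{4}\varphi(\alpha)\ell}$ once $\epsilon'$ is tuned (depending on $\alpha$ and $\epsilon$) so that $O(\sqrt{\epsilon'})\cdot 100/\epsilon \leq \tfrac{1}{4}\varphi(\alpha)$, yielding the pointwise energy bound $\ll e^{-\tfrac{3}{4}\varphi(\alpha)\ell}\Upsilon$.

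\textbf{Reassembly into admissible sheeted sets.} The remaining work is to decompose $\nu_\ell \ast \mu_\mathcal{E}$ as a weighted sum of admissible measures whose supports are new sheeted sets $\mathcal{E}_i$, with cross-sections $F_i$ given by the clusters $\{a_\ell u.w' : w' \in F_u,\ \|a_\ell u.w' - a_\ell u.w\| \leq e^{-2\ell}\}$. I would follow, almost verbatim, the three-step recipe used in the deduction of \cref{thm:closing lemma initial dim} from \cref{lem:Closing lemma many scale}: (i) partition $a_\ell u.\mathcal{E}$ using the covering of $X_{2\eta}$ by translates of $\Check{\mathsf{Q}}^G_0$ (\cref{lem: inverse covering initial}) together with the partition of unity $\Check{\rho}_{0,j}$; (ii) smear each local piece by the Haar measure $\lambda$ on $\mathsf{B}^{s,H}_{\beta + 100\beta^2}$ so that, off a boundary of measure $O(\beta^\star)$, the density becomes bi-Lipschitz-comparable to Haar on $\mathsf{E}$ with complexity bounded by an absolute constant (\cref{lem:shear no multiplicity,lem:admissible complexity}); (iii) within each box, sort the sheets into dyadic weight classes and discard classes whose total weight is $\leq \beta\cdot\hat{c}_j$, incurring only an $O(\beta^\star)$ loss as in \cref{lem:throw away box with small weight}. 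Each surviving class becomes a new cross-section $F_i$, and the associated $\mu_{\mathcal{E}_i}$ is $\mathsf{L}$-admissible. The bounds $\beta^{29}\#F \leq \#F_i \leq e^{2\ell}\#F$ follow respectively from the weight threshold and a straightforward volume count as in \cref{lem:number of sheet 1}. Applying \cref{pro:frostman energy Margulis function}(2) to each $F_i$ converts the energy bound of the previous step into $f^{(\alpha)}_{\mathcal{E}_i,\delta'}(x) \ll e^{-\tfrac{3}{4}\varphi(\alpha)\ell}\Upsilon$ on the interior of each sheet; the residual contributions from points in $F_i$ coming from $F\setminus F_u$ and from boundary effects are bounded trivially by $\#F_i \cdot (\delta')^{-\alpha} \leq e^{2\alpha\ell}\beta^{-\alpha}\#F_i$, producing the second summand in the stated bound.

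\textbf{Main obstacle.} The linear improvement itself is packaged in \cref{thm:energy Improvement}, so the principal technical difficulty is not linear-algebraic but combinatorial: one must ensure that the reassembly in the third step simultaneously preserves admissibility \emph{and} produces a pointwise Margulis-function bound on \emph{every} new sheet. This requires coupling the $\epsilon'$-exceptional sets of \cref{thm:energy Improvement} with the dyadic weight thresholding so that no surviving class of sheets mixes "good" (improved) and "bad" contributions in a way that ruins either property. Provided this is done carefully, the conclusion is robust enough to be iterated $d$ times in the bootstrap of \cref{sec:bootstrap}, which is what \cref{pro:dim improving main} requires.
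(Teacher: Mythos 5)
Your proposal follows the same route as the paper: the paper's own proof of this proposition is a single sentence, namely that it follows \cite[Lemma 7.2]{LMWY25} step-by-step with \cite[Theorem 6.1]{LMWY25} replaced by \cref{thm:energy Improvement}. Your expansion of that blueprint --- converting the Margulis-function hypothesis into an energy estimate on the cross-section via \cref{pro:frostman energy Margulis function}(3), invoking \cref{thm:energy Improvement} with a suitably tuned $\epsilon'$, and re-covering/re-admissibilizing with the box machinery of \cref{sec:boxes,sec:Margulis function} --- is the intended structure, and the parameter tuning in the first two steps (using \cref{eqn:Much smaller scale} to absorb the exceptional set and using $|\log\delta|\le 100\ell/\epsilon$ to absorb $\delta^{-O(\sqrt{\epsilon'})}$ into $e^{\frac{1}{4}\varphi(\alpha)\ell}$) is consistent with the paper's conventions.

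One step in your accounting, however, is wrong as stated. You bound the residual contribution to the Margulis function by $\#F_i\cdot(\delta')^{-\alpha}$ and then assert $\#F_i\cdot(\delta')^{-\alpha}\le e^{2\alpha\ell}\beta^{-\alpha}\#F_i$. This would require $(\delta')^{-\alpha}\le e^{2\alpha\ell}\beta^{-\alpha}$, but from $\delta'=e^{2\ell}\max\{\delta,\#F^{-1/\alpha}\}$ one only gets $(\delta')^{-\alpha}=e^{-2\alpha\ell}\min\{\delta^{-\alpha},\#F\}$, which is typically far larger than $e^{2\alpha\ell}\beta^{-\alpha}$ (already in the first bootstrap step one has $\delta=\delta_0$ and $\#F\ge\delta_0^{-3\epsilon_0/2}$, both polynomial in $R$, while $e^{2\alpha\ell}\beta^{-\alpha}$ is $R^{o(1)}$). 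The mechanism producing the second term is different: a point $w'\in F_u$ whose image lands \emph{outside} the $e^{-2\ell}$-cluster $F_u(w)$ satisfies $\|a_\ell u.w'-a_\ell u.w\|>e^{-2\ell}$ and therefore contributes at most $e^{2\alpha\ell}$ to the modified $\alpha$-energy at scale $\delta'$; summing over the at most $\#F_i$ sheets in $\mathcal{E}_i$ gives $e^{2\alpha\ell}\#F_i$, and the extra $\beta^{-\alpha}$ in the statement is slack for the BCH/re-centering corrections and the boundary effects of the $\mathsf{Q}^G$-cover. The removed sheets $F\setminus F_u$ are not included in any $F_i$ at all --- they contribute only to the measure-theoretic $O(\beta^\star)$ error in property~(1) --- so they never enter the pointwise Margulis-function bound of property~(3). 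With that correction the argument goes through.
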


\begin{proof}
    The statement can be proved following the proof of \cite[Lemma 7.2]{LMWY25} step-by-step and replacing \cite[Theorem 6.1]{LMWY25} by Theorem~\ref{thm:energy Improvement}. 
\end{proof}

\subsection{Proof of Proposition~\ref{pro:dim improving main}}\label{sec:Important bootstrap calculation}

The idea of the proof of Proposition~\ref{pro:dim improving main} is rather straight-forward. First, we apply Theorem~\ref{thm:closing lemma initial dim} to gain an initial dimension. Then we apply Proposition~\ref{prop: integrated inductive step} iteratively to improve the dimension. The following lemma is a direct consequence of \cref{prop: integrated inductive step}. It says that from a good sheeted set, we can do random walk in bounded many steps using \cref{prop: integrated inductive step} to get to a family of good sheeted sets with dimension $\alpha$ in the transverse direction. 

Recall from Theorem~\ref{thm:energy Improvement} that \begin{align*}
    \varphi(\alpha) = \frac{1}{36}\min\Bigl\{\frac{8}{9}\alpha, 1 - \frac{1}{9}\alpha\Bigr\}.
\end{align*}
Let 
\begin{align*}
\bar{\varphi}(\alpha) = \frac{1}{2}\varphi(\alpha) < \frac{3}{4}\varphi(\alpha).
\end{align*}
Note that for all $\alpha \in [\epsilon_0, \alpha_{p_{\mathrm{fin}}}]$, we have
\begin{align*}
    \bar{\varphi}(\alpha) \geq \frac{1}{72 \cdot 10} \theta.
\end{align*}

\begin{lemma}\label{lem:getting dim alpha}
Suppose $\alpha \in [\epsilon_0, \alpha_{p_{\mathrm{fin}}}]$, $C \geq 1$, $\Upsilon$, $\delta \in (0, 1)$, and a sheeted subset $\mathcal{E}^{(0)}$ with cross-section $F^{(0)}$ and associated admissible measure $\mu_{\mathcal{E}}$ are given with 
\begin{align*}
f^{(\alpha)}_{\mathcal{E}^{(0)}, \delta}(z) \leq C\Upsilon \quad \forall z \in \mathcal{E}^{(0)}.
\end{align*}
    
For all integers $N\geq 1$ there exists a family of sheeted sets $\{\mathcal{E}_i\}_i$ with cross-sections $\{F_i\}_i$ and associated admissible measures $\{\mu_{\mathcal{E}_i}\}_i$ satisfying the following properties.
\begin{enumerate}
    \item For all $\phi \in \mathrm{C}_c^{\infty}(X)$, $\ell' \geq 0$, and $u' \in \mathsf{B}_1^U$ we have
    \begin{align*}
        \int_{\mathsf{B}_1^{U}} \int_X\phi(a_{\ell'}u' a_{N\ell} u.x) \,\mathrm{d}\mu_{\mathcal{E}^{(0)}}\,\mathrm{d}u
        = \sum_i c_i\int_{\mathcal{E}_i} \phi( a_{\ell'}u'.x) \,\mathrm{d}\mu_{\mathcal{E}_i}(x) + O(\mathcal{S}(\phi)N\beta^\star)
    \end{align*}
    for some $c_i>0$ with $\sum_i c_i=1$.
    \item For all $i$, we have
\begin{align*}
\beta^{29N} \#F^{(0)} \leq \#F_i \leq e^{2N\ell}\#F^{(0)}.
\end{align*}
\end{enumerate}
Moreover, if
\begin{align}\label{eqn: Number of steps}
    N \geq \Biggl\lceil \frac{1}{\bar{\varphi}(\alpha)\ell + 29\log(\beta)}\log \Biggl(\frac{\Upsilon}{e^{20\ell}\#F^{(0)}}\Biggr) \Biggr\rceil,
\end{align}
then the following holds in addition:
\begin{enumerate}
\setcounter{enumi}{2}
    \item For all $i$, we have
    \begin{align*}
f_{\mathcal{E}_i,\delta_N}^{(\alpha)}(x) \leq 2Ce^{20\ell}\#F_i \qquad \text{for all $x \in \mathcal{E}_i$}
\end{align*}
where
\begin{align*}
    \delta_N = e^{2N\ell}\max\{\delta, (\#F^{(0)})^{-\frac{1}{\alpha}}\}.
\end{align*}
\end{enumerate}

\end{lemma}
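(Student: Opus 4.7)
The plan is to iterate \cref{prop: integrated inductive step} exactly $N$ times starting from $\mathcal{E}^{(0)}$. At step $k = 1, \ldots, N$, I would apply \cref{prop: integrated inductive step} to every sheeted set $\mathcal{E}_i^{(k-1)}$ of the previous family to produce a refined family $\{\mathcal{E}_j^{(k)}\}$ with admissible measures; setting $\{\mathcal{E}_i\} := \{\mathcal{E}_i^{(N)}\}$ and taking the $c_i$ to be products of the step-weights along each branch yields the output family. Assertion (1) then follows by iteratively substituting the single-step identity in \cref{prop: integrated inductive step}(1): at step $k$ one applies the identity with the outer test function absorbing the remaining inner integrals $\int_{\mathsf{B}_1^U}\phi(a_{\ell'}u'a_{(N-k)\ell}u\,\cdot\,)\,\mathrm{d}u$. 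Each step contributes an additive error $O(\mathcal{S}(\phi)\beta^\star)$, summing to the claimed $O(\mathcal{S}(\phi)N\beta^\star)$. Assertion (2) is immediate from multiplying the step bounds $\beta^{29}\#F_i^{(k-1)} \leq \#F_j^{(k)} \leq e^{2\ell}\#F_i^{(k-1)}$ over the $N$ iterations.

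The core of the argument is assertion (3). Along each branch, writing $\Upsilon_k$ for the Margulis bound at step $k$ produced by \cref{prop: integrated inductive step}(3) and setting $b := e^{-\frac{3}{4}\varphi(\alpha)\ell}$, the recursion is
\begin{align*}
\Upsilon_k \;\leq\; b\,\Upsilon_{k-1} + e^{2\alpha\ell}\beta^{-\alpha}\#F_i^{(k)}.
\end{align*}
Unfolding this and inserting the worst-case bound $\#F_i^{(k)} \leq \beta^{-29(N-k)}\#F_i^{(N)}$ (which follows by applying assertion (2) from step $k$ to step $N$) yields
\begin{align*}
\Upsilon_N \;\leq\; b^N\Upsilon_0 \;+\; e^{2\alpha\ell}\beta^{-\alpha}\#F_i^{(N)}\sum_{j=0}^{N-1}(b\beta^{-29})^j.
\end{align*}
The decisive inequality $b\beta^{-29}\leq 1/2$ follows by combining the lower bound $\bar\varphi(\alpha)\geq \theta/720$ (valid on $[\epsilon_0,\alpha_{p_{\mathrm{fin}}}]$) with the quantitative separation $|\log\beta| \leq \epsilon^2\ell/(10^{10}\ref{e:closing lemma main1}\ref{e:closing lemma main2})$ guaranteed by \cref{eqn:Much smaller scale}: indeed, $29|\log\beta|$ is then dwarfed by $\tfrac{3}{4}\varphi(\alpha)\ell \geq \theta\ell/480$. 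The geometric series is therefore bounded by $2$, and using the same smallness $e^{2\alpha\ell}\beta^{-\alpha} \leq e^{19\ell}$ (with $\alpha < 9$), the sum-term is at most $Ce^{20\ell}\#F_i^{(N)}$.

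To control the $b^N\Upsilon_0$ term, I would use $\Upsilon_0 \leq C\Upsilon$ together with $\#F_i^{(N)} \geq \beta^{29N}\#F^{(0)}$: the required inequality $b^N\Upsilon_0 \leq Ce^{20\ell}\#F_i^{(N)}$ reduces to $(b\beta^{-29})^N \leq e^{20\ell}\#F^{(0)}/\Upsilon$, which on taking logarithms (using $b\beta^{-29}<1$) becomes $N(\tfrac{3}{4}\varphi(\alpha)\ell + 29\log\beta) \geq \log\bigl(\Upsilon/(e^{20\ell}\#F^{(0)})\bigr)$ and is strictly weaker than the hypothesis \cref{eqn: Number of steps} since $\bar\varphi(\alpha) < \tfrac{3}{4}\varphi(\alpha)$. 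Summing the two contributions gives $\Upsilon_N \leq 2Ce^{20\ell}\#F_i^{(N)}$. Finally, the scale $\delta_N^{\mathrm{iter}}$ actually produced by the iteration satisfies $\delta_N^{\mathrm{iter}} \leq e^{2N\ell}\max\{\delta,(\#F^{(0)})^{-1/\alpha}\} = \delta_N$, by a short induction using $\#F_i^{(k)} \geq \beta^{29k}\#F^{(0)}$ together with $e^{-2\ell}\beta^{-29/\alpha} < 1$ (again from \cref{eqn:Much smaller scale}); since $f^{(\alpha)}_{\mathcal{E}_i,\cdot}$ is nonincreasing in $\delta$, the bound transfers to $\delta_N$. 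The main obstacle is the coupled bookkeeping of cardinality, scale, and Margulis bound along each branch of the ever-finer family of sheeted sets; quantitatively the whole argument turns on the separation $\tfrac{3}{4}\varphi(\alpha)\ell \gg 29|\log\beta|$ guaranteed by \cref{eqn:Much smaller scale}, which is precisely what allows the decay factor $b$ to dominate the worst-case per-step shrinkage $\beta^{29}$ of cross-section cardinalities.
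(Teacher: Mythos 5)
Your proof is correct and takes essentially the same high-level route as the paper: iterate Proposition~\ref{prop: integrated inductive step} $N$ times along a tree of sheeted sets, with properties~(1) and~(2) following by straightforward composition, and property~(3) by analysing how the Margulis bound propagates. The only genuine difference lies in how you resolve the recursion: you unroll it to $\Upsilon_N \leq b^N\Upsilon_0 + e^{2\alpha\ell}\beta^{-\alpha}\#F^{(N)}\sum_j(b\beta^{-29})^j$ and invoke the geometric-series bound $b\beta^{-29}\leq 1/2$, whereas the paper maintains the inductive invariant $f^{(\alpha)}_{\mathcal{E},\delta_j}\leq 2\max\{e^{-\bar\varphi(\alpha)j\ell}C\Upsilon,\,e^{20\ell}\#F\}$ with a fixed prefactor, using the surplus $\tfrac{3}{4}\varphi-\bar\varphi=\tfrac{1}{4}\varphi$ to absorb the additive term. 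Both arguments hinge on exactly the same quantitative input, namely $\tfrac{3}{4}\varphi(\alpha)\ell \gg 29|\log\beta|$ from~\eqref{eqn:Much smaller scale}. As you observe, your route actually needs a slightly weaker threshold on $N$ (with $\tfrac{3}{4}\varphi$ in place of $\bar\varphi$); the hypothesis~\eqref{eqn: Number of steps} is therefore more than sufficient — the paper's stronger form is an artifact of its ``$2\max$'' invariant. Your scale-tracking (showing $\delta_k^{\mathrm{iter}}=e^{2k\ell}\max\{\delta,(\#F^{(0)})^{-1/\alpha}\}$ by induction via $\beta^{-29/\alpha}\leq e^{2\ell}$) matches the paper's.
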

\begin{proof}
For all integers $j \geq 0$, let $\delta_j = e^{2j\ell}\max\{\delta, (\#F^{(0)})^{-\frac{1}{\alpha}}\}$. We will prove the following stronger claim. 
\begin{claim*}
For every $j\geq 0$ exists a sequence of finite families $\mathcal{F}^{(j)}$ of sheeted sets with associated admissible measures $\{\mu_{\mathcal{E}}: \mathcal{E} \in \mathcal{F}^{(j)}\}$ satisfying the following properties.
\begin{enumerate}
        \item For all $\phi \in \mathrm{C}_c^\infty(X)$, $\ell' \geq 0$, and $u' \in \mathsf{B}_1^U$, we have
    \begin{align*}
        \int_{\mathsf{B}_1^U} \int_{\mathcal{E}^{(0)}} \phi(a_{\ell'}u' a_{j\ell} u.x) \,\mathrm{d}\mu_{\mathcal{E}^{(0)}}\,\mathrm{d}u
        = \sum_{\mathcal{E} \in \mathcal{F}^{(j)}} c_\mathcal{E}\int_{\mathcal{E}} \phi( a_{\ell'}u'.x) \,\mathrm{d}\mu_{\mathcal{E}}(x) + O(\mathcal{S}(\phi)j\beta^\star)
    \end{align*}
    for some $c_\mathcal{E}>0$ with $\sum_{\mathcal{E} \in \mathcal{F}^{(j)}} c_\mathcal{E}=1$.
    \item For all $\mathcal{E} \in \mathcal{F}^{(j)}$ with cross-section $F$, we have
\begin{align*}
\beta^{29j} \#F^{(0)} \leq \#F \leq e^{2N\ell}\#F^{(0)}.
\end{align*}
    \item For all $\mathcal{E} \in \mathcal{F}^{(j)}$, we have
    \begin{align*}
f_{\mathcal{E},\delta_j}^{(\alpha)}(x) \leq 2\max\{e^{-\bar{\varphi}(\alpha)j\ell} C \Upsilon,   e^{20\ell}\#F\} \qquad \text{for all $x \in \mathcal{E}$}.
\end{align*}
\end{enumerate}
\end{claim*}

For $j = N$ as in \cref{eqn: Number of steps} we have $e^{-\bar{\varphi}(\alpha)j\ell}\Upsilon \leq e^{20\ell} \#F_i$. The lemma follows with the family $\mathcal{F}^{(N)}$ as the claim. 

We will prove the claim by induction on $j$. For $j = 0$, let $\mathcal{F}^{(0)} = \{\mathcal{E}^{(0)}\}$. Note that since $\delta_0 = \max\{\delta, (\#F)^{-\frac{1}{\alpha}}\} \geq \delta$, we have
\begin{align*}
    f_{\mathcal{E}^{(0)}, \delta_0}^{(\alpha)}(z) \leq f_{\mathcal{E}^{(0)}, \delta}^{(\alpha)}(z) \quad \forall z \in \mathcal{E}^{(0)}.
\end{align*}
The claim when $j = 0$ follows directly from the condition in the lemma. 

Assuming now the claim holds for some integer $j \geq 0$, we will apply Proposition~\ref{prop: integrated inductive step} to show that it holds for $j + 1$. For each sheeted set $\mathcal{E} \in  \mathcal{F}^{(j)}$ and its associated admissible measure $\mu_{\mathcal{E}}$, apply \cref{prop: integrated inductive step} to $\mathcal{E}$, $\mu_{\mathcal{E}}$, $\alpha$ and $\delta_j$ to obtain a family of sheeted set and their associated admissible measure. Collect them and denote this collection by $\mathcal{F}^{(j + 1)}$. The first two properties hold for this family $\mathcal{F}^{(j + 1)}$ is a direct consequence of Proposition~\ref{prop: integrated inductive step}. We now show that property~(3) also holds for this family $\mathcal{F}^{(j + 1)}$. Take a sheeted set $\mathcal{E} \in \mathcal{F}^{(j)}$ with cross-section $F$ and let $\mathcal{E}' \in \mathcal{F}^{(j + 1)}$ be one of its descendants in the above process. Let $F'$ be the cross-section of $\mathcal{E}'$. 

By the inductive hypothesis, we have
\begin{align*}
f_{\mathcal{E},\delta_j}^{(\alpha)}(x) \leq 2\max\{e^{-\bar{\varphi}(\alpha)j\ell} C \Upsilon,   e^{20\ell}\#F\} \qquad \text{for all $x \in \mathcal{E}$}.
\end{align*}
By \cref{prop: integrated inductive step}, for all $z \in \mathcal{E}'$, we have
\begin{align}\label{eqn:inner j step induction}
\begin{aligned}
f_{\mathcal{E}',\delta_{j}'}^{(\alpha)}(z) \leq{}& e^{-\frac{3}{4}\varphi(\alpha)\ell}2\max\{e^{-\bar{\varphi}(\alpha)j\ell} C \Upsilon,   e^{20\ell}\#F\} + e^{2\alpha \ell}\beta^{-\alpha} \#F'\\
\leq{}& e^{-\frac{3}{4}\varphi(\alpha)\ell}2\max\{e^{-\bar{\varphi}(\alpha)j\ell} C \Upsilon,   e^{20\ell}\#F\} + e^{20 \ell}\#F'
\end{aligned}
\end{align}
where $\delta_j' = e^{2\ell}\max\{\delta_j, (\#F)^{-\frac{1}{\alpha}}\}$. The last inequality follows from \cref{eqn:Much smaller scale}. In particular, we only use $\beta^{9} \geq e^{-\ell}$. 

We first show that $\delta_j' = \delta_{j + 1} = e^{2(j + 1)\ell} \delta_0$. By the inductive hypothesis on $\delta_j$, we have that 
\begin{align*}
    \delta_j = e^{2j\ell}\delta_0 \geq e^{2j\ell} (\#F^{(0)})^{-\frac{1}{\alpha}}. 
\end{align*}
Also, by property~(2) in the inductive hypothesis, we have
\begin{align*}
    \#F^{-\frac{1}{\alpha}} \leq \beta^{-29j/\alpha}(\#F^{(0)})^{-\frac{1}{\alpha}}.
\end{align*}
By \cref{eqn:Much smaller scale} (in particular, $e^{2\ell} \geq \beta^{-29/\epsilon_0}$), we have
\begin{align*}
    \delta_j' = e^{2\ell}\max\{\delta_j, \#F^{-\frac{1}{\alpha}}\} = e^{2\ell}\delta_j = \delta_{j + 1}.
\end{align*}

We now show property~(3) in the claim. By \cref{eqn:inner j step induction} and the above arguments, we have for all $z \in \mathcal{E}'$
\begin{align*}
    f_{\mathcal{E}', \delta_{j + 1}}^{(\alpha)}(z) \leq{}& e^{-\frac{3}{4}\varphi(\alpha)\ell}2\max\{e^{-\bar{\varphi}(\alpha)j\ell} C \Upsilon,   e^{20\ell}\#F\} + e^{20 \ell}\#F'\\
    \leq{}& 2e^{-\frac{1}{4}\varphi(\alpha)\ell} \max\big\{e^{-\bar{\varphi}(\alpha)(j + 1)\ell} C\Upsilon,   e^{-\bar{\varphi}(\alpha)\ell}e^{20\ell}\#F\big\} \\
    &\quad+ e^{20\ell}\#F'\\
    \leq{}& 2\max\{e^{-\bar{\varphi}(\alpha)(j + 1)\ell}C \Upsilon,   e^{20\ell}\#F'\}.
\end{align*}
For the last inequality, we used \cref{eqn:R large enough absolutely,eqn:Much smaller scale}. In particular, we only use $e^{-\frac{1}{4}\varphi(\alpha)\ell} \leq 1/2$ and $e^{-\bar{\varphi}(\alpha)\ell}\beta^{-29} \leq 1$. This proves the claim. 
\end{proof}

We now apply the above lemma to the sequence $\{\alpha_j\}_{j = 0}^M$ to prove \cref{pro:dim improving main}. Before we proceed the proof, let us recall the following lemma. Recall that
\begin{align*}
\nu_t = (a_t)_\ast m_{\mathsf{B}_1^U}
\end{align*}
and $\lambda$ is the normalized Haar measure on
\begin{align*}
    \mathsf{B}^{s, H}_{\beta + 100 \beta^2} = \mathsf{B}_{\beta + 100 \beta^2}^{U^-} \mathsf{B}_{\beta + 100 \beta^2}^{M_0A}.
\end{align*}

\begin{lemma}\label{lem:Folner H}
For all $\phi \in \mathrm{C}_c^{\infty}(X) + \C \mathds{1}_X$, $x \in X$, $t_1, t_2 > 0$ with $e^{-t_1} \leq \beta$, we have
\begin{align*}
\Biggl|\int_X \phi \,\mathrm{d}(\nu_{t_2 + t_1} \ast \delta_x) - \int_X \phi \,\mathrm{d}(\nu_{t_2} \ast \lambda \ast \nu_{t_1} \ast \delta_x)\biggr| \ll \mathcal{S}(\phi) \beta^\star.
\end{align*}
\end{lemma}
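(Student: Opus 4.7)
The plan is a two-stage F{\o}lner-type argument following the template of \cref{lem:Folner1}. First, I would apply \cref{lem:Folner1} directly to $\phi$ (which bounds the total variation of $\mu_{t_2+t_1}-\nu_{t_2}*\mu_{t_1}$ by $O(e^{-t_1})$) to reduce the problem to showing
\[
\Bigl|\int\phi\,d(\nu_{t_2}*\nu_{t_1}*\delta_x) - \int\phi\,d(\nu_{t_2}*\lambda*\nu_{t_1}*\delta_x)\Bigr| \ll \mathcal{S}(\phi)\beta^\star,
\]
since by hypothesis $e^{-t_1}\leq \beta$ and $\mathcal{S}(\phi)$ dominates $\|\phi\|_\infty$.

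For the remaining comparison, I would write the integrand using the identity
\[
a_{t_2}u_2 h a_{t_1}u_1 = g_{u_2,h}\cdot a_{t_2}u_2 a_{t_1}u_1, \qquad g_{u_2,h} := a_{t_2}u_2 h u_2^{-1}a_{-t_2}\in H,
\]
so that the difference of $\phi$-values becomes $\phi(g_{u_2,h}.y)-\phi(y)$ with $y=a_{t_2}u_2 a_{t_1}u_1.x$. A direct BCH computation gives $\log g_{u_2,h}=\Ad(a_{t_2})\Ad(u_2)\log h$, whose components in the decomposition $\LieH=\LieU^-\oplus(\LieM_0\oplus\LieA)\oplus\LieU$ are bounded respectively by $e^{-t_2}\beta$, $\beta$ and $e^{t_2}\beta$, using $\|\log h\|\leq\beta$, $\|\log u_2\|\leq 1$ and the weight structure of $\Ad(a_{t_2})$.

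The key point is then to split $g_{u_2,h}=g_{u_2,h}^-\cdot g_{u_2,h}^U$ via the local $U^-M_0A$-by-$U$ factorization in $H$, with $g_{u_2,h}^-\in U^-M_0A$ of norm $\ll \beta$ and $g_{u_2,h}^U\in U$ of norm $\ll e^{t_2}\beta$. The small factor $g_{u_2,h}^-$ contributes a Lipschitz error $O(\mathcal{S}(\phi)\beta)$ after integration. The large-looking factor $g_{u_2,h}^U$ is absorbed by the $u_1$-averaging: since $U$ is abelian and $\Ad(a_{-s})$ contracts $\LieU$ by $e^{-s}$, commuting $g_{u_2,h}^U$ through $a_{t_2}u_2 a_{t_1}$ converts it into a $U$-translate $a_{-(t_1+t_2)}g_{u_2,h}^U a_{t_1+t_2}$ of size $\ll e^{-t_1}\beta\ll \beta^2$, which the F{\o}lner property of $U$ (applied to $u_1\in\mathsf{B}_1^U$ as in the proof of \cref{lem:Folner1}) absorbs with cost $O(\mathcal{S}(\phi)\beta^2)$. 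Summing the two contributions gives the desired error $O(\mathcal{S}(\phi)\beta^\star)$.

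The main obstacle I anticipate is justifying the factorization $g_{u_2,h}=g_{u_2,h}^-\cdot g_{u_2,h}^U$ uniformly in the parameters, since a priori the $U$-component has size $e^{t_2}\beta$ which need not be small; one has to check that the relevant Bruhat chart remains valid and the implicit constants are absorbed into $\beta^\star$. The bookkeeping of BCH commutators (especially the mixing between $\LieU^-$, $\LieM_0\oplus\LieA$ and $\LieU$ coming from $\Ad(u_2)$) and the careful use of right $G$-invariance of $d_X$ to pass the Lipschitz bound from $G$ to $X$ will be the main technical burden.
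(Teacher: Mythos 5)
Your argument is essentially correct, and it is (up to bookkeeping) the standard two-stage F\o{}lner computation that \cite[Lemma 7.4]{LMW22} uses; the paper itself only cites that reference without reproducing the proof, so you supplied a self-contained version of the same idea.

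Two small remarks that tighten the sketch. First, $\log g_{u_2,h}=\Ad(a_{t_2}u_2)\log h$ is \emph{exact}, not a BCH approximation, since $g_{u_2,h}$ is a pure conjugate of $h$; the only place BCH enters is in relating the $\LieU^-\oplus(\LieM_0\oplus\LieA)\oplus\LieU$ decomposition of a Lie algebra element to the group-level $U^- M_0 A U^+$ factorization. Second, the ``main obstacle'' you flag --- the factorization $g_{u_2,h}=g^-g^U$ when the $\LieU$-component has size $e^{t_2}\beta$ --- is a non-issue: write the \emph{near-identity} element $u_2hu_2^{-1}$ (which has norm $\ll\beta$, hence lies in the domain of the bi-analytic $U^-M_0AU^+$ chart guaranteed by the choice of $\eta_0$) as $h^-h^0h^+$, and then conjugate each factor by $a_{t_2}$. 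This gives $g^- = (a_{t_2}h^-a_{-t_2})(a_{t_2}h^0a_{-t_2})\in U^-M_0A$ of size $\ll\beta$ and $g^U = a_{t_2}h^+a_{-t_2}\in U$ of size $\ll e^{t_2}\beta$ without ever needing a Bruhat chart at a non-small element. From there your absorption of $g^U$ by commuting past $a_{t_2}u_2a_{t_1}$ (using abelianness of $U$ and the contraction $a_{-(t_1+t_2)}g^U a_{t_1+t_2}$ having norm $\ll e^{-t_1}\beta\leq\beta^2$), and the Lipschitz estimate on $g^-$, are both correct and give the stated bound.
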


\begin{proof}
This is a direct consequence of the F{\o}lner property of $U$. See \cite[Lemma 7.4]{LMW22}. 
\end{proof}

\begin{proof}[Proof of \cref{pro:dim improving main}]
Recall that we chose an arithmetic progression $\{\alpha_j\}_{j = 0}^{p_{\mathrm{fin}}}$ satisfying
\begin{itemize}[label=$\bullet$]
\item $\epsilon_0 = \alpha_0 < \alpha_1 < \alpha_2 < \cdots < \alpha_{p_{\mathrm{fin}}}$,
\item $\alpha_{j} - \alpha_{j - 1} = \frac{1}{72 \cdot 9 \cdot 10}\theta$ for all $1 \leq j \leq p_{\mathrm{fin}}$,
\item $\alpha_{p_{\mathrm{fin}} - 1} \leq 9 - \theta < \alpha_{p_{\mathrm{fin}}} < 9$.
\end{itemize}
Note that for all $j$, we have
\begin{align*}
    \bar{\varphi}(\alpha_j) \geq \frac{1}{720} \theta.
\end{align*}

Let $d_j = \sum_{i = 0}^j N_i$. Note that $d = d_{p_{\mathrm{fin}}}$. For all $j = 0, 1, \ldots, p_{\mathrm{fin}}$, let $\delta_j = e^{2d_j\ell}\delta_0$. Recall that $\delta_0 = R^{-\frac{1}{\ref{a:closing lemma main}}} = e^{-\frac{t}{M\ref{a:closing lemma main}}}$. We will apply Lemma~\ref{lem:getting dim alpha} to obtain sheeted sets with dimension $\alpha_j$ at scale $\delta_j$. 

Applying Theorem~\ref{thm:closing lemma initial dim} to the initial point $x_1 \in X_\eta$, we get a family $\mathcal{F}^{\mathrm{ini}}$ of sheeted sets and associsted admissible measures $\{\mu_{\mathcal{E}^{\mathrm{ini}}}: \mathcal{E}^{\mathrm{ini}} \in \mathcal{F}^{\mathrm{ini}}\}$. For each $\mathcal{E}^{\mathrm{ini}} \in \mathcal{F}^{\mathrm{ini}}$, we claim the following. 

\begin{claim*}
For all $j = 0, 1, \ldots, p_{\mathrm{fin}}$, there exists a sequence of family of sheeted sets $\mathcal{F}^{(j)}$ and associated admissible measures with the following properties. For all $\mathcal{E} \in \mathcal{F}^{(j)}$, we use $F$ to denote its cross-section and $\mu_{\mathcal{E}}$ to denote the associated admissible measure. 
\begin{enumerate}
\item For all $\phi \in \mathrm{C}_c^\infty(X)$, $\ell' \geq 0$, and $u' \in \mathsf{B}_1^U$, we have
\begin{align*}
    \int_{\mathsf{B}_1^U} \int_{X} \phi(a_{\ell'}u'a_{d_j \ell} u.x) \,\mathrm{d}\mu_{\mathcal{E}^{\mathrm{ini}}}\,\mathrm{d}u
    = \sum_{\mathcal{E} \in \mathcal{F}^{(j)}} c_\mathcal{E}\int_{\mathcal{E}} \phi( a_{\ell'}u'.x) \,\mathrm{d}\mu_{\mathcal{E}}(x) + O(\mathcal{S}(\phi)d_j\beta^\star)
\end{align*}
for some $c_\mathcal{E}>0$ with $\sum_{\mathcal{E} \in \mathcal{F}^{(j)}} c_\mathcal{E}=1$.
\item For all $\mathcal{E} \in \mathcal{F}^{(j)}$, we have
\begin{align*}
\#F \geq \beta^{29 d_j} \delta_0^{-\frac{3\epsilon_0}{2}}.
\end{align*}
\item For all $\mathcal{E} \in \mathcal{F}^{(j)}$, we have
\begin{align*}
f_{\mathcal{E},\delta_j}^{(\alpha_j)}(x) \leq 2^{j}e^{20\ell}\#F \qquad \text{for all $x \in \mathcal{E}$}
\end{align*}
where $\delta_j = e^{2d_j \ell}\delta_0$.
\end{enumerate}
\end{claim*}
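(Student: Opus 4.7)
The plan is to establish the claim by induction on $j \in \{0,1,\ldots,p_{\mathrm{fin}}\}$, after which Proposition~\ref{pro:dim improving main} follows by post-composition with Theorem~\ref{thm:closing lemma initial dim}. Specifically, summing the claim at $j = p_{\mathrm{fin}}$ over $\mathcal{E}^{\mathrm{ini}}\in \mathcal{F}^{\mathrm{ini}}$ (weighted by the $c_{\mathcal{E}^{\mathrm{ini}}}$ produced by Theorem~\ref{thm:closing lemma initial dim}) and applying Lemma~\ref{lem:Folner H} with $t_1 = t$, $t_2 = d\ell$ converts the left-hand side of part~(1) of the proposition into $\int \phi(a_{\ell'}u'.x)\,\mathrm{d}(\nu_{d\ell}\ast\lambda\ast\nu_t\ast\delta_{x_1})(x)$ up to an additive error $\mathcal{S}(\phi)\beta^{\star}$. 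Part~(3) of the proposition follows by monotonicity of $f^{(\alpha)}_{\mathcal{E},\delta}$ in $\alpha$ and $\delta$, since $9-\theta\leq\alpha_{p_{\mathrm{fin}}}$ by choice and $\delta_{\mathrm{fin}} = \delta_0^{2\epsilon/\theta}\geq e^{2d\ell}\delta_0 = \delta_{p_{\mathrm{fin}}}$ by comparing the explicit exponents of $e$; the lower bound $\#F_i^{\mathrm{fin}}\geq\delta_0^{-4\epsilon_0/3}$ of part~(2) reduces to $\beta^{29d}\geq\delta_0^{\epsilon_0/6}$, equivalent to $29d\leq \tfrac{\epsilon_0}{6}\cdot 10^{10}\ref{e:closing lemma main1}\ref{e:closing lemma main2}d^2$, which is immediate from the definitions of $\beta$ and $\delta_0$.

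For the base case $j = 0$ of the claim, take $\mathcal{F}^{(0)}:=\{\mathcal{E}^{\mathrm{ini}}\}$: since $N_0 = d_0 = 0$ the random walk is trivial, and the three properties reduce to the conclusions of Theorem~\ref{thm:closing lemma initial dim}, using $\beta^{-\ref{e:closing lemma main2}}\leq e^{20\ell}$ (a consequence of~\eqref{eqn:Much smaller scale}) for property~(3) and $\beta^{29}\delta_0^{-\epsilon_0/2}\geq 1$ for property~(2). For the inductive step, given $\mathcal{F}^{(j)}$ at level $j$, first upgrade the Margulis exponent of each $\mathcal{E}\in\mathcal{F}^{(j)}$ from $\alpha_j$ to $\alpha_{j+1}$ at the same scale $\delta_j$ by the elementary bound
\[
 f^{(\alpha_{j+1})}_{\mathcal{E},\delta_j}(z)\;\leq\;\delta_j^{-(\alpha_{j+1}-\alpha_j)}\,f^{(\alpha_j)}_{\mathcal{E},\delta_j}(z)\;\leq\;\delta_j^{-(\alpha_{j+1}-\alpha_j)}\cdot 2^j e^{20\ell}\#F,
\]
and then apply Lemma~\ref{lem:getting dim alpha} to $\mathcal{E}$ with $\alpha = \alpha_{j+1}$, $\delta = \delta_j$, $N = N_{j+1}$, and $C\Upsilon$ equal to the right-hand side above. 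Collecting the outputs over $\mathcal{E}\in\mathcal{F}^{(j)}$ and composing the random-walk step of length $d_j\ell$ with the new one of length $N_{j+1}\ell$ (so $d_{j+1} = d_j + N_{j+1}$) yields a family $\mathcal{F}^{(j+1)}$ whose properties~(1) and~(2) follow immediately from Lemma~\ref{lem:getting dim alpha} combined with the inductive hypothesis, while property~(3) emerges in the form $2Ce^{20\ell}\#F' = 2^{j+1}e^{20\ell}\#F'$.

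The one substantive point is verifying the quantitative hypothesis~\eqref{eqn: Number of steps} under which Lemma~\ref{lem:getting dim alpha} delivers the improved Margulis bound. Using $\bar{\varphi}(\alpha_{j+1})\geq\theta/720$ and the bound $29|\log\beta|\ll \theta\ell/720$ (a consequence of~\eqref{eqn:Much smaller scale}), the denominator of~\eqref{eqn: Number of steps} is $\asymp\theta\ell$, while the numerator equals
\[
 (\alpha_{j+1}-\alpha_j)|\log\delta_j| + j\log 2 \leq \tfrac{\theta}{6480}\bigl(|\log\delta_0| + 2d\ell\bigr) + j\log 2.
\]
Substituting $\ell = \tfrac{\epsilon}{100M\ref{a:closing lemma main}}t$ and $|\log\delta_0| = \tfrac{t}{M\ref{a:closing lemma main}}$, the required inequality reduces to $N_{j+1}\gtrsim 1/\epsilon + O(j)$, which is precisely why the specific choice $N_{j+1} = \lceil N_1(3/4)^j\rceil$ with $N_1 = \lceil 25/(2\epsilon)\rceil$ was made. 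Simultaneously, $p_{\mathrm{fin}}$ is chosen as the smallest integer with $\alpha_{p_{\mathrm{fin}}}\geq 9-\theta$, so the accumulated $2^{p_{\mathrm{fin}}}$ factor in property~(3) is an absolute constant depending only on $(G,H,\Gamma)$. This bookkeeping, together with checking that the errors $O(\mathcal{S}(\phi)\beta^\star)$ at each of the $p_{\mathrm{fin}}$ steps sum to $O(\mathcal{S}(\phi)\beta^\star)$, is the main technical obstacle; once it is in place the proof is a routine assembly of the closing lemma, the F{\o}lner identity, and the single dimension-improving step.
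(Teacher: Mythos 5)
Your overall architecture matches the paper's: take $\mathcal{F}^{(0)}=\{\mathcal{E}^{\mathrm{ini}}\}$, deduce the base case from Theorem~\ref{thm:closing lemma initial dim}, then iterate by trading $\alpha_j\to\alpha_{j+1}$ at the cost of a factor $\delta_j^{-(\alpha_{j+1}-\alpha_j)}$ and feeding this into Lemma~\ref{lem:getting dim alpha} with $C=2^j$ and $N=N_{j+1}$. That part is right.

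There is, however, a genuine error in the quantitative verification of~\eqref{eqn: Number of steps}, which is the crux of the bookkeeping. You bound the numerator by
\[
(\alpha_{j+1}-\alpha_j)\,|\log\delta_j| \;\leq\; \tfrac{\theta}{6480}\bigl(|\log\delta_0| + 2d\ell\bigr),
\]
but the sign here is backwards. Since $\delta_j = e^{2d_j\ell}\delta_0$ with $\delta_0<1$, the scale $\delta_j$ is \emph{larger} than $\delta_0$, so $|\log\delta_j| = |\log\delta_0| - 2d_j\ell$, not $|\log\delta_0| + 2d_j\ell$. Consequently the required lower bound on $N_{j+1}$ comes out to (roughly)
\[
\frac{25}{2\epsilon} - \frac{d_j}{4},
\]
which \emph{decreases} with $j$, and in fact decreases geometrically because $d_j = N_1\sum_{0\le i<j}(3/4)^i$ telescopes to give $\frac{25}{2\epsilon}-\frac{d_j}{4}\lesssim N_1(3/4)^j$. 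Your conclusion "$N_{j+1}\gtrsim 1/\epsilon + O(j)$" is therefore wrong, and your explanation of why the geometric choice $N_{j+1}=\lceil N_1(3/4)^j\rceil$ was made is upside down: the sequence $N_{j+1}$ must \emph{decrease} geometrically (to keep $d=\sum N_j$ bounded, hence $\delta_{p_{\mathrm{fin}}}\leq\delta_{\mathrm{fin}}$), and the hypothesis~\eqref{eqn: Number of steps} is satisfiable with a decreasing $N_{j+1}$ precisely because $|\log\delta_j|$ shrinks at each step. With your sign convention the required $N_{j+1}$ would grow with $j$ while the chosen one shrinks, and the induction would collapse after finitely many steps.

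A secondary omission: after applying Lemma~\ref{lem:getting dim alpha} you must check that $\delta_j \geq (\#F)^{-1/\alpha_{j+1}}$ so that the output scale $\delta_N' = e^{2N_{j+1}\ell}\max\{\delta_j,(\#F)^{-1/\alpha_{j+1}}\}$ equals $\delta_{j+1} = e^{2d_{j+1}\ell}\delta_0$. This is where the lower bound $\#F\geq\beta^{29d_j}\delta_0^{-3\epsilon_0/2}\geq\delta_0^{-4\epsilon_0/3}$ from property~(2) and the bound $\delta_j\geq\delta_0^{(3/4)^j}$ are actually used (together with $\theta$ being small relative to $\epsilon_0$). Without it, the claimed scale $\delta_{j+1}$ in property~(3) of the next level is not justified.
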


Let us first conclude the proposition assuming the claim. Let $\{\mathcal{E}_i^{\mathrm{fin}}\}_i = \mathcal{F}^{(p_{\mathrm{fin}})}$ and $\{\mu_{\mathcal{E}_i^{\mathrm{fin}}}\}$ be those associated admissible measure produced by the claim. As usual, we use $F_i^{\mathrm{fin}}$ to denote the cross-section of $\mathcal{E}_i^{\mathrm{fin}}$. We will show that the proposition holds for this family $\{\mathcal{E}_i^{\mathrm{fin}}\}_i$. We will first show property~(2) and (3) from the claim and show property~(1) by Theorem~\ref{thm:closing lemma initial dim}, and Lemma~\ref{lem:Folner H}. 

By property~(2) in the claim, we have
\begin{align}\label{eqn:final number of sheet estimate}
    \#F_i^{\mathrm{fin}} \geq \beta^{29d} \#F^{\mathrm{ini}}_i \geq \beta^{29d} e^{\frac{3\epsilon_0 t}{2M\ref{a:closing lemma main}}} \geq \delta_0^{-\frac{4\epsilon_0}{3}}.
\end{align}
The last inequality follows from \cref{eqn:Much smaller scale} and the fact that $\epsilon$ is much smaller than $\epsilon_0$. In particular, we use $\beta^{29d} \geq e^{-\frac{\epsilon_0 t}{6M\ref{a:closing lemma main}}}$. This shows property~(2) in the proposition. 

We now estimate $\delta_{p_{\mathrm{fin}}}$. We have
\begin{align*}
    \delta_{p_{\mathrm{fin}}} = e^{2d\ell}\delta_0 \leq e^{2p_{\mathrm{fin}}\ell} e^{2N_1\ell(\sum_{j = 0}^{p_{\mathrm{fin}} - 1} (\frac{3}{4})^j)} \delta_0 = e^{2p_{\mathrm{fin}}\ell}e^{8N_1\ell} e^{-8(\frac{3}{4})^{p_{\mathrm{fin}}}N_1\ell}\delta_0.
\end{align*}
Note that
\begin{align*}
    e^{8N_1\ell}\delta_0 \leq e^{8\ell} e^{8\frac{25}{2\epsilon}\ell} e^{-\frac{t}{M\ref{a:closing lemma main}}} \leq e^{8\ell}.
\end{align*}
The last inequality follows from the definition $\ell = \frac{\epsilon t}{100M\ref{a:closing lemma main}}$. Therefore, 
\begin{align*}
    \delta_{p_{\mathrm{fin}}} \leq e^{2p_{\mathrm{fin}}\ell}e^{8\ell}e^{-8(\frac{3}{4})^{p_{\mathrm{fin}}}N_1\ell} \leq e^{2p_{\mathrm{fin}}\ell}e^{8\ell}e^{-8(\frac{3}{4})^{p_{\mathrm{fin}}}\frac{25}{2\epsilon}\ell}.
\end{align*}
The last inequality follows from the definition $N_1 = \lceil \frac{25}{2\epsilon} \rceil$. Recall that $p_{\mathrm{fin}} = \lceil 6480(\frac{9 - \epsilon_0}{\theta} - 1)\rceil \leq \frac{10^6}{\theta}$ and $\epsilon = 10^{-10}(\frac{3}{4})^{p_{\mathrm{fin}}}\theta$. We have
\begin{align*}
\delta_{p_{\mathrm{fin}}} \leq e^{-\frac{200}{\theta}\ell} = \delta_0^{\frac{2\epsilon}{\theta}}.
\end{align*}
This shows property~(3) in the proposition. 

We now show property~(1). Fix $\phi \in \mathrm{C}_c^{\infty}(X) + \C \mathds{1}_X$, $\ell' \geq 0$, and $u' \in \mathsf{B}_1^U$. Since $e^{-t} \ll \beta $, by Lemma~\ref{lem:Folner H} we have
\begin{align}\label{eqn:Folner in dim improvement}
\begin{aligned}
&\int_{\mathsf{B}_1^U} \phi(a_{\ell'}u' a_{d\ell + t} u.x_1) \,\mathrm{d}u\\
={}& 
\int_{\mathsf{B}_1^U}
\int_H \int_{\mathsf{B}_1^U} \phi(a_{\ell'}u' a_{d\ell}u_2 \mathsf{h} a_{t} u_1.x_1) \,\mathrm{d}u_1\,\mathrm{d}\lambda(\mathsf{h})\,\mathrm{d}u_2 + O(\mathcal{S}(\phi)\beta^\star).
\end{aligned}
\end{align}
By Theorem~\ref{thm:closing lemma initial dim}, we have
\begin{align}\label{eqn:Closing lemma in dim improvement measure}
\begin{aligned}    {}&\int_{\mathsf{B}_1^U}
\int_H \int_{\mathsf{B}_1^U} \phi(a_{\ell'}u' a_{d\ell}u_2 \mathsf{h} a_{t} u_1.x_1) \,\mathrm{d}u_1\,\mathrm{d}\lambda(\mathsf{h})\,\mathrm{d}u_2\\
={}& \sum_{\mathcal{E}^{\mathrm{ini}}} c_{\mathcal{E}^{\mathrm{ini}}} \int_{\mathsf{B}_1^U} \int_{\mathcal{E}^{\mathrm{ini}}} \phi( a_{\ell'}u'a_{d\ell}u_2.x) \,\mathrm{d}\mu_{\mathcal{E}^{\mathrm{ini}}}(x) \,\mathrm{d}u_2 + O(\mathcal{S}(\phi)\beta^\star).
\end{aligned}
\end{align}
for some $c_{\mathcal{E}^{\mathrm{ini}}}>0$ with $\sum_{\mathcal{E}^{\mathrm{ini}}} c_{\mathcal{E}^{\mathrm{ini}}}=1$. Combine \cref{eqn:Folner in dim improvement,eqn:Closing lemma in dim improvement measure} with property~(1) in the claim, we prove property~(1) in the proposition. 

\begin{proof}[Proof of the claim]
For $j = 0$, let $\mathcal{F}^{(0)}$ consist of the single initial sheeted set $\mathcal{E}^{\mathrm{ini}}$. Let $F^{\mathrm{ini}}$ be its cross-section. It suffices to show property~(2) and (3). By Theorem~\ref{thm:closing lemma initial dim} property~(2), we have
\begin{align*}
    \#F^{\mathrm{ini}} \geq \beta^{29}\delta^{-2\epsilon_0} \geq \delta^{-\frac{3\epsilon_0}{2}}.
\end{align*}
The last inequality follows from \cref{eqn:Much smaller scale}. In particular, we use $\delta_0^{\frac{\epsilon_0}{2}} = e^{-\frac{\epsilon_0 t}{2M\ref{a:closing lemma main}}} \leq \beta^{29}$. By Theorem~\ref{thm:closing lemma initial dim} property~(3), for all $x \in \mathcal{E}^{\mathrm{ini}}$ we have
\begin{align*}
f_{\mathcal{E}^{\mathrm{ini}},\delta_0}^{(\alpha_0)}(x) \leq \beta^{-\ref{e:closing lemma main2}}\#F^{\mathrm{ini}} \leq e^{20\ell}\#F^{\mathrm{ini}}.
\end{align*}
The last inequality follows from \cref{eqn:Much smaller scale}. In particular, we use $e^{20\ell} = e^{\frac{\epsilon t}{5M\ref{a:closing lemma main}}} \leq \beta^{\ref{e:closing lemma main2}}$. 

For $j \geq 1$, we will prove the claim by induction on $j$. For $j = 1$, fix $\mathcal{E}^{(0)} \in \mathcal{F}^{(0)}$ with cross-section $F^{(0)}$ and associated admissible measure $\mu_{\mathcal{E}^{(0)}}$. By the previous case with $j = 0$, we have
\begin{align*}
    f_{\mathcal{E}^{(0)}, \delta_0}^{(\alpha_0)}(x) \leq e^{20\ell}\#F^{(0)} \qquad \text{for all $x \in \mathcal{E}^{(0)}$}.
\end{align*}
By definition of the Margulis function in \cref{subsec:Margulis function}, we have for all $x \in \mathcal{E}^{(0)}$
\begin{align*}
    f_{\mathcal{E}^{(0)}, \delta_0}^{(\alpha_{1})}(x) &= \sum_{w \in I_{\mathcal{E}^{(0)}}(x) \setminus \{0\}} \max\{\|w\|, \delta_0\}^{-\alpha_{1}}\\
    &\leq \delta_0^{-(\alpha_{1} - \alpha_0)} \sum_{w \in I_{\mathcal{E}^{(0)}}(x) \setminus \{0\}} \max\{\|w\|, \delta_0\}^{-\alpha_{0}} \\ 
    &\leq  e^{20\ell}\delta_0^{-\frac{\theta}{72\cdot 9 \cdot 10}}\#F^{(0)} .
\end{align*}
Applying \cref{lem:getting dim alpha} for all $\mathcal{E}^{(0)} \in \mathcal{F}^{(0)}$, $\alpha = \alpha_{1}$, $\delta = \delta_0$, $C = 1$, $N = N_1$, and
\begin{align*}
    \Upsilon = \delta_0^{-\frac{\theta}{72 \cdot 9 \cdot 10}} e^{20\ell}\#F^{(0)},
\end{align*}
we obtain a new family $\mathcal{F}^{(1)}$ of sheeted sets. Properties~(1) and (2) follow directly from \cref{lem:getting dim alpha}; it remains to prove Property~(3).
Notice that by definition
\begin{align*}
    N_1 = \left\lceil \frac{25}{2\epsilon}\right\rceil 
    \geq \left\lceil\frac{{\frac{\theta}{72\cdot 9\cdot10}}|\log(\delta_0)|}{\frac{4\theta}{5 \cdot72\cdot 9}\ell}\right\rceil 
    \geq  \Biggl\lceil \frac{1}{\bar{\varphi}(\alpha)\ell + 29\log(\beta)}\log\Biggl(\frac{\Upsilon}{e^{20\ell}\#F^{(0)}}\Biggr) \Biggr\rceil.
\end{align*}
where the last inequality follows from \eqref{eqn:Much smaller scale}. In particular, we use $e^{-\frac{\theta}{720}\ell} \leq \beta^{29}$ and $\epsilon$ is much smaller than $\theta$. Notice also that $\#F^{(0)} \geq \delta_0^{-\frac{3}{2}\alpha_0}$, we have $\delta_0 = \max\{\delta_0,(\#F^{(0)})^{-\frac{1}{\alpha_1}}\}$. Therefore, Lemma~\ref{lem:getting dim alpha} implies that the Margulis functions for any point in the new good sheeted sets satisfy the desired bound at scale $\delta_1 = e^{2N_1\ell}\delta_0$.

Assuming the claim holds for $j$, we will use \cref{prop: integrated inductive step} to show the claim holds for $j + 1$. For all sheeted set $\mathcal{E} \in \mathcal{F}^{(j)}$, let $F$ be its cross-section and $\mu_{\mathcal{E}}$ be the associated admissible measure. 
By the inductive hypothesis, we have for all $\mathcal{E} \in \mathcal{F}^{(j)}$
\begin{align*}
    f_{\mathcal{E}, \delta_j}^{(\alpha_j)}(x) \leq 2^{j}e^{20\ell}\#F \qquad \text{for all $x \in \mathcal{E}$}.
\end{align*}
By definition of the Margulis function, for all $x \in \mathcal{E}$, we have
\begin{align*}
    f_{\mathcal{E}, \delta_j}^{(\alpha_{j + 1})}(x) ={}& \sum_{w \in I_{\mathcal{E}}(x) \setminus \{0\}} \max\{\|w\|, \delta_j\}^{-\alpha_{j + 1}}\\
    \leq{}& \delta_j^{-(\alpha_{j + 1} - \alpha_j)} \sum_{w \in I_{\mathcal{E}}(x) \setminus \{0\}} \max\{\|w\|, \delta_j\}^{-\alpha_{j}}
    \\\leq{}& 2^{j} e^{20\ell} \delta_j^{-\frac{\theta}{72 \cdot 9 \cdot 10}} \#F.
\end{align*}
Applying \cref{lem:getting dim alpha} for all $\mathcal{E} \in \mathcal{F}^{(j)}$ (with cross-section $F$ and associated admissible measure $\mu_{\mathcal{E}}$), $\alpha = \alpha_{j + 1}$, $\delta = \delta_j$, $C = 2^{j}$, $N_{j + 1}$ and
\begin{align*}
    \Upsilon = e^{20\ell}\delta_j^{-\frac{\theta}{72 \cdot 9 \cdot 10}} \#F,
\end{align*}
we have a family $\mathcal{F}^{(j + 1)}$ of good sheeted sets. Properties~(1) and (2) follows directly from \cref{lem:getting dim alpha}. Note that we have
\begin{align*}
\frac{\log(\delta_j^{-\frac{\theta}{72\cdot 9 \cdot10}})}{\frac{4\theta}{5 \cdot72\cdot 9}\ell}
&\leq \frac{-2d_j \frac{\theta}{72\cdot 9 \cdot10} \ell + \frac{\theta}{72 \cdot 9 \cdot 10A_0 M}t}{\frac{4\theta}{5 \cdot72\cdot 9}\ell}
= -\frac{1}{4} d_j + \frac{25}{2\epsilon}\\
&\leq -\tfrac{1 }{4} N_1\sum_{0\leq i < j} (\tfrac{3}{4})^i + \frac{25}{2\epsilon}
= - N_1 \big(1-(\tfrac{3}{4})^j\big) + \frac{25}{2\epsilon} \leq N_1 (\tfrac{3}{4})^j \leq N_{j+1}.
\end{align*}
Hence, by \eqref{eqn:Much smaller scale} we may apply (3) in \cref{lem:getting dim alpha} with $N_{j+1}$. For all new sheeted set $\mathcal{E}' \in \mathcal{F}^{(j + 1)}$ with cross-section $F'$, we have
\begin{align*}
f_{\mathcal{E}',\delta_j'}^{(\alpha_{j + 1})}(z) \leq 2^{j + 1}e^{20\ell}\#F' \qquad \text{for all $z \in \mathcal{E}'$}
\end{align*}
where $\delta_j' = e^{2N_{j + 1}\ell}\max\{\delta_j, (\#F)^{-\frac{1}{\alpha_{j + 1}}}\}$. It suffices to show that $\delta_j \geq (\#F)^{-\frac{1}{\alpha_{j + 1}}}$. 
By property~(2) in the inductive hypothesis, we have 
\begin{align*}
\#F \geq \beta^{29d_j}\delta_0^{-\frac{3}{2}\epsilon_0} \geq \beta^{29d}\delta_0^{-\frac{3}{2}\epsilon_0} \geq \delta_0^{-\frac{4}{3}\epsilon_0}
\end{align*}
where the last inequality follows from \cref{eqn:final number of sheet estimate}. 
By the definition of $d_j$ and $\ell$, we have
\begin{align*}
2d_j \ell \geq 2\Biggl(\sum_{i = 0}^{j - 1} \Biggl(\frac{25}{2\epsilon}\Biggr)\Biggl(\frac{3}{4}\Biggr)^j\Biggr)\ell = \frac{1}{M\ref{a:closing lemma main}}\Biggl(1 - \Biggl(\frac{3}{4}\Biggr)^j\Biggr).
\end{align*}
Therefore, 
\begin{align*}
\delta_j = e^{2d_j\ell}\delta_0 \geq \delta_0^{(\frac{3}{4})^j} \geq \delta_0^{\frac{4}{3}\frac{\epsilon_0}{\alpha_{j + 1}}} \geq (\#F)^{-\frac{1}{\alpha_{j + 1}}}.
\end{align*}
The middle inequality follows from our definition $\alpha_{j + 1} = \epsilon_0 + \frac{\theta}{72 \cdot 9 \cdot 10} (j + 1)$ and the fact that $\theta$ is much smaller than $\epsilon_0$. (In fact here we only need $\theta < \frac{\epsilon_0}{4}$.) Thus property~(3) in the claim holds for all sheeted sets in $\mathcal{F}^{(j + 1)}$. The proof of the claim is complete. 
\end{proof}
The proof of the proposition is complete. 
\end{proof}

\section{From large dimension to effective equidistribution}\label{sec:From large dimension to effective equidistribution}
The main result of this proposition is Proposition~\ref{pro:Final phase}. This is an anologue of \cite[Proposition 9.1]{LMWY25}. It allow us to get effective equidistribution from high transverse dimension. Let us recall the following parameters from the previous sections. 

Recall the constants $\ref{a:closing lemma main} > 1$, $\ref{c:closing lemma main} > 1$, $\ref{d:closing lemma main} > 1$, $\ref{e:closing lemma main1}, \ref{e:closing lemma main2}> 1$, $\ref{m:closing lemma main} > 1$, $\epsilon_0 > 0$ from the effective closing lemma (Theorem~\ref{thm:closing lemma initial dim}) and the constant $\ref{d:avoidance}$ from the avoidance principle (Proposition~\ref{pro:avoidance}). They depend only $(G, H, \Gamma)$. Recall $D = \max\{\ref{d:closing lemma main}, \ref{d:avoidance}\} + 2$ and $M = \ref{m:closing lemma main} + \ref{c:closing lemma main} D$ be as in Theorem~\ref{thm:closing lemma initial dim}. Let $R > 1$ and $t = M\log R$. We will assume that $R$ is large enough depending on the space $X$. Recall from Theorem~\ref{thm:closing lemma initial dim} that $\delta_0 = R^{-\frac{1}{\ref{a:closing lemma main}}} = e^{-\frac{t}{M\ref{a:closing lemma main}}}$.

Recall from the previous section that we have $\theta = (\frac{\min\{\kappa_1, \epsilon_0\}}{80})^{2} \in (0, \min\{\kappa_1, \epsilon_0\})$, $p_{\mathrm{fin}} = \lceil 6480(\frac{9 - \epsilon_0}{\theta} - 1)\rceil$, and $\epsilon = 10^{-10}(\frac{3}{4})^{p_{\mathrm{fin}}} \theta$. Recall that $\frac{\epsilon}{\theta}$ is much smaller than both $\kappa_1$ and $\epsilon_0$. Let $\alpha = \dim(\mathfrak{r}) - \theta = 9 - \theta$. 

Let $\beta$, $\eta$, and $\ell$ be as in the previous section. We recall that $e^{-\ell}$ is a much smaller scale than $\beta$. Recall that we pick $R$ large enough so that $e^{-\theta\ell}$ is a small scale. In particular, let us recall \cref{eqn:Much smaller scale,eqn:R large enough absolutely} in the following inequalities:
\begin{align}
    &e^{-\epsilon^2 \ell} \leq \beta^{10^{10}\ref{e:closing lemma main1}\ref{e:closing lemma main2}},\\
    &e^{-\theta\ell} \leq 10^{-10000}.
\end{align}

\begin{proposition}\label{pro:Final phase}
    Let $F \subset B_{\beta}^{\mathfrak{r}}$ be a finite set with $\#F \geq \delta_0^{-\frac{4\epsilon_0}{3}} = e^{\frac{4\epsilon_0 t}{3M\ref{a:closing lemma main}}}$. Let 
    \begin{align*}
        \mathcal{E} = \mathsf{E} \exp(F).y \subset X_\eta
    \end{align*}
    be a sheeted set equipped with an admissible measure $\mu_{\mathcal{E}}$. Assume further that the following is satisfied. For all $z = \mathsf{h}\exp(w).y$ with $\mathsf{h} \in \mathsf{E} \setminus \partial_{10\beta^2}\mathsf{E}$, 
    \begin{align}\label{eqn:Margulis function estimate final phase}
        f^{(\alpha)}_{\mathcal{E}, \delta_{\mathrm{fin}}}(z) \leq e^{20\ell} \#F \text{ where } \delta_{\mathrm{fin}} = \delta_0^{\frac{2\epsilon}{\theta}}.
    \end{align}
    Let $\tau$ be a parameter with $\frac{1}{16}|\log \delta_{\mathrm{fin}}|\leq \tau \leq \frac{1}{8}|\log \delta_{\mathrm{fin}}|$. Then we have
    \begin{align*}
        \Biggl|\int_{\mathsf{B}_1^U} \int_X \phi(a_\tau u.z) \,\mathrm{d}\mu_{\mathcal{E}}(z) \,\mathrm{d}u - \int_X \phi \,\mathrm{d}\mu_X\Biggr| \ll \mathcal{S}(\phi)\beta^\star
    \end{align*}
    for all $\phi \in \mathrm{C}_c^{\infty}(X) + \C \mathds{1}_X$. 
\end{proposition}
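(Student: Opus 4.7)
The strategy is to reduce, via the admissibility of $\mu_\mathcal{E}$ and standard group-theoretic manipulation, to an expression in the form of Lemma~\ref{lem:Venkatesh} with the transverse measure $\mu$ on $\mathfrak{r}$ being a thickening of the counting measure on $F$, and then to verify that the parameter constraints in Lemma~\ref{lem:Venkatesh} are met under the scale relations recorded at the start of this section. The non-concentration input is produced as follows. The Margulis function estimate~\eqref{eqn:Margulis function estimate final phase}, combined with property~(3) of Proposition~\ref{pro:frostman energy Margulis function}, yields the Frostman-type bound $\mu_F(B^{\mathfrak{r}}_\delta(w_0)) \ll e^{20\ell}\delta^{9-\theta}$ for all $w_0 \in \mathfrak{r}$ and $\delta \geq \delta_{\mathrm{fin}}$. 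Convolving $\mu_F$ with normalized Lebesgue on $B^{\mathfrak{r}}_{\delta_{\mathrm{fin}}}(0)$ produces an absolutely continuous probability measure $\mu$ supported in $B^{\mathfrak{r}}_\varrho(0)$ with $\varrho \asymp \beta$, satisfying $\mu(B^{\mathfrak{r}}_\delta(w)) \leq \Upsilon\delta^{\dim\mathfrak{r}}$ for $\Upsilon := e^{20\ell}\delta_{\mathrm{fin}}^{-\theta}$ and all $w \in \mathfrak{r}$, $\delta > 0$; this is the hypothesis of Lemma~\ref{lem:Venkatesh} with ``$\delta_0$'' taken to be $\delta_{\mathrm{fin}}$. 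Replacing $\mu_F$ by $\mu$ in the main integral after applying $a_\tau$ costs at most $O(\mathcal{S}(\phi)\delta_{\mathrm{fin}} e^{2\tau}) \ll \mathcal{S}(\phi)\beta^\star$, since $\tau \leq |\log\delta_{\mathrm{fin}}|/8$ forces $\delta_{\mathrm{fin}}e^{2\tau}\leq\delta_{\mathrm{fin}}^{3/4}$ and $|\log\delta_{\mathrm{fin}}|\gg|\log\beta|$ by~\eqref{eqn:Much smaller scale}.

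By admissibility of $\mu_\mathcal{E}$, up to $O(\mathcal{S}(\phi)\beta^\star)$ boundary error from $\partial_{10\beta^2}\mathsf{E}$, we replace the integration against $\mu_\mathcal{E}$ by the uniform average over $w \in F$ of the pushforwards of the normalized Haar measure on $\mathsf{E}\setminus\partial_{10\beta^2}\mathsf{E}$ under $\mathsf{h}\mapsto \mathsf{h}\exp(w).y$. Splitting $\tau = \ell_1+\ell_2$ and using the F\o lner property of the abelian $U$ to recognize the dilated ball $\mathsf{B}^U_{e^{2\tau}} = a_\tau\mathsf{B}^U_1 a_{-\tau}$ as an approximate convolution $(a_{\ell_1}\mathsf{B}^U_1 a_{-\ell_1})\cdot(a_{\ell_2}\mathsf{B}^U_1 a_{-\ell_2})$, together with the $\Ad(H)$-invariance of $\mathfrak{r}$ (giving $h\exp(w) = \exp(\Ad(h)w)h$) which lets us pass $\exp(w)$ through the $H$-factors at the cost of an $\Ad$-twist of $w$ absorbed into the $\delta_{\mathrm{fin}}$-thickening defining $\mu$, we transform the left-hand side into
\[
\int_{\mathsf{B}_1^U}\int_{\mathsf{B}_1^U}\int_{\mathfrak{r}} \phi(a_{\ell_1}u_1 a_{\ell_2}u_2\exp(w).x)\,\mathrm{d}\mu(w)\,\mathrm{d}u_2\,\mathrm{d}u_1 + O(\mathcal{S}(\phi)\beta^\star),
\]
with $x \in X_{\eta/2}$ a bounded-Lipschitz perturbation of $y$ coming from the $\mathsf{B}^{s,H}_\beta$-factor of $\mathsf{h}$.

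Finally, we apply Lemma~\ref{lem:Venkatesh} with this $\mu$, $\eta$, $\varrho\asymp\beta$, and the choice $\ell_1 = C_0\ell$ for an absolute constant $C_0$ with $C_0\kappa_1 \geq 200$, and $\ell_2 = \tau - \ell_1$. Using $|\log\delta_{\mathrm{fin}}| = 200\ell/\theta$, $\theta = O(\kappa_1^2)$, $\tau \in [|\log\delta_{\mathrm{fin}}|/16, |\log\delta_{\mathrm{fin}}|/8]$, and $|\log\beta|\ll\ell$ from~\eqref{eqn:Much smaller scale}, we get $\ell_2 \asymp \tau \gg \ell_1$; consequently both $\kappa_1\ell_2\geq\max\{\ell_1,|\log\eta|\}$ and $8\ell_2\leq|\log\delta_{\mathrm{fin}}|$ hold, and the Venkatesh error
\[
\Upsilon^{1/2}\varrho^{-3}e^{-\kappa_1\ell_1}\leq e^{10\ell}\delta_{\mathrm{fin}}^{-\theta/2}\beta^{-3}e^{-C_0\kappa_1\ell} = e^{(110-C_0\kappa_1)\ell}\beta^{-3}
\]
is $\ll\beta^\star$ by the choice of $C_0$. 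The main obstacle will be the F\o lner and commutation bookkeeping in the reduction above: namely, verifying that the approximate factorization $a_\tau u\mathsf{h} \approx a_{\ell_1}u_1 a_{\ell_2}u_2\cdot h'$ with $h'$ in a small $H$-neighborhood of identity, once the two F\o lner errors for the $U$-dilations, the Baker--Campbell--Hausdorff error from the $\Ad$-twist of $w$, and the admissibility boundary contributions are combined, gives total error $O(\mathcal{S}(\phi)\beta^\star)$ under~\eqref{eqn:Much smaller scale}.
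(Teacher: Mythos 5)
Your overall strategy matches the paper's: use the Margulis function estimate to derive a Frostman-type bound for the transverse measure, absorb the $\mathsf{E}$-factor into a near-identity base-point shift, and then invoke Lemma~\ref{lem:Venkatesh} with $\Upsilon \asymp e^{20\ell}\delta_{\mathrm{fin}}^{-\theta}$. Your choice $\ell_1 = C_0\ell$ is different from the paper's $\ell_1 = \kappa_1\ell_2 = \frac{\kappa_1\tau}{1+\kappa_1}$ but works equally well under the parameter constraints (the resulting error $e^{(110-C_0\kappa_1)\ell}\beta^{-3}$ is indeed $\ll\beta^\star$). The convolution to make $\mu$ absolutely continuous is harmless but not needed: Lemma~\ref{lem:Venkatesh} only requires the Frostman bound for $\delta\geq\delta_0$, so the discrete, $\mathsf{h}$-dependent transverse measures can be fed in directly, which is what the paper does via the disintegration from \cite[Section 9.2]{LMWY25}.

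The one genuine error is the claim that the $\Ad(\mathsf{h})$-twist of $w$ can be "absorbed into the $\delta_{\mathrm{fin}}$-thickening defining $\mu$." For $\mathsf{h}\in\mathsf{E}=\mathsf{B}^{U^-}_{\beta}\mathsf{B}^{M_0A}_{\beta}\mathsf{B}^U_\eta$ and $w\in B^{\mathfrak{r}}_\beta$, the twist has size $\|\Ad(\mathsf{h})w-w\|\sim\eta\beta=\beta^{3/2}$, while $\delta_{\mathrm{fin}}=\delta_0^{2\epsilon/\theta}$ satisfies $|\log\delta_{\mathrm{fin}}|=\tfrac{200\ell}{\theta}$ which, by \eqref{eqn:Much smaller scale}, is far larger than $|\log\beta^{3/2}|\leq\tfrac{3}{2}\cdot\tfrac{\epsilon^2\ell}{10^{10}\ref{e:closing lemma main1}\ref{e:closing lemma main2}}$. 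Thus $\beta^{3/2}\gg\delta_{\mathrm{fin}}$, and a $\delta_{\mathrm{fin}}$-thickening cannot absorb the twist. You also cannot repair this by thickening at scale $\beta^{3/2}$ instead, because then Lemma~\ref{lem:Venkatesh} would force $8\ell_2\leq|\log\beta^{3/2}|$, which is incompatible with $\ell_2\asymp\tau\asymp|\log\delta_{\mathrm{fin}}|$. The correct resolution — which is what the disintegration in \cite[Section 9.2]{LMWY25} actually achieves — is to keep the transverse measure $\hat\mu^{\mathsf{h}}=(\Ad(\mathsf{h}))_*\mu_F$ explicitly $\mathsf{h}$-dependent, observe that a near-identity linear twist preserves the Frostman bound $\hat\mu^{\mathsf{h}}(B_\delta)\ll e^{20\ell}\delta^\alpha$ at scales $\geq\delta_{\mathrm{fin}}$ with a uniformly bounded constant, and apply Lemma~\ref{lem:Venkatesh} fiber-by-fiber with base point $\mathsf{h}.z\in X_{\eta}$ before integrating over $\mathsf{h}$.
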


\begin{proof}
The statement can be proved following the proof of \cite[Proposition 9.1]{LMWY25} step-by-step. We present the necessary change of parameter for reader's convenience. 

Write $\tau = \ell_1 + \ell_2$ where 
\begin{align}\label{eqn:final phase setting two push}
\ell_2 = \frac{\tau}{1 + \kappa_1} \text{ and } \ell_1 = \kappa_1 \ell_2.
\end{align}
We have $8\ell_2 \leq 8\tau \leq |\log \delta_{\mathrm{fin}}|$, $\ell_1 \leq \kappa_1 \ell_2$. Recall from \cref{eqn:Much smaller scale} that $\beta^{10^{10}} \geq e^{-\frac{\epsilon t}{\kappa_1 M\ref{a:closing lemma main}}}$. We have $|\log \eta| \leq \frac{\kappa_1}{1 + \kappa_1}\tau = \kappa_1 \ell_2$. Therefore, we have as in Lemma~\ref{lem:Venkatesh} $\kappa_1\ell_2 \geq \max\{\ell_1, |\log \eta|\}$ and $8\ell_2 \leq |\log \delta_0|$. 

Note that for all $\phi \in \mathrm{C}_c^\infty(X) + \C\mathds{1}_X$, we have
\begin{align*}
&\int_{\mathsf{B}_1^U} \int_X \phi(a_\tau u.z) \,\mathrm{d}\mu_{\mathcal{E}}(z) \,\mathrm{d}u\\
=&
\int_{\mathsf{B}_1^U} \int_{\mathsf{B}_1^U} \int_X \phi(a_{\ell_1} u_1 a_{\ell_2} u_2.z) \,\mathrm{d}\mu_{\mathcal{E}}(z) \,\mathrm{d}u_2 \,\mathrm{d}u_1 + O(\mathcal{S}(\phi)e^{-\ell_2}).
\end{align*}
It suffices to estimate
\begin{align*}
\int_{\mathsf{B}_1^U} \int_{\mathsf{B}_1^U} \int_X \phi(a_{\ell_1} u_1 a_{\ell_2} u_2.z) \,\mathrm{d}\mu_{\mathcal{E}}(z) \,\mathrm{d}u_2 \,\mathrm{d}u_1.
\end{align*}

Disintegrating the measure $\mu_{\mathcal{E}}$ as in \cite[Section 9.2]{LMWY25}, for all $\mathsf{h} \in \hat{\mathsf{E}} = \overline{\mathsf{E} \setminus \partial_{20\beta^2}\mathsf{E}}$, there exists $\hat{\mu}^{\mathsf{h}}$ supported on a finite set $F^{\mathsf{h}}$ with the following properties. 
\begin{enumerate}
    \item For all $w \in F^{\mathsf{h}}$, we have
    \begin{align}\label{eqn:Close to uniform measure}
        \hat{\mu}^{\mathsf{h}}(\{w\}) \asymp \#F^{-1} \asymp (\#F^{\mathsf{h}})^{-1}.
    \end{align}
    \item We have the following estimate on the (modified) $\alpha$-energy of $F^{\mathsf{h}}$:
    \begin{align}\label{eqn:energy in final phase}
        \mathcal{G}_{F^{\mathsf{h}}, \delta_{\mathrm{fin}}}^{(\alpha)}(w) \ll e^{20\ell} \#F^{\mathsf{h}} \qquad \forall w \in F^{\mathsf{h}}.
    \end{align}
\end{enumerate}
We remark that \cref{eqn:Close to uniform measure} follows from the fact that $\mu_{\mathcal{E}}$ is an admissible measure and \cref{eqn:energy in final phase} follows from \cref{eqn:Margulis function estimate final phase} and \cite[Lemma 7.1]{LMWY25}. 

Moreover, it suffices to estimate
\begin{align*}
    \int_{\mathsf{B}_1^U} \int_{\mathsf{B}_1^U} \int_{\mathfrak{r}} \phi(a_{\ell_1} u_1 a_{\ell_2} u_2 \exp(w)\mathsf{h}.z) \,\mathrm{d}\hat{\mu}^{\mathsf{h}}(w) \,\mathrm{d}u_1\,\mathrm{d}u_2
\end{align*}
for some $z \in \exp(F).y$. See \cite[Section 9.2]{LMWY25}. 

Since $\#F \geq \delta_0^{-\frac{4\epsilon_0}{3}}$, the scale $\delta_{\mathrm{fin}}$ satisfies
\begin{align*}
\delta_{\mathrm{fin}} = \delta_0^{\frac{2\epsilon}{\theta}} \geq \#F^{-\frac{1}{9}} \geq \#F^{-1/\alpha}
\end{align*}
The first inequality follows from the fact that $\frac{\epsilon}{\theta}$ is much smaller than $\epsilon_0$. Therefore, by property~(2), the measure $\hat{\mu}^{\mathsf{h}}$ satisfies the following Frostman-type condition:
\begin{align*}
\hat{\mu}^{\mathsf{h}} (B^{\mathfrak{r}}_{\delta}(w)) \ll e^{20\ell} \delta^\alpha \leq e^{20\ell}\delta_{\mathrm{fin}}^{-\theta} \delta^{\dim(\mathfrak{r})} \quad \forall w\in \mathfrak{r}, \delta \geq \delta_{\mathrm{fin}}.
\end{align*}

Apply Lemma~\ref{lem:Venkatesh} with $\hat{\mu}^{\mathsf{h}}$, $\varrho = \beta$, scale $\delta_{\mathrm{fin}}$, 
\begin{align*}
\Upsilon \asymp \delta_{\mathrm{fin}}^{-\theta}e^{20\ell}
\end{align*}
and $\ell_1$, $\ell_2$ as in \cref{eqn:final phase setting two push}. We have
\begin{align*}
&\int_{\mathsf{B}_1^U} \int_{\mathsf{B}_1^U}\int_{\mathfrak{r}} \phi(a_{\ell_1}u_1 a_{\ell_2} u_2 \exp(w).x)\,\mathrm{d}\hat{\mu}^{\mathsf{h}}(w)\,\mathrm{d}u_2\,\mathrm{d}u_1\\
={}& \int_X \phi \,\mathrm{d}\mu_X + O(\mathcal{S}(\phi)(\beta^\star + \eta + \Upsilon^{\frac{1}{2}}\beta^{-3}e^{-\kappa_1\ell_1})).
\end{align*}
Since $\eta = \beta^{\frac{1}{2}}$, it suffices to estimate the last term. We have
\begin{align*}
\Upsilon^{\frac{1}{2}}\beta^{-3}e^{-\kappa_1\ell_1} = \delta_{\mathrm{fin}}^{-\theta/2}e^{10\ell}\beta^{-3}e^{-\frac{\kappa_1^2}{1 + \kappa_1}\tau}
\leq \delta_{\mathrm{fin}}^{-\theta/2}e^{11\ell}\delta_{\mathrm{fin}}^{\frac{\kappa_1^2}{(1 + \kappa_1)16}}.
\end{align*}
The last inequality follows from the fact that $e^{-\ell}$ is a much smaller scale than $\beta$ and $\tau \geq \frac{1}{16}|\log \delta_{\mathrm{fin}}|$. Since $\theta \leq (\frac{\kappa_1}{80})^2$, we have
\begin{align*}
\Upsilon^{\frac{1}{2}}\beta^{-3}e^{-\kappa_1\ell_1} \leq \delta_{\mathrm{fin}}^{-\theta/2}e^{11\ell}\delta_{\mathrm{fin}}^{\frac{\kappa_1^2}{(1 + \kappa_1)16}} \leq e^{11\ell}\delta_{\mathrm{fin}}^{\theta}.
\end{align*}
Recall that $\delta_{\mathrm{fin}} = \delta_0^{\frac{2\epsilon}{\theta}} = e^{-\frac{2\epsilon t}{\theta M\ref{a:closing lemma main}}}$ and $\ell = \frac{\epsilon t}{100 M\ref{a:closing lemma main}}$, the above error term is bounded by $e^{-\ell} \leq \beta$. This completes the proof of the proposition. 
\end{proof}

\section{Proof of Theorem~\ref{thm:main equidistribution}}\label{sec:ProofOfMainEqui}
This section is devoted to the proof of Theorem~\ref{thm:main equidistribution}. Before we proceed the proof, let us recall the constants and parameters from previous sections needed in the proof. 

Recall the constants $\ref{a:closing lemma main} > 1$, $\ref{c:closing lemma main} > 1$, $\ref{d:closing lemma main} > 1$, $\ref{e:closing lemma main1}, \ref{e:closing lemma main2}> 1$, $\ref{m:closing lemma main} > 1$, $\epsilon_0 > 0$ from the effective closing lemma (Theorem~\ref{thm:closing lemma initial dim}). They depend only on $(G, H, \Gamma)$. Also, let us recall the constants $\mathsf{m}$, $s_0$, $\ref{a:avoidance}$, $\ref{c:avoidance}$, and $\ref{d:avoidance}$ depending only on $(G, H, \Gamma)$ from the avoidance principle (Proposition~\ref{pro:avoidance}). Fix $D = \max\{\ref{d:closing lemma main}, \ref{d:avoidance}\} + 2$ where $\ref{d:closing lemma main}$ is as in the effective closing lemma (Theorem~\ref{thm:closing lemma initial dim}) and $\ref{d:avoidance}$ is as in the avoidance principle (Proposition~\ref{pro:avoidance}). Let $M = \ref{m:closing lemma main} + \ref{c:closing lemma main} D$ be as in Theorem~\ref{thm:closing lemma initial dim}. 

Recall $\kappa_1$ from Theorem~\ref{lem:Venkatesh}. Recall that we set $\theta = (\frac{\min\{\kappa_1, \epsilon_0\}}{80})^{2} \in (0, \min\{\kappa_1, \epsilon_0\})$ and $p_{\mathrm{fin}} = \lceil 6480(\frac{9 - \epsilon_0}{\theta} - 1)\rceil$ in Section~\ref{sec:bootstrap}. 
Recall we set $\epsilon = 10^{-10}(\frac{3}{4})^{p_{\mathrm{fin}}} \theta$ and 
\begin{align*}
    d = \sum_{j = 0}^{p_\mathrm{fin} - 1} \Biggl\lceil\Biggl\lceil\frac{25}{2\epsilon}\Bigl(\frac{3}{4}\Bigr)^j\Biggr\rceil\Biggr\rceil.
\end{align*}
Note that those are constants depending only on $(G, H, \Gamma)$. 

\begin{proof}[Proof of Theorem~\ref{thm:main equidistribution}]
Recall from Subsection~\ref{subsection:different form on balls} that Theorem~\ref{thm:main equidistribution different form} is equivalent to Theorem~\ref{thm:main equidistribution}. We prove Theorem~\ref{thm:main equidistribution different form} here, i.e., the effective equidistribution theorem for $a_{\log T} \mathsf{B}^U_1$. 

Let $\ref{a:main equidistribution1} = 10M(10\ref{a:avoidance} + 1)$ and $\ref{a:main equidistribution2} = 10\ref{a:avoidance}$. Note that $\ref{a:main equidistribution1} > \ref{a:main equidistribution2} \geq 1$. Fix $x_0 \in X$. 
Suppose
\begin{align}\label{eqn:FinalProofConditionR}
    R \geq \max\{(\inj(x_0))^{-10\ref{a:avoidance}}, (2\ref{d:avoidance})^{\ref{d:avoidance}\ref{a:main equidistribution2}}, \ref{c:avoidance}, e^{s_0}, 10^{10^7\frac{\ref{a:closing lemma main}}{\theta\epsilon}}\}
\end{align}
and $T \geq R^{\ref{a:main equidistribution1}}$. Suppose case~(2) in the statement does not hold for the initial point $x_0$. Then for all $x$ so that $H.x$ is periodic with $\vol(H.x) \leq R$, we have
\begin{align*}
    d_X(x_0, x) > T^{-\frac{1}{\ref{a:main equidistribution2}}}.
\end{align*}

Set $t = M\log R$. We will assume $R$ to be sufficiently large depending on the space $X$. Set 
\begin{align*}
\beta = e^{-\frac{1}{10^{10}M\ref{a:closing lemma main}\ref{e:closing lemma main1}\ref{e:closing lemma main2}d^2}t}
\end{align*}
and $\ell = \frac{\epsilon}{100M\ref{a:closing lemma main}}t$. Set $\eta = \beta^{1/2}$. Note that both $\beta$ and $\eta$ are of size $R^{-\star}$. Let $\delta_0 = R^{-\frac{1}{\ref{a:closing lemma main}}} = e^{-\frac{t}{M\ref{a:closing lemma main}}}$ be as in Theorem~\ref{thm:closing lemma initial dim} and $\delta_{\mathrm{fin}} = \delta_0^{\frac{2\epsilon}{\theta}}$ as in both Proposition~\ref{pro:dim improving main} and Proposition~\ref{pro:Final phase}. To apply results in Section~\ref{sec:bootstrap} and Section~\ref{sec:From large dimension to effective equidistribution}, the parameters needs to satisfy \cref{eqn:Much smaller scale,eqn:R large enough absolutely}, i.e., $e^{-\ell}$ needs to be a small scale absolutely and also much smaller than $\beta$. By the last condition on $R$, i.e., $R \geq 10^{10^7\frac{\ref{a:closing lemma main}}{\theta\epsilon}}$ in \cref{eqn:FinalProofConditionR} and \cref{eqn:R large enough absolutely} hold. The second condition holds automatically since our choice of parameters is exactly the same as in Section~\ref{sec:bootstrap}. 

We now cut $\log T = t_3 + t_2 + t_1 + t_0$ as the following. Let $t_1 = t$, $t_2 = d\ell$, and 
\begin{align*}
    t_3 = \frac{\epsilon t}{4\theta M\ref{a:closing lemma main}}
\end{align*}
and $t_0 = \log T - (t_3 + t_2 + t_1)$. 
They satisfy the following conditions. The length of the last step $t_3$ satisfies $t_3 = \frac{1}{8}|\log \delta_{\mathrm{fin}}|$ as in Proposition~\ref{pro:Final phase}. The parameters $t_2 = d\ell$ and $t_1 = t = M\log R$ are as in Proposition~\ref{pro:dim improving main}. We have the following estimate:
\begin{align}\label{eqn:ProofMainEquiFinalTwoStepEstimate}
t_2 + t_3 \leq \frac{2t}{M\ref{a:closing lemma main}} \leq t = M\log R.
\end{align}

Using the F{\o}lner property of $U$ (see Lemma~\ref{lem:Folner1} or Lemma~\ref{lem:Folner H}), for all $\phi \in \mathrm{C}_c^\infty(X)$ we have
\begin{align}\label{eqn:ProofOfMainEquiFolner}
\begin{aligned}
    &\int_{\mathsf{B}_1^U} \phi(a_{\log T} u.x_0) \,\mathrm{d}u\\
    ={}& \int_{\mathsf{B}_1^U} \int_{\mathsf{B}_1^U} \int_{\mathsf{B}_1^U} \int_{\mathsf{B}_1^U} \phi(a_{t_3}u_3 a_{t_2} u_2 a_{t_1} u_1 a_{t_0} u_0.x_0) \,\mathrm{d}u_0\,\mathrm{d}u_1\,\mathrm{d}u_2\,\mathrm{d}u_3 + O(\mathcal{S}(\phi) \beta^\star).
\end{aligned}
\end{align}

Let $R_1 = R$ and $R_2 = T^{\frac{2}{\ref{a:main equidistribution2}}}$. By our assumption, the initial point $x_0$ satisfies
\begin{align*}
d_X(x_0, x) > T^{-\frac{1}{\ref{a:main equidistribution2}}} = R_2^{-\frac{1}{2}} \geq (\log R_2)^{\ref{d:avoidance}} R_2^{-1}.
\end{align*}
The last inequality follows from the fact that $R_2 \geq R^{\frac{2}{\ref{a:main equidistribution2}}} \geq (2\ref{d:avoidance})^{2\ref{d:avoidance}}$. 

We claim that $t_0 \geq \ref{a:avoidance}\max\{\log R_2, |\log \inj(x_0)|\} + s_0$. Indeed, for the right hand side of the inequality, we have
\begin{align*}
\log R_2 = \frac{2}{\ref{a:main equidistribution2}}\log T \geq \log R \geq \max\{|\log \inj(x_0)|, s_0.
\end{align*}
Therefore, it suffices to show that
\begin{align*}
t_0 \geq 2\ref{a:avoidance}  \log R_2 = \frac{4\ref{a:avoidance}}{\ref{a:main equidistribution2}}\log T = \frac{2}{5}\log T.
\end{align*}
By definition of $t_0$, we have
\begin{align*}
t_0 = \log T - (t_1 + t_2 + t_3) \geq \log T - 2M\log R.
\end{align*}
Recall that $\ref{a:main equidistribution1} = 10M(10\ref{a:avoidance} + 1)$ and $\log T \geq \ref{a:main equidistribution1}\log R$, we have
\begin{align*}
t_0 \geq \log T - 2M\log R \geq 8M(10\ref{a:avoidance} + 1) = \frac{4}{5} \log T > \frac{2}{5} \log T.
\end{align*}
Therefore, we have
\begin{align*}
t_0 \geq \ref{a:avoidance}\max\{\log R_2, |\log \inj(x_0)|\} + s_0.
\end{align*}

Let
\begin{align*}
    \mathsf{B}_1^{U, \mathrm{WA}} = \Biggl\{u \in \mathsf{B}_1^U: \begin{aligned}{}&\inj(a_{t_0}u.x_0) \leq \eta \text{ or }\exists x \text { with }\vol(H.x) \leq R_1\\ {}&\text { and } d_X(a_{t_0} u. x_0, x) \leq R_1^{-D}\end{aligned}\Biggr\}
\end{align*}
and $\mathsf{B}_1^{U, \mathrm{Dio}} = \mathsf{B}_1^{U} \setminus \mathsf{B}_1^{U, \mathrm{WA}}$. Since $D \geq \ref{d:avoidance} + 2$ and $R \geq \ref{c:avoidance}$, Proposition~\ref{pro:avoidance} implies
\begin{align*}
    |\mathsf{B}_1^{U, \mathrm{WA}}| \ll \eta^{\frac{1}{\mathsf{m}}}.
\end{align*}
Here we apply $R^{-1} \leq \eta$. Therefore, 
\begin{align}\label{eqn:ProofOfMainEquiAvoidance}
\begin{aligned}
&\int_{\mathsf{B}_1^U} \int_{\mathsf{B}_1^U} \int_{\mathsf{B}_1^U} \int_{\mathsf{B}_1^U} \phi(a_{t_3}u_3 a_{t_2} u_2 a_{t_1} u_1 a_{t_0} u_0.x_0) \,\mathrm{d}u_0\,\mathrm{d}u_1\,\mathrm{d}u_2\,\mathrm{d}u_3\\   
={}& \int_{\mathsf{B}_1^U} \int_{\mathsf{B}_1^U} \int_{\mathsf{B}_1^U} \int_{\mathsf{B}_1^{U, \mathrm{Dio}}} \phi(a_{t_3}u_3 a_{t_2} u_2 a_{t_1} u_1 a_{t_0} u_0.x_0) \,\mathrm{d}u_0\,\mathrm{d}u_1\,\mathrm{d}u_2\,\mathrm{d}u_3 + O(\mathcal{S}(\phi)\beta^\star).
\end{aligned}
\end{align}
It suffices to estimate
\begin{align*}
\int_{\mathsf{B}_1^U} \int_{\mathsf{B}_1^U} \int_{\mathsf{B}_1^U} \phi(a_{t_3}u_3 a_{t_2} u_2 a_{t_1} u_1 x_1) \,\mathrm{d}u_1\,\mathrm{d}u_2\,\mathrm{d}u_3.
\end{align*}
for all $x_1 \in a_{t_0} \mathsf{B}_1^{U, \mathrm{Dio}}.x_0$. Note that such $x_1$ satisfies the following.
\begin{enumerate}
    \item The point $x_1 \in X_\eta$.
    \item For all $x' \in X$ so that $H.x'$ periodic with $\vol(H.x') \leq R_1 = R$, we have
    \begin{align*}
        d_X(x_1, x') > R^{-D}.
    \end{align*}
\end{enumerate}

Recall that we picked $t_1 = t$ and $t_2 = d\ell$ exactly as in Proposition~\ref{pro:dim improving main}. Applying Proposition~\ref{pro:dim improving main} to such $x_1$, there exists a family of sheeted sets $\{\mathcal{E}^{\mathrm{fin}}_i\}_i$ with cross-section $\{F^{\mathrm{fin}}_i\}_i$, associated admissible measures $\{\mu_{\mathcal{E}^{\mathrm{fin}}_i}\}_i$ and $\{c_i\}_i$ satisfying $c_i > 0$ and $\sum_i c_i = 1$ so that the following holds. 
\begin{enumerate}
\item We have
\begin{align}\label{eqn:ProofOfMainEquiBootstrap}
\begin{aligned}
&\int_{\mathsf{B}_1^U} \int_{\mathsf{B}_1^U} \int_{\mathsf{B}_1^U} \phi(a_{t_3}u_3 a_{t_2} u_2 a_{t_1} u_1 x_1) \,\mathrm{d}u_1\,\mathrm{d}u_2\,\mathrm{d}u_3\\
={}& \sum_i c_i\int_{\mathsf{B}_1^U}\int_{\mathcal{E}_i^{\mathrm{fin}}} \phi(a_{t_3}u_3.x) \,\mathrm{d}\mu_{\mathcal{E}_i^{\mathrm{fin}}}(x) \,\mathrm{d}u_3 + O(\mathcal{S}(\phi)\beta^\star).
\end{aligned}
\end{align}
\item For all $i$, we have
\begin{align*}
\#F_i^{\mathrm{fin}} \geq \delta_0^{-\frac{4\epsilon_0}{3}} = e^{\frac{4\epsilon_0 t}{3M\ref{a:closing lemma main}}}.
\end{align*}
\item Let $\delta_{\mathrm{fin}} = \delta_0^{\frac{2\epsilon}{\theta}} = e^{-\frac{2\epsilon t}{\theta M\ref{a:closing lemma main}}}$. For all $i$ we have
\begin{align*}
f_{\mathcal{E}_i^{\mathrm{fin}},\delta_{\mathrm{fin}}}^{(9 - \theta)}(x) \leq 2^{p_{\mathrm{fin}}}e^{20\ell}\#F_i^{\mathrm{fin}} \qquad \text{for all $x \in \mathcal{E}_i^{\mathrm{fin}}$}.
\end{align*}
\end{enumerate}
By property~(2) and (3) and the fact $t_3 = \frac{1}{8}|\log \delta_{\mathrm{fin}}|$, all conditions in Proposition~\ref{pro:Final phase} are satisfied. Apply Proposition~\ref{pro:Final phase} to each sheeted set $\mathcal{E}^{\mathrm{fin}}_i$ and their associated admissible measures, we have
\begin{align}\label{eqn:ProofOfMainEquiFinalPhase}
\int_{\mathsf{B}_1^U}\int_{\mathcal{E}_i^{\mathrm{fin}}} \phi(a_{t_3}u_3.x) \,\mathrm{d}\mu_{\mathcal{E}_i^{\mathrm{fin}}}(x)\,\mathrm{d}u_3 = \int_X \phi \,\mathrm{d}\mu_X + O(\mathcal{S}(\phi)\beta^\star).
\end{align}
Recall that $\beta$ is of size $R^{-\star}$. Combining \cref{eqn:ProofOfMainEquiFolner,eqn:ProofOfMainEquiAvoidance,eqn:ProofOfMainEquiBootstrap,eqn:ProofOfMainEquiFinalPhase}, we have
\begin{align*}
    \Biggl|\int_{\mathsf{B}_1^U} \phi(a_{\log T} u.x_0) \,\mathrm{d}u - \int_X \phi\,\mathrm{d}\mu_X\Biggr| \ll \mathcal{S}(\phi) R^{-\star},
\end{align*}
where the implied constants depend only on $(G, H, \Gamma)$. The proof is complete. 
\end{proof}

\nocite{*}
\bibliographystyle{alpha_name-year-title}
\bibliography{References}
\end{document}